%% file: main.tex
\documentclass[11pt,a4paper,reqno]{amsart}

\usepackage{amsmath,amssymb,amsthm}
\usepackage[utf8]{inputenc}
\usepackage[T1]{fontenc}
\usepackage{geometry}
\usepackage{hyperref}
\usepackage{xurl}
\hypersetup{
    colorlinks=true,
    linkcolor=blue,
    citecolor=red,
    urlcolor=blue,
}
\usepackage{cleveref}

\theoremstyle{plain}
\newtheorem{theorem}{Theorem}[section]
\newtheorem{lemma}[theorem]{Lemma}
\newtheorem{proposition}[theorem]{Proposition}
\newtheorem{corollary}[theorem]{Corollary}

\newtheorem{mainthm}{Main Theorem}

\theoremstyle{definition}
\newtheorem{definition}[theorem]{Definition}
\newtheorem{remark}[theorem]{Remark}

\numberwithin{equation}{section}

\input{macros}

\begin{document}

\title[Scattering theory for the impedance half-space, I]%
{Scattering theory for bulk and surface waves in an impedance half-space, I:
 radiation conditions and a Rellich-type theorem}

\author{Michiyuki Watanabe}
\address{Department of Applied Mathematics,
Okayama University of Science,
1--1 Ridaicho, Kita--ku, Okayama 700--0005, Japan}
\email{mic-watanabe@ous.ac.jp}
\thanks{ORCID: 0000-0003-0612-8411}

\begin{abstract}
We study the negative Laplacian on the half-space $\mathbb{R}^n_+$, $n \ge 3$,
subject to a constant impedance (Robin) boundary condition with $\alpha > 0$.
The boundary then carries a bound transverse mode, and at each energy $E > 0$
the system propagates two channels with the distinct wavenumbers
$\sqrt{E}$ and $\sqrt{E+\alpha^2}$: bulk waves decaying like $|x|^{-(n-1)/2}$
and surface waves decaying like $|x'|^{-(n-2)/2}$. The two rates are not
comparable pointwise. Each is critical for the Agmon--H\"ormander space
$\mathcal{B}^*$, however. This Part~I treats the flat boundary, and its purpose
is to develop, at that endpoint, the scattering theory of the two channels in
full. We prove a limiting
absorption principle $\mathcal{B} \to \mathcal{B}^*$, construct the generalized
Fourier transform, derive an expansion of the resolvent at infinity in which the
two channels appear in a single relation, characterize the solutions of
$\mathcal{B}^*$ growth, and compute the scattering matrix. The radiation
condition is imposed on the solution itself, with no decomposition into
channels, and is shown to be equivalent to the vanishing of the incoming data.
The main result is a Rellich-type theorem whose hypotheses are imposed only near
infinity and which forces both channels to vanish simultaneously. The surface
channel turns out to be rigid: its block of the scattering matrix is the
identity and carries no information about $\alpha$, so that on the band
$-\alpha^2 < E < 0$ the scattering matrix is trivial. The bulk block, by
contrast, determines $\alpha$.
\end{abstract}

\maketitle

\tableofcontents

\section{Introduction and Physical Motivation}
\input{Part1_01_introduction}

\section{Scattering Theory in the Unperturbed Half-Space}
\label{sec:unperturbed}

\input{Part1_02_opening}

\subsection{Self-Adjointness and Spectral Structure}
\input{Part1_021_selfadjoint}

\subsection{The Limiting Absorption Principle in \texorpdfstring{$\Bsp$--$\Bst$}{B--B*}}
\input{Part1_022_lap}

\subsection{Generalized Eigenfunctions and the Generalized Fourier Transform}
\input{Part1_023_gft}

\subsection{Asymptotic Expansion of the Resolvent}
\input{Part1_024_asymptotics}

\subsection{Characterization of Scattering Solutions}
\input{Part1_025_characterization}

\subsection{Radiation Conditions, Rellich Uniqueness, and the Outgoing Solution}
\input{Part1_026_radiation}

\section{Inverse Scattering for the Flat Half-Space}
\label{sec:inverse}

\input{Part1_027_inverse_uniqueness}

\appendix

\section{Proof of Theorem~\ref{thm:sa-spec}}
\label{app:sa-spec}
\input{Part1_a1_app_selfadjoint}

\section{The Mourre Estimate for \texorpdfstring{$\Halpha$}{H alpha}}
\label{app:mourre}
\input{Part1_a2_app_mourre}

\section*{Statements and Declarations}

\noindent\textbf{Funding.}
The author did not receive support from any organization for the submitted work.

\smallskip
\noindent\textbf{Competing interests.}
The author has no competing interests to declare that are relevant to the
content of this article.

\smallskip
\noindent\textbf{Data availability.}
Data sharing is not applicable to this article, as no datasets were generated
or analysed during the current study.

\smallskip
\noindent\textbf{Use of AI tools.}
The author used an AI assistant (Claude, Anthropic) for language editing, for
drafting parts of the LaTeX source under the author's direction, and for
checking cross-references and citations. All mathematical content, proofs and
references were verified by the author, who takes full responsibility for the
manuscript.
\bibliographystyle{amsplain}
\bibliography{references}

\end{document}

%% file: macros.tex
\ifdefined\RobinMacrosLoaded \fi
\let\RobinMacrosLoaded\relax

\newcommand{\Rn}{\mathbb{R}^n}
\newcommand{\Rnm}{\mathbb{R}^{n-1}}          %
\newcommand{\Rnp}{\mathbb{R}^n_+}            %
\newcommand{\xpar}{x'}                       %
\newcommand{\xipar}{\xi'}                    %
\newcommand{\Rhalf}{\mathbb{R}_+}            %
\newcommand{\Sm}{S^{n-1}_+}                  %
\newcommand{\Ssurf}{S^{n-2}}                 %
\newcommand{\Ltwo}[1]{L^2(#1)}
\newcommand{\norm}[1]{\left\lVert #1 \right\rVert}

\newcommand{\Bsp}{\mathcal{B}}
\newcommand{\Bst}{\mathcal{B}^*}
\newcommand{\Bstz}{\mathcal{B}^*_0}
\newcommand{\Bnorm}[1]{\left\lVert #1 \right\rVert_{\Bsp}}
\newcommand{\Bstnorm}[1]{\left\lVert #1 \right\rVert_{\Bst}}
\newcommand{\Bspd}[1]{\mathcal{B}(#1)}       %
\newcommand{\Bstd}[1]{\mathcal{B}^*(#1)}     %
\newcommand{\shell}[1]{\Omega_{#1}}          %
\newcommand{\shellp}[1]{\Omega_{#1}^+}       %
\newcommand{\tshell}[1]{\omega_{#1}}         %
\newcommand{\slab}[1]{S_{#1}}                %

\newcommand{\Halpha}{H_\alpha}               %
\newcommand{\Ralpha}[1]{R_\alpha(#1)}        %
\newcommand{\RD}[1]{R_{\mathrm{D}}(#1)}      %
\newcommand{\Rfree}[2]{R_0^{(#1)}(#2)}       %
\newcommand{\Krem}[1]{K(#1)}                 %
\newcommand{\Psurf}{P_{\mathrm{surf}}}       %
\newcommand{\DN}{\Lambda}                    %
\newcommand{\Dom}{\mathcal{D}}               %
\newcommand{\hform}{\mathfrak{h}_\alpha}     %
\newcommand{\hfib}{h}                        %
\newcommand{\Lop}[2]{\mathcal{L}(#1,#2)}     %
\DeclareMathOperator{\Ran}{Ran}

\newcommand{\sac}{\sigma_{\mathrm{ac}}}
\newcommand{\spp}{\sigma_{\mathrm{pp}}}
\newcommand{\ssc}{\sigma_{\mathrm{sc}}}

\newcommand{\Thr}{\mathcal{T}}               %
\newcommand{\Ereg}{\mathcal{E}}              %

\newcommand{\kbulk}{\sqrt{E}}                %
\newcommand{\ksurf}{\sqrt{E+\alpha^2}}       %
\newcommand{\Echan}[1]{\mathcal{H}(#1)}      %
\newcommand{\Hdirect}{\mathcal{H}}           %

\newcommand{\Psib}{\Psi_{\mathrm{bulk}}}     %
\newcommand{\Psis}{\Psi_{\mathrm{surf}}}     %
\newcommand{\Fzero}{\mathcal{F}_0}           %
\newcommand{\Fbulk}{\mathcal{F}_{\mathrm{bulk}}}
\newcommand{\Fsurf}{\mathcal{F}_{\mathrm{surf}}}
\newcommand{\Ftbulk}{\widetilde{\mathcal{F}}_{\mathrm{bulk}}}  %
\newcommand{\Ftsurf}{\widetilde{\mathcal{F}}_{\mathrm{surf}}}  %
\newcommand{\Adens}[1]{A(#1)}                %
\newcommand{\cph}[1]{c(#1)}                  %
\newcommand{\refext}[1]{#1^{\mathrm{r}}}     %
\newcommand{\zext}[1]{#1^{\circ}}            %
\newcommand{\Utrans}{\mathcal{U}}            %

\newcommand{\Smat}{S}

\newcommand{\refl}[1]{\mathcal{R}(#1)}   %

\newcommand{\usurf}{u_{\mathrm{surf}}}
\newcommand{\ubulk}{u_{\mathrm{bulk}}}
\newcommand{\Usurf}{U_{\mathrm{surf}}}
\newcommand{\Ubulk}{U_{\mathrm{bulk}}}
\newcommand{\psizero}{\psi_0}                %
\newcommand{\gphi}{\varphi}                  %
\newcommand{\Hsurf}{\mathcal{H}_{\mathrm{surf}}}  %
\newcommand{\Hbulk}{\mathcal{H}_{\mathrm{bulk}}}  %

\newcommand{\Aav}{\mathcal{A}_\alpha}

\newcommand{\Csharp}{C_\sharp}               %
\newcommand{\CAH}[1]{C_{\mathrm{AH}}^{(#1)}} %

\newcommand{\Mav}[1]{M[#1]}
\newcommand{\MavR}[2]{M_{#2}[#1]}
\newcommand{\Mavt}[1]{M'[#1]}
\newcommand{\MavtR}[2]{M'_{#2}[#1]}

\newcommand{\Jbulk}{J_{\mathrm{bulk}}}
\newcommand{\Jsurf}{J_{\mathrm{surf}}}
\newcommand{\Jchan}{J}
\newcommand{\yout}{y^{(+)}}                  %
\newcommand{\yin}{y^{(-)}}                   %
\newcommand{\gambulk}{\gamma_{\mathrm{bulk}}}
\newcommand{\gamsurf}{\gamma_{\mathrm{surf}}}
\newcommand{\Csph}[2]{C_{#1}(#2)}
\newcommand{\Ball}[1]{B_{#1}^{+}}

\newcommand{\Nsol}[1]{\mathcal{N}(#1)}

\newcommand{\Wb}[2]{W^{(#1)}_{\mathrm{bulk}}[#2]}
\newcommand{\Ws}[2]{W^{(#1)}_{\mathrm{surf}}[#2]}

\newcommand{\Vsurf}{V}
\newcommand{\VsurfA}[1]{V_{#1}}              %

\newcommand{\Pibulk}{\Pi_{\mathrm{bulk}}}

\newcommand{\Ffree}[1]{\mathcal{F}^{\mathrm{free}}_{#1}}

\newcommand{\SmatA}[2]{S_{#1}(#2)}                   %
\newcommand{\SmatbulkA}[2]{S_{#1,\mathrm{bulk}}(#2)} %
\newcommand{\SmatsurfA}[2]{S_{#1,\mathrm{surf}}(#2)} %
\newcommand{\reflA}[2]{\mathcal{R}_{#1}(#2)}         %
\newcommand{\cphA}[2]{c_{#1}(#2)}                    %
\newcommand{\psizeroA}[1]{\psi_{0,#1}}               %
\newcommand{\HalphaA}[1]{H_{#1}}                     %
\newcommand{\HsurfA}[1]{\mathcal{H}_{#1,\mathrm{surf}}}
\newcommand{\HbulkA}[1]{\mathcal{H}_{#1,\mathrm{bulk}}}
\newcommand{\FsurfA}[1]{\mathcal{F}_{#1,\mathrm{surf}}}
\newcommand{\EregA}[1]{\mathcal{E}_{#1}}
\newcommand{\ksurfA}[1]{k_{#1,\mathrm{surf}}}
\newcommand{\Arec}[1]{A_{#1}}
\newcommand{\Wint}{W}

\newcommand{\eps}{\varepsilon}
\newcommand{\dist}{\operatorname{dist}}
\DeclareMathOperator{\re}{Re}                %
\DeclareMathOperator{\im}{Im}                %

%% file: Part1_01_introduction.tex
\subsection{The Impedance Half-Space}
\label{subsec:intro-setting}

Let $n \ge 3$. Let $\Rnp = \{x = (\xpar, x_n) \in \Rn : x_n > 0\}$ be the
half-space, and identify its boundary $\Gamma = \partial\Rnp$ with $\Rnm$. In
its physical form the problem of this paper is the Helmholtz equation in
$\Rnp$, subject to a constant impedance, or Robin, boundary condition:
\begin{equation}
  \label{eq:intro-helmholtz}
  (\Delta + k^2)u = 0 \quad\text{in } \Rnp,
  \qquad
  \partial_\nu u + \lambda u = 0 \quad\text{on } \Gamma .
\end{equation}
Here $k > 0$ is the wavenumber of the ambient medium. For a time-harmonic wave
of frequency $\omega$ in a medium of speed $c$ one has $k = \omega/c$, and
$2\pi/k$ is the wavelength. The vector $\nu = -e_n$ is the outward unit normal,
and the constant $\lambda \in \mathbb{R}$ is the impedance of the boundary. Its
two extreme values are the familiar ones: $\lambda = 0$ gives the Neumann, or
sound-hard, boundary, and $\lambda = \infty$ the Dirichlet, or sound-soft, one.

The sign of $\lambda$ determines the nature of the problem.
Consider the transverse operator $-d^2/dx_n^2$ on $\Ltwo{\Rhalf}$ with the
boundary condition $\psi'(0) = \lambda\psi(0)$. Any eigenfunction of it is a
multiple of $e^{\lambda x_n}$, with eigenvalue $-\lambda^2$, and this lies in
$\Ltwo{\Rhalf}$ exactly when $\lambda < 0$. So for $\lambda \ge 0$ there is no
eigenvalue at all. The transverse spectrum is then purely continuous, and the
theory is a variant of the Dirichlet or Neumann theory in the half-space. The
classical dissipative regime $\lambda > 0$, that of Newton's law of cooling,
falls on this side. The case treated here is the other one.

We therefore set $\alpha = -\lambda > 0$ once and for all. The boundary
condition becomes $\partial_{x_n}u + \alpha u = 0$ on $\Gamma$, which is the
form used in the applied literature, and the impedance $\lambda$ does not
occur again. The transverse eigenvalue is $-\alpha^2$ and the normalized
eigenfunction is
\begin{equation}
  \label{eq:intro-psizero}
  \psizero(x_n) = \sqrt{2\alpha}\,e^{-\alpha x_n} .
\end{equation}
The boundary thus carries a bound mode. It is confined to a layer of thickness
$1/\alpha$, and that layer runs along a boundary which is unbounded. The mode
is a second channel of propagation, and the whole of the analysis below is a
response to its presence. The length $1/\alpha$ is the only scale in the
problem.

Boundary conditions of the form \eqref{eq:intro-helmholtz} are the standard
macroscopic description of a surface that is neither perfectly reflecting nor
transparent. They occur throughout acoustics, electromagnetism and continuum
mechanics \cite{ChandlerWilde1998,Kirsch2018}.

A flat rigid wall is a different matter. It reflects an incident wave
completely and carries no wave of its own. It is the Neumann boundary. What has
changed the standing of \eqref{eq:intro-helmholtz} in recent years is that this
can be undone by structure alone. Cut the wall with grooves whose depth, width
and spacing all lie well below the wavelength. Each groove is then a small
closed resonator, and the field inside it forms a standing wave. The incident
field does not resolve the individual grooves, so their effect may be averaged.
In the subwavelength limit the corrugated wall is replaced, exactly, by a flat
wall carrying an effective impedance. If the grooves are shallower than a
quarter wavelength, that impedance falls on the binding side, which in the
present normalization is $\alpha > 0$. A wall with no surface wave of its own
has acquired one. This is the mechanism behind spoof surface plasmon polaritons
in electromagnetism \cite{Pendry2004}, and behind acoustic surface waves on
structured plates \cite{Cummer2016}.

The impedance half-space is therefore an object of design, and not only of
modelling. A designer fixes $\alpha$ by choosing the depth of the grooves and
the fraction of the surface they occupy. Through $\alpha$ he fixes the
penetration depth $1/\alpha$ of the surface wave, and the amount by which that
wave is slowed relative to the ambient one.\footnote{An impedance obtained by
such a homogenization depends on the frequency. We treat $\alpha$ as a constant
and let the energy vary. This suffices here, since the reconstruction of
Corollary~\ref{cor:inv-alpha} uses the scattering matrix at a single energy.
The dispersion of $\alpha$ is not otherwise discussed.} Equation
\eqref{eq:intro-helmholtz} is the simplest model in which this confinement can
be analysed exactly, and it is the model adopted here. It also gives a concrete
meaning to the inverse problem of \S\ref{sec:inverse}. To recover $\alpha$
from measurements made far from the surface is to determine the effective
impedance of a manufactured surface without dismantling it.

The work is divided into two parts. The present paper, Part~I, treats the flat
half-space, and develops its spectral and scattering theory completely: the
limiting absorption principle, the generalized Fourier transform, the
asymptotic behaviour of the resolvent at infinity, the radiation condition,
uniqueness, and the inverse problem for $\alpha$. Part~II treats local
perturbations of this configuration, a deformation of the boundary, an
obstacle placed in the interior, or both. There nothing forces the two
channels to decouple, and a bulk wave may convert into a surface wave.

The flat case is not a corollary of the perturbed one, and it cannot be
relegated to a preliminary section. It is where the channel structure, the
topology in which the two channels are to be compared, and the radiation
condition have all to be settled, and each of these requires a theorem. Beyond that, the Rellich-type theorem proved here is the step on
which the perturbed theory rests. At positive energy it excludes
eigenvalues embedded in the continuous spectrum of the perturbed operator. It
also shows that the far-field pattern determines the near field, so that data
taken at infinity are data on any surface enclosing the perturbation. That
pattern is here a pair of amplitudes, one for each channel. On the band it is unavailable.

\subsection{The Operator and Its Two-Channel Structure}
\label{subsec:intro-two-channel}

The analysis is carried out on the self-adjoint operator behind
\eqref{eq:intro-helmholtz}, namely
\begin{equation}
  \label{eq:intro-operator}
  \Halpha = -\Delta \quad\text{on } \Rnp,
  \qquad
  \gamma_1 u - \alpha\,\gamma_0 u = 0 \quad\text{on } \Gamma,
  \qquad \alpha > 0,
\end{equation}
where $\gamma_0$ is the Dirichlet trace and $\gamma_1 = \partial_\nu$ the
conormal derivative. Then \eqref{eq:intro-helmholtz} is the eigenvalue equation
$\Halpha u = Eu$ at $E = k^2$, and from here on the energy $E$ replaces the
wavenumber $k$. The gain is not one of language only. Real values of $k$ reach
the energies $E \ge 0$ and no others, whereas the spectrum of $\Halpha$ extends
down to $-\alpha^2$. The band $-\alpha^2 < E < 0$ is invisible in
\eqref{eq:intro-helmholtz}, and it is there that the surface wave propagates
alone.

The structure of the problem is visible after a partial Fourier transform in the
tangential variable $\xpar$. This reduces $\Halpha$ to the family of fiber
operators
\begin{equation}
  \label{eq:intro-fiber}
  \hfib(\xipar) = -\frac{d^2}{dx_n^2} + |\xipar|^2
  \quad\text{on } \Ltwo{\Rhalf},
  \qquad \psi'(0) = -\alpha\psi(0),
\end{equation}
each of which has the simple eigenvalue $|\xipar|^2 - \alpha^2$ with
eigenfunction \eqref{eq:intro-psizero}, together with the continuous spectrum
$[\,|\xipar|^2,\infty)$. Taking the union over $\xipar \in \Rnm$ gives
$\sigma(\Halpha) = [-\alpha^2,\infty)$. More to the point, it shows how a fixed
energy $E$ is carried. The two branches carry it on sets of different type:
\begin{equation}
  \label{eq:intro-wavenumbers}
  |\xipar| \le \sqrt{E} \quad\text{(continuous branch)},
  \qquad
  |\xipar| = \sqrt{E+\alpha^2} \quad\text{(eigenvalue branch)}.
\end{equation}
The continuous branch fills a ball, whose radius $\kbulk$ is the wavenumber of
the bulk channel. The eigenvalue branch is carried on a single sphere, whose
radius $\ksurf$ is the tangential wavenumber of the surface channel. The first
requires $E > 0$, the second only $E > -\alpha^2$. Two channels are therefore
open when $E > 0$, and exactly one on the band $-\alpha^2 < E < 0$. Since
$\ksurf > \kbulk$ whenever both are defined, the surface wave has the shorter
wavelength and the slower phase velocity, as it must.

The two words are the physical ones. In physics and engineering the
\emph{bulk} is the interior of a medium, where the influence of any boundary is
negligible. Bulk waves are the waves that propagate freely into it. They are
the modes of the free Laplacian on $\Rn$, restricted to the half-space and
reflected at $\Gamma$, and their band is $[0,\infty)$. Surface waves are made by
the wall. Their energy is trapped at $\Gamma$, they travel along it and decay
exponentially away from it, and their band is $[-\alpha^2,\infty)$. The theory
below is the theory of how the two families interact.

The set of thresholds is $\Thr = \{-\alpha^2, 0\}$, the first being the point at
which the surface channel opens and the second that at which the bulk channel
opens. We write $\Ereg = (-\alpha^2,0)\cup(0,\infty)$ for the non-threshold
energies. The two thresholds turn out to play symmetric roles: the resolvent
bounds of \S\ref{subsec:lap} diverge at the rate $\dist(E,\Thr)^{-1/2}$ at each,
and both must be excluded.

At infinity the two channels appear as spherical waves of different dimensions.
Writing $r = |x|$, $\theta = x/r$, $\rho = |\xpar|$ and $\omega = \xpar/\rho$,
an outgoing solution at energy $E > 0$ behaves as a superposition of
\begin{equation}
  \label{eq:intro-two-waves}
  A(\theta)\,r^{-\frac{n-1}{2}}e^{i\kbulk r},
  \qquad
  B(\omega)\,\rho^{-\frac{n-2}{2}}e^{i\ksurf\rho}\,\psizero(x_n),
\end{equation}
with $A$ on the upper hemisphere $\Sm$ and $B$ on the equatorial sphere
$\Ssurf$. The exponents are those of a spherical wave in dimension $n$ and in
dimension $n-1$ respectively: the surface wave is a spherical wave of the
boundary, not of the interior.

The difficulty which organizes the rest of the paper is that these two
expressions cannot be compared pointwise. Each is governed by its own radial
variable. Near the boundary the two variables are comparable, and there the
surface term is the larger. Away from the boundary the factor $\psizero(x_n)$
makes the surface term exponentially small, and the bulk term is the larger.
Neither ordering holds uniformly in the direction, and no pointwise estimate
covers both.

A single framework is nevertheless possible, because the two
rates in \eqref{eq:intro-two-waves} are each exactly critical for their own
dimension. Average the square of each over the ball $|x| < R$, divide by $R$ and
let $R \to \infty$. Since $dx = r^{n-1}dr\,dS(\theta)$, the bulk term gives
\[
  \frac1R\int_{|x|<R} |A(\theta)|^2 r^{-(n-1)}\,dx
  \;\longrightarrow\; \int_{\Sm}|A|^2\,dS,
\]
and since $dx = \rho^{n-2}d\rho\,dS(\omega)\,dx_n$ with
$\norm{\psizero}_{\Ltwo{\Rhalf}} = 1$, the surface term gives
\[
  \frac1R\int_{|x|<R} |B(\omega)|^2 \rho^{-(n-2)}\psizero(x_n)^2\,dx
  \;\longrightarrow\; \int_{\Ssurf}|B|^2\,dS .
\]
Both limits are finite and nonzero, and each returns the far-field amplitude of
its own channel. The two channels therefore sit at a common borderline, in spite
of their different rates and dimensions, and that borderline is the one of the
Agmon--H\"ormander spaces \cite{Agmon1975,AgmonHormander1976}. With
$D_j = \{2^j \le |x| < 2^{j+1}\}$, the space $\Bst$ is defined by the finiteness
of $\sup_j 2^{-j/2}\norm{u}_{\Ltwo{D_j}}$ and $\Bstz$ by the vanishing of these
quantities. Now $\Bst$ contains both terms of \eqref{eq:intro-two-waves} with
nonzero norm, and $\Bstz$ contains neither. A weighted space
$\Ltwo{\langle x\rangle^{-2s}dx}$ cannot do this. For $s > 1/2$ it contains both
terms but does not see their amplitudes. For $s \le 1/2$ it excludes them.

This is why the framework is not a matter of presentation. The limiting
absorption principle has to be proved as a bound $\Bsp \to \Bst$ rather than
between weighted spaces, since it is only at the endpoint that both channels are
resolved. The radiation condition has to be a statement of membership in
$\Bstz$. And the hypothesis of the Rellich-type uniqueness theorem has to be of
the same kind, since anything stronger would exclude the very solutions the
theorem is about. These are the results described next.

\subsection{Main Results}
\label{subsec:intro-main-results}

The aim of this section is to state, in the order in which they are proved, the
four results that together constitute the scattering theory of $\Halpha$ in the
Agmon--H\"ormander spaces $\Bsp$--$\Bst$. All of them are proved for $n \ge 3$
and for non-threshold energies $E \in \Ereg$. See
Remark~\ref{rem:standing-dimension} on the dimension. The four statements are
displayed as Main Theorems~A--D. The paragraphs between them describe what
carries one to the next. The comparison with the literature is deferred to
\S\ref{subsec:intro-method} and \S\ref{subsec:intro-novelty}.

The starting point is the spectral structure of $\Halpha$
(Theorem~\ref{thm:sa-spec}): the operator is self-adjoint and bounded below by
$-\alpha^2$, its spectrum is $[-\alpha^2,\infty)$, and it is purely absolutely
continuous with no eigenvalues. Absolute continuity is proved by explicit
diagonalization rather than by commutator methods, since the fibers
\eqref{eq:intro-fiber} have continuous spectrum and the analytically fibered
framework is unavailable.

\begin{mainthm}[Limiting absorption principle; Theorem~\ref{thm:lap}]
\label{mainthm:lap}
For $E \in \Ereg$ the boundary values $\Ralpha{E \pm i0}$ exist as bounded
operators $\Bsp \to \Bst$, locally uniformly in $E$, and depend continuously on
$E$. The bounds diverge at the rate $\dist(E,\Thr)^{-1/2}$ at each of the two
thresholds.
\end{mainthm}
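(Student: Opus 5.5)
The plan is to work entirely through the explicit diagonalization behind Theorem~\ref{thm:sa-spec}, not through commutator methods. The two channels are separated at the level of the resolvent and each is reduced to a free resolvent estimate of Agmon--H\"ormander type in the appropriate dimension. Let $\Psurf$ be the projection onto the surface channel, $(\Psurf f)(x) = \psizero(x_n)\int_0^\infty \psizero(y_n) f(x',y_n)\,dy_n$. It commutes with $\Halpha$, since on each fiber \eqref{eq:intro-fiber} it is the eigenprojection, which is independent of $\xipar$. Then
\[
  \Ralpha{z} = \psizero\otimes \Rfree{n-1}{z+\alpha^2}\,\langle \psizero,\cdot\rangle + \Ralpha{z}(1-\Psurf),
\]
where $\Rfree{n-1}{\zeta}$ is the free resolvent on $\Rnm$.

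For the surface part, fix $f\in\Bsp$. By Cauchy--Schwarz in $x_n$ on each dyadic region, $g = \langle\psizero, f\rangle_{x_n}$ lies in $\Bsp(\Rnm)$, because $D_j\cap\{x_n \text{ bounded}\}$ projects into the corresponding dyadic shells of $\Rnm$. The weight $\psizero$ is bounded and exponentially decaying, so the map $\Bstd{\Rnm}\ni v\mapsto \psizero v\in\Bst$ is bounded. The classical Agmon--H\"ormander theorem for $-\Delta$ on $\Rnm$ gives $\Rfree{n-1}{\zeta\pm i0}:\Bspd{\Rnm}\to\Bstd{\Rnm}$ for $\zeta>0$. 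Its bound is $O(\zeta^{-1/2})$ as $\zeta\downarrow0$; this follows by scaling, since the trace of $\hat g$ on the sphere of radius $\sqrt\zeta$ has norm $\lesssim\zeta^{-1/4}$, squared against the density $1/(2\sqrt\zeta)$. With $\zeta=E+\alpha^2$ this yields existence, local uniformity, continuity, and the rate $\dist(E,-\alpha^2)^{-1/2}$. This part is uniform across $E=0$.

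For the bulk part, use the explicit continuum eigenfunctions of the fiber, $\phi(x_n,\tau) = c(\tau)\bigl(e^{-i\tau x_n} + \refl{\tau}e^{i\tau x_n}\bigr)$ with $\refl{\tau} = (i\tau-\alpha)/(i\tau+\alpha)$ unimodular. Transform the fiber in $x_n$ by this family. Then $\Ralpha{z}(1-\Psurf)$ becomes multiplication by $(|\xi'|^2+\tau^2-z)^{-1}$ on $\Rn$ in the variables $\xi=(\xipar,\tau)$, $\tau>0$. This amounts to writing it as the free resolvent on $\Rn$ applied to the even extension plus a reflected term. Concretely, $1-\Psurf$ acting on $f$ equals $\Fzero^{-1}$ of a unitary map that mixes the even and odd extensions by the smooth unimodular multiplier $\refl{\tau}$. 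I therefore plan to show that the bulk transform $\Fbulk$ maps $\Bsp$ to functions whose traces on spheres $|\xi|=\sqrt E$ are $L^2$ with the Agmon--H\"ormander trace bound $\|\Fbulk f|_{|\xi|=\lambda}\|^2 \lesssim \|f\|_{\Bsp}^2$, uniformly for $\lambda$ in compacts of $(0,\infty)$, with Hölder continuity in $\lambda$. Given this trace lemma, the standard argument applies verbatim in dimension $n$: Plemelj decomposition of the $\lambda$-integral, plus the $\Bsp\to\Bst$ duality of the trace operator. It yields the boundary values, continuity, and the $E^{-1/2}$ behaviour as $E\downarrow0$. For $E<0$ the bulk part is analytic in $z$ near $E$, since the symbol is bounded away from zero. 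It is bounded $\Ltwo{}\to\Ltwo{}$ there, and hence $\Bsp\to\Bst$.

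The main obstacle is this trace lemma, because $\refl{\tau}$ is not a Fourier multiplier in $x$ of the trivial kind. I would first write $\phi$ as a combination of plane waves plus a term whose $\tau$-dependence is $(\tau-i\alpha)^{-1}$-smooth. I would then show that multiplication by $\refl{\tau}$ corresponds, in $x_n$, to the identity plus convolution with the integrable kernel $-2\alpha e^{-\alpha|s|}\mathbf 1_{s>0}$. That operator is bounded on $\Bsp$, since it is a convolution in one variable with an exponentially decaying kernel, which respects the dyadic structure up to adjacent shells. The trace lemma for $\Fbulk$ then reduces to the free one for the reflected extension $\refext{f}$. If the boundedness on $\Bsp$ is delicate near the shell boundaries, I would split the kernel into a compactly supported part and an exponentially small tail.
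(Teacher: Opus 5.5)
\paragraph{The gap: the bulk channel does not reach the endpoint.} Your surface-channel argument is the paper's and is fine. The problem is the bulk channel. Even if the trace lemma for $\Ftbulk$ on $\Bsp$ is granted, it controls only $\Fzero(E)^{*}\Fzero(E)=\pi^{-1}\im\Ralpha{E+i0}$. The endpoint difficulty sits in the principal-value part.

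The Plemelj (Privalov) step you invoke needs a H\"older modulus for $\lambda\mapsto\Ftbulk f|_{|\xi|=\lambda}$ bounded by $\Bnorm{f}$. No such modulus exists on $\Bsp$. To see this, take a Herglotz wave of wavenumber $\lambda$ with density in the upper hemisphere, cut it off smoothly to a ball of radius $\sim 2^{j}$ inside $\shellp{j}$, and normalize it to $\Bnorm{f_j}=1$. Then $\widehat{\zext{f_j}}$ is of size one in a $2^{-j}$-neighbourhood of one cap of the sphere of radius $\lambda$ and negligible elsewhere. Hence its traces at radii differing by a large multiple of $2^{-j}$ differ by an amount of order one, for every $j$. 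By Lemma~\ref{lem:reflection-formula} the same holds for $\Ftbulk f_j$.

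Moduli of this kind exist only on $L^{2,s}$ with $s>1/2$. Your route therefore ends at the weighted bound on $\langle x\rangle^{-s}\Ralpha{z}\langle x\rangle^{-s}$ that Remark~\ref{rem:mourre} explicitly sets aside. The endpoint estimate of Theorem~\ref{thm:AH-classical} is not proved this way.

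Two smaller points. First, the trace lemma you call the main obstacle is immediate: Lemma~\ref{lem:reflection-formula} gives $\Ftbulk f(\xi)=\overline{\cph{\xi_n}}\,\widehat{\zext f}(\xi)+\cph{\xi_n}\,\widehat{\zext f}(\refext\xi)$ with $|\cph{\cdot}|=1$. Second, on $(-\alpha^2,0)$ your $L^2$ bound is $|E|^{-1}$. That does not give the asserted rate $|E|^{-1/2}$ as $E\uparrow 0$.

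\paragraph{A repair from your own ingredients.} Apply the convolution observation to the resolvent rather than to the trace. Put
\[
\check f(\xpar,x_n)=\zext{f}(\xpar,-x_n),\qquad
(Q_\alpha w)(x)=2\alpha\int_0^\infty e^{-\alpha s}w(\xpar,x_n-s)\,ds,\qquad
h=\zext f+\check f-Q_\alpha\check f .
\]
Since $\refl{\xi_n}=1-2\alpha/(\alpha+i\xi_n)$ on all of $\mathbb R$, one has $\widehat h(\xi)=\widehat{\zext f}(\xi)+\refl{\xi_n}\widehat{\zext f}(\refext\xi)$.

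Now close the $\xi_n$-contour in the upper half-plane. The pole at $\xi_n=i\kappa$ returns the reflected term $r(\kappa)(2\kappa)^{-1}e^{-\kappa(x_n+y_n)}$ of \eqref{eq:fiber-green}. The pole of $\refl{\cdot}$ at $\xi_n=i\alpha$ returns exactly minus the surface term. Hence
\[
\Ralpha{z}f=\bigl(\Rfree{n-1}{z+\alpha^2}\otimes\Psurf\bigr)f+\bigl(\Rfree{n}{z}h\bigr)\big|_{\Rnp},
\qquad z\in\mathbb{C}\setminus\mathbb{R}.
\]

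Translation by $s$ in $x_n$ has norm $O\bigl((1+s)^{1/2}\bigr)$ on $\Bspd{\Rn}$. So $Q_\alpha$ is bounded there and $\norm{h}_{\Bspd{\Rn}}\le C\Bnorm{f}$. Theorem~\ref{thm:AH-classical} in dimension $n$ then gives the bulk part directly:
\begin{itemize}
\item the bound $C|z|^{-1/2}$ for every nonreal $z$, so the rate at $0$ holds from both sides;
\item weak-$*$ boundary values for $E>0$, and their continuity;
\item analyticity at $E<0$.
\end{itemize}

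\paragraph{Comparison with the paper.} The paper splits $\Ralpha{z}=\RD{z}+\Rfree{n-1}{z+\alpha^2}\otimes\Psurf+\Krem{z}$. It then controls $\Krem{z}$ through the kernel bound $1/\alpha$ and the slab lemma. The identity above absorbs $\RD{z}+\Krem{z}$ into a single free resolvent, at the cost of proving that $Q_\alpha$ is bounded on $\Bspd{\Rn}$.
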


The proof rests on an exact algebraic identity
(Proposition~\ref{prop:splitting}),
\begin{equation}
  \label{eq:intro-splitting}
  \Ralpha{z} = \RD{z} + \Rfree{n-1}{z+\alpha^2} \otimes \Psurf + \Krem{z},
  \qquad \Psurf = \psizero \otimes \overline{\psizero},
\end{equation}
in which the first term carries the bulk channel, the second is the surface
channel in tensor form, and the remainder satisfies a fiber bound uniform in
$x_n$, with constant $1/\alpha$ (Lemma~\ref{lem:K-kernel-bound}). The
comparison is made against the Dirichlet Laplacian, not the Neumann one. With
$\kappa = (|\xipar|^2 - z)^{1/2}$, the weight produced by the reflection
coefficient is $(\kappa-\alpha)^{-1}$ in the first case and
$\kappa^{-1}(\kappa-\alpha)^{-1}$ in the second, and only the former stays
bounded at the bulk threshold $\kappa \to 0$, where the reflection coefficient
tends to $-1$. The two thresholds are then excluded symmetrically,
at the same rate, although one is an endpoint of the spectrum and the other is
embedded in it.

The limiting absorption principle yields a generalized Fourier transform
$\Fzero$ diagonalizing $\Halpha$ (Theorem~\ref{thm:spectral-representation}),
with the two-channel representation space
\begin{equation}
  \label{eq:intro-channel}
  \Echan{E} = \Ltwo{\Sm} \oplus \Ltwo{\Ssurf}
  \quad (E > 0),
  \qquad
  \Echan{E} = \Ltwo{\Ssurf}
  \quad (-\alpha^2 < E < 0),
\end{equation}
and Stone's formula (Theorem~\ref{thm:stone}). The band $(-\alpha^2,0)$ is a
single-channel window in which only the surface channel is open.

It yields also the behaviour of $\Ralpha{E+i0}f$ at infinity. Write
$u \simeq v$ for $u - v \in \Bstz$, that is, for the vanishing of the dyadic
shell averages of $u - v$ (Definition~\ref{def:simeq}). Then for $E > 0$ and
$f \in \Bsp$ (Theorem~\ref{thm:asymp-uniform}),
\begin{equation}
  \label{eq:intro-uniform}
  \Ralpha{E+i0}f
  \;\simeq\;
  c_n \bigl(\Fbulk^{(+)}(E)f\bigr)(\theta)\,
      r^{-\frac{n-1}{2}} e^{i\kbulk r}
  \;+\;
  c_n' \bigl(\Fsurf(E)f\bigr)(\omega)\,
      \rho^{-\frac{n-2}{2}} e^{i\ksurf\rho}\,\psizero(x_n),
\end{equation}
with explicit constants $c_n$, $c_n'$. On the surface band $-\alpha^2 < E < 0$
the second term stands alone (Theorem~\ref{thm:characterization}). A single
relation covers every direction, the grazing region $x_n = o(|x|)$ included,
where by \eqref{eq:intro-two-waves} the two terms are not comparable pointwise.
Expansions of this form are due to Isozaki, Kadowaki and Watanabe
\cite{IsozakiKadowakiWatanabe2020,IsozakiKadowakiWatanabe2023}, and no novelty
is claimed for \eqref{eq:intro-uniform} itself. What is different is the route
to it, which is described in \S\ref{subsec:intro-method}.

\begin{mainthm}[Solutions of $\Bst$ growth, and the scattering matrix;
Theorems~\ref{thm:characterization} and \ref{thm:shell-average-identity}]
\label{mainthm:char}
Let $E \in \Ereg$. A solution $u \in \Bst$ of $(\Halpha - E)u = 0$ is precisely
an element of $\Ran \Fzero(E)^*$, and is determined by a pair
$\yin, \yout \in \Echan{E}$ of incoming and outgoing far-field amplitudes
$($Corollary~\ref{cor:solution-far-field}$)$. The scattering matrix is
\begin{equation}
  \label{eq:intro-smatrix}
  \Smat(E) = \refl{\kbulk\theta_n} \oplus \mathrm{Id}_{\mathrm{surf}} .
\end{equation}
\end{mainthm}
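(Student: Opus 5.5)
The plan has three steps. First, show that solutions of $\Bst$ growth are exactly the range of $\Fzero(E)^*$. Second, read off the incoming and outgoing amplitudes from the asymptotics of $\Fzero(E)^*$ at infinity. Third, compute the scattering matrix from those amplitudes.

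\emph{Step 1: range inclusion.} The inclusion $\Ran\Fzero(E)^*\subset\{u\in\Bst:(\Halpha-E)u=0\}$ should follow from the limiting absorption principle of Main Theorem~\ref{mainthm:lap}. The Stone formula of Theorem~\ref{thm:stone} gives
\[
\Fzero(E)^*\Fzero(E)=\tfrac{1}{2\pi i}\bigl(\Ralpha{E+i0}-\Ralpha{E-i0}\bigr).
\]
Hence $\Fzero(E)\colon\Bsp\to\Echan{E}$ is bounded, and its adjoint maps $\Echan{E}$ into $\Bst$. Since $\Fzero(E)(\Halpha-E)f=0$ for $f$ in a dense set, the range consists of solutions.

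\emph{Step 1: converse.} For the converse I would use the splitting \eqref{eq:intro-splitting} fiberwise in $\xipar$. Take $u\in\Bst$ with $(\Halpha-E)u=0$. Its partial Fourier transform $\hat u(\xipar,x_n)$ is a tempered distribution in $\xipar$, and it solves $(\hfib(\xipar)-E)\hat u=0$ with the Robin condition. This should force its support into the set \eqref{eq:intro-wavenumbers}. That set is the sphere $|\xipar|=\ksurf$, carrying multiples of $\psizero(x_n)$, together with the ball $|\xipar|\le\kbulk$, carrying the generalized eigenfunctions $e^{-i\sqrt{E-|\xipar|^2}x_n}+\refl{\cdot}e^{i\sqrt{E-|\xipar|^2}x_n}$.

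To turn the $\Bst$ bound into square-integrability of the densities, I would follow the Agmon--H\"ormander argument: a distribution whose Fourier transform is supported on a compact hypersurface and which lies in $\Bst$ has an $L^2$ density on that surface. I would apply this twice. For the surface channel it is applied in dimension $n-1$ on $\Ssurf$, after pairing with $\psizero$ in $x_n$. For the bulk channel it is applied in dimension $n$ on the hemisphere, after even/odd extension in $x_n$ adapted to the reflection. The densities obtained this way define $y\in\Echan{E}$ with $u=\Fzero(E)^*y$. On the band $-\alpha^2<E<0$ only the surface part survives.

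\emph{Step 2: amplitudes.} Next, the asymptotics of $\Fzero(E)^*y$ in the $\simeq$ sense of Definition~\ref{def:simeq}. I would compute these by stationary phase, or better by reducing to the known free-space formulas in dimensions $n$ and $n-1$. This gives, up to $\Bstz$,
\[
\Fzero(E)^*y\simeq \bigl(\text{incoming waves with amplitude } \yin\bigr)+\bigl(\text{outgoing waves with amplitude } \yout\bigr).
\]
Here $\yout=\Smat(E)\yin$ and $\yin$ is essentially $y$ composed with the antipodal map. Uniqueness of the pair $(\yin,\yout)$ given $u$ follows because the shell averages of the four model waves are asymptotically orthogonal. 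These averages are exactly the computations displayed in \S\ref{subsec:intro-two-channel}, and cross terms between incoming and outgoing waves oscillate like $e^{2i\kbulk r}$ or $e^{2i\ksurf\rho}$, so they average to zero.

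\emph{Step 3: scattering matrix.} I expect this step to be routine. In the bulk fiber, the incoming plane wave $e^{-i\sqrt{E-|\xipar|^2}x_n}$ reflects with coefficient $\refl{\sqrt{E-|\xipar|^2}}$. In spherical variables $\sqrt{E-|\xipar|^2}=\kbulk\theta_n$, so the outgoing amplitude at $\theta$ is $\refl{\kbulk\theta_n}$ times the incoming amplitude at the mirrored direction. The factor $\refl{\kbulk\theta_n}$ is unimodular and the direction relabeling is absorbed in the normalization of $\Fbulk^{(\pm)}$. In the surface fiber, $\psizero$ is an exact bound state with no reflection, so incoming and outgoing waves on $\Ssurf$ are related by the free $(n-1)$-dimensional identity. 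This gives $\mathrm{Id}_{\mathrm{surf}}$.

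\emph{Main obstacle.} The hardest step is the converse inclusion of Step 1. Near the grazing directions $|\xipar|\to\kbulk$ the bulk density degenerates, and the Agmon--H\"ormander trace argument must be made uniform there. Separating the bulk part from the surface part in $\Bst$ relies on the fact that $\ksurf\ne\kbulk$, which gives a positive spectral gap between the sphere and the ball in $\xipar$. I would try first a smooth cutoff in $\xipar$ that isolates the sphere $|\xipar|=\ksurf$ from the ball $|\xipar|\le\kbulk$; such a cutoff is bounded on $\Bst$.
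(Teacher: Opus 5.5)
Your overall strategy, a direct Fourier/Agmon--H\"ormander characterization in place of the paper's duality argument, is viable, but the bulk step fails as you state it.

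\textbf{The gap: no reflection principle for Robin data.} The even extension of $u$ satisfies $(-\Delta-E)u_{\mathrm{even}}=2\alpha(\gamma_0u)\otimes\delta(x_n)$. The odd extension carries a double layer built on $2\gamma_0u$. In neither case is the Fourier transform on $\Rn$ supported on $\{|\xi|=\kbulk\}$, so the Agmon--H\"ormander trace theorem has nothing to act on.

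An extension that does solve the Helmholtz equation across $\Gamma$ has to come from the ODE in $x_n$, and that is formula \eqref{eq:rad-extension}. It grows like $e^{\alpha|x_n|}$ as $x_n\to-\infty$ unless $\int_0^\infty u\,\psizero\,dx_n\equiv0$. Prescribing the lower-hemisphere density through $\refl{\cdot}$ instead is circular, since it presupposes $u\in\Ran\Fzero(E)^*$. So the bulk channel can only be reached after the surface channel has been removed, and your plan does not link the two steps.

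The difficulty you single out is also not the real one. The degeneration as $|\xipar|\to\kbulk$ is just the Jacobian $d\xipar=\kbulk^{n-1}\theta_n\,dS(\theta)$ of the disc parametrization, and it is invisible on the full sphere of $\Rn$. The two channels separate exactly through $\psizero$, with no use of $\ksurf\neq\kbulk$ or of a cut-off in $\xipar$.

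\textbf{A repair.} Put $v=(\partial_{x_n}+\alpha)u$.
\begin{itemize}
\item Since $\psizero'+\alpha\psizero\equiv0$, this kills the surface mode identically, and $v=0$ on $\Gamma$.
\item So the odd reflection $\widetilde v$ solves $(-\Delta-E)\widetilde v=0$ on all of $\Rn$. It lies in $\Bstd{\Rn}$ because $\nabla u\in\Bst$, by a Caccioppoli estimate up to the Robin boundary (absorb the boundary term with \eqref{eq:trace-ineq}).
\item Agmon--H\"ormander then gives $\widehat{\widetilde v}=g\,dS$ on $\{|\xi|=\kbulk\}$, with $g\in L^2$ odd in $\sigma_n$. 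If $E<0$, then $\widetilde v=0$.
\item Since $(\partial_t+\alpha)\gphi_k(t)=-\sqrt{2/\pi}\,\sqrt{k^2+\alpha^2}\,\sin(kt)$ and $\sqrt{k^2+\alpha^2}\ge\alpha$, dividing $g$ restricted to $\Sm$ by the corresponding factor yields $\phi_{\mathrm{bulk}}\in\Ltwo{\Sm}$ with $(\partial_{x_n}+\alpha)\bigl(u-\Fbulk(E)^*\phi_{\mathrm{bulk}}\bigr)=0$.
\item The difference is therefore $c(\xpar)\psizero(x_n)$, with $c\in\Bstd{\Rnm}$ and $(-\Delta'-\ksurf^2)c=0$ on $\Rnm$. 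This is exactly your surface step.
\end{itemize}
Equivalently, you can subtract the surface component first and then use the bounded extension \eqref{eq:rad-clean-extension}.

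\textbf{Comparison with the paper.} Thus repaired, your route genuinely differs from the paper's. The paper obtains $\Nsol{E}\subset\Ran\Fzero(E)^*$ by Banach's closed range theorem, from two ingredients. One is the lower bound of Corollary~\ref{cor:two-sided}, which comes from the shell-average identity, i.e.\ total reflection. The other is Lemma~\ref{lem:kernel-orth}, which needs the vanishing shell average of $\Ralpha{E+i0}f$ for $f\in\ker\Fzero(E)$, and hence the uniform expansion of Theorem~\ref{thm:asymp-uniform}. Your argument bypasses all resolvent asymptotics and gets injectivity from Fourier uniqueness. However, it uses the flat boundary and the constancy of $\alpha$ at every step, whereas the paper's Isozaki-type argument is built to carry over to Part~II.
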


The surface block is the identity because the surface mode is a bound state of
the transverse problem, and a bound state is not reflected. The lower bound
needed for the characterization does not follow from Stone's formula, which
controls $\Fzero(E)$ on $\Bsp$ but says nothing about the invertibility of
$\Fzero(E)^*$ from below. It comes instead from a shell-average identity for the
generalized eigenfunctions, whose source is total reflection,
$|\refl{\cdot}| = 1$, at the impedance boundary. The separation of the two
channels rests on an exact orthogonality: in $\Ltwo{\Rhalf}$ the bound mode
$\psizero$ is orthogonal to every generalized eigenfunction of the fiber
\eqref{eq:intro-fiber}. This gives $\Bst$-orthogonality of the channels
(Proposition~\ref{prop:char-channel-orth}), and it does not rest on the
difference $\kbulk \neq \ksurf$ of the wavenumbers.

\begin{mainthm}[Rellich-type theorem; Theorem~\ref{thm:rad-rellich}]
\label{mainthm:rellich}
Let $E > 0$ and let $u$ solve the Robin problem outside a ball. If
\[
  \lim_{R\to\infty}\frac1R\int_{\{|x|<R\}\cap\Rnp}|u(x)|^2\,dx = 0 ,
\]
then $u$ vanishes outside some larger ball.
\end{mainthm}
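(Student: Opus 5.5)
We prove the statement in two stages. First we reduce to a problem posed on the whole half-space, where the channel structure of Main Theorem~\ref{mainthm:char} is available. Then we separate the channels by a partial Fourier transform in $\xpar$ and apply a one-dimensional-lower Rellich argument to each.

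\emph{Cut-off.} Let $u$ solve $(\Halpha - E)u = 0$ in $\{|x|>R_0\}\cap\Rnp$ with the Robin condition on $\Gamma\cap\{|x|>R_0\}$. Choose $\chi\in C^\infty$ with $\chi = 1$ for $|x|>R_0+2$ and $\chi = 0$ for $|x|<R_0+1$, and set $v = \chi u$. Since $\chi$ may be taken to depend only on $|x|$, the derivative $\partial_{x_n}\chi$ vanishes on $\Gamma$. Hence $v$ satisfies the Robin condition on all of $\Gamma$ and solves $(\Halpha - E)v = f$, where $f = -2\nabla\chi\cdot\nabla u - (\Delta\chi)u$ is compactly supported and lies in $\Bsp$ by interior and boundary elliptic regularity. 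The hypothesis says exactly that $v\in\Bstz$.

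\emph{Vanishing of $\Fzero(E)f$.} We show that $v = \Ralpha{E+i0}f$. The difference $w = v - \Ralpha{E+i0}f$ is a $\Bst$ solution of the homogeneous equation. By Corollary~\ref{cor:solution-far-field} it is determined by a pair $(\yin,\yout)$. The expansion \eqref{eq:intro-uniform}, together with $v\in\Bstz$, says that $w$ has no incoming part. By the same expansion, the characterization of Main Theorem~\ref{mainthm:char} and the unitarity of $\Smat(E)$, we get $\yout = \Smat(E)\yin = 0$, and hence $w = 0$. So $v = \Ralpha{E+i0}f\in\Bstz$. The shell-average identity of Theorem~\ref{thm:shell-average-identity}, together with the $\Bst$-orthogonality of the channels (Proposition~\ref{prop:char-channel-orth}), gives
\[
\lim_{R\to\infty}\frac1R\int_{|x|<R}|v|^2\,dx = c\bigl(\|\Fbulk^{(+)}(E)f\|^2 + \|\Fsurf(E)f\|^2\bigr).
\]
Therefore $\Fbulk^{(+)}(E)f = 0$ on $\Sm$ and $\Fsurf(E)f = 0$ on $\Ssurf$.

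\emph{Support of $v$.} Take the partial Fourier transform $\hat f(\xipar,x_n)$, which is entire in $\xipar$ because $f$ has compact support. The vanishing of the surface datum says that the entire function $\xipar\mapsto\langle\hat f(\xipar,\cdot),\psizero\rangle$ vanishes on the sphere $|\xipar| = \ksurf$. The vanishing of the bulk datum says that the pairing of $\hat f(\xipar,\cdot)$ with the fiber generalized eigenfunction at transverse momentum $\xi_n = (E-|\xipar|^2)^{1/2}$ vanishes for $|\xipar|<\kbulk$. After even/odd reflection in $x_n$ adapted to the Robin condition (the map $\refext{\cdot}$), this becomes the vanishing of an entire function on $\{|\xi| = \kbulk\}\subset\mathbb C^n$. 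By the division lemma of Agmon--H\"ormander and Paley--Wiener, the quotients $\hat f/(|\xipar|^2-\alpha^2-E)$ in the surface component and $\hat f/(|\xi|^2-E)$ in the bulk component are entire of the same exponential type. Hence the distribution $\Ralpha{E+i0}f$ reconstructed from the splitting \eqref{eq:intro-splitting} is compactly supported, with support in the convex hull of $\operatorname{supp} f$. So $v$, and therefore $u$, vanishes outside a larger ball.

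\emph{Main obstacle.} The hard step is the division in the bulk channel. The reflection coefficient $\refl{\kbulk\theta_n}$ is not entire: it has a pole where $\kappa = \alpha$, which is exactly the surface-mode sphere. The plan is to handle this by showing that the vanishing of the surface datum cancels that pole. Concretely, we combine both channel conditions into a single fiberwise statement: the vector $\hat f(\xipar,\cdot)$ is orthogonal to all generalized eigenfunctions of $\hfib(\xipar)$ at energy $E$. From this we conclude that $(\hfib(\xipar)-E)^{-1}\hat f(\xipar,\cdot)$ extends analytically across the spectrum. As a fallback, if the Paley--Wiener route stalls, we would instead prove the support statement by unique continuation: establish super-polynomial decay of $v$ via a Froese--Herbst-type exponential bound using the Mourre estimate of Appendix~\ref{app:mourre}, then conclude $v = 0$ at infinity.
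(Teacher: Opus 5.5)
Your reduction is sound and not circular. With $\chi$ radial, $v=\chi u$ obeys the Robin condition on all of $\Gamma$, and $f=(-\Delta-E)v\in\Bsp$ has compact support. Your proof that $v=\Ralpha{E+i0}f$ is the argument of (b)$\Rightarrow$(c) in Theorem~\ref{thm:rad-equivalence}, which holds for $f\in\Bsp$ and uses Corollary~\ref{cor:solution-far-field} and Lemma~\ref{lem:rad-rigidity} but not the Rellich theorem. Two small corrections: the relation is $\yout=\Smat(E)\Jchan\yin$, and the identity that turns $v\in\Bstz$ into $\Fzero(E)f=0$ is Lemma~\ref{lem:shell-flux-B}, with constants $\pi/\kbulk$ and $\pi/\ksurf$, since Theorem~\ref{thm:shell-average-identity} concerns $\Fzero(E)^{*}\phi$.

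The gap is the step from $\Fzero(E)f=0$ to compact support of $\Ralpha{E+i0}f$. This step carries the whole content of the theorem, and it is not carried out.

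\emph{The splitting cannot be used term by term.} Paley--Wiener cannot be applied to the terms of \eqref{eq:splitting-preview} separately, because they are not individually compactly supported:
\begin{itemize}
\item $\widetilde u_{\mathrm{surf}}=w(\xpar)\psizero(x_n)$ lives on a half-cylinder even when $w$ has compact support.
\item $\RD{E+i0}f$ would need $\widehat{f_{\mathrm{odd}}}=\widehat{\zext f}(\xi)-\widehat{\zext f}(\refext{\xi})$ to vanish on $\{|\xi|=\kbulk\}$. What $\Fbulk(E)f=0$ actually says is $\widehat{\zext f}(\xi)+\refl{\xi_n}\widehat{\zext f}(\refext{\xi})=0$ there.
\item $\Krem{E+i0}f$ is not treated at all.
\end{itemize}

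\emph{The bulk division only controls $(\partial_{x_n}+\alpha)v$.} What your bulk condition does make divisible by $|\xi|^2-E$ is
$\Phi(\xi)=(\xi_n-i\alpha)\widehat{\zext f}(\xi)+(\xi_n+i\alpha)\widehat{\zext f}(\refext{\xi})$. This function is odd under $\xi\mapsto\refext{\xi}$ and equals $-i$ times the transform of the odd extension of $(\partial_{x_n}+\alpha)f$. Division therefore gives compact support of $(\partial_{x_n}+\alpha)v$ only. The kernel of $\partial_{x_n}+\alpha$ consists exactly of the surface modes $c(\xpar)e^{-\alpha x_n}$, so a further argument is still needed.

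\emph{The proposed repairs do not supply it.} Take real $\xipar$ with $|\xipar|>\kbulk$ and $|\xipar|\neq\ksurf$. The fiber has no generalized eigenfunction at energy $E$, so the far field imposes no condition there. Yet compact support forces $\mathcal{F}'v(\xipar,x_n)=0$ above $\operatorname{supp}f$ for these $\xipar$ as well. These are evanescent components of the type \eqref{eq:intro-evanescent}, invisible to every far-field and decay statement, so a fiberwise argument cannot reach them. The Froese--Herbst fallback does not apply either, since $v$ is neither an eigenfunction nor known to lie in $\Ltwo{\Rnp}$.

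\emph{The missing idea.} $\Fbulk(E)f=0$ holds on an open set, the whole propagating disc $\{|\xipar|<\kbulk\}$, and it can be continued analytically in $\xipar$. Suppose $f=0$ for $x_n>b$, and let $\mu=\mu(\xipar)$ be as in \eqref{eq:mu-branches}. The kernel \eqref{eq:fiber-kernel} gives, for $x_n>b$,
\[
  \mathcal{F}'v(\xipar,x_n)=\frac{i\,e^{i\mu x_n}}{\mu-i\alpha}\,G\bigl(\xipar,\mu^2\bigr),
  \qquad
  G(\xipar,\mu^2)=\frac{1}{\mu}\int_0^b\bigl(\mu\cos\mu y-\alpha\sin\mu y\bigr)\,\mathcal{F}'f(\xipar,y)\,dy .
\]
Here $G$ is entire in $(\xipar,\mu^2)$, because the integrand is odd in $\mu$, and on the disc it equals $(\Fbulk(E)f)(\theta)$ up to a nonzero factor. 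Hence $\xipar\mapsto G(\xipar,E-\xipar\cdot\xipar)$ is entire on $\mathbb{C}^{n-1}$ and vanishes on an open subset of $\Rnm$, so it vanishes identically. This kills the evanescent components. It also kills the value at $\mu=i\alpha$, which is proportional to $\Fsurf(E)f$; so the surface datum vanishes for free and the pole is cancelled automatically. Thus $v=0$ on $\{x_n>b\}$, and by unique continuation $v$ vanishes outside a ball.

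\emph{Comparison with the paper.} The paper never passes through the resolvent. It projects $u$ onto $\psizero$ and applies the classical Rellich theorem in dimension $n-1$ to $\usurf$. It then uses $\partial_{x_n}u+\alpha u$, of which your $\Phi$ is the Fourier-side shadow, together with \eqref{eq:rad-clean-extension}, to apply Rellich in dimension $n$. Your route, once repaired, replaces this by the full characterization machinery of \S\ref{subsec:characterization} plus a Fourier argument that relies on the compact support of $f$.
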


The hypothesis is one scalar statement about $u$, imposed only near infinity.
There is no decomposition into channels, no pointwise decay, and no condition on
the boundary trace. And yet both channels are forced to vanish at once. Why this
is not a formality, and how it stands to the existing literature, is the subject
of \S\ref{subsec:intro-novelty}.

Combined with unique continuation \cite{Aronszajn1957}, the theorem gives
uniqueness for the homogeneous problem
(Theorem~\ref{thm:rad-uniqueness-hom}), and with it existence and uniqueness for
$(\Halpha - E)u = f$ under the radiation condition
(Theorem~\ref{thm:rad-main}). It also yields an equivalence
(Theorem~\ref{thm:rad-equivalence}). For $u \in \Bst$ solving that problem with
$f \in C_c^\infty(\Rnp)$, that is, with $f$ smooth and compactly supported in
$\Rnp$, three conditions coincide. They are the two-region
pointwise condition of Definition~\ref{def:rad-condition}, the purely outgoing
form $u \simeq \Wb{+}{A} + \Ws{+}{C}$, which is to say the vanishing of the
incoming data, and the identity $u = \Ralpha{E+i0}f$.

\begin{mainthm}[Rigidity of the surface channel, and reconstruction;
Proposition~\ref{prop:inv-rigidity} and Corollary~\ref{cor:inv-alpha}]
\label{mainthm:inverse}
For every $\alpha > 0$ the surface block of \eqref{eq:intro-smatrix} is the
identity, and the surface data of two distinct impedances are unitarily
intertwined. On the band $-\alpha^2 < E < 0$ the scattering matrix is therefore
the identity in its entirety. The bulk block, by contrast, determines $\alpha$:
for $E > 0$ and $\theta \in \Sm$ with $\theta_n > 0$,
\[
  \alpha = \sqrt{E}\,\theta_n\,
    \tan\Bigl( \tfrac12 \arg \SmatbulkA{\alpha}{E}(\theta) \Bigr).
\]
\end{mainthm}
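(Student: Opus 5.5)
The statement to prove is Main Theorem~D: the surface block of $\Smat(E)$ is the identity, and the bulk block determines $\alpha$ by an explicit formula. The plan is to make the reflection coefficient explicit and read both halves off it.

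\textbf{The fiber generalized eigenfunctions.} For the fiber \eqref{eq:intro-fiber} at tangential frequency $\xipar$ and normal wavenumber $k>0$, I take
\[
  \varphi(x_n) = e^{-ikx_n} + \refl{k}\,e^{ikx_n}.
\]
The boundary condition $\psi'(0) = -\alpha\psi(0)$ gives $-ik + ik\,\refl{k} = -\alpha\bigl(1+\refl{k}\bigr)$. Solving,
\[
  \refl{k} = \frac{k+i\alpha}{k-i\alpha}.
\]
This depends only on $k$ and $\alpha$, and $|\refl{k}|=1$, which is the total reflection behind the shell-average identity.

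\textbf{The surface block.} I use the form $\Smat(E) = \refl{\kbulk\theta_n}\oplus\mathrm{Id}_{\mathrm{surf}}$ from Main Theorem~B and justify the identity block directly. I compute the incoming and outgoing parts of $\Fzero(E)^*$ on the surface channel by stationary phase on the sphere $|\xipar|=\ksurf$ in $\Rnm$. This produces the $(n-1)$-dimensional free pair $\rho^{-(n-2)/2}e^{\pm i\ksurf\rho}$, multiplied by $\psizero(x_n)$. The bound mode enters only as this fixed transverse factor and is never reflected. Hence the surface scattering operator is that of the free Laplacian on $\Rnm$, normalized by the same convention as the bulk block, namely the identity.

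For distinct impedances $\alpha,\beta$, I intertwine the surface data by the unitary map $g\otimes\psizeroA{\alpha}\mapsto g\otimes\psizeroA{\beta}$, composed with the rescaling of the sphere of radius $\ksurfA{\alpha}$ to that of radius $\ksurfA{\beta}$. Both are unitary and commute with the identity S-matrix. On the band $-\alpha^2<E<0$ one has $\Echan{E}=\Ltwo{\Ssurf}$, so $\Smat(E)=\mathrm{Id}$ there.

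\textbf{Reconstruction from the bulk block.} On the bulk block, $\SmatbulkA{\alpha}{E}(\theta)=\refl{\kbulk\theta_n}$ acts as a multiplication operator. Set $k=\kbulk\theta_n>0$. Then
\[
  \refl{k} = \frac{(k+i\alpha)^2}{k^2+\alpha^2} = e^{2i\phi},
  \qquad \phi=\arctan(\alpha/k)\in(0,\pi/2).
\]
Therefore $\tfrac12\arg\refl{k}=\phi$, with $\arg$ taken in $(0,\pi)$, and $\alpha = k\tan\phi$, which is the stated formula.

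\textbf{Main obstacle.} The delicate step is the convention check, not the algebra. I must confirm that the paper's normalization of $\Fbulk^{(\pm)}$ makes $\Smat$ act on the bulk block as multiplication by $\refl{\kbulk\theta_n}$ itself. The alternatives, its conjugate or a version composed with $\theta\mapsto$ its reflection, would flip the sign of the argument. I would trace this by comparing the incoming and outgoing stationary-phase terms of $\Fzero(E)^*y$ from Corollary~\ref{cor:solution-far-field}, and then fix the branch of $\arg$ accordingly.
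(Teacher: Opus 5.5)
Your computation of $\refl{k}$, the band statement, and the reconstruction $\alpha = k\tan(\frac12\arg\refl{k})$ with $\arg\in(0,\pi)$ follow the paper: Lemma~\ref{lem:inv-reflection}, Corollary~\ref{cor:inv-alpha}, and the block form \eqref{eq:smatrix-def}. Your stationary-phase justification of the identity surface block is Step~2 of Theorem~\ref{thm:eigen-far-field}. The convention you flag as the main obstacle is already settled by \eqref{eq:char-smatrix-relation}. Because $\Jbulk$ commutes with multiplication by functions of $\theta_n$ and $\cph{k}/\overline{\cph{k}} = \refl{k}$, the data of Definition~\ref{def:far-field-data} satisfy $\yout_{\mathrm{bulk}} = \refl{\kbulk\theta_n}\,\Jbulk\yin_{\mathrm{bulk}}$. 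So the multiplier is $\refl{\kbulk\theta_n}$ itself, not its conjugate.

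The genuine gap is the intertwining claim. Saying that your maps are unitary and ``commute with the identity S-matrix'' proves nothing, since every unitary commutes with $\mathrm{Id}$. The bulk parts of $\HalphaA{\alpha}$ and $\HalphaA{\beta}$ are also unitarily equivalent by \eqref{eq:bulk-part}, yet the bulk block separates $\alpha$ from $\beta$. The content of the statement is compatibility with the far-field maps, and your construction fails it. You work at one energy $E$ and rescale the sphere of radius $\ksurfA{\alpha}$ onto that of radius $\ksurfA{\beta}$. Take this to be the unitary dilation $D_\lambda$ of $\Ltwo{\Rnm}$ with $\lambda = \ksurfA{\beta}/\ksurfA{\alpha}$. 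Using the factorization $\FsurfA{\alpha}(E) = \Ffree{n-1}(E+\alpha^2)\circ\VsurfA{\alpha}^{*}$, one gets
\begin{equation*}
  \FsurfA{\beta}(E)\,\VsurfA{\beta}D_\lambda\VsurfA{\alpha}^{*}
  = \lambda^{-1}\,\FsurfA{\alpha}(E),
  \qquad
  \lambda^{-1} = \Bigl(\frac{E+\alpha^2}{E+\beta^2}\Bigr)^{1/2} \ne 1 .
\end{equation*}
So the induced map of channel spaces is not isometric, and removing the factor destroys unitarity on the Hilbert-space side. Moreover $\lambda$ depends on $E$, and $\VsurfA{\beta}D_\lambda\VsurfA{\alpha}^{*}$ conjugates $(\HalphaA{\alpha}+\alpha^2)|_{\HsurfA{\alpha}}$ into $\lambda^{-2}(\HalphaA{\beta}+\beta^2)|_{\HsurfA{\beta}}$, not into $(\HalphaA{\beta}+\beta^2)|_{\HsurfA{\beta}}$.

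The missing idea (Lemma~\ref{lem:inv-surface-free}(iii)) is to compare the two impedances at a common surface wavenumber $q$ and to drop the rescaling. That means working at the different energies $E = q^2-\alpha^2$ and $\tilde E = q^2-\beta^2$, both non-threshold for $q \ne \alpha,\beta$. The single energy-independent unitary $\Wint = \VsurfA{\beta}\VsurfA{\alpha}^{*}$ then intertwines $(\HalphaA{\alpha}+\alpha^2)|_{\HsurfA{\alpha}}$ with $(\HalphaA{\beta}+\beta^2)|_{\HsurfA{\beta}}$. Using $\VsurfA{\beta}^{*}\VsurfA{\beta} = \mathrm{Id}$, it also gives $\FsurfA{\beta}^{(\pm)}(\tilde E)\,\Wint = \Ffree{n-1}(q^2)\VsurfA{\alpha}^{*} = \FsurfA{\alpha}^{(\pm)}(E)$. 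The factorization itself is a one-line Fubini computation from \eqref{eq:Psis-def} and \eqref{eq:Fsurf-def}, and you should state it. It is also what shows the identity surface block to be a statement about $\alpha$ rather than an artefact of the identification.
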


The rigidity is intrinsic: the surface transform
factors through an energy-independent isometry and the free generalized Fourier
transform of $\Rnm$ (Lemma~\ref{lem:inv-surface-free}). So the two channels
behave in opposite ways, and the whole of the information about $\alpha$ sits in
the bulk block. In the reconstruction formula the argument lies in $(0,\pi)$, so
that no branch has to be chosen, and the conditioning is best in the
directions with $\sqrt{E}\,\theta_n = \alpha$.

\subsection{Method and Background}
\label{subsec:intro-method}

\subsubsection*{The splitting identity}

Almost everything in Part~I rests on the identity \eqref{eq:intro-splitting}. It
is exact and algebraic rather than perturbative, obtained by computing the fiber
resolvent of \eqref{eq:intro-fiber} in closed form and recognizing the Dirichlet
resolvent and the surface projection inside it. Its two features recorded above
are used repeatedly: the comparison with the Dirichlet rather than the Neumann
Laplacian, and the bound on $\Krem{z}$ uniform in $x_n$, which allows the
remainder to be controlled on regions about which nothing pointwise is known.
The limiting absorption principle, the shell-negligibility of the grazing region, and
the collapse of the analysis on the surface band $-\alpha^2 < E < 0$ all follow
from these two facts.

\subsubsection*{The far field}

The far-field analysis is classical in its ingredients. The fiber kernel is put
in reflected form, and the resulting integrals are evaluated by the asymptotics
of Hankel functions and by stationary phase. The step that is not routine is the
passage from the expansion to a characterization of all solutions of $\Bst$
growth, and it rests on the lower bound on $\Fzero(E)^*$ described after Main
Theorem~\ref{mainthm:char}. Its consequence is that $\Fzero(E)^*$ has closed
range.

\subsubsection*{Uniqueness}

The uniqueness proof begins from an exact energy identity. Since $\alpha$ is
real, the boundary integrand
$u\,\overline{\partial_\nu u} - \bar u\,\partial_\nu u$ vanishes identically on
$\Gamma$, and not merely in its imaginary part. The two channels are then
separated at the level of $\Bst$ by the orthogonality described after Main
Theorem~\ref{mainthm:char}, and the order in which they are treated is forced.
The surface component has to be disposed of first, by the classical Rellich
theorem \cite{Rellich1943} in dimension $n-1$ followed by unique continuation in
$\Rnm$, and only then can the bulk component be treated in dimension $n$. This
is where the standing assumption $n \ge 3$ is used
(Remark~\ref{rem:standing-dimension}). The Robin condition makes itself felt at
exactly this point: the passage from the half-space to the whole space cannot be
made by reflection, and is made instead by an explicit integral extension
formula, which is usable only because the surface component has already been
shown to vanish (\S\ref{subsec:radiation}).

\subsubsection*{Background}

For the half-space Robin Laplacian itself the literature is piecemeal. Its
spectral theory is highly developed
\cite{LevitinParnovski2008,BehrndtLanger2007,LotoreichikRohleder2012,%
BehrndtSchlosser2018,CossettiKrejcirik2020}, but is concerned with eigenvalue
asymptotics, with the absence of eigenvalues, and with the dependence of the
spectrum on the geometry. We use it only for the qualitative facts recorded in
Theorem~\ref{thm:sa-spec}. There is also a scattering theory, due to Frank and
to Frank and Shterenberg \cite{Frank2003,FrankShterenberg2004,Frank2006}, for an
impedance periodic along $\Gamma$, of which ours is a degenerate case, and its
picture agrees with ours (Remark~\ref{rem:literature}). Its route, a Floquet
decomposition followed by wave operators for the fibers, produces neither of the
two objects this paper is built on: the resolvent bound in the endpoint spaces
$\Bsp$, $\Bst$ and the pointwise asymptotics of the generalized
eigenfunctions. The
outgoing problem under an impedance condition was analysed by Dur\'an, Muga and
N\'ed\'elec \cite{DuranMugaNedelec2009} in weighted $L^2$ spaces, and their
two-region radiation condition is the starting point of
\S\ref{subsec:radiation}. For a Dirichlet condition on a half-space perturbed
by a graph tending to zero at infinity, Eskin and Ralston
\cite{EskinRalston1999} worked with outgoing solutions in the sense of Agmon
\cite{Agmon1975}, and related the Dirichlet-to-Neumann map at a fixed energy to
the scattering amplitude of the operator extended to $\Rn$. Transmission and Besov-space
methods for wave propagation in perturbed stratified media are due to Zhang
\cite{Zhang1994,Zhang2000}.

The framework comes from elsewhere. The structural template is Isozaki's
treatment of multichannel stationary scattering theory
\cite{Isozaki2001,Isozaki2023}. For the general stationary theory see
\cite{Yafaev}. It is a template in which the limiting absorption principle, the
eigenfunction expansion, the radiation condition and the Rellich-type theorem
form a single chain, each link furnishing the hypotheses of the next. The
closest analogue of the present problem is the elastic half-space, treated in
the Agmon--H\"ormander spaces by Isozaki, Kadowaki and Watanabe
\cite{IsozakiKadowakiWatanabe2023,IsozakiKadowakiWatanabe2025,%
IsozakiKadowakiWatanabe2025JOS}, where the body
waves and the Rayleigh surface waves occur together in the expansion and in the
uniqueness theorem. \S\ref{subsec:intro-novelty} explains where the present
treatment differs and why the difference matters. Rellich-type theorems have
been obtained in that school in settings far removed from this one: on
asymptotically hyperbolic manifolds by Isozaki and Kurylev
\cite{IsozakiKurylev2014}, and on the square lattice by Isozaki and Morioka
\cite{IsozakiMorioka2014}, where the theorem yields unique continuation and the
absence of embedded eigenvalues. What those results share with the present one
is the place the theorem occupies in the chain, not the geometry.

Two smaller debts remain to be recorded. Unique continuation, used
inside the Rellich-type theorem and again to pass from it to uniqueness for the
homogeneous problem, is taken from Aronszajn \cite{Aronszajn1957}. See
\cite{KochTataru2001} for the sharp form under weaker regularity. And the inverse problem
of \S\ref{sec:inverse} is elementary beside the inverse scattering literature
for potentials \cite{SylvesterUhlmann1987,Nachman1996,Paivarinta2014}, for which
\cite{ColtonKress2013,Isakov2017} are the standard references, since only a
single parameter is to be recovered.

\subsection{Novelty and Relation to Earlier Work}
\label{subsec:intro-novelty}

\subsubsection*{The radiation condition}

No novelty is claimed for the Agmon--H\"ormander framework, which is standard
\cite{Agmon1975,AgmonHormander1976} and, as recalled above, has already been
used for the elastic half-space. What is new is the shape of the condition
inside it. The half-space must be divided, since the
two channels carry different wavenumbers and no single expression
$\partial_r u - iku$ can serve in all directions. What is not divided is the
solution. The same quantity appears in both regions of
Definition~\ref{def:rad-condition}, with only the wavenumber changing, and it is
applied to $u$ itself. In the elastic half-space the corresponding conditions
are imposed instead on the components of a polarization decomposition
\cite[Thm.~5.1]{IsozakiKadowakiWatanabe2025JOS}. That difference decides
everything that follows: a condition imposed on the components of a
decomposition presupposes the asymptotic analysis which produces that
decomposition, and so cannot serve as the hypothesis of a Rellich-type theorem,
where nothing beyond decay at infinity may be assumed.

The division of the half-space is governed by an exponent $\alpha_1 \in (0,1)$,
which cuts $\Rnp$ along the surface $x_n = r^{\alpha_1}$ into an interior region,
where the bulk wave dominates, and a boundary layer, where the surface wave
does. Dur\'an, Muga and N\'ed\'elec \cite{DuranMugaNedelec2009} require
$\alpha_1 \in (1/4,1/2)$. Here both endpoints disappear, for different reasons
explained in Remark~\ref{rem:rad-alpha1}, and the uniqueness theorem, the
linearity of the radiation condition, and the verification that the resolvent
satisfies it all hold for every $\alpha_1 \in (0,1)$, uniformly in $n$. That series of papers
\cite{DuranMugaNedelec2005a,DuranMugaNedelec2005b,DuranMugaNedelec2009} is
moreover confined to two and three dimensions. The treatment given here is intrinsic and holds for every
$n \ge 3$: the channel spaces \eqref{eq:intro-channel} come from the spectral
decomposition of the fibers \eqref{eq:intro-fiber}, and the two wavenumbers
\eqref{eq:intro-wavenumbers} arise as the branches at which a given energy is
carried, so that $n = 3$ is an instance of the picture and not its foundation.
That matters for Part~II, where the off-diagonal blocks of
\eqref{eq:intro-smatrix} are no longer forced to vanish and the channel spaces have to be compared with one another.

\subsubsection*{The Rellich-type theorem}

This is the main contribution. The natural comparison is with the conical
domains of Bonnet-Ben Dhia, Fliss, Hazard and Tonnoir
\cite{BonnetBenDhia2016}. For an axisymmetric cone of vertex angle strictly
greater than $\pi$ they prove, in any dimension and with no boundary condition
whatever, that a square-integrable solution of the Helmholtz equation vanishes.
Their theorem is sharp, and the half-space is precisely where it breaks: at
vertex angle exactly $\pi$ the statement is false, as they observe,
square-integrable solutions being available there.

Equation \eqref{eq:intro-evanescent} exhibits them. For
$a \in C_c^\infty(\{|\xipar| > \kbulk\})$ the evanescent superposition
\begin{equation}
  \label{eq:intro-evanescent}
  u(x) = \int_{|\xipar| > \kbulk} a(\xipar)\,e^{i\xipar\cdot\xpar}\,
         e^{-\sqrt{|\xipar|^2 - E}\,x_n}\,d\xipar
\end{equation}
solves $(-\Delta - E)u = 0$ in $\Rnp$, is rapidly decreasing, and satisfies
every hypothesis imposed at infinity anywhere in this paper. The geometry
treated here is therefore the borderline one, and a boundary condition is not a
technical convenience but the only thing that can restore a uniqueness theorem
at all. What \S\ref{subsec:radiation} proves is that the Robin condition does
restore it, and under a hypothesis weaker than square integrability: $u \in
\Ltwo{\Rnp}$ implies $u \in \Bstz$, and not conversely.

The two channels make this weakening necessary. In the junction of
stratified media of Al Humaikani, Bonnet-Ben Dhia, Fliss and Hazard
\cite{AlHumaikani2025} a guided wave is evanescent across the guide but
oscillates along it, and so fails to be square-integrable. That geometry is
two-dimensional, it does carry guided modes, and again there is no boundary
condition.
The global $L^2$ hypothesis therefore disposes of the guided channel outright,
before any interaction between channels has to be examined. Under a
shell-average hypothesis no such thing happens here. The surface channel is
exactly critical: it lies in $\Bst$ with finite nonzero norm, and it has to be
removed by an argument rather than by the hypothesis. That argument is the
$\Bst$-orthogonality of the two channels, which forces them to vanish
together.

A second tradition is $N$-body scattering, where Isozaki \cite{Isozaki1994}
generalized the Sommerfeld radiation condition and Ito and Skibsted
\cite{ItoSkibsted2016} proved a Rellich theorem under exactly the hypothesis
used here, membership in $\Bstz$, admitting Dirichlet conditions on bounded
obstacles. See also \cite{AdachiItakuraItoSkibsted2021}. There the channels sit
in conical regions of a single configuration space and are measured against one
and the same radial weight $|x|$. Here they have different effective dimensions,
$n$ and $n-1$, and there is no common radial variable in which to compare them.
Related uniqueness theorems for open waveguides and perturbed stratified media,
under an $L^2$ or eigenfunction hypothesis, are due to Weder \cite{Weder1991},
Bonnet-Ben Dhia, Dakhia, Hazard and Chorfi \cite{BonnetBenDhiaDakhia2009},
Ciraolo and Magnanini \cite{CiraoloMagnanini2009} and Hazard
\cite{Hazard2015}, and a single-channel half-space result to Vesalainen
\cite{Vesalainen2014}.

With these delimitations recorded, the claim is the following. \emph{To the best
of our knowledge, no Rellich-type theorem has previously been established for a
system in which two channels of different effective dimension propagate at a
common energy along an unbounded boundary, under a hypothesis imposed solely
through the shell averages of the solution near infinity.} For the elastic
half-space, whose channel geometry is the closest to this one, the corresponding
extension is recorded as an open problem in
\cite[\S5]{IsozakiKadowakiWatanabe2025JOS}, and is identified there as the step
which the inverse problem would require.

\subsubsection*{Rigidity in the inverse problem}

The recovery of $\alpha$ itself is elementary. What is substantive is the
asymmetry between the two blocks in Main Theorem~\ref{mainthm:inverse}, and it
is this rigidity of the surface channel that makes the perturbed problem of
Part~II necessary rather than merely more general.

\subsection{Organization of the Paper}
\label{subsec:intro-plan}

Section~\ref{sec:unperturbed} develops the theory of the flat half-space, in the
order in which each result is needed by the next.
\S\ref{subsec:selfadjoint} establishes self-adjointness, the spectrum
$[-\alpha^2,\infty)$ and absolute continuity, by explicit diagonalization.
\S\ref{subsec:lap} proves the limiting absorption principle
(Main Theorem~\ref{mainthm:lap}) from the exact splitting
\eqref{eq:intro-splitting}, with a remark on what Mourre theory does and does
not give here. \S\ref{subsec:gft} constructs the generalized Fourier transform,
first abstractly and then by identifying it through $\delta$-orthogonality, and
proves Stone's formula. \S\ref{subsec:asymptotics} derives the asymptotic
expansion \eqref{eq:intro-uniform}. \S\ref{subsec:characterization}
characterizes the solutions of $\Bst$ growth and defines the scattering matrix
(Main Theorem~\ref{mainthm:char}), by way of the shell-average identity and the
two-sided estimate it yields. \S\ref{subsec:radiation} proves the Rellich-type
theorem (Main Theorem~\ref{mainthm:rellich}), the equivalence of the radiation
conditions, and existence and uniqueness for the radiation problem.
Section~\ref{sec:inverse} treats the inverse problem for $\alpha$ and the
rigidity of the surface channel (Main Theorem~\ref{mainthm:inverse}). The proof
of Theorem~\ref{thm:sa-spec} is given in Appendix~\ref{app:sa-spec}, and
Appendix~\ref{app:mourre} develops the Mourre theory of $\Halpha$: it is used
nowhere in the body of the paper, but it exhibits both thresholds as the zeros
of a single explicit constant.

Part~II will treat local perturbations of this configuration: a deformation of
the boundary, an obstacle in the interior, or both. There the scattering
matrix is no longer that of the flat problem. By
Proposition~\ref{prop:inv-rigidity} the flat surface-to-surface block is the
identity for every $\alpha > 0$, so any departure from the identity is
attributable to the perturbation and not to the impedance. The inverse problem
in that setting is the subject of Part~II.

Two further directions may be mentioned. One is the elastic half-space,
mentioned in \S\ref{subsec:intro-novelty}: the mechanism used below is not
specific to the scalar setting, though the elastic problem has a feature with no
scalar analogue, namely the singularities carried by the critical-angle cone
between the two body-wave speeds. The other is a variable impedance
$\alpha = \alpha(\xpar)$, or an impedance changing sign, which lies outside the
method used here, since that method depends on the exact fibration
\eqref{eq:intro-fiber}.

%% file: Part1_02_opening.tex
\subsection{Notation, Function Spaces, and the Robin Laplacian}
\label{sec:setting}

In this section we fix the notation and the function spaces used throughout the
paper, and define the Robin, or impedance, Laplacian that is its object of
study.

\subsubsection{Notation}
\label{sec:notation}
\label{sec:fourier}   %

Throughout the paper we work in the open half-space
\begin{equation*}
  \Rnp = \{x = (\xpar, x_n) \in \Rn : x_n > 0\},
  \qquad n \ge 3,
  \qquad
  \Gamma = \partial\Rnp = \{(\xpar, 0) : \xpar \in \Rnm\} \cong \Rnm .
\end{equation*}
A point of $\Rnp$ is written $x = (\xpar, x_n)$, where
$\xpar = (x_1, \dots, x_{n-1}) \in \Rnm$ is the tangential variable and
$x_n > 0$ is the normal variable. The normal coordinate is always denoted $x_n$
and never $z$. The dual variable is split in the same way,
$\xi = (\xipar, \xi_n)$. We split the gradient and the Laplacian accordingly,
\begin{equation*}
  \nabla = (\nabla', \partial_{x_n}),
  \qquad
  \nabla' = (\partial_{x_1}, \dots, \partial_{x_{n-1}}),
  \qquad
  \Delta = \Delta' + \partial_{x_n}^2,
  \qquad
  \Delta' = \sum_{j=1}^{n-1} \partial_{x_j}^2 .
\end{equation*}
The outward unit normal on $\Gamma$ is $\nu = -e_n$, so that the normal
derivative is $\partial_\nu = -\partial_{x_n}$.

\begin{remark}[Standing assumption on the dimension]
\label{rem:standing-dimension}
Throughout this paper we assume $n \ge 3$. Where a result holds for every
$n \ge 2$ we state it in that generality. This is the case for
Theorem~\ref{thm:sa-spec}, Proposition~\ref{prop:fibration} and
Theorem~\ref{thm:lap}.

The restriction concerns the surface channel, whose direction set $\Ssurf$
reduces to $S^0$ when $n = 2$. The restriction theorem of \S\ref{subsec:gft}
and the stationary-phase analysis of \S\ref{subsec:asymptotics} would then have
to be replaced by their one-dimensional counterparts, but the essential
obstruction lies in the Rellich-type theorem of \S\ref{subsec:radiation}. There
the surface component of a solution satisfies the free Helmholtz equation on
$\Rnm$ outside a compact set, and the argument applies the classical Rellich
theorem in dimension $n-1$ and then propagates the vanishing by unique
continuation through $\{\xpar \in \Rnm : |\xpar| > R_0\}$. Both steps require
$n - 1 \ge 2$, whereas for $n = 2$ that region is a pair of half-lines.
\end{remark}

We collect the remaining symbol conventions. The hat is reserved exclusively
for the Fourier transform of functions, so that $\widehat{u}$ always denotes a
transformed function and never a unit vector. Directions on the unit spheres
are denoted by separate letters: $\theta \in \Sm$ for the bulk hemisphere and
$\omega \in \Ssurf$ for the surface sphere. The letter $\alpha > 0$ is the
Robin, or impedance, parameter fixed in \S\ref{subsec:intro-setting}. The
original impedance $\lambda = -\alpha$ is not used again. Since $\alpha$ is thus
unavailable, multi-indices are written $\beta = (\beta_1, \dots, \beta_n)$, with
$|\beta| = \beta_1 + \cdots + \beta_n$ and
$\partial^\beta = \partial_{x_1}^{\beta_1} \cdots \partial_{x_n}^{\beta_n}$. The
spectral, or energy, parameter is denoted $E$ when real and $z$ when complex,
and the letter $z$ is kept free for that purpose. Finally, we write $|\xipar|$
for the Euclidean norm of $\xipar \in \Rnm$, and $C$ for a generic positive
constant whose value may change from line to line. Such constants are
independent of the energy $E$, of the radius $R$ and of the functions involved,
but not in general of $n$ or of $\alpha$. The dependence on $\alpha$ is
displayed whenever the behaviour as $\alpha \to 0$ or $\alpha \to \infty$ is at
issue.

Fourier transforms are normalized symmetrically. On $\Rn$ the transform and its
inverse both carry the factor $(2\pi)^{-n/2}$, and on $\Rnm$ both carry
$(2\pi)^{-(n-1)/2}$. With this choice the transform is unitary on $L^2$ and the
Plancherel identity holds with no extra constant. Because the half-space is
translation invariant in the tangential variable, the transform we actually use
is the partial one, in $\xpar$ alone:
\begin{equation}
  \label{eq:fourier-partial}
  (\mathcal{F}' u)(\xipar, x_n) = \widehat{u}(\xipar, x_n)
    = \left(\frac{1}{2\pi}\right)^{(n-1)/2}
       \int_{\Rnm} e^{-i \xpar \cdot \xipar}\, u(\xpar, x_n)\, d\xpar ,
\end{equation}
its inverse being given by the same formula with $e^{+i\xpar \cdot \xipar}$ in
place of $e^{-i\xpar \cdot \xipar}$. The constants in
\eqref{eq:fourier-partial} enter the normalization of the generalized
eigenfunctions and of the scattering matrix constructed in \S\ref{subsec:gft},
and are tracked throughout.

\subsubsection{Function spaces}
\label{sec:function-spaces}

All function spaces are complex, and derivatives are understood in the weak
sense unless stated otherwise. We use the Hilbert space $\Ltwo{\Rnp}$ with
inner product and norm
\begin{equation*}
  (u, v)_{\Ltwo{\Rnp}} = \int_{\Rnp} u(x)\,\overline{v(x)}\, dx,
  \qquad
  \norm{u}_{\Ltwo{\Rnp}} = (u, u)_{\Ltwo{\Rnp}}^{1/2},
\end{equation*}
where $dx = d\xpar\, dx_n$ denotes Lebesgue measure on $\Rnp$. Regarding
$\Ltwo{\Rnp}$ as $L^2(\Rnm_{\xpar}; \Ltwo{\Rhalf})$, the partial Fourier
transform $\mathcal{F}'$ of \eqref{eq:fourier-partial} is unitary on
$\Ltwo{\Rnp}$.

For an integer
$m \in \mathbb{N}$ the Sobolev space $H^m(\Rnp)$ consists of those
$u \in \Ltwo{\Rnp}$ for which $\partial^\beta u \in \Ltwo{\Rnp}$ for every
multi-index $\beta$ with $|\beta| \le m$, and it carries the norm
\begin{equation*}
  \norm{u}_{H^m(\Rnp)}
    = \Bigl( \sum_{|\beta| \le m}
       \norm{\partial^\beta u}_{\Ltwo{\Rnp}}^2 \Bigr)^{1/2}.
\end{equation*}

For an open set $\Omega$, in a Euclidean space or in a smooth manifold such as
the sphere $S^{n-1}$, we write $C_c^\infty(\Omega)$ for the space of smooth
functions on $\Omega$ whose support is a compact subset of $\Omega$. The
notation $C_0^\infty(\Omega)$ is also common. In particular an element of
$C_c^\infty(\Rnp)$ vanishes in a neighbourhood of $\Gamma$, and
$C_c^\infty(\Rnp)$ is dense in $\Ltwo{\Rnp}$. We write
$C_c^\infty(\overline{\Rnp})$ for the space of restrictions to
$\overline{\Rnp}$ of functions in $C_c^\infty(\Rn)$. Its elements are smooth
up to the boundary and need not vanish on $\Gamma$, as is needed for the
Robin condition, and $C_c^\infty(\overline{\Rnp})$ is dense in
$H^m(\Rnp)$ for every $m$.

Boundary values are taken through the trace operators. For $m \ge 1$ the
Dirichlet trace $\gamma_0 u = u|_{\Gamma}$ extends to a bounded surjection
$\gamma_0 : H^m(\Rnp) \to H^{m - 1/2}(\Gamma)$, and for $m \ge 2$ the Neumann
trace
$\gamma_1 u = \partial_\nu u|_{\Gamma} = -\partial_{x_n} u|_{\Gamma}$ is bounded
from $H^m(\Rnp)$ to $H^{m - 3/2}(\Gamma)$. Here
$H^s(\Gamma) = H^s(\Rnm)$, $s \in \mathbb{R}$, denotes the usual Sobolev scale
on the boundary, defined through the Fourier transform on $\Rnm$ normalized as
in \S\ref{sec:notation}.

\subsubsection{The Robin Laplacian}
\label{sec:operator}

We can now define precisely the operator written informally in
\eqref{eq:intro-operator}. The boundary condition of \S\ref{subsec:intro-setting},
namely $\partial_{x_n}u + \alpha u = 0$ on $\Gamma$, reads in the language of the
trace operators
\begin{equation}
  \label{eq:robin-alpha}
  \gamma_1 u - \alpha\, \gamma_0 u = 0
  \qquad\text{on } \Gamma,
  \qquad \alpha > 0,
\end{equation}
the change of sign being that of the outward normal $\nu = -e_n$. The
\emph{half-space Robin Laplacian} is then the operator
\begin{equation}
  \label{eq:Halpha-def}
  \Halpha u = -\Delta u,
  \qquad
  \Dom(\Halpha)
    = \bigl\{\, u \in H^2(\Rnp) \;\bigm|\;
       \gamma_1 u - \alpha\, \gamma_0 u = 0 \text{ on } \Gamma \,\bigr\} .
\end{equation}
The impedance $\alpha > 0$ is fixed once and for all in
\S\ref{sec:unperturbed}, and the dependence of the various objects on it is
suppressed from the notation. It is displayed only in \S\ref{sec:inverse}, where
two impedances are compared.

%% file: Part1_021_selfadjoint.tex
\label{subsec:selfadjoint}

Throughout this subsection $\Halpha$ denotes the half-space Robin Laplacian
\eqref{eq:Halpha-def} of \S\ref{sec:operator}. Its natural sesquilinear form is
\begin{equation}
  \label{eq:form}
  \hform[u,v]
  = (\nabla u, \nabla v)_{\Ltwo{\Rnp}}
    - \alpha\,(\gamma_0 u, \gamma_0 v)_{\Ltwo{\Gamma}},
  \qquad u,v \in H^1(\Rnp),
\end{equation}
and we write $\hform[u] := \hform[u,u]$. That \eqref{eq:form} is closed and
semibounded on $H^1(\Rnp)$, and that the associated self-adjoint operator is
exactly \eqref{eq:Halpha-def}, is part of the following statement.

\begin{theorem}[Self-adjointness and spectrum of $\Halpha$]
\label{thm:sa-spec}
Let $n \ge 2$ and $\alpha > 0$.
\begin{enumerate}
  \item[\textup{(a)}] $\Halpha$ is self-adjoint and bounded below. It is the
  self-adjoint operator associated with the closed, symmetric, semibounded form
  \eqref{eq:form} with form domain $H^1(\Rnp)$, and
  \begin{equation}
    \label{eq:lower-bound}
    \hform[u] \ge -\alpha^2 \,\norm{u}_{\Ltwo{\Rnp}}^2
    \qquad \bigl(u \in H^1(\Rnp)\bigr).
  \end{equation}
  \item[\textup{(b)}] The spectrum of $\Halpha$ is the half-line
  \begin{equation}
    \label{eq:spectrum}
    \sigma(\Halpha) = [-\alpha^2, \infty),
    \qquad \inf \sigma(\Halpha) = -\alpha^2,
  \end{equation}
  so that the bound \eqref{eq:lower-bound} is sharp.
  \item[\textup{(c)}] The spectrum is purely absolutely continuous:
  \begin{equation}
    \label{eq:ac}
    \sac(\Halpha) = [-\alpha^2, \infty),
    \qquad
    \spp(\Halpha) = \ssc(\Halpha) = \varnothing .
  \end{equation}
  In particular $\Halpha$ has no eigenvalues. None is embedded in the
  continuous spectrum, and the spectral bottom $-\alpha^2$ is not attained.
\end{enumerate}
\end{theorem}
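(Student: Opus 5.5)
The plan is to reduce everything to the one-dimensional fiber operators \eqref{eq:intro-fiber} through the partial Fourier transform $\mathcal{F}'$ of \eqref{eq:fourier-partial}. For (a), first prove \eqref{eq:lower-bound} by a one-dimensional trace inequality. For $\psi \in H^1(\Rhalf)$,
\[
|\psi(0)|^2 = -\int_0^\infty \partial_t|\psi|^2\,dt \le 2\norm{\psi}\norm{\psi'} \le \alpha^{-1}\norm{\psi'}^2 + \alpha\norm{\psi}^2 .
\]
Hence $\norm{\psi'}^2 - \alpha|\psi(0)|^2 \ge -\alpha^2\norm{\psi}^2$. Apply this for a.e.\ $\xpar$ to $u(\xpar,\cdot)$, discard the nonnegative term $\norm{\nabla' u}^2$, and integrate; this gives \eqref{eq:lower-bound}. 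The same inequality with $\eps$ in place of $\alpha^{-1}$ shows that the boundary term is form-bounded relative to $\norm{\nabla u}^2$ with relative bound zero. So $\hform$ is closed on $H^1(\Rnp)$ by KLMN, and it defines a self-adjoint operator $T$ bounded below by $-\alpha^2$.

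It remains to identify $T$ with \eqref{eq:Halpha-def}. Green's formula gives $\Halpha \subset T$, since $\hform[u,v] = (-\Delta u,v)$ for $u$ in $\Dom(\Halpha)$ and $v \in H^1$. For the reverse inclusion, take $u \in \Dom(T)$, so that $-\Delta u = f \in L^2$ weakly with the Robin condition in the weak sense. On each fiber, $\hat u(\xipar,\cdot)$ then solves $-\psi''+(|\xipar|^2+1)\psi = g$ with $\psi'(0)=-\alpha\psi(0)$, where $g$ is the transform of $f+u$ (shifting by $1$ is harmless). We need $H^2$ regularity with the weight bound $\int (|\xipar|^4\norm{\hat u}^2 + \norm{\hat u''}^2)\,d\xipar < \infty$, uniformly in $\xipar$. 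Testing the fiber equation against $\psi$ and against $|\xipar|^2\psi$, and using the trace inequality to absorb $\alpha|\psi(0)|^2$, gives this estimate for $|\xipar|$ large; bounded $|\xipar|$ is trivial. So $u \in H^2$, and the Robin trace condition holds strongly. This gives $T=\Halpha$.

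For (b) and (c), diagonalize each fiber explicitly. For $\xipar$ fixed, $h(\xipar)-|\xipar|^2$ is the one-dimensional Robin operator $-d^2/dx_n^2$ with $\psi'(0)=-\alpha\psi(0)$. It has the eigenfunction $\psizero$ with eigenvalue $-\alpha^2$ and generalized eigenfunctions
\[
\varphi(x_n,k) = \sqrt{2/\pi}\,\bigl(k\cos kx_n - \alpha\sin kx_n\bigr)/\sqrt{k^2+\alpha^2}, \qquad k>0 .
\]
Show that $\psi \mapsto (\langle\psi,\psizero\rangle, \int\psi\varphi(\cdot,k)\,dx_n)$ is unitary from $\Ltwo{\Rhalf}$ onto $\mathbb{C}\oplus L^2(\Rhalf,dk)$. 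The cleanest route is to compute the resolvent kernel in closed form (the same computation behind Proposition~\ref{prop:splitting}) and apply Stone's formula: the jump across $(0,\infty)$ gives the density $\varphi\otimes\varphi$, and the pole at $-\alpha^2$ gives $\Psurf$. Composing with $\mathcal{F}'$ yields a unitary map $\Utrans$ from $\Ltwo{\Rnp}$ onto $L^2(\Rnm)\oplus L^2(\Rnm\times\Rhalf)$. It carries $\Halpha$ to multiplication by $|\xipar|^2-\alpha^2$ on the first summand and by $|\xipar|^2+k^2$ on the second.

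The spectrum is the closure of the essential ranges of these two functions, which is $[-\alpha^2,\infty)$; this proves (b) and sharpness of \eqref{eq:lower-bound}. For (c), both multipliers are real-analytic with gradients vanishing only on null sets ($\xipar=0$, resp.\ $(\xipar,k)=0$). The pushforward of an absolutely continuous measure under such a function is absolutely continuous, by the coarea formula. Hence the spectral measure of every vector is absolutely continuous, and $\spp=\ssc=\varnothing$.

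The main obstacle is the completeness of the fiber eigenfunction expansion, together with the regularity step in (a). I expect the explicit resolvent-kernel computation to handle both: it gives Parseval by Stone's formula, and it gives the $H^2$ bound by direct estimation of the kernel.
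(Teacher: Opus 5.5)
Your proposal is correct and follows essentially the same route as the paper's proof in Appendix~\ref{app:sa-spec}: the trace inequality and KLMN for (a), then the fiber decomposition, the explicit bound state $\psizero$ and generalized eigenfunctions $\gphi_k$, the reduction of $\Halpha$ to multiplication by $|\xipar|^2-\alpha^2$ and $|\xipar|^2+k^2$, and absolute continuity of the push-forward measures. The differences are local. You obtain $H^2$ regularity from fiberwise energy estimates rather than tangential difference quotients. You also justify the half-line expansion of Lemma~\ref{lem:1d-transform}(iii) by Stone's formula applied to the explicit Green kernel, where the paper cites Weyl--Titchmarsh theory; the kernel's jump across $(0,\infty)$ is indeed $\gphi_k(t)\gphi_k(s)\,dk$, and its pole at $-\alpha^2$ contributes $\Psurf$. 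Note that Stone's formula gives Parseval but not surjectivity onto $\Ltwo{\Rhalf,dk}$. This is harmless for the theorem: absolute continuity passes to invariant subspaces, and the surface part alone already has spectrum $[-\alpha^2,\infty)$.
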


\begin{remark}[Relation to the literature]
\label{rem:literature}
All three parts are known. Lotoreichik and Rohleder
\cite[Thm.~2.7]{LotoreichikRohleder2012} treat a variable coefficient
$\alpha \in W^{1,\infty}(\Gamma)$, of which our constant is a special case, and
\cite[Prop.~3.8]{BehrndtSchlosser2018} carries out the form approach for
$\alpha \in L^p(\Gamma) + L^\infty(\Gamma)$. The value of $\inf\sigma(\Halpha)$
is \cite[Lem.~2.8]{LevitinParnovski2008}, and the essential spectrum is stated
without proof in the introduction of \cite{LotoreichikRohleder2012}.

Part~(c) is contained in the work of Frank on a periodic impedance. He studies
the half-space Laplacian under the condition $\gamma_1 u + \sigma\gamma_0 u = 0$
with $\sigma$ periodic along $\Gamma$, our case being the constant
$\sigma = -\alpha$, and proves that the spectrum is then purely absolutely
continuous \cite[Thm.~1.4]{Frank2006}. A constant satisfies the hypothesis of
that theorem in every dimension. The surface channel appears there as the extra
band below the bulk threshold produced by \cite[Prop.~4.1]{Frank2006}, and the
case $n = 2$ is treated in \cite{Frank2003,FrankShterenberg2004}. The
scattering picture drawn there agrees with the one drawn in this paper: the wave
operators comparing the operator with the Neumann Laplacian exist but are not
complete, the defect being the surface states. (The unitary
equivalence of the absolutely continuous part of $\Halpha$ with the Neumann
Laplacian obtained in \cite[Cor.~3.11]{LotoreichikRohleder2012} is a different
statement. It requires a decay hypothesis on $\alpha$ that a nonzero constant
does not satisfy.)

We nevertheless give a complete and self-contained proof in
Appendix~\ref{app:sa-spec} rather than assemble the statement from these
sources. Frank obtains (c) by a Floquet decomposition, which a constant
impedance does not call for, and the specialisation is longer than a direct
argument. The proof given there also fixes the fiber objects
$\bigl(\hfib(\xipar), \psizero, \gphi_k\bigr)$, that is, the half-line Robin
operator, its bound state and its generalized eigenfunctions, in the explicit
normalisation that is used throughout the rest of the paper.
\end{remark}

The proof rests on the reduction of $\Halpha$ to a family of half-line
operators indexed by the tangential frequency. We record this reduction here,
since the objects it produces are used repeatedly in the sequel.

\begin{proposition}[Fiber decomposition]
\label{prop:fibration}
Let $\mathcal{F}'$ be the partial Fourier transform in the tangential variable
$\xpar$, as in \S\ref{sec:fourier}. Then $\mathcal{F}'$ is a unitary map of
$\Ltwo{\Rnp}$ onto $\int_{\Rnm}^{\oplus} \Ltwo{\Rhalf}\, d\xipar$. It
diagonalizes $\Halpha$ into the direct integral of half-line Robin operators
\begin{equation}
  \label{eq:fibration}
  \mathcal{F}'\,\Halpha\,(\mathcal{F}')^{-1}
  = \int_{\Rnm}^{\oplus} \hfib(\xipar)\, d\xipar,
  \qquad
  \hfib(\xipar) = -\frac{d^2}{dx_n^2} + |\xipar|^2 ,
\end{equation}
Each $\hfib(\xipar)$ is self-adjoint on $\Ltwo{\Rhalf}$ with domain
\begin{equation}
  \label{eq:fiber-domain}
  \Dom\bigl(\hfib(\xipar)\bigr)
  = \bigl\{\, v \in H^2(\Rhalf) \;\bigm|\; v'(0) + \alpha\, v(0) = 0 \,\bigr\} .
\end{equation}
For every $\xipar \in \Rnm$ the fiber spectrum consists of one simple eigenvalue
lying below a half-line of absolutely continuous spectrum,
\begin{equation}
  \label{eq:fiber-spectrum}
  \sigma\bigl(\hfib(\xipar)\bigr)
  = \bigl\{\, |\xipar|^2 - \alpha^2 \,\bigr\}
    \,\cup\, \bigl[\, |\xipar|^2, \infty \bigr),
\end{equation}
the eigenvalue $|\xipar|^2 - \alpha^2$ having the \emph{$\xipar$-independent}
normalized eigenfunction
\begin{equation}
  \label{eq:psi0}
  \psizero(x_n) = \sqrt{2\alpha}\; e^{-\alpha x_n} .
\end{equation}
\end{proposition}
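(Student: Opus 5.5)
The plan is to reduce to the fibers through the partial Fourier transform and then analyse each half-line operator explicitly. First, $\mathcal{F}'$ is unitary on $\Ltwo{\Rnp} = L^2(\Rnm_{\xpar};\Ltwo{\Rhalf})$ by Plancherel in $\xpar$, applied for a.e.\ $x_n$. This gives the identification with $\int^\oplus_{\Rnm}\Ltwo{\Rhalf}\,d\xipar$. Next I transport the form \eqref{eq:form} rather than the operator. For $u\in H^1(\Rnp)$ one has
\[
\hform[u]=\int_{\Rnm}\Bigl(\norm{\partial_{x_n}\widehat u(\xipar,\cdot)}^2+|\xipar|^2\norm{\widehat u(\xipar,\cdot)}^2-\alpha|\widehat u(\xipar,0)|^2\Bigr)d\xipar .
\]
This uses that the trace commutes with $\mathcal{F}'$, since $\gamma_0$ acts fiberwise in $x_n$ only, and that $H^1(\Rnp)$ corresponds to those $\widehat u$ with $\int(1+|\xipar|^2)\norm{\widehat u}^2_{H^1(\Rhalf)}$-type control. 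The integrand is the closed form $\mathfrak{t}_{\xipar}$ of $\hfib(\xipar)$ on $H^1(\Rhalf)$. By Theorem~\ref{thm:sa-spec}(a), $\Halpha$ is the operator of $\hform$. A standard fact about direct integrals of measurably parametrized closed forms then identifies $\mathcal{F}'\Halpha(\mathcal{F}')^{-1}$ with $\int^\oplus\hfib(\xipar)\,d\xipar$. Alternatively, one can check directly that $u\in H^2$ with the Robin trace condition corresponds fiberwise to \eqref{eq:fiber-domain}, using $\widehat{\partial_{x_n}u}(\xipar,0)=\partial_{x_n}\widehat u(\xipar,0)$. The sign is fixed by $\gamma_1=-\partial_{x_n}|_\Gamma$: the condition $\gamma_1u-\alpha\gamma_0u=0$ becomes $v'(0)+\alpha v(0)=0$.

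Self-adjointness of each fiber is the one-dimensional version of the same form argument. The form $\mathfrak{t}_{\xipar}$ is closed and bounded below on $H^1(\Rhalf)$, since $|v(0)|^2\le\eps\norm{v'}^2+\eps^{-1}\norm{v}^2$. Integration by parts identifies the associated operator as $-d^2/dx_n^2+|\xipar|^2$ on the domain \eqref{eq:fiber-domain}, with $H^2$-regularity from the ODE. Since $|\xipar|^2$ is a bounded shift, it suffices to treat $h_0=-d^2/dx_n^2$ with the Robin condition.

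For the spectrum of $h_0$, I check that $\psizero$ is an eigenvector with eigenvalue $-\alpha^2$: indeed $\psizero'(0)=-\alpha\psizero(0)$, $-\psizero''=-\alpha^2\psizero$, and $\norm{\psizero}^2=2\alpha\cdot\frac1{2\alpha}=1$. There are no other eigenvalues. For $\mu<0$ the only $L^2$ solution of $-v''=\mu v$ is $e^{-\sqrt{-\mu}x_n}$, which satisfies the boundary condition only when $\sqrt{-\mu}=\alpha$. For $\mu\ge0$ there is no $L^2$ solution at all. For $k>0$ the functions $\gphi_k(x_n)=\cos kx_n-\frac{\alpha}{k}\sin kx_n$ are bounded generalized eigenfunctions, so Weyl sequences give $[0,\infty)\subset\sigma(h_0)$. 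Conversely, $\sigma(h_0)\cap(-\infty,0)=\{-\alpha^2\}$, because the resolvent can be written explicitly as a Green's function for $z\notin[0,\infty)\cup\{-\alpha^2\}$. Simplicity of the eigenvalue follows from the uniqueness of the decaying solution.

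Absolute continuity of $[0,\infty)$ is the main point I expect to require care. I would prove it by an explicit generalized Fourier (cosine-type) transform $v\mapsto\int_0^\infty v\,\overline{\gphi_k}\,dx_n$, normalized by the spectral density $\frac{2}{\pi}\frac{k^2}{k^2+\alpha^2}\,dk$. Showing that this transform together with $P=\psizero\langle\cdot,\psizero\rangle$ is unitary is the real work. I would obtain it via Stone's formula from the explicit Green's function: its jump across $(0,\infty)$ gives a smooth density, so the spectral measure there is absolutely continuous. Shifting by $|\xipar|^2$ then yields \eqref{eq:fiber-spectrum}, and the eigenfunction $\psizero$ is visibly independent of $\xipar$.
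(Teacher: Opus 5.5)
Your proposal is correct, and its first half matches Appendix~\ref{app:sa-spec}. That shared part covers the unitarity of $\mathcal{F}'$, the fiberwise Robin condition with the sign fixed by $\gamma_1=-\partial_{x_n}|_\Gamma$, self-adjointness of each $\hfib(\xipar)$ from the one-dimensional trace inequality, and the eigenvalue analysis of $h_0$.

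The genuine difference is the absolute continuity of $[0,\infty)$. The paper does not derive it. Lemma~\ref{lem:1d-transform}(iii) quotes the Weyl--Titchmarsh expansion for a regular Robin endpoint. It then only checks, through the phase-shifted form $\gphi_k=\sqrt{2/\pi}\cos(kt+\delta_k)$, that the spectral density is exactly $dk$. What it obtains is a \emph{unitary} transform $\Utrans$, which is what later sections use (the $\delta$-orthogonality theorem and \eqref{eq:bulk-part}).

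You instead apply Stone's formula to the explicit Green kernel, which the paper writes down only later in Lemma~\ref{lem:fiber-green}. This works. The boundary values converge locally uniformly in $(x_n,y_n)$ and in $E\in(0,\infty)$. One has $\frac{1}{\pi}\im g(x_n,y_n;0,k^2+i0)=\frac{1}{2k}\gphi_k(x_n)\gphi_k(y_n)$. With $dE=2k\,dk$ this reproduces the paper's $dk$ normalization, and it agrees with your density $\frac{2}{\pi}\frac{k^2}{k^2+\alpha^2}\,dk$ for the unnormalized $\cos kx_n-\frac{\alpha}{k}\sin kx_n$.

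Your route is self-contained. It proves exactly what the Proposition asserts, though less than the paper needs downstream. Continuous spectral densities for a dense set of vectors, together with the fact that $0$ is not an eigenvalue, already give purely absolutely continuous spectrum on $[0,\infty)$. So the unitarity of the transform, which you call the real work, is not needed here. Stone's formula gives you the isometry on $\{\psizero\}^\perp$ for free. Surjectivity is precisely the extra content the paper imports by citation.

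One correction to your form transport. $\mathcal{F}'H^1(\Rnp)$ is characterized by $\int_{\Rnm}\bigl(\norm{\partial_{x_n}\widehat u(\xipar,\cdot)}^2+(1+|\xipar|^2)\norm{\widehat u(\xipar,\cdot)}^2\bigr)\,d\xipar<\infty$. The weight multiplies only the $L^2$ norm. Putting it on the full $H^1(\Rhalf)$ norm would control the mixed derivative $\nabla'\partial_{x_n}u$, which is strictly stronger. With the correct weight, the trace inequality makes $\mathfrak{t}_{\xipar}+C$ (for instance $C=2\alpha^2+1$) equivalent to this integrand uniformly in $\xipar$. That uniform equivalence is what the direct-integral-of-forms argument requires.
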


We call $\lambda_{\mathrm{surf}}(\xipar) = |\xipar|^2 - \alpha^2$ the
\emph{surface band function}. It sweeps out $[-\alpha^2,\infty)$ as $\xipar$
runs over $\Rnm$, whereas the continuous part of each fiber begins at
$|\xipar|^2$, so that the fiber continua together fill $[0,\infty)$.

The two branches of \eqref{eq:fiber-spectrum} are the two scattering channels of
\S\ref{subsec:intro-two-channel}: the surface channel, of internal energy
$-\alpha^2$ and transverse profile $\psizero$, open for $E > -\alpha^2$, and the
bulk channel, of internal energy $0$, open for $E > 0$. We write
$\Thr = \{-\alpha^2, 0\}$ for the set of thresholds and
\begin{equation}
  \label{eq:Ereg}
  \Ereg = \sigma(\Halpha) \setminus \Thr = (-\alpha^2, 0) \cup (0, \infty)
\end{equation}
for the set of non-threshold energies. The limiting absorption principle, the
eigenfunction expansion and the scattering theory below are carried out on
$\Ereg$.

\begin{remark}[Thresholds and the change of spectral type]
\label{rem:thresholds}
The two thresholds occupy different positions in the spectrum. The surface
threshold $-\alpha^2$ is the bottom of $\sigma(\Halpha)$, whereas the bulk
threshold $0$ is embedded in the surface band. The surface channel does not
degenerate there, since $\nabla\lambda_{\mathrm{surf}}(\xipar) = 2\xipar \ne 0$
on the sphere $\{|\xipar| = \alpha\}$ that carries the energy $E = 0$. The rate
at which the resolvent bounds diverge at either threshold is identified in
\S\ref{subsec:lap}.

The spectral type also changes when the fibers are assembled. At fixed
$\xipar$ the value $\lambda_{\mathrm{surf}}(\xipar)$ is a simple eigenvalue of
$\hfib(\xipar)$, isolated below the fiber continuum, but the superposition over
$\xipar \in \Rnm$ turns the surface band into absolutely continuous spectrum,
so that $\Halpha$ has no eigenvalues (Theorem~\ref{thm:sa-spec}(c)). The
mechanism is made explicit in Remark~\ref{rem:no-eigenvalues-direct}.
\end{remark}

%% file: Part1_022_lap.tex
\label{subsec:lap}

The purpose of this subsection is to establish the limiting absorption principle
for $\Halpha$ in the Agmon--H\"ormander spaces $\Bsp$ and $\Bst$, at every energy
outside the threshold set $\Thr = \{-\alpha^2, 0\}$. The proof proceeds by an
exact decomposition of the resolvent, without cut-offs in the dual variable and
without any commutator estimate. The reasons for this choice, and the relation
to the Mourre theory, are discussed in Remark~\ref{rem:mourre} below and in
Appendix~\ref{app:mourre}. Since the spaces $\Bsp$ and $\Bst$ occur here for the
first time, we begin by fixing our conventions.

\subsubsection{The spaces \texorpdfstring{$\Bsp$, $\Bst$ and $\Bstz$}{B, B* and B*0}}
\label{sec:AH-spaces}

We first fix the dyadic decompositions used throughout. In $\Rn$ we put
\begin{equation}
  \label{eq:shells}
  \shell{0} = \{x \in \Rn : |x| < 1\},
  \qquad
  \shell{j} = \{x \in \Rn : 2^{j-1} \le |x| < 2^{j}\}
  \quad (j \ge 1),
\end{equation}
and we write $\shellp{j} = \shell{j} \cap \Rnp$. The same notation is used in
$\Rnm$, where the shells are denoted by
\begin{equation*}
  \tshell{0} = \{\xpar \in \Rnm : |\xpar| < 1\},
  \qquad
  \tshell{i} = \{\xpar \in \Rnm : 2^{i-1} \le |\xpar| < 2^{i}\}
  \quad (i \ge 1),
\end{equation*}
and in the normal variable, where we use the intervals
\begin{equation*}
  \slab{0} = (0,1),
  \qquad
  \slab{l} = [\,2^{l-1},\, 2^{l}) \subset \Rhalf
  \quad (l \ge 1),
\end{equation*}
so that the corresponding slab of $\Rnp$ is $\Rnm \times \slab{l}$.

\begin{definition}[Agmon--H\"ormander spaces]
\label{def:AH-spaces}
For $u \in L^2_{\mathrm{loc}}(\overline{\Rnp})$ we set
\begin{equation}
  \label{eq:B-norms}
  \Bnorm{u} = \sum_{j \ge 0} 2^{j/2}\,\norm{u}_{\Ltwo{\shellp{j}}},
  \qquad
  \Bstnorm{u} = \sup_{j \ge 0} 2^{-j/2}\,\norm{u}_{\Ltwo{\shellp{j}}},
\end{equation}
and we let $\Bsp$ and $\Bst$ be the spaces of those $u$ for which the respective
quantity is finite. We further set
\begin{equation}
  \label{eq:Bzero}
  \Bstz = \Bigl\{ u \in \Bst \;\Bigm|\;
    \lim_{j \to \infty} 2^{-j/2}\norm{u}_{\Ltwo{\shellp{j}}} = 0 \Bigr\}.
\end{equation}
The same definitions are used on $\Rn$ and on $\Rnm$. When the underlying space
has to be displayed we write $\Bspd{\Rnm}$, $\Bstd{\Rnm}$, and so on.
\end{definition}

Thus $\Bsp$ carries an $\ell^1$-type norm and $\Bst$ an $\ell^\infty$-type norm.
Neither is a Hilbert space. The norm of $\Bst$ is equivalent to the
scale-invariant expression
\begin{equation}
  \label{eq:Bstar-equivalent}
  \Bigl( \sup_{R > 1} \frac{1}{R}
    \int_{\{|x| < R\} \cap \Rnp} |u(x)|^2\,dx \Bigr)^{1/2},
\end{equation}
and $\Bstz$ is the subspace on which the quantity in \eqref{eq:Bstar-equivalent}
tends to $0$ as $R \to \infty$. It plays the role of $c_0$ inside $\ell^\infty$
and is the space in which the radiation condition of \S\ref{subsec:radiation}
will be formulated. With $L^{2,s}$ denoting the weighted space with norm
$\norm{(1+|x|)^{s}u}_{\Ltwo{\Rnp}}$ one has the standard chain of continuous
inclusions
\begin{equation}
  \label{eq:AH-inclusions}
  L^{2,s} \subset \Bsp \subset L^{2,1/2}
  \subset \Ltwo{\Rnp} \subset
  L^{2,-1/2} \subset \Bstz \subset \Bst \subset L^{2,-s}
  \qquad (s > 1/2),
\end{equation}
all of them strict. Under the pairing $(u,g) = \int_{\Rnp} u\overline{g}\,dx$ the
space $\Bst$ is the continuous dual of $\Bsp$ \cite{AgmonHormander1976}. We shall
need only the elementary half of this statement: for $u \in \Bst$ and
$g \in \Bsp$ the pairing converges absolutely, with
\begin{equation}
  \label{eq:AH-pairing}
  |(u,g)| \le \Bstnorm{u}\,\Bnorm{g}.
\end{equation}
Indeed, put $c_j = 2^{-j/2}\norm{u}_{\Ltwo{\shellp{j}}}$ and
$b_j = 2^{j/2}\norm{g}_{\Ltwo{\shellp{j}}}$. Then
$\sum_j \bigl| \int_{\shellp{j}} u\overline{g}\,dx \bigr|
\le \sum_j c_j b_j \le (\sup_j c_j)\sum_j b_j$. The inclusions
$\Bsp \subset \Ltwo{\Rnp} \subset \Bst$ hold, moreover, with norm at most one.
Indeed, writing $u_j = \norm{u}_{\Ltwo{\shellp{j}}}$, one has
$\norm{u}_{\Ltwo{\Rnp}} = \bigl( \sum_j u_j^2 \bigr)^{1/2}$, which is at most
$\sum_j 2^{j/2}u_j$ and at least $\sup_j 2^{-j/2}u_j$. Every operator bounded on
$\Ltwo{\Rnp}$ is therefore bounded from $\Bsp$ to $\Bst$ with the same bound.

Finally we record the version of the classical Agmon--H\"ormander resolvent
estimate that will be quoted. For $m \ge 1$ let
$\Rfree{m}{\zeta} = (-\Delta_{\mathbb{R}^m} - \zeta)^{-1}$.

\begin{theorem}[Agmon--H\"ormander \cite{AgmonHormander1976}]
\label{thm:AH-classical}
Let $m \ge 1$. There is a constant $\CAH{m}$ such that
\begin{equation}
  \label{eq:AH-classical}
  \bigl\| \Rfree{m}{\zeta}
  \bigr\|_{\Lop{\Bspd{\mathbb{R}^m}}{\Bstd{\mathbb{R}^m}}}
  \le \CAH{m}\,|\zeta|^{-1/2},
  \qquad \zeta \in \mathbb{C}\setminus\mathbb{R}.
\end{equation}
Moreover, for every $\zeta > 0$ and every $F \in \Bspd{\mathbb{R}^m}$ the limits
$\Rfree{m}{\zeta \pm i0}F = \lim_{\eps \downarrow 0}\Rfree{m}{\zeta \pm i\eps}F$
exist in the weak-$*$ topology of $\Bstd{\mathbb{R}^m}$, they obey
\eqref{eq:AH-classical}, and for all $F,G \in \Bspd{\mathbb{R}^m}$ the maps
$\zeta \mapsto (\Rfree{m}{\zeta \pm i0}F, G)$ are continuous on
$(0,\infty)$.
\end{theorem}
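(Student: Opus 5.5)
We first reduce to $|\zeta| = 1$ by dilation. For $\lambda > 0$ put $(U_\lambda F)(x) = F(\lambda x)$. The $\Bst$ norm is equivalent to the scale-covariant expression \eqref{eq:Bstar-equivalent}, under which $U_\lambda$ multiplies norms by the explicit factor $\lambda^{-(m-1)/2}$. By duality \eqref{eq:AH-pairing}, the $\Bsp$ norm is likewise covariant up to equivalence constants independent of $\lambda$, with factor $\lambda^{-(m+1)/2}$.

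Since $-\Delta U_\lambda = \lambda^2 U_\lambda(-\Delta)$, we have
\[
  \Rfree{m}{\zeta} = \lambda^{-2}\, U_\lambda\, \Rfree{m}{\zeta/\lambda^2}\, U_\lambda^{-1}.
\]
With $\lambda = |\zeta|^{1/2}$ the scaling factors combine to $\lambda^{-2}\cdot\lambda^{-(m-1)/2}\cdot\lambda^{(m+1)/2} = \lambda^{-1} = |\zeta|^{-1/2}$. So \eqref{eq:AH-classical} follows once we show
\[
  \sup_{|\zeta| = 1,\ \zeta \notin \mathbb{R}} \bigl\|\Rfree{m}{\zeta}\bigr\|_{\Lop{\Bspd{\mathbb{R}^m}}{\Bstd{\mathbb{R}^m}}} < \infty .
\]
Away from $\zeta = 1$ this is immediate. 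There $\dist(\zeta, [0,\infty))$ is bounded below, so the resolvent is uniformly bounded on $L^2$, hence from $\Bsp$ to $\Bst$ by the norm-one inclusions recorded above.

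Near $\zeta = 1$ we pass to the Fourier side. We split the symbol $(|\xi|^2 - \zeta)^{-1}$ by a smooth cutoff $\chi(|\xi|)$ supported near $|\xi| = 1$. The part $(1-\chi)(|\xi|^2-\zeta)^{-1}$ is a bounded multiplier uniformly in $\zeta$, hence bounded $L^2 \to L^2$ and so $\Bsp \to \Bst$.

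For the localized part we use a finite partition of unity on the sphere and rotate so that, on each piece, $|\xi|^2 - 1 = a(\xi)(\xi_1 - g(\xi''))$ with $a$ smooth and nonvanishing. Here $\xi'' = (\xi_2, \dots, \xi_m)$, and $g$ is smooth on a compact set. The smooth factors $a^{-1}$ and the cutoffs are Fourier multipliers by Schwartz-class symbols. These are convolutions with rapidly decreasing kernels, which we check to be bounded on $\Bsp$ and on $\Bst$ by a direct dyadic-shell estimate.

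The shear $\xi_1 \mapsto \xi_1 - g(\xi'')$, on the compact $\xi''$-support, corresponds in $x$ to a modulation $e^{i x_1 g(D'')}$ acting in $x''$. Showing that this, composed with the cutoffs, is harmless on the relevant spaces is the most delicate bookkeeping point. We then reduce to the one-dimensional multiplier $(\xi_1 - w)^{-1}$ with $\im w \ne 0$, $|w|$ bounded, acting in $x_1$ with values in $L^2(\mathbb{R}^{m-1}_{x''})$. Its kernel is $\pm i\,e^{i w x_1}$ restricted to a half-line, so its modulus is bounded by $1$. This gives the elementary bound
\[
  \sup_{R}\frac1R\int_{|x_1|<R}\|u(x_1)\|^2\,dx_1 \le C\,\|F\|_{L^1(\mathbb{R};L^2)}^2 \cdot \sup_R \tfrac{1}{R}\cdot R ,
\]
together with its refinement with $L^1$ replaced by the one-dimensional $\Bsp$ norm, which follows by splitting $F$ into dyadic pieces. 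It remains to compare the $m$-dimensional spaces with the mixed spaces $\Bsp(\mathbb{R}_{x_1};L^2)$ and $\Bst(\mathbb{R}_{x_1};L^2)$. The inclusions
\[
  \Bspd{\mathbb{R}^m} \subset \Bsp\bigl(\mathbb{R}_{x_1};L^2(\mathbb{R}^{m-1})\bigr),
  \qquad
  \Bst\bigl(\mathbb{R}_{x_1};L^2(\mathbb{R}^{m-1})\bigr) \subset \Bstd{\mathbb{R}^m}
\]
are elementary, since a slab $|x_1| < R$ contains the ball $|x| < R$. I expect the main obstacle to be keeping all constants uniform as $\im\zeta \to 0$ through this localization; the one-dimensional kernel bound is what makes that possible.

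For the boundary values, take $F, G$ in the dense class $\mathcal{S}(\mathbb{R}^m)$. The limit of $(\Rfree{m}{\zeta\pm i\eps}F, G)$ as $\eps \downarrow 0$ exists by the Sokhotski--Plemelj formula in the radial variable, $(t - \zeta \mp i0)^{-1} = \mathrm{p.v.}\,(t-\zeta)^{-1} \pm i\pi\delta(t-\zeta)$. The resulting expression is continuous in $\zeta > 0$ because $t \mapsto \int_{|\xi|=t}\widehat F\,\overline{\widehat G}\,dS$ is smooth. The uniform bound \eqref{eq:AH-classical}, on compact subsets of $(0,\infty)$, then extends both existence and continuity to all $F, G \in \Bsp$ by a $3\eps$ argument. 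Since $\Bst$ is the dual of $\Bsp$, and bounded sets are weak-$*$ compact, this yields weak-$*$ convergence of $\Rfree{m}{\zeta \pm i\eps}F$ to a limit obeying the same bound.
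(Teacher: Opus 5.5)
The genuine gap is the reduction to $|\zeta|=1$. The norms of $\Bsp$ and $\Bst$ are not dilation covariant. Indeed, \eqref{eq:Bstar-equivalent} is a supremum over $R>1$ only, and the unit ball enters both norms with weight $1$.

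For $\lambda\ge1$ the two inequalities your scaling uses are true:
$\|U_\lambda G\|_{\Bst}\le C\lambda^{-(m-1)/2}\|G\|_{\Bst}$ and $\|U_\lambda^{-1}F\|_{\Bsp}\le C\lambda^{(m+1)/2}\|F\|_{\Bsp}$. So you do obtain \eqref{eq:AH-classical} for $|\zeta|\ge1$. Running your local analysis at other positive energies also gives a bound $C_\delta$ on each set $\{|\zeta|\ge\delta\}$.

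For $\lambda<1$ both inequalities fail by a factor $\lambda^{-1/2}$:
\begin{itemize}
\item $G=\mathbf{1}_{\{|y|<\lambda\}}$ has $\|G\|_{\Bst}\asymp\lambda^{m/2}$, but $U_\lambda G=\mathbf{1}_{\{|x|<1\}}$ has norm $\asymp1$, not $O(\lambda^{1/2})$;
\item $F=\mathbf{1}_{\{|x|<1\}}$ has $\|U_\lambda^{-1}F\|_{\Bsp}\asymp\lambda^{m/2}$, not $\lambda^{(m+1)/2}$.
\end{itemize}
Carried through $\Rfree{m}{\zeta}=\lambda^{-2}U_\lambda\Rfree{m}{\zeta/\lambda^2}U_\lambda^{-1}$, this gives only $C|\zeta|^{-1}$ as $\zeta\to0$. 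So the threshold rate $|\zeta|^{-1/2}$ is not proved. That rate is exactly what the paper takes from this theorem in \eqref{eq:dirichlet-bound} and \eqref{eq:surface-bound}, and in its claim that both thresholds enter \eqref{eq:lap-explicit} at the rate $\dist(E,\Thr)^{-1/2}$.

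The rest of your argument is sound. The paper does not prove this theorem; it quotes Agmon--H\"ormander and checks only $m=1$ through the explicit kernel (Remark~\ref{rem:AH-exponent}). Your unit-energy analysis is the $m$-dimensional form of that check.

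No shear has to be controlled separately. On a cap, write $|\xi|^2-\zeta=a_\zeta(\xi)\,(\xi_1-g_\zeta(\xi''))$ with $g_\zeta(\xi'')=(\zeta-|\xi''|^2)^{1/2}$, whose imaginary part has the sign of $\im\zeta$ throughout. The operator-valued kernel of $(D_1-g_\zeta(D''))^{-1}$ is $\pm i\,e^{ix_1 g_\zeta(D'')}$ on the half-line $\{\pm x_1>0\}$. On that half-line it is a contraction of $L^2(\mathbb{R}^{m-1}_{x''})$, so $L^1_{x_1}L^2_{x''}\to L^\infty_{x_1}L^2_{x''}$ with norm $1$ is immediate. (A real, $\zeta$-independent $g$ would not factor $|\xi|^2-\zeta$.) The boundary-value part is also fine.

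To close the gap, run everything in the homogeneous spaces $\dot{\mathcal{B}}$, $\dot{\mathcal{B}}^*$, built from the shells $2^{j-1}\le|x|<2^j$ for all $j\in\mathbb{Z}$, respectively from $\sup_{R>0}$. These are exactly covariant with your factors, and they satisfy $\|F\|_{\dot{\mathcal{B}}}\le C\|F\|_{\Bsp}$ and $\|u\|_{\Bst}\le C\|u\|_{\dot{\mathcal{B}}^*}$. Hence a homogeneous estimate at unit energy implies \eqref{eq:AH-classical} for every $\zeta$.

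The resonant part transfers: the slab lemma holds with $l\in\mathbb{Z}$, and Schwartz convolutions are bounded on both homogeneous spaces. The non-resonant pieces, however, can no longer be routed through $L^2$, because $\dot{\mathcal{B}}\not\subset L^2$ and $L^2\not\subset\dot{\mathcal{B}}^*$ near the origin. For $m\ge2$, Kato's inequality $\int|x|^{-1}|u|^2\,dx\le C\|u\|^2_{\dot H^{1/2}}$ supplies the substitute:
\begin{itemize}
\item it gives $\dot H^{1/2}\subset\dot{\mathcal{B}}^*$;
\item dually, it gives $\dot{\mathcal{B}}\subset L^2(|x|\,dx)\subset\dot H^{-1/2}$;
\item any symbol bounded by $C\langle\xi\rangle^{-2}$ maps $\dot H^{-1/2}$ into $\dot H^{1/2}$.
\end{itemize}
For $m=1$ the explicit kernel bound is already scale free.
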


\begin{remark}[The case $m = 1$]
\label{rem:AH-exponent}
For $m = 1$ the estimate \eqref{eq:AH-classical} is elementary and does not
require the theory of \cite{AgmonHormander1976}: the kernel of
$\Rfree{1}{\zeta}$ is $(i/2\zeta^{1/2})e^{i\zeta^{1/2}|t-t'|}$, and one has
$\norm{F}_{L^1(\mathbb{R})} \le \sqrt2\,\norm{F}_{\Bspd{\mathbb{R}}}$ together
with $\norm{u}_{\Bstd{\mathbb{R}}} \le \sqrt2\,\norm{u}_{L^\infty(\mathbb{R})}$.
This makes Theorem~\ref{thm:lap} below available when $n = 2$: the
proof uses Theorem~\ref{thm:AH-classical} in dimensions $n$ and $n-1$ only, and
both are available for every $m \ge 1$.
\end{remark}

\subsubsection{The main result}
\label{sec:lap-theorem}

\begin{theorem}[Limiting absorption principle]
\label{thm:lap}
Let $n \ge 2$ and $\alpha > 0$, and recall
$\Ereg = \sigma(\Halpha)\setminus\Thr = (-\alpha^2,0) \cup (0,\infty)$ from
\eqref{eq:Ereg}.
\begin{enumerate}
  \item[\textup{(a)}] \emph{(Uniform bound.)} For every compact set
  $\mathcal{K} \subset \Ereg$ there is a constant
  $C = C(n,\alpha,\mathcal{K})$ such that
  \begin{equation}
    \label{eq:lap-bound}
    \Bstnorm{\Ralpha{z}f} \le C\,\Bnorm{f},
    \qquad f \in \Bsp,
  \end{equation}
  for all $z \in \mathbb{C}\setminus\mathbb{R}$ with $\re z \in \mathcal{K}$.
  \item[\textup{(b)}] \emph{(Boundary values.)} For every $E \in \Ereg$ and every
  $f \in \Bsp$ the limits
  \begin{equation*}
    \Ralpha{E \pm i0}f := \lim_{\eps \downarrow 0} \Ralpha{E \pm i\eps}f
  \end{equation*}
  exist in the weak-$*$ topology of $\Bst$, that is,
  $\lim_{\eps \downarrow 0}(\Ralpha{E \pm i\eps}f,g)$ exists for every
  $g \in \Bsp$. The operators so defined belong to $\Lop{\Bsp}{\Bst}$ and satisfy
  \eqref{eq:lap-bound} with the same constant.
  \item[\textup{(c)}] \emph{(Continuity.)} For all $f,g \in \Bsp$ the maps
  $\Ereg \ni E \mapsto (\Ralpha{E \pm i0}f,g)$ are continuous. Equivalently,
  $z \mapsto \Ralpha{z}$ is continuous in the weak-$*$ operator topology on each
  of the sets $\{z : \re z \in \Ereg,\ \pm\im z \ge 0\}$.
  \item[\textup{(d)}] \emph{(Adjoints and positivity.)}
  $\Ralpha{E \mp i0} = \Ralpha{E \pm i0}^{*}$ in the sense that
  $(\Ralpha{E \pm i0}f,g) = (f,\Ralpha{E \mp i0}g)$ for $f,g \in \Bsp$, and
  \begin{equation*}
    \im\,\bigl(\Ralpha{E+i0}f,f\bigr) \ge 0,
    \qquad f \in \Bsp .
  \end{equation*}
  \item[\textup{(e)}] \emph{(The equation.)} For $f \in \Bsp$ and $E \in \Ereg$
  the function $u = \Ralpha{E \pm i0}f$ belongs to
  $\Bst \cap H^2_{\mathrm{loc}}(\overline{\Rnp})$ and solves
  \begin{equation*}
    (-\Delta - E)u = f \ \text{ in } \Rnp,
    \qquad
    \gamma_1 u - \alpha\gamma_0 u = 0 \ \text{ on } \Gamma .
  \end{equation*}
\end{enumerate}
\end{theorem}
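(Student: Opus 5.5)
The plan is to reduce everything to the splitting identity of Proposition~\ref{prop:splitting} (announced in \eqref{eq:intro-splitting}),
\[
  \Ralpha{z} = \RD{z} + \Rfree{n-1}{z+\alpha^2}\otimes\Psurf + \Krem{z},
\]
and to prove (a)--(c) term by term. First I compute the fiber resolvent of $\hfib(\xipar)$ explicitly. With $\kappa = (|\xipar|^2-z)^{1/2}$, $\re\kappa>0$, its kernel is the Dirichlet kernel $(2\kappa)^{-1}\bigl(e^{-\kappa|x_n-y_n|}-e^{-\kappa(x_n+y_n)}\bigr)$ plus a reflected term with coefficient $(\kappa-\alpha)^{-1}e^{-\kappa(x_n+y_n)}$. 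Partial-fractioning $(\kappa-\alpha)^{-1}$ against the pole at $\kappa=\alpha$, i.e.\ $|\xipar|^2 = z+\alpha^2$, isolates the term $2\alpha(\kappa^2-\alpha^2)^{-1}e^{-\alpha(x_n+y_n)} = (|\xipar|^2-z-\alpha^2)^{-1}\psizero(x_n)\psizero(y_n)$, which is the surface term. The remainder $\Krem{z}$ is what is left, and it is regular at $\kappa=\alpha$.

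For $\RD{z}$ I use odd reflection. $\RD{z}f$ is the restriction to $\Rnp$ of $\Rfree{n}{z}$ applied to the odd extension of $f$. Since odd extension is bounded $\Bsp(\Rnp)\to\Bsp(\Rn)$ and restriction is bounded $\Bst(\Rn)\to\Bst(\Rnp)$ (the shells are symmetric), Theorem~\ref{thm:AH-classical} with $m=n$ gives the bound $C|z|^{-1/2}$ together with the boundary values and their continuity for $\re z>0$. For $\re z<0$ the operator $\RD{z}$ is bounded on $L^2$ uniformly near the real axis, since its spectrum is $[0,\infty)$, and hence bounded $\Bsp\to\Bst$.

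For the surface term I bound the $\xpar$-part by Theorem~\ref{thm:AH-classical} with $m=n-1$ at $\zeta=z+\alpha^2$. The $x_n$-part is bounded with the pairing $\int\psizero\bar f\,dy_n$, using Cauchy--Schwarz and the exponential decay of $\psizero$. The key mixed-norm facts are these: if $F(\xpar)=\int f(\xpar,y_n)\psizero(y_n)\,dy_n$, then $\norm{F}_{\Bspd{\Rnm}}\le C\Bnorm{f}$, and $\norm{G\otimes\psizero}_{\Bst}\le C\norm{G}_{\Bstd{\Rnm}}$. Both follow by comparing the shells $\shellp{j}$ with the products $\tshell{i}\times\slab{l}$ and summing the weights $e^{-\alpha 2^{l}}$. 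This gives the bound $C|z+\alpha^2|^{-1/2}$.

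For $\Krem{z}$ I use Lemma~\ref{lem:K-kernel-bound}. Its fiber kernel is bounded uniformly in $x_n,y_n$, with constant $1/\alpha$ and integrable decay in $y_n$. I then transfer this to a $\Bsp\to\Bst$ bound as for the surface term, with a Hilbert--Schmidt type estimate in $x_n$. The points to check are that $\Krem{z}$ extends continuously to $\im z=0$ for $\re z\in\Ereg$ and that the only degeneration sits at $\kappa\to0$, i.e.\ the threshold $0$, where the rate is $|z|^{-1/2}$.

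\textbf{Main obstacle.} I expect $\Krem{z}$ to be the hard step. Near $\re z>0$ the variable $\kappa$ becomes purely imaginary on $|\xipar|<\sqrt{E}$, so the fiber kernel oscillates rather than decays in $x_n$. A uniform $L^\infty$ bound in $(x_n,y_n)$ does not by itself give $\Bsp\to\Bst$ in $n$ variables. I would try to write $\Krem{z}$ as a reflected free resolvent composed with a bounded Fourier multiplier in $\xipar$ (the factor $\kappa(\kappa-\alpha)^{-1}$, bounded away from $\kappa=\alpha$), and then invoke Theorem~\ref{thm:AH-classical} in dimension $n$ again. Here one needs multipliers in $\xipar$ to preserve $\Bsp$ and $\Bst$ up to constants, which holds for smooth symbols.

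The remaining parts follow quickly. For (d), $\Ralpha{\bar z}=\Ralpha{z}^*$ passes to the limit by (b). Positivity comes from $\im(\Ralpha{E+i\eps}f,f)=\eps\norm{\Ralpha{E+i\eps}f}^2\ge0$ in the limit. For (e), the equation holds distributionally by passing to the limit in $((\Halpha-z)\Ralpha{z}f,\phi)$ for test functions $\phi$ in $\Dom(\Halpha)$ with compact support. Local $H^2$ regularity up to $\Gamma$, together with the Robin condition, follows from local elliptic regularity for the Robin problem.
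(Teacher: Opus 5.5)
Your splitting, the odd-reflection treatment of $\RD{z}$, the tensor treatment of the surface term, and your arguments for (d) and (e) all match the paper. The gap is the remainder $\Krem{z}$. It is the one term that needs an argument specific to this problem, and neither of your two sketches for it works.

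\emph{The first sketch.} It relies on ``integrable decay in $y_n$'' and a Hilbert--Schmidt estimate in $x_n$. On the propagating disc $|\xipar|^2<\re z$, however, $\kappa$ becomes purely imaginary in the limit. The kernel \eqref{eq:K-kernel} then decays in neither variable, so no such estimate holds uniformly as $\im z\downarrow 0$.

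\emph{The fallback.} It rests on $g-g_{\mathrm{D}}=\frac{2\kappa}{\kappa-\alpha}\cdot\frac{e^{-\kappa(x_n+y_n)}}{2\kappa}$, i.e.\ on writing $\Ralpha{z}-\RD{z}$ as a tangential multiplier times the reflected free resolvent. That identity describes $\Ralpha{z}-\RD{z}$, not $\Krem{z}$. The multiplier has a pole on the sphere $|\xipar|^2=z+\alpha^2$, which reaches the real sphere $|\xipar|^2=\re z+\alpha^2$ in the limit. The surface term that cancels this pole carries $e^{-\alpha(x_n+y_n)}$ rather than $e^{-\kappa(x_n+y_n)}$, so $\Krem{z}$ is not of your form. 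Even away from that sphere, at $z=E+i0$ the symbol $\kappa=(|\xipar|^2-E)^{1/2}$ is only H\"older-$\tfrac12$ across the branch circle $|\xipar|=\sqrt{E}$. Your appeal to smooth symbols therefore gives no bound uniform in $z$.

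You also expect $\Krem{z}$ to degenerate as $\kappa\to 0$. It does not: keeping $(\kappa-\alpha)^{-1}$ bounded there is exactly why the comparison is with the Dirichlet and not the Neumann resolvent (Remark~\ref{rem:krein}).

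The missing observation is that the obstacle you name is not one. A fiber kernel bounded uniformly in $(\xipar,x_n,y_n)$ already gives $\Bsp\to\Bst$, and oscillation in $x_n$ is irrelevant. Put $\Theta_f(\xipar)=\int_0^\infty|\widehat{f}(\xipar,y_n)|\,dy_n$.
\begin{itemize}
\item[(1)] Lemma~\ref{lem:K-kernel-bound} gives $|\widehat{\Krem{z}f}(\xipar,x_n)|\le\alpha^{-1}\Theta_f(\xipar)$ for every $x_n>0$.
\item[(2)] Minkowski's inequality, Plancherel in $\xpar$ and Lemma~\ref{lem:slab} give $\norm{\Theta_f}_{\Ltwo{\Rnm}}\le\int_0^\infty\norm{f(\cdot,y_n)}_{\Ltwo{\Rnm}}\,dy_n\le\sqrt2\,\Bnorm{f}$.
\item[(3)] Since $\shellp{j}\subset\Rnm\times(0,2^j)$, Plancherel again gives $\norm{\Krem{z}f}^2_{\Ltwo{\shellp{j}}}\le 2^{j}\alpha^{-2}\norm{\Theta_f}^2_{\Ltwo{\Rnm}}\le 2^{j+1}\alpha^{-2}\Bnorm{f}^2$.
\end{itemize}
Hence $\norm{\Krem{z}}_{\Lop{\Bsp}{\Bst}}\le\sqrt2/\alpha$ for every $z\in\mathbb{C}\setminus\mathbb{R}$. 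Put differently, $\Bsp\hookrightarrow L^1(\Rhalf;\Ltwo{\Rnm})$ and $L^\infty(\Rhalf;\Ltwo{\Rnm})\hookrightarrow\Bst$, and a uniformly bounded fiber kernel maps the first space into the second.

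The boundary values and their continuity then follow by dominated convergence in the triple integral for $(\Krem{z}f,g)$. The majorant $\alpha^{-1}\Theta_f\Theta_g\in L^1(\Rnm)$ is independent of $z$, and $\kappa$ extends continuously to $\im z=0$ off the null set $|\xipar|^2=\re z$. This is Proposition~\ref{prop:K-bounded}, and with it your outline closes.
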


The proof occupies \S\S\ref{sec:dyadic-lemmas}--\ref{sec:lap-proof}, and its
whole content is the following exact decomposition, established in
Proposition~\ref{prop:splitting}: for every $z \in \mathbb{C}\setminus\mathbb{R}$,
\begin{equation*}
  \Ralpha{z} = \RD{z}
    + \Rfree{n-1}{z+\alpha^2} \otimes \Psurf
    + \Krem{z},
\end{equation*}
where $\RD{z}$ is the resolvent of the Dirichlet Laplacian on $\Rnp$, $\Psurf$
is the orthogonal projection of $\Ltwo{\Rhalf}$ onto the surface mode
$\mathbb{C}\psizero$, and $\Krem{z}$ is an explicit remainder. There is neither a
cut-off nor an approximation here: the identity is an algebraic one, between
fiber Green kernels. Its three terms are then treated
separately. The Dirichlet term is reduced, by the odd reflection across
$\Gamma$, to the free Laplacian on $\Rn$. The surface term is, by
Proposition~\ref{prop:surface-reduction}, a tensor product, and it
reduces to the free Laplacian on $\Rnm$ at the shifted energy $z + \alpha^2$.
The remainder is estimated directly, its kernel being bounded by $1/\alpha$
uniformly in all of its arguments and in $z$ (Lemma~\ref{lem:K-kernel-bound}).

The decomposition also has a structural consequence. Since the remainder is bounded uniformly on all of
$\mathbb{C}\setminus\mathbb{R}$, in particular uniformly up to and across both
thresholds, the entire dependence of the resolvent estimate on the spectral
parameter is carried by two free resolvents, in two different dimensions:
\begin{equation}
  \label{eq:two-channels-preview}
  \Rfree{n}{z}
  \quad \text{with its threshold at } 0,
  \qquad\text{and}\qquad
  \Rfree{n-1}{z+\alpha^2}
  \quad \text{with its threshold at } -\alpha^2 .
\end{equation}
The two thresholds of $\Halpha$ are thus the thresholds of two distinct free
problems, of dimensions $n$ and $n-1$, and they do not interact: their singular
loci in the dual variable are the spheres $|\xipar|^2 = \re z$ and
$|\xipar|^2 = \re z + \alpha^2$, which are disjoint whenever $\alpha > 0$.
This separation of the channels is used again in
\S\S\ref{subsec:gft}--\ref{subsec:radiation}: in the generalized Fourier
transform, in the block structure of the scattering matrix, and in the
radiation condition.

\begin{remark}[Comparison with the Mourre theory]
\label{rem:mourre}
The limiting absorption principle is often derived from a positive commutator
estimate \cite{Mourre1981,ABG1996}, and this method applies here: the generator
of dilations is a conjugate operator $A$ for $\Halpha$, with
$i[\Halpha,A] = 2\Halpha + \alpha\,\gamma_0^{*}\gamma_0$ in the form sense
(Appendix~\ref{app:mourre}). Its conclusion, however, is a bound on
$\langle x\rangle^{-s}\Ralpha{z}\langle x\rangle^{-s}$ for $s > 1/2$ only,
whereas Theorem~\ref{thm:lap} holds at the endpoint $\Bsp \to \Bst$ of that
scale \eqref{eq:AH-inclusions}, which interpolation does not reach. Moreover,
the sections that follow use the kernel of $\Ralpha{E \pm i0}$ supplied by
\eqref{eq:splitting-preview}, not merely its norm. The behaviour of the two
bounds at the thresholds is compared in Appendix~\ref{app:mourre}.
\end{remark}

\subsubsection{Two elementary lemmas on the dyadic decomposition}
\label{sec:dyadic-lemmas}

Both lemmas of this subsection compare the isotropic shells $\shellp{j}$ of
$\Rnp$ with the product decomposition $\tshell{i} \times \slab{l}$. The only
geometric input is the elementary pair of inclusions
\begin{equation}
  \label{eq:geometry}
  \tshell{i} \times \slab{l} \subset \shellp{m} \cup \shellp{m+1}
  \quad \text{with } m = \max(i,l),
  \qquad
  \shellp{j} \subset \{|\xpar| < 2^{j}\} \times \Rhalf,
\end{equation}
which follow from
$\max(|\xpar|,x_n) \le |x| \le \sqrt2\,\max(|\xpar|,x_n)$.

\begin{lemma}[Slab estimate]
\label{lem:slab}
For every $u \in \Bsp$,
\begin{equation}
  \label{eq:slab-1}
  \sum_{l \ge 0} 2^{l/2}\,\norm{u}_{\Ltwo{\Rnm \times \slab{l}}}
  \le \sqrt2\,\Bnorm{u},
\end{equation}
and consequently
\begin{equation}
  \label{eq:slab-2}
  \int_0^{\infty} \norm{u(\cdot,x_n)}_{\Ltwo{\Rnm}}\,dx_n
  \le \sqrt2\,\Bnorm{u}.
\end{equation}
\end{lemma}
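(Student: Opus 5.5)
The plan is to decompose each slab $\Rnm \times \slab{l}$ into the shells $\shellp{j}$ it meets, and then interchange the order of summation. By the second inclusion of \eqref{eq:geometry}, or more directly by $x_n \le |x|$, a point with $x_n \in \slab{l}$ satisfies $|x| \ge 2^{l-1}$ when $l \ge 1$. Hence $\Rnm \times \slab{l} \subset \bigcup_{j \ge l} \shellp{j}$ for $l \ge 1$, and trivially also for $l = 0$. Writing $u_j = \norm{u}_{\Ltwo{\shellp{j}}}$, this gives
\begin{equation*}
  \norm{u}_{\Ltwo{\Rnm\times\slab{l}}}
  \le \Bigl(\sum_{j \ge l} \norm{u}_{\Ltwo{\shellp{j}\cap(\Rnm\times\slab{l})}}^2\Bigr)^{1/2}
  \le \sum_{j\ge l} \norm{u}_{\Ltwo{\shellp{j}\cap(\Rnm\times\slab{l})}} .
\end{equation*}

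Multiplying by $2^{l/2}$ and summing over $l$, we interchange the sums to get
\begin{equation*}
  \sum_l 2^{l/2}\norm{u}_{\Ltwo{\Rnm\times\slab{l}}}
  \le \sum_j \sum_{l \le j} 2^{l/2}\,\norm{u}_{\Ltwo{\shellp{j}\cap(\Rnm\times\slab{l})}}.
\end{equation*}
The crude bound $2^{l/2} \le 2^{j/2}$ would leave a sum over $l$ of pieces of $\shellp{j}$, and bounding that by $u_j$ costs a factor $\sqrt{j+1}$ via Cauchy--Schwarz. That is too weak. Instead we apply Cauchy--Schwarz with the geometric weights, which is the decisive step:
\begin{equation*}
  \sum_{l\le j} 2^{l/2} a_{jl}
  \le \Bigl(\sum_{l\le j} 2^{l}\Bigr)^{1/2}\Bigl(\sum_l a_{jl}^2\Bigr)^{1/2}
  \le \sqrt{2^{j+1}}\; u_j
  = \sqrt2\,2^{j/2}u_j .
\end{equation*}
Here $a_{jl}$ is the norm of $u$ on $\shellp{j}\cap(\Rnm\times\slab{l})$, and $\sum_l a_{jl}^2 = u_j^2$ because the slabs are disjoint. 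Summing over $j$ gives \eqref{eq:slab-1} with the constant $\sqrt2$.

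For \eqref{eq:slab-2}, split the $x_n$-integral over the intervals $\slab{l}$, each of length at most $2^{l-1}$ (length $1$ for $l=0$). Cauchy--Schwarz in $x_n$ then gives
\begin{equation*}
  \int_{\slab{l}}\norm{u(\cdot,x_n)}_{\Ltwo{\Rnm}}dx_n \le |\slab{l}|^{1/2}\norm{u}_{\Ltwo{\Rnm\times\slab{l}}} \le 2^{l/2}\norm{u}_{\Ltwo{\Rnm\times\slab{l}}},
\end{equation*}
and summing over $l$ and applying \eqref{eq:slab-1} completes the proof.

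The only real obstacle is avoiding the logarithmic loss in the first estimate. The weighted Cauchy--Schwarz above handles it, and the constant comes out as exactly $\sqrt2$, matching the statement.
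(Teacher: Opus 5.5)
Your proof is correct and is essentially the paper's argument: the slab $\Rnm\times\slab{l}$ meets only the shells $\shellp{j}$ with $j\ge l$. You exchange the sums, and the weighted Cauchy--Schwarz inequality with $\sum_{l\le j}2^{l}<2^{j+1}$ gives the constant $\sqrt2$, after which \eqref{eq:slab-2} follows from Cauchy--Schwarz on each slab. One small correction: the bound $|x|\ge 2^{l-1}$ on the slab comes from $x_n\le|x|$, or from the first inclusion of \eqref{eq:geometry}, and not from the second. Since you also give the direct reason, nothing in the argument is affected.
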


\begin{proof}
On $\Rnm \times \slab{l}$ one has $|x| \ge x_n \ge 2^{l-1}$, hence
$(\Rnm \times \slab{l}) \cap \shellp{j} = \varnothing$ for $j < l$. Since the
slabs are pairwise disjoint, the Cauchy--Schwarz inequality in the variable $l$
gives
\begin{align*}
  \sum_{l \ge 0} 2^{l/2} \norm{u}_{\Ltwo{\Rnm \times \slab{l}}}
  &\le \sum_{l \ge 0} 2^{l/2} \sum_{j \ge l}
       \norm{u}_{\Ltwo{(\Rnm \times \slab{l}) \cap \shellp{j}}}
   = \sum_{j \ge 0} \sum_{l \le j} 2^{l/2}
       \norm{u}_{\Ltwo{(\Rnm \times \slab{l}) \cap \shellp{j}}}
  \\
  &\le \sum_{j \ge 0} \Bigl( \sum_{l \le j} 2^{l} \Bigr)^{1/2}
       \Bigl( \sum_{l \le j}
         \norm{u}_{\Ltwo{(\Rnm \times \slab{l}) \cap \shellp{j}}}^2
       \Bigr)^{1/2}
  \\
  &\le \sum_{j \ge 0} 2^{(j+1)/2}\,\norm{u}_{\Ltwo{\shellp{j}}}
   = \sqrt2\,\Bnorm{u},
\end{align*}
which is \eqref{eq:slab-1}. For \eqref{eq:slab-2} apply the Cauchy--Schwarz
inequality on each slab,
\begin{equation*}
  \int_{\slab{l}} \norm{u(\cdot,x_n)}_{\Ltwo{\Rnm}}\,dx_n
  \le |\slab{l}|^{1/2}\,\norm{u}_{\Ltwo{\Rnm \times \slab{l}}}
  \le 2^{l/2}\,\norm{u}_{\Ltwo{\Rnm \times \slab{l}}},
\end{equation*}
$|\slab{l}|$ being the length of the interval $\slab{l}$, and sum over $l$.
\end{proof}

The second lemma compares the isotropic norms of $\Bsp$ and $\Bst$ with mixed
norms in which the tangential and the normal variable are separated. It gives
access to the tensor structure of the surface term.

\begin{lemma}[Mixed-norm embeddings]
\label{lem:mixed}
Let $n \ge 2$ and put $\Csharp = (1+\sqrt2)^2$. Then
\begin{equation}
  \label{eq:mixed-B}
  \Bigl( \int_0^{\infty}
    \norm{f(\cdot,x_n)}_{\Bspd{\Rnm}}^2\,dx_n \Bigr)^{1/2}
  \le \Csharp\,\Bnorm{f},
  \qquad f \in \Bsp,
\end{equation}
and
\begin{equation}
  \label{eq:mixed-Bstar}
  \Bstnorm{u}
  \le \sqrt2 \Bigl( \int_0^{\infty}
    \norm{u(\cdot,x_n)}_{\Bstd{\Rnm}}^2\,dx_n \Bigr)^{1/2}
\end{equation}
for every measurable $u$ on $\Rnp$ for which the right-hand side is finite.
\end{lemma}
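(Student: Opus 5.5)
The plan is to prove the two inequalities separately and directly from the definitions \eqref{eq:B-norms}, using only the inclusions \eqref{eq:geometry}, the Cauchy--Schwarz inequality and Minkowski's inequality. Throughout, for a set $Q \subset \Rnp$ I abbreviate $\norm{f}_Q = \norm{f}_{\Ltwo{Q}}$. I will aim for a constant no worse than $2+\sqrt2 \le \Csharp$.

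\emph{The bound \eqref{eq:mixed-Bstar}.} I fix $j$ and use the second inclusion in \eqref{eq:geometry}. Together with $x_n \le |x|$, it gives $\shellp{j} \subset \{|\xpar|<2^j\}\times(0,2^j)$. For each fixed $x_n$, write $v = u(\cdot,x_n)$. Then
\[
\norm{v}^2_{\Ltwo{\{|\xpar|<2^j\}}} = \sum_{i\le j}\norm{v}^2_{\Ltwo{\tshell{i}}} \le \sum_{i\le j}2^i\norm{v}^2_{\Bstd{\Rnm}} \le 2^{j+1}\norm{v}^2_{\Bstd{\Rnm}} .
\]
Integrating in $x_n$ yields $\norm{u}^2_{\shellp{j}} \le 2^{j+1}\int_0^\infty\norm{u(\cdot,x_n)}^2_{\Bstd{\Rnm}}\,dx_n$. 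I then multiply by $2^{-j}$, take square roots and the supremum over $j$, which gives \eqref{eq:mixed-Bstar} with the constant $\sqrt2$.

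\emph{The bound \eqref{eq:mixed-B}.} I put $a_i(x_n) = \norm{f(\cdot,x_n)}_{\Ltwo{\tshell{i}}}$, so that the left side is the $L^2(dx_n)$ norm of $\sum_i 2^{i/2}a_i$. Minkowski's inequality bounds it by $\sum_i 2^{i/2}\norm{f}_{\tshell{i}\times\Rhalf}$. Next I split the normal line as $\Rhalf = (0,2^i)\cup[2^i,\infty)$, i.e.\ into the slabs $\slab{l}$ with $l\le i$ and those with $l>i$.

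For the near part I use the first inclusion in \eqref{eq:geometry}, which gives $\tshell{i}\times(0,2^i) \subset \shellp{i}\cup\shellp{i+1}$. Its contribution is therefore at most $\sum_i 2^{i/2}\bigl(\norm{f}_{\shellp{i}}+\norm{f}_{\shellp{i+1}}\bigr) \le (1+2^{-1/2})\Bnorm{f}$.

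The far part is the step that needs care. A naive triangle inequality over the $l$ with $\max(i,l)=m$ fixed loses a factor $\sqrt m$, so I organize the sum differently. First I bound $\bigl(\sum_{l>i}\norm{f}^2_{\tshell{i}\times\slab{l}}\bigr)^{1/2} \le \sum_{l>i}\norm{f}_{\tshell{i}\times\slab{l}}$. Then I interchange the order of summation, so that the far part is at most $\sum_l\sum_{i<l}2^{i/2}\norm{f}_{\tshell{i}\times\slab{l}}$. For fixed $l$ I apply Cauchy--Schwarz in $i$, exactly as in the proof of Lemma~\ref{lem:slab}. The sets $\tshell{i}\times\slab{l}$ with $i<l$ are disjoint and lie in $\shellp{l}\cup\shellp{l+1}$ by \eqref{eq:geometry}, and $\sum_{i<l}2^i \le 2^l$. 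Hence the inner sum is at most $2^{l/2}\norm{f}_{\shellp{l}\cup\shellp{l+1}}$.

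Summing over $l$ gives again at most $(1+2^{-1/2})\Bnorm{f}$. The total constant is therefore $2+\sqrt2 \le (1+\sqrt2)^2 = \Csharp$.
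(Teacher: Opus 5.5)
Your proof is correct and follows the paper's argument: Minkowski's inequality in $x_n$, a split of each tangential shell at the slab $l=i$, and the inclusions \eqref{eq:geometry} combined with the shifted sum over consecutive shells. The one difference is in the far part. The paper bounds each $\norm{f}_{\Ltwo{\tshell{i}\times\slab{l}}}$ separately by $\norm{f}_{\Ltwo{\shellp{l}}}+\norm{f}_{\Ltwo{\shellp{l+1}}}$ and sums $\sum_{i<l}2^{i/2}\le 2^{l/2}/(\sqrt2-1)$, whereas your Cauchy--Schwarz in $i$ uses the disjointness of the sets $\tshell{i}\times\slab{l}$, $i<l$, to remove the factor $(\sqrt2-1)^{-1}$, and so gives the sharper constant $2+\sqrt2$ in place of $\Csharp=(1+\sqrt2)^2$.
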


\begin{proof}
We begin with \eqref{eq:mixed-Bstar}, which is the shorter of the two and uses
only the second inclusion in \eqref{eq:geometry}. By that inclusion and Fubini's
theorem,
\begin{equation*}
  \norm{u}_{\Ltwo{\shellp{j}}}^2
  \le \int_0^{\infty}
      \norm{u(\cdot,x_n)}^2_{\Ltwo{\{|\xpar| < 2^{j}\}}}\,dx_n .
\end{equation*}
Since $\{|\xpar| < 2^{j}\}$ is the disjoint union of
$\tshell{0},\dots,\tshell{j}$, and since
$\norm{v}_{\Ltwo{\tshell{i}}} \le 2^{i/2}\norm{v}_{\Bstd{\Rnm}}$ for every $i$,
\begin{equation*}
\begin{aligned}
  \norm{u(\cdot,x_n)}^2_{\Ltwo{\{|\xpar| < 2^{j}\}}}
  &= \sum_{i \le j} \norm{u(\cdot,x_n)}^2_{\Ltwo{\tshell{i}}}
   \le \Bigl(\sum_{i \le j} 2^{i}\Bigr)
       \norm{u(\cdot,x_n)}^2_{\Bstd{\Rnm}}
  \\
  &< 2^{j+1}\,\norm{u(\cdot,x_n)}^2_{\Bstd{\Rnm}} .
\end{aligned}
\end{equation*}
Multiplying by $2^{-j}$ and taking the supremum over $j$ gives
\eqref{eq:mixed-Bstar}.

We turn to \eqref{eq:mixed-B}, and abbreviate
\begin{equation*}
  G_m = \norm{f}_{\Ltwo{\shellp{m}}} \quad (m \ge 0),
  \qquad
  a_{i,l} = \norm{f}_{\Ltwo{\tshell{i} \times \slab{l}}}
  \quad (i,l \ge 0),
\end{equation*}
so that $\Bnorm{f} = \sum_{m \ge 0} 2^{m/2}G_m$. We shall use twice the
elementary consequence
\begin{equation}
  \label{eq:shifted-sum}
  \sum_{m \ge 0} 2^{m/2}\bigl(G_m + G_{m+1}\bigr)
  \le \Bnorm{f} + \frac{1}{\sqrt2}\sum_{m \ge 0} 2^{(m+1)/2}G_{m+1}
  \le \Bigl(1 + \frac{1}{\sqrt2}\Bigr)\Bnorm{f}.
\end{equation}
The proof of \eqref{eq:mixed-B} is in four steps.

\emph{Step 1: reduction to a single sum over the tangential shells.} Put
$h_i(x_n) = 2^{i/2}\norm{f(\cdot,x_n)}_{\Ltwo{\tshell{i}}}$, so that
$\norm{f(\cdot,x_n)}_{\Bspd{\Rnm}} = \sum_{i \ge 0} h_i(x_n)$ by the definition
\eqref{eq:B-norms} of the $\Bspd{\Rnm}$-norm. The triangle inequality in
$L^2(\Rhalf, dx_n)$, applied to this sum, and then Fubini's theorem give
\begin{align*}
  \Bigl( \int_0^{\infty}
    \norm{f(\cdot,x_n)}_{\Bspd{\Rnm}}^2\,dx_n \Bigr)^{1/2}
  &= \Bigl\| \sum_{i \ge 0} h_i \Bigr\|_{L^2(\Rhalf)}
   \le \sum_{i \ge 0} \norm{h_i}_{L^2(\Rhalf)}
  \\
  &= \sum_{i \ge 0} 2^{i/2}
     \Bigl( \int_0^{\infty}
       \norm{f(\cdot,x_n)}^2_{\Ltwo{\tshell{i}}}\,dx_n \Bigr)^{1/2}
  \\
  &= \sum_{i \ge 0} 2^{i/2}\,
     \norm{f}_{\Ltwo{\tshell{i} \times \Rhalf}} .
\end{align*}
It therefore suffices to bound the last sum by $\Csharp\,\Bnorm{f}$.

\emph{Step 2: splitting one tangential shell along the slabs.} Fix $i \ge 0$.
The slabs decompose $\tshell{i} \times \Rhalf$ into the disjoint union of the
sets $\tshell{i} \times \slab{l}$, $l \ge 0$, so that
$\norm{f}^2_{\Ltwo{\tshell{i} \times \Rhalf}} = \sum_{l \ge 0} a_{i,l}^2$. We
split this sum at $l = i$, which is where the value of $\max(i,l)$ in
\eqref{eq:geometry} changes, and use $\sqrt{A^2+B^2} \le A + B$:
\begin{equation*}
  \norm{f}_{\Ltwo{\tshell{i} \times \Rhalf}} \le P_i + Q_i,
  \qquad
  P_i = \Bigl(\sum_{l \le i} a_{i,l}^2\Bigr)^{1/2},
  \qquad
  Q_i = \Bigl(\sum_{l > i} a_{i,l}^2\Bigr)^{1/2} .
\end{equation*}
Steps 3 and 4 bound $\sum_{i \ge 0} 2^{i/2}P_i$ and $\sum_{i \ge 0} 2^{i/2}Q_i$
separately.

\emph{Step 3: the slabs with $l \le i$.} Here $\max(i,l) = i$, so by
\eqref{eq:geometry} the sets $\tshell{i} \times \slab{l}$ with $0 \le l \le i$
are pairwise disjoint subsets of $\shellp{i} \cup \shellp{i+1}$. Their squared
norms therefore add up to at most $\norm{f}^2_{\Ltwo{\shellp{i} \cup
\shellp{i+1}}} = G_i^2 + G_{i+1}^2$, whence
\begin{equation*}
  P_i \le \bigl(G_i^2 + G_{i+1}^2\bigr)^{1/2} \le G_i + G_{i+1},
  \qquad\text{and so}\qquad
  \sum_{i \ge 0} 2^{i/2}P_i
  \le \Bigl(1 + \frac{1}{\sqrt2}\Bigr)\Bnorm{f}
\end{equation*}
by \eqref{eq:shifted-sum}.

\emph{Step 4: the slabs with $l > i$.} Here $\max(i,l) = l$, so
$\tshell{i} \times \slab{l} \subset \shellp{l} \cup \shellp{l+1}$ and hence
$a_{i,l} \le G_l + G_{l+1}$, a bound independent of $i$. Using
$(\sum_k c_k^2)^{1/2} \le \sum_k c_k$ we obtain
$Q_i \le \sum_{l > i} a_{i,l} \le \sum_{l > i}(G_l + G_{l+1})$. All terms being
nonnegative, we may now sum over $i$ and exchange the order of summation. The
pair $(i,l)$ occurs exactly when $0 \le i < l$, so that fixing $l$ leaves the
geometric sum $\sum_{i=0}^{l-1}2^{i/2}
= (2^{l/2}-1)/(\sqrt2-1) \le 2^{l/2}/(\sqrt2-1)$, and therefore
\begin{equation*}
  \sum_{i \ge 0} 2^{i/2}Q_i
  \le \sum_{l \ge 1} \Bigl( \sum_{i=0}^{l-1} 2^{i/2} \Bigr)(G_l + G_{l+1})
  \le \frac{1}{\sqrt2-1} \sum_{l \ge 0} 2^{l/2}(G_l + G_{l+1})
  \le \frac{1}{\sqrt2-1}\Bigl(1 + \frac{1}{\sqrt2}\Bigr)\Bnorm{f},
\end{equation*}
again by \eqref{eq:shifted-sum}.

\emph{Conclusion.} Adding the bounds of Steps 3 and 4 and inserting them into
Steps 2 and 1,
\begin{equation*}
  \Bigl( \int_0^{\infty}
    \norm{f(\cdot,x_n)}_{\Bspd{\Rnm}}^2\,dx_n \Bigr)^{1/2}
  \le \Bigl(1 + \frac{1}{\sqrt2}\Bigr)
      \Bigl(1 + \frac{1}{\sqrt2-1}\Bigr)\Bnorm{f}
  = (1+\sqrt2)^2\,\Bnorm{f},
\end{equation*}
since $1 + (\sqrt2-1)^{-1} = 2 + \sqrt2$ and
$\bigl(1+2^{-1/2}\bigr)\bigl(2+\sqrt2\bigr) = 3 + 2\sqrt2 = (1+\sqrt2)^2$.
\end{proof}

\begin{corollary}[Tensor lemmas]
\label{cor:tensor}
Let $\Psi \in \Ltwo{\Rhalf}$ and define
\begin{equation*}
  (T_\Psi f)(\xpar) = \int_0^{\infty} \Psi(y_n)\,f(\xpar,y_n)\,dy_n,
  \qquad
  (M_\Psi w)(\xpar,x_n) = \Psi(x_n)\,w(\xpar).
\end{equation*}
Then
\begin{equation}
  \label{eq:tensor-bounds}
  \norm{T_\Psi}_{\Lop{\Bsp}{\Bspd{\Rnm}}}
    \le \Csharp\,\norm{\Psi}_{\Ltwo{\Rhalf}},
  \qquad
  \norm{M_\Psi}_{\Lop{\Bstd{\Rnm}}{\Bst}}
    \le \sqrt2\,\norm{\Psi}_{\Ltwo{\Rhalf}} .
\end{equation}
\end{corollary}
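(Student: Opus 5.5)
Both bounds follow at once from Lemma~\ref{lem:mixed}, the first by Cauchy--Schwarz in the normal variable and the second by a direct evaluation of the mixed norm. No further geometry of the shells is needed, since all of it has been absorbed into the constants $\Csharp$ and $\sqrt2$ of that lemma.

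For $T_\Psi$, fix $f \in \Bsp$. Minkowski's integral inequality, applied to each tangential shell, gives
\begin{equation*}
  \norm{T_\Psi f}_{\Ltwo{\tshell{i}}}
  \le \int_0^\infty |\Psi(y_n)|\,\norm{f(\cdot,y_n)}_{\Ltwo{\tshell{i}}}\,dy_n .
\end{equation*}
We multiply by $2^{i/2}$ and sum over $i$. By monotone convergence the sum may be moved inside the integral, and this yields
\begin{equation*}
  \norm{T_\Psi f}_{\Bspd{\Rnm}}
  \le \int_0^\infty |\Psi(y_n)|\,\norm{f(\cdot,y_n)}_{\Bspd{\Rnm}}\,dy_n .
\end{equation*}
The Cauchy--Schwarz inequality in $y_n$ bounds the right-hand side by
\begin{equation*}
  \norm{\Psi}_{\Ltwo{\Rhalf}}
  \Bigl(\int_0^\infty \norm{f(\cdot,y_n)}^2_{\Bspd{\Rnm}}\,dy_n\Bigr)^{1/2},
\end{equation*}
and \eqref{eq:mixed-B} then gives the first bound in \eqref{eq:tensor-bounds}. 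Along the way the integral defining $T_\Psi f$ converges absolutely for a.e.\ $\xpar$. This follows from Tonelli's theorem on each bounded shell together with the finiteness just established, so $T_\Psi f$ is well defined and measurable.

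For $M_\Psi$, fix $w \in \Bstd{\Rnm}$. We apply \eqref{eq:mixed-Bstar} to $u = M_\Psi w$. Since $u(\cdot,x_n) = \Psi(x_n)w$, we have
\begin{equation*}
  \norm{u(\cdot,x_n)}_{\Bstd{\Rnm}} = |\Psi(x_n)|\,\norm{w}_{\Bstd{\Rnm}} .
\end{equation*}
The mixed norm on the right of \eqref{eq:mixed-Bstar} is therefore exactly $\norm{\Psi}_{\Ltwo{\Rhalf}}\norm{w}_{\Bstd{\Rnm}}$. It is finite, so the lemma applies and gives the second bound with constant $\sqrt2$.

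I expect no real obstacle, since the substance lies in Lemma~\ref{lem:mixed}. The only point needing care is the measure-theoretic justification in the first part: the interchange of the shell sum with the $y_n$-integral, and the a.e.\ convergence of the integral defining $T_\Psi f$. Both are handled by Tonelli's theorem applied to nonnegative integrands before the Cauchy--Schwarz inequality is used.
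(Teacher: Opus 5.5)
Your proposal is correct and follows the paper's proof: Minkowski's integral inequality in the $\Bspd{\Rnm}$-norm, then Cauchy--Schwarz in the normal variable and \eqref{eq:mixed-B} for $T_\Psi$, and \eqref{eq:mixed-Bstar} applied to $u = M_\Psi w$ for $M_\Psi$. Your shell-by-shell application of Minkowski's inequality, followed by monotone convergence in the shell sum, is the same step written out in more detail, and it also settles the a.e.\ absolute convergence of the integral defining $T_\Psi f$.
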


\begin{proof}
For $T_\Psi$, apply Minkowski's integral inequality in the $\Bspd{\Rnm}$-norm,
then the Cauchy--Schwarz inequality in $x_n$ followed by \eqref{eq:mixed-B}:
\begin{equation*}
  \norm{T_\Psi f}_{\Bspd{\Rnm}}
  \le \int_0^{\infty} |\Psi(y_n)|\,
      \norm{f(\cdot,y_n)}_{\Bspd{\Rnm}}\,dy_n
  \le \norm{\Psi}_{\Ltwo{\Rhalf}}\,\Csharp\,\Bnorm{f}.
\end{equation*}
For $M_\Psi$ the statement is \eqref{eq:mixed-Bstar} applied to $u = M_\Psi w$,
whose inner norm is
$\norm{u(\cdot,x_n)}_{\Bstd{\Rnm}} = |\Psi(x_n)|\,\norm{w}_{\Bstd{\Rnm}}$.
\end{proof}

\subsubsection{The surface channel and the exact splitting}
\label{sec:splitting}

This subsection produces the decomposition announced in \S\ref{sec:lap-theorem}.
It has two ingredients. The first is that the surface mode $\psizero$ spans a
reducing subspace of $\Halpha$, on which the operator is a free Laplacian on
$\Rnm$, shifted by $-\alpha^2$. This is Proposition~\ref{prop:surface-reduction}.
The second is an algebraic identity satisfied by the Green kernel of the fiber
operator $\hfib(\xipar)$, which separates that surface contribution from a
Dirichlet part and a remainder. This is Lemma~\ref{lem:fiber-green}. Assembling
the fibers then gives the resolvent identity, Proposition~\ref{prop:splitting}.
Throughout we use the identification
$\Ltwo{\Rnp} = \Ltwo{\Rnm} \otimes \Ltwo{\Rhalf}$.

\begin{proposition}[Reduction of the surface channel]
\label{prop:surface-reduction}
Let $\Psurf = (\cdot,\psizero)_{\Ltwo{\Rhalf}}\,\psizero$ denote the orthogonal
projection of $\Ltwo{\Rhalf}$ onto $\mathbb{C}\psizero$, and put
\begin{equation*}
  \Hsurf = \Ltwo{\Rnm} \otimes \mathbb{C}\psizero,
  \qquad
  \Hbulk = \Hsurf^{\perp},
\end{equation*}
so that $I \otimes \Psurf$ is the orthogonal projection of $\Ltwo{\Rnp}$ onto
$\Hsurf$. Let $U : \Ltwo{\Rnm} \to \Hsurf$ be the unitary map
$Ug = g \otimes \psizero$. Then both $\Hsurf$ and $\Hbulk$ reduce $\Halpha$, the
map $U$ intertwines
\begin{equation}
  \label{eq:surface-reduction}
  \Halpha\,U = U\,\bigl( -\Delta' - \alpha^2 \bigr)
  \qquad\text{on } H^2(\Rnm),
\end{equation}
and consequently, for every $z \in \mathbb{C}\setminus\mathbb{R}$,
\begin{equation}
  \label{eq:surface-resolvent}
  \Ralpha{z}\,\bigl( I \otimes \Psurf \bigr)
  = \Rfree{n-1}{z + \alpha^2} \otimes \Psurf .
\end{equation}
\end{proposition}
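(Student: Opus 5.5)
The plan is to work entirely in the fiber picture of Proposition~\ref{prop:fibration}. Since $\psizero$ is independent of $\xipar$, the projection $I \otimes \Psurf$ becomes, under $\mathcal{F}'$, the decomposable operator $\int^{\oplus} \Psurf\,d\xipar$. It acts fiberwise by the same rank-one projection. Each fiber $\hfib(\xipar)$ has $\psizero$ as an eigenfunction with eigenvalue $|\xipar|^2-\alpha^2$. I will first check this directly: $\psizero \in H^2(\Rhalf)$, $\psizero'(0) + \alpha\psizero(0) = 0$, and $-\psizero'' = -\alpha^2\psizero$. Because $\hfib(\xipar)$ is self-adjoint, $\Psurf$ commutes with it, in the sense that $\Psurf\,\Dom(\hfib) \subset \Dom(\hfib)$ and $\Psurf\hfib v = \hfib\Psurf v$. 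This follows from $(\hfib v,\psizero) = (v,\hfib\psizero) = (|\xipar|^2-\alpha^2)(v,\psizero)$. Equivalently, $\Psurf$ commutes with every fiber resolvent $(\hfib(\xipar)-z)^{-1}$, and on $\Ran\Psurf$ that resolvent equals $(|\xipar|^2-\alpha^2-z)^{-1}\Psurf$.

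Next I integrate over $\xipar$. A bounded decomposable operator commutes with the direct integral of the resolvents, so $I\otimes\Psurf$ commutes with $\Ralpha{z}$ for all nonreal $z$. This is exactly the statement that $\Hsurf$, and hence $\Hbulk = \Hsurf^\perp$, reduces $\Halpha$. For \eqref{eq:surface-resolvent}, take $f = g\otimes\phi$ with $g\in\Ltwo{\Rnm}$ and $\phi\in\Ltwo{\Rhalf}$. Then $(I\otimes\Psurf)f = (\phi,\psizero)\,g\otimes\psizero$, whose partial Fourier transform is $(\phi,\psizero)\widehat g(\xipar)\psizero(x_n)$. Applying the fiber resolvent multiplies this by $(|\xipar|^2-(z+\alpha^2))^{-1}$. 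Transforming back gives $(\phi,\psizero)\,\bigl(\Rfree{n-1}{z+\alpha^2}g\bigr)\otimes\psizero$, which is $(\Rfree{n-1}{z+\alpha^2}\otimes\Psurf)f$. Both sides are bounded, and linear combinations of such product vectors are dense in $\Ltwo{\Rnm}\otimes\Ltwo{\Rhalf}$, so the identity extends to all of $\Ltwo{\Rnp}$. Here $z+\alpha^2$ is nonreal, so $\Rfree{n-1}{z+\alpha^2}$ is defined.

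The intertwining \eqref{eq:surface-reduction} can be verified either directly or through the fibers. Directly: for $g\in H^2(\Rnm)$, the function $g\otimes\psizero$ lies in $H^2(\Rnp)$, and its Robin trace vanishes because $\psizero$ satisfies the boundary condition. Moreover $-\Delta(g\psizero) = (-\Delta' g)\psizero + g(-\psizero'') = ((-\Delta'-\alpha^2)g)\psizero$. So $U$ maps $H^2(\Rnm)$ into $\Dom(\Halpha)$ and the identity holds there. Since $U$ is unitary onto $\Hsurf$ and $-\Delta'-\alpha^2$ is self-adjoint on $H^2(\Rnm)$, this shows that the part of $\Halpha$ in $\Hsurf$ is unitarily equivalent to $-\Delta'-\alpha^2$. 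Here one uses that the restriction of a self-adjoint operator to a reducing subspace is self-adjoint, and that it extends the symmetric operator $U(-\Delta'-\alpha^2)U^*$, which is itself self-adjoint, so the two agree.

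I expect no real obstacle. The only point needing care is the domain bookkeeping in the reducing-subspace claim. I will handle it through the resolvent commutation, which is the cleanest criterion for a bounded projection to reduce an unbounded self-adjoint operator. That way I avoid any direct domain arguments for $\Dom(\Halpha)\cap\Hsurf$.
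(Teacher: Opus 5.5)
Your proof is correct, but it runs in the opposite logical direction from the paper's.

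The paper stays in physical space and uses only the self-adjointness of $\Halpha$ (Theorem~\ref{thm:sa-spec}(a)). It checks \eqref{eq:surface-reduction} by the same direct computation you give. It reads this as saying that the self-adjoint operator $U(-\Delta'-\alpha^2)U^{*}$ on $\Hsurf$ is a restriction of $\Halpha$, so $\Ralpha{z}$ maps $\Hsurf$ into itself and acts there as $U(-\Delta'-\alpha^2-z)^{-1}U^{*}$. It then passes invariance to $\Hbulk=\Hsurf^{\perp}$ by self-adjointness. Reduction and \eqref{eq:surface-resolvent} are thus genuine consequences of the intertwining, as the word ``consequently'' in the statement indicates.

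You instead take Proposition~\ref{prop:fibration} as input. You obtain reduction and the resolvent formula fiberwise, from the fact that the $\xipar$-independent vector $\psizero$ is an eigenvector of every $\hfib(\xipar)$. The intertwining then becomes a separate verification rather than the source of the other two claims.

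Your route gives transparent domain bookkeeping: commuting decomposable bounded operators, with no need to argue about $\Dom(\Halpha)\cap\Hsurf$. It also makes \eqref{eq:surface-resolvent} a one-line multiplier computation, which is exactly the fiberwise reading the paper itself uses in the proof of Proposition~\ref{prop:splitting}. The paper's route is independent of the fiber decomposition and the direct-integral machinery, needing nothing beyond self-adjointness and the tensor form of $\Hsurf$. Your use of Proposition~\ref{prop:fibration} is not circular, since its proof in Appendix~\ref{app:sa-spec} does not invoke the present proposition.
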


\begin{proof}
For $g \in H^2(\Rnm)$ the function $u = Ug$ lies in $H^2(\Rnp)$ and satisfies
$\psizero'(0) = -\alpha\,\psizero(0)$, hence
$\gamma_1 u - \alpha\gamma_0 u = 0$ and $u \in \Dom(\Halpha)$. Moreover
$\Halpha u = \bigl( (-\Delta' - \alpha^2)g \bigr) \otimes \psizero$, because
$\psizero'' = \alpha^2\psizero$. This is \eqref{eq:surface-reduction}, and it
shows that $\Hsurf$ is invariant. Since $\Halpha$ is self-adjoint
(Theorem~\ref{thm:sa-spec}(a)), so is $\Hbulk = \Hsurf^{\perp}$. The resolvent
therefore commutes with $I \otimes \Psurf$ and acts on $\Hsurf$ as
$U(-\Delta' - \alpha^2 - z)^{-1}U^{*}$, which is
$\Rfree{n-1}{z+\alpha^2} \otimes \Psurf$. The latter operator vanishes on
$\Hbulk$, so \eqref{eq:surface-resolvent} holds on all of $\Ltwo{\Rnp}$.
\end{proof}

We turn to the fiber operator. The next lemma is a computation in one variable:
$\xipar$ enters only through $\kappa$, and no operator on $\Rnp$ occurs in it.

\begin{lemma}[The fiber Green kernel and its splitting]
\label{lem:fiber-green}
Let $z \in \mathbb{C}\setminus\mathbb{R}$ and set
\begin{equation*}
  \kappa = \kappa(\xipar,z) = \bigl( |\xipar|^2 - z \bigr)^{1/2},
  \qquad \re \kappa > 0 .
\end{equation*}
Then the Green kernel of $\hfib(\xipar) - z$ on $\Rhalf$ is
\begin{equation}
  \label{eq:fiber-green}
  g(x_n,y_n;\xipar,z)
  = \frac{1}{2\kappa}
    \Bigl( e^{-\kappa|x_n - y_n|} + r(\kappa)\,e^{-\kappa(x_n+y_n)} \Bigr),
  \qquad
  r(\kappa) = \frac{\kappa + \alpha}{\kappa - \alpha},
\end{equation}
and, writing $s = x_n + y_n$, it admits the exact decomposition
\begin{equation}
  \label{eq:fiber-splitting}
  g = \frac{1}{2\kappa}
        \Bigl( e^{-\kappa|x_n-y_n|} - e^{-\kappa s} \Bigr)
    + \frac{\psizero(x_n)\psizero(y_n)}{|\xipar|^2 - (z+\alpha^2)}
    + k(\xipar,x_n,y_n;z),
\end{equation}
where
\begin{equation}
  \label{eq:K-kernel}
  k(\xipar,x_n,y_n;z)
  = \frac{(\kappa+\alpha)e^{-\kappa s} - 2\alpha e^{-\alpha s}}
         {|\xipar|^2 - (z+\alpha^2)} .
\end{equation}
\end{lemma}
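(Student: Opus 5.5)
The proof has two parts. First we construct the Green kernel by the standard one-dimensional ansatz. Then we verify the splitting \eqref{eq:fiber-splitting} by direct algebra.

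\emph{The Green kernel.} For $z \notin \mathbb{R}$ and $\re\kappa>0$, the solutions of $-v''+\kappa^2 v=0$ are spanned by $e^{\pm\kappa x_n}$. We seek $g(\cdot,y_n)$ as the free half-line kernel $\frac{1}{2\kappa}e^{-\kappa|x_n-y_n|}$ plus a reflected term $\frac{r}{2\kappa}e^{-\kappa(x_n+y_n)}$. The free part already produces the unit jump $-1$ in $\partial_{x_n}g$ at $x_n=y_n$, and both terms decay as $x_n\to\infty$. The reflected term solves the homogeneous equation. So it only remains to impose the Robin condition $v'(0)+\alpha v(0)=0$ at $x_n=0<y_n$. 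There
\[
g=\tfrac{1}{2\kappa}(1+r)e^{-\kappa y_n},
\qquad
\partial_{x_n}g=\tfrac{1}{2\kappa}(\kappa-\kappa r)e^{-\kappa y_n},
\]
so the condition reads $\kappa(1-r)+\alpha(1+r)=0$, which gives $r=(\kappa+\alpha)/(\kappa-\alpha)$.

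The denominator never vanishes. Since $z\notin\mathbb{R}$, we have $\kappa^2=|\xipar|^2-z\notin\mathbb{R}$, so $\kappa\neq\alpha$. The resulting kernel is bounded by $C|\kappa|^{-1}e^{-\re\kappa\,|x_n-y_n|}$ plus a similar term, so it defines a bounded operator on $\Ltwo{\Rhalf}$. This operator maps into the domain \eqref{eq:fiber-domain} and inverts $\hfib(\xipar)-z$. Self-adjointness of $\hfib(\xipar)$ (Proposition~\ref{prop:fibration}) makes the inverse unique, so this is the resolvent kernel.

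\emph{The splitting.} We subtract the first two terms of \eqref{eq:fiber-splitting} from $g$ and check that what remains is \eqref{eq:K-kernel}. The identity to use is
\[
|\xipar|^2-(z+\alpha^2)=\kappa^2-\alpha^2=(\kappa-\alpha)(\kappa+\alpha).
\]
The free parts cancel, and the reflected parts combine to
\[
\frac{1}{2\kappa}(r+1)e^{-\kappa s}=\frac{e^{-\kappa s}}{\kappa-\alpha}=\frac{(\kappa+\alpha)e^{-\kappa s}}{\kappa^2-\alpha^2}.
\]
The surface term equals $2\alpha e^{-\alpha s}/(\kappa^2-\alpha^2)$, because $\psizero(x_n)\psizero(y_n)=2\alpha e^{-\alpha s}$. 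Subtracting it gives exactly \eqref{eq:K-kernel}.

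The computation is routine. The only delicate point, which is not really an obstacle, is the nonvanishing of $\kappa\pm\alpha$. As shown above, $\kappa\neq\alpha$ for nonreal $z$, and $\kappa\neq-\alpha$ holds trivially since $\re\kappa>0$.
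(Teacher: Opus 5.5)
Your proposal is correct and follows the paper's proof. The paper obtains the same reflection coefficient from the same Robin equation $\kappa(1-r)=-\alpha(1+r)$, phrased there through the Wronskian of $e^{-\kappa x_n}$ and $e^{\kappa x_n}+r\,e^{-\kappa x_n}$, and it verifies the splitting by the same algebra, namely $r+1=2\kappa/(\kappa-\alpha)$ together with $\kappa^2-\alpha^2=|\xi'|^2-(z+\alpha^2)$. Your remarks on $\kappa\neq\pm\alpha$ and on identifying the kernel with the resolvent spell out points the paper leaves implicit.
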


\begin{proof}
A decaying solution of $(-d^2/dx_n^2 + \kappa^2)v = 0$ on $\Rhalf$ is
$\psi(x_n) = e^{-\kappa x_n}$, and
$\varphi(x_n) = e^{\kappa x_n} + r\,e^{-\kappa x_n}$ satisfies
$\varphi'(0) = -\alpha\varphi(0)$ precisely when
$\kappa(1 - r) = -\alpha(1+r)$, that is $r = (\kappa+\alpha)/(\kappa-\alpha)$.
Their Wronskian is $W = \varphi\psi' - \varphi'\psi = -2\kappa$, and the Green
kernel of $-d^2/dx_n^2 + \kappa^2$ under the Robin condition at $x_n = 0$ is
\begin{equation*}
  g(x_n,y_n;\xipar,z)
  = -\frac{1}{W}
  \begin{cases}
    \varphi(x_n)\,\psi(y_n), & x_n < y_n, \\[1mm]
    \varphi(y_n)\,\psi(x_n), & x_n > y_n,
  \end{cases}
\end{equation*}
which is \eqref{eq:fiber-green}. (The sign is fixed by the free case: for
$r = 0$ one recovers $(2\kappa)^{-1}e^{-\kappa|x_n-y_n|}$, the kernel of
$(-d^2/dx_n^2 + \kappa^2)^{-1}$ on $\mathbb{R}$.)

For \eqref{eq:fiber-splitting}, insert
$r(\kappa) = -1 + 2\kappa/(\kappa-\alpha)$ into \eqref{eq:fiber-green}:
\begin{equation}
  \label{eq:g-minus-gD}
  g = \frac{1}{2\kappa}
      \Bigl( e^{-\kappa|x_n-y_n|} - e^{-\kappa s} \Bigr)
    + \frac{e^{-\kappa s}}{\kappa - \alpha} .
\end{equation}
Since $\kappa^2 - \alpha^2 = |\xipar|^2 - (z+\alpha^2)$ and
$\psizero(x_n)\psizero(y_n) = 2\alpha e^{-\alpha s}$, the last term of
\eqref{eq:g-minus-gD} equals
\begin{equation*}
  \frac{e^{-\kappa s}}{\kappa-\alpha}
  = \frac{(\kappa+\alpha)e^{-\kappa s}}{\kappa^2 - \alpha^2}
  = \frac{2\alpha e^{-\alpha s}}{|\xipar|^2 - (z+\alpha^2)}
    + k(\xipar,x_n,y_n;z)
\end{equation*}
with $k$ as in \eqref{eq:K-kernel}. Substituting this into
\eqref{eq:g-minus-gD} gives \eqref{eq:fiber-splitting}.
\end{proof}

Each of the three terms of \eqref{eq:fiber-splitting} is the fiber kernel of an
operator on $\Ltwo{\Rnp}$, and assembling the fibers turns the lemma into the
identity announced in \S\ref{sec:lap-theorem}.

\begin{proposition}[Exact splitting of the resolvent]
\label{prop:splitting}
Let $\Krem{z}$ denote the operator on $\Ltwo{\Rnp}$ which, after partial Fourier
transform in $\xpar$, acts in each fiber as the integral operator on
$\Ltwo{\Rhalf}$ with kernel \eqref{eq:K-kernel}. Then, for every
$z \in \mathbb{C}\setminus\mathbb{R}$,
\begin{equation}
  \label{eq:splitting-preview}
  \Ralpha{z} = \RD{z}
    + \Rfree{n-1}{z+\alpha^2} \otimes \Psurf
    + \Krem{z},
\end{equation}
where $\RD{z}$ is the resolvent of the Dirichlet Laplacian on $\Rnp$.
\end{proposition}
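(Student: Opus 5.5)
The plan is to prove \eqref{eq:splitting-preview} fiberwise: after conjugating by the partial Fourier transform $\mathcal{F}'$, each of the four operators in the identity becomes a direct integral of integral operators on $\Ltwo{\Rhalf}$. The identity then follows from Lemma~\ref{lem:fiber-green}, once each of the three terms of \eqref{eq:fiber-splitting} is identified with the fiber kernel of the corresponding operator.

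The first step handles $\Ralpha{z}$ itself. By Proposition~\ref{prop:fibration},
\[
  \mathcal{F}'\Ralpha{z}(\mathcal{F}')^{-1} = \int^{\oplus}_{\Rnm}\bigl(\hfib(\xipar)-z\bigr)^{-1}\,d\xipar,
\]
and for each $\xipar$ the fiber resolvent $(\hfib(\xipar)-z)^{-1}$ is the integral operator with kernel \eqref{eq:fiber-green}. To confirm this I will check that the Wronskian construction of Lemma~\ref{lem:fiber-green} gives a bounded operator mapping $\Ltwo{\Rhalf}$ into the domain \eqref{eq:fiber-domain} and inverting $\hfib(\xipar)-z$. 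This is standard Sturm--Liouville theory: $\re\kappa>0$ gives decay, and $\kappa\neq\pm\alpha$ because $z\notin\mathbb{R}$, so $r(\kappa)$ is finite.

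Next come the Dirichlet and surface terms. The Dirichlet Laplacian on $\Rnp$ is fibered in the same way, with fibers $-d^2/dx_n^2+|\xipar|^2$ under the condition $v(0)=0$. Its Green kernel is $(2\kappa)^{-1}(e^{-\kappa|x_n-y_n|}-e^{-\kappa s})$, which is the same Wronskian formula with $r=-1$, or equivalently the odd reflection of the free kernel. For the surface term I will use \eqref{eq:surface-resolvent}: under $\mathcal{F}'$, the operator $\Rfree{n-1}{z+\alpha^2}\otimes\Psurf$ acts on the fiber at $\xipar$ as multiplication by $(|\xipar|^2-z-\alpha^2)^{-1}$ tensored with $\Psurf$. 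Its kernel is therefore $\psizero(x_n)\psizero(y_n)/(|\xipar|^2-(z+\alpha^2))$, which is exactly the middle term of \eqref{eq:fiber-splitting}.

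The last step is to define $\Krem{z}$ as the difference $\Ralpha{z}-\RD{z}-\Rfree{n-1}{z+\alpha^2}\otimes\Psurf$. This difference is bounded on $\Ltwo{\Rnp}$, and subtracting kernels shows that its fiber kernel is \eqref{eq:K-kernel}. The main point requiring care is the converse reading of the statement, namely that the direct integral of the kernel operators \eqref{eq:K-kernel} is a well-defined bounded operator. This is needed only to make the definition of $\Krem{z}$ in the proposition meaningful. The difference construction settles it. As an independent check, the norm of each fiber operator is bounded uniformly in $\xipar$: both the numerator and the denominator of \eqref{eq:K-kernel} vanish only at $\kappa=\alpha$, which is excluded, and for large $|\xipar|$ the kernel is $O(|\kappa|^{-1})$ in Schur norm. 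Measurability in $\xipar$ is clear from the explicit formula, so the direct integrals are legitimate and the identity holds as an equality of bounded operators on $\Ltwo{\Rnp}$.
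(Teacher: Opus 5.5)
Your proposal is correct and is essentially the paper's proof. You conjugate by $\mathcal{F}'$, read off from Lemma~\ref{lem:fiber-green} that the first two summands of \eqref{eq:fiber-splitting} are the fiber kernels of $\RD{z}$ (the odd image, $r=-1$) and of $\Rfree{n-1}{z+\alpha^2}\otimes\Psurf$, and note that the third is the fiber kernel of $\Krem{z}$ by definition.

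Your extra check that $\Krem{z}$ is a well-defined bounded operator, equal to the difference of the other three, is a harmless supplement that the paper leaves implicit. Only the nonvanishing of the denominator $\kappa^2-\alpha^2$ for $z\notin\mathbb{R}$ is needed there; your side remark that the numerator of \eqref{eq:K-kernel} vanishes only at $\kappa=\alpha$ is not true for $s>0$, but it plays no role.
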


\begin{proof}
By Proposition~\ref{prop:fibration} the partial Fourier transform $\mathcal{F}'$
carries $\Ralpha{z}$ into the direct integral of the fiber resolvents
$\bigl(\hfib(\xipar) - z\bigr)^{-1}$, whose integral kernels are the functions
$g(\cdot,\cdot;\xipar,z)$ of \eqref{eq:fiber-green}. It therefore suffices to
identify the three terms of \eqref{eq:fiber-splitting} fiberwise.

The Dirichlet Laplacian on $\Rnp$ fibers in the same way, its fiber being
$-d^2/dx_n^2 + |\xipar|^2$ with the boundary condition $v(0) = 0$, and the
corresponding Green kernel is obtained from the free one by the odd image:
it is $(2\kappa)^{-1}\bigl(e^{-\kappa|x_n-y_n|} - e^{-\kappa s}\bigr)$, the first
term of \eqref{eq:fiber-splitting}. The second term is the fiber kernel of
$\Rfree{n-1}{z+\alpha^2} \otimes \Psurf$, since $\Psurf$ has kernel
$\psizero(x_n)\psizero(y_n)$ while $\Rfree{n-1}{z+\alpha^2}$ becomes,
after partial Fourier transform, multiplication by
$\bigl(|\xipar|^2 - (z+\alpha^2)\bigr)^{-1}$. This is
\eqref{eq:surface-resolvent} read fiberwise. The third term is the fiber kernel
of $\Krem{z}$ by definition.
\end{proof}

\begin{remark}[Krein's formula and the choice of comparison operator]
\label{rem:krein}
Identity \eqref{eq:g-minus-gD} is the half-space instance of the Krein-type
resolvent formula for the pair (Dirichlet, Robin),
\begin{equation*}
  \Ralpha{z} - \RD{z}
  = \mathcal{P}_z\,(\DN_z - \alpha)^{-1}\,\mathcal{P}_z^{\top},
\end{equation*}
where $\mathcal{P}_z$ is the Poisson operator of $-\Delta - z$ and $\DN_z$ the
Dirichlet-to-Neumann operator, whose symbol on the flat boundary is
$\kappa(\xipar,z)$ (see \cite{BehrndtLanger2007,LotoreichikRohleder2012}). The
comparison with the Dirichlet rather than the Neumann realization keeps the
weight bounded at the bulk threshold $\kappa \to 0$: it is
$(\kappa - \alpha)^{-1}$ here, against $\kappa^{-1}(\kappa-\alpha)^{-1}$ for the
Neumann realization, which is as singular as the free resolvent itself.
\end{remark}

\subsubsection{Estimates for the three terms}
\label{sec:three-terms}

\smallskip
\noindent\textbf{The Dirichlet term.}
For $f$ on $\Rnp$ let $f_{\mathrm{odd}}$ denote its extension to $\Rn$ that is
odd in $x_n$. Then
$\RD{z}f = \bigl( \Rfree{n}{z}f_{\mathrm{odd}} \bigr)\big|_{\Rnp}$, and since
$\norm{f_{\mathrm{odd}}}_{\Bspd{\Rn}} = \sqrt2\,\Bnorm{f}$ while restriction to
$\Rnp$ does not increase the $\Bst$-norm, Theorem~\ref{thm:AH-classical} gives
at once
\begin{equation}
  \label{eq:dirichlet-bound}
  \norm{\RD{z}}_{\Lop{\Bsp}{\Bst}} \le \sqrt2\,\CAH{n}\,|z|^{-1/2},
  \qquad z \in \mathbb{C}\setminus\mathbb{R},
\end{equation}
together with the existence of the weak-$*$ boundary values $\RD{E \pm i0}$ for
$E > 0$ and their continuity in $E$. For $E < 0$ the point $z = E$ lies in the
resolvent set of the Dirichlet Laplacian and $\RD{\cdot}$ is analytic near it.

\smallskip
\noindent\textbf{The surface term.}
By Proposition~\ref{prop:surface-reduction} and Corollary~\ref{cor:tensor}
applied with $\Psi = \psizero$, first to reduce $f$ to the boundary and then to
lift the result back,
\begin{align}
  \label{eq:surface-bound}
  \bigl\| \Rfree{n-1}{z+\alpha^2} \otimes \Psurf
    \bigr\|_{\Lop{\Bsp}{\Bst}}
  &\le \sqrt2\,\Csharp
       \bigl\| \Rfree{n-1}{z+\alpha^2}
       \bigr\|_{\Lop{\Bspd{\Rnm}}{\Bstd{\Rnm}}}
  \notag \\
  &\le \sqrt2\,\Csharp\,\CAH{n-1}\,|z+\alpha^2|^{-1/2},
\end{align}
again with weak-$*$ boundary values and continuity for $E > -\alpha^2$, all
inherited from Theorem~\ref{thm:AH-classical} in dimension $n-1$.

\smallskip
\noindent\textbf{The remainder.}
Only this term requires an argument specific to the present problem, and it
rests on the following elementary bound.

\begin{lemma}[Uniform bound for the remainder kernel]
\label{lem:K-kernel-bound}
For all $\xipar \in \Rnm$, all $x_n,y_n > 0$ and all
$z \in \mathbb{C}\setminus\mathbb{R}$, the kernel \eqref{eq:K-kernel} satisfies
\begin{equation}
  \label{eq:K-kernel-bound}
  |k(\xipar,x_n,y_n;z)| \le \frac{1}{\alpha}.
\end{equation}
\end{lemma}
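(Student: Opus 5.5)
The plan is to rewrite the kernel \eqref{eq:K-kernel} so that the apparent singularity at $\kappa = \alpha$ disappears, and then bound each piece separately. Since $z \notin \mathbb{R}$, the denominator $\kappa^2 - \alpha^2 = |\xipar|^2 - (z+\alpha^2)$ never vanishes, so $\kappa \ne \pm\alpha$ and all the manipulations below are legitimate.

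First, split the numerator as $(\kappa+\alpha)e^{-\kappa s} - 2\alpha e^{-\alpha s} = (\kappa+\alpha)\bigl(e^{-\kappa s} - e^{-\alpha s}\bigr) + (\kappa - \alpha)e^{-\alpha s}$. Dividing by $(\kappa-\alpha)(\kappa+\alpha)$ gives the exact identity
\begin{equation*}
  k(\xipar,x_n,y_n;z)
  = \frac{e^{-\kappa s} - e^{-\alpha s}}{\kappa - \alpha}
    + \frac{e^{-\alpha s}}{\kappa + \alpha},
  \qquad s = x_n + y_n > 0 .
\end{equation*}
The first term is a divided difference of $w \mapsto e^{-ws}$, and the second term is harmless.

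Second, bound the two terms. For the second term, $|\kappa + \alpha| \ge \re\kappa + \alpha > \alpha$ because $\re\kappa > 0$, so its modulus is at most $e^{-\alpha s}/\alpha$. For the first term, write the divided difference along the segment from $\alpha$ to $\kappa$:
\begin{equation*}
  \frac{e^{-\kappa s} - e^{-\alpha s}}{\kappa - \alpha}
  = -s\int_0^1 e^{-(\alpha + t(\kappa-\alpha))s}\,dt .
\end{equation*}
The real part of the exponent's coefficient is $(1-t)\alpha + t\re\kappa \ge (1-t)\alpha$. Hence the modulus of the first term is at most
\begin{equation*}
  s\int_0^1 e^{-(1-t)\alpha s}\,dt = \frac{1 - e^{-\alpha s}}{\alpha}.
\end{equation*}

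Adding the two bounds, the factors $e^{-\alpha s}$ cancel exactly, and this gives $|k| \le \alpha^{-1}$ uniformly in $\xipar$, $x_n$, $y_n$ and $z$, which is \eqref{eq:K-kernel-bound}.

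The only delicate point is finding the decomposition. It must absorb the removable singularity at $\kappa = \alpha$ and keep the bound uniform both as $\kappa \to 0$ and for large $s$. The divided-difference form handles this, because positivity of $\re\kappa$ is used only through the convexity of the exponent along the segment.
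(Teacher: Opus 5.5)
Your proof is correct and follows the paper's argument essentially step for step. You use the same splitting of the numerator, which gives a divided difference of $w\mapsto e^{-ws}$ plus $e^{-\alpha s}/(\kappa+\alpha)$. You represent the divided difference by the same integral along the segment from $\alpha$ to $\kappa$, and the factors $e^{-\alpha s}$ cancel in the same way when the two bounds are added.
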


\begin{proof}
The denominator of \eqref{eq:K-kernel} factors as
$\kappa^2 - \alpha^2 = (\kappa-\alpha)(\kappa+\alpha)$, so we split the
numerator to match, writing it as
$(\kappa+\alpha)\bigl(e^{-\kappa s} - e^{-\alpha s}\bigr)
 + (\kappa-\alpha)\,e^{-\alpha s}$. This gives
\begin{equation*}
  k = \frac{e^{-\kappa s} - e^{-\alpha s}}{\kappa - \alpha}
      + \frac{e^{-\alpha s}}{\kappa + \alpha} ,
\end{equation*}
and the second term is already harmless, since
$\re(\kappa+\alpha) \ge \alpha > 0$. The first is a difference quotient of
$w \mapsto e^{-ws}$ between $w = \alpha$ and $w = \kappa$, hence an average of
the derivative along the segment joining them. Parametrizing that segment by
$w_t = \alpha + t(\kappa-\alpha)$, $t \in [0,1]$, and using
$\frac{d}{dt}e^{-w_t s} = -s(\kappa-\alpha)e^{-w_t s}$,
\begin{equation*}
  \frac{e^{-\kappa s} - e^{-\alpha s}}{\kappa - \alpha}
  = -\,s\int_0^1 e^{-w_t s}\,dt .
\end{equation*}
The whole segment lies in the half-plane $\re w > 0$, because
$\re w_t = (1-t)\alpha + t\,\re\kappa \ge (1-t)\alpha$, and therefore
\begin{equation*}
  \Bigl| s\int_0^1 e^{-w_t s}\,dt \Bigr|
  \le s \int_0^1 e^{-(1-t)\alpha s}\,dt
  = \frac{1 - e^{-\alpha s}}{\alpha} .
\end{equation*}
Since $|e^{-\alpha s}/(\kappa+\alpha)| \le e^{-\alpha s}/\alpha$, adding the two
bounds gives \eqref{eq:K-kernel-bound}.
\end{proof}

\begin{proposition}[The remainder is uniformly bounded]
\label{prop:K-bounded}
Let $n \ge 2$ and $\alpha > 0$. Then $\Krem{z}$ extends from
$C_c^{\infty}(\overline{\Rnp})$ to a bounded operator $\Bsp \to \Bst$ with
\begin{equation}
  \label{eq:K-bound}
  \norm{\Krem{z}}_{\Lop{\Bsp}{\Bst}} \le \frac{\sqrt2}{\alpha},
  \qquad z \in \mathbb{C}\setminus\mathbb{R},
\end{equation}
uniformly in $z$, in particular uniformly up to and across both thresholds.
Moreover, for all $f,g \in \Bsp$ the map $z \mapsto (\Krem{z}f,g)$ extends
continuously to each of the closed half-planes $\{\pm\im z \ge 0\}$, and the
limiting operators $\Krem{E \pm i0}$, $E \in \mathbb{R}$, obey
\eqref{eq:K-bound}.
\end{proposition}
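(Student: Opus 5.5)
The plan is to reduce everything to the pointwise kernel bound of Lemma~\ref{lem:K-kernel-bound}, working fiberwise after the partial Fourier transform and then returning to $\Bsp$, $\Bst$ through the slab estimate (Lemma~\ref{lem:slab}).

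\emph{Step 1: an $L^1$--$L^\infty$ fiber bound.} For $f \in C_c^\infty(\overline{\Rnp})$ put $\widehat f(\xipar,y_n) = (\mathcal{F}'f)(\xipar,y_n)$. In each fiber
\[
  \widehat{\Krem{z}f}(\xipar,x_n) = \int_0^\infty k(\xipar,x_n,y_n;z)\,\widehat f(\xipar,y_n)\,dy_n .
\]
By \eqref{eq:K-kernel-bound} and Minkowski's inequality in $L^2(d\xipar)$, using Plancherel on $\Rnm$ at each fixed height, we get for every $x_n > 0$
\[
  \norm{\Krem{z}f(\cdot,x_n)}_{\Ltwo{\Rnm}}
  \le \frac1\alpha\int_0^\infty \norm{f(\cdot,y_n)}_{\Ltwo{\Rnm}}\,dy_n
  \le \frac{\sqrt2}{\alpha}\,\Bnorm{f},
\]
where the last inequality is \eqref{eq:slab-2}. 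So $\Krem{z}f$ lies in $L^\infty(\Rhalf_{x_n};\Ltwo{\Rnm})$ with a bound uniform in $z$.

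\emph{Step 2: passage to $\Bst$.} The inclusion $\shellp{j} \subset \Rnm \times (0,2^j)$ gives
\[
  \norm{u}^2_{\Ltwo{\shellp{j}}} \le 2^j \sup_{x_n}\norm{u(\cdot,x_n)}^2_{\Ltwo{\Rnm}} .
\]
Hence $\Bstnorm{u} \le \sup_{x_n}\norm{u(\cdot,x_n)}_{\Ltwo{\Rnm}}$, and \eqref{eq:K-bound} follows. The constant should come out as $\sqrt2/\alpha$ exactly as stated. Extension to all of $\Bsp$ then follows by density of $C_c^\infty(\overline{\Rnp})$ in $\Bsp$.

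\emph{Step 3: continuity up to the real axis.} Fix $f,g \in C_c^\infty(\overline{\Rnp})$. By Parseval,
\[
  (\Krem{z}f,g) = \int_{\Rnm}\iint k(\xipar,x_n,y_n;z)\,\widehat f(\xipar,y_n)\,\overline{\widehat g(\xipar,x_n)}\,dy_n\,dx_n\,d\xipar .
\]
The kernel extends continuously in $z$ to each closed half-plane, with $\kappa$ continued as $(|\xipar|^2 - E \mp i0)^{1/2}$, $\re\kappa \ge 0$. The only possible singularity is at $\kappa = \alpha$, and there the difference quotient in the proof of Lemma~\ref{lem:K-kernel-bound} is removable. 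That proof only used $\re\kappa \ge 0$, so the bound $1/\alpha$ persists in the limit.

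The majorant $\alpha^{-1}|\widehat f(\xipar,y_n)||\widehat g(\xipar,x_n)|$ is integrable. Indeed, by Cauchy--Schwarz in $\xipar$ its integral is at most $\alpha^{-1}\int\norm{f(\cdot,y_n)}\,dy_n\int\norm{g(\cdot,x_n)}\,dx_n$, which is finite. Dominated convergence therefore gives continuity of $z \mapsto (\Krem{z}f,g)$ on the closed half-planes.

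For general $f,g \in \Bsp$, continuity follows by the uniform bound and an $\eps/3$ argument. The same argument shows that the limits $\Krem{E \pm i0}$ obey \eqref{eq:K-bound}, since the Step~1 bound holds for the limiting kernel as well.

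\emph{Expected obstacle.} The main point needing care is the $z$-continuity of the kernel at boundary points where $\re\kappa = 0$, that is, for $|\xipar|^2 < E$. There the factor $e^{-\kappa s}$ is purely oscillatory, so no decay in $s$ is available. The argument above avoids needing any decay, because the majorant uses only the uniform bound $1/\alpha$ together with the $L^1(dy_n)$ integrability supplied by the slab lemma.
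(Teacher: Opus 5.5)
Your proposal is correct and follows the paper's proof essentially step by step. Like the paper, you bound $|\widehat{\Krem{z}f}(\xipar,x_n)|$ by $\alpha^{-1}\Theta_f(\xipar)$ with $\Theta_f(\xipar)=\int_0^\infty|\widehat f(\xipar,y_n)|\,dy_n$, control $\|\Theta_f\|_{L^2}$ by the slab estimate, use $\shellp{j}\subset\Rnm\times(0,2^j)$ to get $\sqrt2/\alpha$, and obtain the boundary values by dominated convergence with the $z$-independent majorant $\alpha^{-1}\Theta_f\Theta_g$. The only differences are cosmetic: the paper works directly with $f,g\in\Bsp$ instead of passing through density and an $\varepsilon/3$ argument, and it discards the null set $|\xipar|^2=\re z$ rather than observing, as you do, that $\kappa$ and the kernel (removable at $\kappa=\alpha$) are continuous on the whole closed half-plane.
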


\begin{proof}
Let $f \in \Bsp$ and put
$\Theta_f(\xipar) = \int_0^{\infty} |\widehat{f}(\xipar,y_n)|\,dy_n$. By
Minkowski's integral inequality, Plancherel's theorem in $\xpar$ and
Lemma~\ref{lem:slab},
\begin{equation}
  \label{eq:Theta-bound}
  \norm{\Theta_f}_{\Ltwo{\Rnm}}
  \le \int_0^{\infty} \norm{\widehat{f}(\cdot,y_n)}_{\Ltwo{\Rnm}}\,dy_n
  = \int_0^{\infty} \norm{f(\cdot,y_n)}_{\Ltwo{\Rnm}}\,dy_n
  \le \sqrt2\,\Bnorm{f}.
\end{equation}
By Lemma~\ref{lem:K-kernel-bound},
\begin{equation}
  \label{eq:K-pointwise}
  \bigl| \widehat{\Krem{z}f}(\xipar,x_n) \bigr|
  = \Bigl| \int_0^{\infty} k(\xipar,x_n,y_n;z)\,
      \widehat{f}(\xipar,y_n)\,dy_n \Bigr|
  \le \frac{1}{\alpha}\,\Theta_f(\xipar)
\end{equation}
for every $x_n > 0$, the right-hand side being independent of $x_n$. Using
$\shellp{j} \subset \{|\xpar| < 2^{j}\} \times \{0 < x_n < 2^{j}\}$ together with
Plancherel's theorem in $\xpar$,
\begin{equation*}
  \norm{\Krem{z}f}_{\Ltwo{\shellp{j}}}^2
  \le \int_0^{2^{j}}
      \bigl\| \widehat{\Krem{z}f}(\cdot,x_n) \bigr\|_{\Ltwo{\Rnm}}^2\,dx_n
  \le \frac{2^{j}}{\alpha^2}\,\norm{\Theta_f}_{\Ltwo{\Rnm}}^2
  \le \frac{2^{j+1}}{\alpha^2}\,\Bnorm{f}^2 .
\end{equation*}
Multiplying by $2^{-j}$ and taking the supremum over $j$ gives
\eqref{eq:K-bound}. Neither a decomposition of the dual variable nor
any lower bound on $\re\kappa$ has been used.

For the boundary values, let $f,g \in \Bsp$. By \eqref{eq:K-kernel-bound},
\eqref{eq:Theta-bound} and the Cauchy--Schwarz inequality, the triple integral
\begin{equation*}
  (\Krem{z}f,g)
  = \int_{\Rnm} \int_0^{\infty} \int_0^{\infty}
    k(\xipar,x_n,y_n;z)\,\widehat{f}(\xipar,y_n)\,
    \overline{\widehat{g}(\xipar,x_n)}\,dy_n\,dx_n\,d\xipar
\end{equation*}
converges absolutely, with the $z$-independent majorant
$\alpha^{-1}\Theta_f\Theta_g \in L^1(\Rnm)$ of total mass at most
$2\alpha^{-1}\Bnorm{f}\Bnorm{g}$. For every $\xipar$ with
$|\xipar|^2 \ne \re z$ the function $z \mapsto \kappa(\xipar,z)$, hence
$z \mapsto k$, extends continuously to $\{\pm\im z \ge 0\}$. Since the
exceptional set is Lebesgue-null, dominated convergence yields both the
existence of $\lim_{\eps \downarrow 0}(\Krem{E \pm i\eps}f,g)$ and its
continuity in $E$. That the limit is again given by an operator bounded as in
\eqref{eq:K-bound} follows by repeating the first part of the proof with
$k(\cdot\,;E \pm i0)$, which satisfies the same pointwise bound.
\end{proof}

\subsubsection{Proof of the main result, and explicit constants}
\label{sec:lap-proof}

\begin{proof}[Proof of Theorem~\ref{thm:lap}]
(a) By Proposition~\ref{prop:splitting} together with
\eqref{eq:dirichlet-bound}, \eqref{eq:surface-bound} and \eqref{eq:K-bound},
\begin{equation}
  \label{eq:lap-explicit}
  \norm{\Ralpha{z}}_{\Lop{\Bsp}{\Bst}}
  \le \sqrt2\,\CAH{n}\,|z|^{-1/2}
    + \sqrt2\,\Csharp\,\CAH{n-1}\,\bigl|z+\alpha^2\bigr|^{-1/2}
    + \frac{\sqrt2}{\alpha} .
\end{equation}
If $\re z \in K$ with $K \subset \Ereg$ compact, then $\dist(K,\Thr) > 0$, so
$|z|$ and $|z+\alpha^2|$ are bounded from below uniformly in $\im z$, and
\eqref{eq:lap-bound} follows.

(b), (c) Each of the three terms of \eqref{eq:splitting-preview} has weak-$*$
boundary values, continuous in $E$: the first two by
Theorem~\ref{thm:AH-classical} in dimensions $n$ and $n-1$ respectively (for
$E > 0$, resp. $E > -\alpha^2$, the remaining values of $E \in \Ereg$ being ones
at which the term in question is analytic near $z = E$), and the third by
Proposition~\ref{prop:K-bounded}. The bound \eqref{eq:lap-explicit} is preserved
in the limit.

(d) Let $\im z \ne 0$ and $f \in \Bsp \subset \Ltwo{\Rnp}$. The identity
$\Ralpha{z}^{*} = \Ralpha{\overline{z}}$ is standard for the resolvent of a
self-adjoint operator. For the second statement put $u = \Ralpha{z}f$, so that
$u \in \Dom(\Halpha)$ and $f = (\Halpha - z)u$. Then
\begin{equation*}
  \bigl( \Ralpha{z}f, f \bigr)
  = \bigl( u, (\Halpha - z)u \bigr)
  = \bigl( u, \Halpha u \bigr)
    - \overline{z}\,\norm{u}_{\Ltwo{\Rnp}}^2 ,
\end{equation*}
and $(u,\Halpha u)$ is real because $\Halpha$ is self-adjoint. Taking imaginary
parts,
\begin{equation*}
  \im\bigl( \Ralpha{z}f, f \bigr)
  = -\im \overline{z}\,\norm{u}_{\Ltwo{\Rnp}}^2
  = \im z\,\norm{\Ralpha{z}f}_{\Ltwo{\Rnp}}^2 ,
\end{equation*}
which is nonnegative for $\im z > 0$. Both statements pass to the limit
$\eps \downarrow 0$ by (b).

(e) Let $f \in \Bsp \subset \Ltwo{\Rnp}$ and
$\varphi \in C_c^{\infty}(\overline{\Rnp})$ with
$\gamma_1\varphi - \alpha\gamma_0\varphi = 0$, so that
$\varphi \in \Dom(\Halpha) \cap \Bsp$. For $\im z \ne 0$,
\begin{equation*}
  \bigl( \Ralpha{z}f, (\Halpha - \overline{z})\varphi \bigr)
  = \bigl( (\Halpha - z)\Ralpha{z}f, \varphi \bigr)
  = (f,\varphi),
\end{equation*}
and letting $\eps \downarrow 0$ along $z = E \pm i\eps$ gives, by (b),
$(u,(\Halpha - E)\varphi) = (f,\varphi)$. Thus $u$ solves $(-\Delta - E)u = f$ in
the distributional sense in $\Rnp$ and satisfies the Robin condition weakly.
Interior and boundary elliptic regularity for the Robin problem on the
half-space (see \cite[\S6.3.1]{Evans2010} and
\cite[Thm.~4.18(ii)]{McLean2000}, the latter applied with the Neumann datum
$\alpha\gamma_0 u$ as in Remark~\ref{rem:rad-regularity}) upgrades this to
$u \in H^2_{\mathrm{loc}}(\overline{\Rnp})$, and $u \in \Bst$ by (b).
\end{proof}

\begin{proposition}[Explicit constants]
\label{prop:lap-explicit}
Estimate \eqref{eq:lap-explicit} holds for every
$z \in \mathbb{C}\setminus\mathbb{R}$, with $\Csharp = (1+\sqrt2)^2$ and with
$\CAH{n}$, $\CAH{n-1}$ the constants of Theorem~\ref{thm:AH-classical} in
dimensions $n$ and $n-1$. The same bound holds for $\Ralpha{E \pm i0}$ with $z$
replaced by $E \in \Ereg$. In particular the Robin parameter enters only through
the last term and through the shift $z + \alpha^2$, and the dependence on the
spectral parameter is carried entirely by the two free resolvents
\eqref{eq:two-channels-preview}.
\end{proposition}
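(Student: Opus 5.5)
The estimate follows by assembling bounds already proved, term by term. By Proposition~\ref{prop:splitting}, for every $z \in \mathbb{C}\setminus\mathbb{R}$ the resolvent is the exact sum $\RD{z} + \Rfree{n-1}{z+\alpha^2}\otimes\Psurf + \Krem{z}$. The triangle inequality in $\Lop{\Bsp}{\Bst}$ then reduces \eqref{eq:lap-explicit} to three separate estimates. The Dirichlet term is handled by \eqref{eq:dirichlet-bound}: odd reflection gives the factor $\sqrt2$, and Theorem~\ref{thm:AH-classical} in dimension $n$ supplies $\CAH{n}|z|^{-1/2}$. The surface term is handled by \eqref{eq:surface-bound}. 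There Corollary~\ref{cor:tensor} with $\Psi = \psizero$ and $\norm{\psizero}_{\Ltwo{\Rhalf}} = 1$ gives the factors $\Csharp$ and $\sqrt2$, and Theorem~\ref{thm:AH-classical} in dimension $n-1$ gives $\CAH{n-1}|z+\alpha^2|^{-1/2}$. The remainder is bounded by Proposition~\ref{prop:K-bounded}, which gives $\sqrt2/\alpha$. No restriction on $z$ beyond $\im z \ne 0$ enters any of these, so the bound holds on all of $\mathbb{C}\setminus\mathbb{R}$ with the stated constants.

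For the boundary values at $E \in \Ereg$, I would use weak-$*$ lower semicontinuity of the norm. For $f,g \in \Bsp$, Theorem~\ref{thm:lap}(b) gives
\[
|(\Ralpha{E\pm i0}f,g)| = \lim_{\eps\downarrow 0}|(\Ralpha{E\pm i\eps}f,g)| \le \limsup_{\eps\downarrow0} N(E\pm i\eps)\,\Bnorm{f}\Bnorm{g},
\]
where $N(z)$ denotes the right-hand side of \eqref{eq:lap-explicit}. Since $N$ is continuous at $z = E$ for $E \notin \Thr$, this $\limsup$ equals $N(E)$. To convert the bound on the pairing into a bound on $\Bstnorm{\Ralpha{E\pm i0}f}$, I would take the supremum over $g$ with $\Bnorm{g} \le 1$. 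The converse half of the duality $\Bst = \Bsp^*$ is needed here, beyond \eqref{eq:AH-pairing}. Concretely, I would test against $g = \chi_{\shellp{j}}\,u/\norm{u}_{\Ltwo{\shellp{j}}}$, scaled by $2^{-j/2}$, where $u = \Ralpha{E\pm i0}f$. This $g$ has $\Bnorm{g} = 1$ and recovers $2^{-j/2}\norm{u}_{\Ltwo{\shellp{j}}}$ exactly, so the duality reduces to a one-line check.

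The remaining claims are structural readings of the same display. The only occurrences of $\alpha$ are in $\sqrt2/\alpha$ and in the shift $z+\alpha^2$: $\Csharp$ is purely geometric (Lemma~\ref{lem:mixed}), $\CAH{n}$ and $\CAH{n-1}$ are free constants, and the normalization $\norm{\psizero}_{\Ltwo{\Rhalf}} = 1$ removes any other dependence on $\alpha$. The only $z$-dependent factors are $|z|^{-1/2}$ and $|z+\alpha^2|^{-1/2}$, which come from the two free resolvents \eqref{eq:two-channels-preview}. The only step requiring any care is the passage to the limit at real $E$. There one has to ensure that the per-term limits of Theorem~\ref{thm:AH-classical} and Proposition~\ref{prop:K-bounded} respect their individual bounds, or equivalently, use the lower-semicontinuity argument above on the sum.
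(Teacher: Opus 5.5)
Your proposal is correct and follows the paper's argument. The paper uses the exact splitting of Proposition~\ref{prop:splitting}, the triangle inequality in $\Lop{\Bsp}{\Bst}$ and the three bounds \eqref{eq:dirichlet-bound}, \eqref{eq:surface-bound}, \eqref{eq:K-bound}, and then passes to $E \pm i0$. At that last step the paper only asserts that \eqref{eq:lap-explicit} is preserved in the limit, and your weak-$*$ lower-semicontinuity argument supplies the detail.

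One small correction of wording: your shell test functions $g$ prove the norming identity $\Bstnorm{u} = \sup_{\Bnorm{g} \le 1}|(u,g)|$. That is the isometric half of the duality, not its surjective half. It is all you need, since Theorem~\ref{thm:lap}(b) already gives $\Ralpha{E\pm i0}f \in \Bst$.
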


The two thresholds therefore enter \eqref{eq:lap-explicit} with the same rate,
$\dist(E,\Thr)^{-1/2}$. What distinguishes them is their position in the
spectrum and the dimension of the free problem from which each originates
(Remark~\ref{rem:thresholds}). Their behaviour as $E \to 0$ and as
$E \to -\alpha^{2+}$ is not pursued here. The eigenfunction expansion, the
scattering theory and the inverse theory below are all carried out on compact
subsets of $\Ereg$.

%% file: Part1_023_gft.tex
\label{subsec:gft}

The limiting absorption principle determines the spectral representation of
$\Halpha$ abstractly, but it does not exhibit the space in which that
representation takes place. This subsection supplies the missing object, in two
halves. In \S\ref{sec:gft-abstract} we record what Theorem~\ref{thm:lap} gives
for free: the boundary values of the resolvent produce a nonnegative,
continuous family of operators $\Adens{E} \in \Lop{\Bsp}{\Bst}$ whose range
consists of generalized eigenfunctions and which is the density of the spectral
measure. Nothing about the half-space enters there, and nothing about the two
channels can be read off. The remainder of the subsection identifies
$\Adens{E}$, and with it the representation space, in explicit terms: we
construct the bulk and surface generalized eigenfunctions from the fiber
eigenfunctions of Lemma~\ref{lem:1d-transform}, prove that they are mutually
$\delta$-orthogonal, assemble them into the generalized Fourier transform
$\Fzero$, and show that Stone's formula holds in the form
$\Adens{E} = \Fzero(E)^{*}\Fzero(E)$.

\subsubsection{Consequences of the limiting absorption principle}
\label{sec:gft-abstract}

Throughout, $\Ralpha{z} = (\Halpha - z)^{-1}$, and $P(\,\cdot\,)$ denotes the
spectral measure, that is the resolution of the identity, of $\Halpha$.

\begin{definition}
For $E \in \Ereg$ we set
\begin{equation*}
  \Adens{E} := \frac{1}{2\pi i}
    \bigl( \Ralpha{E+i0} - \Ralpha{E-i0} \bigr),
\end{equation*}
the boundary values being those of Theorem~\ref{thm:lap}(b).
\end{definition}

\begin{proposition}
\label{prop:A-properties}
Let $E \in \Ereg$. Then the following hold.
\begin{enumerate}
  \item[\textup{(i)}] $\Adens{E} \in \Lop{\Bsp}{\Bst}$, and
  $\norm{\Adens{E}}_{\Lop{\Bsp}{\Bst}}$ is bounded uniformly on compact subsets
  of $\Ereg$. Moreover $\Adens{E} = \pi^{-1}\im\Ralpha{E+i0}$, and
  \begin{equation}
    \label{eq:A-nonneg}
    \bigl( \Adens{E}f, f \bigr) \ge 0,
    \qquad f \in \Bsp .
  \end{equation}
  \item[\textup{(ii)}] For all $f,g \in \Bsp$ the map
  $\Ereg \ni E \mapsto (\Adens{E}f,g)$ is continuous.
  \item[\textup{(iii)}] For every $f \in \Bsp$ the function $u = \Adens{E}f$
  belongs to $\Bst \cap H^2_{\mathrm{loc}}(\overline{\Rnp})$ and satisfies
  \begin{equation}
    \label{eq:A-eigen}
    (-\Delta - E)u = 0 \ \text{ in } \Rnp,
    \qquad
    \gamma_1 u - \alpha\gamma_0 u = 0 \ \text{ on } \Gamma ;
  \end{equation}
  that is, $\Ran \Adens{E}$ consists of generalized eigenfunctions of $\Halpha$
  for the eigenvalue $E$.
  \item[\textup{(iv)}] For every interval $(a,b) \subset \Ereg$ with
  $\overline{(a,b)} \subset \Ereg$ and all $f,g \in \Bsp$,
  \begin{equation}
    \label{eq:stone-measure}
    \bigl( P((a,b))f, g \bigr) = \int_a^b \bigl( \Adens{E}f, g \bigr)\,dE .
  \end{equation}
\end{enumerate}
\end{proposition}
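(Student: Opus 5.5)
The proposal is to derive all four items from Theorem~\ref{thm:lap} and standard facts about self-adjoint operators; the half-space geometry plays no role.

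For (i), the first step is to use Theorem~\ref{thm:lap}(b), which gives $\Adens{E}$ as a difference of two elements of $\Lop{\Bsp}{\Bst}$. The uniform bound on compact subsets of $\Ereg$ then follows from \eqref{eq:lap-bound}, since $\Ralpha{E\pm i0}$ inherit the same constant. For the identity $\Adens{E} = \pi^{-1}\im\Ralpha{E+i0}$, read as $\tfrac{1}{2i}(R - R^*)$ in the pairing sense, I will use Theorem~\ref{thm:lap}(d): it gives $(\Ralpha{E-i0}f,g) = \overline{(\Ralpha{E+i0}g,f)}$, so that $(\Adens{E}f,g) = \frac{1}{2\pi i}\bigl[(\Ralpha{E+i0}f,g) - \overline{(\Ralpha{E+i0}g,f)}\bigr]$. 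Taking $g=f$ gives $(\Adens{E}f,f) = \pi^{-1}\im(\Ralpha{E+i0}f,f)$, and this is $\ge 0$ by the positivity statement in (d). Item (ii) is then immediate from Theorem~\ref{thm:lap}(c), applied to both signs.

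For (iii), set $u_\pm = \Ralpha{E\pm i0}f$. By Theorem~\ref{thm:lap}(e), both lie in $\Bst\cap H^2_{\mathrm{loc}}(\overline{\Rnp})$, solve $(-\Delta-E)u_\pm = f$, and satisfy the Robin condition. Their difference, divided by $2\pi i$, therefore solves the homogeneous problem \eqref{eq:A-eigen}, and linearity preserves the regularity and the $\Bst$ membership. To avoid any worry about the meaning of the boundary trace, I would also repeat the weak formulation from the proof of (e): test against $\varphi\in C_c^\infty(\overline{\Rnp})$ satisfying the Robin condition, observe that the $(f,\varphi)$ terms cancel, and then invoke the same elliptic regularity.

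Item (iv) is the only step needing an argument: Stone's formula. For $f,g\in\Bsp\subset\Ltwo{\Rnp}$ I will start from
\[
  \tfrac12\bigl(P([a,b])+P((a,b))\bigr) = \lim_{\eps\downarrow0}\frac{1}{2\pi i}\int_a^b\bigl(\Ralpha{E+i\eps}-\Ralpha{E-i\eps}\bigr)\,dE
\]
in the strong sense. Since Theorem~\ref{thm:sa-spec}(c) says $\Halpha$ has no eigenvalues, $P(\{a\}) = P(\{b\}) = 0$, so the left side is $P((a,b))$. The aim is then to pass the limit inside the $E$-integral of $(\cdot f,g)$. Pointwise convergence of the integrand is Theorem~\ref{thm:lap}(b). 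For domination, note that $\overline{(a,b)}$ is a compact subset of $\Ereg$, so \eqref{eq:lap-bound} with \eqref{eq:AH-pairing} bounds the integrand by $2C\Bnorm{f}\Bnorm{g}$ uniformly in $\eps\in(0,1]$. Dominated convergence on the bounded interval then yields \eqref{eq:stone-measure}. The main (mild) obstacle is exactly this interchange, and the uniformity in $\im z$ of Theorem~\ref{thm:lap}(a) removes it.
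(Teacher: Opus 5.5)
Your proposal is correct and follows the paper's proof essentially step for step. Items (i)--(iii) are read off from Theorem~\ref{thm:lap}(a)--(e) exactly as you describe, and (iv) is Stone's formula, with $P(\{a\})=P(\{b\})=0$ coming from Theorem~\ref{thm:sa-spec}(c) and dominated convergence using the $\im z$-uniform bound \eqref{eq:lap-bound} on $\overline{(a,b)}$. The extra weak-formulation check you mention in (iii) is harmless but unnecessary, since Theorem~\ref{thm:lap}(e) already gives everything by linearity.
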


\begin{proof}
(i) The membership and the uniform bound are Theorem~\ref{thm:lap}(a),(b). By
Theorem~\ref{thm:lap}(d) one has $\Ralpha{E-i0} = \Ralpha{E+i0}^{*}$ in the
sense of the pairing, so that
$\Adens{E} = (2\pi i)^{-1}(\Ralpha{E+i0} - \Ralpha{E+i0}^{*})
 = \pi^{-1}\im\Ralpha{E+i0}$, and \eqref{eq:A-nonneg} is the positivity
statement of Theorem~\ref{thm:lap}(d).

(ii) is Theorem~\ref{thm:lap}(c).

(iii) By Theorem~\ref{thm:lap}(e) both $u_{\pm} = \Ralpha{E \pm i0}f$ lie in
$\Bst \cap H^2_{\mathrm{loc}}(\overline{\Rnp})$ and solve
$(-\Delta - E)u_{\pm} = f$ together with the Robin condition. Subtracting, the
inhomogeneity cancels and \eqref{eq:A-eigen} follows.

(iv) Let $f,g \in \Bsp \subset \Ltwo{\Rnp}$. Stone's formula for the spectral
measure of $\Halpha$ gives
\begin{equation*}
  \tfrac12 \bigl( P([a,b]) + P((a,b)) \bigr)
  = \operatorname*{s-lim}_{\eps \downarrow 0} \frac{1}{2\pi i}
    \int_a^b \bigl( \Ralpha{E+i\eps} - \Ralpha{E-i\eps} \bigr)\,dE,
\end{equation*}
and the two projections on the left coincide because
$P(\{a\}) = P(\{b\}) = 0$, $\Halpha$ having no eigenvalues
(Theorem~\ref{thm:sa-spec}(c)). The integrand converges pointwise in $E$ by
Theorem~\ref{thm:lap}(b), and is dominated, uniformly in $\eps \in (0,1]$, by
the constant $2C\Bnorm{f}\Bnorm{g}$ of Theorem~\ref{thm:lap}(a) applied to the
compact set $\mathcal{K} = \overline{(a,b)}$. The dominated convergence theorem gives
\eqref{eq:stone-measure}.
\end{proof}

Properties (i)--(iv) already determine a spectral representation of $\Halpha$
on $\Ereg$, as follows. By
\eqref{eq:A-nonneg} the sesquilinear form $(f,g) \mapsto (\Adens{E}f,g)$ is
nonnegative on $\Bsp$. Let $\Echan{E}$ denote the Hilbert space obtained by
quotienting $\Bsp$ by the null space of this form and completing, and let
$\Fzero(E) : \Bsp \to \Echan{E}$ be the canonical map. Then
$\Fzero(E)^{*}\Fzero(E) = \Adens{E}$ by construction, and
\eqref{eq:stone-measure} says exactly that
$E \mapsto \norm{\Fzero(E)f}_{\Echan{E}}^2$ is the density of the spectral
measure of $f$. Since $\Halpha$ is purely absolutely continuous with spectrum
$[-\alpha^2,\infty)$ (Theorem~\ref{thm:sa-spec}(b),(c)), the associated global
map $\Fzero$ is isometric on $\Bsp$ and extends to a unitary operator onto
$\int_{\Ereg}^{\oplus}\Echan{E}\,dE$. The notation anticipates
Definition~\ref{def:gft}. That the two constructions agree is
Theorem~\ref{thm:stone}.

This is all that abstraction yields, and it is not enough. The space
$\Echan{E}$ so obtained is anonymous: its dimension is unknown, the two
scattering channels are invisible in it, and no generalized eigenfunction has
been produced. Indeed \eqref{eq:A-eigen} asserts that $\Ran\Adens{E}$ consists
of generalized eigenfunctions, but says nothing about which ones, nor how many
parameters are needed to label them. The content of the rest of this subsection
is the identification
\begin{equation*}
  \Echan{E} \;\cong\;
  \begin{cases}
    \Ltwo{\Sm} \oplus \Ltwo{\Ssurf}, & E > 0, \\[1mm]
    \Ltwo{\Ssurf}, & -\alpha^2 < E < 0,
  \end{cases}
\end{equation*}
together with the explicit eigenfunctions realising it. It is here, and only
here, that the half-space and the Robin condition enter.

\subsubsection{The generalized eigenfunctions}
\label{sec:gft-eigenfunctions}

Recall from Appendix~\ref{app:sa-spec} the fiber eigenfunctions given by
Lemma~\ref{lem:1d-transform}, namely
\begin{equation}
  \label{eq:fiber-recall}
  \psizero(t) = \sqrt{2\alpha}\,e^{-\alpha t},
  \qquad
  \gphi_k(t) = \sqrt{\frac{2}{\pi}}\;
    \frac{k\cos(kt) - \alpha\sin(kt)}{\sqrt{k^2+\alpha^2}}
  \qquad (k > 0),
\end{equation}
both real-valued, both satisfying the Robin condition $v'(0) + \alpha v(0) = 0$
on $\Rhalf$, and orthogonal to one another in $\Ltwo{\Rhalf}$.

\begin{definition}[Generalized eigenfunctions]
\label{def:eigenfunctions}
For $\xi = (\xipar,\xi_n) \in \Rnm \times (0,\infty)$ and $\eta' \in \Rnm$ we set
\begin{align}
  \label{eq:Psib-def}
  \Psib(x,\xi)
    &= (2\pi)^{-\frac{n-1}{2}}\,
       e^{i\xpar \cdot \xipar}\,\gphi_{\xi_n}(x_n),
  \\[1mm]
  \label{eq:Psis-def}
  \Psis(x,\eta')
    &= (2\pi)^{-\frac{n-1}{2}}\,
       e^{i\xpar \cdot \eta'}\,\psizero(x_n),
\end{align}
for $x = (\xpar,x_n) \in \Rnp$.
\end{definition}

\begin{proposition}[Basic properties]
\label{prop:eigenfunctions}
Let $\xi = (\xipar,\xi_n) \in \Rnm \times (0,\infty)$ and $\eta' \in \Rnm$.
\begin{enumerate}
  \item[\textup{(i)}] The functions $\Psib(\cdot,\xi)$ and $\Psis(\cdot,\eta')$
  are smooth and bounded on $\overline{\Rnp}$, satisfy the Robin condition
  $\gamma_1 - \alpha\gamma_0 = 0$ pointwise on $\Gamma$, and solve
  \begin{equation}
    \label{eq:eigen-equations}
    -\Delta\,\Psib(\cdot,\xi) = |\xi|^2\,\Psib(\cdot,\xi),
    \qquad
    -\Delta\,\Psis(\cdot,\eta') = \bigl( |\eta'|^2 - \alpha^2 \bigr)\,
      \Psis(\cdot,\eta') .
  \end{equation}
  \item[\textup{(ii)}] Put
  \begin{equation}
    \label{eq:phase-and-reflection}
    \cph{k} = \frac{k+i\alpha}{\sqrt{k^2+\alpha^2}},
    \qquad
    \refl{k} = \frac{\cph{k}}{\overline{\cph{k}}}
             = \frac{ik-\alpha}{ik+\alpha},
    \qquad k > 0,
  \end{equation}
  so that $|\cph{k}| = |\refl{k}| = 1$. Then
  \begin{equation}
    \label{eq:Psib-exponential}
    \Psib(x,\xi)
    = (2\pi)^{-\frac{n}{2}}\,\overline{\cph{\xi_n}}\;
      e^{i\xpar \cdot \xipar}
      \Bigl[ e^{-i\xi_n x_n} + \refl{\xi_n}\,e^{i\xi_n x_n} \Bigr].
  \end{equation}
  \item[\textup{(iii)}] Conversely, let $E \in \mathbb{R}$ and let
  $u(x) = e^{i\xpar \cdot \xipar}w(x_n)$ be a nonzero bounded solution of
  $-\Delta u = Eu$ on $\Rnp$ satisfying the Robin condition. If
  $E > |\xipar|^2$, then $w$ is a constant multiple of $\gphi_{\xi_n}$ with
  $\xi_n = (E - |\xipar|^2)^{1/2}$. If $w(x_n) \to 0$ as $x_n \to \infty$, then
  $E = |\xipar|^2 - \alpha^2$ and $w$ is a constant multiple of $\psizero$. In
  either case the eigenfunctions \eqref{eq:Psib-def}--\eqref{eq:Psis-def} are
  unique up to a multiplicative constant.
\end{enumerate}
\end{proposition}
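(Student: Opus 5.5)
Part (i) is a direct computation. Both functions are products of $e^{i\xpar\cdot\xipar}$ (resp. $e^{i\xpar\cdot\eta'}$) with a smooth, bounded real function of $x_n$; for the bulk one $|\gphi_k|\le\sqrt{2/\pi}$, and $\psizero$ is bounded since $\alpha>0$. Since $-\Delta = -\Delta' - \partial_{x_n}^2$ and $-\Delta' e^{i\xpar\cdot\xipar} = |\xipar|^2 e^{i\xpar\cdot\xipar}$, it suffices to check $-\gphi_k'' = k^2\gphi_k$ and $-\psizero'' = -\alpha^2\psizero$; both are evident from \eqref{eq:fiber-recall}. The Robin condition $\gamma_1 u - \alpha\gamma_0 u = -\partial_{x_n}u - \alpha u = 0$ at $x_n=0$ reduces to $v'(0)+\alpha v(0)=0$ for the transverse factor. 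For $\gphi_k$ this reads $-\alpha k + \alpha k = 0$ up to the normalizing constant, and for $\psizero$ it reads $-\alpha\sqrt{2\alpha}+\alpha\sqrt{2\alpha}=0$.

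For (ii) I will write $k\cos(kt)-\alpha\sin(kt) = \re\bigl((k+i\alpha)e^{ikt}\bigr)$, which equals
\[
\tfrac12\bigl[(k+i\alpha)e^{ikt} + (k-i\alpha)e^{-ikt}\bigr].
\]
Dividing by $\sqrt{k^2+\alpha^2}$ gives $\gphi_k(t) = (2\pi)^{-1/2}\bigl[\cph{k}e^{ikt} + \overline{\cph{k}}e^{-ikt}\bigr]$, because $\sqrt{2/\pi}\cdot\tfrac12 = (2\pi)^{-1/2}$. Factoring out $\overline{\cph{k}}$ leaves $e^{-ikt} + (\cph{k}/\overline{\cph{k}})e^{ikt}$. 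Multiplying by $(2\pi)^{-(n-1)/2}e^{i\xpar\cdot\xipar}$ then yields \eqref{eq:Psib-exponential} with $k=\xi_n$. The alternative expression for $\refl{k}$ follows from $(k+i\alpha)/(k-i\alpha)$ after multiplying numerator and denominator by $i$: this gives $(ik-\alpha)/(ik+\alpha)$. The unimodularity is clear because $|k\pm i\alpha| = \sqrt{k^2+\alpha^2}$.

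For (iii), substituting $u = e^{i\xpar\cdot\xipar}w$ reduces the problem to the ODE $-w'' = (E-|\xipar|^2)w$ on $\Rhalf$ with $w'(0)+\alpha w(0)=0$ and $w$ bounded. This ODE step is the only place where care is needed.
\begin{enumerate}
  \item[\textup{(1)}] If $E>|\xipar|^2$, put $k=(E-|\xipar|^2)^{1/2}>0$. The general solution is $w = a\cos kt + b\sin kt$, and every such solution is bounded. The Robin condition gives $bk + \alpha a = 0$, a one-dimensional solution space, which is spanned by $\gphi_k$.
  \item[\textup{(2)}] Now suppose $w\to0$. If $E-|\xipar|^2 \ge 0$ with $w\neq0$, every nonzero solution is either oscillatory with constant amplitude (the case $E>|\xipar|^2$) or affine (the case $E=|\xipar|^2$), and neither tends to $0$. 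Hence $E-|\xipar|^2 = -\mu^2$ with $\mu>0$, and decay forces $w = ce^{-\mu t}$. The Robin condition then gives $-\mu+\alpha=0$, so $\mu=\alpha$. Therefore $E = |\xipar|^2-\alpha^2$ and $w$ is a multiple of $\psizero$.
\end{enumerate}
Uniqueness up to a constant follows in both cases because the admissible solution space is one-dimensional.
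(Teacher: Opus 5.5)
Your proof is correct and follows the paper's argument step for step. Part (i) separates a tangential plane wave from a transverse profile, part (ii) rewrites $k\cos(kt)-\alpha\sin(kt)$ as $\tfrac12\bigl[(k+i\alpha)e^{ikt}+(k-i\alpha)e^{-ikt}\bigr]$ and factors out $\overline{c(\xi_n)}$, and part (iii) uses the same case analysis of the transverse ODE under the Robin condition.

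One small wording point in case (2): with complex coefficients, $a\cos kt+b\sin kt$ need not have constant modulus. It is, however, periodic and nonzero, so it still cannot tend to $0$, and that is all the argument needs.
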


\begin{proof}
(i) Both functions are products of a smooth bounded function of $\xpar$ with a
smooth bounded function of $x_n$. Applying
$-\Delta = -\Delta' - \partial_{x_n}^2$ and using $-\gphi_k'' = k^2\gphi_k$ and
$-\psizero'' = -\alpha^2\psizero$ gives \eqref{eq:eigen-equations} with
$|\xipar|^2 + \xi_n^2 = |\xi|^2$ and $|\eta'|^2 - \alpha^2$ respectively. The
boundary condition is inherited from the corresponding one-dimensional
condition, since $\gamma_1 u - \alpha\gamma_0 u
= -\bigl( \partial_{x_n}u(\xpar,0) + \alpha u(\xpar,0) \bigr)$.

(ii) Writing $\cos(kt) = \tfrac12(e^{ikt} + e^{-ikt})$ and
$\sin(kt) = \tfrac{1}{2i}(e^{ikt} - e^{-ikt})$ in \eqref{eq:fiber-recall} gives
$k\cos(kt) - \alpha\sin(kt)
 = \tfrac12 (k+i\alpha)e^{ikt} + \tfrac12 (k-i\alpha)e^{-ikt}$, so that, since
$\sqrt{2/\pi}\cdot\tfrac12 = (2\pi)^{-1/2}$,
\begin{equation}
  \label{eq:phi-exponential}
  \gphi_k(t)
  = \frac{1}{\sqrt{2\pi}}
    \Bigl[ \overline{\cph{k}}\,e^{-ikt} + \cph{k}\,e^{ikt} \Bigr]
  = \frac{\overline{\cph{k}}}{\sqrt{2\pi}}
    \Bigl[ e^{-ikt} + \refl{k}\,e^{ikt} \Bigr] .
\end{equation}
Multiplying by $(2\pi)^{-(n-1)/2}e^{i\xpar\cdot\xipar}$ gives
\eqref{eq:Psib-exponential}. The identity
$(k+i\alpha)/(k-i\alpha) = (ik-\alpha)/(ik+\alpha)$ follows on multiplying
numerator and denominator by $i$.

(iii) Substituting $u = e^{i\xpar\cdot\xipar}w$ into $-\Delta u = Eu$ gives
$-w'' = (E - |\xipar|^2)w$ on $\Rhalf$, with $w'(0) + \alpha w(0) = 0$. If
$E - |\xipar|^2 = \xi_n^2 > 0$ the solution space is two-dimensional and the
Robin condition cuts it down to one dimension, spanned by $\gphi_{\xi_n}$. Every
element of it is bounded, so boundedness imposes nothing further. Suppose now
that $w(x_n) \to 0$. Then $E - |\xipar|^2 < 0$, since for $E - |\xipar|^2 \ge 0$
the bounded solutions are trigonometric or constant, and none of those tends to
$0$ without vanishing. Write $E - |\xipar|^2 = -\kappa^2$ with $\kappa > 0$, so
that $w$ is a multiple of $e^{-\kappa x_n}$. The Robin condition reads
$(-\kappa + \alpha)w(0) = 0$ and forces $\kappa = \alpha$, whence
$E = |\xipar|^2 - \alpha^2$ and $w \propto \psizero$. This is the computation of
Lemma~\ref{lem:1d-transform}(i) applied to the shifted fiber.
\end{proof}

\begin{remark}[The hemisphere, and the phase convention]
\label{rem:standing-wave}
Two consequences of Proposition~\ref{prop:eigenfunctions}(ii) will be used
repeatedly.

\emph{(a) The hemisphere, and not the sphere.} For each fixed $\xipar$ the
Robin condition determines the ratio of the two amplitudes in
\eqref{eq:Psib-exponential}, so there is exactly \emph{one} degree of freedom
per $\xipar$, namely $\xi_n > 0$, and not two. Writing $\xi = |\xi|\theta$, this
means that $\theta$ ranges over the open hemisphere $\Sm$ only, just as the
Robin problem on the half-line is diagonalized by a single continuous parameter
(Lemma~\ref{lem:1d-transform}(iii)).

\emph{(b) $\Fzero$ is a standing-wave transform.} Since $\gphi_k$ is
real-valued, $\Psib(\cdot,\xi)$ is by \eqref{eq:Psib-exponential} the
superposition of a plane wave $e^{-i\xi_n x_n}$ travelling towards the boundary
and its reflection, with reflection coefficient $\refl{\xi_n}$ of modulus one,
and it carries no net flux in the normal direction. The real normalization
agrees with Lemma~\ref{lem:1d-transform} and is the one used for Stone's
formula, but it is not the one adapted to scattering. For that one takes
$\cph{\xi_n}\Psib(\cdot,\xi)$, whose incoming amplitude in
\eqref{eq:Psib-exponential} is $1$. The two differ by a unimodular factor and
satisfy the same orthogonality relations. The second is used for the outgoing
and incoming transforms of \S\ref{subsec:asymptotics} and \S\ref{sec:inverse}.
A reader comparing with \cite[Ch.~1, \S7]{IsozakiKurylev2014} should keep this
distinction in mind.
\end{remark}

\begin{remark}[The surface channel as a tensor factor]
\label{rem:surface-family}
The eigenvalue $|\eta'|^2 - \alpha^2$ attached to $\Psis(\cdot,\eta')$ depends
only on the tangential variable, because the Robin condition fixes the normal
decay rate to be $\alpha$ independently of $\eta'$
(Proposition~\ref{prop:eigenfunctions}(iii)): the transverse profile $\psizero$
is the same for every surface wave. This makes $\Hsurf$ a tensor factor, and
with it the exact splitting \eqref{eq:splitting-preview} and
Proposition~\ref{prop:surface-reduction} of \S\ref{subsec:lap}. In a layered
medium, by contrast, the decay rate of a trapped mode varies with the energy
and no such decomposition is available.
\end{remark}

\subsubsection{\texorpdfstring{$\delta$}{delta}-orthogonality and completeness}
\label{sec:gft-orthogonality}

The next theorem is the analytic heart of this subsection. Formally it states
\begin{equation}
  \label{eq:delta-orthogonality-formal}
\begin{aligned}
  \int_{\Rnp} \Psib(x,\xi)\,\overline{\Psib(x,\eta)}\,dx
    &= \delta(\xi - \eta),
  &\quad
  \int_{\Rnp} \Psis(x,\xipar)\,\overline{\Psis(x,\eta')}\,dx
    &= \delta(\xipar - \eta'),
  \\
  \int_{\Rnp} \Psib(x,\xi)\,\overline{\Psis(x,\eta')}\,dx &= 0, &&
\end{aligned}
\end{equation}
and we state it in the form of an isometry, which is the form used below.

\begin{theorem}[$\delta$-orthogonality]
\label{thm:delta-orthogonality}
For $F \in C_c^\infty(\Rnm \times (0,\infty))$ and $G \in C_c^\infty(\Rnm)$ put
\begin{equation}
  \label{eq:wave-packets}
  u_F(x) = \int_{\Rnm}\!\!\int_0^{\infty} F(\xi)\,\Psib(x,\xi)\,d\xi_n\,d\xipar,
  \qquad
  v_G(x) = \int_{\Rnm} G(\eta')\,\Psis(x,\eta')\,d\eta' .
\end{equation}
Then $u_F, v_G \in \Ltwo{\Rnp}$, and for all such $F,F',G,G'$,
\begin{equation}
  \label{eq:isometry}
\begin{aligned}
  (u_F, u_{F'})_{\Ltwo{\Rnp}} &= (F,F')_{\Ltwo{\Rnm \times (0,\infty)}},
  &\quad
  (v_G, v_{G'})_{\Ltwo{\Rnp}} &= (G,G')_{\Ltwo{\Rnm}},
  \\
  (u_F, v_G)_{\Ltwo{\Rnp}} &= 0 . &&
\end{aligned}
\end{equation}
\end{theorem}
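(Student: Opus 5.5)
The plan is to reduce everything, by the partial Fourier transform $\mathcal{F}'$ of Proposition~\ref{prop:fibration}, to the one-dimensional transform of Lemma~\ref{lem:1d-transform}. The first step is to compute $\mathcal{F}'u_F$ and $\mathcal{F}'v_G$ explicitly. The factor $(2\pi)^{-(n-1)/2}e^{i\xpar\cdot\xipar}$ in \eqref{eq:Psib-def} is exactly the kernel of the inverse of \eqref{eq:fourier-partial}. Hence, for fixed $x_n$, $u_F(\cdot,x_n)$ is the inverse partial Fourier transform of $\xipar \mapsto \int_0^\infty F(\xipar,\xi_n)\gphi_{\xi_n}(x_n)\,d\xi_n$, and similarly
\[
  \widehat{v_G}(\xipar,x_n) = G(\xipar)\,\psizero(x_n).
\]
Interchanging the integrals is harmless, since $F$ and $G$ are compactly supported and smooth and $\gphi_k$, $\psizero$ are bounded. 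Writing $\mathcal{G}$ for the half-line transform $(\mathcal{G}a)(t) = \int_0^\infty a(k)\gphi_k(t)\,dk$, we obtain
\[
  \widehat{u_F}(\xipar,\cdot) = \mathcal{G}\bigl[F(\xipar,\cdot)\bigr].
\]

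The second step applies the fiberwise facts. By Lemma~\ref{lem:1d-transform}(iii), $\mathcal{G}$ is an isometry from $L^2((0,\infty),dk)$ into $\Ltwo{\Rhalf}$, and its range is the orthogonal complement of $\mathbb{C}\psizero$. This is the one-dimensional spectral theorem for the Robin half-line operator. In particular $\mathcal{G}[F(\xipar,\cdot)] \perp \psizero$ for every $\xipar$, and $\norm{\psizero}_{\Ltwo{\Rhalf}} = 1$. Then Plancherel in $\xpar$ (the unitarity of $\mathcal{F}'$) and Fubini give the three identities of \eqref{eq:isometry}:
\[
  (u_F,u_{F'}) = \int_{\Rnm}\bigl(\mathcal{G}F(\xipar,\cdot),\mathcal{G}F'(\xipar,\cdot)\bigr)_{\Ltwo{\Rhalf}}\,d\xipar = (F,F'),
\]
\[
  (v_G,v_{G'}) = \int_{\Rnm} G\,\overline{G'}\,d\xipar\cdot\norm{\psizero}^2 = (G,G'),
\]
\[
  (u_F,v_G) = \int_{\Rnm}\overline{G(\xipar)}\,\bigl(\mathcal{G}F(\xipar,\cdot),\psizero\bigr)\,d\xipar = 0.
\]
Membership of $u_F, v_G$ in $\Ltwo{\Rnp}$ follows from the same computation. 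Since $\widehat{u_F}$ is measurable with finite $L^2(d\xipar\,dx_n)$ norm $\norm{F}$, the function $u_F$ lies in $L^2$, and its pointwise definition agrees with the $L^2$ inverse transform. For fixed $x_n$ the function $\xipar \mapsto \int F\gphi\,d\xi_n$ is in $C_c^\infty$, so no subtlety arises.

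The main obstacle is that everything rests on Lemma~\ref{lem:1d-transform}, specifically on the normalization $\sqrt{2/\pi}\,(k^2+\alpha^2)^{-1/2}$ and on the orthogonality $\int_0^\infty \gphi_k\psizero\,dt = 0$. The orthogonality can be checked directly: writing $k\cos kt - \alpha\sin kt$ against $e^{-\alpha t}$ gives
\[
  k\cdot\frac{\alpha}{\alpha^2+k^2} - \alpha\cdot\frac{k}{\alpha^2+k^2} = 0.
\]
If the isometry of $\mathcal{G}$ is not already granted by that lemma, I would prove it by Stone's formula applied to the fiber Green kernel \eqref{eq:fiber-green}. The jump of $g$ across the cut $|\xipar|^2 - z = -k^2$ produces $\gphi_k(x_n)\gphi_k(y_n)$, since
\[
  \im\frac{ik+\alpha}{ik-\alpha}\ \text{ combined with } e^{\pm ik(x_n+y_n)}
\]
yields exactly the product form, and the pole at $\kappa = \alpha$ yields $\psizero\otimes\psizero$. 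Together with the absence of other spectrum, this gives $\mathcal{G}\mathcal{G}^* + \Psurf = I$ and $\mathcal{G}^*\mathcal{G} = I$. Alternatively, one can use the representation \eqref{eq:phi-exponential} together with the Fourier inversion and cosine/sine transforms on the half-line, where the reflection coefficient $\refl{k}$ of modulus one makes the cross terms cancel.
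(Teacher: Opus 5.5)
Your proposal follows the paper's proof essentially verbatim: the partial Fourier transform gives $\widehat{u_F}(\xipar,\cdot) = \Utrans^{*}F(\xipar,\cdot)$ (your $\mathcal{G}$ is $\Utrans^{*}$) and $\widehat{v_G} = G(\xipar)\psizero(x_n)$. The three identities then follow fiberwise from Lemma~\ref{lem:1d-transform}(iii), which says $\Utrans^{*}$ is an isometry of $L^2(\Rhalf,dk)$ onto $\{\psizero\}^{\perp}$, integrated in $\xipar$ via the unitarity of $\mathcal{F}'$; your Stone-formula fallback is unnecessary, since that lemma already supplies the isometry with the correct normalization.
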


\begin{proof}
Let $\mathcal{F}'$ be the partial Fourier transform \eqref{eq:fourier-partial},
a unitary operator on $\Ltwo{\Rnp} = \Ltwo{\Rnm} \otimes \Ltwo{\Rhalf}$. By
\eqref{eq:Psib-def} the inner integral in \eqref{eq:wave-packets} is, for each
fixed $x_n$, the inverse partial Fourier transform of
$\xipar \mapsto \int_0^{\infty} F(\xipar,\xi_n)\gphi_{\xi_n}(x_n)\,d\xi_n$.
Hence
\begin{equation}
  \label{eq:uF-hat}
  \widehat{u_F}(\xipar,x_n)
  = \int_0^{\infty} F(\xipar,\xi_n)\,\gphi_{\xi_n}(x_n)\,d\xi_n
  = \bigl( \Utrans^{*}F(\xipar,\cdot) \bigr)(x_n),
\end{equation}
where $\Utrans$ is the one-dimensional spectral transform
\eqref{eq:1d-transform}. The interchange of the order of integration is
justified because $F$ has compact support and $\gphi_k$ is bounded uniformly
for $k$ in compact subsets of $(0,\infty)$. Likewise
\begin{equation}
  \label{eq:vG-hat}
  \widehat{v_G}(\xipar,x_n) = G(\xipar)\,\psizero(x_n).
\end{equation}
By Lemma~\ref{lem:1d-transform}(iii) the operator $\Utrans^{*}$ is an isometry
of $\Ltwo{\Rhalf,dk}$ onto $\{\psizero\}^{\perp} \subset \Ltwo{\Rhalf}$.
Therefore, for a.e.\ $\xipar$,
\begin{equation*}
  \bigl( \widehat{u_F}(\xipar,\cdot), \widehat{u_{F'}}(\xipar,\cdot)
  \bigr)_{\Ltwo{\Rhalf}}
  = \bigl( F(\xipar,\cdot), F'(\xipar,\cdot) \bigr)_{\Ltwo{\Rhalf,dk}},
  \qquad
  \bigl( \widehat{u_F}(\xipar,\cdot), \psizero \bigr)_{\Ltwo{\Rhalf}} = 0 .
\end{equation*}
Integrating the first identity in $\xipar$ and using the unitarity of
$\mathcal{F}'$ gives the first relation in \eqref{eq:isometry}, and in
particular $u_F \in \Ltwo{\Rnp}$. The second follows from
$\norm{\psizero}_{\Ltwo{\Rhalf}} = 1$, and the third from the second identity
above together with \eqref{eq:vG-hat}:
\begin{equation*}
  (u_F,v_G)_{\Ltwo{\Rnp}}
  = \int_{\Rnm} \overline{G(\xipar)}\,
    \bigl( \widehat{u_F}(\xipar,\cdot), \psizero \bigr)_{\Ltwo{\Rhalf}}
    \,d\xipar
  = 0 . \qedhere
\end{equation*}
\end{proof}

By duality, Theorem~\ref{thm:delta-orthogonality} yields the transforms
themselves. For $f \in \Ltwo{\Rnp}$ with compact support set
\begin{equation}
  \label{eq:Ftilde-def}
  (\Ftbulk f)(\xi) = \int_{\Rnp} \overline{\Psib(x,\xi)}\,f(x)\,dx,
  \qquad
  (\Ftsurf f)(\eta') = \int_{\Rnp} \overline{\Psis(x,\eta')}\,f(x)\,dx .
\end{equation}

\begin{corollary}[Completeness]
\label{cor:completeness}
The map $f \mapsto (\Ftbulk f, \Ftsurf f)$ extends from compactly supported
functions to a unitary operator
\begin{equation}
  \label{eq:Ftilde-unitary}
  \Ltwo{\Rnp} \;\longrightarrow\;
  \Ltwo{\Rnm \times (0,\infty)} \oplus \Ltwo{\Rnm},
\end{equation}
and for $f \in \Dom(\Halpha)$,
\begin{equation}
  \label{eq:Ftilde-diagonal}
  (\Ftbulk \Halpha f)(\xi) = |\xi|^2\,(\Ftbulk f)(\xi),
  \qquad
  (\Ftsurf \Halpha f)(\eta') = \bigl( |\eta'|^2 - \alpha^2 \bigr)\,
    (\Ftsurf f)(\eta') .
\end{equation}
\end{corollary}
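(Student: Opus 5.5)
The map $\Phi : f \mapsto (\Ftbulk f, \Ftsurf f)$ is, on compactly supported $f$, the adjoint of the synthesis map $S : (F,G) \mapsto u_F + v_G$ of Theorem~\ref{thm:delta-orthogonality}. By that theorem, $S$ is an isometry from the dense subspace $C_c^\infty(\Rnm\times(0,\infty)) \oplus C_c^\infty(\Rnm)$ of $\Ltwo{\Rnm\times(0,\infty)}\oplus\Ltwo{\Rnm}$ into $\Ltwo{\Rnp}$. It therefore extends to an isometry $\overline{S}$. For compactly supported $f \in \Ltwo{\Rnp}$, Fubini's theorem applies, because $f$ has compact support and $\Psib$, $\Psis$ are bounded. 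It gives $(f, u_F + v_G) = (\Ftbulk f, F) + (\Ftsurf f, G)$, so $\Phi f = \overline{S}^{*} f$. Since $\overline{S}^{*}$ is bounded with norm at most one, $\Phi$ extends to $\overline{S}^{*}$ on all of $\Ltwo{\Rnp}$. Unitarity of $\Phi$ is then equivalent to surjectivity of $\overline{S}$, that is, to $\Ker \overline{S}^{*} = \{0\}$.

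For completeness I would work fiberwise. By \eqref{eq:uF-hat} and \eqref{eq:vG-hat}, after the partial Fourier transform $\mathcal{F}'$ the map $\overline{S}$ acts in the fiber over $\xipar$ as
\[
  (F(\xipar,\cdot),G(\xipar)) \mapsto \Utrans^{*}F(\xipar,\cdot) + G(\xipar)\psizero .
\]
By Lemma~\ref{lem:1d-transform}(iii), $\Utrans^{*}$ is an isometry of $\Ltwo{\Rhalf,dk}$ onto $\{\psizero\}^{\perp}$. Each fiber map is therefore a unitary from $\Ltwo{\Rhalf,dk}\oplus\mathbb{C}$ onto $\Ltwo{\Rhalf}$. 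Hence the direct integral over $\xipar$, composed with $(\mathcal{F}')^{-1}$, is unitary onto $\Ltwo{\Rnp}$. One small point needs checking here: the fiberwise formula extends from $C_c^\infty$ data to all $L^2$ data. This holds by continuity, since both sides are bounded operators agreeing on a dense set.

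For \eqref{eq:Ftilde-diagonal} I would use the fiber decomposition, Proposition~\ref{prop:fibration}. For $f \in \Dom(\Halpha)$, the transform $\widehat{\Halpha f}(\xipar,\cdot)$ equals $\hfib(\xipar)\widehat f(\xipar,\cdot)$. The surface component is $(\widehat f(\xipar,\cdot),\psizero)$, and self-adjointness of $\hfib(\xipar)$ with $\hfib(\xipar)\psizero = (|\xipar|^2-\alpha^2)\psizero$ gives the second identity. For the bulk component, Lemma~\ref{lem:1d-transform} gives that $\Utrans$ diagonalizes the half-line Robin operator, $\Utrans(-d^2/dx_n^2)v = k^2\,\Utrans v$ on its domain. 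Adding $|\xipar|^2$ gives $|\xi|^2$.

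One step could be the main obstacle: the identification of $\Ftbulk f$, for compactly supported $f$, with the fiberwise $\Utrans\widehat f(\xipar,\cdot)$. The difficulty is that $\gphi_k$ is not in $\Ltwo{\Rhalf}$, so this holds only in the $L^2$-limit sense of \eqref{eq:1d-transform}. I would avoid the issue entirely by arguing only through the adjoint relation $\Phi = \overline{S}^{*}$ and the diagonalization of $\overline{S}$. Once $\overline{S}$ is unitary, the relation $\overline{S}^{*} = \overline{S}^{-1}$ transfers the fiberwise spectral statements directly. Writing $M$ for multiplication by $(|\xi|^2, |\eta'|^2-\alpha^2)$, the operator identity $\overline{S}^{-1}\Halpha\overline{S} = M$ follows from the fiber unitarity together with the diagonalization in Lemma~\ref{lem:1d-transform}.
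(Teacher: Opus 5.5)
Your proposal is correct and follows essentially the paper's argument: the transforms are the adjoints of the synthesis maps of Theorem~\ref{thm:delta-orthogonality}, surjectivity (equivalently, density of the range) comes from the fiberwise picture under $\mathcal{F}'$ together with Lemma~\ref{lem:1d-transform}(iii), and \eqref{eq:Ftilde-diagonal} comes from \eqref{eq:1d-diag} and Proposition~\ref{prop:fibration}. The identification you flag as the main obstacle is in fact harmless for compactly supported $f$, since the $x_n$-integral then converges absolutely because $|\gphi_k|\le\sqrt{2/\pi}$. Routing everything through $\overline{S}^{*}=\overline{S}^{-1}$, as you propose, works equally well.
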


\begin{proof}
The operators \eqref{eq:Ftilde-def} are the adjoints of $F \mapsto u_F$ and
$G \mapsto v_G$, so \eqref{eq:isometry} says that
$\Ftbulk^{*}\Ftbulk + \Ftsurf^{*}\Ftsurf$ acts as the identity on the closed
span of the ranges of $F \mapsto u_F$ and $G \mapsto v_G$, while
$\Ftbulk\Ftbulk^{*} = I$ and $\Ftsurf\Ftsurf^{*} = I$ on
$\Ltwo{\Rnm \times (0,\infty)}$ and $\Ltwo{\Rnm}$ respectively, the cross terms
vanishing. It remains to see that the said span is all of $\Ltwo{\Rnp}$, and
this is \eqref{eq:uF-hat}--\eqref{eq:vG-hat} again: under the unitary
$\mathcal{F}'$ the two ranges become $\Ltwo{\Rnm} \otimes \{\psizero\}^{\perp}$
and $\Ltwo{\Rnm} \otimes \mathbb{C}\psizero$, whose sum is
$\Ltwo{\Rnm} \otimes \Ltwo{\Rhalf} = \Ltwo{\Rnp}$ by
Lemma~\ref{lem:1d-transform}(iii). Identity \eqref{eq:Ftilde-diagonal} follows
from \eqref{eq:1d-diag} and $\hfib(\xipar) = h_0 + |\xipar|^2$, that is, from
Proposition~\ref{prop:fibration}.
\end{proof}

\subsubsection{The generalized Fourier transform}
\label{sec:gft-transform}

We now pass to polar coordinates, so as to fiber over the spectral parameter.
For $E > 0$ the bulk variable is written $\xi = \sqrt{E}\,\theta$ with
$\theta \in \Sm$, and for $E > -\alpha^2$ the surface variable is written
$\eta' = \sqrt{E+\alpha^2}\,\omega$ with $\omega \in \Ssurf$. The corresponding
Jacobians are
\begin{equation}
  \label{eq:jacobians}
  d\xi = \tfrac12\,E^{\frac{n-2}{2}}\,dE\,d\theta,
  \qquad
  d\eta' = \tfrac12\,(E+\alpha^2)^{\frac{n-3}{2}}\,dE\,d\omega .
\end{equation}

\begin{definition}[Channel spaces and the generalized Fourier transform]
\label{def:gft}
For $E \in \Ereg$ put
\begin{equation}
  \label{eq:channel-space}
  \Echan{E} =
  \begin{cases}
    \Ltwo{\Sm} \oplus \Ltwo{\Ssurf}, & E > 0, \\[1mm]
    \Ltwo{\Ssurf}, & -\alpha^2 < E < 0,
  \end{cases}
  \qquad
  \Hdirect = \int_{\Ereg}^{\oplus} \Echan{E}\,dE ,
\end{equation}
and define, for $f$ in a suitable dense class,
\begin{align}
  \label{eq:Fbulk-def}
  \bigl( \Fbulk(E)f \bigr)(\theta)
    &= \frac{E^{\frac{n-2}{4}}}{\sqrt2}\,
       \int_{\Rnp} \overline{\Psib(x,\sqrt{E}\,\theta)}\,f(x)\,dx,
    &&\theta \in \Sm,\ E > 0,
  \\[1mm]
  \label{eq:Fsurf-def}
  \bigl( \Fsurf(E)f \bigr)(\omega)
    &= \frac{(E+\alpha^2)^{\frac{n-3}{4}}}{\sqrt2}
       \int_{\Rnp} \overline{\Psis(x,\ksurf\,\omega)}\,f(x)\,dx,
    &&\omega \in \Ssurf,\ E > -\alpha^2 .
\end{align}
We set $\Fzero(E) = \bigl( \Fbulk(E), \Fsurf(E) \bigr)$ for $E > 0$ and
$\Fzero(E) = \Fsurf(E)$ for $-\alpha^2 < E < 0$, and
$(\Fzero f)(E) = \Fzero(E)f$.
\end{definition}

\begin{theorem}[Spectral representation]
\label{thm:spectral-representation}
$\Fzero$ extends to a unitary operator from $\Ltwo{\Rnp}$ onto $\Hdirect$ which
diagonalizes $\Halpha$:
\begin{equation}
  \label{eq:diagonalization}
  \bigl( \Fzero \Halpha f \bigr)(E) = E\,\bigl( \Fzero f \bigr)(E),
  \qquad f \in \Dom(\Halpha),
\end{equation}
for a.e.\ $E \in \Ereg$. In particular the inversion formula
\begin{equation}
  \label{eq:inversion}
  f = \operatorname*{s-lim}_{N \to \infty}
    \int_{K_N} \Fzero(E)^{*}\bigl( \Fzero f \bigr)(E)\,dE,
  \qquad
  K_N = \bigl[ -\alpha^2 + \tfrac1N,\ -\tfrac1N \bigr]
      \cup \bigl[ \tfrac1N,\ N \bigr],
\end{equation}
holds in $\Ltwo{\Rnp}$.
\end{theorem}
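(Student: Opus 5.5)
The plan is to derive Theorem~\ref{thm:spectral-representation} from Corollary~\ref{cor:completeness} by a change of variables to polar coordinates on each channel. By Corollary~\ref{cor:completeness}, the map $f \mapsto (\Ftbulk f, \Ftsurf f)$ is unitary from $\Ltwo{\Rnp}$ onto $\Ltwo{\Rnm\times(0,\infty)} \oplus \Ltwo{\Rnm}$. Unitarity onto $\Hdirect$ therefore reduces to a unitary identification
\begin{equation*}
  \Ltwo{\Rnm\times(0,\infty)} \oplus \Ltwo{\Rnm} \;\cong\; \Hdirect .
\end{equation*}
For the bulk channel, write $\xi = \sqrt{E}\,\theta$, where $\theta$ ranges over $\Sm$ because $\xi_n > 0$ (Remark~\ref{rem:standing-wave}(a)), and use \eqref{eq:jacobians}. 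This gives
\begin{equation*}
  \int |\Ftbulk f(\xi)|^2\,d\xi = \int_0^\infty \int_{\Sm} \tfrac12 E^{\frac{n-2}{2}}\,|\Ftbulk f(\sqrt{E}\theta)|^2\,d\theta\,dE ,
\end{equation*}
which is $\int_0^\infty \norm{\Fbulk(E)f}^2_{\Ltwo{\Sm}}\,dE$ by \eqref{eq:Fbulk-def}. For the surface channel, write $\eta' = \ksurf\,\omega$. The Jacobian $\tfrac12(E+\alpha^2)^{\frac{n-3}{2}}$ matches the prefactor in \eqref{eq:Fsurf-def}, and $E$ ranges over $(-\alpha^2,\infty)$. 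The thresholds $E = 0$ and $E = -\alpha^2$ correspond to the null sets $\{|\xi| = 0\}$ and $\{|\eta'| = 0\}$. Decomposing the surface integral over $(-\alpha^2,0)\cup(0,\infty)$ and pairing it with the bulk integral over $(0,\infty)$ reproduces exactly the fibers $\Echan{E}$ of \eqref{eq:channel-space}. This proves the isometry on the dense class (say compactly supported $f\in\Ltwo{\Rnp}$).

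Next comes surjectivity. The polar-coordinate map is itself unitary from $\Ltwo{\Rnm\times(0,\infty)}$ onto $\int^\oplus_{(0,\infty)}\Ltwo{\Sm}\,dE$, and from $\Ltwo{\Rnm}$ onto $\int^\oplus_{(-\alpha^2,\infty)}\Ltwo{\Ssurf}\,dE$. Its inverse is given explicitly by dividing by the Jacobian factors, which are positive off the thresholds. Composing this with the unitary map of Corollary~\ref{cor:completeness} gives a unitary map, and it agrees with $\Fzero$ on the dense class. Hence $\Fzero$ extends to a unitary operator onto $\Hdirect$.

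Diagonalization \eqref{eq:diagonalization} follows from \eqref{eq:Ftilde-diagonal}. On the sphere $|\xi| = \sqrt{E}$ the bulk multiplier is $|\xi|^2 = E$, and on $|\eta'| = \ksurf$ the surface multiplier is $|\eta'|^2 - \alpha^2 = E$. Both hold for a.e.\ $E$, since the polar identification is defined only almost everywhere.

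For the inversion formula \eqref{eq:inversion}, apply $\Fzero^*$ to $\mathbf 1_{K_N}\Fzero f$. Because the Jacobians are bounded above and below on $K_N$, this equals $\int_{K_N}\Fzero(E)^*(\Fzero f)(E)\,dE$ as a weak integral. Strong convergence as $N\to\infty$ then follows from $\mathbf 1_{K_N}\to 1$ a.e.\ on $\Ereg$, dominated convergence in $\Hdirect$, and unitarity of $\Fzero$.

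The main obstacle is justifying the pointwise meaning of $\Fzero(E)f$ for each $E$: the traces of $\Ftbulk f$ on spheres are not automatic for $f\in\Ltwo{\Rnp}$. I would first work with $f$ compactly supported, for which $\Ftbulk f$ and $\Ftsurf f$ are continuous, since the eigenfunctions are bounded and continuous in $\xi$ away from $\xi_n = 0$. The direct-integral statement for general $f$ is then an a.e.\ statement obtained by density. Identifying $\Fzero(E)$ with the abstract map of \S\ref{sec:gft-abstract} on all of $\Bsp$ is not needed here; that is deferred to Theorem~\ref{thm:stone}.
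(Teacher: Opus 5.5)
Your proposal is correct and follows the paper's proof: the unitary map of Corollary~\ref{cor:completeness} is carried onto $\Hdirect$ by the polar change of variables \eqref{eq:jacobians}, with the prefactors of $\Fbulk(E)$ and $\Fsurf(E)$ being the square roots of the Jacobian densities; \eqref{eq:diagonalization} is \eqref{eq:Ftilde-diagonal} read on the spheres $|\xi|^2=E$ and $|\eta'|^2-\alpha^2=E$; and \eqref{eq:inversion} comes from $\Fzero^{*}\Fzero=I$. The only difference is cosmetic: you discard the thresholds as Lebesgue-null sets and spell out the dominated-convergence step, whereas the paper cites $P(\Thr)=0$, which holds because $\Halpha$ has no eigenvalues.
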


\begin{proof}
The change of variables \eqref{eq:jacobians} turns the unitary map
\eqref{eq:Ftilde-unitary} of Corollary~\ref{cor:completeness} into a unitary map
onto
$\int_{(0,\infty)}^{\oplus}\Ltwo{\Sm}\,dE
 \oplus \int_{(-\alpha^2,\infty)}^{\oplus}\Ltwo{\Ssurf}\,dE$,
the constants in \eqref{eq:Fbulk-def}--\eqref{eq:Fsurf-def} being exactly the
square roots of the Jacobian densities. Regrouping the two direct integrals over
a common spectral parameter gives $\Hdirect$ as in \eqref{eq:channel-space}, the
bulk summand being absent for $E < 0$. Identity \eqref{eq:diagonalization} is
\eqref{eq:Ftilde-diagonal} rewritten in the new variables, since
$|\xi|^2 = E$ on the bulk sphere and $|\eta'|^2 - \alpha^2 = E$ on the surface
sphere. Finally \eqref{eq:inversion} is the statement that
$\Fzero^{*}\Fzero = I$, together with the fact that $P(\Thr) = 0$ because
$\Halpha$ has no eigenvalues (Theorem~\ref{thm:sa-spec}(c)).
\end{proof}

The two channels enter \eqref{eq:channel-space} asymmetrically. On
$(-\alpha^2,0)$ only the surface channel is open. At $E = 0$ the bulk channel
opens and the fiber gains the summand $\Ltwo{\Sm}$. No continuous family of
unitary identifications can cross such a jump in the dimension of the fiber,
which is why $E = 0$ is excluded here. In Theorem~\ref{thm:lap} it was excluded
for a different reason, the degeneration of the resolvent bound. The surface
threshold $-\alpha^2$, by contrast, is the bottom of the spectrum, and below it
there is no channel to open (Remark~\ref{rem:thresholds}).

\subsubsection{\texorpdfstring{$\Bsp$--$\Bst$}{B--B*} properties and Stone's
formula}
\label{sec:gft-BBstar}

It remains to connect Definition~\ref{def:gft} with \S\ref{sec:gft-abstract}.
The tool is the second classical theorem of Agmon and H\"ormander, the
restriction theorem, which we quote alongside its dual.

\begin{theorem}[Agmon--H\"ormander \cite{AgmonHormander1976}]
\label{thm:AH-restriction}
Let $m \ge 2$. For every $F \in \Bspd{\mathbb{R}^m}$ the restriction of
$\widehat{F}$ to the sphere $\{|\xi| = \rho\}$ is well defined for every
$\rho > 0$, and, after enlarging the constant $\CAH{m}$ of
Theorem~\ref{thm:AH-classical} if necessary,
\begin{equation}
  \label{eq:AH-restriction}
  \sup_{\rho > 0} \int_{|\xi| = \rho} \bigl| \widehat{F}(\xi) \bigr|^2\,dS(\xi)
  \le \bigl( \CAH{m} \bigr)^2\,\norm{F}_{\Bspd{\mathbb{R}^m}}^2 ,
\end{equation}
the left-hand side being scale invariant. Moreover
$\rho \mapsto \widehat{F}(\rho\,\cdot\,)|_{S^{m-1}}$ is continuous from
$(0,\infty)$ to $\Ltwo{S^{m-1}}$. Dually, for $g \in \Ltwo{S^{m-1}}$ the
function
\begin{equation}
  \label{eq:AH-extension}
  x \longmapsto \int_{S^{m-1}} e^{i\rho\,x \cdot \theta}\,g(\theta)\,d\theta
\end{equation}
belongs to $\Bstd{\mathbb{R}^m}$, with norm at most
$C\rho^{-\frac{m-1}{2}}\norm{g}_{\Ltwo{S^{m-1}}}$.
\end{theorem}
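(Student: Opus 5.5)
The plan is to deduce the restriction estimate \eqref{eq:AH-restriction} from the resolvent bound of Theorem~\ref{thm:AH-classical} through Stone's formula for the free Laplacian on $\mathbb{R}^m$, and then to obtain the extension statement \eqref{eq:AH-extension} by duality, using the $\Bsp$--$\Bst$ pairing \eqref{eq:AH-pairing}.

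\emph{Step 1: the spectral density.} Let $F \in \Bspd{\mathbb{R}^m}$. In polar coordinates $\xi = \rho\theta$, Plancherel's theorem identifies the density of the spectral measure of $-\Delta_{\mathbb{R}^m}$ at $\zeta = \rho^2$ as
\[
  \frac{1}{2\pi i}\bigl( (\Rfree{m}{\zeta+i0} - \Rfree{m}{\zeta-i0})F, F \bigr)
  = \frac{\rho^{m-2}}{2} \int_{S^{m-1}} |\widehat F(\rho\theta)|^2\,d\theta
  = \frac{1}{2\rho}\int_{|\xi|=\rho} |\widehat F|^2\,dS .
\]
I would first verify this for $F$ in the Schwartz class, where $\widehat F$ is continuous and the identity is a direct computation: the difference $\im(|\xi|^2-\zeta-i\eps)^{-1}$ is a Poisson kernel concentrating on the sphere $|\xi| = \rho$.

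\emph{Step 2: the bound.} By \eqref{eq:AH-pairing} and \eqref{eq:AH-classical}, the left-hand side is at most $\pi^{-1}\CAH{m}\,\rho^{-1}\norm{F}_{\Bsp}^2$. Hence $\int_{|\xi|=\rho}|\widehat F|^2\,dS \le 2\pi^{-1}\CAH{m}\norm{F}_{\Bsp}^2$ uniformly in $\rho$, which is \eqref{eq:AH-restriction} after enlarging the constant. This bound lets the restriction map extend by density from Schwartz functions to all of $\Bsp$; Schwartz functions are dense in $\Bsp$, since truncation to finitely many shells converges in the $\ell^1$-type norm. Continuity of the restriction in $\rho$ follows by the same density argument, because for Schwartz $F$ the map $\rho \mapsto \widehat F(\rho\,\cdot)$ is continuous into $\Ltwo{S^{m-1}}$ and the bound is uniform on compacts.

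\emph{Step 3: duality.} For $g \in \Ltwo{S^{m-1}}$ let $u$ be the function in \eqref{eq:AH-extension}. For $F \in \Bsp$ one has $(F,u) = (2\pi)^{m/2}\int_{S^{m-1}} \widehat F(\rho\theta)\,\overline{g(\theta)}\,d\theta$. Since $\int_{S^{m-1}}|\widehat F(\rho\theta)|^2 d\theta = \rho^{-(m-1)}\int_{|\xi|=\rho}|\widehat F|^2\,dS$, the Cauchy--Schwarz inequality and Step 2 give $|(F,u)| \le C\rho^{-(m-1)/2}\norm{g}\norm{F}_{\Bsp}$. The elementary half of the duality then bounds $\Bstnorm{u}$: test against $F = \chi_{\shell{j}}u$ normalized in $\Bsp$, whose $\Bsp$-norm is at most $C2^{j/2}\norm{u}_{\Ltwo{\shell{j}}}$.

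\emph{Main obstacle.} The delicate step is the rigorous identification in Step 1 for general $F$, that is, passing the weak-$*$ limit through the Poisson-kernel integral. I plan to avoid this entirely by working only with Schwartz $F$ and then extending by density, as described. The scale-invariance remark is immediate from the form of the bound.
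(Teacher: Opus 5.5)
Your argument is correct, but it runs in the opposite direction from the standard proof. The paper does not prove Theorem~\ref{thm:AH-restriction}; it quotes it from Agmon--H\"ormander, where the trace estimate comes first and is proved directly. For $F$ supported in $\{|x|<R\}$, a one-dimensional trace inequality in the radial variable across the layer $\rho<|\xi|<\rho+R^{-1}$, combined with $\nabla_\xi\widehat F = -i\widehat{xF}$, gives $\int_{|\xi|=\rho}|\widehat F|^2\,dS\le CR\norm{F}^2_{L^2}$ uniformly in $\rho$. Summing over dyadic shells then yields the $\Bsp$ bound, and the resolvent estimate of Theorem~\ref{thm:AH-classical} is built on this.

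You instead take Theorem~\ref{thm:AH-classical} as input and read the trace bound off the identity $\pi^{-1}\im\Rfree{m}{\rho^2+i0} = (2\rho)^{-1}\tau_\rho^{*}\tau_\rho$, where $\tau_\rho F = \widehat F|_{\{|\xi|=\rho\}}$. This is the free-space form of the flux identity \eqref{eq:flux}. The paper uses the same mechanism in reverse: it bounds $\Fzero(E)$ by the restriction theorem in Proposition~\ref{prop:F-bounded} and then proves Stone's formula in Theorem~\ref{thm:stone}.

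Since the paper quotes both results, your reduction is legitimate there and short. It derives one quoted theorem from the other rather than proving it independently, however; relative to Agmon--H\"ormander's own proof of the resolvent bound it would be circular. The direct argument is also more elementary and covers compact pieces of arbitrary smooth hypersurfaces, not only spheres.

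Two small points. First, the obstacle you anticipate in Step~1 does not arise. Since $\norm{\Rfree{m}{\rho^2\pm i\eps}}_{\Lop{\Bsp}{\Bst}}\le\CAH{m}\rho^{-1}$ for every $\eps>0$, the limit $\eps\downarrow0$ is only ever taken on the Poisson-kernel side. Second, for general $F\in\Bsp$ you should record that the extended trace agrees with $\widehat F(\rho\,\cdot\,)$ for a.e.\ $\rho$. You should also carry out the Fubini computation of $(F,u)$ in Step~3 only for compactly supported $F$, which is all your test functions $\chi_{\shell{j}}u$ require.
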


The next lemma is the elementary but essential observation that $\Ftbulk$ is, up
to unimodular factors, the restriction to a sphere in $\Rn$ of the Fourier
transform of the zero extension. For $f$ defined on $\Rnp$ we write $\zext{f}$
for its extension by zero to $\Rn$, and for $\xi = (\xipar,\xi_n)$ we write
$\refext{\xi} = (\xipar,-\xi_n)$ for its reflection in the hyperplane
$\{\xi_n = 0\}$. The same notation is used for $\theta \in \Sm$, so that
$\theta \mapsto \refext{\theta}$ is a bijection from $\Sm$ onto the lower
hemisphere.

\begin{lemma}[Reflection formula]
\label{lem:reflection-formula}
For $f \in \Bsp$ one has $\zext{f} \in \Bspd{\Rn}$ with
$\norm{\zext{f}}_{\Bspd{\Rn}} = \Bnorm{f}$, and for
$\xi \in \Rnm \times (0,\infty)$,
\begin{equation}
  \label{eq:reflection-formula}
  (\Ftbulk f)(\xi)
  = \overline{\cph{\xi_n}}\,\widehat{\zext{f}}(\xi)
    + \cph{\xi_n}\,\widehat{\zext{f}}(\refext{\xi}),
\end{equation}
where $\widehat{\phantom{f}}$ denotes the Fourier transform on $\Rn$. Moreover,
with $T_{\psizero}$ as in Corollary~\ref{cor:tensor},
\begin{equation}
  \label{eq:surface-formula}
  (\Ftsurf f)(\eta') = \widehat{T_{\psizero}f}(\eta'),
  \qquad
  (T_{\psizero}f)(\xpar) = \int_0^{\infty} \psizero(x_n)\,f(\xpar,x_n)\,dx_n,
\end{equation}
the transform on the right being that of $\Rnm$.
\end{lemma}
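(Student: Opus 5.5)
The plan is to verify the three assertions of Lemma~\ref{lem:reflection-formula} in turn, first for $f \in C_c^\infty(\overline{\Rnp})$ or more generally compactly supported $f \in \Ltwo{\Rnp}$, where \eqref{eq:Ftilde-def} is an absolutely convergent integral. For general $f \in \Bsp$ I will then pass to the limit using Theorem~\ref{thm:AH-restriction} and Corollary~\ref{cor:tensor}.

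\emph{The norm identity.} Zero extension changes no shell norm. Indeed $\zext{f}$ vanishes on $\shell{j} \setminus \shellp{j}$, so $\norm{\zext{f}}_{\Ltwo{\shell{j}}} = \norm{f}_{\Ltwo{\shellp{j}}}$ for every $j$. Summing with the weights $2^{j/2}$ gives $\norm{\zext{f}}_{\Bspd{\Rn}} = \Bnorm{f}$ directly from \eqref{eq:B-norms}.

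\emph{The bulk formula.} For compactly supported $f$ I take complex conjugates in \eqref{eq:Psib-exponential}. Since $|\cph{\xi_n}| = 1$ and $\overline{\refl{\xi_n}} = \overline{\cph{\xi_n}}/\cph{\xi_n}$, this gives
\[
  \overline{\Psib(x,\xi)} = (2\pi)^{-n/2}\Bigl[ \cph{\xi_n}\,e^{-i\xpar\cdot\xipar}e^{i\xi_n x_n} + \overline{\cph{\xi_n}}\,e^{-i\xpar\cdot\xipar}e^{-i\xi_n x_n} \Bigr].
\]
Equivalently, one can use \eqref{eq:phi-exponential} with $\gphi_k$ real, which gives $\gphi_k = (2\pi)^{-1/2}[\overline{\cph{k}}e^{-ikt} + \cph{k}e^{ikt}]$ directly. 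The second exponential is $e^{-ix\cdot\xi}$ and the first is $e^{-ix\cdot\refext{\xi}}$. Integrating against $f$ over $\Rnp$, which is the same as integrating $\zext{f}$ over $\Rn$, yields $\overline{\cph{\xi_n}}\,\widehat{\zext{f}}(\xi) + \cph{\xi_n}\,\widehat{\zext{f}}(\refext{\xi})$. This is \eqref{eq:reflection-formula}, and the only care needed is to match the unimodular factors correctly.

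\emph{The surface formula.} \eqref{eq:Psis-def} gives $\overline{\Psis(x,\eta')} = (2\pi)^{-(n-1)/2}e^{-i\xpar\cdot\eta'}\psizero(x_n)$, with $\psizero$ real. Fubini then gives $(\Ftsurf f)(\eta') = \widehat{T_{\psizero}f}(\eta')$ with the normalization of \eqref{eq:fourier-partial}.

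\emph{Extension to all of $\Bsp$.} This is the only step that is not a computation. For $f \in \Bsp$ the defining integrals \eqref{eq:Ftilde-def} need not converge, so I would read \eqref{eq:reflection-formula} and \eqref{eq:surface-formula} as the definition of the right-hand sides on $\Bsp$. I then show that this definition agrees with the unitary extension of Corollary~\ref{cor:completeness}.

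Truncate $f$ to $f_R = f\,\mathbf 1_{\{|x|<R\}}$. Then $f_R \to f$ in $\Bsp$, since the tail of the series in \eqref{eq:B-norms} tends to zero, and $f_R \to f$ in $\Ltwo{\Rnp}$ by \eqref{eq:AH-inclusions}. The $L^2$ convergence makes $\Ftbulk f_R \to \Ftbulk f$ and $\Ftsurf f_R \to \Ftsurf f$ in $L^2$. On the right-hand sides, Theorem~\ref{thm:AH-restriction} in dimension $n$, together with the norm identity, makes the traces of $\widehat{\zext{f_R}}$ on each sphere $|\xi| = \rho$ converge in $\Ltwo{S^{n-1}}$, uniformly in $\rho$. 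The same theorem in dimension $n-1$, combined with Corollary~\ref{cor:tensor}, handles $T_{\psizero}f_R \to T_{\psizero}f$ in $\Bspd{\Rnm}$.

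The two limits therefore agree almost everywhere. This also shows that the formulas give a meaning to $\Ftbulk f$ and $\Ftsurf f$ on every sphere, and not merely almost everywhere, which is presumably why the lemma is stated.
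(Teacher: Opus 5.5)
Your proof is correct and is essentially the paper's own. The paper performs the $\xpar$-integration first and then inserts \eqref{eq:phi-exponential} against the partial transform. You instead conjugate \eqref{eq:Psib-exponential} and integrate in all variables at once; this is the same computation, and the norm identity and the surface formula are handled exactly as in the paper.

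Your density step, which extends the identities from compactly supported $f$ to all of $\Bsp$, is left implicit in the paper, and your version of it is sound. For the a.e.\ identity, the $L^2$-boundedness of both sides already suffices; the restriction theorem is needed only to give the right-hand side a meaning on every sphere, which is how Proposition~\ref{prop:F-bounded} then uses the lemma.
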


\begin{proof}
The dyadic shells of $\Rn$ intersect $\Rnp$ exactly in the sets $\shellp{j}$ of
\eqref{eq:B-norms}, so the norm identity is immediate from
Definition~\ref{def:AH-spaces}. For \eqref{eq:reflection-formula}, carry out the
$\xpar$-integration in \eqref{eq:Ftilde-def} first. Since $\gphi_{\xi_n}$ is
real,
\begin{equation*}
  (\Ftbulk f)(\xi)
  = \int_0^{\infty} \gphi_{\xi_n}(x_n)\,\widehat{f}(\xipar,x_n)\,dx_n,
\end{equation*}
with $\widehat{f}$ the partial transform \eqref{eq:fourier-partial}. Inserting
\eqref{eq:phi-exponential} and using
\[
  \int_0^{\infty}e^{\mp i\xi_n x_n}\widehat{f}(\xipar,x_n)\,dx_n
  = \sqrt{2\pi}\,\widehat{\zext{f}}(\xipar,\pm\xi_n)
\]
gives \eqref{eq:reflection-formula}. Formula \eqref{eq:surface-formula} is
\eqref{eq:Psis-def} with the $x_n$-integration performed first, $\psizero$ being
real.
\end{proof}

\begin{proposition}[$\Bsp$-boundedness]
\label{prop:F-bounded}
Let $E \in \Ereg$. Then $\Fzero(E) \in \Lop{\Bsp}{\Echan{E}}$, with
\begin{equation}
  \label{eq:F-bound}
\begin{aligned}
  \norm{\Fbulk(E)}_{\Lop{\Bsp}{\Ltwo{\Sm}}}^2
    &\le \bigl(\CAH{n}\bigr)^2\,E^{-\frac12},
  \\[1mm]
  \norm{\Fsurf(E)}_{\Lop{\Bsp}{\Ltwo{\Ssurf}}}^2
    &\le \tfrac12\,\Csharp^2\,\bigl(\CAH{n-1}\bigr)^2\,
       (E+\alpha^2)^{-\frac12} .
\end{aligned}
\end{equation}
In particular $\norm{\Fzero(E)}$ is bounded uniformly on compact subsets of
$\Ereg$, and $E \mapsto \Fzero(E)f$ is continuous from each of $(-\alpha^2,0)$
and $(0,\infty)$ into the corresponding channel space, for every $f \in \Bsp$.
\end{proposition}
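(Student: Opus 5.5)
The bulk bound will come from the reflection formula (Lemma~\ref{lem:reflection-formula}) together with the restriction theorem (Theorem~\ref{thm:AH-restriction}) in dimension $n$. The surface bound will come from the surface formula \eqref{eq:surface-formula}, the tensor bound of Corollary~\ref{cor:tensor}, and the restriction theorem in dimension $n-1$. Continuity in $E$ will be inherited from the continuity statement of Theorem~\ref{thm:AH-restriction}.

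\emph{Bulk channel.} Let $f \in \Bsp$ and $\xi = \sqrt{E}\,\theta$ with $\theta \in \Sm$. By \eqref{eq:reflection-formula} and $|\cph{\cdot}| = 1$,
\begin{equation*}
  |(\Ftbulk f)(\sqrt{E}\theta)|^2
  \le 2\bigl(|\widehat{\zext{f}}(\sqrt{E}\theta)|^2
      + |\widehat{\zext{f}}(\sqrt{E}\refext{\theta})|^2\bigr).
\end{equation*}
Since $\theta \mapsto \refext{\theta}$ maps $\Sm$ onto the lower hemisphere, integrating over $\Sm$ gives at most $2\int_{S^{n-1}}|\widehat{\zext{f}}(\sqrt{E}\,\cdot)|^2\,d\theta$. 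The sphere $\{|\xi| = \sqrt{E}\}$ has surface measure $E^{(n-1)/2}d\theta$, so \eqref{eq:AH-restriction} bounds this by $2E^{-(n-1)/2}(\CAH{n})^2\Bnorm{f}^2$, using $\norm{\zext{f}}_{\Bspd{\Rn}} = \Bnorm{f}$. Multiplying by the prefactor $E^{(n-2)/2}/2$ from \eqref{eq:Fbulk-def} gives $(\CAH{n})^2E^{-1/2}\Bnorm{f}^2$, which is exactly the first line of \eqref{eq:F-bound}. Continuity of $E \mapsto \Fbulk(E)f$ follows because $\rho \mapsto \widehat{\zext{f}}(\rho\,\cdot)|_{S^{n-1}}$ is continuous into $\Ltwo{S^{n-1}}$. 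Composing with the reflection and multiplying by the continuous unimodular factors $\cph{\sqrt{E}\theta_n}$ preserves continuity, via dominated convergence in $\theta$ (these factors are bounded and continuous in $E$ pointwise).

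\emph{Surface channel.} By \eqref{eq:surface-formula}, $\Ftsurf f = \widehat{T_{\psizero}f}$, and Corollary~\ref{cor:tensor} gives $\norm{T_{\psizero}f}_{\Bspd{\Rnm}} \le \Csharp\Bnorm{f}$ because $\norm{\psizero} = 1$. Applying \eqref{eq:AH-restriction} with $m = n-1 \ge 2$ on the sphere of radius $\ksurf$, whose surface measure is $(E+\alpha^2)^{(n-2)/2}d\omega$, gives
\begin{equation*}
  \int_{\Ssurf}|\widehat{T_{\psizero}f}(\ksurf\omega)|^2\,d\omega
  \le (E+\alpha^2)^{-\frac{n-2}{2}}(\CAH{n-1})^2\Csharp^2\Bnorm{f}^2 .
\end{equation*}
Multiplying by the prefactor $(E+\alpha^2)^{(n-3)/2}/2$ from \eqref{eq:Fsurf-def} yields the second line of \eqref{eq:F-bound}. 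Continuity in $E$ again comes from the continuity clause of Theorem~\ref{thm:AH-restriction}.

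\emph{Extension from the dense class.} The only point requiring care is that \eqref{eq:Fbulk-def}--\eqref{eq:Fsurf-def} were defined on a dense class, so the right-hand sides must agree with the restrictions used above. For compactly supported $f \in \Ltwo{\Rnp}$ the transforms $\widehat{\zext{f}}$ and $\widehat{T_{\psizero}f}$ are continuous, and the formulas of Lemma~\ref{lem:reflection-formula} hold pointwise. The bounds then extend $\Fzero(E)$ by density to all of $\Bsp$, and this extension coincides with the restriction-theorem trace. Uniform boundedness on compact subsets of $\Ereg$ is immediate from the explicit powers $E^{-1/2}$ and $(E+\alpha^2)^{-1/2}$.
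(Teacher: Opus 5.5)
Your proposal is correct and follows essentially the same route as the paper. The paper also uses the reflection formula with $|a+b|^2\le 2|a|^2+2|b|^2$ and the $n$-dimensional restriction theorem for the bulk channel, and the tensor bound with the $(n-1)$-dimensional restriction theorem for the surface channel, with the same Jacobian and prefactor bookkeeping. Your explicit dominated-convergence step for the unimodular factor and your remark on extension from the dense class fill in details that the paper leaves implicit.
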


\begin{proof}
By \eqref{eq:reflection-formula} and the inequality
$|a+b|^2 \le 2|a|^2 + 2|b|^2$, together with $|\cph{\cdot}| = 1$,
\begin{equation*}
\begin{aligned}
  \int_{\Sm} \bigl| (\Ftbulk f)(\sqrt{E}\,\theta) \bigr|^2\,d\theta
  &\le 2 \int_{\Sm}
    \Bigl( \bigl| \widehat{\zext{f}}(\sqrt{E}\,\theta) \bigr|^2
         + \bigl| \widehat{\zext{f}}(\sqrt{E}\,\refext{\theta}) \bigr|^2
    \Bigr) d\theta
  \\
  &= 2 \int_{S^{n-1}}
      \bigl| \widehat{\zext{f}}(\sqrt{E}\,\theta) \bigr|^2 \,d\theta,
\end{aligned}
\end{equation*}
since $\theta \mapsto \refext{\theta}$ carries $\Sm$ onto the complementary
hemisphere. Rewriting the last integral as a surface integral over
$\{|\xi| = \sqrt{E}\}$ costs a factor $E^{-\frac{n-1}{2}}$, and
\eqref{eq:AH-restriction} in dimension $n$ together with
$\norm{\zext{f}}_{\Bspd{\Rn}} = \Bnorm{f}$ gives
\begin{align*}
  \norm{\Fbulk(E)f}_{\Ltwo{\Sm}}^2
  &= \frac{E^{\frac{n-2}{2}}}{2}
    \int_{\Sm} \bigl| (\Ftbulk f)(\sqrt{E}\,\theta) \bigr|^2\,d\theta \\
  &\le E^{\frac{n-2}{2}}\,E^{-\frac{n-1}{2}}\,
      \bigl(\CAH{n}\bigr)^2\,\Bnorm{f}^2
   = \bigl(\CAH{n}\bigr)^2\,E^{-\frac12}\,\Bnorm{f}^2 .
\end{align*}
For the surface channel, Corollary~\ref{cor:tensor} gives
$\norm{T_{\psizero}f}_{\Bspd{\Rnm}} \le \Csharp\Bnorm{f}$, since
$\norm{\psizero}_{\Ltwo{\Rhalf}} = 1$. Applying \eqref{eq:AH-restriction} in
dimension $n-1$ to $T_{\psizero}f$ at the radius $\sqrt{E+\alpha^2}$ and
proceeding as above yields the second bound in \eqref{eq:F-bound}. The
continuity statement follows from the continuity assertion in
Theorem~\ref{thm:AH-restriction}, the maps $E \mapsto E^{(n-2)/4}$ and
$E \mapsto \cph{\sqrt{E}\,\theta_n}$ being continuous as well.
\end{proof}

Both bounds in \eqref{eq:F-bound} degenerate as $E \to 0^{+}$ and as
$E \to -\alpha^{2+}$ respectively. They do so with the same exponent
$-\frac12$, and in the same two dimensions $n$ and $n-1$, as the resolvent
bound \eqref{eq:lap-explicit}. By Theorem~\ref{thm:stone} below the two
are forms of the same estimate, and in both the channels are separated by the exact
splitting \eqref{eq:splitting-preview}.
Accordingly, and as in Theorem~\ref{thm:lap}, everything that follows is carried
out on compact subsets of $\Ereg$.

\begin{proposition}[The adjoint and the eigenoperator property]
\label{prop:F-adjoint}
Let $E \in \Ereg$. Then $\Fzero(E)^{*} \in \Lop{\Echan{E}}{\Bst}$, and for
$g = (g_{\mathrm{bulk}},g_{\mathrm{surf}}) \in \Echan{E}$,
\begin{align}
  \label{eq:Fbulk-adjoint}
  \bigl( \Fbulk(E)^{*}g_{\mathrm{bulk}} \bigr)(x)
    &= \frac{E^{\frac{n-2}{4}}}{\sqrt2}
       \int_{\Sm} \Psib(x,\sqrt{E}\,\theta)\,g_{\mathrm{bulk}}(\theta)
       \,d\theta,
  \\[1mm]
  \label{eq:Fsurf-adjoint}
  \bigl( \Fsurf(E)^{*}g_{\mathrm{surf}} \bigr)(x)
    &= \frac{(E+\alpha^2)^{\frac{n-3}{4}}}{\sqrt2}
       \int_{\Ssurf} \Psis(x,\sqrt{E+\alpha^2}\,\omega)\,
       g_{\mathrm{surf}}(\omega)\,d\omega,
\end{align}
with $\Fzero(E)^{*}g = \Fbulk(E)^{*}g_{\mathrm{bulk}}
+ \Fsurf(E)^{*}g_{\mathrm{surf}}$, a sum in $\Bst$ and not an orthogonal one.
Moreover $u = \Fzero(E)^{*}g$ satisfies
\begin{equation}
  \label{eq:eigenoperator}
  (-\Delta - E)u = 0 \ \text{ in } \Rnp,
  \qquad
  \gamma_1 u - \alpha\gamma_0 u = 0 \ \text{ on } \Gamma ;
\end{equation}
that is, $\Fzero(E)^{*}$ is an eigenoperator of $\Halpha$.
\end{proposition}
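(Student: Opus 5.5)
The plan is to obtain $\Fzero(E)^{*}$ by duality from Proposition~\ref{prop:F-bounded}, identify its kernel through Lemma~\ref{lem:reflection-formula}, and read off the eigenfunction property from Proposition~\ref{prop:eigenfunctions}(i). Since $\Fzero(E) \in \Lop{\Bsp}{\Echan{E}}$ and $\Bst$ is the dual of $\Bsp$ under the pairing \eqref{eq:AH-pairing}, the Banach adjoint is defined on $\Echan{E}$ by
\[
  (\Fzero(E)^{*}g, f) = (g, \Fzero(E)f)_{\Echan{E}}, \qquad f \in \Bsp .
\]
To avoid relying on the full duality theorem, I will exhibit $\Fzero(E)^{*}g$ as an explicit element of $\Bst$ and check this identity directly.

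\emph{Kernel formulas.} Take first $f \in C_c^\infty(\Rnp)$, where \eqref{eq:Fbulk-def}--\eqref{eq:Fsurf-def} are absolutely convergent integrals. Fubini, using that $\Psib$ and $\Psis$ are bounded (Proposition~\ref{prop:eigenfunctions}(i)), gives
\[
  (g_{\mathrm{bulk}}, \Fbulk(E)f)_{\Ltwo{\Sm}} = \int_{\Rnp} u_{\mathrm{b}}\,\overline{f}\,dx ,
\]
where $u_{\mathrm{b}}$ is the right-hand side of \eqref{eq:Fbulk-adjoint}. The same holds for the surface term with $u_{\mathrm{s}}$, the right-hand side of \eqref{eq:Fsurf-adjoint}.

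\emph{Membership in $\Bst$.} This is the main step. For the bulk term I insert \eqref{eq:Psib-exponential}. This writes $u_{\mathrm{b}}$ as the restriction to $\Rnp$ of an extension integral \eqref{eq:AH-extension} over the full sphere $S^{n-1}$. The density is $g_{\mathrm{bulk}}(\theta)\,\overline{\cph{\sqrt E\theta_n}}$ on the lower hemisphere after the reflection $\theta\mapsto\refext{\theta}$, and $g_{\mathrm{bulk}}\,\overline{\cph{\cdot}}\,\refl{\cdot}$ on the upper one. Both densities have $L^2$ norm equal to $\norm{g_{\mathrm{bulk}}}$ because $|\cph{\cdot}| = |\refl{\cdot}| = 1$. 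The dual half of Theorem~\ref{thm:AH-restriction} in dimension $n$ then gives $u_{\mathrm{b}} \in \Bstd{\Rn}$, and restriction to $\Rnp$ does not increase the norm.

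For the surface term, $u_{\mathrm{s}} = M_{\psizero} w$, where $w$ is the extension integral over $\Ssurf$ at radius $\ksurf$. Theorem~\ref{thm:AH-restriction} in dimension $n-1 \ge 2$ gives $w \in \Bstd{\Rnm}$. Corollary~\ref{cor:tensor} then gives $u_{\mathrm{s}} \in \Bst$, with norm at most $\sqrt2\,\norm{w}_{\Bstd{\Rnm}}$.

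\emph{Extension to all of $\Bsp$.} With $u_{\mathrm{b}}, u_{\mathrm{s}} \in \Bst$, both sides of the duality identity are continuous in $f \in \Bsp$: the left by \eqref{eq:AH-pairing}, the right by Proposition~\ref{prop:F-bounded}. The identity therefore extends from the dense subspace $C_c^\infty(\Rnp)$ to all of $\Bsp$. This density holds because $\Bsp$ has an $\ell^1$-type norm with $L^2$ pieces. Hence $\Fzero(E)^{*}g = u_{\mathrm{b}} + u_{\mathrm{s}}$, and the norm bound shows $\Fzero(E)^{*} \in \Lop{\Echan{E}}{\Bst}$. The sum is not orthogonal because neither piece lies in $L^2$; it is simply a sum in $\Bst$. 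For $-\alpha^2<E<0$ only the surface term is present, since $\Echan{E} = \Ltwo{\Ssurf}$.

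\emph{Eigenoperator property.} Each $\Psib(\cdot,\sqrt E\theta)$ and $\Psis(\cdot,\ksurf\omega)$ is smooth and satisfies the Robin condition, with $|\xi|^2 = E$ and $|\eta'|^2 - \alpha^2 = E$ respectively, by \eqref{eq:eigen-equations}. Derivatives in $x$ of these functions are bounded locally uniformly in $x$, with $\theta,\omega$ ranging over compact spheres. We may therefore differentiate under the integral sign against $g \in L^2 \subset L^1$ on the compact sphere. This gives $(-\Delta - E)u = 0$ pointwise and $\gamma_1 u - \alpha\gamma_0 u = 0$ on $\Gamma$, which is \eqref{eq:eigenoperator}.
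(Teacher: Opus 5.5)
Your proposal is correct and follows the paper's proof essentially step for step. The common steps are: transposing \eqref{eq:Fbulk-def}--\eqref{eq:Fsurf-def}; obtaining membership in $\Bst$ from the dual half of Theorem~\ref{thm:AH-restriction}, in dimension $n$ after inserting \eqref{eq:Psib-exponential} and in dimension $n-1$ combined with Corollary~\ref{cor:tensor} for the surface term; and differentiating under the integral sign for \eqref{eq:eigenoperator}. The only differences are cosmetic: you fold the reflected wave into a single full-sphere extension integral in $x$, where the paper keeps one hemispherical integral in $x$ and one in $\refext{x}$, and you apply Fubini for $f \in C_c^\infty(\Rnp)$ and extend by density in $f$, where the paper takes $g$ continuous and extends by density in $g$.
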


\begin{proof}
Formulas \eqref{eq:Fbulk-adjoint}--\eqref{eq:Fsurf-adjoint} are obtained by
transposing \eqref{eq:Fbulk-def}--\eqref{eq:Fsurf-def} against the pairing
$(\cdot,\cdot)$, the interchange of integrations being justified for $g$
continuous and then extended by density. That the result lies in $\Bst$ follows
from \eqref{eq:AH-extension}: inserting \eqref{eq:Psib-exponential} into
\eqref{eq:Fbulk-adjoint} exhibits $\Fbulk(E)^{*}g_{\mathrm{bulk}}$, as a
function of $x \in \Rnp$, as the sum of two integrals of the form
\eqref{eq:AH-extension} over hemispheres of $S^{n-1}$ at radius $\sqrt{E}$, one
in the variable $x$ and one in the variable $\refext{x} = (\xpar,-x_n)$. Both
are restrictions to $\Rnp$ of elements of $\Bstd{\Rn}$, and restriction does not
increase the $\Bst$-norm. For \eqref{eq:Fsurf-adjoint} one applies
\eqref{eq:AH-extension} in dimension $n-1$ and then the second half of
\eqref{eq:tensor-bounds} with $\Psi = \psizero$. Finally
\eqref{eq:eigenoperator} holds because each $\Psib(\cdot,\sqrt{E}\theta)$ and
each $\Psis(\cdot,\sqrt{E+\alpha^2}\,\omega)$ does, by
Proposition~\ref{prop:eigenfunctions}(i), and the integrals converge locally
uniformly together with all derivatives.
\end{proof}

We can now identify the abstract objects of \S\ref{sec:gft-abstract}.

\begin{theorem}[Stone's formula]
\label{thm:stone}
For every $E \in \Ereg$,
\begin{equation}
  \label{eq:stone}
  \frac{1}{2\pi i}\bigl( \Ralpha{E+i0} - \Ralpha{E-i0} \bigr)
  = \Fzero(E)^{*}\,\Fzero(E)
  \qquad \text{in } \Lop{\Bsp}{\Bst}.
\end{equation}
Consequently the abstract representation space constructed in
\S\ref{sec:gft-abstract} is the channel space \eqref{eq:channel-space}, and for
all $f \in \Bsp$,
\begin{equation}
  \label{eq:flux}
  \im \bigl( \Ralpha{E+i0}f, f \bigr)
  = \pi\,\norm{\Fzero(E)f}_{\Echan{E}}^2 .
\end{equation}
\end{theorem}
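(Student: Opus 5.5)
The plan is to prove \eqref{eq:stone} by testing both sides against a pair $f,g\in\Bsp$ and integrating in $E$. Proposition~\ref{prop:A-properties}(iv) gives
\[
  (P((a,b))f,g)=\int_a^b (\Adens{E}f,g)\,dE
\]
for every interval with $\overline{(a,b)}\subset\Ereg$. On the other side, Theorem~\ref{thm:spectral-representation} shows that $\Fzero$ is unitary and diagonalizes $\Halpha$. Since the spectral projection $P((a,b))$ becomes multiplication by $\mathbf 1_{(a,b)}(E)$ in $\Hdirect$, we get
\[
  (P((a,b))f,g)=\int_a^b (\Fzero(E)f,\Fzero(E)g)_{\Echan{E}}\,dE .
\]
This holds for $f,g\in\Bsp\subset\Ltwo{\Rnp}$, provided the fiber $(\Fzero f)(E)$ of the unitary extension coincides for a.e.\ $E$ with the operator $\Fzero(E)f$ of Proposition~\ref{prop:F-bounded}.

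That identification is the point to check first. For compactly supported $f\in\Ltwo{\Rnp}$ both are given by the integrals \eqref{eq:Fbulk-def}--\eqref{eq:Fsurf-def}, so they agree. For general $f\in\Bsp$, I would approximate by the truncations $f_N=f\mathbf 1_{\{|x|<N\}}$, which converge to $f$ in $\Bsp$ and hence in $\Ltwo{\Rnp}$. The unitary image converges in $\Hdirect$, so along a subsequence it converges for a.e.\ $E$. Meanwhile, by \eqref{eq:F-bound}, $\Fzero(E)f_N\to\Fzero(E)f$ in $\Echan{E}$ for every $E\in\Ereg$. The two limits therefore agree a.e.

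Subtracting the two integral formulas, the function
\[
  E\mapsto(\Adens{E}f,g)-(\Fzero(E)f,\Fzero(E)g)
\]
has vanishing integral over every subinterval of each compact part of $\Ereg$. The function is continuous: the first term by Proposition~\ref{prop:A-properties}(ii), and the second because $E\mapsto\Fzero(E)f$ is continuous into a fixed channel space on each component $(-\alpha^2,0)$ and $(0,\infty)$ (Proposition~\ref{prop:F-bounded}). A continuous function with zero integral over every subinterval vanishes identically, so the difference is zero for every $E\in\Ereg$, not merely almost every $E$. Since the pairing $(u,g)$, $g\in\Bsp$, separates points of $\Bst$, and $(\Fzero(E)f,\Fzero(E)g)=(\Fzero(E)^*\Fzero(E)f,g)$ by the definition of the adjoint in Proposition~\ref{prop:F-adjoint}, this yields the operator identity \eqref{eq:stone} in $\Lop{\Bsp}{\Bst}$.

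The remaining consequences are short.
\begin{itemize}
  \item[\textup{(i)}] Setting $g=f$ and using $\Adens{E}=\pi^{-1}\im\Ralpha{E+i0}$ from Proposition~\ref{prop:A-properties}(i) gives the flux identity \eqref{eq:flux}.
  \item[\textup{(ii)}] The abstract space of \S\ref{sec:gft-abstract} is the completion of $\Bsp$ modulo the null space of $(\Adens{E}f,f)=\norm{\Fzero(E)f}^2$. It therefore embeds isometrically onto the closure of $\Ran\Fzero(E)$ in $\Echan{E}$.
  \item[\textup{(iii)}] To identify the abstract space with the whole channel space \eqref{eq:channel-space}, I still need $\Ran\Fzero(E)$ to be dense in $\Echan{E}$.
\end{itemize}

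The density in (iii) is the step I expect to be the main obstacle. I would argue it by channels. If $g=(g_{\mathrm{bulk}},g_{\mathrm{surf}})$ is orthogonal to $\Ran\Fzero(E)$, then $\Fzero(E)^*g=0$. Taking the partial Fourier transform of \eqref{eq:Fbulk-adjoint}--\eqref{eq:Fsurf-adjoint} with $f\in C_c^\infty$ data, the surface part is carried by the sphere $|\xipar|=\ksurf$ with profile $\psizero$. The bulk part is carried by the ball $|\xipar|\le\kbulk$ with profiles $\gphi_{\xi_n}$. These supports are disjoint, because $\ksurf>\kbulk$. Hence each part vanishes separately. The surface part then vanishes by injectivity of the extension operator \eqref{eq:AH-extension}. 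The bulk part vanishes by the reflection formula \eqref{eq:reflection-formula}, since $\cph{\cdot}\neq0$ and the map $\theta\mapsto\xipar=\sqrt E\,\theta'$ is injective on $\Sm$. This forces $g=0$.
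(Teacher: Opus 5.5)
This is the paper's own argument: you equate the two expressions for $(P((a,b))f,g)$, one from Proposition~\ref{prop:A-properties}(iv) and one from the unitarity of $\Fzero$, and then upgrade the a.e.\ equality to every $E\in\Ereg$ by continuity on each component. Your two additions are both correct and fill in steps the paper leaves implicit. The first is identifying the fiber of the unitary extension with $\Fzero(E)f$ for $f\in\Bsp$ by truncation. The second is the direct proof that $\Ran\Fzero(E)$ is dense in the channel space, which the paper's ``Consequently'' needs and which in the paper is supplied only later, by the injectivity of $\Fzero(E)^{*}$ in Corollary~\ref{cor:two-sided}.
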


\begin{proof}
Fix $f,g \in \Bsp$ and an interval $(a,b)$ with
$\overline{(a,b)} \subset \Ereg$. By Proposition~\ref{prop:A-properties}(iv),
the pairing $(P((a,b))f,g)$ equals $\int_a^b (\Adens{E}f,g)\,dE$. On the other
hand $\Fzero$ is
unitary and diagonalizes $\Halpha$
(Theorem~\ref{thm:spectral-representation}), so that
$P((a,b)) = \Fzero^{*}\mathbf{1}_{(a,b)}\Fzero$ and therefore
\begin{equation*}
  \bigl( P((a,b))f, g \bigr)
  = \int_a^b \bigl( \Fzero(E)f, \Fzero(E)g \bigr)_{\Echan{E}}\,dE
  = \int_a^b \bigl( \Fzero(E)^{*}\Fzero(E)f, g \bigr)\,dE .
\end{equation*}
Since $(a,b)$ was arbitrary, the two integrands agree for a.e.\ $E \in \Ereg$.
Both are continuous on each of $(-\alpha^2,0)$ and $(0,\infty)$, the first by
Proposition~\ref{prop:A-properties}(ii) and the second by
Proposition~\ref{prop:F-bounded}. Hence they agree for every $E \in \Ereg$. As
$f$ and $g$ range over $\Bsp$ this is \eqref{eq:stone}. Identity
\eqref{eq:flux} follows on taking $g = f$ and using
$\Adens{E} = \pi^{-1}\im\Ralpha{E+i0}$ from
Proposition~\ref{prop:A-properties}(i).
\end{proof}

Identity \eqref{eq:flux} is the energy-flux identity of the problem: the
imaginary part of the resolvent at the boundary value measures the
$\Echan{E}$-norm of the scattering data, split into a bulk part and a surface
part. It is in this form that Theorem~\ref{thm:stone} is used in
\S\ref{subsec:radiation}, where the vanishing of the left-hand side for a
solution satisfying the radiation condition forces both channels on the right to
vanish, and again in \S\ref{sec:inverse}. Note also that the limiting absorption
principle enters the proof only through Proposition~\ref{prop:A-properties}: the
construction of $\Fzero$ itself, in
\S\S\ref{sec:gft-eigenfunctions}--\ref{sec:gft-transform}, is independent of
\S\ref{subsec:lap} and rests on Appendix~\ref{app:sa-spec} alone.

%% file: Part1_024_asymptotics.tex
\label{subsec:asymptotics}

The limiting absorption principle (Theorem~\ref{thm:lap}) places
$u = \Ralpha{E+i0}f$ in $\Bst \cap H^2_{\mathrm{loc}}(\overline{\Rnp})$ and
identifies, through Stone's formula (Theorem~\ref{thm:stone}), the spectral
density of $\Halpha$. It says nothing about how $u$ propagates to infinity.
This subsection supplies that information. The half-space carries two channels
with distinct effective wavenumbers, $\kbulk$ in the interior and $\ksurf$
along the boundary, and the far field of $u$ is a superposition of an outgoing
bulk wave and an outgoing surface wave, the first spread over $\overline{\Rnp}$
and the second confined to a boundary layer with the fixed transverse profile
$\psizero$.

We proceed in two steps. First we derive a \emph{pointwise} expansion of $u$.
Because the two channels occupy geometrically incompatible parts of space, it
is naturally stated region by region, as in the Green-function analysis of
Dur\'an, Muga and N\'ed\'elec \cite{DuranMugaNedelec2009} for $n = 3$, which it
recovers. Secondly, we show that the regional pictures glue into a
\emph{single} expansion valid on all of $\Rnp$, in the sense of the space
$\Bstz$. This second step is where the Agmon--H\"ormander framework becomes essential. Expansions of this
last form are due to Isozaki, Kadowaki and Watanabe
\cite{IsozakiKadowakiWatanabe2020,IsozakiKadowakiWatanabe2023}, and we claim no
novelty for the shape of Theorem~\ref{thm:asymp-uniform}. What is different is
the route: the uniform statement is insensitive to the transition zone between
the channels, and, as the proof shows, the expansion itself requires no
analysis of the grazing directions (Remark~\ref{rem:no-grazing}). It is also
the form in which the intrinsic radiation condition of \S\ref{subsec:radiation}
is stated (Theorem~\ref{thm:rad-equivalence}(b)). The two-region condition of
\cite{DuranMugaNedelec2009}, in which the two regions are separated by a fixed
power of $r$ with exponent in $(1/4,1/2)$, is shown there to be equivalent to
it for every exponent in $(0,1)$.

The precise dictionary with \cite{DuranMugaNedelec2009} for $n = 3$ is
recorded in Remark~\ref{rem:dmn-comparison}, once the objects it refers to have
been constructed.

Everything below rests on two explicitly displayed integrals and two classical
devices: the asymptotic expansion of the Hankel function
(Lemma~\ref{lem:hankel}), applied in dimension $n$ to the Dirichlet part and in
dimension $n-1$ to the surface part, and the stationary-phase formula
(Lemma~\ref{lem:statphase}), applied to the reflected integral
\eqref{eq:v-def}. A third and more elementary device, integration by parts in
the normal wavenumber (Lemma~\ref{lem:branch}), disposes of the circle
$|\xipar| = \kbulk$ on which the two branches of that variable meet.

\subsubsection{Notation and the relation \texorpdfstring{$\simeq$}{≃}}
\label{sec:asymp-notation}

Throughout, $x = (\xpar,x_n) \in \Rnp$ and
\begin{equation}
  \label{eq:asymp-polar}
  r = |x|,
  \qquad
  \rho = |\xpar|,
  \qquad
  \theta = \frac{x}{r} \in \Sm,
  \qquad
  \omega = \frac{\xpar}{\rho} \in \Ssurf,
\end{equation}
so that $\theta = (\sqrt{1-\theta_n^2}\,\omega,\ \theta_n)$ with
$\theta_n = x_n/r \in [0,1]$. The directions with $\theta_n$ small are called
\emph{grazing}. We keep the standing assumption $n \ge 3$ of
Remark~\ref{rem:standing-dimension}, and we fix $E > 0$ once and for all, so
that both channels are open. The bulk wavenumber $\kbulk$ and the surface
wavenumber $\ksurf$ never coincide:
\begin{equation}
  \label{eq:asymp-wavenumbers}
  \ksurf - \kbulk = \frac{\alpha^2}{\ksurf + \kbulk} > 0 .
\end{equation}
By Theorem~\ref{thm:lap}(e), $u = \Ralpha{E+i0}f$ solves
$(-\Delta - E)u = f$ in $\Rnp$ under the boundary condition
$\gamma_1 u - \alpha\gamma_0 u = 0$.

Two Fourier transforms occur, and in this subsection we keep them
typographically apart. The partial transform in the tangential variable is
written $\mathcal{F}'$, which \eqref{eq:fourier-partial} also allows one to
write with a hat. Here the plain $\widehat{\phantom{f}}$ is reserved for a full
transform, on $\Rn$ or on $\Rnm$ according to its argument. Further,
$\zext{g}$ is the extension of $g$ by zero to $\Rn$,
$\refext{\xi} = (\xipar,-\xi_n)$ is the reflection of
Lemma~\ref{lem:reflection-formula}, $f_{\mathrm{odd}}$ is the odd extension of
$f$ across $\Gamma$, so that
$\widehat{f_{\mathrm{odd}}}(\xi) = \widehat{\zext{f}}(\xi)
- \widehat{\zext{f}}(\refext{\xi})$, and $T_{\psizero}$ is the map of
\eqref{eq:surface-formula}.

The far-field statements are made in the space $\Bstz$ of \eqref{eq:Bzero}.
Recall from \eqref{eq:Bstar-equivalent} that the $\Bst$-norm is equivalent to
$\bigl(\sup_{R>1}R^{-1}\int_{\{|x|<R\}\cap\Rnp}|u|^2\bigr)^{1/2}$ and that
$\Bstz$ is the subspace on which this shell average tends to $0$.

\begin{definition}[Asymptotic equivalence]
\label{def:simeq}
For $u,v \in \Bst$ we write $u \simeq v$ when $u - v \in \Bstz$, that is,
\begin{equation}
  \label{eq:simeq}
  \lim_{R \to \infty} \frac{1}{R}
    \int_{\{|x| < R\} \cap \Rnp} |u(x) - v(x)|^2\,dx = 0 .
\end{equation}
\end{definition}

Two functions are equivalent when their difference is negligible in every
shell on average \cite{AgmonHormander1976, Isozaki2001}. This is weaker than
equality of pointwise leading terms, and it is this weaker notion that will
allow a single formula to hold across the boundary layer and across the
grazing directions.

\subsubsection{The resolvent in the fiber representation}
\label{sec:asymp-fiber}

By Proposition~\ref{prop:fibration} the partial Fourier transform
$\mathcal{F}'$ diagonalizes $\Halpha$ into the direct integral of the half-line
Robin operators $\hfib(\xipar)$. Consequently, for $z \in
\mathbb{C}\setminus\mathbb{R}$ and $f \in \Ltwo{\Rnp}$,
\begin{equation}
  \label{eq:fiber-rep}
  \mathcal{F}'\bigl(\Ralpha{z}f\bigr)(\xipar,x_n)
  = \int_0^{\infty}
      g(x_n,y_n;\xipar,z)\,
      \bigl(\mathcal{F}'f\bigr)(\xipar,y_n)\,dy_n ,
\end{equation}
where $g$ is the fiber Green kernel of \eqref{eq:fiber-green}. Every
statement of this subsection is obtained from \eqref{eq:fiber-rep} by
inserting one of the three summands of the exact splitting
\eqref{eq:fiber-splitting} and analysing the resulting integral. It is
convenient to record that splitting in a variable adapted to propagation.

\begin{lemma}[The fiber kernel in the normal wavenumber]
\label{lem:fiber-mu}
For $\xipar \in \Rnm$ and $z \in \mathbb{C}\setminus\mathbb{R}$ put
\begin{equation}
  \label{eq:mu-def}
  \mu = \mu(\xipar,z) = \sqrt{z - |\xipar|^2},
  \qquad \im\mu > 0,
\end{equation}
so that $\mu = i\kappa$ with $\kappa$ as in Lemma~\ref{lem:fiber-green}. In
this variable the fiber kernel \eqref{eq:fiber-green} is a free wave plus a
wave reflected from the boundary,
\begin{equation}
  \label{eq:fiber-kernel}
  g(x_n,y_n;\xipar,z)
  = \frac{i}{2\mu}
    \Bigl[
      e^{i\mu|x_n - y_n|}
      + \refl{\mu}\;e^{i\mu(x_n + y_n)}
    \Bigr],
  \qquad
  \refl{\mu} = \frac{i\mu - \alpha}{i\mu + \alpha},
\end{equation}
the coefficient $\refl{\cdot}$ being the one of
\eqref{eq:phase-and-reflection}, continued to $\{\im\mu \ge 0\}$. The Dirichlet
kernel $g_{\mathrm{D}}$ is \eqref{eq:fiber-kernel} with $\refl{\mu}$ replaced
by $-1$, and
\begin{equation}
  \label{eq:fiber-difference}
  g - g_{\mathrm{D}}
  = -\,\frac{e^{i\mu(x_n+y_n)}}{i\mu + \alpha}
  = \frac{(\alpha - i\mu)\,e^{i\mu(x_n+y_n)}}
         {|\xipar|^2 - (z+\alpha^2)} .
\end{equation}
The three summands of \eqref{eq:fiber-splitting} are $g_{\mathrm{D}}$, the
surface term $\psizero(x_n)\psizero(y_n)\bigl(|\xipar|^2-(z+\alpha^2)\bigr)^{-1}$,
and the remainder \eqref{eq:K-kernel}, which reads
\begin{equation}
  \label{eq:K-explicit}
  k(\xipar,x_n,y_n;z)
  = \frac{(\alpha-i\mu)\,e^{i\mu(x_n+y_n)}
          - \psizero(x_n)\psizero(y_n)}
         {|\xipar|^2 - (z+\alpha^2)} .
\end{equation}
\end{lemma}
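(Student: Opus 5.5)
The plan is to rewrite the closed formula of Lemma~\ref{lem:fiber-green} in the new variable $\mu = i\kappa$ and then redo the splitting algebra. First I would check the branch. Since $\re\kappa > 0$ and $\mu = i\kappa$, we get $\im\mu = \re\kappa > 0$. Also $\mu^2 = -\kappa^2 = z - |\xipar|^2$. So \eqref{eq:mu-def} selects exactly the root used in Lemma~\ref{lem:fiber-green}, and the correspondence $\kappa \leftrightarrow \mu$ is a bijection between the two half-planes. This is the only point that needs care. Any continuation to $\{\im\mu \ge 0\}$ is then by continuity, and that is also where $\refl{\cdot}$ of \eqref{eq:phase-and-reflection} is matched, since $(ik-\alpha)/(ik+\alpha)$ is the same rational function evaluated at real $k$.

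Next I would substitute $\kappa = -i\mu$ into \eqref{eq:fiber-green}. One has $1/(2\kappa) = i/(2\mu)$ and $e^{-\kappa t} = e^{i\mu t}$. The reflection coefficient becomes
\[
  r(\kappa) = \frac{-i\mu+\alpha}{-i\mu-\alpha} = \frac{i\mu-\alpha}{i\mu+\alpha} = \refl{\mu}.
\]
This gives \eqref{eq:fiber-kernel}. The Dirichlet kernel is the first term of \eqref{eq:fiber-splitting}, which is the same expression with $r$ replaced by $-1$, as noted in the proof of Proposition~\ref{prop:splitting}.

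For \eqref{eq:fiber-difference} I would compute $(i/2\mu)(\refl{\mu}+1)\,e^{i\mu s}$. Since
\[
  \refl{\mu}+1 = \frac{2i\mu}{i\mu+\alpha},
\]
this equals $-e^{i\mu s}/(i\mu+\alpha)$. This is \eqref{eq:g-minus-gD} rewritten, because $\kappa - \alpha = -(i\mu+\alpha)$. To get the second form, multiply numerator and denominator by $\alpha - i\mu$. The denominator becomes $-(i\mu+\alpha)(\alpha-i\mu) = -(\alpha^2+\mu^2) = |\xipar|^2 - (z+\alpha^2)$.

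Finally, \eqref{eq:K-explicit} is \eqref{eq:K-kernel} with $\kappa + \alpha = \alpha - i\mu$, $e^{-\kappa s} = e^{i\mu s}$ and $2\alpha e^{-\alpha s} = \psizero(x_n)\psizero(y_n)$. Equivalently, it is \eqref{eq:fiber-difference} minus the surface term. The identification of the three summands is then just Lemma~\ref{lem:fiber-green}. No step is a real obstacle; the only thing to watch is the consistency of the branch and of the signs.
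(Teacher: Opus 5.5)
Your proposal is correct and matches the paper's proof essentially step for step. Both proofs check the branch $\mu=i\kappa$, substitute $\kappa=-i\mu$ into \eqref{eq:fiber-green}, obtain $g-g_{\mathrm{D}}$ from $\refl{\mu}+1=2i\mu/(i\mu+\alpha)$, rationalize with $\alpha-i\mu$, and then rewrite \eqref{eq:K-kernel} using $\kappa+\alpha=\alpha-i\mu$.
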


\begin{proof}
Since $\re\kappa > 0$, the root $i\kappa$ has positive imaginary part, so
$\mu = i\kappa$ and $\kappa = -i\mu$. Substituting this in
\eqref{eq:fiber-green} gives $(2\kappa)^{-1} = i(2\mu)^{-1}$,
$e^{-\kappa t} = e^{i\mu t}$ and
\begin{equation*}
  r(\kappa) = \frac{\kappa+\alpha}{\kappa-\alpha}
            = \frac{\alpha - i\mu}{-(\alpha + i\mu)}
            = \frac{i\mu-\alpha}{i\mu+\alpha}
            = \refl{\mu},
\end{equation*}
which is \eqref{eq:fiber-kernel}. For real $\mu = \xi_n > 0$ this is the
coefficient of \eqref{eq:phase-and-reflection}. Subtracting the Dirichlet
kernel,
\begin{equation*}
  g - g_{\mathrm{D}}
  = \frac{i}{2\mu}\bigl(\refl{\mu} + 1\bigr)e^{i\mu(x_n+y_n)}
  = \frac{i}{2\mu}\cdot\frac{2i\mu}{i\mu+\alpha}\,e^{i\mu(x_n+y_n)}
  = -\,\frac{e^{i\mu(x_n+y_n)}}{i\mu+\alpha},
\end{equation*}
and multiplying numerator and denominator by $\alpha - i\mu$ gives the second
form of \eqref{eq:fiber-difference}, because
$(i\mu+\alpha)(\alpha-i\mu) = \alpha^2 + \mu^2
= -\bigl( |\xipar|^2 - (z+\alpha^2) \bigr)$, the two signs cancelling.
Finally \eqref{eq:K-kernel} becomes
\eqref{eq:K-explicit} on writing $\kappa + \alpha = \alpha - i\mu$,
$e^{-\kappa(x_n+y_n)} = e^{i\mu(x_n+y_n)}$ and
$2\alpha e^{-\alpha(x_n+y_n)} = \psizero(x_n)\psizero(y_n)$.
\end{proof}

The variable $\mu$ is the wavenumber in the normal direction: by
\eqref{eq:fiber-kernel} the kernel is built from the plane waves
$e^{\pm i\mu x_n}$, and the same $\mu$ will appear as the normal component of
the phase in \eqref{eq:v-def}, ready for stationary phase. The decaying
variable $\kappa$ of \S\ref{subsec:lap} is better adapted to uniform bounds,
and the two are used side by side.

\subsubsection{The boundary value \texorpdfstring{$z \to E+i0$}{z -> E+i0}}
\label{sec:asymp-boundary}

Fix $E > 0$. As $z = E + i\varepsilon$ with $\varepsilon \downarrow 0$, the
root \eqref{eq:mu-def} has the two branches
\begin{equation}
  \label{eq:mu-branches}
  \mu = \sqrt{E - |\xipar|^2} > 0
  \quad (|\xipar| < \kbulk),
  \qquad
  \mu = i\tau,
  \quad \tau = \sqrt{|\xipar|^2 - E} > 0
  \quad (|\xipar| > \kbulk) .
\end{equation}
We call $\{|\xipar| < \kbulk\}$ the \emph{propagating disc} and its complement
the \emph{evanescent region}. On the first the fiber kernel oscillates in
$x_n$, on the second it decays.

The denominator $|\xipar|^2 - (z+\alpha^2)$ of
\eqref{eq:fiber-difference}--\eqref{eq:K-explicit} does not vanish for
$\im z > 0$. Its zero descends to the sphere $|\xipar| = \ksurf$ from the lower
half-plane as $\varepsilon \downarrow 0$, so at the boundary value it is to be
read as
\begin{equation}
  \label{eq:v-pole}
  \frac{1}{|\xipar|^2 - (E+\alpha^2) - i0}
  = \operatorname{p.v.}\frac{1}{|\xipar|^2 - \ksurf^2}
    + i\pi\,\delta\bigl(|\xipar|^2 - \ksurf^2\bigr) .
\end{equation}
By \eqref{eq:asymp-wavenumbers} the singular sphere lies in the evanescent
region, where $|\xipar| = \ksurf$ forces $\mu = i\alpha$ and hence
\begin{equation}
  \label{eq:polar-part}
  (\alpha - i\mu)\,e^{i\mu(x_n+y_n)}\Big|_{\mu = i\alpha}
  = 2\alpha\,e^{-\alpha(x_n+y_n)}
  = \psizero(x_n)\,\psizero(y_n) .
\end{equation}
This is the purpose of the splitting: the residue of
\eqref{eq:fiber-difference} at the pole is exactly the surface term, so the
numerator of \eqref{eq:K-explicit} vanishes there and $k$ is regular across
$|\xipar| = \ksurf$. In particular the bound $|k| \le \alpha^{-1}$ used in
\S\ref{subsec:lap} survives the limit, and the abstract remainder $\Krem{z}$
of \eqref{eq:splitting-preview} has become an explicit kernel.

\begin{proposition}[The three terms at the boundary value]
\label{prop:three-terms}
Let $E > 0$ and let $f \in \Bsp$. Then the boundary value
$u = \Ralpha{E+i0}f \in \Bst$ of Theorem~\ref{thm:lap} splits as
\begin{equation}
  \label{eq:u-split}
  u = u_{\mathrm{D}} + \widetilde{u}_{\mathrm{surf}} + \Krem{E+i0}f,
\end{equation}
where
\begin{equation}
  \label{eq:usurf-exact}
  u_{\mathrm{D}}
    = \Bigl(\Rfree{n}{E+i0}\,f_{\mathrm{odd}}\Bigr)\Big|_{\Rnp},
  \qquad
  \widetilde{u}_{\mathrm{surf}}(x)
    = \Bigl(\Rfree{n-1}{E+\alpha^2+i0}\,T_{\psizero}f\Bigr)(\xpar)\;
      \psizero(x_n),
\end{equation}
each of the three limits existing separately in $\Bst$.
\end{proposition}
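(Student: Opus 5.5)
The plan is to pass to the limit $z = E+i\eps \to E+i0$ separately in each term of the exact splitting \eqref{eq:splitting-preview}. That identity holds for every $\eps > 0$, so
\[
  \Ralpha{E+i\eps}f = \RD{E+i\eps}f
    + \bigl(\Rfree{n-1}{E+\alpha^2+i\eps}\otimes\Psurf\bigr)f
    + \Krem{E+i\eps}f .
\]
If each term on the right has a weak-$*$ limit in $\Bst$, the left side converges to the sum of those limits. By Theorem~\ref{thm:lap}(b) it also converges to $\Ralpha{E+i0}f$, and uniqueness of weak-$*$ limits then gives \eqref{eq:u-split}. It therefore suffices to identify the three limits and show each exists in $\Bst$.

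\emph{Dirichlet term.} Use the odd reflection recorded before \eqref{eq:dirichlet-bound}: $\RD{z}f = (\Rfree{n}{z}f_{\mathrm{odd}})|_{\Rnp}$, with $f_{\mathrm{odd}} \in \Bspd{\Rn}$. Theorem~\ref{thm:AH-classical} in dimension $n$, at $\zeta = E > 0$, gives the weak-$*$ limit $\Rfree{n}{E+i0}f_{\mathrm{odd}}$ in $\Bstd{\Rn}$. Restriction to $\Rnp$ is weak-$*$ continuous: pairing with $g \in \Bsp$ on $\Rnp$ is the same as pairing with $\zext{g} \in \Bspd{\Rn}$ on $\Rn$. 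This yields $u_{\mathrm{D}}$.

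\emph{Surface term.} By the definition of $\Psurf$ and $T_{\psizero}$,
\[
  \bigl(\Rfree{n-1}{\zeta}\otimes\Psurf\bigr)f = M_{\psizero}\,\Rfree{n-1}{\zeta}\,T_{\psizero}f .
\]
Here $T_{\psizero}f \in \Bspd{\Rnm}$ by Corollary~\ref{cor:tensor}. Since $\zeta = E+\alpha^2 > 0$, Theorem~\ref{thm:AH-classical} in dimension $n-1$ gives the weak-$*$ limit $\Rfree{n-1}{E+\alpha^2+i0}T_{\psizero}f$ in $\Bstd{\Rnm}$. It remains to check that $M_{\psizero}$ carries weak-$*$ convergence in $\Bstd{\Rnm}$ to weak-$*$ convergence in $\Bst$. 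For $g \in \Bsp$, Fubini gives $(M_{\psizero}w, g) = (w, T_{\psizero}g)_{\Rnm}$, and $T_{\psizero}g \in \Bspd{\Rnm}$, again by Corollary~\ref{cor:tensor}. This yields $\widetilde{u}_{\mathrm{surf}}$, and $M_{\psizero}$ keeps it in $\Bst$ by \eqref{eq:tensor-bounds}.

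\emph{Remainder.} Proposition~\ref{prop:K-bounded} already gives convergence of $(\Krem{E+i\eps}f,g)$ for each $g \in \Bsp$. It also gives a limit operator $\Krem{E+i0}$ obeying \eqref{eq:K-bound}, built from the pointwise limit kernel $k(\cdot\,;E+i0)$. One point needs care: the pairing limit must be represented by $\Krem{E+i0}f$. This follows from dominated convergence with the $\eps$-independent majorant $\alpha^{-1}\Theta_f\Theta_g$, which gives $\lim(\Krem{E+i\eps}f,g) = (\Krem{E+i0}f,g)$.

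I expect the main obstacle to be this last identification, specifically the pointwise limit of $k$ near the sphere $|\xipar| = \ksurf$. There the denominator of \eqref{eq:K-explicit} degenerates, and the limit must be regular there. By the residue computation \eqref{eq:polar-part} the numerator vanishes on that sphere, and Lemma~\ref{lem:K-kernel-bound} bounds $|k| \le \alpha^{-1}$ uniformly. So the only exceptional set is the null sphere $|\xipar| = \kbulk$, where the branch of $\mu$ changes, and it does not affect the dominated convergence argument.
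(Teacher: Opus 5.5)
Your proposal is correct and is essentially the paper's argument. You write the exact splitting \eqref{eq:splitting-preview} at $z=E+i\varepsilon$ and pass to the weak-$*$ limit term by term, using Theorem~\ref{thm:AH-classical} in dimensions $n$ and $n-1$ (at $E$ and $E+\alpha^2$) for the Dirichlet and surface terms and Proposition~\ref{prop:K-bounded} for the remainder. The duality checks you add make explicit what the paper's short proof leaves implicit: restriction to $\Rnp$ is handled via zero extension, and $M_{\psizero}$ via $(M_{\psizero}w,g)=(w,T_{\psizero}g)$.
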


\begin{proof}
Apply Proposition~\ref{prop:splitting} at $z = E+i\varepsilon$ and let
$\varepsilon \downarrow 0$. The left-hand side converges in $\Bst$ by
Theorem~\ref{thm:lap}. On the right, $\Krem{z}f$ converges by
Proposition~\ref{prop:K-bounded}, and the two free resolvents converge by the
Agmon--H\"ormander limiting absorption principle on $\Rn$ and on $\Rnm$,
applied at the energies $E > 0$ and $E + \alpha^2 > 0$ respectively. Here
$f_{\mathrm{odd}} \in \Bspd{\Rn}$ and $T_{\psizero}f \in \Bspd{\Rnm}$ by
Lemma~\ref{lem:reflection-formula} and Corollary~\ref{cor:tensor}. The
identification of the first term uses that the Dirichlet Laplacian on $\Rnp$ is
carried to $-\Delta$ on $\Rn$ by odd reflection. The identification of the
second is \eqref{eq:surface-resolvent} together with
$\bigl(I\otimes\Psurf\bigr)f = \bigl(T_{\psizero}f\bigr)\otimes\psizero$.
\end{proof}

\subsubsection{The reflected integral}
\label{sec:asymp-reflected}

The Dirichlet part of \eqref{eq:u-split} is a free wave and needs no further
preparation. The other two terms are best handled together, as the two parts
of a single oscillatory integral.

\begin{proposition}[The reflected integral]
\label{prop:reflected}
Let $f \in C_c^{\infty}(\Rnp)$ and $E > 0$, and put
$v = u - u_{\mathrm{D}} = \widetilde{u}_{\mathrm{surf}} + \Krem{E+i0}f$. Then,
for every $x \in \Rnp$,
\begin{equation}
  \label{eq:v-def}
  v(x)
  = -\,(2\pi)^{-\frac{n-2}{2}}
    \int_{\Rnm}
      \frac{\widehat{\zext{f}}\bigl(\xipar,-\mu(\xipar)\bigr)}
           {i\mu(\xipar) + \alpha}\;
      e^{i(\xpar\cdot\xipar + \mu(\xipar)x_n)}\,d\xipar,
\end{equation}
with $\mu$ as in \eqref{eq:mu-branches} and the quotient understood through
\begin{equation*}
  \frac{1}{i\mu+\alpha}
  = -\,\frac{\alpha - i\mu}{|\xipar|^2 - \ksurf^2 - i0}
\end{equation*}
and \eqref{eq:v-pole}. The integral converges absolutely away from the sphere
$|\xipar| = \ksurf$, and the contribution of the singular term in
\eqref{eq:v-pole} is exactly $\widetilde{u}_{\mathrm{surf}}$.
\end{proposition}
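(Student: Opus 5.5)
The plan is to compute $v$ fiberwise from \eqref{eq:fiber-rep} at $z = E+i\varepsilon$ and then let $\varepsilon \downarrow 0$.

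\emph{Step 1: the fiber integral for $\varepsilon > 0$.} By Proposition~\ref{prop:splitting}, $v_\varepsilon = \Ralpha{z}f - \RD{z}f$ has partial Fourier transform
\[
\mathcal{F}'v_\varepsilon(\xipar,x_n) = \int_0^\infty (g-g_{\mathrm{D}})\,(\mathcal{F}'f)(\xipar,y_n)\,dy_n .
\]
Insert \eqref{eq:fiber-difference}. The $y_n$-integral is
\[
\int_0^\infty e^{i\mu y_n}\,(\mathcal{F}'f)(\xipar,y_n)\,dy_n = \sqrt{2\pi}\,\widehat{\zext{f}}(\xipar,-\mu),
\]
which is the computation in the proof of Lemma~\ref{lem:reflection-formula}. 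It extends to complex $\mu$ with $\im\mu \ge 0$, since $f$ has compact support in $\Rnp$: $\widehat{\zext{f}}$ is entire, and it is rapidly decreasing in $\xipar$ uniformly for $-\im\mu$ bounded above. Inverting $\mathcal{F}'$ then gives
\[
v_\varepsilon(x) = -(2\pi)^{-(n-2)/2}\int_{\Rnm} \frac{\widehat{\zext f}(\xipar,-\mu)}{i\mu+\alpha}\, e^{i(\xpar\cdot\xipar+\mu x_n)}\,d\xipar .
\]
The normalizations combine as $(2\pi)^{-(n-1)/2}\sqrt{2\pi} = (2\pi)^{-(n-2)/2}$. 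This integral converges absolutely, because $|e^{i\mu x_n}| \le 1$ and $i\mu+\alpha = \alpha-\kappa \ne 0$ for $\varepsilon > 0$.

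\emph{Step 2: passage to $\varepsilon \downarrow 0$ and the splitting.} For $\varepsilon > 0$, write $1/(i\mu+\alpha) = -(\alpha-i\mu)/(|\xipar|^2-(z+\alpha^2))$. Then split the numerator as $\bigl[(\alpha-i\mu)e^{i\mu(x_n+y_n)} - \psizero(x_n)\psizero(y_n)\bigr] + \psizero(x_n)\psizero(y_n)$, exactly as in \eqref{eq:K-explicit}.

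The second piece is the fiber form of $\Rfree{n-1}{z+\alpha^2}\,T_{\psizero}f \otimes \psizero$. By Proposition~\ref{prop:three-terms} it converges to $\widetilde u_{\mathrm{surf}}$. Its limit is the distribution \eqref{eq:v-pole}, applied to $\widehat{T_{\psizero}f}$, which is Schwartz. The first piece is the $\Krem{z}f$ term. Its numerator vanishes at $|\xipar| = \ksurf$ by \eqref{eq:polar-part}, so by Lemma~\ref{lem:K-kernel-bound} and dominated convergence (as in Proposition~\ref{prop:K-bounded}) it converges pointwise in $x$ to the integral with $\mu$ given by \eqref{eq:mu-branches}.

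Recombining the two limits gives \eqref{eq:v-def} with the stated reading of the quotient. Away from $|\xipar| = \ksurf$ the integrand is bounded by a Schwartz function times $|i\mu+\alpha|^{-1}$, and this factor is locally bounded there. That gives absolute convergence. The identification of the singular term with $\widetilde u_{\mathrm{surf}}$ follows, because at $\mu = i\alpha$ the factor $(\alpha-i\mu)\,\widehat{\zext f}(\xipar,-\mu)\,e^{i\mu x_n}$ equals $\sqrt{2\pi}\,\widehat{T_{\psizero}f}(\xipar)\,\psizero(x_n)/\sqrt{2\alpha}\cdot\sqrt{2\alpha}$, again by \eqref{eq:polar-part}.

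\emph{Main obstacle.} I expect the delicate point to be the uniformity of dominated convergence near the branch circle $|\xipar| = \kbulk$, where $\mu \to 0$. There I would use that $|i\mu+\alpha| \ge \alpha - |\mu|$ stays away from zero, since $\alpha > 0$ and $\mu$ is small. A second point to check is the identification of the pointwise limit with the weak-$*$ $\Bst$ limit of Theorem~\ref{thm:lap}. I would handle this by pairing with test functions and noting that locally uniform pointwise convergence implies distributional convergence.
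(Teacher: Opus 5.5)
Your proposal is correct and follows the paper's route. The paper inserts \eqref{eq:fiber-difference} into \eqref{eq:fiber-rep}, evaluates the $y_n$-integral as $\sqrt{2\pi}\,\widehat{\zext f}(\xipar,-\mu)$, gets absolute convergence from the half-space Paley--Wiener bound, and identifies the pole contribution through \eqref{eq:polar-part}. The only difference is that the paper inserts the boundary-value kernel directly, whereas you work at $z = E+i\varepsilon$ and pass to the limit through the splitting $v_\varepsilon = \widetilde u_{\mathrm{surf},\varepsilon} + \Krem{z}f$. This makes explicit a limiting step the paper leaves implicit.

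One slip in your last paragraph: at $\mu = i\alpha$ one has $(\alpha-i\mu)\,\widehat{\zext f}(\xipar,-\mu)\,e^{i\mu x_n} = (2\pi)^{-1/2}\,\widehat{T_{\psizero}f}(\xipar)\,\psizero(x_n)$, not $\sqrt{2\pi}\,\widehat{T_{\psizero}f}(\xipar)\,\psizero(x_n)$; you used $\sqrt{2\pi}\,\widehat{\zext f}$ in place of $\widehat{\zext f}$. It is this factor that turns the prefactor $(2\pi)^{-\frac{n-2}{2}}$ into the $(2\pi)^{-\frac{n-1}{2}}$ of $\widetilde u_{\mathrm{surf}}$. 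The error is harmless for your argument, since there the identification already comes from the splitting.
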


\begin{proof}
Insert \eqref{eq:fiber-difference} into \eqref{eq:fiber-rep}:
\begin{equation*}
  \mathcal{F}'v(\xipar,x_n)
  = -\,\frac{e^{i\mu x_n}}{i\mu+\alpha}
      \int_0^{\infty} e^{i\mu y_n}\,
        \bigl(\mathcal{F}'f\bigr)(\xipar,y_n)\,dy_n
  = -\,\frac{\sqrt{2\pi}}{i\mu+\alpha}\,
      \widehat{\zext{f}}(\xipar,-\mu)\;e^{i\mu x_n},
\end{equation*}
where the identity
$\int_0^{\infty}e^{-i\xi_n y_n}\bigl(\mathcal{F}'f\bigr)(\xipar,y_n)\,dy_n
= \sqrt{2\pi}\,\widehat{\zext{f}}(\xipar,\xi_n)$, which is Fubini's theorem and
appears in the proof of Lemma~\ref{lem:reflection-formula}, has been used with
$\xi_n = -\mu$.
Inverting $\mathcal{F}'$ and using
$(2\pi)^{-\frac{n-1}{2}}\sqrt{2\pi} = (2\pi)^{-\frac{n-2}{2}}$ gives
\eqref{eq:v-def}. On the evanescent branch $\mu = i\tau$ one has, for every
$N$,
\begin{equation}
  \label{eq:PW-halfplane}
  \bigl|\widehat{\zext{f}}(\xipar,-i\tau)\bigr|
  \le C_N\,(1+|\xipar|)^{-N}
  \qquad (\tau \ge 0),
\end{equation}
with $C_N$ independent of $\tau$. This is the half-space form of the
Paley--Wiener theorem. Indeed
$\widehat{\zext f}(\xipar,-i\tau) = (2\pi)^{-\frac{n}{2}}\int_{\Rnp}
e^{-i\xpar\cdot\xipar}\,e^{-\tau y_n}f(y)\,dy$, and $e^{-\tau y_n} \le 1$ on the
support of $f$, so that integration by parts in $\xpar$ alone produces the
decay: the transform of a function vanishing for $y_n < 0$ is bounded on the
lower half-plane $\{\im\xi_n \le 0\}$, where $\xi_n = -\mu$ lies. Since
$|e^{i\mu x_n}| = e^{-\tau x_n} \le 1$ for $x_n \ge 0$, the integral converges
absolutely at every $x \in \Rnp$. On the propagating disc the integrand is
bounded. Finally, at $\mu = i\alpha$,
\begin{equation*}
  \widehat{\zext{f}}(\xipar,-i\alpha)
  = (2\pi)^{-\frac{n}{2}}\int_{\Rnp}
      e^{-i\xpar\cdot\xipar}e^{-\alpha y_n}f(y)\,dy
  = (2\pi)^{-\frac12}(2\alpha)^{-\frac12}\,
    \widehat{T_{\psizero}f}(\xipar),
\end{equation*}
so that the singular term of \eqref{eq:v-pole} contributes
\begin{equation*}
  (2\pi)^{-\frac{n-1}{2}}\psizero(x_n)
  \int_{\Rnm}
    \frac{\widehat{T_{\psizero}f}(\xipar)}
         {|\xipar|^2 - \ksurf^2 - i0}\,
    e^{i\xpar\cdot\xipar}\,d\xipar,
\end{equation*}
which is $\widetilde{u}_{\mathrm{surf}}$ as given in \eqref{eq:usurf-exact}.
\end{proof}

Formula \eqref{eq:v-def} is the half-space counterpart of Sommerfeld's
classical representation of the field of a vertical dipole over a plane
earth \cite[\S3]{Sommerfeld1909}, developed at length in his lectures
\cite[Chap.~VI, \S32]{Sommerfeld1949}.
In Sommerfeld's problem the earth is a second medium with a large complex
refractive index. His transmission condition then reduces approximately to
a Robin condition with a complex coefficient, and Sommerfeld himself
compares this approximation with the Robin problem of heat conduction
\cite[\S32, (10b)--(10d)]{Sommerfeld1949}. Our real $\alpha>0$ is the
lossless counterpart. Sommerfeld also rewrote his integral as a
superposition of homogeneous and inhomogeneous plane waves
\cite[\S11]{Sommerfeld1909}, which is the form \eqref{eq:v-def} takes here.
We shall refer to it as the \emph{reflected integral}.
The pole of the second factor is Sommerfeld's
\cite[\S5]{Sommerfeld1909}, \cite[\S32, (15)--(19)]{Sommerfeld1949},
and its residue is the surface wave.

Thus the resolvent is a sum of three explicit objects. They are a free
resolvent in $\Rn$ applied to the odd extension of $f$, a free resolvent in
$\Rnm$ at the shifted energy $E + \alpha^2$ tensored with $\psizero$, and the
reflected integral with the surface pole removed. The first two are handled by
Lemma~\ref{lem:hankel} below, the third by Lemmas~\ref{lem:statphase} and
\ref{lem:branch}.

\subsubsection{Three far-field lemmas}
\label{sec:asymp-lemmas}

\begin{lemma}[Hankel asymptotics of the free resolvent]
\label{lem:hankel}
Let $m \ge 2$, $k > 0$. The outgoing free resolvent
$\Rfree{m}{k^2+i0}$ on $\mathbb{R}^m$ is convolution with
\begin{equation}
  \label{eq:hankel-kernel}
  G_m(X;k)
  = \frac{i}{4}
    \Bigl(\frac{k}{2\pi|X|}\Bigr)^{\frac{m-2}{2}}
    H^{(1)}_{\frac{m-2}{2}}\bigl(k|X|\bigr),
\end{equation}
which for $m = 3$ is $e^{ik|X|}/(4\pi|X|)$ and for $m = 2$ is
$\tfrac{i}{4}H^{(1)}_0(k|X|)$. Consequently, for
$g \in C_c^{\infty}(\mathbb{R}^m)$ and $\varsigma = X/|X|$,
\begin{equation}
  \label{eq:hankel-far}
  \bigl(\Rfree{m}{k^2+i0}g\bigr)(X)
  = \sqrt{\frac{\pi}{2}}\;k^{\frac{m-3}{2}}\,
    e^{-i\frac{(m-3)\pi}{4}}\;
    \widehat{g}(k\varsigma)\;
    |X|^{-\frac{m-1}{2}}e^{ik|X|}
    \bigl(1 + O(|X|^{-1})\bigr)
\end{equation}
as $|X| \to \infty$, uniformly in $\varsigma \in S^{m-1}$, and the same holds
for each first derivative after differentiation of the leading term.
\end{lemma}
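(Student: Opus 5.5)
The plan is to prove the lemma in two steps: first identify the kernel, then read off the far field from the large-argument expansion of the Hankel function.

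\emph{Step 1: the kernel.} For $\im\zeta>0$ the resolvent $\Rfree{m}{\zeta}$ is convolution with the radial fundamental solution of $-\Delta-\zeta$ that decays at infinity. In the radial variable, and after factoring out $|X|^{-(m-2)/2}$, the equation $(-\Delta-k^2)G=0$ away from the origin becomes Bessel's equation of order $\nu=(m-2)/2$ in $k|X|$. The outgoing branch is selected by the requirement $G\sim e^{ik|X|}$, which picks $H^{(1)}_\nu$. The constant is fixed by matching the singularity at the origin to that of the Newtonian kernel, using
\[
  H^{(1)}_\nu(t)\sim -\frac{i}{\pi}\,\Gamma(\nu)\Bigl(\frac{2}{t}\Bigr)^{\nu}
  \qquad (t\to 0,\ \nu>0),
\]
and the logarithmic singularity when $m=2$. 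This gives \eqref{eq:hankel-kernel}. The explicit cases serve as checks: for $m=3$, $H^{(1)}_{1/2}(t)=-i\sqrt{2/(\pi t)}\,e^{it}$ returns $e^{ik|X|}/(4\pi|X|)$. The identification of the limit $\zeta\to k^2+i0$ with convolution by this kernel follows from the weak-$*$ convergence in Theorem~\ref{thm:AH-classical} together with locally uniform convergence of the kernels, since $H^{(1)}_\nu$ is continuous in $\zeta$ away from the origin and locally integrable near it.

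\emph{Step 2: the far field.} Insert the expansion
\[
  H^{(1)}_\nu(t)=\sqrt{\frac{2}{\pi t}}\,e^{i(t-\nu\pi/2-\pi/4)}\bigl(1+O(t^{-1})\bigr),
\]
which may be differentiated termwise, into the convolution $\int G_m(X-Y;k)\,g(Y)\,dY$, where $g$ is supported in $|Y|\le\rho_0$. For $|X|\to\infty$ use the Taylor expansions
\[
  |X-Y|=|X|-\varsigma\cdot Y+O\bigl(|X|^{-1}\bigr),
  \qquad
  |X-Y|^{-a}=|X|^{-a}\bigl(1+O(|X|^{-1})\bigr),
\]
uniformly in $\varsigma$ and in $Y$ in the support. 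The error $e^{ikO(|X|^{-1})}-1=O(|X|^{-1})$ is absorbed into the remainder. What remains is
\[
  \int e^{-ik\varsigma\cdot Y}g(Y)\,dY=(2\pi)^{m/2}\,\widehat g(k\varsigma).
\]
Collecting constants, the amplitude is $\frac{i}{4}\bigl(\frac{k}{2\pi}\bigr)^{(m-2)/2}\sqrt{\frac{2}{\pi k}}\,(2\pi)^{m/2}$ times the phase $e^{-i(m-1)\pi/4}$. Using $i\,e^{-i(m-1)\pi/4}=e^{-i(m-3)\pi/4}$, this simplifies to $\sqrt{\pi/2}\;k^{(m-3)/2}e^{-i(m-3)\pi/4}$, which is \eqref{eq:hankel-far}. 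For the derivatives, differentiate under the integral. Each derivative falling on $e^{ik|X-Y|}$ produces $ik(X-Y)_j/|X-Y|=ik\varsigma_j+O(|X|^{-1})$. Derivatives falling on the amplitude or on the Hankel remainder gain a factor $|X|^{-1}$, so the leading term is the derivative of the leading term, as claimed.

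The main obstacle is bookkeeping rather than analysis: the normalization constant of the kernel and the exact phase $e^{-i(m-3)\pi/4}$, under the symmetric Fourier convention of \S\ref{sec:notation}. I would verify both against the explicit case $m=3$, where the formula must reduce to $(2\pi)^{3/2}\widehat g(k\varsigma)/(4\pi)=\sqrt{\pi/2}\,\widehat g(k\varsigma)$, and against $m=2$. Uniformity in $\varsigma$ is automatic, because all the expansions above are uniform over the compact support of $g$.
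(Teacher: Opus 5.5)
Your proposal is correct and follows the paper's proof: identify the kernel, insert the large-argument Hankel expansion, expand $|X-Y|=|X|-\varsigma\cdot Y+O(|X|^{-1})$ on the support of $g$, and track the constant to $\sqrt{\pi/2}\,k^{(m-3)/2}e^{-i(m-3)\pi/4}$. The differences are that you derive the kernel from Bessel's equation and the Newtonian singularity where the paper cites Watson, and that you spell out the first derivatives and the identification of the boundary value, which the paper leaves implicit. One remark applies equally to the paper's proof: the $O(|X|^{-1})$ error sits inside the $Y$-integral, so what is actually proved is an additive remainder $O(|X|^{-(m+1)/2})$, uniform in $\varsigma$, and this is how the relative error in \eqref{eq:hankel-far} has to be read at directions where $\widehat g(k\varsigma)$ vanishes.
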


\begin{proof}
Formula \eqref{eq:hankel-kernel} is the standard Fourier inversion of
$(|\xi|^2 - k^2 - i0)^{-1}$ \cite[Ch.~13]{Watson1944}. Insert the asymptotic
expansion
$H^{(1)}_{\nu}(t) = \sqrt{2/(\pi t)}\;e^{i(t - \nu\pi/2 - \pi/4)}
\bigl(1 + O(t^{-1})\bigr)$ with $\nu = (m-2)/2$, and use
$|X - Y| = |X| - \varsigma\cdot Y + O(|X|^{-1})$ uniformly for $Y$ in a
compact set:
\begin{equation*}
  \int G_m(X-Y;k)g(Y)\,dY
  = \frac{i}{4}\Bigl(\frac{k}{2\pi}\Bigr)^{\frac{m-2}{2}}
    \sqrt{\frac{2}{\pi k}}\;
    e^{i\left(k|X| - \frac{(m-1)\pi}{4}\right)}|X|^{-\frac{m-1}{2}}
    \int e^{-ik\varsigma\cdot Y}g(Y)\,dY
\end{equation*}
up to a relative error $O(|X|^{-1})$. Since
$\int e^{-ik\varsigma\cdot Y}g\,dY = (2\pi)^{m/2}\widehat{g}(k\varsigma)$ and
\begin{equation*}
  \frac{i}{4}\Bigl(\frac{k}{2\pi}\Bigr)^{\frac{m-2}{2}}
  \sqrt{\frac{2}{\pi k}}\,(2\pi)^{\frac{m}{2}}
  = i\sqrt{\frac{\pi}{2}}\;k^{\frac{m-3}{2}},
  \qquad
  i\,e^{-i\frac{(m-1)\pi}{4}} = e^{-i\frac{(m-3)\pi}{4}},
\end{equation*}
this is \eqref{eq:hankel-far}. For $m = 3$ the constant is
$(2\pi)^{3/2}/(4\pi) = \sqrt{\pi/2}$, as it must be.
\end{proof}

\begin{lemma}[Stationary phase for the reflected integral]
\label{lem:statphase}
Let $\delta \in (0,1)$ and let
$\Phi(\xipar) = \xpar\cdot\xipar + \mu(\xipar)x_n$ with $\mu$ as in
\eqref{eq:mu-branches}, on the propagating disc $\{|\xipar| < \kbulk\}$. For
$\theta_n \ge \delta$ the phase $\Phi$ has in that disc exactly one critical
point, namely
\begin{equation}
  \label{eq:statpoint}
  \xipar_{*} = \kbulk\,\theta',
  \qquad
  \mu(\xipar_{*}) = \kbulk\theta_n,
  \qquad
  \Phi(\xipar_{*}) = \kbulk\,r,
\end{equation}
it is nondegenerate, and
\begin{equation}
  \label{eq:hessian}
  \Phi''(\xipar_{*})
  = -\frac{r}{\kbulk}
    \Bigl(
      I_{n-1} + \frac{\theta'\otimes\theta'}{\theta_n^2}
    \Bigr),
  \qquad
  \det\Phi''(\xipar_{*})
  = \Bigl(-\frac{r}{\kbulk}\Bigr)^{n-1}\frac{1}{\theta_n^{2}},
  \qquad
  \operatorname{sgn}\Phi''(\xipar_{*}) = -(n-1).
\end{equation}
Consequently, if $A$ is smooth near $\xipar_{*}$ and $\chi$ is a cut-off equal
to $1$ on $\{|\xipar| \le \kbulk\sqrt{1-\delta^2/2}\}$ and supported in
$\{|\xipar| < \kbulk\sqrt{1-\delta^2/4}\}$, then
\begin{equation}
  \label{eq:statphase-result}
  \int_{\Rnm} \chi(\xipar)A(\xipar)e^{i\Phi(\xipar)}\,d\xipar
  = (2\pi)^{\frac{n-1}{2}}
    \Bigl(\frac{\kbulk}{r}\Bigr)^{\frac{n-1}{2}}\theta_n\,
    e^{-i\frac{(n-1)\pi}{4}}\,A(\xipar_{*})\,e^{i\kbulk r}
    \bigl(1 + O(r^{-1})\bigr),
\end{equation}
uniformly on $\{\theta_n \ge \delta\}$.
\end{lemma}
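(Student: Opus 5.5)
We treat Lemma~\ref{lem:statphase} as a direct computation followed by the standard stationary-phase theorem with a nondegenerate critical point.

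\emph{Critical point.} On the propagating disc we have $\mu(\xipar) = (E - |\xipar|^2)^{1/2}$, so $\nabla\mu = -\xipar/\mu$. Then $\nabla\Phi = \xpar - x_n\xipar/\mu$, and $\nabla\Phi = 0$ forces $\xipar/\mu = \xpar/x_n = \theta'/\theta_n$. Thus $\xipar = t\theta'$ and $\mu = t\theta_n$ for some $t>0$. Since $|\xipar|^2 + \mu^2 = E$ and $|\theta'|^2+\theta_n^2 = 1$, we get $t = \kbulk$. This gives uniqueness and \eqref{eq:statpoint}, and
$\Phi(\xipar_*) = \kbulk(\xpar\cdot\theta' + x_n\theta_n) = \kbulk r$.

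\emph{Hessian.} Differentiate once more: $\partial_j\partial_k\mu = -\delta_{jk}/\mu - \xi_j\xi_k/\mu^3$. At $\xipar_*$, with $\mu = \kbulk\theta_n$, this gives
$\Phi'' = -(x_n/\mu)\bigl(I + \xipar\otimes\xipar/\mu^2\bigr) = -(r/\kbulk)\bigl(I + \theta'\otimes\theta'/\theta_n^2\bigr)$,
using $x_n/\mu = r/\kbulk$.
\begin{itemize}
\item The matrix $I + \theta'\otimes\theta'/\theta_n^2$ has eigenvalue $1+|\theta'|^2/\theta_n^2 = \theta_n^{-2}$ in the direction $\theta'$ (when $\theta'\ne0$) and eigenvalue $1$ on its orthogonal complement.
\item Hence $\det\Phi'' = (-r/\kbulk)^{n-1}\theta_n^{-2}$.
\item The matrix is negative definite, so the signature is $-(n-1)$.
\end{itemize}

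\emph{Applying stationary phase.} Write $\Phi = r\,\phi(\xipar;\theta)$ with $\phi = \theta'\cdot\xipar + \theta_n\mu(\xipar)$. Here $r$ is the large parameter and $\theta$ lies in the compact set $\{\theta_n\ge\delta\}$. The critical point satisfies $|\xipar_*| = \kbulk\sqrt{1-\theta_n^2} \le \kbulk\sqrt{1-\delta^2}$, which lies strictly inside the set where $\chi=1$. On the support of $\chi$, $\mu$ is bounded below by $\kbulk\delta/2$, so $\phi$ is smooth there, with derivatives bounded uniformly in $\theta$.

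The standard formula then gives the value
$(2\pi/r)^{\frac{n-1}{2}}|\det\phi''|^{-1/2}e^{i\frac{\pi}{4}\operatorname{sgn}}A(\xipar_*)e^{ir\phi(\xipar_*)}(1+O(r^{-1}))$.
Here $|\det\phi''|^{-1/2} = \kbulk^{\frac{n-1}{2}}\theta_n$, which yields \eqref{eq:statphase-result}.

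\emph{Main obstacle: uniformity in $\theta$.} The remainder constant must be uniform in $\theta$. This needs:
\begin{itemize}
\item a uniform lower bound on $|\nabla\phi|$ on $\operatorname{supp}\chi$ away from a fixed neighbourhood of $\xipar_*$;
\item a uniform bound on the inverse Hessian.
\end{itemize}
The inverse Hessian is controlled because $\theta_n\ge\delta$ bounds the eigenvalue $\theta_n^{-2}$. For the gradient, we would use that $\phi$ is concave with Hessian bounded above by $-I$ times a positive constant, uniformly on $\operatorname{supp}\chi$ and in $\theta$. This follows from the formula for $\partial^2\mu$ and $\mu\le\kbulk$. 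Concavity gives $|\nabla\phi(\xipar)| \ge c|\xipar-\xipar_*|$ uniformly.

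With these bounds, the version of stationary phase with parameters (H\"ormander, Thm.~7.7.5) gives the $O(r^{-1})$ error uniformly. This assumes $A$ is smooth on $\operatorname{supp}\chi$, which is how we read the hypothesis.
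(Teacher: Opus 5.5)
Your proof is correct and follows the paper's route: the same computation of $\nabla\Phi$ and of $\nabla^2\mu$, the observation that $\xipar_{*}$ lies inside $\{\chi=1\}$, and then the standard nondegenerate stationary-phase formula in $n-1$ variables with $|\det\phi''|^{-1/2} = \kbulk^{(n-1)/2}\theta_n$. Your strong-concavity argument makes explicit the uniformity in $\theta$ that the paper leaves implicit: $\nabla^2\phi \le -(\delta/\kbulk)I$ on the disc, hence $|\nabla\phi(\xipar)| \ge c\,|\xipar-\xipar_{*}|$. One attribution should be corrected: the inverse Hessian is bounded by $\kbulk$ for every $\theta$, and what $\theta_n \ge \delta$ keeps bounded is the Hessian itself and the higher derivatives of $\phi$, the latter through $\mu \ge \kbulk\delta/2$ on $\operatorname{supp}\chi$.
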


\begin{proof}
On the disc, $\nabla\Phi = \xpar - x_n\xipar/\mu$, which vanishes if and only
if $\xipar = \mu\xpar/x_n$. Combined with $|\xipar|^2 + \mu^2 = \kbulk^2$ this
forces $|\xipar| = \kbulk|\theta'|$ and hence \eqref{eq:statpoint}, and
$\Phi(\xipar_{*}) = \kbulk r(|\theta'|^2 + \theta_n^2) = \kbulk r$. From
$\nabla\mu = -\xipar/\mu$ and
$\nabla^2\mu = -\tfrac{1}{\mu}\bigl(I_{n-1} + \xipar\otimes\xipar/\mu^2\bigr)$
one gets
$\Phi'' = -\tfrac{x_n}{\mu}\bigl(I_{n-1} + \xipar\otimes\xipar/\mu^2\bigr)$,
which at $\xipar_{*}$ is \eqref{eq:hessian} since
$x_n/\mu(\xipar_{*}) = r/\kbulk$. The determinant follows from
$\det(I + w\otimes w) = 1 + |w|^2$ and $|\theta'|^2 + \theta_n^2 = 1$, and the
matrix is negative definite, whence the signature. Since $\theta_n \ge \delta$
gives $|\xipar_*| = \kbulk\sqrt{1-\theta_n^2} \le \kbulk\sqrt{1-\delta^2}$, the
critical point lies in the interior of $\{\chi = 1\}$, at distance at least
$c\delta^2\kbulk$ from its boundary. Now \eqref{eq:statphase-result} is the
stationary-phase formula in $n-1$ variables, with
$|\det\Phi''|^{-1/2} = (\kbulk/r)^{(n-1)/2}\theta_n$ and phase factor
$e^{i\pi\operatorname{sgn}\Phi''/4} = e^{-i(n-1)\pi/4}$.
\end{proof}

The propagating part of \eqref{eq:v-def} may equivalently be written as an
integral over the hemisphere: parametrizing the disc by $\xi = \kbulk\theta$,
$\theta \in \Sm$, one has $d\xipar = \kbulk^{n-1}\theta_n\,dS(\theta)$, and the
factor $\theta_n$ cancels against the $\mu^{-1} = (\kbulk\theta_n)^{-1}$
carried by the amplitude, so that the integral becomes a Herglotz wave with
amplitude smooth up to the equator. We use the disc form, in which the Hessian
is displayed explicitly.

The circle $|\xipar| = \kbulk$, where the two branches of \eqref{eq:mu-branches}
meet, is not covered by either lemma, and it is here that the classical
difficulties of the half-space problem are located. They are, however,
difficulties of the coordinate $|\xipar|$ rather than of the integral. As
$|\xipar|$ increases from $0$ to $\infty$, the normal wavenumber $\mu$ traverses
the path
\begin{equation}
  \label{eq:branch-contour}
  \Gamma:\qquad
  \kbulk \;\longrightarrow\; 0 \;\longrightarrow\; i\infty
\end{equation}
in the closed upper half-plane, consisting of a segment of the real axis
followed by the positive imaginary axis. Across the corner $\mu = 0$ of
$\Gamma$ the integrand of \eqref{eq:v-def} extends holomorphically. The next
lemma exploits nothing else. In particular no contour is deformed, so the
region of integration in the original variable is untouched.

\begin{lemma}[The branch circle is not a singularity]
\label{lem:branch}
Let $E > 0$, $\delta \in (0,1)$ and $0 < \epsilon \le \tfrac14\kbulk\delta$.
Put $s(\mu) = \sqrt{E - \mu^2}$, holomorphic on $\{|\mu| < \kbulk\}$, and, for
$x \in \Rnp$ and $\omega' \in \Ssurf$,
\begin{equation}
  \label{eq:branch-phase}
  \Phi_{\omega'}(\mu)
  = \rho\,s(\mu)\,(\omega'\cdot\theta') + \mu x_n .
\end{equation}
Let $b(\cdot,\omega')$ be holomorphic on $\{|\mu| < 2\epsilon\}$ for each
$\omega'$, with all $\mu$-derivatives bounded uniformly in $\omega'$, and let
$\zeta \in C_c^{\infty}\bigl((-\epsilon^2,\epsilon^2)\bigr)$ be equal to $1$
near $0$. Then
\begin{equation}
  \label{eq:branch-integral}
  I(x)
  = \int_{\Ssurf}\!\!
    \left[
      -\int_0^{\epsilon}\!\zeta(\mu^2)\,b(\mu,\omega')\,
         e^{i\Phi_{\omega'}(\mu)}\,d\mu
      + i\!\int_0^{\epsilon}\!\zeta(-\tau^2)\,b(i\tau,\omega')\,
         e^{i\Phi_{\omega'}(i\tau)}\,d\tau
    \right] dS(\omega')
\end{equation}
satisfies, for every $N \in \mathbb{N}$,
\begin{equation*}
  |I(x)| \le C_N\,r^{-N},
  \qquad x \in \Rnp,\quad x_n \ge \delta r,\quad r \ge 1,
\end{equation*}
with $C_N$ depending on $N$, $\delta$, $\epsilon$, $E$, $n$ and on finitely
many derivatives of $b$.
\end{lemma}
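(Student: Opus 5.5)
The plan is to read the two terms of \eqref{eq:branch-integral} as a single contour integral in the complex variable $\mu$, and then integrate by parts along that contour. The contour is the one on which the phase \eqref{eq:branch-phase} is non-stationary once $x_n \ge \delta r$.

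\emph{Step 1: one contour.} Let $\gamma$ be the piecewise linear path from $\epsilon$ down to $0$ along the real axis, and then from $0$ up to $i\epsilon$ along the imaginary axis. On the first segment $d\mu$ is real, so $-\int_0^\epsilon(\cdots)\,d\mu = \int_\epsilon^0(\cdots)\,d\mu$. On the second we have $\mu = i\tau$, $d\mu = i\,d\tau$ and $\zeta(-\tau^2) = \zeta(\mu^2)$. Hence for each fixed $\omega'$ the bracket in \eqref{eq:branch-integral} equals
\[
  J_{\omega'}(x) = \int_\gamma G(\mu)\,e^{i\Phi_{\omega'}(\mu)}\,d\mu,
  \qquad G(\mu) = \zeta(\mu^2)\,b(\mu,\omega').
\]
On each segment, $G$ is the restriction of a smooth function of $\mu$. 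Near the corner $\mu = 0$ we have $\zeta \equiv 1$, so there $G = b$, which is holomorphic. The integrand is therefore holomorphic in a neighbourhood of the corner, and the corner will produce no boundary terms. At the two endpoints $\epsilon$ and $i\epsilon$ the factor $\zeta(\pm\epsilon^2)$ vanishes together with all its derivatives, since $\operatorname{supp}\zeta \subset (-\epsilon^2,\epsilon^2)$ is compact. It therefore suffices to prove $|J_{\omega'}(x)| \le C_N r^{-N}$ uniformly in $\omega' \in \Ssurf$, and then to integrate over the compact sphere $\Ssurf$.

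\emph{Step 2: the phase is non-stationary on $\gamma$.} We have
\[
  \Phi_{\omega'}'(\mu) = x_n - \rho\,\frac{\mu}{s(\mu)}\,(\omega'\cdot\theta').
\]
On the real segment $|\mu| \le \epsilon \le \kbulk\delta/4$, so $s(\mu) \ge \kbulk\sqrt{15}/4$. On the imaginary segment $|s(i\tau)| = \sqrt{E+\tau^2} \ge \kbulk$. In both cases $|\mu/s(\mu)| \le \delta/\sqrt{15}$. Since $\rho \le r$ and $|\omega'\cdot\theta'| \le 1$, we get
\[
  |\Phi_{\omega'}'(\mu)| \ge \delta r - \frac{\delta}{\sqrt{15}}\,r \ge \frac{\delta r}{2}
  \qquad\text{on } \gamma,
\]
for $x_n \ge \delta r$. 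In the same way, every higher derivative $\Phi_{\omega'}^{(j)}$ with $j \ge 2$ equals $-\rho\,(\omega'\cdot\theta')\,\frac{d^{j-1}}{d\mu^{j-1}}(\mu/s)$, which is bounded by $C_j r$ on a neighbourhood of $\gamma$. We also need the oscillatory factor to stay bounded. On the real segment $\Phi$ is real. On the imaginary segment
\[
  \im\Phi(i\tau) = \tau x_n \ge 0,
\]
because $s(i\tau)$ is real there. Hence $|e^{i\Phi}| \le 1$ on all of $\gamma$.

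\emph{Step 3: integration by parts, and the main point.} Write $e^{i\Phi} = (i\Phi')^{-1}\frac{d}{d\mu}e^{i\Phi}$. The function $\mu \mapsto G\,(i\Phi')^{-1}e^{i\Phi}$ is continuous along $\gamma$ and piecewise $C^1$, with the same complex derivative from both sides at the corner, where it is holomorphic. It also vanishes at both endpoints. The fundamental theorem of calculus along $\gamma$ therefore gives $J = \int_\gamma (L G)\,e^{i\Phi}\,d\mu$ with $LG = -\frac{d}{d\mu}\bigl(G/(i\Phi')\bigr)$. Iterating $N$ times gives $J = \int_\gamma (L^N G)\,e^{i\Phi}\,d\mu$. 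By Step 2, $(L^N G)$ is a finite sum of terms of the form
\[
  G^{(a)}\,\prod_i \Phi^{(j_i)}\,(\Phi')^{-N-m},
\]
in which the $m$ factors $\Phi^{(j_i)}$ with $j_i \ge 2$ are each $O(r)$. Each term is therefore $O(r^{m}\,r^{-N-m}) = O(r^{-N})$. The constants involve only $\delta$, $\epsilon$, $E$, $n$, and finitely many derivatives of $\zeta$ and of $b$, the latter bounded uniformly in $\omega'$ by hypothesis. Since $|e^{i\Phi}| \le 1$ and $\gamma$ has length $2\epsilon$, this gives $|J_{\omega'}(x)| \le C_N r^{-N}$, and the lemma follows.

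The step I regard as the crux is the observation in Step 1 that the two half-integrals are one contour integral with no contribution from the corner $\mu = 0$, which is where $|\xipar| = \kbulk$. This is what allows the integration by parts to go through without a boundary term at the branch circle. The only genuinely quantitative input is the lower bound on $|\Phi'|$ in Step 2, and this is exactly why the hypothesis $\epsilon \le \tfrac14\kbulk\delta$ is imposed.
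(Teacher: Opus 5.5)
Your proposal is correct and follows the paper's proof essentially step for step. Like the paper, you read the bracket as a single integral along the corner path, bound $|\Phi_{\omega'}'|\gtrsim\delta r$, $|\Phi_{\omega'}^{(j)}|\le C_j r$ and $|e^{i\Phi_{\omega'}}|\le 1$, cancel the corner boundary terms by holomorphy where $\zeta\equiv 1$, and integrate by parts $N$ times. The only cosmetic difference is that you bound $|\mu/s(\mu)|$ uniformly on both arms, whereas the paper uses $\re\Phi_{\omega'}' = x_n$ on the imaginary arm.
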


\begin{proof}
The bracket in \eqref{eq:branch-integral} is the integral of
$\zeta(\mu^2)b\,e^{i\Phi_{\omega'}}\,d\mu$ along the truncated path
$\Gamma_{\epsilon} = [0,\epsilon] \cup i[0,\epsilon]$ of
\eqref{eq:branch-contour}, traversed from $\epsilon$ down to $0$ and then up to
$i\epsilon$. Note that $|e^{i\Phi_{\omega'}}| \le 1$ on $\Gamma_{\epsilon}$:
the phase is real on the first arm, and $\im\Phi_{\omega'}(i\tau) = \tau x_n
\ge 0$ on the second, since $s(i\tau) = \sqrt{E+\tau^2}$ is real.

\emph{Step 1: the phase has no critical point on $\Gamma_{\epsilon}$.} From
$s'(\mu) = -\mu/s(\mu)$,
\begin{equation*}
  \Phi_{\omega'}'(\mu)
  = x_n - \rho\,\frac{\mu}{s(\mu)}\,(\omega'\cdot\theta') .
\end{equation*}
On the second arm $\mu/s(i\tau) = i\tau/\sqrt{E+\tau^2}$ is purely imaginary,
so $\re\Phi_{\omega'}' = x_n$ and $|\Phi_{\omega'}'| \ge x_n$. On the first
arm $\mu$ and $s(\mu)$ are real, and $\epsilon \le \kbulk\delta/4$ gives
$s(\mu) \ge \sqrt{E-\epsilon^2} \ge \kbulk\sqrt{1-\delta^2/16}
 \ge \tfrac34\kbulk$, the last inequality because $\delta < 1$. Hence, using
$\rho \le r$ and $x_n \ge \delta r$,
\begin{equation*}
  \Bigl| \rho\,\frac{\mu}{s(\mu)}\,(\omega'\cdot\theta') \Bigr|
  \le r\,\frac{\kbulk\delta/4}{\tfrac34\kbulk}
  = \frac{\delta r}{3}
  \le \frac{x_n}{3},
\end{equation*}
so that $|\Phi_{\omega'}'| \ge \tfrac23 x_n \ge \tfrac23\delta r$ there as
well. The higher derivatives satisfy
$|\Phi_{\omega'}^{(j)}| \le C_j\,\rho \le C_j\,r$ for $j \ge 2$, with $C_j$
depending only on $E$ and $\epsilon$.

\emph{Step 2: the corner contributes nothing.} Write $F = \zeta(\mu^2)b$ and
$LF = \bigl(F/(i\Phi_{\omega'}')\bigr)'$, so that on either arm
\begin{equation*}
  \int F\,e^{i\Phi_{\omega'}}\,d\mu
  = \Bigl[\frac{F}{i\Phi_{\omega'}'}\,e^{i\Phi_{\omega'}}\Bigr]
    - \int (LF)\,e^{i\Phi_{\omega'}}\,d\mu .
\end{equation*}
The boundary terms at the outer endpoints $\mu = \epsilon$ and $\mu = i\epsilon$
vanish, because $\zeta(\pm\epsilon^2) = 0$. The two boundary terms at
$\mu = 0$ occur with opposite signs, one arm ending there and the other
beginning there, and they are equal: near $\mu = 0$ one has $\zeta \equiv 1$,
so $F = b$ and $\Phi_{\omega'}$, $\Phi_{\omega'}'$ are single holomorphic
functions of $\mu$, whose values at the corner do not depend on the arm along
which it is approached. Hence the two boundary terms cancel and
\begin{equation*}
  \int_{\Gamma_{\epsilon}} F\,e^{i\Phi_{\omega'}}\,d\mu
  = -\int_{\Gamma_{\epsilon}} (LF)\,e^{i\Phi_{\omega'}}\,d\mu .
\end{equation*}
The same applies to $LF$, which again equals a holomorphic function near
$\mu = 0$ and is supported away from the outer endpoints, and so on.

\emph{Step 3: iteration.} By Step 1,
$|(F/i\Phi_{\omega'}')'| \le |F'|/|\Phi_{\omega'}'|
+ |F||\Phi_{\omega'}''|/|\Phi_{\omega'}'|^2
\le C(\delta r)^{-1} + Cr(\delta r)^{-2}$, and the corresponding bounds for
the derivatives of $LF$ are obtained in the same way. Inductively
$|L^{N}F| \le C_N(\delta^2 r)^{-N}$ on $\Gamma_{\epsilon}$, the constants
involving finitely many derivatives of $b$ and of $\zeta$. Since
$|e^{i\Phi_{\omega'}}| \le 1$ and $\Gamma_{\epsilon}$ has length $2\epsilon$,
the bracket in \eqref{eq:branch-integral} is $O_{\delta}(r^{-N})$ uniformly in
$\omega'$, and integration over the finite measure space $\Ssurf$ completes the
proof.
\end{proof}

\subsubsection{The outgoing transforms and the two model waves}
\label{sec:asymp-transforms}

The far field of the bulk channel is not governed by the standing-wave
transform $\Fbulk$ of \eqref{eq:Fbulk-def} but by the outgoing one, obtained
from it by the unimodular factor $\cph{\cdot}$ of
\eqref{eq:phase-and-reflection}. This carries out the construction announced in
Remark~\ref{rem:standing-wave}(b).

\begin{definition}[Outgoing and incoming transforms]
\label{def:outgoing}
For $E > 0$ and $\theta \in \Sm$ put
\begin{equation}
  \label{eq:Fout-def}
  \bigl(\Fbulk^{(\pm)}(E)f\bigr)(\theta)
  = \cph{\kbulk\theta_n}^{\pm1}\,\bigl(\Fbulk(E)f\bigr)(\theta),
  \qquad
  \Fsurf^{(\pm)}(E) = \Fsurf(E),
\end{equation}
and $\Fzero^{(\pm)}(E) = \bigl(\Fbulk^{(\pm)}(E),\Fsurf^{(\pm)}(E)\bigr)$.
Since $|\cph{\cdot}| = 1$, each $\Fzero^{(\pm)}(E)$ has the same operator norms
and the same orthogonality relations as $\Fzero(E)$, and
$\Fzero^{(\pm)}(E)^{*}\Fzero^{(\pm)}(E) = \Fzero(E)^{*}\Fzero(E)$.
\end{definition}

By \eqref{eq:phase-and-reflection} one has
$\refl{k} = \cph{k}/\overline{\cph{k}} = \cph{k}^2$, so that
\begin{equation}
  \label{eq:smatrix-germ}
  \Fbulk^{(+)}(E) = \refl{\kbulk\theta_n}\;\Fbulk^{(-)}(E),
  \qquad
  \Fsurf^{(+)}(E) = \Fsurf^{(-)}(E) .
\end{equation}
The outgoing and incoming bulk transforms differ by multiplication by the
pointwise Robin reflection coefficient, and the surface transforms do not
differ at all: the surface mode is a bound state of the transverse problem and
suffers no reflection. This is the elementary reason for the block structure
$\SmatA{\alpha}{E} = \SmatbulkA{\alpha}{E} \oplus \mathrm{Id}_{\Ltwo{\Ssurf}}$
of the scattering matrix, constructed in \S\ref{sec:inverse}.

\begin{definition}[The two model waves]
\label{def:model-waves}
For $A \in \Ltwo{\Sm}$ and $C \in \Ltwo{\Ssurf}$ set
\begin{equation*}
  \Wb{\pm}{A}(x) = A(\theta)\,r^{-\frac{n-1}{2}}e^{\pm i\kbulk r},
  \qquad
  \Ws{\pm}{C}(x) = C(\omega)\,\rho^{-\frac{n-2}{2}}
    e^{\pm i\ksurf\rho}\,\psizero(x_n).
\end{equation*}
For $E > 0$ and $f \in C_c^{\infty}(\Rnp)$ the two model waves are
$\Wb{+}{A_f}$ and $\Ws{+}{C_f}$, with amplitudes
\begin{align}
  \label{eq:ubulk}
  A_f &= c_n\,\bigl(\Fbulk^{(+)}(E)f\bigr),
    &&c_n = \sqrt{\frac{\pi}{\kbulk}}\;e^{-i\frac{(n-3)\pi}{4}},
  \\[1mm]
  \label{eq:usurf}
  C_f &= c_n'\,\bigl(\Fsurf(E)f\bigr),
    &&c_n' = \sqrt{\frac{\pi}{\ksurf}}\;e^{-i\frac{(n-4)\pi}{4}} .
\end{align}
\end{definition}

The two constants are the same function of the dimension and of the wavenumber,
one step apart: $c_n$ belongs to the sphere $S^{n-1}$ and the wavenumber
$\kbulk$, and $c_n'$ to the sphere $S^{n-2}$ and the wavenumber $\ksurf$. In
particular
\begin{equation}
  \label{eq:constants}
  |c_n|^2 = \frac{\pi}{\kbulk},
  \qquad
  |c_n'|^2 = \frac{\pi}{\ksurf} .
\end{equation}

\subsubsection{The pointwise expansions}
\label{sec:asymp-pointwise}

\begin{proposition}[Bulk expansion away from grazing]
\label{prop:asymp-cone}
Let $E > 0$, $f \in C_c^{\infty}(\Rnp)$ and $\delta \in (0,1)$. Then, uniformly
on the cone $\{x \in \Rnp : x_n \ge \delta|x|\}$,
\begin{equation}
  \label{eq:cone-expansion}
  u(x) = \Wb{+}{A_f}(x) + O\bigl(r^{-\frac{n}{2}}\bigr),
  \qquad
  \Ws{+}{C_f}(x) = O\bigl(e^{-\alpha\delta r}\bigr),
  \qquad r \to \infty,
\end{equation}
the constants depending on $\delta$, $f$, $E$, $\alpha$ and $n$.
\end{proposition}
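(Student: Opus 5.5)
The plan is to work from the splitting of Proposition~\ref{prop:three-terms} in its reflected form. We write $u = u_{\mathrm{D}} + v$, where $v = \widetilde{u}_{\mathrm{surf}} + \Krem{E+i0}f$ is the reflected integral \eqref{eq:v-def}. The second assertion of \eqref{eq:cone-expansion} is immediate. Indeed $\rho^{-(n-2)/2}|C_f(\omega)| \le C\rho^{-(n-2)/2}$ and $\psizero(x_n) = \sqrt{2\alpha}\,e^{-\alpha x_n} \le \sqrt{2\alpha}\,e^{-\alpha\delta r}$ on the cone. Near the axis $\rho \to 0$ one either restricts to $\rho \ge 1$ or absorbs the polynomial loss, which the exponential dominates on $\{x_n \ge \delta r\}$. (A bounded $C_f$ is available because $C_f$ is a restriction of the smooth function $\widehat{T_{\psizero}f}$.) The whole content is therefore the expansion of $u$.

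\emph{The Dirichlet part.} Since $f_{\mathrm{odd}} \in C_c^\infty(\Rn)$, Lemma~\ref{lem:hankel} with $m = n$ and $k = \kbulk$ gives
\[
u_{\mathrm{D}}(x) = \sqrt{\tfrac{\pi}{2}}\,E^{\frac{n-3}{4}}e^{-i\frac{(n-3)\pi}{4}}\bigl(\widehat{\zext f}(\kbulk\theta) - \widehat{\zext f}(\kbulk\refext{\theta})\bigr)r^{-\frac{n-1}{2}}e^{i\kbulk r}\bigl(1+O(r^{-1})\bigr),
\]
uniformly in $\theta$. This is already of the required form, with error $O(r^{-n/2})$ since $r^{-(n-1)/2-1} \le r^{-n/2}$.

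\emph{The reflected integral on the cone.} Decompose $\Rnm_{\xipar}$ by a partition of unity into five pieces. Piece (i) is the inner disc, where the cut-off $\chi$ of Lemma~\ref{lem:statphase} equals one. Piece (ii) is the annulus $\kbulk\sqrt{1-\delta^2/2} \le |\xipar| < \kbulk$, minus a neighbourhood of the circle. Piece (iii) is a neighbourhood of the branch circle $|\xipar| = \kbulk$. Piece (iv) is the evanescent region with $\tau$ bounded below. Piece (v) is the pole sphere $|\xipar| = \ksurf$ inside (iv).
\begin{itemize}
\item[(i)] Lemma~\ref{lem:statphase} applies with amplitude $A(\xipar) = -(2\pi)^{-(n-2)/2}\widehat{\zext f}(\xipar,-\mu)/(i\mu+\alpha)$, which is smooth there. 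It gives a term $c\,\widehat{\zext f}(\kbulk\refext\theta)\,\theta_n(i\kbulk\theta_n+\alpha)^{-1}r^{-(n-1)/2}e^{i\kbulk r}(1+O(r^{-1}))$.
\item[(ii)] There is no critical point, because $|\nabla\Phi| \ge c_\delta r$ for $\theta_n \ge \delta$. Repeated integration by parts gives $O(r^{-\infty})$.
\item[(iii)] Writing $|\xipar| = s(\mu)$ in polar coordinates $\xipar = s(\mu)\omega'$ turns this piece into exactly the form \eqref{eq:branch-integral}. The holomorphy of the amplitude in $\mu$ near $0$ comes from $\widehat{\zext f}$ being entire and from $i\mu+\alpha \ne 0$. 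Lemma~\ref{lem:branch} then gives $O(r^{-N})$.
\item[(iv)] The integrand is bounded by $C_N(1+|\xipar|)^{-N}e^{-\tau x_n}$ by \eqref{eq:PW-halfplane}. With $\tau \ge c$ and $x_n \ge \delta r$ this gives $O(e^{-c\delta r})$.
\item[(v)] Near the pole I would not estimate the principal-value expression directly. Instead I would use $v = \widetilde{u}_{\mathrm{surf}} + \Krem{E+i0}f$. The first term is $O(e^{-\alpha\delta r})$, as above. In the second the numerator of \eqref{eq:K-explicit} vanishes at the pole, so its kernel is smooth across $|\xipar| = \ksurf$ and the bound $e^{-\tau x_n}$ with $\tau$ near $\alpha$ again gives exponential smallness.
\end{itemize}

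\emph{Matching the constants.} After the asymptotics are in hand, one checks that the sum of the $u_{\mathrm{D}}$ coefficient and the stationary-phase coefficient is $c_n\Fbulk^{(+)}(E)f$. The coefficient of $\widehat{\zext f}(\kbulk\refext\theta)$ should combine as $-1 + 2i\kbulk\theta_n/(i\kbulk\theta_n+\alpha) = \refl{\kbulk\theta_n}$. Hence the amplitude is proportional to $\widehat{\zext f}(\kbulk\theta) + \refl{\kbulk\theta_n}\widehat{\zext f}(\kbulk\refext\theta) = \overline{\cph{\cdot}}^{\,-1}(\Ftbulk f)(\kbulk\theta)$, by \eqref{eq:reflection-formula} and $\refl{} = \cph{}^{\,2}$. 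Multiplying by the normalisation of \eqref{eq:Fbulk-def} yields $\cph{}\,\Fbulk = \Fbulk^{(+)}$ and the constant $c_n$ of \eqref{eq:ubulk}.

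I expect the main obstacle to be the bookkeeping in (iii)--(v): gluing the pieces with cut-offs that are uniform on $\{\theta_n \ge \delta\}$, and ensuring that the pole treatment in (v) is compatible with the partition. A secondary risk is tracking the factors $2\pi$ and phases $e^{-i(n-1)\pi/4}$ so that the final constant is exactly $c_n$.
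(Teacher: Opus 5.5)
Your proposal is correct and follows the paper's proof essentially step by step. The shared steps are the Hankel asymptotics for $u_{\mathrm{D}}$, then the same partition of the reflected integral into a stationary piece (Lemma~\ref{lem:statphase}), a non-stationary annulus, the branch circle (Lemma~\ref{lem:branch}) and the Paley--Wiener-decaying evanescent tail, and finally the same reassembly $A_{\mathrm{D}}+A_v = c_n\Fbulk^{(+)}(E)f$ via \eqref{eq:reflection-formula}.

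Handling the pole through $v = \widetilde u_{\mathrm{surf}} + \Krem{E+i0}f$, rather than the principal value/$\delta$ split of \eqref{eq:v-pole}, is only a cosmetic variant. Your caveat about the axis $\rho \to 0$ in the bound for $\Ws{+}{C_f}$ is a fair point that the paper passes over. However, only the restriction to $\rho \ge 1$ resolves it, because $\rho^{-(n-2)/2}$ is not polynomially bounded in $r$ near the axis.
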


\begin{proof}
We use $u = u_{\mathrm{D}} + v$ and treat the two summands separately. The
second statement is immediate from \eqref{eq:usurf}, since
$\psizero(x_n) \le \sqrt{2\alpha}\,e^{-\alpha\delta r}$ on the cone.

\emph{The Dirichlet part.} By Lemma~\ref{lem:hankel} with $m = n$,
$k = \kbulk$ and $g = f_{\mathrm{odd}}$,
\begin{equation}
  \label{eq:AD}
  u_{\mathrm{D}}(x)
  = A_{\mathrm{D}}(\theta)\,r^{-\frac{n-1}{2}}e^{i\kbulk r}
    \bigl(1 + O(r^{-1})\bigr),
  \qquad
  A_{\mathrm{D}}(\theta)
  = \sqrt{\frac{\pi}{2}}\,\kbulk^{\frac{n-3}{2}}
    e^{-i\frac{(n-3)\pi}{4}}\,
    \widehat{f_{\mathrm{odd}}}(\kbulk\theta),
\end{equation}
uniformly in all directions $\theta \in \Sm$, with
$\widehat{f_{\mathrm{odd}}}(\kbulk\theta)
= \widehat{\zext f}(\kbulk\theta) - \widehat{\zext f}(\kbulk\refext\theta)$.

\emph{A partition of the reflected integral.} The representation
\eqref{eq:v-def} of Proposition~\ref{prop:reflected} is valid on all of $\Rnp$.
Put
\begin{equation*}
  \epsilon = \tfrac14\min\{\kbulk\delta,\ \alpha\},
  \qquad
  A(\xipar)
  = -(2\pi)^{-\frac{n-2}{2}}\,
    \frac{\widehat{\zext f}(\xipar,-\mu)}{i\mu+\alpha},
\end{equation*}
and let $\zeta \in C_c^{\infty}\bigl((-\epsilon^2,\epsilon^2)\bigr)$ equal $1$
on $[-\epsilon^2/4,\epsilon^2/4]$. We split $1 = \chi + \zeta_{\mathrm{br}}
+ \bigl(1 - \chi - \zeta_{\mathrm{br}}\bigr)$, where $\chi$ is the cut-off of
Lemma~\ref{lem:statphase} and
$\zeta_{\mathrm{br}}(\xipar) = \zeta\bigl(E - |\xipar|^2\bigr)$. The two
supports are disjoint: $\zeta_{\mathrm{br}}$ vanishes unless
$|\xipar| > \kbulk\sqrt{1-\delta^2/16}$, whereas $\chi$ is supported in
$\{|\xipar| < \kbulk\sqrt{1-\delta^2/4}\}$. The support of the third cut-off
has two components, a compact annulus
\begin{equation*}
  K_{\delta}
  = \Bigl\{\, \xipar : \tfrac12\epsilon \le \mu(\xipar)
      \le \tfrac{1}{\sqrt2}\kbulk\delta \,\Bigr\}
\end{equation*}
inside the propagating disc, and the evanescent tail
$\{\,\tau \ge \tfrac12\epsilon\,\}$, which contains the surface pole
$\tau = \alpha$ because $\alpha \ge 4\epsilon$.

\emph{The stationary point.} The amplitude $A$ is smooth on the support of
$\chi$, where $\mu \ge \kbulk\delta/2 > 0$, so Lemma~\ref{lem:statphase} gives
\begin{equation}
  \label{eq:Av}
  \int \chi A\,e^{i\Phi}\,d\xipar
  = A_{v}(\theta)\,r^{-\frac{n-1}{2}}e^{i\kbulk r}
    \bigl(1 + O(r^{-1})\bigr),
\end{equation}
where, since $\bigl(\xipar_*,-\mu(\xipar_*)\bigr) = \kbulk\refext\theta$,
\begin{equation*}
  A_{v}(\theta)
  = -\sqrt{2\pi}\;\kbulk^{\frac{n-1}{2}}\,\theta_n\;
    \frac{e^{-i\frac{(n-1)\pi}{4}}}{i\kbulk\theta_n + \alpha}\;
    \widehat{\zext f}(\kbulk\refext\theta) .
\end{equation*}

\emph{The branch circle.} On the support of $\zeta_{\mathrm{br}}$ we pass to
polar coordinates $\xipar = s\omega'$ and then to the normal wavenumber, using
$|\xipar|^2 = E - \mu^2$ and $s\,ds = -\mu\,d\mu$, so that
$s^{n-2}ds = -(E-\mu^2)^{\frac{n-3}{2}}\mu\,d\mu$. As $s$ increases through the
annulus, $\mu$ runs along the path $\Gamma_{\epsilon}$ of
Lemma~\ref{lem:branch}, traversed in the direction fixed there. The minus sign
just displayed is the one that reverses the first arm, and it is absorbed into
the amplitude. Thus
\begin{equation*}
  \int \zeta_{\mathrm{br}}A\,e^{i\Phi}\,d\xipar
\end{equation*}
takes exactly the form \eqref{eq:branch-integral} with phase
\eqref{eq:branch-phase} and amplitude
\begin{equation*}
  b(\mu,\omega')
  = (2\pi)^{-\frac{n-2}{2}}\,
    \frac{\widehat{\zext f}\bigl(s(\mu)\omega',-\mu\bigr)}{i\mu+\alpha}\,
    \bigl(E-\mu^2\bigr)^{\frac{n-3}{2}}\,\mu .
\end{equation*}
This $b$ is holomorphic on $\{|\mu| < 2\epsilon\}$: the transform
$\widehat{\zext f}$ is entire by the Paley--Wiener theorem, $E - \mu^2$ does
not vanish there since $2\epsilon < \kbulk$, and $i\mu + \alpha$ does not
vanish there since $2\epsilon \le \alpha/2$. Its derivatives are bounded
uniformly in $\omega'$. By Lemma~\ref{lem:branch} this contribution is
$O_{\delta}(r^{-N})$ for every $N$.

\emph{The remaining propagating region.} On $K_{\delta}$ we have
$\mu \ge \epsilon/2 > 0$, so $A$ is smooth there with all derivatives bounded
in terms of $\delta$. Writing $\nabla\Phi = r\theta_n\bigl(T(\xipar_*)
- T(\xipar)\bigr)$ with $T(\xipar) = \xipar/\mu(\xipar)$, and using
$|\xipar| = \sqrt{E-\mu^2} \ge \kbulk\sqrt{1-\delta^2/2}$, hence
$|T(\xipar)| \ge \sqrt{2-\delta^2}/\delta$, on $K_{\delta}$ against
$|T(\xipar_*)| = \sqrt{1-\theta_n^2}/\theta_n \le \sqrt{1-\delta^2}/\delta$,
we obtain
\begin{equation*}
  |\nabla\Phi|
  \ge r\theta_n\,\frac{\sqrt{2-\delta^2} - \sqrt{1-\delta^2}}{\delta}
  \ge (\sqrt2 - 1)\,r .
\end{equation*}
The phase therefore has no critical point on $K_{\delta}$, and repeated
integration by parts gives $O_{\delta}(r^{-N})$ for every $N$.

\emph{The evanescent tail.} There $|e^{i\mu x_n}| = e^{-\tau x_n}$ with
$\tau \ge \epsilon/2$ and $x_n \ge \delta r$, while
$|\widehat{\zext f}(\xipar,-i\tau)| \le C_N(1+|\xipar|)^{-N}$ by
\eqref{eq:PW-halfplane}. Both terms of \eqref{eq:v-pole} are therefore bounded
by $C e^{-\epsilon\delta r/2}$. For the principal value
this uses that the amplitude is smooth near $\tau = \alpha$, and for the
singular term it is the transverse decay of $\psizero$ in
\eqref{eq:usurf-exact}.

\emph{Reassembly.} Adding \eqref{eq:AD} and \eqref{eq:Av} and factoring out the
constant of $A_{\mathrm{D}}$, one finds, with $\xi_* = \kbulk\theta_n$,
\begin{equation*}
\begin{aligned}
  \frac{A_{v}(\theta)}
       {\sqrt{\pi/2}\,\kbulk^{\frac{n-3}{2}}e^{-i\frac{(n-3)\pi}{4}}}
  &= -2\xi_*\,
     \frac{e^{-i\frac{(n-1)\pi}{4}}}
          {e^{-i\frac{(n-3)\pi}{4}}\,(i\xi_*+\alpha)}\;
     \widehat{\zext f}(\kbulk\refext\theta)
  \\[1mm]
  &= \frac{2i\xi_*}{i\xi_*+\alpha}\;
     \widehat{\zext f}(\kbulk\refext\theta)
   = \bigl(1 + \refl{\xi_*}\bigr)\,
     \widehat{\zext f}(\kbulk\refext\theta),
\end{aligned}
\end{equation*}
because $e^{-i\frac{(n-1)\pi}{4}+i\frac{(n-3)\pi}{4}} = -i$ and
$\sqrt{2\pi}/\sqrt{\pi/2} = 2$. Hence
\begin{equation}
  \label{eq:reassembly}
\begin{aligned}
  A_{\mathrm{D}} + A_{v}
  &= \sqrt{\frac{\pi}{2}}\;\kbulk^{\frac{n-3}{2}}\,
     e^{-i\frac{(n-3)\pi}{4}}
     \Bigl[
       \widehat{\zext f}(\kbulk\theta)
       + \refl{\xi_*}\,\widehat{\zext f}(\kbulk\refext\theta)
     \Bigr]
  \\[1mm]
  &= \sqrt{\frac{\pi}{2}}\;\kbulk^{\frac{n-3}{2}}\,
     e^{-i\frac{(n-3)\pi}{4}}\;
     \cph{\xi_*}\,(\Ftbulk f)(\kbulk\theta),
\end{aligned}
\end{equation}
where the last step is the reflection formula \eqref{eq:reflection-formula}
together with $\overline{\cph{\xi_*}}\,\refl{\xi_*} = \cph{\xi_*}$ and
$\overline{\cph{\xi_*}}^{\,-1} = \cph{\xi_*}$. Finally, by
\eqref{eq:Fbulk-def},
\begin{equation*}
  (\Ftbulk f)(\kbulk\theta)
  = \sqrt2\;\kbulk^{-\frac{n-2}{2}}\,(\Fbulk(E)f)(\theta),
  \qquad
  \sqrt{\frac{\pi}{2}}\;\kbulk^{\frac{n-3}{2}}\,
  \sqrt2\;\kbulk^{-\frac{n-2}{2}}
  = \sqrt{\frac{\pi}{\kbulk}},
\end{equation*}
so that, by \eqref{eq:Fout-def},
$A_{\mathrm{D}} + A_{v} = c_n\,\cph{\kbulk\theta_n}(\Fbulk(E)f)(\theta)
= c_n\bigl(\Fbulk^{(+)}(E)f\bigr)(\theta)$. Collecting the five contributions,
the error is $O_{\delta}\bigl(r^{-\frac{n+1}{2}}\bigr)$, which is more than
\eqref{eq:cone-expansion} asserts.
\end{proof}

\begin{proposition}[Surface expansion]
\label{prop:asymp-surface}
Let $E > 0$ and $f \in C_c^{\infty}(\Rnp)$. Then the surface summand
\eqref{eq:usurf-exact} satisfies
\begin{equation}
  \label{eq:surface-expansion}
  \widetilde{u}_{\mathrm{surf}}(x) = \Ws{+}{C_f}(x)
    + O\bigl(\rho^{-\frac{n}{2}}\bigr)\,e^{-\alpha x_n},
  \qquad \rho \to \infty,
\end{equation}
uniformly in $\omega \in \Ssurf$ and in $x_n \ge 0$, and likewise for the first
derivatives.
\end{proposition}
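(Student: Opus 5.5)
The plan is to reduce the claim at once to the Hankel asymptotics of Lemma~\ref{lem:hankel} in dimension $m = n-1$. By \eqref{eq:usurf-exact},
\[
  \widetilde{u}_{\mathrm{surf}}(x) = w(\xpar)\,\psizero(x_n),
  \qquad
  w = \Rfree{n-1}{E+\alpha^2+i0}\,T_{\psizero}f ,
\]
so the $x_n$-dependence is exactly the factor $\psizero(x_n) = \sqrt{2\alpha}\,e^{-\alpha x_n}$. Uniformity in $x_n \ge 0$ is therefore automatic once an expansion of $w(\xpar)$ is established uniformly in $\omega \in \Ssurf$. It then suffices to show
\[
  w(\xpar) = C_f(\omega)\,\rho^{-\frac{n-2}{2}}e^{i\ksurf\rho}
    + O\bigl(\rho^{-\frac{n}{2}}\bigr),
  \qquad \rho \to \infty ,
\]
together with the same statement for $\nabla' w$. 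The normal derivative is handled trivially, since $\partial_{x_n}\psizero = -\alpha\psizero$.

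Next I check the hypotheses of Lemma~\ref{lem:hankel}. The input $g = T_{\psizero}f$ must lie in $C_c^\infty(\Rnm)$. Since $f \in C_c^\infty(\Rnp)$, the function $g(\xpar) = \int_0^\infty \psizero(x_n) f(\xpar,x_n)\,dx_n$ is smooth and supported in the projection of $\operatorname{supp} f$, so this holds. Because $n \ge 3$, we have $m = n-1 \ge 2$ and the lemma applies with $k = \ksurf$. It gives
\[
  w(\xpar) = \sqrt{\tfrac{\pi}{2}}\,\ksurf^{\frac{n-4}{2}}\,e^{-i\frac{(n-4)\pi}{4}}\,
  \widehat{g}(\ksurf\omega)\,\rho^{-\frac{n-2}{2}}e^{i\ksurf\rho}\bigl(1+O(\rho^{-1})\bigr),
\]
uniformly in $\omega$, and likewise for first derivatives.

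The remaining step is to identify the constant, and this is the only place that needs care. By \eqref{eq:surface-formula} and \eqref{eq:Fsurf-def},
\[
  \widehat{g}(\ksurf\omega) = (\Ftsurf f)(\ksurf\omega)
  = \sqrt2\,(E+\alpha^2)^{-\frac{n-3}{4}}\,(\Fsurf(E)f)(\omega)
  = \sqrt2\,\ksurf^{-\frac{n-3}{2}}\,(\Fsurf(E)f)(\omega).
\]
Hence the prefactor becomes
\[
  \sqrt{\tfrac{\pi}{2}}\,\sqrt2\,\ksurf^{\frac{n-4}{2}-\frac{n-3}{2}}
  = \sqrt{\pi/\ksurf},
\]
and the phase is $e^{-i\frac{(n-4)\pi}{4}}$. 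Together these give exactly $c_n'$ of \eqref{eq:usurf}, so the leading term is $C_f(\omega)\,\rho^{-\frac{n-2}{2}}e^{i\ksurf\rho}$. Multiplying by $\psizero(x_n)$ yields $\Ws{+}{C_f}$. The error term $O(\rho^{-\frac{n-2}{2}-1}) = O(\rho^{-n/2})$ is uniform in $\omega$ and carries the factor $\psizero(x_n) \lesssim e^{-\alpha x_n}$, which is \eqref{eq:surface-expansion}.

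I expect no real obstacle here. The main risk is bookkeeping: the Fourier normalisations in \eqref{eq:surface-formula} and the Jacobian factor in \eqref{eq:Fsurf-def} must combine to give $c_n'$ exactly. For the derivative statement I also need the derivative clause of Lemma~\ref{lem:hankel}, applied to $\nabla' w$.
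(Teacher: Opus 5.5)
Your proposal is correct and is essentially the paper's proof. The transverse factor $\psizero(x_n)$ in \eqref{eq:usurf-exact} is exact, Lemma~\ref{lem:hankel} is applied with $m = n-1$, $k = \ksurf$ and $g = T_{\psizero}f \in C_c^\infty(\Rnm)$, and the constant is identified as $c_n'$ through \eqref{eq:surface-formula} and \eqref{eq:Fsurf-def}, exactly as you do. You also spell out several details the paper leaves implicit: why $T_{\psizero}f$ is smooth and compactly supported, the need for $n-1 \ge 2$, and the treatment of the normal derivative via $\partial_{x_n}\psizero = -\alpha\psizero$.
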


\begin{proof}
The transverse factor $\psizero$ in \eqref{eq:usurf-exact} is exact, so only the
tangential factor has to be expanded. Apply Lemma~\ref{lem:hankel} with
$m = n-1$, $k = \ksurf$ and $g = T_{\psizero}f \in C_c^{\infty}(\Rnm)$:
\begin{equation*}
\begin{aligned}
  \bigl(\Rfree{n-1}{E+\alpha^2+i0}T_{\psizero}f\bigr)(\xpar)
  = \sqrt{\frac{\pi}{2}}\;\ksurf^{\frac{n-4}{2}}\,
    &e^{-i\frac{(n-4)\pi}{4}}\;
     \widehat{T_{\psizero}f}(\ksurf\omega)
  \\
    &\times \rho^{-\frac{n-2}{2}}\,e^{i\ksurf\rho}
     \bigl(1+O(\rho^{-1})\bigr).
\end{aligned}
\end{equation*}
By \eqref{eq:surface-formula} and \eqref{eq:Fsurf-def},
\begin{equation*}
  \widehat{T_{\psizero}f}(\ksurf\omega)
  = (\Ftsurf f)(\ksurf\omega)
  = \sqrt2\,\ksurf^{-\frac{n-3}{2}}(\Fsurf(E)f)(\omega),
\end{equation*}
and
$\sqrt{\pi/2}\,\ksurf^{\frac{n-4}{2}}\sqrt2\,\ksurf^{-\frac{n-3}{2}}
= \sqrt{\pi/\ksurf}$. Multiplying by
$\psizero(x_n) = \sqrt{2\alpha}e^{-\alpha x_n}$ gives
\eqref{eq:surface-expansion}.
\end{proof}

\begin{remark}[The case $n = 3$, and the Green function of Dur\'an, Muga and
N\'ed\'elec]
\label{rem:dmn-comparison}
For $n = 3$ the two exponents in \eqref{eq:cone-expansion} and
\eqref{eq:surface-expansion} are $r^{-1}$ and $\rho^{-1/2}$, and the constants
of \eqref{eq:ubulk}--\eqref{eq:usurf} reduce to $c_3 = \sqrt{\pi/\kbulk}$ and
$c_3' = \sqrt{\pi/\ksurf}\,e^{i\pi/4}$. The bulk amplitude
\eqref{eq:reassembly} becomes
\begin{equation*}
  \widehat{\zext f}(\kbulk\theta)
  + \refl{\kbulk\theta_3}\,\widehat{\zext f}(\kbulk\refext\theta),
\end{equation*}
which for a point source $f = \delta_y$ is the bracket
$e^{-i\xi_3 y_3} + \refl{\xi_3}e^{i\xi_3 y_3}$. The bracket of
\cite[(20)]{DuranMugaNedelec2009} is $-e^{i\xi_3 y_3}$ times this one: their
reflection coefficient is the negative of $\refl{\cdot}$, and their radial
variable is $|x-y|$ where ours is $|x|$. Their surface wave arises as the
residue at the pole $\xi_p = \sqrt{z^2+k^2}$, in which $z$ is their impedance
and $k$ their wavenumber, so that $\xi_p$ is our $\ksurf$ and the pole is the sphere
$|\xipar| = \ksurf$. That residue is \eqref{eq:polar-part}, and
Proposition~\ref{prop:three-terms} identifies it with the surface summand of
the exact splitting \eqref{eq:splitting-preview}. In the present formulation it is
therefore present from the outset rather than produced by a deformation of the
integration path, and the reflected integral \eqref{eq:v-def} is what remains
after it has been removed.
\end{remark}

\subsubsection{The grazing region and the uniform expansion}
\label{sec:asymp-grazing}

Proposition~\ref{prop:asymp-cone} degenerates as $\delta \downarrow 0$. In the
language of \S\ref{sec:asymp-lemmas} the reason is visible: the stationary
point $\mu(\xipar_*) = \kbulk\theta_n$ approaches the corner $\mu = 0$ of the
path \eqref{eq:branch-contour}, so that Lemmas~\ref{lem:statphase} and
\ref{lem:branch} cease to apply simultaneously. This is the classical
difficulty of the Sommerfeld half-space problem, and it produces the
lateral wave, of order $r^{-\frac{n+1}{2}}$ for $n = 3$. It is also what makes
the residual estimates of \cite[\S4.3.1]{DuranMugaNedelec2009} degrade towards
the boundary: away from the grazing directions their remainder is $O(r^{-2})$,
whereas for the remaining directions they obtain only a power that depends on
the height at which their two regions are separated. The following proposition
shows that in the
shell-averaged sense the grazing region is negligible \emph{uniformly in the
aperture}, so that none of this has to be resolved. Its proof uses only the
decomposition of \S\ref{subsec:lap} and no asymptotics at all.

\begin{proposition}[The grazing region is shell-negligible]
\label{prop:asymp-grazing}
Let $E > 0$, $f \in C_c^{\infty}(\Rnp)$ and $\delta \in (0,1)$, and put
$W_{\delta} = \{x \in \Rnp : x_n < \delta|x|\}$. Then
\begin{equation*}
  \limsup_{R \to \infty}
  \frac{1}{R}\int_{\{|x|<R\}\cap W_{\delta}}
    \bigl| w \bigr|^2\,dx
  \le \eta(\delta)
  \quad\text{for}\quad
  w \in \bigl\{ u_{\mathrm{D}},\ \Wb{+}{A_f},\ \Krem{E+i0}f \bigr\},
\end{equation*}
where $\eta(\delta) \to 0$ as $\delta \downarrow 0$. Explicitly one may take
\begin{equation}
  \label{eq:grazing-eta}
  \eta(\delta)
  = \int_{\{\theta \in \Sm\,:\,\theta_n < \delta\}}
      \bigl( |A_{\mathrm{D}}(\theta)|^2
        + |c_n|^2\,|(\Fbulk(E)f)(\theta)|^2 \bigr)\,dS(\theta)
    + \frac{2\delta}{\alpha^2}\,\Bnorm{f}^2 ,
\end{equation}
whose three terms bound the three functions $w$ in the order listed.
\end{proposition}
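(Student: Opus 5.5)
The three functions are treated separately, one term of \eqref{eq:grazing-eta} for each. In each case the shell integral over $W_\delta$ is split into angular and radial parts, or else bounded through the fiber representation.

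\emph{The two bulk waves.} For $w = \Wb{+}{A_f}$ the computation is exact in polar coordinates. With $|w|^2 = |A_f(\theta)|^2 r^{-(n-1)}$ and $dx = r^{n-1}dr\,dS(\theta)$ we get
\[
  \frac1R\int_{\{|x|<R\}\cap W_\delta}|w|^2\,dx
  = \int_{\{\theta_n<\delta\}}|A_f|^2\,dS ,
\]
and $|A_f|^2 = |c_n|^2|(\Fbulk(E)f)(\theta)|^2$ because $|\cph{\cdot}|=1$.

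For $w = u_{\mathrm{D}}$ I use Lemma~\ref{lem:hankel}. Its expansion \eqref{eq:AD} is uniform in \emph{all} directions, including grazing ones, since it is a free-space statement for $f_{\mathrm{odd}}\in C_c^\infty(\Rn)$. Hence $|u_{\mathrm{D}}|^2 = |A_{\mathrm{D}}(\theta)|^2 r^{-(n-1)}(1+O(r^{-1}))$ for $r\ge r_0$. The bounded region $r<r_0$ contributes $O(1/R)$, because $u_{\mathrm{D}}\in L^2_{\mathrm{loc}}$. Taking $\limsup$ gives the first term of $\eta$. Both angular integrals tend to $0$ as $\delta\downarrow0$ by dominated convergence, since $A_{\mathrm{D}}$ and $\Fbulk(E)f$ are bounded (indeed continuous) on $\overline{\Sm}$, and the set $\{\theta_n<\delta\}$ shrinks to the null equator.

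\emph{The remainder.} This is the only step that genuinely needs the structure of \S\ref{subsec:lap}, and it is where I expect the work to lie. The pointwise fiber bound \eqref{eq:K-pointwise} holds at $z=E+i0$ by Proposition~\ref{prop:K-bounded}:
\[
  |\widehat{\Krem{E+i0}f}(\xipar,x_n)| \le \alpha^{-1}\Theta_f(\xipar)
\]
uniformly in $x_n$. Note that $\{|x|<R\}\cap W_\delta \subset \Rnm\times(0,\delta R)$. By Plancherel in $\xpar$ at each fixed $x_n$, and then \eqref{eq:Theta-bound},
\[
  \int_{\{|x|<R\}\cap W_\delta}|\Krem{E+i0}f|^2\,dx
  \le \int_0^{\delta R}\alpha^{-2}\norm{\Theta_f}^2_{\Ltwo{\Rnm}}\,dx_n
  \le \frac{2\delta R}{\alpha^2}\Bnorm{f}^2 .
\]
Dividing by $R$ gives the third term, which tends to $0$ linearly in $\delta$.

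The subtle point is that the tangential extent of the region is not used at all. Only its thinness in $x_n$, relative to $R$, matters, and that is exactly why the $x_n$-uniform kernel bound of Lemma~\ref{lem:K-kernel-bound} is the right tool. No analysis of the lateral wave or of the branch circle enters.
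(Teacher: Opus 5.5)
Your proposal is correct and follows the paper's proof essentially step for step. The two bulk far fields are handled in polar coordinates on the cone $W_\delta$, using the direction-uniform Hankel expansion for $u_{\mathrm{D}}$, and $\Krem{E+i0}f$ is handled by the $x_n$-uniform fiber bound \eqref{eq:K-pointwise} together with Plancherel on the slab $\{0<x_n<\delta R\}$. The only difference is cosmetic: you treat $\Wb{+}{A_f}$ exactly and the error term of $u_{\mathrm{D}}$ separately, whereas the paper handles both bulk waves in a single computation.
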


\begin{proof}
Both far fields are of the form $A(\theta)r^{-(n-1)/2}e^{i\kbulk r}$, that of
$u_{\mathrm{D}}$ by \eqref{eq:AD} and that of $\Wb{+}{A_f}$ by
\eqref{eq:ubulk}, with
$A \in L^2(\Sm)$, uniformly in all directions. Hence, in polar coordinates
$dx = r^{n-1}dr\,dS(\theta)$,
\begin{equation*}
  \frac{1}{R}\int_{\{|x|<R\}\cap W_{\delta}}|A(\theta)|^2r^{-(n-1)}\,dx
  = \frac{1}{R}\int_1^{R}\!\!\int_{\{\theta_n<\delta\}}|A(\theta)|^2
      \,dS(\theta)\,dr
  \longrightarrow
  \int_{\{\theta_n<\delta\}}|A(\theta)|^2\,dS(\theta),
\end{equation*}
up to the $O(R^{-1})$ produced by the relative errors, and the right-hand side
tends to $0$ as $\delta \downarrow 0$ by dominated convergence. For
$\Krem{E+i0}f$ we use the pointwise fiber bound \eqref{eq:K-pointwise} of
Proposition~\ref{prop:K-bounded}, namely
$|\mathcal{F}'\Krem{E+i0}f(\xipar,x_n)| \le \alpha^{-1}\Theta_f(\xipar)$ with
$\Theta_f$ independent of $x_n$ and $\norm{\Theta_f}_{\Ltwo{\Rnm}}
\le \sqrt2\Bnorm{f}$ by \eqref{eq:Theta-bound}. Since
$W_{\delta}\cap\{|x|<R\} \subset \{0 < x_n < \delta R\}$, Plancherel's
theorem in $\xpar$ gives
\begin{equation*}
\begin{aligned}
  \int_{\{|x|<R\}\cap W_{\delta}}|\Krem{E+i0}f|^2\,dx
  &\le \int_0^{\delta R}
      \bigl\| \mathcal{F}'\Krem{E+i0}f(\cdot,x_n) \bigr\|^2_{\Ltwo{\Rnm}}
      \,dx_n
  \\[1mm]
  &\le \frac{\delta R}{\alpha^2}\,\norm{\Theta_f}^2_{\Ltwo{\Rnm}}
   \le \frac{2\delta R}{\alpha^2}\,\Bnorm{f}^2 .
\end{aligned}
\end{equation*}
Dividing by $R$ gives the last term of \eqref{eq:grazing-eta}, which is
independent of $R$ and tends to $0$ with $\delta$.
\end{proof}

\begin{theorem}[Uniform asymptotic expansion of the resolvent]
\label{thm:asymp-uniform}
Let $E > 0$ and let $f \in \Bsp$. Then
$u = \Ralpha{E+i0}f$ satisfies
\begin{equation}
  \label{eq:asymp-uniform}
  u \simeq \Wb{+}{A_f} + \Ws{+}{C_f},
\end{equation}
with the amplitudes $A_f$ and $C_f$ of \eqref{eq:ubulk}--\eqref{eq:usurf}. That
is, $u - \Wb{+}{A_f} - \Ws{+}{C_f} \in \Bstz$.
\end{theorem}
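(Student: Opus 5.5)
The plan is to prove \eqref{eq:asymp-uniform} first for $f \in C_c^{\infty}(\Rnp)$ and then extend it to $f \in \Bsp$ by density. The density step uses three facts: the $\Bsp \to \Bst$ bound of Theorem~\ref{thm:lap}(a), the boundedness of $\Fzero(E)$ on $\Bsp$ (Proposition~\ref{prop:F-bounded}), and the $\Bst$-continuity of the two model-wave maps $A \mapsto \Wb{+}{A}$ and $C \mapsto \Ws{+}{C}$. The last follows from the shell computations of \S\ref{subsec:intro-two-channel}: the bulk wave has $\Bst$-norm at most $C\norm{A}_{\Ltwo{\Sm}}$ in polar coordinates, and the surface wave has norm at most $C\norm{C}_{\Ltwo{\Ssurf}}$ by \eqref{eq:mixed-Bstar} with $\norm{\psizero}=1$. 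Since $\Bstz$ is closed in $\Bst$, an $\eps/3$ argument then passes the relation $\simeq$ to the limit. One caveat: $C_c^\infty(\Rnp)$ should be dense in $\Bsp$ in norm, which holds because $\Bsp$ has an $\ell^1$-type norm, so truncation to finitely many shells followed by mollification away from $\Gamma$ converges.

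For smooth compactly supported $f$, fix $\delta\in(0,1)$ and split $\Rnp$ into the cone $\{x_n\ge\delta|x|\}$ and the grazing set $W_\delta$. Write
\[
u-\Wb{+}{A_f}-\Ws{+}{C_f}=\bigl(u_{\mathrm D}+\Krem{E+i0}f-\Wb{+}{A_f}\bigr)+\bigl(\widetilde{u}_{\mathrm{surf}}-\Ws{+}{C_f}\bigr),
\]
using Proposition~\ref{prop:three-terms}.

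The second bracket is handled by Proposition~\ref{prop:asymp-surface} everywhere. It is $O(\rho^{-n/2})e^{-\alpha x_n}$, so its square integrated over $\{|x|<R\}$ is $O(\log R)$ for the region $\rho \ge 1$ after the $x_n$-integral (and bounded on the compact remainder), hence $o(R)$.

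On the cone, Proposition~\ref{prop:asymp-cone} gives $u-\Wb{+}{A_f}=O(r^{-n/2})$ and $\Ws{+}{C_f}=O(e^{-\alpha\delta r})$. Since $u=u_{\mathrm D}+\widetilde u_{\mathrm{surf}}+\Krem{E+i0}f$, the first bracket is $O(r^{-n/2})$ there too, with the surface piece absorbed by the exponential decay. Its shell average is then $R^{-1}\int_1^R O(r^{-1})\,dr\to0$.

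On $W_\delta$, bound the first bracket by the sum of $|u_{\mathrm D}|$, $|\Wb{+}{A_f}|$ and $|\Krem{E+i0}f|$. Proposition~\ref{prop:asymp-grazing} gives $\limsup_R R^{-1}\int_{\{|x|<R\}\cap W_\delta}|\cdot|^2\le 3\eta(\delta)$ for this sum. Combining the two regions yields $\limsup_R R^{-1}\int_{\{|x|<R\}}|u-\Wb{+}{A_f}-\Ws{+}{C_f}|^2\le 3\eta(\delta)$ for every $\delta$, and letting $\delta\downarrow0$ gives $0$.

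The main obstacle is bookkeeping rather than asymptotics. The cone estimate must hold for $u$ itself, not merely for $u_{\mathrm D}+v$ with $v$ containing the surface term, and the grazing bound must control the first bracket without $\widetilde u_{\mathrm{surf}}$. The decomposition above separates these correctly, because the grazing proposition deliberately omits the surface summand, which is treated globally. The other point to check is that $\eta(\delta)\to0$ requires $A_{\mathrm D},\Fbulk(E)f\in L^2(\Sm)$, and this holds for $f \in C_c^\infty$.
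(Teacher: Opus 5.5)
Your proposal is correct and follows essentially the same route as the paper. Both prove the relation for $f \in C_c^\infty(\Rnp)$ through the cone/wedge split using Propositions~\ref{prop:three-terms}, \ref{prop:asymp-cone}, \ref{prop:asymp-surface} and \ref{prop:asymp-grazing}, let $\delta \downarrow 0$, and extend to $\Bsp$ by density and closedness of $\Bstz$; the only difference is bookkeeping, in that you subtract $\Ws{+}{C_f}$ from $\widetilde u_{\mathrm{surf}}$ on all of $\Rnp$ at once, whereas the paper bounds the whole difference directly on the cone and performs that subtraction only inside $W_\delta$.
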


\begin{proof}
Assume first $f \in C_c^{\infty}(\Rnp)$ and put
$w = u - \Wb{+}{A_f} - \Ws{+}{C_f}$. Fix $\delta \in (0,\tfrac12]$ and split,
for
$R > 1$,
\begin{equation}
  \label{eq:asymp-split}
  \frac{1}{R}\int_{\{|x|<R\}\cap\Rnp}|w|^2\,dx
  = \frac{1}{R}\int_{\{|x|<R\}\setminus W_{\delta}}|w|^2\,dx
  + \frac{1}{R}\int_{\{|x|<R\}\cap W_{\delta}}|w|^2\,dx,
\end{equation}
with $W_{\delta} = \{x_n < \delta|x|\}$ as above. The constants of
Proposition~\ref{prop:asymp-cone} blow up as $\delta \downarrow 0$, so we
estimate the two terms at $\delta$ fixed, let $R \to \infty$, and only then
let $\delta \downarrow 0$.

\emph{Outside the wedge.} Here $x_n \ge \delta r$, so
Proposition~\ref{prop:asymp-cone} applies and gives
$|w| \le |u - \Wb{+}{A_f}| + |\Ws{+}{C_f}|
= O_\delta\bigl(r^{-\frac n2}\bigr) + O\bigl(e^{-\alpha\delta r}\bigr)$.
Hence
\begin{equation*}
  \frac{1}{R}\int_{\{|x|<R\}\setminus W_{\delta}}|w|^2\,dx
  \le \frac{C_\delta}{R}\int_1^{R}
      \bigl( r^{-n} + e^{-2\alpha\delta r}\bigr)r^{n-1}\,dr
  \le \frac{C_\delta'}{R}\bigl( \log R + 1 \bigr)
  \longrightarrow 0 .
\end{equation*}

\emph{Inside the wedge.} By Proposition~\ref{prop:three-terms},
\begin{equation*}
  w = \bigl(\widetilde{u}_{\mathrm{surf}} - \Ws{+}{C_f}\bigr)
      + u_{\mathrm{D}} + \Krem{E+i0}f - \Wb{+}{A_f} .
\end{equation*}
The first bracket is controlled by Proposition~\ref{prop:asymp-surface}: since
$\rho \ge r\sqrt{1-\delta^2} \ge r/2$ on $W_{\delta}$,
\begin{equation*}
\begin{aligned}
  \frac{1}{R}\int_{\{|x|<R\}\cap W_{\delta}}
    \bigl| \widetilde{u}_{\mathrm{surf}} - \Ws{+}{C_f} \bigr|^2 dx
  &\le \frac{C}{R}\int_{\Ssurf}\!\!\int_1^{R}\rho^{-n}\rho^{n-2}
      \Bigl(\int_0^{\infty}e^{-2\alpha x_n}dx_n\Bigr)d\rho\,dS(\omega)
  \\
  &= O(R^{-1}),
\end{aligned}
\end{equation*}
the normal integral being finite \emph{independently of the height of the
wedge}, which is what makes the estimate insensitive to $\delta$. The three
remaining terms are exactly those of Proposition~\ref{prop:asymp-grazing}. By
the triangle inequality in $\Ltwo{\{|x|<R\}\cap W_{\delta}}$ applied to these
four terms, and letting $R \to \infty$, the first bracket contributes nothing
and each of the other three at most $\sqrt{\eta(\delta)}$. The second term of
\eqref{eq:asymp-split} is therefore bounded in the limit by
$\bigl(3\sqrt{\eta(\delta)}\bigr)^{2}$. With the first term this gives
\begin{equation*}
  \limsup_{R\to\infty}
  \frac{1}{R}\int_{\{|x|<R\}\cap\Rnp}|w|^2\,dx
  \le 9\,\eta(\delta)
  \qquad\text{for every } \delta \in (0,\tfrac12],
\end{equation*}
and letting $\delta \downarrow 0$ gives \eqref{eq:simeq}.

\emph{Extension to $f \in \Bsp$.} The maps
$f \mapsto u$ and $f \mapsto \Wb{+}{A_f},\Ws{+}{C_f}$ are continuous from
$\Bsp$ to
$\Bst$: the first by Theorem~\ref{thm:lap}, the other two by
Proposition~\ref{prop:F-bounded} together with the elementary estimate
$\Bstnorm{A(\theta)r^{-(n-1)/2}e^{i\kbulk r}}
\le C\norm{A}_{\Ltwo{\Sm}}$ and its surface counterpart. Now
$C_c^{\infty}(\Rnp)$ is dense in the compactly supported elements of $\Bsp$,
and these are dense in $\Bsp$, since
$\Bnorm{f - \mathbf{1}_{\{|x|<2^{j}\}}f}
= \sum_{i>j}2^{i/2}\norm{f}_{\Ltwo{\shellp{i}}} \to 0$ by
\eqref{eq:B-norms}. Since $\Bstz$ is closed in $\Bst$, the relation
\eqref{eq:asymp-uniform} passes to the limit.
\end{proof}

\begin{remark}[What the proof does, and does not, require]
\label{rem:no-grazing}
\emph{(a) No grazing analysis.} The pointwise expansion is used only in the
cone $\{x_n \ge \delta r\}$, where the stationary point \eqref{eq:statpoint}
stays away from the corner of \eqref{eq:branch-contour}, and the surface
expansion only through the exact tensor summand \eqref{eq:usurf-exact}, which
needs no stationary phase at all. The merging of the stationary point with the
corner never has to be resolved, and neither does the interaction of the
stationary point with the Sommerfeld pole. Both are absorbed by
Proposition~\ref{prop:asymp-grazing}, whose proof uses only the uniform kernel
bound of \S\ref{subsec:lap}. In particular the lateral wave never appears.
The shell average removes the \emph{wedge}, however, not the corner. The circle $|\xipar| = \kbulk$ still has to be
crossed, and it is crossed in Lemma~\ref{lem:branch}, at every aperture
$\delta > 0$.

\emph{(b) No cross term, and where orthogonality enters.} In the wedge both
model waves are subtracted at once, so no product
$\Wb{+}{A_f}\overline{\Ws{+}{C_f}}$ ever appears. The two channels are mutually
$\Bst$-orthogonal, in the sense that
$\lim_R R^{-1}\int\Wb{+}{A_f}\overline{\Ws{+}{C_f}} = 0$, as a consequence of
the transverse decay of $\psizero$ (see the proof of
Lemma~\ref{lem:shell-flux-B}). The orthogonality is not needed for the
expansion. It is needed for \eqref{eq:shell-flux} below, and in
\S\ref{sec:inverse}.
\end{remark}

\begin{remark}[Shell average and flux]
\label{rem:constants}
Inserting \eqref{eq:asymp-uniform} into the shell average and using the
$\Bst$-orthogonality of the two channels gives
\begin{equation}
  \label{eq:shell-flux}
  \lim_{R\to\infty}\frac{1}{R}\int_{\{|x|<R\}\cap\Rnp}|u|^2\,dx
  = \frac{\pi}{\kbulk}
    \bigl\| \Fbulk(E)f \bigr\|^2_{\Ltwo{\Sm}}
  + \frac{\pi}{\ksurf}
    \bigl\| \Fsurf(E)f \bigr\|^2_{\Ltwo{\Ssurf}},
\end{equation}
the unimodular factor of \eqref{eq:Fout-def} having dropped out. The proof is
given in Lemma~\ref{lem:shell-flux-B}. Comparison with the flux identity
\eqref{eq:flux},
$\im\bigl(\Ralpha{E+i0}f,f\bigr) = \pi\norm{\Fzero(E)f}^2_{\Echan{E}}$, shows
that the two quantities agree channel by channel up to the factor
$\kbulk^{-1}$, respectively $\ksurf^{-1}$: the flux is a current and the shell
average an energy density, and they differ by the propagation speed of the
channel.
\end{remark}

\begin{corollary}[The outgoing transforms as far-field maps]
\label{cor:far-field}
Let $E > 0$ and $f \in C_c^{\infty}(\Rnp)$. The pointwise expansions behind
Theorem~\ref{thm:asymp-uniform} identify $\Fzero^{(+)}(E)$ of
Definition~\ref{def:outgoing} as the far-field map of the outgoing resolvent:
\begin{equation}
  \label{eq:outgoing-transform}
\begin{aligned}
  \bigl(\Fbulk^{(+)}(E)f\bigr)(\theta)
  &= \frac{1}{c_n}\lim_{r\to\infty}
      r^{\frac{n-1}{2}}\,e^{-i\kbulk r}\,
      \bigl(\Ralpha{E+i0}f\bigr)(r\theta),
  \\[1mm]
  \bigl(\Fsurf(E)f\bigr)(\omega)
  &= \frac{1}{c_n'}\lim_{\rho\to\infty}
      \frac{\rho^{\frac{n-2}{2}}\,e^{-i\ksurf\rho}}{\psizero(x_n)}\;
      \widetilde{u}_{\mathrm{surf}}(x),
\end{aligned}
\end{equation}
the first limit in $\Ltwo{\Sm}$ for directions away from grazing, the second in
$\Ltwo{\Ssurf}$ and independent of $x_n$. Replacing $E+i0$ by $E-i0$ gives
$\Fzero^{(-)}(E)$, the far fields then carrying $e^{-i\kbulk r}$ and
$e^{-i\ksurf\rho}$.
\end{corollary}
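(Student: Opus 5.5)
The two identities in \eqref{eq:outgoing-transform} are read off from the pointwise expansions, Proposition~\ref{prop:asymp-cone} for the bulk channel and Proposition~\ref{prop:asymp-surface} for the surface channel. The uniform relation \eqref{eq:asymp-uniform} is not needed.

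\emph{The bulk limit.} Fix $\delta \in (0,1)$ and a direction $\theta$ with $\theta_n \ge \delta$. For $x = r\theta$ one has $x_n = r\theta_n \ge \delta r$, so Proposition~\ref{prop:asymp-cone} gives
\[
  r^{\frac{n-1}{2}}e^{-i\kbulk r}\bigl(\Ralpha{E+i0}f\bigr)(r\theta)
  = A_f(\theta) + O_\delta\bigl(r^{-1/2}\bigr),
\]
uniformly in $\{\theta_n \ge \delta\}$. Since $A_f = c_n\Fbulk^{(+)}(E)f$ by \eqref{eq:ubulk} and $c_n \neq 0$, the first formula holds uniformly, and hence in $L^2$, on each set $\{\theta_n \ge \delta\}$. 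This is the meaning of ``away from grazing''. The proof of Proposition~\ref{prop:asymp-cone} actually gives the better error $r^{-(n+1)/2}$, but $r^{-n/2}$ already suffices.

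\emph{The surface limit.} Divide \eqref{eq:surface-expansion} by $\psizero(x_n) = \sqrt{2\alpha}\,e^{-\alpha x_n}$. The error term $O(\rho^{-n/2})e^{-\alpha x_n}$ becomes $O(\rho^{-n/2})$, uniformly in $x_n \ge 0$ and in $\omega$. Hence
\[
  \frac{\rho^{\frac{n-2}{2}}e^{-i\ksurf\rho}}{\psizero(x_n)}\,
  \widetilde{u}_{\mathrm{surf}}(x) = C_f(\omega) + O(\rho^{-1}).
\]
By \eqref{eq:usurf}, $C_f = c_n'\Fsurf(E)f$, which gives the second formula, uniformly in $\omega$ and independently of $x_n$. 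There is in fact no $x_n$-dependence at all, because the transverse factor in \eqref{eq:usurf-exact} is exactly $\psizero$.

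\emph{The incoming case.} Repeat the argument of \S\ref{sec:asymp-boundary}--\S\ref{sec:asymp-pointwise} with $z = E - i\varepsilon$. Then $\mu$ changes to $-\overline{\mu}$ on the propagating disc, the outgoing kernel \eqref{eq:hankel-kernel} is replaced by its conjugate (which involves $H^{(2)}$), the stationary value becomes $-\kbulk r$, and the pole prescription in \eqref{eq:v-pole} changes sign. A more economical route is the identity $\Ralpha{E-i0}f = \overline{\Ralpha{E+i0}\overline f}$, which holds because $\Halpha$ is real. Conjugating the $E+i0$ expansion then gives far fields carrying $e^{-i\kbulk r}$ and $e^{-i\ksurf\rho}$, with amplitudes $\overline{c_n}\,\overline{\Fbulk^{(+)}(E)\overline f}$ and $\overline{c_n'}\,\overline{\Fsurf(E)\overline f}$.

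It remains to identify these amplitudes, and this is the only step needing care. Since $\gphi_k$ and $\psizero$ are real, $\overline{\Fbulk(E)\overline f} = \Fbulk(E)f$ and $\overline{\Fsurf(E)\overline f} = \Fsurf(E)f$. Moreover $\overline{\cph{\cdot}} = \cph{\cdot}^{-1}$, so $\overline{\Fbulk^{(+)}(E)\overline f} = \Fbulk^{(-)}(E)f$. The amplitudes are therefore $\overline{c_n}\,\Fbulk^{(-)}(E)f$ and $\overline{c_n'}\,\Fsurf^{(-)}(E)f$. Thus the $E-i0$ far-field maps are $\Fzero^{(-)}(E)$, normalized by the conjugate constants $\overline{c_n}$ and $\overline{c_n'}$.
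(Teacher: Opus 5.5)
The outgoing half of your argument is correct and is the paper's own. You apply Proposition~\ref{prop:asymp-cone} on each cone $\{\theta_n \ge \delta\}$, where $A_f = c_n\Fbulk^{(+)}(E)f$ already encodes $A_{\mathrm D}+A_v$. You divide Proposition~\ref{prop:asymp-surface} by $\psizero(x_n)$. Uniformity of the errors then gives the limits in $L^2$. The paper also handles $E-i0$ through the conjugation identity $\Ralpha{E-i0}f = \overline{\Ralpha{E+i0}\overline f}$.

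The step you single out as ``the only step needing care'' is wrong, however. Reality of $\gphi_k$ and $\psizero$ only disposes of the transverse factor. Conjugation also turns the tangential factor $e^{-i\xpar\cdot\xipar}$ of $\overline{\Psib}$ and $\overline{\Psis}$ into $e^{+i\xpar\cdot\xipar}$, which reverses $\xipar$. From \eqref{eq:Fbulk-def} and \eqref{eq:Fsurf-def} one gets
\[
  \overline{\Fbulk(E)\overline f} = \Jbulk\,\Fbulk(E)f,
  \qquad
  \overline{\Fsurf(E)\overline f} = \Jsurf\,\Fsurf(E)f,
\]
with the involutions of Definition~\ref{def:reflection-involutions}. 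The factor $\cph{\kbulk\theta_n}$ depends on $\theta_n$ alone and so commutes with $\Jbulk$. Hence the far-field amplitudes of $\Ralpha{E-i0}f$ are $\overline{c_n}\,\Jbulk\Fbulk^{(-)}(E)f$ and $\overline{c_n'}\,\Jsurf\Fsurf(E)f$. They are not $\overline{c_n}\,\Fbulk^{(-)}(E)f$ and $\overline{c_n'}\,\Fsurf(E)f$, as you claim.

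There is a quick check. The far field of $\Rfree{m}{k^2-i0}g$ in direction $\widehat X$ is proportional to $\widehat g(-k\widehat X)$, not to $\widehat g(k\widehat X)$: an incoming wave seen in direction $\theta$ arrives from the opposite tangential direction. This is also the convention the paper uses elsewhere. The incoming datum of Definition~\ref{def:far-field-data} is $\yin_{\mathrm{bulk}} = \Jbulk\Fbulk^{(-)}(E)f$, and the scattering relation reads $\yout = \Smat(E)\Jchan\yin$.

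So the corollary's phrase ``gives $\Fzero^{(-)}(E)$'' has to be read up to the fixed unitary $\Jchan$ and the conjugate constants. Your proof instead asserts a literal identity that is false. The repair is to replace your last paragraph by the two displayed identities above and state the incoming far-field maps as $\overline{c_n}\,\Jbulk\Fbulk^{(-)}(E)$ and $\overline{c_n'}\,\Jsurf\Fsurf(E)$.
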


\begin{proof}
The first line is Proposition~\ref{prop:asymp-cone} together with
$A_{\mathrm{D}} + A_{v} = c_n\Fbulk^{(+)}(E)f$, and the second is
Proposition~\ref{prop:asymp-surface}. The passage from pointwise limits to
limits in $\Ltwo{\Sm}$ and $\Ltwo{\Ssurf}$ uses the uniformity of the error
terms and dominated convergence. The statement for $E-i0$ follows by complex
conjugation, since $\Ralpha{E-i0}f = \overline{\Ralpha{E+i0}\overline f}$.
\end{proof}

The structural consequence for what follows is that $\Ralpha{E+i0}f$ is, in
the sense of \eqref{eq:simeq}, an \emph{outgoing} function: both of its
channel far fields carry a phase with the sign fixed by the $+i0$
prescription, and the surface channel carries in addition the transverse
profile $\psizero$. This is the property that \S\ref{subsec:radiation}
isolates and converts into a radiation condition, and the pair
$\Fzero^{(\pm)}(E)$, related by \eqref{eq:smatrix-germ}, is the pair between
which the scattering matrix of \S\ref{sec:inverse} interpolates.

%% file: Part1_025_characterization.tex
\label{subsec:characterization}

The aim of this subsection is to characterize the solutions of
$(\Halpha - E)u = 0$ that lie in $\Bst$: we shall prove that every such
solution is of the form $u = \Fzero^{(-)}(E)^{*}\phi$ for a unique
$\phi \in \Echan{E}$. The argument follows the pattern of
Isozaki~\cite[\S8]{Isozaki2001} and rests on two pillars. The first is a
two-sided estimate
\begin{equation}
  \label{eq:two-sided-preview}
  C^{-1}\norm{\phi}_{\Echan{E}}
  \le \Bstnorm{\Fzero^{(\pm)}(E)^{*}\phi}
  \le C\norm{\phi}_{\Echan{E}},
\end{equation}
which makes $\Fzero^{(\pm)}(E)^{*}$ injective with closed range. The second
identifies that range as the annihilator of $\ker\Fzero^{(-)}(E)$, so that the
closed range theorem applies.

Of the two inequalities in \eqref{eq:two-sided-preview} the upper one is
already known: it is the statement
$\Fzero(E)^{*} \in \Lop{\Echan{E}}{\Bst}$ of
Proposition~\ref{prop:F-adjoint}, obtained from Stone's formula
\eqref{eq:stone}. The lower one is of a different nature and is not a
consequence of Stone's formula. It comes instead from an exact computation of
the \emph{shell average} of the eigenfunction $\Fzero(E)^{*}\phi$ at spatial
infinity, and this computation, carried out in
Theorem~\ref{thm:shell-average-identity} below, is a
quantitative expression of the unitarity of the reflection coefficient
$\refl{\cdot}$: the far field of $\Fzero(E)^{*}\phi$ consists of an incoming
and an outgoing part of \emph{equal} amplitude, and the sum of the two
reproduces $\norm{\phi}^2_{\Echan{E}}$.

Throughout this subsection $E \in \Ereg$, and we write
\begin{equation*}
  r = |x|,
  \qquad
  \theta = x/r \in \Sm \quad (x \in \Rnp),
  \qquad
  \rho = |\xpar|,
  \qquad
  \omega = \xpar/\rho \in \Ssurf,
\end{equation*}
and $\Ball{R} = \{|x|<R\} \cap \Rnp$. As before $\kbulk$ and $\ksurf$ denote
the two effective wavenumbers of \eqref{eq:asymp-wavenumbers}, and $\simeq$
is the equivalence of Definition~\ref{def:simeq}. On the surface band
$-\alpha^2 < E < 0$ the bulk channel is closed and
$\Echan{E} = \Ltwo{\Ssurf}$ by \eqref{eq:channel-space}. This case is carried
along in every statement rather than treated separately.

\subsubsection{The shell-average functional}
\label{sec:char-shell}

All the far-field information we need is carried by a single scalar
functional.

\begin{definition}[Shell average]
\label{def:shell-average}
For $u \in \Bst$ and $R \ge 1$ put
\begin{equation*}
  \MavR{u}{R} = \frac{1}{R}\int_{\Ball{R}} |u(x)|^2\,dx,
\end{equation*}
and, whenever the limit exists,
$\Mav{u} = \lim_{R\to\infty}\MavR{u}{R}$. For $w \in \Bstd{\Rnm}$ we write
$\MavtR{w}{R} = R^{-1}\int_{\{|\xpar|<R\}}|w|^2\,d\xpar$ and
$\Mavt{w} = \lim_{R\to\infty}\MavtR{w}{R}$.
\end{definition}

\begin{lemma}[Properties of the shell average]
\label{lem:shell-average}
Let $u,v \in \Bst$.
\begin{enumerate}
  \item[\textup{(i)}] $\MavR{u}{R} \le 4\,\Bstnorm{u}^2$ for every $R \ge 1$.
  \item[\textup{(ii)}] $\bigl| \MavR{u}{R}^{1/2} - \MavR{v}{R}^{1/2} \bigr|
    \le \MavR{u-v}{R}^{1/2}$ for every $R \ge 1$.
  \item[\textup{(iii)}] $u \in \Bstz$ if and only if $\Mav{u}$ exists and
    equals $0$. Consequently, if $u \simeq v$ and $\Mav{v}$ exists, then
    $\Mav{u}$ exists and $\Mav{u} = \Mav{v}$.
  \item[\textup{(iv)}] If $u_j \to u$ in $\Bst$ and $\Mav{u_j}$ exists for
    every $j$, then $\Mav{u}$ exists and $\Mav{u} = \lim_j \Mav{u_j}$.
  \item[\textup{(v)}] If $\Mav{u}$ exists then
    $\Bstnorm{u}^2 \ge \tfrac14\,\Mav{u}$.
\end{enumerate}
The same statements hold on $\Rnm$ for $\Mavt{\cdot}$ and
$\norm{\cdot}_{\Bstd{\Rnm}}$.
\end{lemma}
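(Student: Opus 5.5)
The five properties are elementary consequences of the definition of $\MavR{u}{R}$ and of the dyadic structure of $\Bst$. I will prove them in order, each from the previous ones, and then note that nothing in the argument uses the half-space.

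For (i), fix $R \ge 1$ and choose $J$ with $2^{J-1} \le R < 2^{J}$. Then $\Ball{R} \subset \bigcup_{j \le J}\shellp{j}$, so
\begin{equation*}
  \int_{\Ball{R}}|u|^2\,dx
  \le \sum_{j\le J}\norm{u}^2_{\Ltwo{\shellp{j}}}
  \le \Bstnorm{u}^2\sum_{j\le J}2^{j}
  < 2^{J+1}\Bstnorm{u}^2
  \le 4R\,\Bstnorm{u}^2 .
\end{equation*}
Dividing by $R$ gives (i). Part (ii) is the reverse triangle inequality, since $\MavR{\cdot}{R}^{1/2}$ is a seminorm, namely the $L^2(\Ball{R})$ norm scaled by $R^{-1/2}$.

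For (iii), the implication from $\Mav{u} = 0$ to $u \in \Bstz$ uses $\shellp{j} \subset \Ball{2^j}$. This gives $2^{-j}\norm{u}^2_{\Ltwo{\shellp{j}}} \le \MavR{u}{2^j} \to 0$. For the converse, let $\epsilon > 0$ and take $j_0$ with $2^{-j}\norm{u}^2_{\Ltwo{\shellp{j}}} < \epsilon$ for all $j > j_0$. With $J$ as in (i), splitting the sum at $j_0$ gives
\begin{equation*}
  \MavR{u}{R} \le R^{-1}\sum_{j\le j_0}\norm{u}^2_{\Ltwo{\shellp{j}}} + 4\epsilon .
\end{equation*}
The first term tends to $0$ as $R \to \infty$, so $\limsup_R \MavR{u}{R} \le 4\epsilon$, and $\epsilon$ is arbitrary. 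The consequence about $u \simeq v$ follows from (ii): the difference $\MavR{u}{R}^{1/2} - \MavR{v}{R}^{1/2}$ tends to $0$, so $\MavR{u}{R}^{1/2}$ converges to $\Mav{v}^{1/2}$.

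For (iv), combine (ii) and (i) to get $|\MavR{u}{R}^{1/2} - \MavR{u_j}{R}^{1/2}| \le 2\Bstnorm{u-u_j}$, uniformly in $R$. A standard $\epsilon/3$ argument then shows that $(\MavR{u}{R}^{1/2})_R$ is Cauchy as $R \to \infty$. The numbers $\Mav{u_j}^{1/2}$ are Cauchy in $j$ by the same bound, and the limits of the two sequences agree.

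For (v), let $R \to \infty$ in (i).

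The only step needing any care is the direction $\Bstz \Rightarrow \Mav{u} = 0$ in (iii), where the head of the sum must be separated from the tail. The version on $\Rnm$ is word for word the same, with $\tshell{i}$ in place of $\shellp{j}$, since none of the steps uses the half-space geometry.
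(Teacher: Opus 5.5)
Your proof is correct and follows the paper's argument essentially step for step. The steps are the dyadic covering $\Ball{R}\subset\bigcup_{j\le J}\shellp{j}$ for (i), the reverse triangle inequality for (ii), the split of the shell sum into a fixed head and a tail for (iii), the uniform bound $\bigl|\MavR{u}{R}^{1/2}-\MavR{u_j}{R}^{1/2}\bigr|\le 2\Bstnorm{u-u_j}$ for (iv), and letting $R\to\infty$ in (i) for (v). The only difference is cosmetic: you choose $2^{J-1}\le R<2^{J}$ where the paper takes the minimal $j_0$ with $2^{j_0}\ge R$, and both give the constant $4$.
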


Parts (i) and (iii) are the proof of the equivalence asserted after
\eqref{eq:Bstar-equivalent}: together they say that the supremum over
$R \ge 1$ of $\MavR{u}{R}$ is comparable with $\Bstnorm{u}^2$, and that
$\Bstz$ is exactly the subspace on which the shell average vanishes in the
limit.

\begin{proof}
(i) Let $j_0 \ge 0$ be minimal with $2^{j_0} \ge R$. By \eqref{eq:shells} we
have $\Ball{R} \subset \bigcup_{j=0}^{j_0}\shellp{j}$, so that by
\eqref{eq:B-norms}
\begin{equation*}
  \int_{\Ball{R}}|u|^2\,dx
  \le \sum_{j=0}^{j_0}\norm{u}^2_{\Ltwo{\shellp{j}}}
  \le \Bstnorm{u}^2\sum_{j=0}^{j_0}2^{j}
  < 2^{j_0+1}\,\Bstnorm{u}^2 .
\end{equation*}
Minimality of $j_0$ gives $2^{j_0-1} < R$, hence $2^{j_0+1} < 4R$, which is the
claim.

(ii) This is the triangle inequality in $\Ltwo{\Ball{R}}$ divided by
$R^{1/2}$.

(iii) Put $\eps_j = 2^{-j}\norm{u}^2_{\Ltwo{\shellp{j}}}$, so that
$u \in \Bstz$ means $\eps_j \to 0$ by \eqref{eq:Bzero}. Assume this and let
$\eta > 0$. Choose $j_1$ with $\eps_j < \eta$ for $j > j_1$. Let $J$ be
minimal with $2^{J} \ge R$, as in (i), so that $R > 2^{J-1}$. Then
\begin{equation*}
  \MavR{u}{R}
  \le \frac{2}{2^{J}}\sum_{j=0}^{J}2^{j}\eps_j
  \le \frac{2}{2^{J}}\sum_{j=0}^{j_1}2^{j}\eps_j
    + \frac{2}{2^{J}}\,\eta\sum_{j=j_1+1}^{J}2^{j}
  \le \frac{2}{2^{J}}\sum_{j=0}^{j_1}2^{j}\eps_j + 4\eta .
\end{equation*}
The first term tends to $0$ as $R \to \infty$, whence
$\limsup_{R}\MavR{u}{R} \le 4\eta$ for every $\eta > 0$ and $\Mav{u}=0$.
Conversely, if $\Mav{u} = 0$ then
$\eps_j \le 2^{-j}\int_{\Ball{2^{j}}}|u|^2\,dx = \MavR{u}{2^{j}} \to 0$.
The second assertion follows because $u \simeq v$ means
$u - v \in \Bstz$ by \eqref{eq:simeq}, so that
$\MavR{u}{R}^{1/2} - \MavR{v}{R}^{1/2} \to 0$ by (ii).

(iv) Let $\eps > 0$ and pick $j$ with $\Bstnorm{u-u_j} < \eps$. By (ii) and
(i), $\bigl|\MavR{u}{R}^{1/2}-\MavR{u_j}{R}^{1/2}\bigr| \le 2\eps$ for all
$R \ge 1$. Since $\MavR{u_j}{R}^{1/2}$ converges as $R \to \infty$, the
family $\MavR{u}{R}^{1/2}$ is Cauchy in $R$, hence convergent, and the two
limits differ by at most $2\eps$. As $\eps$ was arbitrary the claim follows.

(v) Immediate from (i).
\end{proof}

Parts (iii) and (iv) are what make the computations below elementary. By
(iii) the shell average may be evaluated on any convenient representative of
the far field, and by (iv) it may be evaluated on a dense class of data.
Every delicate uniformity question is thereby removed, and no analysis of
grazing directions is required. In \S\ref{sec:asymp-grazing}, by contrast,
the datum was a fixed $f \in \Bsp$ and no such freedom was available.

The surface channel lives on a slab of finite thickness, and the following
lemma converts a shell average on $\Rnm$ into one on $\Rnp$, and this is where
the normalization $\norm{\psizero}_{\Ltwo{\Rhalf}} = 1$ of
\eqref{eq:fiber-recall} is used.

\begin{lemma}[Tensor shell average]
\label{lem:tensor-shell}
Let $w \in \Bstd{\Rnm}$ and $\Psi \in \Ltwo{\Rhalf}$, and write
$(w \otimes \Psi)(x) = w(\xpar)\Psi(x_n)$, the order being that of
Proposition~\ref{prop:surface-reduction}. If $\Mavt{w}$ exists, then
$\Mav{w \otimes \Psi}$ exists and
\begin{equation}
  \label{eq:tensor-shell}
  \Mav{w \otimes \Psi}
  = \norm{\Psi}^2_{\Ltwo{\Rhalf}}\;\Mavt{w} .
\end{equation}
\end{lemma}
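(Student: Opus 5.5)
The plan is to compare the half-ball $\Ball{R}$ with cylinders $\{|\xpar|<R'\}\times(0,\infty)$, whose shell averages factor exactly. By Fubini,
\[
  \frac1R\int_{\{|\xpar|<R\}\times\Rhalf}|w(\xpar)|^2|\Psi(x_n)|^2\,dx
  = \norm{\Psi}^2_{\Ltwo{\Rhalf}}\,\MavtR{w}{R}.
\]
The right-hand side tends to $\norm{\Psi}^2\Mavt{w}$ by hypothesis.

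For the upper bound I use the inclusion $\Ball{R}\subset\{|\xpar|<R\}\times\Rhalf$ from \eqref{eq:geometry}. This gives at once
\[
  \limsup_{R\to\infty}\MavR{w\otimes\Psi}{R}\le\norm{\Psi}^2\Mavt{w}.
\]

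For the lower bound, fix $\delta\in(0,1)$. The half-ball $\Ball{R}$ contains the truncated cylinder $\{|\xpar|<R\sqrt{1-\delta^2}\}\times(0,\delta R)$. Hence $\MavR{w\otimes\Psi}{R}$ is at least
\[
  \sqrt{1-\delta^2}\,\MavtR{w}{R\sqrt{1-\delta^2}}\int_0^{\delta R}|\Psi|^2\,dx_n .
\]
As $R\to\infty$ the first factor tends to $\sqrt{1-\delta^2}\,\Mavt{w}$, and the integral increases to $\norm{\Psi}^2$ by monotone convergence. Letting $\delta\downarrow0$ then gives the matching $\liminf$, which proves \eqref{eq:tensor-shell}.

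The only delicate point is that the normal integral must be taken over a height that grows with $R$. A fixed height would lose mass whenever $\Psi$ is not compactly supported, since $\Psi$ is merely $L^2$. No uniformity in $w$ is needed, because the argument uses only the scalar limit $\Mavt{w}$ evaluated at the rescaled radius $R\sqrt{1-\delta^2}$, and that rescaled radius still tends to infinity.
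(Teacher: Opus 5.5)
Your argument is correct, but it works by a different mechanism from the paper's. The paper slices $\Ball{R}$ exactly in $x_n$. With $N(s)=\int_{\{|\xpar|<s\}}|w|^2\,d\xpar$ it writes
$\MavR{w\otimes\Psi}{R}=\int_0^R|\Psi(x_n)|^2\,R^{-1}N\bigl(\sqrt{R^2-x_n^2}\bigr)\,dx_n$.
It then observes that the inner factor tends to $\Mavt{w}$ for each fixed $x_n$, and concludes by dominated convergence. The dominating function $4\norm{w}^2_{\Bstd{\Rnm}}|\Psi|^2$ comes from Lemma~\ref{lem:shell-average}(i).

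You instead squeeze $\Ball{R}$ between two regions on which the average factorizes exactly. The outer one is the full cylinder over $\{|\xpar|<R\}$. The inner one is the truncated cylinder $\{|\xpar|<R\sqrt{1-\delta^2}\}\times(0,\delta R)$. You then let $R\to\infty$ before $\delta\downarrow0$.

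Your route needs no dominating function, only the scalar limit $\Mavt{w}$ and monotone convergence of $\int_0^{\delta R}|\Psi|^2$. Moreover, your upper bound $\MavR{w\otimes\Psi}{R}\le\norm{\Psi}^2_{\Ltwo{\Rhalf}}\,\MavtR{w}{R}$ holds for every $R$. Combined with Lemma~\ref{lem:shell-average}(i) on $\Rnm$ and \eqref{eq:Bstar-equivalent}, it gives $w\otimes\Psi\in\Bst$. The definition of $\Mav{\cdot}$ presupposes this membership, and the paper takes it from Corollary~\ref{cor:tensor}. You should add that one sentence to your write-up.

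The paper's route, for its part, gives a single exact formula. It expresses the half-ball average as a $|\Psi|^2$-weighted mixture of tangential averages at the radii $\sqrt{R^2-x_n^2}$. There is no auxiliary parameter $\delta$ to remove afterwards, and the growing height of the normal integration, which you rightly single out as the delicate point, is built in.
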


\begin{proof}
That $w \otimes \Psi \in \Bst$ is the second bound of
\eqref{eq:tensor-bounds}. Put $N(s) = \int_{\{|\xpar|<s\}}|w|^2\,d\xpar$.
Since $\Ball{R} = \{(\xpar,x_n) : 0 < x_n < R,\ |\xpar| < \sqrt{R^2-x_n^2}\}$,
Fubini's theorem gives
\begin{equation*}
  \MavR{w \otimes \Psi}{R}
  = \int_0^{R}|\Psi(x_n)|^2\,
      \frac{N\bigl(\sqrt{R^2-x_n^2}\bigr)}{R}\,dx_n .
\end{equation*}
For each fixed $x_n$ we have, with $s = \sqrt{R^2-x_n^2}$,
\begin{equation*}
  \frac{N(s)}{R} = \frac{N(s)}{s}\cdot\frac{s}{R}
  \longrightarrow \Mavt{w}
  \qquad (R \to \infty),
\end{equation*}
because $s \to \infty$ and $s/R \to 1$. Moreover
$N(s)/R \le N(R)/R \le 4\norm{w}^2_{\Bstd{\Rnm}}$ by
Lemma~\ref{lem:shell-average}(i) applied on $\Rnm$, so the integrand is
dominated by $4\norm{w}^2_{\Bstd{\Rnm}}|\Psi|^2 \in L^1(\Rhalf)$.
Dominated convergence yields \eqref{eq:tensor-shell}.
\end{proof}

\subsubsection{Stationary phase on the sphere}
\label{sec:char-statphase}

We shall need the classical far-field expansion of a Herglotz wave, in the
form given by Agmon--H\"ormander~\cite{AgmonHormander1976} and
Isozaki~\cite[Lemma~2.4]{Isozaki2001}. We state it with an explicit uniform
remainder, which allows us to dispense with any separate
treatment of grazing directions.

\begin{lemma}[Full sphere]
\label{lem:sphere-statphase}
Let $m \ge 2$, $\kappa > 0$ and $y \in C^{\infty}(S^{m-1})$. Put
\begin{equation}
  \label{eq:Csph-def}
  \Csph{m}{\kappa}
  = \Bigl( \frac{2\pi}{\kappa} \Bigr)^{\frac{m-1}{2}}
    e^{-i\frac{(m-1)\pi}{4}} .
\end{equation}
Then, for $X \in \mathbb{R}^{m}$ with $|X| \ge 1$,
\begin{equation}
  \label{eq:sphere-statphase}
  \int_{S^{m-1}} e^{i\kappa\sigma\cdot X}\,y(\sigma)\,d\sigma
  = |X|^{-\frac{m-1}{2}}
    \Bigl[ \Csph{m}{\kappa}\,e^{i\kappa|X|}\,y(\widehat X)
      + \overline{\Csph{m}{\kappa}}\,e^{-i\kappa|X|}\,y(-\widehat X)
    \Bigr]
    + \mathcal{Q}(X),
\end{equation}
where $\widehat X = X/|X|$ and
$|\mathcal{Q}(X)| \le C(m,\kappa,y)\,|X|^{-\frac{m+1}{2}}$ uniformly in the
direction $\widehat X$.
\end{lemma}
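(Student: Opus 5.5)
The plan is to reduce the integral to a one-dimensional oscillatory integral along the axis $\widehat X$ and treat the two poles by stationary phase with an explicit remainder. By rotation invariance of $d\sigma$ we may take $\widehat X = e_m$. Write $\sigma = (\sqrt{1-t^2}\,\varsigma, t)$ with $t \in [-1,1]$ and $\varsigma \in S^{m-2}$, so that $d\sigma = (1-t^2)^{\frac{m-3}{2}}\,dt\,d\varsigma$ and the integral becomes
\[
  \int_{-1}^{1} e^{i\kappa|X| t}\,Y(t)\,(1-t^2)^{\frac{m-3}{2}}\,dt,
  \qquad
  Y(t) = \int_{S^{m-2}} y\bigl(\sqrt{1-t^2}\,\varsigma, t\bigr)\,d\varsigma .
\]
The phase $t$ has no interior critical point, so the whole contribution comes from the endpoints $t = \pm1$, where the weight $(1-t^2)^{\frac{m-3}{2}}$ is singular (or degenerate). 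To avoid this I would instead use a smooth partition of unity on $S^{m-1}$ into three pieces: $\chi_+$ supported near $\widehat X$, $\chi_-$ near $-\widehat X$, and $\chi_0$ supported away from both.

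On the support of $\chi_0$ the phase $\Phi(\sigma) = \sigma\cdot \widehat X$ restricted to the sphere has nonvanishing tangential gradient, whose norm is $\sqrt{1-(\sigma\cdot\widehat X)^2}$ and is bounded below. Repeated integration by parts with the vector field $L = |\nabla_S\Phi|^{-2}\nabla_S\Phi\cdot\nabla_S/(i\kappa|X|)$ gives $O(|X|^{-N})$ for every $N$. The constants depend only on finitely many derivatives of $y$ and of the cut-offs, and because the construction is rotation-covariant they are uniform in $\widehat X$.

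Near $\pm\widehat X$, parametrize the sphere by the tangent plane coordinate $w \in \mathbb{R}^{m-1}$, setting $\sigma = (w, \pm\sqrt{1-|w|^2})$ in rotated coordinates. The phase is then $\pm\kappa|X|\sqrt{1-|w|^2}$, with a nondegenerate critical point at $w=0$. Its Hessian there is $\mp\kappa|X| I_{m-1}$, so the signature is $\mp(m-1)$ and the determinant has modulus $(\kappa|X|)^{m-1}$. The stationary phase formula in $m-1$ variables, applied with its standard remainder, yields
\[
  \Bigl(\tfrac{2\pi}{\kappa|X|}\Bigr)^{\frac{m-1}{2}} e^{\pm i\kappa|X|}\,
  e^{\mp i\frac{(m-1)\pi}{4}}\,y(\pm\widehat X)
  + O\bigl(|X|^{-\frac{m+1}{2}}\bigr).
\]
The upper sign gives $\Csph{m}{\kappa}$ and the lower sign its conjugate, which is \eqref{eq:sphere-statphase}.

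The main obstacle is the \emph{uniformity} of the remainder in $\widehat X$. The stationary phase remainder is bounded by a finite sum of sup norms of derivatives of the amplitude $\chi_\pm(\sigma)y(\sigma)J(w)$ in the chart. Here $J$ is the Jacobian, which is fixed. After rotating, $y$ is replaced by $y\circ\mathcal{R}$ with $\mathcal{R}\in SO(m)$, and derivatives of $y\circ \mathcal{R}$ up to any fixed order are bounded by the $C^k(S^{m-1})$ norm of $y$, uniformly in $\mathcal{R}$, because $SO(m)$ is compact and acts by isometries. The same observation handles the nonstationary piece. Since $|X|\ge1$, the $O(|X|^{-N})$ terms are absorbed into $C(m,\kappa,y)|X|^{-\frac{m+1}{2}}$.
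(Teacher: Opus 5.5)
Your proof is correct. The paper does not prove this lemma itself; it quotes it from \cite[Thm.~6.11]{Isozaki2023}. What you give is the standard self-contained proof of that result:
\begin{itemize}
\item a partition of unity adapted to $\widehat X$;
\item non-stationary phase away from $\pm\widehat X$, using the vector field built from the tangential gradient $\widehat X-(\sigma\cdot\widehat X)\sigma$;
\item stationary phase in $m-1$ variables at the two poles.
\end{itemize}
Your constants are right: modulus $(2\pi/\kappa|X|)^{(m-1)/2}$, phase $e^{\mp i(m-1)\pi/4}$ from signature $\mp(m-1)$, and remainder of order $|X|^{-\frac{m+1}{2}}$.

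Compared with the citation, your version shows where the uniformity in $\widehat X$ comes from. $SO(m)$ is compact and acts isometrically, so in a fixed chart the $C^k$ norms of $y\circ\mathcal R$ are bounded independently of $\mathcal R$. This uniformity is exactly what the paper relies on in Theorem~\ref{thm:eigen-far-field} to avoid treating grazing directions.

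State two small points explicitly:
\begin{itemize}
\item The leading coefficient is exactly $y(\pm\widehat X)$ only because the chart Jacobian $(1-|w|^2)^{-1/2}$ equals $1$ at $w=0$ and $\chi_\pm\equiv 1$ near the poles.
\item Any bounded range $1\le|X|\le R$ where you do not want to invoke the asymptotics is harmless, since both sides are bounded there and can be absorbed into the constant.
\end{itemize}
The one-dimensional reduction in your first paragraph is never used and can be dropped.
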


This is \cite[Thm.~6.11]{Isozaki2023}.

The bulk eigenfunctions are parametrized by the \emph{half}-sphere $\Sm$, and
one might expect to need a version of \eqref{eq:sphere-statphase} for
integrals over $\Sm$, whose stationary point approaches the boundary
$\partial\Sm$ as $\theta_n \downarrow 0$ and whose remainder is therefore not
uniform. This is avoided as follows: we prove the expansion first for
densities supported in the \emph{open} half-sphere, extend such a density by
zero to a smooth function on $S^{n-1}$, and apply
Lemma~\ref{lem:sphere-statphase}. The antipodal stationary point then lies
outside the support and contributes nothing. The general case follows by
density through Lemma~\ref{lem:shell-average}(iv). Both the boundary term at
the equator and the grazing region disappear from the argument.

\subsubsection{Far-field expansion of the eigenoperator}
\label{sec:char-farfield}

\begin{definition}[Reflection involutions]
\label{def:reflection-involutions}
Define
\begin{equation*}
  (\Jbulk F)(\theta) = F(-\theta',\theta_n) = F(-\refext{\theta}),
  \qquad
  (\Jsurf G)(\omega) = G(-\omega),
\end{equation*}
for $F$ on $\Sm$ and $G$ on $\Ssurf$, and put
$\Jchan = \Jbulk \oplus \Jsurf$ on $\Echan{E}$ (with
$\Jchan = \Jsurf$ when $-\alpha^2 < E < 0$). Both $\Jbulk$ and $\Jsurf$ are
measure-preserving involutions, hence unitary. The map $\Jbulk$ reverses
only the tangential components and therefore maps $\Sm$ onto itself, whereas
$\Jsurf$ is the antipodal map of the boundaryless sphere $\Ssurf$.
\end{definition}

\begin{definition}[Incoming and outgoing far-field data]
\label{def:far-field-data}
Let $E \in \Ereg$ and $g = (g_{\mathrm{bulk}},g_{\mathrm{surf}}) \in
\Echan{E}$. Put
\begin{align}
  \label{eq:ydata-bulk}
  \yout_{\mathrm{bulk}}(\theta)
    &= \cph{\kbulk\theta_n}\,g_{\mathrm{bulk}}(\theta),
  &
  \yin_{\mathrm{bulk}}(\theta)
    &= \overline{\cph{\kbulk\theta_n}}\,
       (\Jbulk g_{\mathrm{bulk}})(\theta),
  \\[1mm]
  \label{eq:ydata-surf}
  \yout_{\mathrm{surf}}(\omega)
    &= g_{\mathrm{surf}}(\omega),
  &
  \yin_{\mathrm{surf}}(\omega)
    &= (\Jsurf g_{\mathrm{surf}})(\omega),
\end{align}
where $\cph{\cdot}$ is the unimodular phase factor of
\eqref{eq:phase-and-reflection}. Since $|\cph{\cdot}| = 1$ and $\Jbulk$,
$\Jsurf$ are unitary,
\begin{equation}
  \label{eq:ydata-norms}
  \begin{aligned}
  \norm{\yout_{\mathrm{bulk}}}_{\Ltwo{\Sm}}
  &= \norm{\yin_{\mathrm{bulk}}}_{\Ltwo{\Sm}}
   = \norm{g_{\mathrm{bulk}}}_{\Ltwo{\Sm}},
  \\[1mm]
  \norm{\yout_{\mathrm{surf}}}_{\Ltwo{\Ssurf}}
  &= \norm{\yin_{\mathrm{surf}}}_{\Ltwo{\Ssurf}}
   = \norm{g_{\mathrm{surf}}}_{\Ltwo{\Ssurf}} .
  \end{aligned}
\end{equation}
\end{definition}

The names are justified by the theorem below, and are consistent with the
outgoing normalization of \S\ref{subsec:asymptotics}: if $g = \Fzero(E)f$ for
some $f \in \Bsp$, then by \eqref{eq:Fout-def} the two data of
\eqref{eq:ydata-bulk} are $\yout_{\mathrm{bulk}} = \Fbulk^{(+)}(E)f$ and
$\yin_{\mathrm{bulk}} = \Jbulk\Fbulk^{(-)}(E)f$, the phase factor commuting
with $\Jbulk$ because $\cph{\kbulk\theta_n}$ depends on $\theta_n$ alone.
Thus $\Fzero^{(\pm)}(E)$ are precisely the transforms that read off the
outgoing and the incoming far-field amplitude of the standing wave
$\Fzero(E)^{*}g$, just as, by Corollary~\ref{cor:far-field}, they read off
the far field of the outgoing resolvent.

\begin{theorem}[Far field of the eigenoperator]
\label{thm:eigen-far-field}
Let $E \in \Ereg$ and $g \in \Echan{E}$, and set
\begin{equation}
  \label{eq:gamma-constants}
  \gambulk(E) = \frac{1}{2\sqrt{\pi}}\,\kbulk^{-1/2}\,
    e^{-i\frac{(n-1)\pi}{4}},
  \qquad
  \gamsurf(E) = \frac{1}{2\sqrt{\pi}}\,\ksurf^{-1/2}\,
    e^{-i\frac{(n-2)\pi}{4}} .
\end{equation}
Then, in the sense of $\simeq$,
\begin{equation}
  \label{eq:eigen-far-field}
  \begin{aligned}
  \Fzero(E)^{*}g
  \;\simeq\;
  & r^{-\frac{n-1}{2}}
    \Bigl[ \gambulk(E)\,e^{i\kbulk r}\,\yout_{\mathrm{bulk}}(\theta)
      + \overline{\gambulk(E)}\,e^{-i\kbulk r}\,
        \yin_{\mathrm{bulk}}(\theta) \Bigr]
  \\[1mm]
  + \;& \rho^{-\frac{n-2}{2}}
    \Bigl[ \gamsurf(E)\,e^{i\ksurf\rho}\,\yout_{\mathrm{surf}}(\omega)
      + \overline{\gamsurf(E)}\,e^{-i\ksurf\rho}\,
        \yin_{\mathrm{surf}}(\omega) \Bigr]\,\psizero(x_n),
  \end{aligned}
\end{equation}
the first line being absent when $-\alpha^2 < E < 0$. In particular
\begin{equation}
  \label{eq:gamma-moduli}
  |\gambulk(E)|^2 = \frac{1}{4\pi\kbulk},
  \qquad
  |\gamsurf(E)|^2 = \frac{1}{4\pi\ksurf} .
\end{equation}
\end{theorem}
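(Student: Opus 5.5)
The plan is to treat the two channels separately, since $\Fzero(E)^{*}g = \Fbulk(E)^{*}g_{\mathrm{bulk}} + \Fsurf(E)^{*}g_{\mathrm{surf}}$ by Proposition~\ref{prop:F-adjoint}. I will first prove \eqref{eq:eigen-far-field} for a dense class of data and then pass to general $g$ by density. Both sides of \eqref{eq:eigen-far-field} depend continuously on $g \in \Echan{E}$ in $\Bst$. For the left side this is Proposition~\ref{prop:F-adjoint}. For the right side it follows from the elementary bound $\Bstnorm{A(\theta)r^{-(n-1)/2}e^{\pm i\kbulk r}} \le C\norm{A}_{\Ltwo{\Sm}}$, together with its surface counterpart via Corollary~\ref{cor:tensor}. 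Since $\Bstz$ is closed in $\Bst$, it therefore suffices to take $g_{\mathrm{bulk}} \in C_c^\infty(\Sm)$, supported in the open hemisphere, and $g_{\mathrm{surf}} \in C^\infty(\Ssurf)$.

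For the surface term, \eqref{eq:Fsurf-adjoint} is exactly a tensor product. It equals $\psizero(x_n)$ times a Herglotz wave on $\Rnm$ with density $(2\pi)^{-(n-1)/2}2^{-1/2}(E+\alpha^2)^{(n-3)/4}g_{\mathrm{surf}}$ at radius $\ksurf$. I will apply Lemma~\ref{lem:sphere-statphase} with $m = n-1$ and $\kappa = \ksurf$. The remainder is $O(\rho^{-n/2})$ uniformly in the direction, so after multiplication by $\psizero$ it lies in $\Bstz$ by the computation already used in the proof of Theorem~\ref{thm:asymp-uniform}, since the normal integral of $\psizero^2$ is finite. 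Collecting constants, $(2\pi)^{-(n-1)/2}2^{-1/2}\ksurf^{(n-3)/2}\Csph{n-1}{\ksurf}$ should reduce to $\gamsurf(E)$. The outgoing term carries $g_{\mathrm{surf}}(\omega)$, and the incoming term carries $g_{\mathrm{surf}}(-\omega) = \Jsurf g_{\mathrm{surf}}$, which matches \eqref{eq:ydata-surf}.

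For the bulk term I will use \eqref{eq:Psib-exponential} to write $\Fbulk(E)^{*}g_{\mathrm{bulk}}$ as a sum of two integrals of the form \eqref{eq:AH-extension}. The first has density $\overline{\cph{\kbulk\theta_n}}\,g_{\mathrm{bulk}}$ transported to the lower hemisphere by $\theta \mapsto \refext{\theta}$, since $e^{-i\xi_n x_n}$ corresponds to the direction $\refext{\theta}$. The second has density $\cph{\kbulk\theta_n}\,g_{\mathrm{bulk}}$ on $\Sm$, using $\overline{\cph{}}\,\refl{} = \cph{}$. Extending each density by zero gives smooth functions on $S^{n-1}$, because $g_{\mathrm{bulk}}$ is compactly supported in the open hemisphere. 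Lemma~\ref{lem:sphere-statphase} with $m = n$ and $\kappa = \kbulk$ then applies with a uniform $O(r^{-(n+1)/2})$ remainder, which is in $\Bstz$. For $x \in \Rnp$ the direction $\widehat X = \theta$ lies in the closed upper hemisphere. In the second integral the value $y(\theta)$ is the outgoing term $\cph{\kbulk\theta_n}g_{\mathrm{bulk}}(\theta) = \yout_{\mathrm{bulk}}$, and the antipode $-\theta$ lies outside the support. In the first integral only the antipodal term survives, because $\theta$ is not in the support of the lower-hemisphere density. Its value at $-\theta$ is the density at $\refext{(-\theta)}=(-\theta',\theta_n)$, namely $\overline{\cph{\kbulk\theta_n}}\,(\Jbulk g_{\mathrm{bulk}})(\theta) = \yin_{\mathrm{bulk}}$. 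The constant $(2\pi)^{-n/2}2^{-1/2}\kbulk^{(n-2)/2}\Csph{n}{\kbulk}$ should equal $\gambulk(E)$. When $-\alpha^2 < E < 0$ the bulk line is simply absent.

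The identities \eqref{eq:gamma-moduli} are immediate from \eqref{eq:gamma-constants}. I expect the main obstacle to be the bookkeeping of the reflection, that is, making sure the incoming term lands exactly on $\Jbulk$ with the conjugate phase, together with the verification of the constants. A smaller point is the cutoff at $|x| \ge 1$ in Lemma~\ref{lem:sphere-statphase}, which is harmless because bounded regions never affect $\Bstz$ membership.
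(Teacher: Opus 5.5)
Your proposal is correct and follows the paper's proof essentially step for step. Like the paper, you reduce by density to $g_{\mathrm{bulk}}\in C_c^\infty(\Sm)$ and $g_{\mathrm{surf}}\in C^\infty(\Ssurf)$, using boundedness of both sides into $\Bst$ and closedness of $\Bstz$; you treat the exact tensor factorization of the surface part with Lemma~\ref{lem:sphere-statphase} at $m=n-1$; and you split the bulk part via \eqref{eq:Psib-exponential} into two zero-extended Herglotz integrals, in which only the outgoing stationary point, respectively only the antipodal incoming one, survives. Your constants and your identification of the incoming datum with $\overline{\cph{\kbulk\theta_n}}\,\Jbulk g_{\mathrm{bulk}}$ are right. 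The only cosmetic difference is in the surface remainder: the paper shows it is negligible in $\Rnm$ and lifts this with Lemma~\ref{lem:tensor-shell}, whereas you integrate directly against $\psizero^2$.
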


\begin{proof}
\emph{Step 1: the bulk channel, for smooth data supported away from the
equator.} Let $E > 0$ and $g_{\mathrm{bulk}} \in C_c^{\infty}(\Sm)$.
Substituting \eqref{eq:Psib-exponential} into \eqref{eq:Fbulk-adjoint} and
using
$e^{i\kbulk\xpar\cdot\sigma'}e^{\mp i\kbulk\sigma_n x_n}
= e^{i\kbulk\refext{\sigma}\cdot x}$ resp.\ $e^{i\kbulk\sigma\cdot x}$
together with
$\overline{\cph{k}}\,\refl{k} = \cph{k}$, which is immediate from
\eqref{eq:phase-and-reflection}, we obtain
\begin{equation*}
  \Fbulk(E)^{*}g_{\mathrm{bulk}}
  = a_n(E)\bigl[ I_{-} + I_{+} \bigr],
  \qquad
  a_n(E) = \frac{E^{\frac{n-2}{4}}}{\sqrt2}\,(2\pi)^{-\frac{n}{2}} > 0,
\end{equation*}
where
\begin{equation*}
  I_{+} = \int_{\Sm}e^{i\kbulk\sigma\cdot x}\,
    \cph{\kbulk\sigma_n}\,g_{\mathrm{bulk}}(\sigma)\,d\sigma,
  \qquad
  I_{-} = \int_{\Sm}e^{i\kbulk\refext{\sigma}\cdot x}\,
    \overline{\cph{\kbulk\sigma_n}}\,g_{\mathrm{bulk}}(\sigma)\,d\sigma .
\end{equation*}
In $I_{+}$ extend the amplitude by zero and apply
Lemma~\ref{lem:sphere-statphase} with $m = n$, $\kappa = \kbulk$, $X = x$ and
$y_{+} = \mathbf{1}_{\Sm}\,\cph{\kbulk\sigma_n}g_{\mathrm{bulk}}$, which is
smooth on $S^{n-1}$ because $\operatorname{supp}g_{\mathrm{bulk}}$ is a
compact subset of the open half-sphere. Since $x \in \Rnp$ forces
$\theta \in \Sm$ and hence $-\theta \notin \operatorname{supp}y_{+}$, only the
outgoing stationary point survives:
\begin{equation*}
  I_{+} = r^{-\frac{n-1}{2}}\,\Csph{n}{\kbulk}\,e^{i\kbulk r}\,
    \cph{\kbulk\theta_n}\,g_{\mathrm{bulk}}(\theta)
    + O\bigl( r^{-\frac{n+1}{2}} \bigr).
\end{equation*}
In $I_{-}$ substitute $\tilde\sigma = \refext{\sigma}$, a measure-preserving
bijection of $\Sm$ onto the open lower half-sphere
$S^{n-1}_{-} = \{\tilde\sigma \in S^{n-1} : \tilde\sigma_n < 0\}$, with inverse
$\sigma = \refext{\tilde\sigma}$ and $\sigma_n = -\tilde\sigma_n$. This turns
$I_{-}$ into a direct-phase integral,
\begin{equation*}
  I_{-} = \int_{S^{n-1}}e^{i\kbulk\tilde\sigma\cdot x}\,
    y_{-}(\tilde\sigma)\,d\tilde\sigma,
  \qquad
  y_{-} = \mathbf{1}_{S^{n-1}_{-}}\,
    \overline{\cph{-\kbulk\tilde\sigma_n}}\,
    g_{\mathrm{bulk}}(\refext{\tilde\sigma}),
\end{equation*}
again smooth on $S^{n-1}$. Now $\theta \notin \operatorname{supp}y_{-}$,
while $-\theta \in S^{n-1}_{-}$ does lie in the support, so that this time
only the incoming stationary point survives:
\begin{equation*}
  I_{-} = r^{-\frac{n-1}{2}}\,\overline{\Csph{n}{\kbulk}}\,e^{-i\kbulk r}\,
    y_{-}(-\theta) + O\bigl( r^{-\frac{n+1}{2}} \bigr),
  \qquad
  y_{-}(-\theta) = \overline{\cph{\kbulk\theta_n}}\,
    (\Jbulk g_{\mathrm{bulk}})(\theta),
\end{equation*}
where we used $(-\theta)_n = -\theta_n$ and
$\refext{(-\theta)} = (-\theta',\theta_n)$. Since $a_n(E) > 0$ the two
constants remain conjugate after multiplication, and
\begin{equation*}
  a_n(E)\,\Csph{n}{\kbulk}
  = \frac{E^{\frac{n-2}{4}}}{\sqrt2}(2\pi)^{-\frac12}\kbulk^{-\frac{n-1}{2}}
    e^{-i\frac{(n-1)\pi}{4}}
  = \frac{1}{2\sqrt{\pi}}\kbulk^{-\frac12}e^{-i\frac{(n-1)\pi}{4}}
  = \gambulk(E),
\end{equation*}
because $E^{(n-2)/4} = \kbulk^{(n-2)/2}$. Finally the remainder is
$\simeq 0$: it is bounded on $\{r \le 1\}$, and in polar coordinates
\begin{equation*}
  \frac{1}{R}\int_{\Ball{R}\cap\{r>1\}}O\bigl(r^{-(n+1)}\bigr)\,dx
  \lesssim \frac{1}{R}\int_1^{R}r^{-(n+1)}r^{n-1}\,dr
  = \frac{1}{R}\int_1^{R}r^{-2}\,dr = O(R^{-1}).
\end{equation*}
This proves the first line of \eqref{eq:eigen-far-field} for
$g_{\mathrm{bulk}} \in C_c^{\infty}(\Sm)$.

\emph{Step 2: the surface channel.} Let
$g_{\mathrm{surf}} \in C^{\infty}(\Ssurf)$. By \eqref{eq:Psis-def} and
\eqref{eq:Fsurf-adjoint} the surface part factorizes \emph{exactly},
\begin{equation}
  \label{eq:surf-factorization}
  \Fsurf(E)^{*}g_{\mathrm{surf}} = w \otimes \psizero,
  \qquad
  w(\xpar) = a_n'(E)\int_{\Ssurf}
    e^{i\ksurf\sigma\cdot\xpar}\,g_{\mathrm{surf}}(\sigma)\,d\sigma,
\end{equation}
with $a_n'(E) = 2^{-1/2}(E+\alpha^2)^{(n-3)/4}(2\pi)^{-(n-1)/2} > 0$. Since
$\Ssurf$ is a closed manifold, Lemma~\ref{lem:sphere-statphase} with
$m = n-1$, $\kappa = \ksurf$ and $X = \xpar$ applies directly and both
stationary points $\pm\omega$ contribute, giving
\begin{equation*}
  w = \rho^{-\frac{n-2}{2}}
    \Bigl[ \gamsurf(E)e^{i\ksurf\rho}g_{\mathrm{surf}}(\omega)
      + \overline{\gamsurf(E)}e^{-i\ksurf\rho}
        g_{\mathrm{surf}}(-\omega) \Bigr]
    + O\bigl( \rho^{-\frac{n}{2}} \bigr),
\end{equation*}
with $a_n'(E)\Csph{n-1}{\ksurf} = \gamsurf(E)$ by the same computation as
above, using $(E+\alpha^2)^{(n-3)/4} = \ksurf^{(n-3)/2}$. The remainder
satisfies $\Mavt{\cdot} = 0$ by the argument of Step 1 carried out in
$\Rnm$, and Lemma~\ref{lem:tensor-shell} transports this to
$\Mav{\,\cdot \otimes \psizero\,} = 0$. Recalling
$\yin_{\mathrm{surf}}(\omega) = g_{\mathrm{surf}}(-\omega)$ we obtain
the second line of \eqref{eq:eigen-far-field}.

\emph{Step 3: density.} Both sides of \eqref{eq:eigen-far-field} are, for
fixed $E$, values of maps $\Echan{E} \to \Bst$ that are bounded: the left
side by Proposition~\ref{prop:F-adjoint}, the right side by the elementary
estimates
\begin{equation*}
  \bigl\| A(\theta)\,r^{-\frac{n-1}{2}}e^{\pm i\kbulk r}
  \bigr\|_{\Bst}
  \le C\norm{A}_{\Ltwo{\Sm}},
  \qquad
  \bigl\| B(\omega)\,\rho^{-\frac{n-2}{2}}e^{\pm i\ksurf\rho}
    \otimes \psizero \bigr\|_{\Bst}
  \le C\norm{B}_{\Ltwo{\Ssurf}},
\end{equation*}
the second via \eqref{eq:tensor-bounds}, both of which follow from the
polar-coordinate computation used for the remainders. Since
$C_c^{\infty}(\Sm) \oplus C^{\infty}(\Ssurf)$ is dense in $\Echan{E}$ and
$\Bstz$ is a closed subspace of $\Bst$, the difference of the two sides,
being a bounded map $\Echan{E} \to \Bst$ vanishing into $\Bstz$ on a dense
subspace, takes values in $\Bstz$ throughout. The case
$-\alpha^2 < E < 0$ is Step 2 alone, since then
$\Echan{E} = \Ltwo{\Ssurf}$ by \eqref{eq:channel-space}. Finally
\eqref{eq:gamma-moduli} is immediate from \eqref{eq:gamma-constants}.
\end{proof}

Since $\Jbulk$ commutes with multiplication by $\cph{\kbulk\theta_n}$ and
$\Jbulk^2 = \mathrm{id}$, the data
\eqref{eq:ydata-bulk}--\eqref{eq:ydata-surf} satisfy
\begin{equation}
  \label{eq:char-smatrix-relation}
  \yout_{\mathrm{bulk}}
  = \refl{\kbulk\theta_n}\,\bigl( \Jbulk \yin_{\mathrm{bulk}} \bigr),
  \qquad
  \yout_{\mathrm{surf}} = \Jsurf \yin_{\mathrm{surf}} .
\end{equation}
This is the germ of the $S$-matrix already noted in
\eqref{eq:smatrix-germ}. It will be given its proper place in
\S\ref{sec:char-smatrix}, once the characterization theorem has shown that
the incoming datum determines the solution. At this stage only the
consequence $|\refl{\cdot}| = 1$ is needed, and it makes the two amplitudes in
\eqref{eq:ydata-norms} equal.

\subsubsection{Orthogonality of the two channels}
\label{sec:char-channel}

To compute the shell average of the sum \eqref{eq:eigen-far-field} we must
know that the two channels do not interfere. It is tempting to argue that
they decouple because the phases $e^{i\kbulk r}$ and $e^{i\ksurf\rho}$
oscillate at different rates, the gap $\ksurf - \kbulk > 0$ being available
at every energy by \eqref{eq:asymp-wavenumbers}. Such an argument would require
pointwise far-field information about $\Fbulk(E)^{*}g_{\mathrm{bulk}}$ in the
layer $\{x_n = O(1/\alpha)\}$ where the surface channel is concentrated, that
is, precisely in the grazing region where that far field is least accessible.
We use instead an exact mechanism, which avoids the grazing region
altogether: by \eqref{eq:fiber-recall} the transverse profiles
$\gphi_k$ and $\psizero$ are orthogonal in $\Ltwo{\Rhalf}$ for \emph{every}
$k > 0$.

\begin{proposition}[$\Bst$-orthogonality of the channels]
\label{prop:char-channel-orth}
Let $E > 0$, $g_{\mathrm{bulk}} \in \Ltwo{\Sm}$,
$g_{\mathrm{surf}} \in \Ltwo{\Ssurf}$, and put
$\Ubulk = \Fbulk(E)^{*}g_{\mathrm{bulk}}$,
$\Usurf = \Fsurf(E)^{*}g_{\mathrm{surf}}$, the capital letters distinguishing
these from the model waves \eqref{eq:ubulk}--\eqref{eq:usurf}. Then
\begin{equation}
  \label{eq:char-channel-orth}
  \lim_{R\to\infty}\frac{1}{R}
  \int_{\Ball{R}}\Ubulk(x)\,\overline{\Usurf(x)}\,dx = 0 .
\end{equation}
\end{proposition}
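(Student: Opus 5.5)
The plan is to use the exact transverse orthogonality $(\gphi_k,\psizero)_{\Ltwo{\Rhalf}}=0$ rather than any far-field asymptotics. Both $\Ubulk$ and $\Usurf$ are bounded maps of their data into $\Bst$ (Proposition~\ref{prop:F-adjoint}). The cross term $R^{-1}\int_{\Ball{R}}\Ubulk\overline{\Usurf}$ is therefore bounded by $4\Bstnorm{\Ubulk}\Bstnorm{\Usurf}$ uniformly in $R$, by Lemma~\ref{lem:shell-average}(i) and Cauchy--Schwarz. A density argument (an $\eps/3$ argument, as in Lemma~\ref{lem:shell-average}(iv)) thus reduces the claim to $g_{\mathrm{bulk}}\in C_c^\infty(\Sm)$ and $g_{\mathrm{surf}}\in C^\infty(\Ssurf)$.

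For such data, write $\Usurf=w\otimes\psizero$ with $w$ the Herglotz wave of \eqref{eq:surf-factorization}. I would replace the ball $\Ball{R}$ by the cylinder $Z_R=\{|\xpar|<R\}\times(0,\infty)$. On $\Ball{R}$ and on $Z_R$ the factor $\psizero(x_n)$ confines $\Usurf$ to the layer $x_n\lesssim \log R$, up to an exponentially small tail. The symmetric difference $Z_R\setminus\Ball{R}$ within that layer is contained in $\{R-O(\log^2 R/R)<|\xpar|<R\}$ (for $x_n\le\log R$, $\sqrt{R^2-x_n^2}\ge R-x_n^2/R$). Its contribution is $o(R)$, since $\Ubulk$ is bounded there by the stationary-phase bound $|\Ubulk|\lesssim r^{-(n-1)/2}$ from the proof of Theorem~\ref{thm:eigen-far-field}, Step~1, and $w=O(\rho^{-(n-2)/2})$. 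The region $x_n>\log R$ contributes $O(e^{-\alpha\log R})$ times bounded shell averages, which is negligible after dividing by $R$. So it suffices to show that $\int_{Z_R}\Ubulk\overline{\Usurf}\,dx=o(R)$.

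On $Z_R$ I would integrate first in $x_n$. By \eqref{eq:Fbulk-adjoint} and \eqref{eq:Psib-def}, $\Ubulk(\xpar,x_n)$ is a superposition over $\theta\in\Sm$ of $e^{i\kbulk\xpar\cdot\theta'}\gphi_{\kbulk\theta_n}(x_n)$ with smooth compactly supported density. Because $g_{\mathrm{bulk}}$ is supported away from the equator, $\theta_n\ge\delta>0$ on the support, and $\gphi_k$ is bounded, so Fubini applies: the $x_n$-integral $\int_0^\infty\gphi_{\kbulk\theta_n}(x_n)\psizero(x_n)\,dx_n$ converges absolutely thanks to $e^{-\alpha x_n}$, and it vanishes identically by the orthogonality in \eqref{eq:fiber-recall}. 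Hence $\int_{Z_R}\Ubulk\overline{\Usurf}\,dx=0$ for every $R$, exactly.

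The main obstacle is the justification of this exchange together with the comparison between $\Ball{R}$ and $Z_R$, that is, making the boundary-layer error estimate honest. If that estimate fails, the fallback is to smooth the cutoff: replace $\mathbf 1_{\Ball{R}}$ by $\chi(|\xpar|/R)\mathbf 1_{x_n<R}$ and use the $R$-uniform bounds of Lemma~\ref{lem:shell-average}. The difference between the two cutoffs is supported where $\Usurf$ is either exponentially small or lies in a thin annulus in $\xpar$, and both contributions are $o(R)$ by the polar-coordinate computation already used for the remainders in Theorem~\ref{thm:eigen-far-field}.
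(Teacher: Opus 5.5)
Your argument is correct and is essentially the paper's proof. The exact transverse orthogonality $(\gphi_k,\psizero)_{\Ltwo{\Rhalf}}=0$ kills the integral over the cylinder $Z_R$, so only $Z_R\setminus\Ball{R}$ has to be estimated, and the general case follows by density from the uniform bound $4\Bstnorm{\Ubulk}\Bstnorm{\Usurf}$; the only difference lies in that last estimate, and no fallback is needed. The paper cuts $Z_R\setminus\Ball{R}$ at height $x_n=R$, using the $\Bst$ bound on $\Ubulk$ and $\int_0^R e^{-2\alpha\sqrt{R^2-\rho^2}}\,d\rho=O(R^{-1})$ for $\Usurf$, whereas you cut at $x_n=\log R$ and use the thin annulus with pointwise decay of both factors, which works provided you keep the weight $\psizero$ on both sides of Cauchy--Schwarz above the cut.
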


\begin{proof}
Assume first $g_{\mathrm{bulk}} \in C(\overline{\Sm})$ and
$g_{\mathrm{surf}} \in C^{\infty}(\Ssurf)$, and write
$\Usurf = w \otimes \psizero$ as in \eqref{eq:surf-factorization}.

\emph{Step 1: exact transverse orthogonality.} By \eqref{eq:fiber-recall} we
have $\int_0^{\infty}\gphi_k(t)\psizero(t)\,dt = 0$ for every $k > 0$. Since
$|\gphi_k| \le \sqrt{2/\pi}$ and $\psizero \in L^1(\Rhalf)$, Fubini's theorem
applies to \eqref{eq:Psib-def}--\eqref{eq:Fbulk-adjoint} and gives, for
\emph{every} $\xpar \in \Rnm$,
\begin{equation}
  \label{eq:transverse-orth}
  \int_0^{\infty}\Ubulk(\xpar,x_n)\,\psizero(x_n)\,dx_n
  = \frac{E^{\frac{n-2}{4}}}{\sqrt2}(2\pi)^{-\frac{n-1}{2}}
    \int_{\Sm}e^{i\kbulk\xpar\cdot\sigma'}g_{\mathrm{bulk}}(\sigma)
    \Bigl[ \int_0^{\infty}\gphi_{\kbulk\sigma_n}\psizero\,dt \Bigr]
    d\sigma = 0 ,
\end{equation}
the function $\psizero$ being real-valued.

\emph{Step 2: the cylinder contributes nothing.} Let
$Z_R = \{|\xpar| < R\} \times (0,\infty)$. Both $\Ubulk$ and $w$ are bounded
(the former by $\norm{g_{\mathrm{bulk}}}_{L^1(\Sm)}$, since
$|\gphi_k| \le \sqrt{2/\pi}$ uniformly in $k$ by \eqref{eq:fiber-recall}), so
$\Ubulk\overline{\Usurf} \in L^1(Z_R)$, and by \eqref{eq:transverse-orth}
\begin{equation*}
  \int_{Z_R}\Ubulk\,\overline{\Usurf}\,dx
  = \int_{\{|\xpar|<R\}}\overline{w(\xpar)}
    \Bigl[ \int_0^{\infty}\Ubulk(\xpar,x_n)\psizero(x_n)\,dx_n \Bigr]
    d\xpar = 0 .
\end{equation*}
Hence
$\int_{\Ball{R}}\Ubulk\overline{\Usurf}
= -\int_{Z_R \setminus \Ball{R}}\Ubulk\overline{\Usurf}$, and it suffices to
estimate the latter. Explicitly
$Z_R \setminus \Ball{R}
= \{ |\xpar| < R,\ x_n > \sqrt{R^2-|\xpar|^2} \}$.

\emph{Step 3: the region $x_n \ge R$.} By \eqref{eq:psi0}
$\psizero(x_n) = \sqrt{2\alpha}\,e^{-\alpha x_n}$, and the ball
$\{|\xpar| < R\}$ of $\Rnm$ has volume
$\int_{\{|\xpar|<R\}}d\xpar = (n-1)^{-1}R^{n-1}\int_{\Ssurf}dS(\omega)$, so
\begin{equation*}
\begin{aligned}
  \int_{\{|\xpar|<R\}}\int_R^{\infty}|\Ubulk|\,|\Usurf|\,dx_n\,d\xpar
  &\le \norm{\Ubulk}_{L^{\infty}}\norm{w}_{L^{\infty}}
      \sqrt{2\alpha}\,\Bigl(\int_{\{|\xpar|<R\}}d\xpar\Bigr)
      \int_R^{\infty}e^{-\alpha x_n}\,dx_n
  \\[1mm]
  &= O\bigl( R^{n-1}e^{-\alpha R} \bigr).
  \end{aligned}
\end{equation*}

\emph{Step 4: the region $x_n < R$.} This part of
$Z_R \setminus \Ball{R}$ is contained in $\Ball{\sqrt2 R}$, so by
Lemma~\ref{lem:shell-average}(i)
\begin{equation*}
  \int_{(Z_R \setminus \Ball{R}) \cap \{x_n<R\}}|\Ubulk|^2\,dx
  \le 4\sqrt2\,R\,\Bstnorm{\Ubulk}^2 .
\end{equation*}
For $\Usurf$ we use the uniform pointwise bound
$|w(\xpar)| \le C_{g}(1+\rho)^{-(n-2)/2}$, which is
Lemma~\ref{lem:sphere-statphase} applied on the closed manifold $\Ssurf$ with
the smooth density $g_{\mathrm{surf}}$. Then, integrating $x_n$ first,
\begin{equation*}
  \int_{(Z_R \setminus \Ball{R}) \cap \{x_n<R\}}|\Usurf|^2\,dx
  \le \int_{\{|\xpar|<R\}}|w(\xpar)|^2
      e^{-2\alpha\sqrt{R^2-|\xpar|^2}}\,d\xpar
  \le C\int_0^{R}e^{-2\alpha\sqrt{R^2-\rho^2}}\,d\rho ,
\end{equation*}
where the constant absorbed $\rho^{n-2}(1+\rho)^{-(n-2)}\le 1$ and the area
of $\Ssurf$. Substituting $s = \sqrt{R^2-\rho^2}$ gives
\begin{equation*}
  \int_0^{R}e^{-2\alpha\sqrt{R^2-\rho^2}}\,d\rho
  = \int_0^{R}e^{-2\alpha s}\frac{s\,ds}{\sqrt{R^2-s^2}}
  \le \frac{2}{\sqrt3\,R}\int_0^{\infty}se^{-2\alpha s}\,ds
    + e^{-\alpha R}\frac{\sqrt3}{2}R
  = O(R^{-1}),
\end{equation*}
the two terms coming from $s \le R/2$, where
$\sqrt{R^2-s^2} \ge \tfrac{\sqrt3}{2}R$, and from $s > R/2$, where
$e^{-2\alpha s} \le e^{-\alpha R}$ and
$\int_{R/2}^{R}s(R^2-s^2)^{-1/2}ds = \tfrac{\sqrt3}{2}R$. By
Cauchy--Schwarz the contribution of this region to
$\int_{Z_R\setminus\Ball{R}}|\Ubulk||\Usurf|$ is therefore
$O(R^{1/2})\cdot O(R^{-1/2}) = O(1)$.

Combining Steps 3 and 4 and dividing by $R$ gives
\eqref{eq:char-channel-orth} with rate $O(R^{-1})$ for the dense class of
data considered.

\emph{Step 5: density.} Write
$\Lambda_R(g_{\mathrm{bulk}},g_{\mathrm{surf}})
= R^{-1}\int_{\Ball{R}}\Ubulk\overline{\Usurf}\,dx$. By
Cauchy--Schwarz and Lemma~\ref{lem:shell-average}(i),
\begin{equation*}
\begin{aligned}
  |\Lambda_R(g_{\mathrm{bulk}},g_{\mathrm{surf}})|
  &\le \MavR{\Ubulk}{R}^{1/2}\MavR{\Usurf}{R}^{1/2}
   \le 4\,\Bstnorm{\Ubulk}\Bstnorm{\Usurf}
  \\[1mm]
  &\le C\norm{g_{\mathrm{bulk}}}_{\Ltwo{\Sm}}
       \norm{g_{\mathrm{surf}}}_{\Ltwo{\Ssurf}},
  \end{aligned}
\end{equation*}
uniformly in $R \ge 1$, the last step by
Proposition~\ref{prop:F-adjoint}. A sesquilinear family that is uniformly
bounded and tends to $0$ on a dense subspace tends to $0$ everywhere, which
completes the proof.
\end{proof}

The cancellation \eqref{eq:transverse-orth} is exact and pointwise in
$\xpar$, and holds at every energy including, formally, the threshold. Only
the difference between the ball $\Ball{R}$ and the cylinder $Z_R$ has to be
estimated, and there the exponential localization of $\psizero$ does all the
work.

\subsubsection{The shell-average identity and the two-sided bound}
\label{sec:char-lower-bound}

\begin{theorem}[Shell-average identity for eigenfunctions]
\label{thm:shell-average-identity}
Let $E \in \Ereg$ and $\phi = (\phi_{\mathrm{bulk}},\phi_{\mathrm{surf}})
\in \Echan{E}$. Then $\Mav{\Fzero(E)^{*}\phi}$ exists and
\begin{equation}
  \label{eq:shell-average-identity}
  \Mav{\Fzero(E)^{*}\phi}
  = \frac{1}{2\pi\kbulk}\,\norm{\phi_{\mathrm{bulk}}}^2_{\Ltwo{\Sm}}
    + \frac{1}{2\pi\ksurf}\,\norm{\phi_{\mathrm{surf}}}^2_{\Ltwo{\Ssurf}},
\end{equation}
the first term being absent when $-\alpha^2 < E < 0$. The same identity holds
with $\Fzero(E)^{*}$ replaced by $\Fzero^{(\pm)}(E)^{*}$.
\end{theorem}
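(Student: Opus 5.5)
The plan is to evaluate the shell average on the far-field representative supplied by Theorem~\ref{thm:eigen-far-field}. By Lemma~\ref{lem:shell-average}(iii) the limit may be computed for any $\simeq$-equivalent function. By (iv) it suffices to treat a dense class of $\phi$, since $\phi\mapsto\Fzero(E)^*\phi$ is bounded $\Echan{E}\to\Bst$ (Proposition~\ref{prop:F-adjoint}) and both sides of \eqref{eq:shell-average-identity} are continuous in $\phi$. So I take $\phi_{\mathrm{bulk}}\in C_c^\infty(\Sm)$ and $\phi_{\mathrm{surf}}\in C^\infty(\Ssurf)$. I then expand $\MavR{U_{\mathrm{bulk}}+U_{\mathrm{surf}}}{R}$ into the two diagonal terms and the cross term. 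The cross term vanishes in the limit by Proposition~\ref{prop:char-channel-orth}. This is exactly why the channels can be handled separately, and it is the only place their interaction enters.

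For the surface diagonal term I use the factorization \eqref{eq:surf-factorization}, $U_{\mathrm{surf}}=w\otimes\psizero$. Lemma~\ref{lem:tensor-shell} together with $\norm{\psizero}=1$ reduces the problem to computing $\Mavt{w}$ on $\Rnm$. By Step~2 of Theorem~\ref{thm:eigen-far-field}, $w\simeq\rho^{-(n-2)/2}[\gamsurf e^{i\ksurf\rho}\phi_{\mathrm{surf}}(\omega)+\overline{\gamsurf}e^{-i\ksurf\rho}\phi_{\mathrm{surf}}(-\omega)]$. In polar coordinates on $\Rnm$ the squared modulus integrates over $\{\rho<R\}$ to
\[
\int_1^R\int_{\Ssurf}\bigl(|\gamsurf|^2(|\phi(\omega)|^2+|\phi(-\omega)|^2)+2\re[\gamsurf^2e^{2i\ksurf\rho}\phi(\omega)\overline{\phi(-\omega)}]\bigr)\,dS\,d\rho .
\]
The oscillatory part contributes $O(1)$ after integrating $e^{2i\ksurf\rho}$ in $\rho$, so it disappears after dividing by $R$. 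The nonoscillatory part gives $2|\gamsurf|^2\norm{\phi_{\mathrm{surf}}}^2=(2\pi\ksurf)^{-1}\norm{\phi_{\mathrm{surf}}}^2$ by \eqref{eq:gamma-moduli}.

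The bulk diagonal term is computed the same way in $\Rnp$ with $dx=r^{n-1}dr\,dS(\theta)$ over $\Sm$. The incoming and outgoing parts each give $|\gambulk|^2\norm{\phi_{\mathrm{bulk}}}^2$, using \eqref{eq:ydata-norms} (unimodularity of $\cph{\cdot}$ and unitarity of $\Jbulk$). The cross term again carries $e^{2i\kbulk r}$ and averages out. The total is $(2\pi\kbulk)^{-1}\norm{\phi_{\mathrm{bulk}}}^2$. On the band $-\alpha^2<E<0$ only the surface computation is present.

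For $\Fzero^{(\pm)}(E)^*$, I note that $\Fzero^{(\pm)}(E)^*\phi=\Fzero(E)^*\tilde\phi$ with $\tilde\phi_{\mathrm{bulk}}=\cph{\kbulk\theta_n}^{\mp1}\ldots$, which is a unimodular multiple of $\phi_{\mathrm{bulk}}$, and with $\tilde\phi_{\mathrm{surf}}=\phi_{\mathrm{surf}}$. The norms are therefore unchanged, and the identity carries over.

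I expect the only delicate point to be justifying that the error terms in the far-field expansion do not spoil the limit of $\MavR{\cdot}{R}$, that is, passing from $\simeq$ to equality of limits, and interchanging the $\rho$-integration with the oscillating cross terms. Both are handled by Lemma~\ref{lem:shell-average}(ii)--(iii) and by an explicit integration of $e^{2ik\rho}$ against the smooth, fixed angular integrals.
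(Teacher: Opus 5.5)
Your proposal is correct and follows the paper's proof. The cross term is removed by Proposition~\ref{prop:char-channel-orth}. The two diagonal terms are evaluated on the far-field representatives of Theorem~\ref{thm:eigen-far-field}, using Lemma~\ref{lem:shell-average}(iii) and Lemma~\ref{lem:tensor-shell}. The $\Fzero^{(\pm)}(E)^{*}$ case follows from the unimodular multiplier. Your preliminary reduction to smooth data via Lemma~\ref{lem:shell-average}(iv) is valid but unnecessary, since Theorem~\ref{thm:eigen-far-field} and Proposition~\ref{prop:char-channel-orth} are already stated for arbitrary $L^2$ data.
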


\begin{proof}
Let first $E > 0$ and write $u = \Ubulk + \Usurf$ as in
Proposition~\ref{prop:char-channel-orth}. By that proposition,
\begin{equation*}
  \MavR{u}{R} = \MavR{\Ubulk}{R} + \MavR{\Usurf}{R}
    + 2\re\Lambda_R \quad\text{with}\quad \Lambda_R \to 0,
\end{equation*}
so it suffices to compute the two diagonal terms. By
Lemma~\ref{lem:shell-average}(iii) each may be computed on the far-field
representative furnished by Theorem~\ref{thm:eigen-far-field}.

For the bulk term let
$v = r^{-(n-1)/2}\bigl[\gambulk e^{i\kbulk r}\yout_{\mathrm{bulk}}(\theta)
+ \overline{\gambulk}e^{-i\kbulk r}\yin_{\mathrm{bulk}}(\theta)\bigr]$, which
is locally square integrable since $r^{-(n-1)}$ is. In polar coordinates
$dx = r^{n-1}dr\,dS(\theta)$ on $\Rnp$, with $\theta$ ranging over $\Sm$, the
weight $r^{-(n-1)}$ cancels the Jacobian and
\begin{equation*}
  \MavR{v}{R}
  = \frac{1}{R}\int_0^{R}\!\!\int_{\Sm}
    \Bigl[ |\gambulk|^2\bigl( |\yout_{\mathrm{bulk}}|^2
      + |\yin_{\mathrm{bulk}}|^2 \bigr)
      + 2\re\bigl( \gambulk^2 e^{2i\kbulk r}
        \yout_{\mathrm{bulk}}\overline{\yin_{\mathrm{bulk}}} \bigr)
    \Bigr] dS(\theta)\,dr .
\end{equation*}
The cross term carries the non-stationary radial phase $e^{2i\kbulk r}$ and
contributes
$R^{-1}\int_0^{R}e^{2i\kbulk r}dr = O\bigl((\kbulk R)^{-1}\bigr) \to 0$,
while $R^{-1}\int_0^{R}dr = 1$. Hence, by \eqref{eq:ydata-norms} and
\eqref{eq:gamma-moduli},
\begin{equation*}
  \Mav{\Ubulk} = \Mav{v}
  = 2|\gambulk(E)|^2\norm{\phi_{\mathrm{bulk}}}^2_{\Ltwo{\Sm}}
  = \frac{1}{2\pi\kbulk}\norm{\phi_{\mathrm{bulk}}}^2_{\Ltwo{\Sm}} .
\end{equation*}
For the surface term the far-field representative is
$v' \otimes \psizero$, where
\begin{equation*}
  v' = \rho^{-\frac{n-2}{2}}
    \Bigl[ \gamsurf(E)\,e^{i\ksurf\rho}\,\yout_{\mathrm{surf}}(\omega)
      + \overline{\gamsurf(E)}\,e^{-i\ksurf\rho}\,
        \yin_{\mathrm{surf}}(\omega) \Bigr].
\end{equation*}
The identical computation performed in $\Rnm$, where
$d\xpar = \rho^{n-2}d\rho\,dS(\omega)$ with $\omega$ ranging over the whole
of $\Ssurf$, gives
$\Mavt{v'} = 2|\gamsurf(E)|^2
\norm{\phi_{\mathrm{surf}}}^2_{\Ltwo{\Ssurf}}$, and
Lemma~\ref{lem:tensor-shell} together with
$\norm{\psizero}_{\Ltwo{\Rhalf}} = 1$ yields
\begin{equation*}
  \Mav{\Usurf} = \Mav{v' \otimes \psizero} = \Mavt{v'}
  = \frac{1}{2\pi\ksurf}\norm{\phi_{\mathrm{surf}}}^2_{\Ltwo{\Ssurf}} .
\end{equation*}
Adding the two gives \eqref{eq:shell-average-identity}. For
$-\alpha^2 < E < 0$ only the surface computation occurs and there is no cross
term. Finally
$\Fzero^{(\pm)}(E)^{*}\phi = \Fzero(E)^{*}\bigl(m^{\mp1}\phi\bigr)$ for the
unimodular multiplier $m = \cph{\kbulk\theta_n} \oplus 1$ of
\eqref{eq:Fout-def}, and the right-hand side of
\eqref{eq:shell-average-identity} is unchanged under multiplication of
$\phi$ by a unimodular function.
\end{proof}

\begin{corollary}[Two-sided estimate and closed range]
\label{cor:two-sided}
Let $E \in \Ereg$ and let $\Fzero^{\bullet}$ denote any one of $\Fzero$,
$\Fzero^{(+)}$, $\Fzero^{(-)}$. Then for every $\phi \in \Echan{E}$
\begin{equation}
  \label{eq:two-sided}
  \frac{1}{\sqrt{8\pi\ksurf}}\;\norm{\phi}_{\Echan{E}}
  \;\le\; \Bstnorm{\Fzero^{\bullet}(E)^{*}\phi}
  \;\le\; \norm{\Fzero(E)^{*}}_{\Lop{\Echan{E}}{\Bst}}\,
    \norm{\phi}_{\Echan{E}} .
\end{equation}
In particular each of the three operators
$\Fzero^{\bullet}(E)^{*} : \Echan{E} \to \Bst$ is injective, is bounded below,
and has closed range.
\end{corollary}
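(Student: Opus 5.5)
The lower bound will come from the shell-average identity of Theorem~\ref{thm:shell-average-identity}, combined with Lemma~\ref{lem:shell-average}(v). The upper bound will come from Proposition~\ref{prop:F-adjoint}, transported to $\Fzero^{(\pm)}(E)^{*}$ through the unimodular multiplier of \eqref{eq:Fout-def}. Fix $E \in \Ereg$, $\phi \in \Echan{E}$, and let $\Fzero^{\bullet}$ be one of the three transforms.

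For the lower bound, Theorem~\ref{thm:shell-average-identity} says that $\Mav{\Fzero^{\bullet}(E)^{*}\phi}$ exists and equals
\[
\frac{1}{2\pi\kbulk}\norm{\phi_{\mathrm{bulk}}}^2_{\Ltwo{\Sm}}
+ \frac{1}{2\pi\ksurf}\norm{\phi_{\mathrm{surf}}}^2_{\Ltwo{\Ssurf}} ,
\]
and it holds for all three choices of $\Fzero^{\bullet}$. By \eqref{eq:asymp-wavenumbers}, $\kbulk < \ksurf$ when $E > 0$. Hence both coefficients are at least $(2\pi\ksurf)^{-1}$, and the right-hand side is at least $(2\pi\ksurf)^{-1}\norm{\phi}^2_{\Echan{E}}$. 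On the band $-\alpha^2 < E < 0$ only the surface term is present, and the same bound holds trivially. Lemma~\ref{lem:shell-average}(v) gives
\[
\Bstnorm{\Fzero^{\bullet}(E)^{*}\phi}^2 \ge \tfrac14\,\Mav{\Fzero^{\bullet}(E)^{*}\phi} .
\]
Combining the two yields $\Bstnorm{\Fzero^{\bullet}(E)^{*}\phi}^2 \ge (8\pi\ksurf)^{-1}\norm{\phi}^2_{\Echan{E}}$, which is the left inequality of \eqref{eq:two-sided}.

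For the upper bound, write $\Fzero^{(\pm)}(E)^{*}\phi = \Fzero(E)^{*}(m^{\mp1}\phi)$ with $m = \cph{\kbulk\theta_n}\oplus 1$, as at the end of the proof of Theorem~\ref{thm:shell-average-identity}. Since $|m| = 1$, multiplication by $m^{\mp1}$ is unitary on $\Echan{E}$. Proposition~\ref{prop:F-adjoint} then gives the right inequality of \eqref{eq:two-sided} with the operator norm of $\Fzero(E)^{*}$ for all three choices.

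Injectivity is immediate from the lower bound. For closed range, take $\Fzero^{\bullet}(E)^{*}\phi_j \to u$ in $\Bst$. The lower bound makes $(\phi_j)$ Cauchy in the Hilbert space $\Echan{E}$, so $\phi_j \to \phi$. Boundedness of $\Fzero^{\bullet}(E)^{*}$ then gives $u = \Fzero^{\bullet}(E)^{*}\phi$. No step is a real obstacle here, since all the analytic content sits in Theorem~\ref{thm:shell-average-identity}. The only points needing care are the comparison $\kbulk < \ksurf$, which fixes the constant, and checking that the identity is available for $\Fzero^{(\pm)}$ as well as for $\Fzero$.
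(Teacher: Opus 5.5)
Your proposal is correct and follows the paper's proof essentially step for step: the lower bound comes from Theorem~\ref{thm:shell-average-identity} together with $\kbulk < \ksurf$ and Lemma~\ref{lem:shell-average}(v), the upper bound from Proposition~\ref{prop:F-adjoint}, and closed range from the Cauchy-sequence argument. The paper cites Proposition~\ref{prop:F-adjoint} only for $\Fzero(E)^{*}$, so your explicit transport of the upper bound to $\Fzero^{(\pm)}(E)^{*}$ through the unimodular multiplier $m = \cph{\kbulk\theta_n}\oplus 1$ spells out a step the paper leaves implicit.
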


\begin{proof}
The upper bound is Proposition~\ref{prop:F-adjoint}. For the lower one,
observe that $\ksurf > \kbulk > 0$ for $E > 0$, so that
$(2\pi\ksurf)^{-1}$ is the smaller of the two coefficients in
\eqref{eq:shell-average-identity} and
\begin{equation*}
  \Mav{\Fzero^{\bullet}(E)^{*}\phi}
  \ge \frac{1}{2\pi\ksurf}
    \Bigl( \norm{\phi_{\mathrm{bulk}}}^2_{\Ltwo{\Sm}}
      + \norm{\phi_{\mathrm{surf}}}^2_{\Ltwo{\Ssurf}} \Bigr)
  = \frac{1}{2\pi\ksurf}\norm{\phi}^2_{\Echan{E}} .
\end{equation*}
For $-\alpha^2 < E < 0$ this holds with equality. Now apply
Lemma~\ref{lem:shell-average}(v), the identity
\eqref{eq:shell-average-identity} being available for all three normalizations
by Theorem~\ref{thm:shell-average-identity}. An operator bounded below is
injective, and its range is closed: if $\Fzero^{\bullet}(E)^{*}\phi_j \to u$ in
$\Bst$, then $(\phi_j)$ is Cauchy in $\Echan{E}$ by the lower bound, so
$\phi_j \to \phi$ and $u = \Fzero^{\bullet}(E)^{*}\phi$ by continuity.
\end{proof}

\begin{remark}[Interpretation: total reflection]
\label{rem:total-reflection}
In \eqref{eq:shell-average-identity} the incoming and the outgoing halves of
the far field contribute equally, each of them
$(4\pi\kappa)^{-1}\norm{\phi_{\text{channel}}}^2$ with $\kappa$ the effective
wavenumber of the channel. This equality is the total reflection
$|\refl{\cdot}| = 1$ of \eqref{eq:phase-and-reflection}: the Robin boundary
absorbs nothing, and $\Fzero(E)^{*}\phi$ is a standing wave. It also explains
why Corollary~\ref{cor:two-sided} does not follow from Stone's formula
\eqref{eq:stone}, which controls $\Fzero(E)^{*}$ only from above: the outgoing
resolvent $\Ralpha{E+i0}f$ carries only the $e^{+i\kbulk r}$ component, and
its shell average \eqref{eq:shell-flux} has one term per channel instead of
two.
\end{remark}

\subsubsection{The shell average of the outgoing resolvent}
\label{sec:char-flux}

The second pillar of the characterization requires the shell average of
$\Ralpha{E+i0}f$ for $f \in \Bsp$ and for every $E \in \Ereg$. For $E > 0$
this follows from Theorem~\ref{thm:asymp-uniform}
(Lemma~\ref{lem:shell-flux-B} below). On the surface band the corresponding
expansion is not yet available, because \S\ref{subsec:asymptotics} is written
under the standing assumption $E > 0$. We supply it first.

There the bulk channel is closed, and the expansion is simpler than for
$E > 0$: it needs neither stationary phase nor
any analysis of the grazing region, because the whole bulk part of the
resolvent is an $\Ltwo{\Rnp}$ function. The mechanism is the exact reduction
of Proposition~\ref{prop:surface-reduction}. We write
$\Vsurf : \Ltwo{\Rnm} \to \Ltwo{\Rnp}$ for the isometry
$(\Vsurf g)(\xpar,x_n) = g(\xpar)\psizero(x_n)$, so that
$\Vsurf^{*} = T_{\psizero}$ in the notation of
Corollary~\ref{cor:tensor}, $\psizero$ being real, that
$\Vsurf^{*}\Vsurf = I$ because $\norm{\psizero}_{\Ltwo{\Rhalf}} = 1$, and
that $\Vsurf\Vsurf^{*} = I \otimes \Psurf$ is the orthogonal projection onto
$\Hsurf$. Let $\Pibulk = I - \Vsurf\Vsurf^{*}$ be the projection onto
$\Hbulk$.

\begin{proposition}[Expansion of the resolvent on the surface band]
\label{prop:char-surface-band}
Let $-\alpha^2 < E < 0$ and $f \in \Bsp$. Then
\begin{equation}
  \label{eq:char-sb-splitting}
  \Ralpha{E+i0}f
  = \Vsurf\Bigl( \Rfree{n-1}{E+\alpha^2+i0}\,\Vsurf^{*}f \Bigr)
    \;+\; \Ralpha{E}\,\Pibulk f,
\end{equation}
and the second summand satisfies
\begin{equation}
  \label{eq:char-sb-L2}
  \norm{\Ralpha{E}\,\Pibulk f}_{\Ltwo{\Rnp}}
  \le \frac{1}{|E|}\,\norm{f}_{\Ltwo{\Rnp}}
  \le \frac{1}{|E|}\,\Bnorm{f} .
\end{equation}
Consequently
\begin{equation}
  \label{eq:char-sb-expansion}
  \Ralpha{E+i0}f \;\simeq\; \Ws{+}{C},
  \qquad
  C = c_n'\,\bigl( \Fsurf(E)f \bigr),
\end{equation}
with $c_n' = \sqrt{\pi/\ksurf}\,e^{-i(n-4)\pi/4}$ as in \eqref{eq:usurf}, all
of whose ingredients depend on $E$ only through the surface wavenumber $\ksurf$.
\end{proposition}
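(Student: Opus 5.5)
The plan is to work on $\Ltwo{\Rnp}$ at $z = E + i\eps$ first and then pass to the boundary value. By Proposition~\ref{prop:surface-reduction}, $\Hsurf$ and $\Hbulk$ both reduce $\Halpha$, so $\Ralpha{z} = \Ralpha{z}\Vsurf\Vsurf^{*} + \Ralpha{z}\Pibulk$. By \eqref{eq:surface-resolvent} the first summand is $\Vsurf\Rfree{n-1}{z+\alpha^2}\Vsurf^{*}$.

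For the second summand I will use the spectrum of $\Halpha|_{\Hbulk}$. Under $\mathcal{F}'$, the space $\Hbulk$ is $\Ltwo{\Rnm}\otimes\{\psizero\}^{\perp}$. On each fiber, $\hfib(\xipar)$ restricted to $\{\psizero\}^\perp$ has spectrum $[|\xipar|^2,\infty) \subset [0,\infty)$; this is the diagonalization behind Corollary~\ref{cor:completeness}, in which $\Ftbulk$ diagonalizes $\Halpha|_{\Hbulk}$ by $|\xi|^2 \ge 0$. Hence $\sigma(\Halpha|_{\Hbulk}) \subset [0,\infty)$. For $E<0$ the point $E$ lies in the resolvent set of this part, and the spectral theorem gives
\[
  \norm{(\Halpha|_{\Hbulk} - z)^{-1}} \le \dist(z,[0,\infty))^{-1} \le |E|^{-1}
\]
uniformly in $\eps$. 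Consequently $\Ralpha{z}\Pibulk f \to \Ralpha{E}\Pibulk f$ in norm, where $\Ralpha{E}\Pibulk$ means the bulk-part resolvent. With $\norm{\Pibulk f} \le \norm f$ and $\norm{f}_{\Ltwo{\Rnp}} \le \Bnorm f$ from the inclusion following \eqref{eq:AH-pairing}, this is \eqref{eq:char-sb-L2}.

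For the first summand, Corollary~\ref{cor:tensor} gives $\Vsurf^{*}f = T_{\psizero}f \in \Bspd{\Rnm}$. Theorem~\ref{thm:AH-classical} in dimension $n-1$ at the energy $E+\alpha^2>0$ gives weak-$*$ convergence of $\Rfree{n-1}{z+\alpha^2}\Vsurf^{*}f$ in $\Bstd{\Rnm}$. The bound on $M_{\psizero}$ in \eqref{eq:tensor-bounds} transports this to $\Bst$ via pairing with $g\in\Bsp$, since $(\Vsurf w, g) = (w, T_{\psizero}g)$. Comparing with the weak-$*$ limit of Theorem~\ref{thm:lap}(b) yields \eqref{eq:char-sb-splitting}.

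For \eqref{eq:char-sb-expansion}, note first that an $\Ltwo{\Rnp}$ function lies in $\Bstz$ by \eqref{eq:AH-inclusions}, so $\Ralpha{E+i0}f \simeq \Vsurf w$ with $w = \Rfree{n-1}{E+\alpha^2+i0}T_{\psizero}f$. The main step is to show $w \simeq \rho^{-(n-2)/2}e^{i\ksurf\rho}C(\omega)$ in $\Bstd{\Rnm}$, and then lift this with Lemma~\ref{lem:tensor-shell}, since $\Mav{(w-w_0)\otimes\psizero} = \Mavt{w-w_0}$. I do this in two stages.
\begin{enumerate}
  \item[(a)] For $f \in C_c^\infty(\Rnp)$, the function $T_{\psizero}f$ is in $C_c^\infty(\Rnm)$. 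Lemma~\ref{lem:hankel} with $m=n-1$, $k=\ksurf$ gives a pointwise expansion with relative error $O(\rho^{-1})$. Using \eqref{eq:surface-formula} and \eqref{eq:Fsurf-def} exactly as in the proof of Proposition~\ref{prop:asymp-surface}, the amplitude is $c_n'\Fsurf(E)f$. That computation never used $E>0$, only $E+\alpha^2>0$. The remainder $O(\rho^{-n/2})$ has vanishing shell average by the polar-coordinate computation in Step~1 of Theorem~\ref{thm:eigen-far-field}.
  \item[(b)] To reach general $f \in \Bsp$, I argue by density as in the last step of Theorem~\ref{thm:asymp-uniform}. Continuity of $f\mapsto \Ralpha{E+i0}f$ into $\Bst$ comes from Theorem~\ref{thm:lap}. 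Continuity of $f\mapsto \Ws{+}{C}$ comes from Proposition~\ref{prop:F-bounded} together with the elementary bound $\bigl\|B(\omega)\rho^{-(n-2)/2}e^{i\ksurf\rho}\otimes\psizero\bigr\|_{\Bst}\le C\norm{B}_{\Ltwo{\Ssurf}}$. Finally, $\Bstz$ is closed in $\Bst$.
\end{enumerate}

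The main obstacle is making the convergence in the first step rigorous. Norm convergence in $\Ltwo{\Rnp}$ implies convergence in $\Bst$ (norm at most one), so the bulk part is fine. The surface part converges only weak-$*$, so the identification has to be made entirely through pairings with $g\in\Bsp$, and that is how I will do it.
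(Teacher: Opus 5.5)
Your proposal is correct and follows the paper's proof essentially step by step. The common skeleton is: the reduction by $\Hsurf\oplus\Hbulk$ at $z=E+i\varepsilon$; the spectral-theorem bound $|E|^{-1}$ on the bulk part, which converges in norm because $E<0$ lies in the resolvent set of $\Halpha|_{\Hbulk}$; the free limiting absorption principle in dimension $n-1$ for the surface part; the inclusion $\Ltwo{\Rnp}\subset\Bstz$; Lemma~\ref{lem:hankel} for $f\in C_c^\infty(\Rnp)$; and density.

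The differences are cosmetic. You lift the $(n-1)$-dimensional expansion through Lemma~\ref{lem:tensor-shell}, whereas the paper integrates the remainder $O(\rho^{-n/2})e^{-\alpha x_n}$ directly over the half-ball and treats the tube $\{\rho\le1\}$ separately. You also spell out the weak-$*$ identification of the surface summand via $(\Vsurf w,g)=(w,T_{\psizero}g)$, which the paper leaves implicit.
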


\begin{proof}
By Proposition~\ref{prop:surface-reduction} the orthogonal decomposition
$\Ltwo{\Rnp} = \Hsurf \oplus \Hbulk$ reduces $\Halpha$, so that for
$z \notin \mathbb{R}$ the resolvent splits accordingly, and on the first
summand it is given by \eqref{eq:surface-resolvent}. This is
\eqref{eq:char-sb-splitting} for $z$ off the axis.

It remains to pass to the boundary value, and here the two summands behave
differently. By \eqref{eq:bulk-part} the restriction $\Halpha|_{\Hbulk}$ is
unitarily equivalent to multiplication by $|\xipar|^2 + k^2$ on
$\Ltwo{\Rnm \times \Rhalf}$, so
$\sigma\bigl(\Halpha|_{\Hbulk}\bigr) = [0,\infty)$. Hence every $E < 0$ lies
in the resolvent set of $\Halpha|_{\Hbulk}$, the map
$z \mapsto \Ralpha{z}|_{\Hbulk}$ is holomorphic in the $\Ltwo{}$ operator norm
in a neighbourhood of $E$, and by the spectral theorem
\begin{equation*}
  \bigl\| \Ralpha{E}\big|_{\Hbulk} \bigr\|_{\Lop{\Ltwo{\Rnp}}{\Ltwo{\Rnp}}}
  \le \dist\bigl( E, [0,\infty) \bigr)^{-1} = |E|^{-1} .
\end{equation*}
The bulk summand of the boundary value $\Ralpha{E+i0}$ of
Theorem~\ref{thm:lap} therefore coincides with $\Ralpha{E}\Pibulk$, and
\eqref{eq:char-sb-L2} follows, the second inequality because
$\norm{f}^2_{\Ltwo{\Rnp}} = \sum_{j\ge0}\norm{f}^2_{\Ltwo{\shellp{j}}}
\le \bigl( \sum_{j\ge0}2^{j/2}\norm{f}_{\Ltwo{\shellp{j}}} \bigr)^2
= \Bnorm{f}^2$ by \eqref{eq:B-norms}. On the surface summand
$E + \alpha^2 > 0$ is a non-threshold energy for $-\Delta'$ on $\Rnm$ and
$\Vsurf^{*}f \in \Bspd{\Rnm}$ by \eqref{eq:tensor-bounds}, so the boundary
value $\Rfree{n-1}{E+\alpha^2+i0}\Vsurf^{*}f \in \Bstd{\Rnm}$ is furnished by
the free limiting absorption principle in dimension $n-1$.

For \eqref{eq:char-sb-expansion}, note first that
$\Ltwo{\Rnp} \subset \Bstz$, since
$R^{-1}\int_{\Ball{R}}|h|^2 \le R^{-1}\norm{h}^2_{\Ltwo{\Rnp}} \to 0$. By
\eqref{eq:char-sb-L2} the second summand of \eqref{eq:char-sb-splitting} is
therefore negligible, and only the first has to be expanded. Assume
$f \in C_c^{\infty}(\Rnp)$. Proposition~\ref{prop:asymp-surface} applies
verbatim: its proof invokes Lemma~\ref{lem:hankel} with $m = n-1$ and
$k = \ksurf > 0$, uses \eqref{eq:surface-formula} and \eqref{eq:Fsurf-def},
and nowhere uses the sign of $E$. It gives
\begin{equation*}
  \Bigl[ \Vsurf\Bigl( \Rfree{n-1}{E+\alpha^2+i0}\Vsurf^{*}f \Bigr) \Bigr](x)
  = \Ws{+}{C}(x) + O\bigl( \rho^{-\frac n2} \bigr)e^{-\alpha x_n},
  \qquad \rho \to \infty,
\end{equation*}
uniformly in $\omega \in \Ssurf$ and in $x_n \ge 0$. Exactly as in
\eqref{eq:asymp-split}\,ff., but now with no wedge to isolate,
\begin{equation*}
  \frac{1}{R}\int_{\Ball{R} \cap \{\rho>1\}}
    O\bigl( \rho^{-n} \bigr)e^{-2\alpha x_n}\,dx
  \le \frac{C}{R}\int_{\Ssurf}\!\!\int_1^{R}\rho^{-n}\rho^{n-2}
      \Bigl( \int_0^{\infty}e^{-2\alpha x_n}dx_n \Bigr) d\rho\,dS(\omega)
  = O(R^{-1}),
\end{equation*}
the normal integral being finite independently of $R$. On the tube
$\{\rho \le 1\}$ both functions are square integrable, the first because it
is smooth with transverse decay $e^{-\alpha x_n}$ and the second because
$\int_{\{\rho\le1\}}\rho^{-(n-2)}\rho^{n-2}d\rho < \infty$. That region
therefore contributes $O(R^{-1})$ as well. Letting $R \to \infty$ gives
\eqref{eq:simeq}. The extension to $f \in \Bsp$ is a density argument:
$f \mapsto \Ralpha{E+i0}f$ is continuous from $\Bsp$ to $\Bst$ by
Theorem~\ref{thm:lap}(b), $f \mapsto \Ws{+}{C}$ is continuous by
Proposition~\ref{prop:F-bounded}, and $\Bstz$ is closed in $\Bst$.
\end{proof}

The surface band is thus the one place in this paper where the far field is
obtained with no oscillatory-integral analysis at all: for $E > 0$ the
three-term decomposition \eqref{eq:splitting-preview} of
\S\ref{subsec:lap} has to be used together with a stationary-phase analysis
of the reflected Sommerfeld integral and the shell-negligibility of the
grazing wedge (Proposition~\ref{prop:asymp-grazing}), whereas for $E < 0$ the
entire bulk part of the resolvent is replaced by a single bounded operator.
The same reduction, read as a statement about $\alpha$ rather than about $E$,
is the content of Lemma~\ref{lem:inv-surface-free}.

\begin{lemma}[Shell average of the resolvent on all of $\Bsp$]
\label{lem:shell-flux-B}
Let $E \in \Ereg$ and $f \in \Bsp$, and put $v = \Ralpha{E+i0}f$. Then
$\Mav{v}$ exists and
\begin{equation}
  \label{eq:shell-flux-B}
  \Mav{v}
  = \frac{\pi}{\kbulk}\bigl\| \Fbulk(E)f \bigr\|^2_{\Ltwo{\Sm}}
    + \frac{\pi}{\ksurf}\bigl\| \Fsurf(E)f \bigr\|^2_{\Ltwo{\Ssurf}} ,
\end{equation}
the first term being absent when $-\alpha^2 < E < 0$. In particular
$\Fzero(E)f = 0$ implies $v \in \Bstz$.
\end{lemma}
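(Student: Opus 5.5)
The plan is to compute $\Mav{v}$ on the far-field representative that the paper already supplies, and then to invoke Lemma~\ref{lem:shell-average}(iii). Take first $E > 0$. By Theorem~\ref{thm:asymp-uniform}, $v \simeq \Wb{+}{A_f} + \Ws{+}{C_f}$ for every $f \in \Bsp$. So by Lemma~\ref{lem:shell-average}(iii) it suffices to show that $\Mav{\Wb{+}{A_f} + \Ws{+}{C_f}}$ exists and equals the right-hand side of \eqref{eq:shell-flux-B}. Expanding the square gives two diagonal terms and a cross term.

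The diagonal terms are elementary. For the bulk wave, polar coordinates $dx = r^{n-1}dr\,dS(\theta)$ cancel the weight $r^{-(n-1)}$, and the phase has modulus one. Hence $\MavR{\Wb{+}{A_f}}{R} \to \norm{A_f}^2_{\Ltwo{\Sm}} = |c_n|^2\norm{\Fbulk(E)f}^2$, where the unimodular factor of \eqref{eq:Fout-def} drops out. By \eqref{eq:constants} this is $(\pi/\kbulk)\norm{\Fbulk(E)f}^2$. For the surface wave, $\Ws{+}{C_f} = w \otimes \psizero$ with $w = C_f(\omega)\rho^{-(n-2)/2}e^{i\ksurf\rho}$. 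The same computation in $\Rnm$ gives $\Mavt{w} = |c_n'|^2\norm{\Fsurf(E)f}^2$, and Lemma~\ref{lem:tensor-shell} transports this to $\Rnp$. There is one small point to check: the singularity of $\rho^{-(n-2)}$ at $\rho = 0$ is locally integrable, so these functions lie in $\Bst$ and the $R$-independent contribution of a bounded region vanishes after division by $R$.

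The cross term $R^{-1}\int_{\Ball{R}}\Wb{+}{A_f}\overline{\Ws{+}{C_f}}$ is where I expect the only real work. I would first take $A$ continuous on $\overline{\Sm}$ and $C$ continuous on $\Ssurf$. Then $|\Wb{+}{A}\,\overline{\Ws{+}{C}}| \le C' r^{-(n-1)/2}\rho^{-(n-2)/2}e^{-\alpha x_n}$. I would split $\Ball{R}$ at $\rho = r/2$. Where $\rho < r/2$, one has $x_n \ge r\sqrt3/2$, so the integrand decays exponentially and contributes $O(1)$. Where $\rho \ge r/2$, the bound is $\lesssim \rho^{-(2n-3)/2}e^{-\alpha x_n}$. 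Integrating $x_n$ first and then $\rho^{n-2}d\rho$ up to $R$ gives $\int_1^R \rho^{-1/2}\,d\rho = O(R^{1/2})$, so after division by $R$ this is $O(R^{-1/2}) \to 0$. For general $A, C$ in $L^2$, the cross-term functional is bounded by $4\Bstnorm{\Wb{+}{A}}\Bstnorm{\Ws{+}{C}} \le C\norm{A}\norm{C}$, uniformly in $R$, by Cauchy--Schwarz and Lemma~\ref{lem:shell-average}(i). The dense-subspace argument of Proposition~\ref{prop:char-channel-orth}, Step~5, then finishes the cross term. Alternatively one could invoke that proposition directly, after writing the model waves in terms of $\Fzero(E)^{*}$ and Theorem~\ref{thm:eigen-far-field}. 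The direct estimate is simpler, though, and it is what Remark~\ref{rem:no-grazing}(b) alludes to.

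On the band $-\alpha^2 < E < 0$, Proposition~\ref{prop:char-surface-band} gives $v \simeq \Ws{+}{C}$ with $C = c_n'\Fsurf(E)f$. The surface computation above then yields $(\pi/\ksurf)\norm{\Fsurf(E)f}^2$, and there is no cross term. The final assertion follows at once: if $\Fzero(E)f = 0$, the right-hand side of \eqref{eq:shell-flux-B} vanishes, so $\Mav{v} = 0$, and $v \in \Bstz$ by Lemma~\ref{lem:shell-average}(iii).
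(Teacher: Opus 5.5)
Your proof is correct and follows the paper's route. You take the far-field representative from Theorem~\ref{thm:asymp-uniform} (respectively Proposition~\ref{prop:char-surface-band} on the band), compute the diagonal terms in polar coordinates with Lemma~\ref{lem:tensor-shell}, and return to $v$ via Lemma~\ref{lem:shell-average}(iii). The only difference is the cross term: the paper splits $\Ball{R}$ at height $x_n = L$ and applies Cauchy--Schwarz directly to $L^2$ amplitudes, letting $R \to \infty$, then an auxiliary radius $r_0 \to \infty$, then $L \to \infty$, which avoids a density step. Your pointwise $O(R^{-1/2})$ bound for continuous amplitudes followed by density is equally valid, and it is the argument the paper itself uses in Lemma~\ref{lem:rad-rigidity}.
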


\begin{proof}
Let first $E > 0$, and put $W_1 = \Wb{+}{A_f}$ and $W_2 = \Ws{+}{C_f}$. By
Theorem~\ref{thm:asymp-uniform}, $v \simeq W_1 + W_2$, so by
Lemma~\ref{lem:shell-average}(iii) it suffices to show that $\Mav{W_1+W_2}$
exists and equals the right-hand side of \eqref{eq:shell-flux-B}.

In polar coordinates $dx = r^{n-1}dr\,dS(\theta)$ on $\Rnp$ the weight
$r^{-(n-1)}$ of $|W_1|^2$ cancels the Jacobian, so that for every $R$
\begin{equation*}
  \MavR{W_1}{R}
  = \frac{1}{R}\int_0^{R}\!\!\int_{\Sm}|A_f(\theta)|^2\,dS(\theta)\,dr
  = \norm{A_f}^2_{\Ltwo{\Sm}}
  = |c_n|^2\,\bigl\| \Fbulk(E)f \bigr\|^2_{\Ltwo{\Sm}},
\end{equation*}
the last step by \eqref{eq:ubulk} and $|\cph{\cdot}| = 1$. In particular
$W_1 \in \Bst$. The computation made below for $E < 0$ applies verbatim to
$W_2$ and gives $\Mav{W_2} = |c_n'|^2\,\norm{\Fsurf(E)f}^2_{\Ltwo{\Ssurf}}$.
It remains to show that
$\Lambda_R = R^{-1}\int_{\Ball{R}}W_1\overline{W_2}\,dx \to 0$. Fix $L > 0$
and split $\Ball{R}$ along the hyperplane $\{x_n = L\}$. By the
Cauchy--Schwarz inequality,
\begin{equation*}
  |\Lambda_R|
  \le \Bigl( \frac{1}{R}\int_{\Ball{R}\cap\{x_n<L\}}|W_1|^2\,dx \Bigr)^{1/2}
      \MavR{W_2}{R}^{1/2}
  + \MavR{W_1}{R}^{1/2}
      \Bigl( \frac{1}{R}\int_{\Ball{R}\cap\{x_n \ge L\}}|W_2|^2\,dx \Bigr)^{1/2}.
\end{equation*}
In the second term
$|W_2|^2 = |C_f(\omega)|^2\rho^{-(n-2)}\psizero(x_n)^2$ and
$\int_L^{\infty}\psizero(x_n)^2\,dx_n = e^{-2\alpha L}$, so that, integrating
first in $x_n$ and then in polar coordinates on $\{|\xpar| < R\}$, the last
factor is at most $\norm{C_f}_{\Ltwo{\Ssurf}}\,e^{-\alpha L}$. In the first
term the set $\{x_n < L\}$ meets the sphere $\{|x| = r\}$ in
$\{\theta_n < L/r\}$, so that for every $r_0 > 0$
\begin{equation*}
  \frac{1}{R}\int_{\Ball{R}\cap\{x_n<L\}}|W_1|^2\,dx
  \le \frac{r_0}{R}\,\norm{A_f}^2_{\Ltwo{\Sm}}
    + \int_{\{\theta \in \Sm:\ \theta_n < L/r_0\}}|A_f(\theta)|^2\,dS(\theta),
\end{equation*}
and the last integral tends to $0$ as $r_0 \to \infty$ by dominated
convergence. Since $\MavR{W_1}{R}$ and $\MavR{W_2}{R}$ are bounded in $R$,
letting first $R \to \infty$, then $r_0 \to \infty$, and finally
$L \to \infty$ gives $\Lambda_R \to 0$. Hence $\Mav{W_1+W_2}$ exists and
equals $\Mav{W_1} + \Mav{W_2}$, and $|c_n|^2 = \pi/\kbulk$,
$|c_n'|^2 = \pi/\ksurf$ by \eqref{eq:constants}. This proves
\eqref{eq:shell-flux-B}, and in particular \eqref{eq:shell-flux}.

Let now $-\alpha^2 < E < 0$. By Proposition~\ref{prop:char-surface-band} and
Lemma~\ref{lem:shell-average}(iii) it suffices to compute
$\Mav{\Ws{+}{C}}$. Write $\Ws{+}{C} = w \otimes \psizero$ with
$w(\xpar) = C(\omega)\rho^{-(n-2)/2}e^{i\ksurf\rho}$. In polar coordinates
$d\xpar = \rho^{n-2}d\rho\,dS(\omega)$ the weight $\rho^{-(n-2)}$ cancels the
Jacobian exactly, so that for every $R$
\begin{equation*}
  \MavtR{w}{R}
  = |c_n'|^2\,\frac{1}{R}\int_0^{R}\!\!\int_{\Ssurf}
    \bigl| \bigl(\Fsurf(E)f\bigr)(\omega) \bigr|^2\,dS(\omega)\,d\rho
  = |c_n'|^2\,\bigl\| \Fsurf(E)f \bigr\|^2_{\Ltwo{\Ssurf}} ;
\end{equation*}
in particular $w \in \Bstd{\Rnm}$ and $\Mavt{w}$ exists. By
Lemma~\ref{lem:tensor-shell} and $\norm{\psizero}_{\Ltwo{\Rhalf}} = 1$ we get
$\Mav{\Ws{+}{C}} = \Mavt{w}$, and $|c_n'|^2 = \pi/\ksurf$ by
\eqref{eq:constants}. This is \eqref{eq:shell-flux-B} without the bulk term.

In both cases the last assertion follows from
Lemma~\ref{lem:shell-average}(iii).
\end{proof}

\subsubsection{Orthogonality to the kernel of the transform}
\label{sec:char-kernel}

The second pillar of the characterization is the statement that every
$\Bst$-solution annihilates $\ker\Fzero^{(-)}(E)$. This is the analogue of
\cite[Lemma~7.5]{Isozaki2001}. It is here that the Robin condition is felt
twice, and in two opposite ways.

\begin{lemma}[Orthogonality to $\ker\Fzero(E)$]
\label{lem:kernel-orth}
Let $E \in \Ereg$ and let $u \in \Bst \cap H^2_{\mathrm{loc}}(\overline{\Rnp})$
satisfy
\begin{equation}
  \label{eq:solution-hypotheses}
  (-\Delta - E)u = 0 \ \text{ in } \Rnp,
  \qquad
  \gamma_1 u - \alpha\gamma_0 u = 0 \ \text{ on } \Gamma .
\end{equation}
If $f \in \Bsp$ satisfies $\Fzero(E)f = 0$, then $(u,f) = 0$.
\end{lemma}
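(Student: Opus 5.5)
The plan follows Isozaki's pattern \cite[Lemma~7.5]{Isozaki2001}. We move the resolvent from $f$ onto $u$ by Green's formula on the truncated half-ball $\Ball{R}$, and then control the resulting surface terms at infinity by shell averages.

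Put $v_\pm = \Ralpha{E\pm i0}f$. By Theorem~\ref{thm:lap}(e), $v_\pm$ solves $(-\Delta-E)v_\pm = f$ with the Robin condition. By Stone's formula (Theorem~\ref{thm:stone}), $v_+ - v_- = 2\pi i\,\Fzero(E)^*\Fzero(E)f = 0$, so $v := v_+ = v_-$. Lemma~\ref{lem:shell-flux-B} gives $v\in\Bstz$, since $\Fzero(E)f=0$.

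First reduce to compactly supported $f$. Truncating $f$ does not preserve $\Fzero(E)f=0$. So I instead derive the identity $(u,f) = \lim_{R}\int_{\partial B_R\cap\Rnp}(u\,\overline{\partial_r v} - \partial_r u\,\overline{v})\,dS$ for general $f\in\Bsp$, working directly with the $H^2_{\mathrm{loc}}$ functions $u$ and $v$. Apply Green's second identity to $u$ and $v$ on $\Ball{R}$. The flat part of the boundary contributes $\int_{\Gamma\cap B_R}(u\,\overline{\partial_\nu v} - \partial_\nu u\,\overline v)$. Since $\alpha$ is real and both functions satisfy the same Robin condition, this integrand is $\alpha u\bar v - \alpha u\bar v = 0$ identically. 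Using $(-\Delta-E)u=0$ and $(-\Delta-E)v=f$ with $E$ real, this leaves
\[
 \int_{\Ball{R}} u\,\overline f\,dx = -\int_{\{|x|=R\}\cap\Rnp}\bigl(u\,\overline{\partial_r v} - \partial_r u\,\overline v\bigr)\,dS ,
\]
up to sign conventions. The left side tends to $(u,f)$ by \eqref{eq:AH-pairing}, since $\mathbf 1_{B_R}f\to f$ in $\Bsp$.

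It remains to show that the spherical term tends to zero along a suitable sequence. I would not try this pointwise. Instead, average in $R$ over $[R_0,2R_0]$, which turns the sphere integral into a volume integral over the shell $\shellp{}$ between radii $R_0$ and $2R_0$. The averaged term is then bounded by
\[
 R_0^{-1}\int_{R_0<|x|<2R_0}\bigl(|u|\,|\nabla v| + |\nabla u|\,|v|\bigr)\,dx \le C\,\MavR{u}{2R_0}^{1/2}\MavR{\nabla v}{2R_0}^{1/2} + C\,\MavR{\nabla u}{2R_0}^{1/2}\MavR{v}{2R_0}^{1/2}.
\]
Since $v\in\Bstz$ and $u\in\Bst$, it suffices to know that $\nabla u\in\Bst$ and $\nabla v\in\Bstz$. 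For $\nabla u\in\Bst$, I would use a Caccioppoli estimate for the Robin problem on enlarged shells: multiply the equation by $\chi^2\bar u$, with $\chi$ a cutoff adapted to the shell, and integrate by parts. The boundary term $\alpha\int_\Gamma\chi^2|u|^2$ is controlled by the trace inequality $\|\chi u\|^2_{L^2(\Gamma)}\le \epsilon\|\nabla(\chi u)\|^2 + C_\epsilon\|\chi u\|^2$. This gives $\|\nabla u\|_{L^2(\text{shell})}\le C\|u\|_{L^2(\text{enlarged shell})} + C\|f\|_{L^2(\text{enlarged shell})}$. The same estimate, applied to $v$ with source $f\in\Bsp$, shows that $\nabla v\in\Bstz$, the $f$-contribution being negligible since $\Bsp\subset\Ltwo{\Rnp}$. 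Letting $R_0\to\infty$ yields $(u,f)=0$.

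I expect the main obstacle to be this gradient estimate near the boundary, where the Robin term has the wrong sign ($\alpha>0$ is binding). It has to be absorbed uniformly in the shell index, with a constant independent of $R_0$. The trace-inequality absorption should handle it, because the cutoffs have gradients of size $O(R_0^{-1})$, which only improve the bound. A secondary point is checking that the averaged Green identity is legitimate for merely $H^2_{\mathrm{loc}}$ functions; this follows from Fubini on the shells.
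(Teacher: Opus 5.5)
Your proposal is correct and is essentially the paper's argument. Both proofs use Green's second identity with the Robin boundary terms cancelling identically, and both reduce $(u,f)$ to a radial flux on shells. Both then make the shell average of $|\nabla v|^2$ vanish through an energy identity in which the wrong-sign boundary term is absorbed by the trace inequality, with $v \in \Bstz$ supplied by Lemma~\ref{lem:shell-flux-B}.

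The one real difference is that your symmetric flux $u\,\partial_r\overline{v} - \overline{v}\,\partial_r u$ also needs the Caccioppoli bound $\nabla u \in \Bst$. The paper avoids this by testing with $\chi_{1,R}\overline{v}$, so that no derivative ever falls on $u$, at the cost only of a harmless term with $\Delta\chi_{1,R} = O(R^{-2})$. Your observation $v_+ = v_-$ via Stone's formula is correct but not needed.
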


\begin{proof}
Fix $\psi \in C_c^{\infty}\bigl((0,\infty)\bigr)$ with $\psi \ge 0$ and
$\operatorname{supp}\psi \subset [c_1,c_2]$, $0 < c_1 < c_2 < \infty$, and set
\begin{equation}
  \label{eq:cutoff-choice}
  \chi = \psi^2,
  \qquad
  \int_0^{\infty}\chi(t)\,dt = 1,
  \qquad
  \chi_1(t) = \int_t^{\infty}\chi(s)\,ds,
\end{equation}
so that $\chi \ge 0$, $\chi_1' = -\chi$, $\chi_1 \equiv 1$ near $t = 0$ and
$\chi_1 \equiv 0$ for $t \ge c_2$. Writing
$\chi_R(x) = \chi(|x|/R)$ and $\chi_{1,R}(x) = \chi_1(|x|/R)$, both are
smooth and compactly supported on $\overline{\Rnp}$, and
$\operatorname{supp}\chi_R \subset S_R := \{c_1R \le |x| \le c_2R\}\cap\Rnp$.
Put $v = \Ralpha{E+i0}f$, so that $v \in \Bst \cap
H^2_{\mathrm{loc}}(\overline{\Rnp})$ solves $(-\Delta-E)v = f$ with the same
boundary condition, by Theorem~\ref{thm:lap}(e). Throughout we abbreviate
\begin{equation*}
  \mathcal{I}_R = \frac{1}{R}\int_{\Rnp}\chi_R|\nabla v|^2\,dx,
  \qquad
  \mathcal{J}_R = \frac{1}{R}\int_{\Gamma}\chi_R|v|^2\,dS,
  \qquad
  \mathcal{K}_R = \frac{1}{R}\int_{\Rnp}\chi_R|v|^2\,dx,
\end{equation*}
all three finite for each $R$ because $\chi_R$ is compactly supported and
$v \in H^2_{\mathrm{loc}}(\overline{\Rnp})$. Since $\chi$ is bounded and
supported in $[c_1,c_2]$, Lemma~\ref{lem:shell-average} gives
\begin{equation}
  \label{eq:KR-bounds}
  \mathcal{K}_R \le c_2\norm{\chi}_{L^{\infty}}\,\MavR{v}{c_2R},
  \qquad
  \frac{1}{R}\int_{\Rnp}\chi_R|u|^2\,dx \le 4c_2\norm{\chi}_{L^{\infty}}
    \Bstnorm{u}^2 ,
\end{equation}
and by Lemma~\ref{lem:shell-flux-B} the hypothesis $\Fzero(E)f = 0$ forces
\begin{equation}
  \label{eq:KR-vanishes}
  \mathcal{K}_R \longrightarrow 0 \qquad (R \to \infty).
\end{equation}

\emph{Step 1: the boundary terms cancel, and $(u,f)$ is a radial flux.}
Apply Green's second identity on the bounded set
$\operatorname{supp}\chi_{1,R}\cap\Rnp$ to the pair $u$ and
$\Phi = \chi_{1,R}\overline{v}$:
\begin{equation*}
  \int_{\Rnp}\bigl( u\,\Delta\Phi - \Phi\,\Delta u \bigr)dx
  = \int_{\Gamma}\bigl( u\,\gamma_1\Phi - \Phi\,\gamma_1 u \bigr)dS ,
\end{equation*}
$\gamma_1 = \partial_{\nu}$ being the derivative along the outward normal
$\nu = -e_n$ of $\Rnp$. On the left, $\Delta u = -Eu$ and
$\Delta\overline{v} = -E\overline{v} - \overline{f}$, so that the two terms
carrying the factor $E$ cancel and
\begin{equation}
  \label{eq:green-lhs}
  \int_{\Rnp}\bigl( u\,\Delta\Phi - \Phi\,\Delta u \bigr)dx
  = \int_{\Rnp}u\,(\Delta\chi_{1,R})\,\overline{v}\,dx
    + 2\int_{\Rnp}u\,\bigl( \nabla\chi_{1,R}\cdot\nabla\overline{v} \bigr)dx
    - \int_{\Rnp}\chi_{1,R}\,u\,\overline{f}\,dx .
\end{equation}
On the right, the normal derivative of the cut-off vanishes identically on
$\Gamma$,
\begin{equation}
  \label{eq:cutoff-normal}
  \gamma_1\chi_{1,R}
  = -\partial_{x_n}\Bigl[ \chi_1\bigl( |x|/R \bigr) \Bigr]\Big|_{x_n=0}
  = -\frac{1}{R}\,\chi_1'\bigl( |x|/R \bigr)\frac{x_n}{|x|}\bigg|_{x_n=0}
  = 0 ,
\end{equation}
because the shells are centred at a point of the boundary. Hence
$\gamma_1\Phi = \chi_{1,R}\,\gamma_1\overline{v}
= \alpha\chi_{1,R}\overline{v}$, while
$\Phi\,\gamma_1 u = \alpha\chi_{1,R}\overline{v}u$, and the boundary integral
vanishes: \emph{both} $u$ and $v$ obey the same Robin condition, so the two
contributions cancel whatever the sign of $\alpha$. Therefore, by
\eqref{eq:green-lhs},
\begin{equation}
  \label{eq:uf-flux}
  \int_{\Rnp}\chi_{1,R}\,u\,\overline{f}\,dx
  = \int_{\Rnp}u\,(\Delta\chi_{1,R})\,\overline{v}\,dx
    - \frac{2}{R}\int_{\Rnp}\chi_R\,u\,\partial_r\overline{v}\,dx ,
\end{equation}
where we used $\nabla\chi_{1,R} = R^{-1}\chi_1'(|x|/R)\theta
= -R^{-1}\chi_R\theta$. As $R \to \infty$ the left-hand side converges to
$(u,f)$ by dominated convergence, the pairing being absolutely convergent by
\eqref{eq:AH-pairing} and $\chi_{1,R} \to 1$ pointwise. On the right,
$|\Delta\chi_{1,R}| \le CR^{-2}$ on $S_R$, so that
\begin{equation*}
  \Bigl| \int_{\Rnp}u\,(\Delta\chi_{1,R})\,\overline{v}\,dx \Bigr|
  \le \frac{C}{R^2}\int_{S_R}|u|\,|v|\,dx
  \le \frac{C'}{R}\,\Bstnorm{u}\Bstnorm{v}
  \longrightarrow 0
\end{equation*}
by Lemma~\ref{lem:shell-average}(i), while by Cauchy--Schwarz and
\eqref{eq:KR-bounds}
\begin{equation}
  \label{eq:flux-cs}
  \Bigl| \frac{2}{R}\int_{\Rnp}\chi_R\,u\,\partial_r\overline{v}\,dx \Bigr|
  \le 2\Bigl( \frac{1}{R}\int\chi_R|u|^2 \Bigr)^{1/2}\mathcal{I}_R^{1/2}
  \le C\,\Bstnorm{u}\,\mathcal{I}_R^{1/2} .
\end{equation}
It therefore suffices to prove $\mathcal{I}_R \to 0$.

\emph{Step 2: a weighted trace inequality.} For every $\eta > 0$,
\begin{equation}
  \label{eq:weighted-trace}
  \mathcal{J}_R \le \eta\,\mathcal{I}_R + \eta^{-1}\mathcal{K}_R
    + \frac{C}{R}\,\Bstnorm{v}^2 .
\end{equation}
Indeed, for each $\xpar \in \Rnm$ the function
$x_n \mapsto \chi_R(\xpar,x_n)|v(\xpar,x_n)|^2$ vanishes for $x_n$ large, so
\begin{equation*}
  \int_{\Gamma}\chi_R|v|^2\,dS
  = -\int_{\Rnp}\partial_{x_n}\bigl[ \chi_R|v|^2 \bigr]dx
  = -\int_{\Rnp}(\partial_{x_n}\chi_R)|v|^2\,dx
    - 2\int_{\Rnp}\chi_R\,\re\bigl( \overline{v}\,\partial_{x_n}v \bigr)dx .
\end{equation*}
Since $|\partial_{x_n}\chi_R| \le \norm{\chi'}_{L^{\infty}}/R$ and
$\int_{S_R}|v|^2 \le 4c_2R\Bstnorm{v}^2$, the first term is bounded by
$C\Bstnorm{v}^2$. For the second we use
$2\chi_R|v||\partial_{x_n}v| \le \eta\chi_R|\nabla v|^2
+ \eta^{-1}\chi_R|v|^2$. Dividing by $R$ gives
\eqref{eq:weighted-trace}.

\emph{Step 3: the energy identity.} Multiply $(-\Delta - E)v = f$ by
$\chi_R\overline{v}$ and integrate. Green's first identity, in the form
$\int_{\Rnp}(-\Delta v)h\,dx = \int_{\Rnp}\nabla v \cdot \nabla h\,dx
- \int_{\Gamma}h\,\gamma_1 v\,dS$ with $h = \chi_R\overline{v}$, gives
\begin{equation*}
  \int_{\Rnp}\chi_R|\nabla v|^2\,dx
    + \frac{1}{R}\int_{\Rnp}\chi'\bigl(|x|/R\bigr)\,
      \overline{v}\,\partial_r v\,dx
    - \alpha\int_{\Gamma}\chi_R|v|^2\,dS
    - E\int_{\Rnp}\chi_R|v|^2\,dx
  = \int_{\Rnp}\chi_R\,\overline{v}f\,dx ,
\end{equation*}
where the boundary term was evaluated by $\gamma_1 v = \alpha\gamma_0 v$.
Taking real parts and dividing by $R$,
\begin{equation}
  \label{eq:energy-scaled}
  \mathcal{I}_R
  \le \alpha\mathcal{J}_R + |E|\,\mathcal{K}_R
    + \frac{1}{R}\Bigl| \int_{\Rnp}\chi_R\overline{v}f\,dx \Bigr|
    + \frac{1}{R^2}\Bigl| \int_{\Rnp}\chi'\bigl(|x|/R\bigr)
      \overline{v}\,\partial_r v\,dx \Bigr| .
\end{equation}
The third term is $O(R^{-1})$ by \eqref{eq:AH-pairing}. For the fourth we use
the choice $\chi = \psi^2$ of \eqref{eq:cutoff-choice}, which gives
$|\chi'| = 2\psi|\psi'| \le C\psi = C\chi^{1/2}$ and hence, by
Cauchy--Schwarz,
\begin{equation*}
  \frac{1}{R^2}\Bigl| \int\chi'\,\overline{v}\,\partial_r v \Bigr|
  \le \frac{C}{R^2}\Bigl( \int_{S_R}|v|^2 \Bigr)^{1/2}
      \Bigl( \int\chi_R|\nabla v|^2 \Bigr)^{1/2}
  \le \frac{C'}{R}\,\Bstnorm{v}\,\mathcal{I}_R^{1/2} .
\end{equation*}
Inserting \eqref{eq:weighted-trace} with $\eta = 1/(2\alpha)$ into
\eqref{eq:energy-scaled} we obtain
\begin{equation*}
  \mathcal{I}_R
  \le \tfrac12\mathcal{I}_R
    + \bigl( 2\alpha^2 + |E| \bigr)\mathcal{K}_R
    + \frac{C}{R} + \frac{C}{R}\,\mathcal{I}_R^{1/2},
\end{equation*}
and absorbing the last term by $CR^{-1}\mathcal{I}_R^{1/2}
\le \tfrac14\mathcal{I}_R + C^2R^{-2}$,
\begin{equation*}
  \tfrac14\,\mathcal{I}_R
  \le \bigl( 2\alpha^2 + |E| \bigr)\mathcal{K}_R
    + \frac{C}{R} + \frac{C^2}{R^2} .
\end{equation*}
By \eqref{eq:KR-vanishes} the right-hand side tends to $0$, so
$\mathcal{I}_R \to 0$ and, by \eqref{eq:weighted-trace},
$\mathcal{J}_R \to 0$ as well. With \eqref{eq:uf-flux} and
\eqref{eq:flux-cs} this gives $(u,f) = 0$. The energy entered only
through $|E|$, so that the argument is insensitive to the sign of $E$.
\end{proof}

\begin{remark}[The sign of the boundary term]
\label{rem:boundary-sign}
The Robin condition enters the proof twice, with opposite effects. In Step 1
the boundary integral cancels identically, because $u$ and $v$ satisfy the
same condition. This reflects the symmetry of $\Halpha$ and does not depend on
the sign of $\alpha$. The only geometric input is \eqref{eq:cutoff-normal}: the
dyadic shells are centred at a point of $\Gamma$, so their defining function is
constant along the normal there.

In Step 3, by contrast, the boundary term has an unfavourable sign. With the
convention $\gamma_1 = \partial_{\nu}$ of \eqref{eq:solution-hypotheses} the
quadratic form of $\Halpha$ is
$\hform[w] = \norm{\nabla w}^2 - \alpha\norm{\gamma_0 w}^2$, which is not
nonnegative for $\alpha > 0$, and this produces the surface-wave band
$(-\alpha^2,0)$. Consequently \eqref{eq:energy-scaled} presents
$\mathcal{I}_R$ and $\alpha\mathcal{J}_R$ with opposite signs, and one cannot
conclude, as in the Dirichlet or Neumann case, that both vanish by positivity.
The trace inequality \eqref{eq:weighted-trace} bounds $\mathcal{J}_R$ by an
arbitrarily small multiple of $\mathcal{I}_R$ plus a multiple of
$\mathcal{K}_R$, which is already known to vanish: the surface energy is
subordinate to the bulk energy at infinity.
\end{remark}

\subsubsection{The characterization theorem}
\label{sec:char-theorem}

\begin{theorem}[Characterization of $\Bst$-solutions]
\label{thm:characterization}
Let $E \in \Ereg$ and put
\begin{equation}
  \label{eq:solution-space}
  \Nsol{E} = \Bigl\{ u \in \Bst \cap H^2_{\mathrm{loc}}(\overline{\Rnp}) :
    (-\Delta - E)u = 0 \text{ in } \Rnp,\
    \gamma_1 u - \alpha\gamma_0 u = 0 \text{ on } \Gamma \Bigr\}.
\end{equation}
Then
\begin{equation}
  \label{eq:characterization}
  \Nsol{E} = \Ran\bigl( \Fzero^{(\pm)}(E)^{*} \bigr)
           = \Ran\bigl( \Fzero(E)^{*} \bigr),
\end{equation}
and for each $u \in \Nsol{E}$, and each of the three normalizations
$\Fzero$, $\Fzero^{(+)}$, $\Fzero^{(-)}$, there is exactly one
$\phi \in \Echan{E}$ representing $u$. The correspondence is bicontinuous, with
\begin{equation}
  \label{eq:characterization-norms}
  \frac{1}{\sqrt{8\pi\ksurf}}\,\norm{\phi}_{\Echan{E}}
  \le \Bstnorm{u}
  \le \norm{\Fzero(E)^{*}}_{\Lop{\Echan{E}}{\Bst}}\,
      \norm{\phi}_{\Echan{E}} .
\end{equation}
On the surface band $-\alpha^2 < E < 0$ this reads
$\Nsol{E} = \Ran\bigl( \Fsurf(E)^{*} \bigr)$ with
$\Echan{E} = \Ltwo{\Ssurf}$.
\end{theorem}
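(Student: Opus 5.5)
The plan is to follow Isozaki's closed-range argument, as announced at the opening of \S\ref{subsec:characterization}. The two pillars are already in place: the two-sided bound of Corollary~\ref{cor:two-sided}, and the orthogonality of Lemma~\ref{lem:kernel-orth}. First we dispose of the easy inclusion. By Proposition~\ref{prop:F-adjoint}, $\Fzero(E)^{*}$ maps $\Echan{E}$ into $\Bst$ with values solving the Robin problem. Local $H^2$ regularity follows from the locally uniform convergence of the defining integrals, together with all derivatives. Hence $\Ran\Fzero(E)^{*} \subset \Nsol{E}$. Since $\Fzero^{(\pm)}(E)^{*}\phi = \Fzero(E)^{*}(m^{\mp1}\phi)$ with the unimodular multiplier $m = \cph{\kbulk\theta_n}\oplus 1$, the three ranges coincide. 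Uniqueness of $\phi$ and the norm bounds \eqref{eq:characterization-norms} are then exactly Corollary~\ref{cor:two-sided}.

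For the reverse inclusion, fix $u \in \Nsol{E}$ and regard it as a continuous linear functional $\ell_u(f) = (f,u)$ on $\Bsp$, bounded by \eqref{eq:AH-pairing}. By Lemma~\ref{lem:kernel-orth}, $\ell_u$ vanishes on $\ker\Fzero(E)$. Next consider the operator $\Fzero(E)\colon\Bsp\to\Echan{E}$. Its adjoint, in the $\Bsp$--$\Bst$ duality, is the map $\Fzero(E)^{*}\colon\Echan{E}\to\Bst = \Bsp'$. This adjoint is injective with closed range by Corollary~\ref{cor:two-sided}. The Banach closed range theorem, applied to $T = \Fzero(E)$ between the Banach space $\Bsp$ and the Hilbert space $\Echan{E}$, then gives three conclusions: $\Ran T$ is closed, $\Ran T = (\ker T^{*})_\perp = \Echan{E}$, and $\Ran T^{*} = (\ker T)^{\perp}$, the annihilator in $\Bsp'$. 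Since $\ell_u \in (\ker T)^{\perp}$, we obtain $\ell_u = T^{*}\phi$ for some $\phi$. That is, $(f,u) = (\Fzero(E)f,\phi)_{\Echan{E}} = (f,\Fzero(E)^{*}\phi)$ for all $f\in\Bsp$. Testing with $f \in C_c^\infty(\Rnp)$ identifies $u = \Fzero(E)^{*}\phi$ almost everywhere.

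The main point to check carefully is the identification of $\Bst$ with $\Bsp'$, so that $(\ker T)^{\perp}$ consists of elements of $\Bst$ and the pairing matches the one used to define $\Fzero(E)^{*}$. The paper only proved the elementary half, \eqref{eq:AH-pairing}. I would cite \cite{AgmonHormander1976} for the full duality. Alternatively, one can avoid it: the closed range theorem already gives $\ell_u = T'\phi$ as functionals on $\Bsp$, where $T'$ denotes the Banach adjoint, and Proposition~\ref{prop:F-adjoint} shows that $T'\phi$ is represented by the function $\Fzero(E)^{*}\phi\in\Bst$. Two functions in $\Bst$ that induce the same functional on $C_c^\infty(\Rnp)\subset\Bsp$ coincide, so no full duality is needed. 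On the surface band the same argument runs with $\Echan{E} = \Ltwo{\Ssurf}$ and $\Fzero(E) = \Fsurf(E)$. All ingredients, namely Lemma~\ref{lem:kernel-orth}, Lemma~\ref{lem:shell-flux-B} and Corollary~\ref{cor:two-sided}, were stated for every $E\in\Ereg$.
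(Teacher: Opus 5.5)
Your proposal is correct and is essentially the paper's proof: the easy inclusion comes from Proposition~\ref{prop:F-adjoint}, injectivity and \eqref{eq:characterization-norms} from Corollary~\ref{cor:two-sided}, and the reverse inclusion from Banach's closed range theorem combined with Lemma~\ref{lem:kernel-orth}. The paper applies the theorem to $T=\Fzero^{(-)}(E)$ rather than to $\Fzero(E)$, which makes no difference since the two kernels coincide. Your variant avoiding the full Agmon--H\"ormander duality needs one more observation: the lower bound of Corollary~\ref{cor:two-sided} must hold in the norm of the dual of $\Bsp$. It does, because testing $w \in \Bst$ against $f_j = 2^{-j/2}\norm{w}_{\Ltwo{\shellp{j}}}^{-1}\mathbf{1}_{\shellp{j}}w$ and using \eqref{eq:AH-pairing} shows that $w \mapsto (\cdot,w)$ embeds $\Bst$ isometrically into $\Bsp'$.
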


\begin{proof}
The three ranges in \eqref{eq:characterization} coincide because
$\Fzero^{(\pm)}(E)^{*}\phi = \Fzero(E)^{*}\bigl( m^{\mp1}\phi \bigr)$ for the
unimodular multiplier $m = \cph{\kbulk\theta_n} \oplus 1$ of
\eqref{eq:Fout-def}, so that multiplication by $m^{\mp1}$ is a unitary
automorphism of $\Echan{E}$. For $-\alpha^2 < E < 0$ the multiplier reduces
to $1$ and $\Fzero^{(\pm)}(E) = \Fzero(E) = \Fsurf(E)$.

\emph{The inclusion $\Ran \subset \Nsol{E}$.} By
Proposition~\ref{prop:F-adjoint}, $\Fzero(E)^{*}\phi$ lies in $\Bst$ and
satisfies \eqref{eq:eigenoperator}, which is \eqref{eq:solution-hypotheses}.
Membership in $H^2_{\mathrm{loc}}(\overline{\Rnp})$ follows from local
elliptic regularity for the Robin problem, or directly from the fact that
$\Psib$ and $\Psis$ are smooth and bounded on $\overline{\Rnp}$ together with
their derivatives.

\emph{Injectivity and the norm bounds.} These are
Corollary~\ref{cor:two-sided}.

\emph{The inclusion $\Nsol{E} \subset \Ran$.} Put
$T = \Fzero^{(-)}(E) \in \Lop{\Bsp}{\Echan{E}}$, which is bounded by
Proposition~\ref{prop:F-bounded}. Its Banach-space adjoint is a bounded map
$T^{*} : \Echan{E}^{*} \to \Bsp^{*}$. Identifying $\Echan{E}^{*}$ with
$\Echan{E}$ by the Riesz isomorphism and $\Bsp^{*}$ with $\Bst$ by the
Agmon--H\"ormander duality \eqref{eq:AH-pairing}, $T^{*}$ becomes the
eigenoperator $\Fzero^{(-)}(E)^{*}$ of Proposition~\ref{prop:F-adjoint}. By
Corollary~\ref{cor:two-sided} the operator $T^{*}$ is bounded below, so
$\Ran(T^{*})$ is a closed subspace of $\Bst$. By Banach's closed range
theorem \cite[Ch.~VII, \S5]{Yosida} the range of $T$ is then closed as well,
and
\begin{equation}
  \label{eq:closed-range}
  \Ran\bigl( \Fzero^{(-)}(E)^{*} \bigr)
  = \bigl( \ker\Fzero^{(-)}(E) \bigr)^{\perp}
  := \Bigl\{ w \in \Bst : (w,f) = 0 \ \text{ for all }
    f \in \ker\Fzero^{(-)}(E) \Bigr\} .
\end{equation}
Finally $\ker\Fzero^{(-)}(E) = \ker\Fzero(E)$, again because the two
transforms differ by a unimodular multiplier, so
Lemma~\ref{lem:kernel-orth} says precisely that every $u \in \Nsol{E}$ lies in
the right-hand side of \eqref{eq:closed-range}.
\end{proof}

\begin{remark}[What each hypothesis is doing]
\label{rem:characterization-hypotheses}
The theorem is a statement about behaviour at infinity only: no decay is
imposed beyond membership in $\Bst$, and no radiation condition is assumed.
The two thresholds enter in three different places. The lower bound of Corollary~\ref{cor:two-sided} has
constant $(8\pi\ksurf)^{-1/2}$, which stays bounded away from $0$ as
$E \downarrow 0$ because $\ksurf \to \alpha$. The bulk threshold is felt
instead in the upper bound, where \eqref{eq:F-bound} gives only
$\norm{\Fbulk(E)} \le \CAH{n}E^{-1/4}$. The essential obstruction is
Lemma~\ref{lem:shell-flux-B}, which rests on the asymptotic expansion of
\S\ref{subsec:asymptotics} as $E \downarrow 0$ and on the free limiting
absorption principle in dimension $n-1$ as $E \downarrow -\alpha^2$, and
neither survives at its endpoint. This is where the exclusion of
$\Thr = \{-\alpha^2,0\}$ originates.
\end{remark}

\subsubsection{Far field of an arbitrary solution, and the \texorpdfstring{$S$}{S}-matrix}
\label{sec:char-smatrix}

Combining Theorem~\ref{thm:characterization} with
Theorem~\ref{thm:eigen-far-field} we obtain the analogue of
\cite[Theorem~8.2]{Isozaki2001}: every solution in $\Bst$ has a complete
far field, consisting of an incoming and an outgoing part in each open
channel.

For $E > 0$ the channel space $\Echan{E} = \Ltwo{\Sm} \oplus \Ltwo{\Ssurf}$
of \eqref{eq:channel-space} is an orthogonal direct sum of two summands, and
the far-field data of Definition~\ref{def:far-field-data} are correspondingly
column vectors $\yin = (\yin_{\mathrm{bulk}},\yin_{\mathrm{surf}})^{\mathsf T}$.
Every bounded operator on $\Echan{E}$, the scattering matrix in particular,
is therefore a $2 \times 2$ operator matrix
\begin{equation*}
  \begin{pmatrix}
    \Smat_{\mathrm{bb}}(E) & \Smat_{\mathrm{bs}}(E) \\[1mm]
    \Smat_{\mathrm{sb}}(E) & \Smat_{\mathrm{ss}}(E)
  \end{pmatrix},
  \qquad
  \begin{aligned}
    \Smat_{\mathrm{bb}}(E) &: \Ltwo{\Sm} \to \Ltwo{\Sm},
    &\quad
    \Smat_{\mathrm{bs}}(E) &: \Ltwo{\Ssurf} \to \Ltwo{\Sm},
    \\[1mm]
    \Smat_{\mathrm{sb}}(E) &: \Ltwo{\Sm} \to \Ltwo{\Ssurf},
    &\quad
    \Smat_{\mathrm{ss}}(E) &: \Ltwo{\Ssurf} \to \Ltwo{\Ssurf},
  \end{aligned}
\end{equation*}
the two off-diagonal entries being the conversion of an incident bulk wave
into a surface wave and conversely. The corollary below shows that in the
flat half-space this matrix is \emph{diagonal}, and that its
lower-right entry is the identity.

\begin{corollary}[Far field of a $\Bst$-solution]
\label{cor:solution-far-field}
Let $E \in \Ereg$ and $u \in \Nsol{E}$. Then there exist unique
$\yin,\yout \in \Echan{E}$ such that
\begin{equation}
  \label{eq:solution-far-field}
  \begin{aligned}
  u \;\simeq\;
  & r^{-\frac{n-1}{2}}
    \Bigl[ \gambulk(E)\,e^{i\kbulk r}\,\yout_{\mathrm{bulk}}(\theta)
      + \overline{\gambulk(E)}\,e^{-i\kbulk r}\,
        \yin_{\mathrm{bulk}}(\theta) \Bigr]
  \\[1mm]
  + \;& \rho^{-\frac{n-2}{2}}
    \Bigl[ \gamsurf(E)\,e^{i\ksurf\rho}\,\yout_{\mathrm{surf}}(\omega)
      + \overline{\gamsurf(E)}\,e^{-i\ksurf\rho}\,
        \yin_{\mathrm{surf}}(\omega) \Bigr]\,\psizero(x_n) ,
  \end{aligned}
\end{equation}
the first line being absent when $-\alpha^2 < E < 0$, and they satisfy
\begin{equation}
  \label{eq:far-field-isometry}
  \norm{\yin}_{\Echan{E}} = \norm{\yout}_{\Echan{E}}
  = \norm{\phi}_{\Echan{E}},
  \qquad
  \yout = \Smat(E)\,\Jchan\,\yin ,
\end{equation}
where $\phi$ is the datum of Theorem~\ref{thm:characterization} in the
standing normalization, $u = \Fzero(E)^{*}\phi$, and
\begin{equation}
  \label{eq:smatrix-def}
  \begin{aligned}
  \Smat(E)
  &= \begin{pmatrix}
       \Smat_{\mathrm{bulk}}(E) & 0 \\[1mm]
       0 & \mathrm{Id}_{\Ltwo{\Ssurf}}
     \end{pmatrix}
     \quad\text{on } \Echan{E} = \Ltwo{\Sm} \oplus \Ltwo{\Ssurf},
  &&\quad E > 0,
  \\[2mm]
  \Smat(E)
  &= \mathrm{Id}_{\Ltwo{\Ssurf}}
     \quad\text{on } \Echan{E} = \Ltwo{\Ssurf},
  &&\quad -\alpha^2 < E < 0,
  \end{aligned}
\end{equation}
where $\Smat_{\mathrm{bulk}}(E)$ is the operator of multiplication by the
reflection coefficient,
\begin{equation*}
  \bigl( \Smat_{\mathrm{bulk}}(E)F \bigr)(\theta)
  = \refl{\kbulk\theta_n}\,F(\theta),
  \qquad \theta \in \Sm .
\end{equation*}
Moreover each of the maps $u \mapsto \yin$ and $u \mapsto \yout$ is a
bijection of $\Nsol{E}$ onto $\Echan{E}$, and $\Smat(E)$ is unitary on
$\Echan{E}$.
\end{corollary}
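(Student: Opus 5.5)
The plan is to combine Theorem~\ref{thm:characterization} with Theorem~\ref{thm:eigen-far-field}, and to obtain uniqueness of the far-field data from the shell-average identity. Given $u \in \Nsol{E}$, Theorem~\ref{thm:characterization} supplies a unique $\phi \in \Echan{E}$ with $u = \Fzero(E)^{*}\phi$. Applying Theorem~\ref{thm:eigen-far-field} with $g = \phi$ yields \eqref{eq:solution-far-field}, where $\yout,\yin$ are the data of Definition~\ref{def:far-field-data} built from $\phi$. The norm identities in \eqref{eq:far-field-isometry} are then exactly \eqref{eq:ydata-norms}. The relation $\yout = \Smat(E)\Jchan\yin$ is \eqref{eq:char-smatrix-relation}, rewritten as follows. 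In the bulk channel we use $\Jbulk^2 = \mathrm{id}$, together with the fact that $\Jbulk$ commutes with multiplication by functions of $\theta_n$, which gives $\yout_{\mathrm{bulk}} = \refl{\kbulk\theta_n}\,\Jbulk\yin_{\mathrm{bulk}}$. In the surface channel we have $\yout_{\mathrm{surf}} = \Jsurf\yin_{\mathrm{surf}}$, which is the identity block applied to $\Jsurf\yin_{\mathrm{surf}}$. On the band only the surface line occurs. Unitarity of $\Smat(E)$ holds because it is multiplication by the unimodular function $\refl{\kbulk\theta_n}$ in one summand and the identity in the other.

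The step needing actual work is uniqueness of the pair $(\yin,\yout)$, that is, independence from the representation. Suppose two pairs $(\yin_j,\yout_j)$, $j=1,2$, both represent $u$. Their difference $d = (\yin_1-\yin_2,\yout_1-\yout_2)$ then gives a sum of four model waves $\simeq 0$. I would show that the shell average of such a sum is
\[
2|\gambulk|^2\bigl(\|d^{\mathrm{out}}_{\mathrm{bulk}}\|^2+\|d^{\mathrm{in}}_{\mathrm{bulk}}\|^2\bigr)+2|\gamsurf|^2\bigl(\|d^{\mathrm{out}}_{\mathrm{surf}}\|^2+\|d^{\mathrm{in}}_{\mathrm{surf}}\|^2\bigr)
\]
(up to the factor $\tfrac12$ convention used in Theorem~\ref{thm:shell-average-identity}), and hence must vanish. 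This is the same computation as in that proof. The radial cross terms carry $e^{\pm 2i\kbulk r}$ or $e^{\pm2i\ksurf\rho}$ and average to zero, first for smooth amplitudes and then for all $L^2$ amplitudes by the uniform bound of Lemma~\ref{lem:shell-average}(i),(ii). The bulk--surface cross terms vanish by the splitting argument at height $L$ used for $\Lambda_R$ in Lemma~\ref{lem:shell-flux-B}, applied to general bulk and surface model waves. By Lemma~\ref{lem:shell-average}(iii) the total shell average is $0$, which forces $d = 0$. I expect this cross-term bookkeeping, carried out for arbitrary $L^2$ amplitudes rather than for the special ones of Theorem~\ref{thm:eigen-far-field}, to be the main, though routine, obstacle.

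Bijectivity is then immediate. The map $\phi \mapsto \yin$ is $\phi \mapsto (\overline{\cph{\kbulk\theta_n}}\Jbulk\phi_{\mathrm{bulk}},\Jsurf\phi_{\mathrm{surf}})$, a unitary map of $\Echan{E}$. Likewise $\phi \mapsto \yout$ is multiplication by the unimodular multiplier $m$. Composing either map with the bijection $\phi \leftrightarrow u$ of Theorem~\ref{thm:characterization} gives bijections $\Nsol{E} \to \Echan{E}$.
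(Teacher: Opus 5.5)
Your proposal is correct. Existence, the identities \eqref{eq:far-field-isometry}, the unitarity of $\Smat(E)$ and the bijectivity argument all coincide with the paper's proof. The routes differ only in the uniqueness step.

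\textbf{The paper's route.} It compares with a second pair only if that pair also satisfies $\tilde y^{(+)} = \Smat(E)\Jchan\tilde y^{(-)}$. By \eqref{eq:phi-from-yin} such a pair comes from some $\tilde\phi \in \Echan{E}$. Theorem~\ref{thm:eigen-far-field} then gives $\Fzero(E)^{*}(\phi-\tilde\phi) \in \Bstz$, and Theorem~\ref{thm:shell-average-identity} forces $\tilde\phi = \phi$. No new cross-term analysis is needed, because the channel interaction was already disposed of for eigenfunctions by the exact transverse orthogonality of Proposition~\ref{prop:char-channel-orth}.

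\textbf{Your route.} You show that a sum of four model waves with arbitrary $L^2$ amplitudes lies in $\Bstz$ only if every amplitude vanishes. This is exactly Lemma~\ref{lem:rad-rigidity}, which the paper proves only later, for Theorem~\ref{thm:rad-equivalence}. Your adaptation of the height-$L$ splitting from Lemma~\ref{lem:shell-flux-B} does work for general amplitudes: it uses only $|W_1|^2 \le 2(|A|^2+|B|^2)\,r^{-(n-1)}$ and the tail $\int_L^\infty \psizero^2 = e^{-2\alpha L}$.

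\textbf{What each buys.} The paper's argument is shorter given results already in hand. However, it yields uniqueness only within the class of pairs obeying the $S$-matrix relation. Your argument costs the bookkeeping for general amplitudes, but it shows that the expansion alone determines $(\yin,\yout)$, which is the more natural reading of the statement.

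\textbf{A small correction.} The shell average of your general sum is
$|\gambulk|^2\bigl(\|d^{\mathrm{out}}_{\mathrm{bulk}}\|^2+\|d^{\mathrm{in}}_{\mathrm{bulk}}\|^2\bigr)+|\gamsurf|^2\bigl(\|d^{\mathrm{out}}_{\mathrm{surf}}\|^2+\|d^{\mathrm{in}}_{\mathrm{surf}}\|^2\bigr)$, without the factor $2$. The $2$ in Theorem~\ref{thm:shell-average-identity} comes from $\|\yin\| = \|\yout\| = \|\phi\|$. This is harmless, since only the positivity of the coefficients is used.
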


\begin{proof}
Existence of the expansion is Theorem~\ref{thm:eigen-far-field} applied to
the datum $\phi$ supplied by Theorem~\ref{thm:characterization}, with
$\yin,\yout$ as in \eqref{eq:ydata-bulk}--\eqref{eq:ydata-surf}. The
identities \eqref{eq:far-field-isometry} are \eqref{eq:ydata-norms} and
\eqref{eq:char-smatrix-relation}. On the surface band the latter reads
$\yout = \Jsurf\yin$, which is \eqref{eq:smatrix-def} with
$\Smat(E) = \mathrm{Id}$. Next, $\phi$ is recovered from $\yin$ by
\begin{equation}
  \label{eq:phi-from-yin}
  \phi_{\mathrm{bulk}}
  = \cph{\kbulk\theta_n}\,\bigl( \Jbulk\yin_{\mathrm{bulk}} \bigr),
  \qquad
  \phi_{\mathrm{surf}} = \Jsurf\yin_{\mathrm{surf}} ,
\end{equation}
because $\Jbulk$ commutes with multiplication by $\cph{\kbulk\theta_n}$ and
$\Jbulk^2 = \mathrm{id}$. The map $\yin \mapsto \phi$ so defined is unitary,
and $\Smat(E)$ is unitary because $|\refl{\cdot}| = 1$.

Uniqueness now follows. If $\bigl( \tilde y^{(-)},\tilde y^{(+)} \bigr)$ is a
second pair satisfying \eqref{eq:far-field-isometry} and representing $u$ in
\eqref{eq:solution-far-field}, then by \eqref{eq:phi-from-yin} it arises from
some $\tilde\phi \in \Echan{E}$, and the two right-hand sides of
\eqref{eq:solution-far-field} are the far-field models of
$\Fzero(E)^{*}\phi$ and $\Fzero(E)^{*}\tilde\phi$ furnished by
Theorem~\ref{thm:eigen-far-field}. Their difference lies in $\Bstz$, so
$\Fzero(E)^{*}(\phi - \tilde\phi) \in \Bstz$ and
$\Mav{\Fzero(E)^{*}(\phi-\tilde\phi)} = 0$ by
Lemma~\ref{lem:shell-average}(iii). Theorem~\ref{thm:shell-average-identity}
then forces $\phi = \tilde\phi$. Since $\yin \mapsto \phi$ is unitary and
$\phi \mapsto u$ is bijective onto $\Nsol{E}$ by
Theorem~\ref{thm:characterization}, the map $u \mapsto \yin$ is a bijection
onto $\Echan{E}$, and likewise for $u \mapsto \yout$.
\end{proof}

\begin{remark}[Reading \eqref{eq:smatrix-def}]
\label{rem:smatrix-reading}
By Corollary~\ref{cor:solution-far-field} the scattering problem for $\Halpha$
in the flat half-space is exactly solvable: the incoming data determine the
solution, and the scattering matrix is diagonal in the channel decomposition.
The bulk block is multiplication by $\refl{\kbulk\theta_n}$, which depends on
the direction only through $\theta_n$, as it must for a boundary invariant
under tangential translations and rotations. The surface block is the
identity, since a surface wave travelling along $\Gamma$ meets no obstacle.
In particular $\Smat_{\mathrm{bs}}(E) = \Smat_{\mathrm{sb}}(E) = 0$, and on the
surface band the matrix reduces to $\mathrm{Id}$. A local perturbation of the
boundary can break this degeneracy.
\end{remark}

\begin{remark}[Comparison with the literature]
\label{rem:char-literature}
For $n = 3$ and with a Sommerfeld-type radiation condition in place of the
membership $u \in \Bst$, results of this kind go back to
\cite{DuranMugaNedelec2009}. Here the hypotheses are imposed in $\Bst$ rather
than through a pointwise decay rate, so that no exponent has to be chosen and
the two channels are treated on the same footing, at the price that the far
field is obtained modulo $\Bstz$ rather than pointwise. The two channels are
handled by the same argument in dimensions $m = n$ and $m = n-1$.
Lemma~\ref{lem:sphere-statphase} requires $m \ge 2$, and this is where
$n \ge 3$ enters the present subsection. The abstract skeleton, namely
two-sided bound, orthogonality to the kernel and closed range, is that of
\cite[\S8]{Isozaki2001}, where it is carried out for $N$-body Schr\"odinger
operators (see also \cite[Ch.~1]{IsozakiKurylev2014}). What is specific to the
present setting is the source of the lower bound, the total reflection of
Remark~\ref{rem:total-reflection}, and the sign of the boundary term in
Lemma~\ref{lem:kernel-orth} (Remark~\ref{rem:boundary-sign}).
\end{remark}

%% file: Part1_026_radiation.tex
\label{subsec:radiation}

This subsection completes Section~\ref{sec:unperturbed} by isolating, among all
solutions of the impedance problem that live in $\Bst$, the single one produced
by the limiting absorption principle. Three statements are proved, in the order
in which they depend on one another.

The first is a \emph{Rellich-type uniqueness theorem}
(Theorem~\ref{thm:rad-rellich}): a solution of the Robin problem which is
defined only near infinity and whose shell averages tend to zero must vanish
identically near infinity. Its hypotheses involve neither a radiation condition
nor any behaviour of $u$ on a compact set. All it uses is the equation, the
boundary condition, and the single scalar decay statement $u \in \Bstz$. The second is
the \emph{outgoing radiation condition} (Definition~\ref{def:rad-condition}),
imposed directly on $u$ in the two-region form of Dur\'an, Muga and
N\'ed\'elec~\cite{DuranMugaNedelec2009}, together with the proof that the
resolvent satisfies it (Proposition~\ref{prop:rad-resolvent-satisfies}) and the
resulting uniqueness theorem for the homogeneous problem
(Theorem~\ref{thm:rad-uniqueness-hom}). The third is the identification of the
radiation condition with the \emph{intrinsic}, parameter-free requirement that
the far field of $u$ be purely outgoing in both channels
(Theorem~\ref{thm:rad-equivalence}), from which the existence and uniqueness
theorem for the inhomogeneous equation follows at once
(Theorem~\ref{thm:rad-main}).

The logical order is forced: the uniqueness proof converts the radiation
condition into the decay hypothesis of the Rellich theorem and then invokes it,
so the Rellich theorem must come first. In the surface channel the Rellich
argument is carried out in dimension $n-1$, and this is where the standing
assumption $n \ge 3$ is used (Remark~\ref{rem:standing-dimension}). The
passage from the half-space to the whole space in the bulk
channel is effected not by an odd or even reflection, neither of which is
available for a Robin condition, but by an explicit integral extension formula
\eqref{eq:rad-clean-extension} whose boundedness at $x_n \to -\infty$ holds
precisely because the surface component has already been shown to vanish.

One preparatory result is of a different nature from everything else here and
is proved separately in \S\ref{sec:rad-reflected}. Verifying that the resolvent
satisfies the radiation condition requires pointwise control of $\partial_r u$
in the boundary layer as well as away from it, and the pointwise expansions of
\S\ref{subsec:asymptotics} are uniform only on cones $\{x_n \ge \delta r\}$
with $\delta$ fixed. The intermediate wedge $\{r^{\alpha_1} < x_n < \delta r\}$
is left uncovered by them for every $\delta$. We therefore establish, in
Lemma~\ref{lem:rad-uniform}, a bound on the radiation defect of the reflected
integral which is uniform in the direction, down to and including the boundary.
This is the one place in Part~I where the grazing region has to be resolved
rather than averaged over. The expansion of \S\ref{subsec:asymptotics} needed
only its shell-negligibility (Remark~\ref{rem:no-grazing}).

\subsubsection{Standing hypotheses and scale averages at an arbitrary ratio}
\label{sec:rad-prelim}

Throughout this subsection $n \ge 3$, $\alpha > 0$, and $E > 0$ unless stated
otherwise. The coordinates are those of \eqref{eq:asymp-polar}, so that
$x = (\xpar,x_n)$, $r = |x|$, $\rho = |\xpar|$, $\theta = x/r \in \Sm$ and
$\omega = \xpar/\rho \in \Ssurf$. The wavenumbers $\kbulk$ and $\ksurf$ are
those of \eqref{eq:asymp-wavenumbers}. For $R_0 \ge 0$ we write
\begin{equation*}
  \Rnp(R_0) = \{x \in \Rnp : |x| > R_0\},
  \qquad
  \Gamma(R_0) = \{x \in \Gamma : |\xpar| > R_0\},
\end{equation*}
and we say that $u$ \emph{solves the Robin problem outside $R_0$} when
\begin{equation}
  \label{eq:rad-exterior-problem}
  (-\Delta - E)u = 0 \ \text{ in } \Rnp(R_0),
  \qquad
  \partial_{x_n}u + \alpha u = 0 \ \text{ on } \Gamma(R_0),
\end{equation}
which is the boundary condition \eqref{eq:robin-alpha} written out with
$\gamma_1 = \partial_\nu$, $\nu = -e_n$. Note that
$|\xpar| > R_0$ implies $|x| > R_0$ for every $x_n > 0$, so the whole vertical
ray above a point of $\Gamma(R_0)$ lies in $\Rnp(R_0)$. This is used
repeatedly.

\begin{remark}[Regularity]
\label{rem:rad-regularity}
It suffices to assume that $u \in H^1_{\mathrm{loc}}(\overline{\Rnp(R_0)})$ is a
weak solution, that is
\begin{equation*}
  \int_{\Rnp(R_0)}\bigl(\nabla u \cdot \nabla\overline{v} - Eu\overline{v}\bigr)dx
  - \alpha\int_{\Gamma(R_0)}u\overline{v}\,dS = 0
\end{equation*}
for all $v \in H^1_c(\overline{\Rnp(R_0)})$ vanishing near $\{|x| = R_0\}$,
which encodes the equation and the Robin condition simultaneously. Since the
coefficients are constant and the boundary is flat, interior elliptic
regularity and elliptic regularity up to the boundary, applied on bounded
subdomains, upgrade $u$ to $C^\infty(\overline{\Rnp(R_0')})$ for every
$R_0' > R_0$. See \cite[\S6.3.1]{Evans2010} for the interior statement and
\cite[Thm.~4.18(ii)]{McLean2000} for the regularity up to the boundary. The
latter is stated for the Neumann problem and applies to the Robin condition
$\gamma_1 u = \alpha\gamma_0 u$ with the Neumann datum $g = \alpha\gamma_0 u$:
if $u \in H^{r+1}$ near a boundary point, then $g \in H^{r+1/2}$ there, hence
$u \in H^{r+2}$ on a smaller neighbourhood, and induction on $r$ together with
the Sobolev embedding gives smoothness up to the boundary. For
constant coefficients and a flat boundary this can also be seen directly from
a Fourier representation in the tangential variables. All integrations by
parts, traces and pointwise manipulations below are therefore classical. The
membership $u \in \Bst$ is a growth restriction at infinity, not a solution
concept, and does not replace the assumption that $u$ solves the problem.
\end{remark}

The decay hypothesis of the Rellich theorem is the statement $u \in \Bstz$. In
the proof it is convenient to test decay on annuli of ratio $2$ (in the variable
$x$) and of ratio $\sqrt3$ (in the variable $\xpar$), and the following
elementary lemma shows that all these formulations coincide with each other and
with \eqref{eq:Bzero}. Recording it once removes the need for any ad hoc
covering argument later.

\begin{lemma}[Scale averages at an arbitrary ratio]
\label{lem:rad-scale-ratio}
Let $m \ge 1$, let $\Omega$ be either $\mathbb{R}^m$ or a half-space in
$\mathbb{R}^m$, let $\lambda > 1$, and let
$v \in L^2_{\mathrm{loc}}(\overline{\Omega})$ satisfy
$\sup_{R>1}R^{-1}\int_{\{|x|<R\}\cap\Omega}|v|^2\,dx < \infty$. Then
\begin{equation*}
  \lim_{R\to\infty}\frac1R\int_{\{|x|<R\}\cap\Omega}|v|^2\,dx = 0
  \quad\Longleftrightarrow\quad
  \lim_{R\to\infty}\frac1R\int_{\{R<|x|<\lambda R\}\cap\Omega}|v|^2\,dx = 0 .
\end{equation*}
In particular the right-hand side is, for every $\lambda > 1$, equivalent to
$v \in \Bstz(\Omega)$ in the sense of \eqref{eq:Bzero}.
\end{lemma}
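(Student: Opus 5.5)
The plan is to work with the cumulative function $N(R) = \int_{\{|x|<R\}\cap\Omega}|v|^2\,dx$. Write $A(R) = N(R)/R$ and $D(R) = (N(\lambda R) - N(R))/R$, where the integral over $\{R<|x|<\lambda R\}$ and $N(\lambda R)-N(R)$ differ only by the null set $\{|x|=R\}$. By hypothesis $M := \sup_{R>1}A(R) < \infty$. Both implications are elementary manipulations of $N$; no property of $\Omega$ beyond local square integrability is used. So the half-space case and the case $\mathbb{R}^m$ are treated identically.

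\emph{The forward implication.} Assume $A(R) \to 0$. Then
\[
  D(R) = \lambda A(\lambda R) - A(R),
\]
and both terms tend to $0$, so $D(R) \to 0$. This step is immediate.

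\emph{The converse.} Assume $D(R) \to 0$. Given $\eta > 0$, I choose $R_1 > 1$ such that $D(s) < \eta$ for all $s \ge R_1$. For $R > \lambda R_1$, let $k \ge 1$ be maximal with $\lambda^{-k}R \ge R_1$, so that $\lambda^{-k}R < \lambda R_1$. I telescope along the geometric sequence $R_1' = \lambda^{-k}R,\ \lambda R_1',\ \dots,\ \lambda^{k}R_1' = R$:
\[
  N(R) = N(R_1') + \sum_{i=0}^{k-1}\bigl( N(\lambda^{i+1}R_1') - N(\lambda^{i}R_1') \bigr)
  \le N(\lambda R_1) + \eta\sum_{i=0}^{k-1}\lambda^{i}R_1' .
\]
The geometric sum is at most $\eta R_1'\lambda^{k}/(\lambda-1) = \eta R/(\lambda-1)$. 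Dividing by $R$ gives
\[
  A(R) \le \frac{N(\lambda R_1)}{R} + \frac{\eta}{\lambda-1},
\]
hence $\limsup_{R\to\infty} A(R) \le \eta/(\lambda-1)$ for every $\eta > 0$, and so $A(R) \to 0$. In this direction only local integrability is used, through $N(\lambda R_1) < \infty$. The bound $M$ guarantees that both quantities are finite and bounded.

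\emph{Identification with $\Bstz$.} By Lemma~\ref{lem:shell-average}(iii), together with the equivalence \eqref{eq:Bstar-equivalent}, the left-hand condition $A(R)\to 0$ is exactly membership in $\Bstz(\Omega)$. That lemma is stated on $\Rnp$ and on $\Rnm$, but its proof is dimension-free and applies to $\mathbb{R}^m$ or to any half-space through the origin.

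The only point needing care is the choice of starting radius in the telescoping, so that every intermediate annulus lies beyond $R_1$. This is handled by descending from $R$ by powers of $\lambda$ rather than ascending from $R_1$. I expect no genuine obstacle.
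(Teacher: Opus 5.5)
Your proof is correct and is essentially the paper's argument. The forward direction is the same observation as the paper's $Q(R)\le I(\lambda R)$, written as $D(R)=\lambda A(\lambda R)-A(R)$. The converse descends from $R$ along $\lambda^{-j}R$ until the radius falls below $\lambda R_1$, bounds the leftover ball by $N(\lambda R_1)$, and bounds the annuli by the geometric sum $\eta R/(\lambda-1)$. The caveat ``through the origin'' in your identification with $\Bstz$ is unnecessary: the ball--shell comparison behind \eqref{eq:Bstar-equivalent} and Lemma~\ref{lem:shell-average}(iii) uses nothing about $\Omega$ beyond measurability.
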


\begin{proof}
Write $I(R) = \int_{\{|x|<R\}\cap\Omega}|v|^2dx$ and
$Q(R) = \int_{\{R<|x|<\lambda R\}\cap\Omega}|v|^2dx$, so that
$Q(R) \le I(\lambda R)$.

If $I(R) = o(R)$ then $Q(R) \le I(\lambda R) = o(\lambda R) = o(R)$, which is
the implication from left to right.

Conversely, assume $Q(R) = o(R)$ and let $\eps > 0$. Choose $R_1 \ge 1$ with
$Q(t) \le \eps t$ for all $t \ge R_1$. Let $R > \lambda R_1$ and set
$t_j = \lambda^{-j}R$ for $j \ge 0$. Let $N \ge 1$ be the largest integer with
$t_{N-1} \ge R_1$. Then $t_{N-1} < \lambda R_1$, since otherwise $t_N \ge R_1$
would contradict maximality. Because
$\{t_j \le |x| < t_{j-1}\} = \{t_j \le |x| < \lambda t_j\}$, the annuli with
$j = 1,\dots,N-1$ cover $\{t_{N-1} \le |x| < R\}$, whence
\begin{equation*}
  I(R) \le I(\lambda R_1) + \sum_{j=1}^{N-1}Q(t_j)
  \le I(\lambda R_1) + \eps\sum_{j\ge1}\lambda^{-j}R
  = I(\lambda R_1) + \frac{\eps R}{\lambda-1}.
\end{equation*}
The first term is a finite constant depending only on $\eps$, since $v$ is
locally square integrable. Dividing by $R$ and letting $R \to \infty$ gives
$\limsup_{R\to\infty}R^{-1}I(R) \le \eps/(\lambda-1)$, and $\eps > 0$ was
arbitrary. Finally, the equivalence of $R^{-1}I(R) \to 0$ with
\eqref{eq:Bzero} is the equivalence of norms recorded in
\eqref{eq:Bstar-equivalent}.
\end{proof}

The second preliminary converts the integral bound $u \in \Bst$ into the
pointwise growth bound needed to discard boundary terms at $x_n \to \infty$.

\begin{lemma}[Pointwise growth of solutions in $\Bst$]
\label{lem:rad-pointwise}
Let $u$ solve the Robin problem outside $R_0$ and let $u \in \Bst$. Then there
is $C = C(n,E,\alpha)$ such that
\begin{equation*}
  |u(x)| + |\nabla u(x)| \le C\,\Bstnorm{u}\,|x|^{1/2}
  \qquad (|x| > R_0 + 2,\ x_n > 1).
\end{equation*}
The same bound holds, with a possibly larger $C$, on the whole of
$\{|x| > R_0+2\}$: on the slab $0 \le x_n \le 1$ one applies the elliptic
estimate up to the boundary of Remark~\ref{rem:rad-regularity} instead of the
interior one.
\end{lemma}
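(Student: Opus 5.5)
The plan is to combine a local $L^2$ bound, obtained from the $\Bst$ hypothesis, with standard elliptic estimates on balls of fixed size. Fix $x$ with $|x| > R_0 + 2$ and $x_n > 1$. Take $B = B(x,1/2)$, which lies in $\Rnp(R_0)$ since $x_n > 1$ and $|x| - 1/2 > R_0$. The interior elliptic estimate for $(-\Delta - E)u = 0$ on the concentric balls $B(x,1/4) \subset B(x,1/2)$ gives
\[
  \sup_{B(x,1/4)}\bigl(|u| + |\nabla u|\bigr) \le C(n,E)\,\norm{u}_{\Ltwo{B(x,1/2)}} .
\]
It is proved by iterating $H^k$ interior regularity for $k > n/2 + 1$ and applying the Sobolev embedding. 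Since the operator has constant coefficients and the radius is fixed, $C$ does not depend on $x$.

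Next we bound the local norm by the $\Bst$ norm. Since $B(x,1/2) \subset \{|y| < |x| + 1/2\} \cap \Rnp$, Lemma~\ref{lem:shell-average}(i) with $R = |x| + 1/2 \ge 1$ gives
\[
  \norm{u}^2_{\Ltwo{B(x,1/2)}} \le 4\bigl(|x|+\tfrac12\bigr)\Bstnorm{u}^2 \le 8|x|\,\Bstnorm{u}^2 .
\]
Here $u$ is only defined outside $R_0$, so we read $\Bstnorm{u}$ as the norm of $u$ extended by zero, or restricted to $\Rnp(R_0)$. The shell bound in (i) uses only the dyadic definition, so it applies unchanged. Combining the two displays gives $|u(x)| + |\nabla u(x)| \le C\Bstnorm{u}|x|^{1/2}$.

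The slab $0 \le x_n \le 1$ needs the most care, because the estimate must hold uniformly up to $\Gamma$. Take the half-ball $D = B((\xpar,0),2) \cap \Rnp$, on which the Robin condition holds since $|\xpar| > R_0$ when $|x| > R_0 + 2$ and $x_n \le 1$. Boundary regularity (Remark~\ref{rem:rad-regularity}, which iterates the Neumann estimate with datum $\alpha\gamma_0 u$) gives $\norm{u}_{H^k(D')} \le C(n,E,\alpha,k)\norm{u}_{\Ltwo{D}}$ on the half-ball $D'$ of radius $3/2$. Translation invariance in $\xpar$ makes $C$ independent of $\xpar$.

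The subtle point is controlling the trace term $\alpha\gamma_0 u$ by $\norm{u}_{L^2}$ alone. We handle it by induction in $k$ with nested half-balls, absorbing the trace into a lower-order Sobolev norm through the trace theorem. The $L^2$ bound on $D$ then follows from Lemma~\ref{lem:shell-average}(i) exactly as before, with $D \subset \{|y| < |x| + 3\}$, and Sobolev embedding on $D'$ gives the stated bound on all of $\{|x| > R_0 + 2\}$.
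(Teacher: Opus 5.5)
Your argument is the paper's: an elliptic estimate on a ball of fixed radius (interior for $x_n>1$, and up to the flat boundary for the Robin problem in the slab) is combined with the ball form of the $\Bst$-norm, Lemma~\ref{lem:shell-average}(i), equivalently \eqref{eq:Bstar-equivalent}, and the constants do not depend on $x$ by translation invariance.

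There is one small slip in the slab. The fact that $|\xpar|>R_0$ only places the centre of $B((\xpar,0),2)\cap\Rnp$ in $\Gamma(R_0)$. Since $|\xpar|$ can be arbitrarily close to $\sqrt{(R_0+2)^2-1}<R_0+2$, that half-ball may reach into $\{|y|<R_0\}$, where neither the equation nor the Robin condition is assumed. Either shrink the radius to $3/2$, which works because $|\xpar|>R_0+3/2$, or centre the half-ball at $x$ as the paper does, so that $|y|>|x|-2>R_0$.
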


\begin{proof}
By Remark~\ref{rem:rad-regularity}, $u$ is smooth there. Interior elliptic
estimates for the constant-coefficient operator $-\Delta - E$ on the ball
$B_1(x) \subset \Rnp(R_0)$ give
$|u(x)| + |\nabla u(x)| \le C\norm{u}_{\Ltwo{B_1(x)}}$ with $C$ independent of
$x$. Since $B_1(x) \subset \{|y| < |x|+1\} \cap \Rnp$, the equivalence
\eqref{eq:Bstar-equivalent} yields
$\norm{u}_{\Ltwo{B_1(x)}}^2 \le C'\Bstnorm{u}^2(|x|+1)$, and
$|x|+1 \le 2|x|$ in the stated range. For $0 \le x_n \le 1$ one replaces
$B_1(x)$ by the half-ball $B_2(x)\cap\Rnp$ and the interior estimate by the
estimate up to the flat boundary for the Robin problem, the right-hand side
being of the same form.
\end{proof}

\subsubsection{The Rellich-type theorem}
\label{sec:rad-rellich}

\begin{theorem}[Rellich-type theorem for the Robin half-space]
\label{thm:rad-rellich}
Let $n \ge 3$, $\alpha > 0$, $E > 0$, and let $R_0 > 0$. Suppose that $u$
solves the Robin problem \eqref{eq:rad-exterior-problem} outside $R_0$ in the
sense of Remark~\ref{rem:rad-regularity} and that
\begin{equation}
  \label{eq:rad-rellich-hypothesis}
  u \in \Bstz,
  \qquad\text{that is}\qquad
  \lim_{R\to\infty}\frac1R\int_{\{|x|<R\}\cap\Rnp}|u(x)|^2\,dx = 0 .
\end{equation}
Then there is $R_1 > R_0$ with
\begin{equation}
  \label{eq:rad-rellich-conclusion}
  u(x) = 0 \qquad (|x| > R_1).
\end{equation}
\end{theorem}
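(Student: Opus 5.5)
The plan follows the order announced in the introduction: first dispose of the surface component, then extend the bulk component to $\Rn$ and apply the classical Rellich theorem there.

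\emph{Step 1: regularity and channel decomposition.} By Remark~\ref{rem:rad-regularity} and Lemma~\ref{lem:rad-pointwise}, $u$ is smooth on $\overline{\Rnp(R_0')}$ with at most $|x|^{1/2}$ growth. Since $\Gamma(R_0)$ carries whole vertical rays, for $|\xpar| > R_0$ I define the surface component
\[
  w(\xpar) = \int_0^\infty u(\xpar,x_n)\,\psizero(x_n)\,dx_n
\]
and the bulk remainder $u_{\mathrm b} = u - w\otimes\psizero$. Integrating by parts twice in $x_n$, the boundary terms cancel because $u$ and $\psizero$ satisfy the same Robin condition, and the terms at $x_n\to\infty$ are discarded by the pointwise growth bound. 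This gives
\[
  (-\Delta'-(E+\alpha^2))w = 0 \quad\text{on } \{|\xpar|>R_0\}.
\]
Cauchy--Schwarz in $x_n$ with $\norm{\psizero}_{\Ltwo{\Rhalf}}=1$ gives $\MavtR{w}{R} \le \MavR{u}{R}$ up to the cylinder/ball discrepancy. That discrepancy I control as in Proposition~\ref{prop:char-channel-orth}, using the exponential decay of $\psizero$ together with Lemma~\ref{lem:rad-scale-ratio} on annuli. The conclusion is $w\in\Bstz(\Rnm)$.

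\emph{Step 2: surface channel vanishes.} Since $n-1\ge2$ and $E+\alpha^2>0$, the classical Rellich theorem \cite{Rellich1943} in $\Rnm$ (in its $\Bstz$ form, cf.\ \cite{AgmonHormander1976}) gives $w=0$ for $|\xpar|$ large. Unique continuation \cite{Aronszajn1957} through the connected set $\{|\xpar|>R_0\}$, which uses $n\ge3$, then yields $w\equiv0$ there. Hence $u$ is fiberwise orthogonal to $\psizero$ on every vertical ray with $|\xpar|>R_0$.

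\emph{Step 3: extension of the bulk part.} With $w=0$, I extend $u$ to $x_n<0$ by the integral extension formula that intertwines the Robin condition with the free equation. For $(\partial_n+\alpha)$-type boundary data the natural choice is
\[
  \tilde u(\xpar,-x_n) = u(\xpar,x_n) - 2\alpha\, e^{\alpha x_n}\int_{x_n}^{\infty} e^{-\alpha t}\,u(\xpar,t)\,dt,
\]
possibly with a sign adjustment. This formula solves the Helmholtz equation across $\Gamma$ precisely because $\partial_n u + \alpha u=0$ there. A priori the factor $e^{\alpha x_n}$ blows up as $x_n\to-\infty$, but the vanishing of $w$ exactly kills the growing mode, leaving an expression bounded by tail integrals of $u$. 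The extension therefore solves $(-\Delta-E)\tilde u=0$ outside a bounded set, intersected with $\{|\xpar|>R_0\}$.

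\emph{Step 4: the main obstacle.} The hard step is showing $\tilde u\in\Bstz(\Rn)$ from $u\in\Bstz$. I would write the tail term as convolution of $u(\xpar,\cdot)$ with the integrable kernel $\mathbf 1_{t>0}\,e^{-\alpha t}$, which is an $L^2$-bounded operator in $x_n$. Shell averages are then controlled via Fubini on annuli of ratio $2$, using Lemma~\ref{lem:rad-scale-ratio} to absorb the smearing in $x_n$. A further difficulty is that the extension is only defined over $\{|\xpar|>R_0\}$, so the exterior of the ball in $\Rn$ is not fully covered. I would remedy this with a cutoff $\chi(\xpar)$ equal to $1$ for $|\xpar|>2R_0$, producing an inhomogeneity supported in the cylinder $\{|\xpar|<2R_0\}$. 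I would then handle that cylinder either by noting the inhomogeneity is in $\Bsp$ (a tube in $x_n$ is not compact, so I expect to need decay of $u$ in $x_n$ inside the tube), or, more likely, by applying Rellich on cones $\{x_n<-\delta|x|\}$ with a local version (Rellich's lemma on sectors) and continuing.

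\emph{Step 5: conclusion.} Applying Rellich in $\Rn$ gives $\tilde u=0$ far out, hence $u=0$ for $|x|>R_1$, by unique continuation in the connected exterior region of $\Rnp$.
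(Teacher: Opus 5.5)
\paragraph{Overall.} Your outline matches the paper's: the surface component is removed first (Rellich in dimension $n-1$ plus unique continuation), then $u$ is extended across $\Gamma$ and Rellich is applied in $\Rn$. Your extension formula is exactly \eqref{eq:rad-clean-extension}, with no sign adjustment needed. Your decay estimate in Step~4 is in outline the argument of Lemma~\ref{lem:rad-full-decay}, where shifts $s\ge R$ are handled by the $\Bst$ bound against $e^{-\alpha s}$.

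\paragraph{The gap.} The problem is the ``further difficulty'' of Step~4: neither of the two remedies you propose works.
\begin{itemize}
\item The cutoff $\chi(\xpar)$ produces the source $-2\nabla\chi\cdot\nabla\tilde u-(\Delta\chi)\tilde u$ on the non-compact tube $\{R_0<|\xpar|<2R_0\}\times(-\infty,0)$. Nothing you know about $u$ puts this source in $\Bsp$. Even if it were, the Rellich theorem you quote needs the homogeneous equation outside a \emph{compact} set, so you would not recover vanishing near infinity.
\item Rellich on the cones $\{x_n<-\delta|x|\}$ is simply false without a boundary condition. The reflections $x_n\mapsto -x_n$ of the evanescent superpositions \eqref{eq:intro-evanescent} solve the Helmholtz equation in $\{x_n<0\}$, decay rapidly, and vanish outside no compact set. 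This is exactly the borderline phenomenon the paper stresses.
\end{itemize}

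\paragraph{The fix.} There is no difficulty to remedy. Your own formula,
$\tilde u(\xpar,-h)=u(\xpar,h)-2\alpha\int_0^\infty e^{-\alpha s}u(\xpar,h+s)\,ds$,
uses $u$ only on the vertical ray above the reflected point, and $|(\xpar,h+s)|\ge|(\xpar,h)|$. So $\tilde u$ is defined on all of $\{x_n<0,\ |x|>R_0\}$, not only over $\{|\xpar|>R_0\}$.

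On that set, $(-\Delta-E)\tilde u=0$. Indeed, $\tilde u$ is an $e^{-\alpha s}$-weighted superposition of vertical translates of the reflection of $u$, and each translate is a solution. Differentiating under the integral is justified by the polynomial bounds of Lemma~\ref{lem:rad-pointwise} and their second-order analogues. The paper reaches the same conclusion in Lemma~\ref{lem:rad-matching} by noting that $\mathcal G=(-\Delta-E)\tilde u$ satisfies $(\partial_{x_n}+\alpha)\mathcal G=0$ and grows at most polynomially.

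Across $\Gamma\cap\{|\xpar|>R_0\}$, a direct computation gives
$\tilde u(\xpar,0^-)=u(\xpar,0)-\sqrt{2\alpha}\,w(\xpar)$
and
$\partial_{x_n}\tilde u(\xpar,0^-)=-\partial_{x_n}u(\xpar,0)-2\alpha u(\xpar,0)+\alpha\sqrt{2\alpha}\,w(\xpar)$.
These agree with the values from above exactly when $w=0$ and $\partial_{x_n}u+\alpha u=0$. This matching, not the suppression of a growing factor, is where your Step~2 enters the formula.

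Hence $\tilde u$ solves the free equation on $\{|x|>R_0'\}$ for every $R_0'>R_0$, and no cutoff is needed. Your Step~5 then applies as it stands. Its final appeal to unique continuation is superfluous, since $\tilde u=0$ outside a ball is already the conclusion.
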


Two features of the hypotheses should be isolated before the proof, since they
are what distinguish the theorem from the Rellich-type results available in the
literature.

The hypotheses are imposed \emph{only near infinity}. Neither the equation nor
the boundary condition is assumed on the compact set $\{|x| \le R_0\}$, and
nothing at all is assumed there about $u$. The theorem therefore applies in
the perturbed setting of Part~II, where an obstacle of unknown shape
occupies that region. See Remark~\ref{rem:rad-outlook}.

The boundary condition, on the other hand, is used, and must be. For the
half-space, decay at infinity alone never implies uniqueness: the evanescent
superpositions displayed in \S\ref{subsec:intro-novelty} solve the Helmholtz
equation in all of $\Rnp$, are rapidly decreasing, and satisfy every hypothesis
imposed at infinity anywhere in this paper. The comparison with the conical
geometries of Bonnet-Ben Dhia, Fliss, Hazard and
Tonnoir~\cite{BonnetBenDhia2016}, where no boundary condition is needed and the
half-space is exactly the excluded borderline case, is made there as well. For
Rellich-type theorems in other geometries with unbounded boundary see
\cite{CossettiKrejcirik2020} and the references there.

The proof occupies the rest of \S\ref{sec:rad-rellich}. It decomposes $u$ along
the transverse spectral splitting of Lemma~\ref{lem:1d-transform} and treats the
two channels separately: the surface component is killed by an $(n-1)$-dimensional
Rellich theorem (Steps 1--3), and only then can the bulk component be handled by
an $n$-dimensional one (Steps 4--6). Both invoke the classical Rellich
uniqueness theorem \cite{Rellich1943} for the free Helmholtz equation, in the
shell-averaged form of Agmon and H\"ormander \cite{AgmonHormander1976}, which we
use as a black box in the following formulation: if $m \ge 2$, $\kappa > 0$, and
$U \in \Bstd{\mathbb{R}^m}$ satisfies $(-\Delta - \kappa^2)U = 0$ outside a
compact set together with
$\lim_{R\to\infty}R^{-1}\int_{|x|<R}|U|^2 = 0$, then $U$ vanishes outside a
compact set.

\medskip
\noindent\textbf{The transverse splitting.}
By Lemma~\ref{lem:1d-transform} the half-line Robin operator has the single
bound state $\psizero(x_n) = \sqrt{2\alpha}\,e^{-\alpha x_n}$, normalized in
$\Ltwo{\Rhalf}$, together with a continuum $\{\gphi_k\}_{k>0}$ orthogonal to it.
Accordingly we split $u$, for each fixed $\xpar$ with $|\xpar| > R_0$, as
\begin{equation}
  \label{eq:rad-splitting}
  u(\xpar,x_n) = \usurf(\xpar)\psizero(x_n) + \ubulk(\xpar,x_n),
  \qquad
  \usurf(\xpar) = \int_0^\infty u(\xpar,x_n)\psizero(x_n)\,dx_n,
\end{equation}
so that $\int_0^\infty\psizero(x_n)\ubulk(\xpar,x_n)\,dx_n = 0$ identically. The
integral defining $\usurf$ converges absolutely by
Lemma~\ref{lem:rad-pointwise}: the integrand is $O(x_n^{1/2}e^{-\alpha x_n})$.
The same argument, run with higher-order interior and boundary elliptic
estimates on balls of radius $1$, bounds $\partial_{\xpar}^{\beta}u(\xpar,x_n)$
for $|\beta| \le 2$ by $C(1+x_n)^{1/2}$ locally uniformly in $\xpar$, so that
$\usurf$ may be
differentiated twice under the integral sign. This is used without further
comment in Step~1.

\medskip
\noindent\textbf{Step 1: the equation satisfied by the surface component.}
Multiply $(-\Delta - E)u = 0$ by $\psizero$ and integrate over
$x_n \in (0,\infty)$ at fixed $\xpar$ with $|\xpar| > R_0$. Integrating the
term $\partial_{x_n}^2u$ by parts twice,
\begin{equation*}
  \int_0^\infty(\partial_{x_n}^2u)\psizero\,dx_n
  = \Bigl[\partial_{x_n}u\,\psizero - u\,\partial_{x_n}\psizero\Bigr]_0^\infty
    + \int_0^\infty u\,\partial_{x_n}^2\psizero\,dx_n .
\end{equation*}
At $x_n \to \infty$ the bracket vanishes, because $\psizero$ and
$\partial_{x_n}\psizero$ decay like $e^{-\alpha x_n}$ while $u$ and
$\partial_{x_n}u$ grow at most like $x_n^{1/2}$ by
Lemma~\ref{lem:rad-pointwise}. At $x_n = 0$ the bracket vanishes as well: the
mode $\psizero$ satisfies the same Robin condition as $u$, that is
$\partial_{x_n}\psizero(0) + \alpha\psizero(0) = 0$, so that
\begin{equation*}
  -\bigl[\partial_{x_n}u(\xpar,0)\psizero(0)
    - u(\xpar,0)\partial_{x_n}\psizero(0)\bigr]
  = -\sqrt{2\alpha}\bigl(\partial_{x_n}u(\xpar,0) + \alpha u(\xpar,0)\bigr) = 0 .
\end{equation*}
The surface channel is thus reached through this exact cancellation of the
boundary term rather than through a positivity property (compare
Remark~\ref{rem:boundary-sign}). Since $\partial_{x_n}^2\psizero = \alpha^2\psizero$,
what remains is $-\Delta_{\xpar}\usurf - \alpha^2\usurf - E\usurf = 0$, that is
\begin{equation}
  \label{eq:rad-surface-helmholtz}
  \bigl(-\Delta_{\xpar} - \ksurf^2\bigr)\usurf(\xpar) = 0
  \qquad (|\xpar| > R_0),
\end{equation}
the $(n-1)$-dimensional Helmholtz equation with the surface wavenumber
$\ksurf$. This is the analytic origin of the second
channel: the transverse binding energy $-\alpha^2$ is converted into an
increment of the tangential wavenumber.

\medskip
\noindent\textbf{Step 2: transfer of the decay condition to $\usurf$.}
The decay hypothesis \eqref{eq:rad-rellich-hypothesis} concerns $u$ on balls in
$\Rnp$, whereas \eqref{eq:rad-surface-helmholtz} needs decay of $\usurf$ on
balls in $\Rnm$. The two are related by cutting the $x_n$-integration in
\eqref{eq:rad-splitting} at the height at which the vertical segment leaves the
ball, and estimating the discarded tail by the exponential decay of $\psizero$.

\begin{lemma}[The truncated surface component]
\label{lem:rad-truncation}
For $\sigma > 0$ and $\xpar$ with $\sigma < |\xpar| < \sqrt3\sigma$ put
\begin{equation}
  \label{eq:rad-Zplus}
  Z(\xpar;\sigma) = \sqrt{4\sigma^2 - |\xpar|^2} \ \ge\ \sigma,
  \qquad
  \usurf^{(\sigma)}(\xpar) = \int_0^{Z(\xpar;\sigma)}
    \psizero(x_n)u(\xpar,x_n)\,dx_n .
\end{equation}
Then
\begin{equation}
  \label{eq:rad-truncation-main}
  \int_{\sigma<|\xpar|<\sqrt3\sigma}\bigl|\usurf^{(\sigma)}(\xpar)\bigr|^2 d\xpar
  \ \le\ \int_{\{\sigma<|x|<2\sigma\}\cap\Rnp}|u(x)|^2\,dx,
\end{equation}
and, if $u \in \Bst$,
\begin{equation}
  \label{eq:rad-truncation-tail}
  \int_{\sigma<|\xpar|<\sqrt3\sigma}
    \bigl|\usurf(\xpar) - \usurf^{(\sigma)}(\xpar)\bigr|^2 d\xpar
  \ \le\ C\,\Bstnorm{u}^2\,\sigma^2e^{-2\alpha\sigma}
  \qquad (\sigma \ge 1).
\end{equation}
\end{lemma}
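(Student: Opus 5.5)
The plan is to prove both inequalities pointwise in $\xpar$ by Cauchy--Schwarz in $x_n$, using only $\norm{\psizero}_{\Ltwo{\Rhalf}} = 1$ and the geometry of the vertical segment $\{\xpar\}\times(0,Z(\xpar;\sigma))$. First the geometric facts. For $\sigma < |\xpar| < \sqrt3\sigma$ one has $4\sigma^2 - |\xpar|^2 > \sigma^2$, hence $Z(\xpar;\sigma) \ge \sigma$. A point $(\xpar,x_n)$ with $0 < x_n < Z(\xpar;\sigma)$ satisfies $|x|^2 = |\xpar|^2 + x_n^2 < 4\sigma^2$ and $|x| \ge |\xpar| > \sigma$. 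So the union over $\sigma<|\xpar|<\sqrt3\sigma$ of these segments lies in the annulus $\{\sigma<|x|<2\sigma\}\cap\Rnp$.

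For \eqref{eq:rad-truncation-main}, Cauchy--Schwarz gives
\[
\bigl|\usurf^{(\sigma)}(\xpar)\bigr|^2 \le \int_0^{Z}\psizero^2\,dx_n \int_0^{Z}|u(\xpar,x_n)|^2\,dx_n \le \int_0^{Z(\xpar;\sigma)}|u(\xpar,x_n)|^2\,dx_n .
\]
We then integrate over the tangential annulus and apply Fubini together with the inclusion just noted.

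For \eqref{eq:rad-truncation-tail}, the difference is the tail $\int_Z^\infty \psizero u\,dx_n$, which converges by Lemma~\ref{lem:rad-pointwise}. Rather than use the pointwise bound, which would require $|x| > R_0+2$ (harmless for $\sigma$ large but awkward for all $\sigma \ge 1$), I would again apply Cauchy--Schwarz with a split weight. Write $\psizero = \sqrt{2\alpha}\,e^{-\alpha x_n/2}\cdot e^{-\alpha x_n/2}$. Then
\[
|\usurf - \usurf^{(\sigma)}|^2 \le 2\alpha\int_{Z}^\infty e^{-\alpha x_n}dx_n \int_{Z}^\infty e^{-\alpha x_n}|u|^2dx_n \le 2e^{-\alpha\sigma}\int_{\sigma}^\infty e^{-\alpha x_n}|u(\xpar,x_n)|^2dx_n ,
\]
using $Z \ge \sigma$ in both factors.

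Next we integrate over $\{|\xpar|<\sqrt3\sigma\}$ and decompose $x_n\in(\sigma,\infty)$ into unit-free dyadic pieces $x_n\in[t,2t)$. On such a piece the region lies in $\{|x|<(\sqrt3+2)t\}$, so by \eqref{eq:Bstar-equivalent}, equivalently Lemma~\ref{lem:shell-average}(i), its $L^2$ mass is at most $C t\Bstnorm{u}^2$. Summing $e^{-\alpha t}\,C t\,\Bstnorm{u}^2$ over $t=2^k\sigma$ gives $C\sigma e^{-\alpha\sigma}\Bstnorm{u}^2$. Multiplying by the prefactor $e^{-\alpha\sigma}$ then yields $C\Bstnorm{u}^2\sigma e^{-2\alpha\sigma}$, which is even better than the claimed $\sigma^2e^{-2\alpha\sigma}$ for $\sigma\ge1$. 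The constant depends on $\alpha$.

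The only step needing care is this tail bound. The point is to avoid the pointwise lemma, whose range restriction $|x|>R_0+2$ and boundary-layer issues would otherwise intrude; the integral $\Bst$ bound handles all $\sigma\ge1$ uniformly. If the dyadic sum is handled less carefully, the cruder estimate $\int_{|x|<\rho}|u|^2\le C\rho\Bstnorm{u}^2$ together with $\int_\sigma^\infty x_n e^{-\alpha x_n}dx_n \le C\sigma e^{-\alpha\sigma}$ produces the stated $\sigma^2$ factor, so either route suffices.
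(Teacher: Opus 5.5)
Your proof is correct, and it is a mild variant of the paper's. The first inequality is proved exactly as in the paper. The difference is in the tail bound, in where Cauchy--Schwarz is applied.

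The paper cuts $[\sigma,\infty)$ into the dyadic intervals $[2^k\sigma,2^{k+1}\sigma)$. On each it replaces $\psizero$ by its supremum $\sqrt{2\alpha}\,e^{-\alpha 2^k\sigma}$ and applies Cauchy--Schwarz over an interval of length $2^k\sigma$. It then sums the resulting $L^2$ norms in $\xpar$ by Minkowski's inequality.

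You instead split $\psizero=\sqrt{2\alpha}\,e^{-\alpha x_n/2}\cdot e^{-\alpha x_n/2}$ and apply a single weighted Cauchy--Schwarz on $(Z,\infty)$. The dyadic decomposition is then needed only to bound $\iint e^{-\alpha x_n}|u|^2$ through the ball form of the $\Bst$ norm.

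Both routes rest on the same input: Lemma~\ref{lem:shell-average}(i), applied to regions of radius $\asymp 2^k\sigma$. Neither uses the pointwise Lemma~\ref{lem:rad-pointwise} in the estimate itself.

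What your arrangement buys is a better power of $\sigma$. The paper's interval-wise Cauchy--Schwarz pays a factor $(2^k\sigma)^{1/2}$ for the length of each interval. Your weight $e^{-\alpha x_n}$ has integral $e^{-\alpha Z}/\alpha$ over $(Z,\infty)$, which carries no power of $\sigma$. You therefore get $C\Bstnorm{u}^2\,\sigma e^{-2\alpha\sigma}$, a factor $\sigma$ sharper than \eqref{eq:rad-truncation-tail}, with a slightly shorter argument.

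For the use of the lemma in Step~2 of the Rellich theorem, where the tail is divided by $\sigma$ and must tend to zero, either bound is more than enough. Your closing remark about a cruder route that produces the factor $\sigma^2$ is imprecise, but it plays no role.
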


\begin{proof}
For \eqref{eq:rad-truncation-main}, Cauchy--Schwarz together with
$\int_0^{Z}\psizero^2\,dx_n \le \int_0^\infty\psizero^2\,dx_n = 1$ gives
\begin{equation*}
  |\usurf^{(\sigma)}(\xpar)|^2
  \le \int_0^{Z(\xpar;\sigma)}|u(\xpar,x_n)|^2\,dx_n .
\end{equation*}
The region swept out is contained in the annulus, since
$|\xpar| > \sigma$ forces $|x| > \sigma$, while
$|x|^2 = |\xpar|^2 + x_n^2 < |\xpar|^2 + Z(\xpar;\sigma)^2 = 4\sigma^2$.
Integration in $\xpar$ then gives \eqref{eq:rad-truncation-main}.

For \eqref{eq:rad-truncation-tail}, note
$\usurf - \usurf^{(\sigma)} = \int_{Z(\xpar;\sigma)}^\infty\psizero u\,dx_n$
and $Z(\xpar;\sigma) \ge \sigma$, so we may split $[\sigma,\infty)$ into the
intervals $[2^k\sigma,2^{k+1}\sigma)$, $k \ge 0$. On each of them
$\psizero \le \sqrt{2\alpha}\,e^{-\alpha2^k\sigma}$, so Cauchy--Schwarz in
$x_n$ over an interval of length $2^k\sigma$ gives
\begin{equation*}
  \int_{2^k\sigma}^{2^{k+1}\sigma}\psizero|u|\,dx_n
  \le \sqrt{2\alpha}\,e^{-\alpha2^k\sigma}(2^k\sigma)^{1/2}
      \Bigl(\int_{2^k\sigma}^{2^{k+1}\sigma}|u|^2dx_n\Bigr)^{1/2}.
\end{equation*}
Take the $\Ltwo{\{\sigma<|\xpar|<\sqrt3\sigma\}}$ norm in $\xpar$ and apply
Minkowski's inequality to the sum over $k$. Writing
$D_k = \{\sigma<|\xpar|<\sqrt3\sigma\} \times [2^k\sigma,2^{k+1}\sigma)$, we
have for $k \ge 0$
\begin{equation*}
  |x|^2 = |\xpar|^2 + x_n^2 < 3\sigma^2 + 4\cdot4^k\sigma^2
  \le 7\cdot4^k\sigma^2
  \qquad (x \in D_k),
\end{equation*}
so that $D_k \subset \{|x| < \sqrt7\cdot2^k\sigma\} \cap \Rnp$ and therefore,
by \eqref{eq:Bstar-equivalent},
$\norm{u}_{\Ltwo{D_k}}^2 \le C\Bstnorm{u}^2\,2^k\sigma$. No covering by
dyadic shells is needed: the ball form of the $\Bst$ norm applies to
$D_k$ directly. Consequently
\begin{equation*}
  \norm{\usurf - \usurf^{(\sigma)}}_{\Ltwo{\{\sigma<|\xpar|<\sqrt3\sigma\}}}
  \le C\Bstnorm{u}\sum_{k\ge0}2^k\sigma\,e^{-\alpha2^k\sigma}
  \le C'\Bstnorm{u}\,\sigma e^{-\alpha\sigma},
\end{equation*}
the last step because for $k \ge 1$ one has $2^k - 1 \ge 2^{k-1}$, so that the
$k$-th term is at most $2^k\sigma e^{-\alpha\sigma}e^{-\alpha2^{k-1}\sigma}$
and the series converges, for $\sigma \ge 1$, to a bounded multiple of its
first term. Squaring gives \eqref{eq:rad-truncation-tail}.
\end{proof}

By Lemma~\ref{lem:rad-scale-ratio} with $\lambda = 2$ the right-hand side of
\eqref{eq:rad-truncation-main} is $o(\sigma)$, hence so is the left-hand side.
Dividing
\eqref{eq:rad-truncation-tail} by $\sigma$ gives $O(\sigma e^{-2\alpha\sigma})$.
Hence, by the triangle inequality in
$\Ltwo{\{\sigma<|\xpar|<\sqrt3\sigma\}}$,
\begin{equation*}
  \lim_{\sigma\to\infty}\frac1\sigma
    \int_{\sigma<|\xpar|<\sqrt3\sigma}|\usurf(\xpar)|^2 d\xpar = 0 .
\end{equation*}
The same computation with $\sup$ in place of $\lim$ shows
$\usurf \in \Bstd{\Rnm}$. Applying Lemma~\ref{lem:rad-scale-ratio} once more,
now on $\Rnm$ and with the ratio $\lambda = \sqrt3$, we conclude
\begin{equation}
  \label{eq:rad-surface-Bzero}
  \usurf \in \Bstz(\Rnm).
\end{equation}
The ratio $\sqrt3$ arises as follows, and it costs nothing.
The geometric cut \eqref{eq:rad-Zplus} requires
$4\sigma^2 - |\xpar|^2 \ge \sigma^2$, hence $|\xpar| < \sqrt3\sigma$, so a
single application of \eqref{eq:rad-truncation-main} covers only an annulus of
ratio $\sqrt3 < 2$ in $\Rnm$. Lemma~\ref{lem:rad-scale-ratio}, being stated for
an arbitrary ratio $\lambda > 1$, absorbs this mismatch. No gluing of shifted
base points is required.

\medskip
\noindent\textbf{Step 3: the surface component vanishes.}
By \eqref{eq:rad-surface-helmholtz} the function $\usurf$ solves the free
Helmholtz equation in $\Rnm$ outside the compact set $\{|\xpar| \le R_0\}$ with
wavenumber $\ksurf > 0$, and by \eqref{eq:rad-surface-Bzero} it satisfies the
Rellich decay condition there. Since $n \ge 3$ we have $n - 1 \ge 2$, so the
classical Rellich theorem applies in dimension $n-1$ and produces $R_1' > R_0$
with $\usurf(\xpar) = 0$ for $|\xpar| > R_1'$. As $n - 1 \ge 2$, the set
$\{|\xpar| > R_0\}$ is connected, and $\usurf$ is real analytic there, being a
solution of a constant-coefficient elliptic equation. By unique continuation
\cite{Aronszajn1957},
\begin{equation}
  \label{eq:rad-surface-zero}
  \usurf \equiv 0 \qquad (|\xpar| > R_0),
  \qquad\text{equivalently}\qquad
  \int_0^\infty e^{-\alpha x_n}u(\xpar,x_n)\,dx_n = 0 .
\end{equation}
Thus $u = \ubulk$ carries no surface-wave component. The requirement
$n - 1 \ge 2$ is genuine: for $n = 2$ the surface channel would be governed by a
one-dimensional Helmholtz equation, whose solutions $e^{\pm i\ksurf\xpar}$ do
not decay at all, and the argument breaks down at this point rather than merely
becoming technically harder.

\medskip
\noindent\textbf{Step 4: the auxiliary function and its odd extension.}
For the bulk channel we pass to the whole space. Neither the even nor the odd
reflection preserves a Robin condition, but the function
\begin{equation*}
  v := \partial_{x_n}u + \alpha u
\end{equation*}
satisfies $(-\Delta - E)v = 0$ in $\Rnp(R_0)$, because $-\Delta - E$ has
constant coefficients and hence commutes with $\partial_{x_n}$ and with
multiplication by $\alpha$, and it satisfies $v = 0$ on $\Gamma(R_0)$ by the
Robin condition. Its odd extension
\begin{equation*}
  \widetilde v(\xpar,x_n) =
  \begin{cases}
    v(\xpar,x_n), & x_n > 0,\\
    -v(\xpar,-x_n), & x_n < 0,
  \end{cases}
\end{equation*}
is therefore $C^1$ across $\{x_n = 0\}$ and solves $(-\Delta-E)\widetilde v = 0$
on $\{|x| > R_0\} \subset \Rn$, the equation being invariant under
$x_n \mapsto -x_n$. Note that $\widetilde v$ is defined on all of
$\{|x| > R_0\}$, since $|(\xpar,-x_n)| = |x|$.

\medskip
\noindent\textbf{Step 5: the Robin extension formula.}
Suppose first that $|\xpar| > R_0$, so that the whole vertical line above and
below $\xpar$ meets $\Rnp$ only in $\Rnp(R_0)$. We extend $u$ by solving
$\partial_{x_n}\widetilde u + \alpha\widetilde u = \widetilde v$ for $x_n < 0$
with the initial value $\widetilde u(\xpar,0) = u(\xpar,0)$. With the
integrating factor $e^{\alpha x_n}$ we have
$\partial_{x_n}(e^{\alpha x_n}\widetilde u) = e^{\alpha x_n}\widetilde v$, so
integrating from $0$ to $x_n < 0$ and substituting $\tau = -s$,
\begin{equation*}
  e^{\alpha x_n}\widetilde u(\xpar,x_n) - u(\xpar,0)
  = -\int_0^{x_n}e^{\alpha s}v(\xpar,-s)\,ds
  = \int_0^{-x_n}e^{-\alpha\tau}v(\xpar,\tau)\,d\tau .
\end{equation*}
Inserting $v = \partial_\tau u + \alpha u$ and integrating the first summand by
parts,
\begin{equation*}
  \int_0^{h}e^{-\alpha\tau}v\,d\tau
  = e^{-\alpha h}u(\xpar,h) - u(\xpar,0)
    + 2\alpha\int_0^{h}e^{-\alpha\tau}u(\xpar,\tau)\,d\tau
  \qquad (h = -x_n).
\end{equation*}
The terms $u(\xpar,0)$ cancel, and dividing by $e^{\alpha x_n}$ we obtain the
extension formula
\begin{equation}
  \label{eq:rad-extension}
  \widetilde u(\xpar,x_n) = u(\xpar,-x_n)
    + 2\alpha\int_0^{-x_n}e^{-\alpha(x_n+\tau)}u(\xpar,\tau)\,d\tau
  \qquad (x_n < 0).
\end{equation}
For $\alpha \to 0$ the second term disappears and \eqref{eq:rad-extension}
degenerates to the even reflection, which is the Neumann one. For
$\alpha \to \infty$ the second term does not disappear. It contributes,
formally, twice the even reflection with the opposite sign, so that
\eqref{eq:rad-extension} degenerates to the odd reflection, which is the
Dirichlet one. The Robin case interpolates between the two by a memory term
along the vertical ray.

Formula \eqref{eq:rad-extension} is not yet usable, for two distinct reasons.
Its second term grows like $e^{-\alpha x_n}$ as $x_n \to -\infty$ in general.
It also requires the values of $u$ on the whole segment $0 \le \tau \le -x_n$,
so that it defines $\widetilde u$ only over $\{|\xpar| > R_0\}$, whereas the
Rellich theorem to be applied in Step~6 needs a function defined outside a
compact subset of $\Rn$. Both defects are removed at once by the result of
Step~3. Writing $g(h) = \int_0^he^{-\alpha\tau}u(\xpar,\tau)\,d\tau$, relation
\eqref{eq:rad-surface-zero} says exactly $g(\infty) = 0$, so
$g(-x_n) = -\int_{-x_n}^\infty e^{-\alpha\tau}u\,d\tau$. Substituting
$\tau = -x_n+s$ with $s \ge 0$ turns \eqref{eq:rad-extension} into
\begin{equation}
  \label{eq:rad-clean-extension}
  \widetilde u(\xpar,x_n) = u(\xpar,-x_n)
    - 2\alpha\int_0^\infty e^{-\alpha s}u(\xpar,-x_n+s)\,ds
  \qquad (x_n < 0),
\end{equation}
in which the extension is an exponentially weighted average of the values of
$u$ \emph{above} the reflected point, with no growing factor at all. That the
rewriting is not cosmetic is seen from the formal test function $u \equiv 1$,
for which the right-hand side of \eqref{eq:rad-extension} equals
$2e^{-\alpha x_n}-1 \to \infty$: the vanishing of the surface component is
precisely what excludes this behaviour.

\emph{We take \eqref{eq:rad-clean-extension} as the definition of $\widetilde u$
on $\{x_n<0\}$}, and \eqref{eq:rad-extension} is retained only as the derivation
which identifies it. The gain is that \eqref{eq:rad-clean-extension} uses the
values $u(\xpar,\tau)$ for $\tau \ge -x_n$ only. If $-x_n > R_0$ then
$|(\xpar,\tau)| \ge \tau \ge -x_n > R_0$ for every such $\tau$, whatever
$\xpar$. Hence $\widetilde u$ is defined on
\begin{equation*}
  \Omega_{R_0} :=
  \bigl(\Rnp(R_0)\bigr) \cup \{|\xpar| > R_0\} \cup \{x_n < -R_0\},
\end{equation*}
and $\Rn \setminus \Omega_{R_0} \subset \{|x| \le \sqrt2\,R_0\}$ is bounded. A
point outside $\Omega_{R_0}$ has $|\xpar| \le R_0$ and $-x_n \le R_0$, and
either $x_n \le 0$, in which case $|x|^2 \le 2R_0^2$, or $x_n > 0$, in which
case it fails to lie in $\Rnp(R_0)$ only if $|x| \le R_0$.

\begin{lemma}[$C^2$ matching and validity of the equation]
\label{lem:rad-matching}
The function $\widetilde u$ of \eqref{eq:rad-clean-extension} matches $u$ across
$\{x_n = 0\}$ in the $C^2$ sense, and $(-\Delta - E)\widetilde u = 0$ holds
classically on $\{|x| > \sqrt2\,R_0\} \subset \Rn$.
\end{lemma}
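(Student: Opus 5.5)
The plan is to verify the lemma directly from the definition \eqref{eq:rad-clean-extension}. The work splits into two parts: the matching of $\widetilde u$ with $u$ at $x_n=0$ up to second order, and the equation on $\{x_n<0\}$. The key identity is that, for $x_n<0$ with $h=-x_n$, the function $\widetilde u$ coincides with the original formula \eqref{eq:rad-extension} wherever the latter is defined, that is over $\{|\xpar|>R_0\}$. This holds because the two formulas differ by $2\alpha e^{-\alpha x_n}\int_0^\infty e^{-\alpha\tau}u(\xpar,\tau)\,d\tau$, which vanishes by \eqref{eq:rad-surface-zero}. Over $\{|\xpar|\le R_0\}$, however, only \eqref{eq:rad-clean-extension} is available, and there $x_n<-R_0$.

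\emph{Step 1 (first-order ODE).} I will differentiate \eqref{eq:rad-clean-extension} in $x_n$. Writing $\widetilde u(\xpar,x_n)=u(\xpar,h)-2\alpha\int_h^\infty e^{-\alpha(\tau-h)}u(\xpar,\tau)\,d\tau$ and differentiating in $h$ gives
\[
\partial_{x_n}\widetilde u+\alpha\widetilde u=\widetilde v \qquad (x_n<0),
\]
where $\widetilde v$ is the odd extension of Step~4. Differentiation under the integral is justified by Lemma~\ref{lem:rad-pointwise} together with its higher-derivative version noted after \eqref{eq:rad-splitting}: derivatives of $u$ grow at most like $(1+\tau)^{1/2}$, against the weight $e^{-\alpha s}$.

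\emph{Step 2 ($C^2$ matching).} At $x_n=0$ and $|\xpar|>R_0$, formula \eqref{eq:rad-extension} gives $\widetilde u(\xpar,0^-)=u(\xpar,0)$. The derivative then follows from the ODE, $\partial_{x_n}\widetilde u(0^-)=\widetilde v(0^-)-\alpha u(\xpar,0)=-\alpha u(\xpar,0)$, since $v=0$ on $\Gamma(R_0)$; this equals $\partial_{x_n}u(0^+)$ by the Robin condition. For second derivatives I differentiate the ODE, obtaining $\partial_{x_n}^2\widetilde u=\partial_{x_n}\widetilde v-\alpha\partial_{x_n}\widetilde u$. Oddness gives $\partial_{x_n}\widetilde v(0^-)=\partial_{x_n}v(0^+)$, and the same relation $\partial_{x_n}^2u=\partial_{x_n}v-\alpha\partial_{x_n}u$ holds on the upper side, so the one-sided second normal derivatives agree. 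Tangential derivatives match because the boundary values agree and the formula commutes with $\partial_{\xpar}$. Hence $\widetilde u$ is $C^2$ across the hyperplane.

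\emph{Step 3 (the equation).} For $x_n<0$ put $w=(-\Delta-E)\widetilde u$. Applying $-\Delta-E$, which commutes with $\partial_{x_n}+\alpha$, to the ODE gives $(\partial_{x_n}+\alpha)w=(-\Delta-E)\widetilde v=0$. Hence $w=c(\xpar)e^{-\alpha x_n}$ on each vertical ray in the domain.

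To show $c=0$ I compute $w$ directly from \eqref{eq:rad-clean-extension} instead. The tangential Laplacian passes under the integral, and the $x_n$-derivatives fall on $u(\xpar,h+s)$. Integrating $\partial_s^2u$ by parts twice against $e^{-\alpha s}$, the boundary terms at $s=\infty$ vanish by Lemma~\ref{lem:rad-pointwise}, and the remaining terms reproduce $-2\alpha\int e^{-\alpha s}(-\Delta-E)u\,ds$ plus boundary terms at $s=0$ that cancel exactly against $(-\Delta-E)u(\xpar,h)$. This gives $w=0$ wherever $h>R_0$ or $|\xpar|>R_0$.

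On the upper half-space the equation holds by hypothesis. Across $x_n=0$ it holds classically by the $C^2$ matching, since both one-sided values of $\Delta\widetilde u$ coincide. The set $\{|x|>\sqrt2 R_0\}$ is contained in $\Omega_{R_0}$, which completes the argument.

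\emph{Main obstacle.} I expect the hardest part to be the integration-by-parts computation in Step~3 on the region $|\xpar|\le R_0$, $x_n<-R_0$, where the identification with \eqref{eq:rad-extension} is unavailable. There one must check carefully that all $s=0$ boundary terms cancel and that the growth control of Lemma~\ref{lem:rad-pointwise} suffices to discard the terms at infinity.
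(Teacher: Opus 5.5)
Your proof is correct. Steps 1--2 are essentially the paper's: the paper differentiates \eqref{eq:rad-extension} twice directly, while you read the normal derivatives off $(\partial_{x_n}+\alpha)\widetilde u=\widetilde v$, and both use only the Robin condition and $v=0$ on $\Gamma(R_0)$.

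You differ from the paper on the equation in $\{x_n<0\}$. The paper keeps the first-order relation $(\partial_{x_n}+\alpha)\mathcal G=0$ for $\mathcal G=(-\Delta-E)\widetilde u$ and removes the factor $c(\xpar)$ line by line. When $|\xpar|>R_0$ it uses the $C^2$ matching at $x_n=0^-$. When $|\xpar|\le R_0$ it compares the polynomial growth of $\widetilde u$ with the growth of $e^{-\alpha x_n}$ as $x_n\to-\infty$.

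You compute $\mathcal G$ directly instead. This treats both kinds of vertical lines at once, and the matching is then needed only on the hyperplane itself. The ODE for $w$ in your Step 3 becomes superfluous.

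The computation is also easier than you expect, and no integration by parts is needed. The operator $-\Delta-E$ commutes with the reflection $x_n\mapsto -x_n$ and with vertical translations. Differentiating \eqref{eq:rad-clean-extension} under the integral therefore gives
\[
(-\Delta-E)\widetilde u(\xpar,x_n)=[(-\Delta-E)u](\xpar,h)-2\alpha\int_0^\infty e^{-\alpha s}\,[(-\Delta-E)u](\xpar,h+s)\,ds,\qquad h=-x_n .
\]
Every point $(\xpar,h+s)$ with $s\ge0$ lies in $\Rnp(R_0)$ whenever $h>R_0$ or $|\xpar|>R_0$, so both terms vanish.

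Your phrase that the $s=0$ boundary terms cancel ``against $(-\Delta-E)u(\xpar,h)$'' is not accurate. That term vanishes on its own. If one writes the integral as $\int_h^\infty$ and differentiates in the lower limit, the resulting terms simply coincide with those produced by integrating $\partial_s^2u$ by parts.

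The only analytic input in your route is polynomial growth of the second derivatives of $u$, to justify differentiation under the integral. The paper needs the same input to define $\mathcal G$ classically and to run its growth comparison. So your route is shorter at no extra cost. What the paper's version buys is that it stays with the ODE from which the extension was derived, with the equation fixed by one scalar condition per vertical line.
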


\begin{proof}
We first record that \eqref{eq:rad-clean-extension} satisfies the same ordinary
differential equation as \eqref{eq:rad-extension} wherever it is defined. Put
$h = -x_n > 0$ and $G(h) = u(\xpar,h) - 2\alpha\int_0^\infty
e^{-\alpha s}u(\xpar,h+s)\,ds$, so that $\widetilde u(\xpar,x_n) = G(-x_n)$ and
$(\partial_{x_n}+\alpha)\widetilde u = -G'(h) + \alpha G(h)$. Since
$\int_0^\infty e^{-\alpha s}\partial_hu(\xpar,h+s)\,ds
= -u(\xpar,h) + \alpha\int_0^\infty e^{-\alpha s}u(\xpar,h+s)\,ds$, the boundary
term at $s = \infty$ vanishing by Lemma~\ref{lem:rad-pointwise}, we get
\begin{equation*}
  -G'(h) + \alpha G(h)
  = -\partial_hu(\xpar,h) - \alpha u(\xpar,h)
  = -v(\xpar,h) = \widetilde v(\xpar,x_n),
\end{equation*}
as claimed. Consequently, with $\mathcal{G} := (-\Delta - E)\widetilde u$ on
$\{x_n<0\}\cap\Omega_{R_0}$, and since $-\Delta - E$ commutes with
$\partial_{x_n} + \alpha$,
\begin{equation*}
  (\partial_{x_n}+\alpha)\mathcal{G} = (-\Delta-E)\widetilde v = 0,
  \qquad\text{hence}\qquad
  \mathcal{G}(\xpar,x_n) = \mathcal{G}(\xpar,x_n^0)\,e^{-\alpha(x_n-x_n^0)}
\end{equation*}
along each vertical segment contained in the region.

\emph{Vertical lines with $|\xpar| > R_0$.} Here the segment reaches
$x_n = 0^-$, and we show $\mathcal{G}(\xpar,0^-) = 0$ by $C^2$ matching.
The two formulas agree on this range. Differentiating
\eqref{eq:rad-extension} once and using
$\frac{d}{dx_n}\int_0^{-x_n}e^{-\alpha\tau}u\,d\tau = -e^{\alpha x_n}u(\xpar,-x_n)$,
\begin{equation*}
  \partial_{x_n}\widetilde u(\xpar,x_n)
  = -u'(\xpar,-x_n) - 2\alpha u(\xpar,-x_n)
    - 2\alpha^2\int_0^{-x_n}e^{-\alpha(x_n+\tau)}u(\xpar,\tau)\,d\tau,
\end{equation*}
where $u'$ denotes $\partial_{x_n}u$. Letting $x_n \to 0^-$ and using the Robin
condition $u'(\xpar,0) = -\alpha u(\xpar,0)$,
\begin{equation*}
  \partial_{x_n}\widetilde u(\xpar,0^-)
  = -u'(\xpar,0) - 2\alpha u(\xpar,0)
  = -\alpha u(\xpar,0) = \partial_{x_n}u(\xpar,0^+).
\end{equation*}
Differentiating once more,
\begin{equation*}
  \partial_{x_n}^2\widetilde u(\xpar,x_n)
  = u''(\xpar,-x_n) + 2\alpha u'(\xpar,-x_n) + 2\alpha^2u(\xpar,-x_n)
    + 2\alpha^3\int_0^{-x_n}e^{-\alpha(x_n+\tau)}u(\xpar,\tau)\,d\tau,
\end{equation*}
so that, again by the Robin condition,
$\partial_{x_n}^2\widetilde u(\xpar,0^-)
= u''(\xpar,0) + 2\alpha u'(\xpar,0) + 2\alpha^2u(\xpar,0) = u''(\xpar,0)$.
Since the tangential derivatives match trivially, $\widetilde u$ is $C^2$ across
$\{x_n=0\}$, so $\mathcal{G}(\xpar,0^-)$ coincides with the limit of
$(-\Delta-E)\widetilde u$ from above, which is $0$. Hence $\mathcal{G}$ vanishes
on every such vertical line.

\emph{Vertical lines with $|\xpar| \le R_0$.} Here the segment is
$\{x_n < -R_0\}$ and does not reach the boundary. By
Lemma~\ref{lem:rad-pointwise} and \eqref{eq:rad-clean-extension}, $\widetilde u$
and its derivatives up to order two grow at most polynomially as
$x_n \to -\infty$, whence so does $\mathcal{G}$. But
$\mathcal{G}(\xpar,x_n) = \mathcal{G}(\xpar,-2R_0)e^{-\alpha(x_n+2R_0)}$ grows
exponentially unless $\mathcal{G}(\xpar,-2R_0) = 0$. Therefore
$\mathcal{G} \equiv 0$ there as well.

Since $\{x_n>0\} \cap \{|x|>R_0\}$ carries the original equation and
$\Rn\setminus\Omega_{R_0} \subset \{|x|\le\sqrt2R_0\}$, the conclusion holds on
$\{|x| > \sqrt2R_0\}$.
\end{proof}

\medskip
\noindent\textbf{Step 6: decay on the whole space, and conclusion.}

\begin{lemma}[Decay of the extension on the whole space]
\label{lem:rad-full-decay}
Under the hypotheses of Theorem~\ref{thm:rad-rellich}, the function
$\widetilde u$ satisfies
\begin{equation}
  \label{eq:rad-full-decay}
  \lim_{R\to\infty}\frac1R\int_{|x|<R}|\widetilde u(x)|^2\,dx = 0 .
\end{equation}
\end{lemma}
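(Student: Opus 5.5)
We split the ball $\{|x|<R\}$ of $\Rn$ into its upper part $\{x_n>0\}$, its lower part, and the bounded set $\Rn\setminus\Omega_{R_0}$. The latter is contained in $\{|x|\le\sqrt2R_0\}$ and contributes $O(1)$, hence $o(R)$ after division by $R$. On the upper part $\widetilde u=u$, and that contribution is $o(R)$ by the hypothesis \eqref{eq:rad-rellich-hypothesis}. So everything reduces to bounding $\int_{\{|x|<R,\ x_n<0\}}|\widetilde u|^2\,dx$ through formula \eqref{eq:rad-clean-extension}.

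Write $\widetilde u(\xpar,x_n)=u(\xpar,h)-2\alpha(Au)(\xpar,h)$ with $h=-x_n$ and
\[
(Au)(\xpar,h)=\int_0^\infty e^{-\alpha s}u(\xpar,h+s)\,ds .
\]
The reflected term $u(\xpar,h)$ integrates over $\{|x|<R,\ x_n<0\}$ to exactly $\int_{\Ball{R}}|u|^2=o(R)$. For the memory term, the weight has total mass $\int_0^\infty e^{-\alpha s}\,ds=1/\alpha$, so Cauchy--Schwarz gives
\[
|Au(\xpar,h)|^2\le\alpha^{-1}\int_0^\infty e^{-\alpha s}|u(\xpar,h+s)|^2\,ds .
\]
Integrating over $\{|\xpar|^2+h^2<R^2\}$ and using Fubini, we split the $s$-integral at $s=R$. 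For $s\le R$ the point $(\xpar,h+s)$ lies in $\Ball{2R}$, so the contribution is at most
\[
\alpha^{-1}\int_0^R e^{-\alpha s}\int_{\Ball{2R}}|u|^2\,dx\,ds\le\alpha^{-2}\int_{\Ball{2R}}|u|^2=o(R).
\]

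The tail $s>R$ is the only point needing care, since there $h+s$ may be arbitrarily large. We do not use pointwise bounds; we again decompose dyadically in $s\in[2^kR,2^{k+1}R)$. On that range $(\xpar,h+s)\in\Ball{2^{k+2}R}$, so by the $\Bst$ bound (Lemma~\ref{lem:shell-average}(i)) the $k$-th piece is at most
\[
\alpha^{-1}e^{-\alpha2^kR}\cdot C\,\Bstnorm{u}^2\,2^{k+2}R .
\]
Summing over $k$ gives $O(Re^{-\alpha R})$. If $u$ is only given on $\Rnp(R_0)$, the norm $\Bstnorm{u}$ is understood as on that set, so nothing changes; the points used all have $h+s\ge R>R_0$.

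Combining the pieces, $R^{-1}\int_{|x|<R}|\widetilde u|^2\to0$, which is \eqref{eq:rad-full-decay}. The main obstacle is only the bookkeeping of the memory term, since its values are taken far above the ball. The exponential weight, together with the linear growth $\int_{\Ball{\lambda R}}|u|^2\le C\lambda R\Bstnorm{u}^2$, absorbs this.
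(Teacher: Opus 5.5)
Your proof is correct and follows essentially the paper's route. You write $\widetilde u(\xpar,-h)=u(\xpar,h)-2\alpha(\Aav u)(\xpar,h)$ and split the memory integral at $s=R$; the range $s\le R$ is controlled by the $\Bstz$ hypothesis on a region of size comparable to $R$, and the range $s>R$ by the linear $\Bst$ growth against the weight $e^{-\alpha s}$. The differences are cosmetic: you work on balls rather than on annuli $\{R<|x|<2R\}$, which lets you bypass Lemma~\ref{lem:rad-scale-ratio}, and you use a weighted Cauchy--Schwarz inequality with a dyadic split of the tail where the paper uses Minkowski's integral inequality.
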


\begin{proof}
On $\{x_n>0\}$ we have $\widetilde u = u$, and the claim is
\eqref{eq:rad-rellich-hypothesis}. For $\{x_n<0\}$ put $h = -x_n > 0$. The
region $\Omega_R^- = \{(\xpar,-h) : R<|(\xpar,h)|<2R,\ h>0\}$ is congruent to
its counterpart in $\Rnp$, and by \eqref{eq:rad-clean-extension}
\begin{equation*}
  \widetilde u(\xpar,-h) = u(\xpar,h) - 2\alpha(\Aav u)(\xpar,h),
  \qquad
  (\Aav u)(\xpar,h) = \int_0^\infty e^{-\alpha s}u(\xpar,h+s)\,ds .
\end{equation*}
The first term contributes $o(R)$ by \eqref{eq:rad-rellich-hypothesis} and
Lemma~\ref{lem:rad-scale-ratio}. For the second, the Minkowski integral
inequality gives
$\norm{\Aav u}_{\Ltwo{\Omega_R^-}} \le \int_0^\infty e^{-\alpha s}
\norm{u(\cdot,\cdot+s)}_{\Ltwo{\Omega_R^-}}\,ds$. The shift
$(\xpar,h) \mapsto (\xpar,h+s)$ preserves Lebesgue measure and, since
$|(\xpar,h+s)| \ge |(\xpar,h)|$ and
$|(\xpar,h+s)| \le |(\xpar,h)| + s$, maps $\Omega_R^-$ into
$\{R<|x|<2R+s\} \cap \Rnp$. Hence
\begin{equation*}
  \norm{u(\cdot,\cdot+s)}_{\Ltwo{\Omega_R^-}}^2 \le N(R,s)
  := \int_{\{R<|x|<2R+s\}\cap\Rnp}|u|^2\,dx .
\end{equation*}
For $s \ge R$ we use only $u \in \Bst$, which by \eqref{eq:Bstar-equivalent}
gives $N(R,s) \le C\Bstnorm{u}^2(R+s)$ and therefore
$\int_R^\infty e^{-\alpha s}N(R,s)^{1/2}ds = O\bigl(e^{-\alpha R}R^{1/2}\bigr)$.
For $0 \le s \le R$ we have $2R+s \le 3R$, so that
\begin{equation*}
  N(R,s) \le \int_{\{R<|x|<3R\}\cap\Rnp}|u|^2\,dx = o(R)
\end{equation*}
by
\eqref{eq:rad-rellich-hypothesis} and Lemma~\ref{lem:rad-scale-ratio} with
$\lambda = 3$, uniformly in $s$. It follows that
$\int_0^Re^{-\alpha s}N(R,s)^{1/2}ds \le \alpha^{-1}o(R^{1/2})$. Adding the two
ranges, $\norm{\Aav u}_{\Ltwo{\Omega_R^-}} = o(R^{1/2})$, that is
$R^{-1}\norm{\Aav u}_{\Ltwo{\Omega_R^-}}^2 = o(1)$. The bounded set
$\Rn\setminus\Omega_{R_0}$, on which $\widetilde u$ is undefined, may be given
the value $0$. It contributes $O(1/R)$. Combining the two half-spaces
and applying Lemma~\ref{lem:rad-scale-ratio} in $\Rn$ gives
\eqref{eq:rad-full-decay}.
\end{proof}

\begin{proof}[Conclusion of the proof of Theorem~\ref{thm:rad-rellich}]
By Lemma~\ref{lem:rad-matching}, $\widetilde u$ solves
$(-\Delta - E)\widetilde u = 0$ on $\{|x|>\sqrt2R_0\} \subset \Rn$ with $E>0$,
and by Lemma~\ref{lem:rad-full-decay} it satisfies the Rellich decay condition
there. The classical $n$-dimensional Rellich theorem yields $R_1 > R_0$ with
$\widetilde u = 0$ for $|x|>R_1$. Since $\widetilde u = u$ on
$\{x_n>0\}$, this is \eqref{eq:rad-rellich-conclusion}.
\end{proof}

\subsubsection{The outgoing radiation condition}
\label{sec:rad-condition}

We now formulate the radiation condition. Following
\cite{DuranMugaNedelec2009} we impose it directly on $u$, not on the summands of
any decomposition of $u$: a boundary condition at infinity should be a statement
about the solution, and the splitting \eqref{eq:rad-splitting} is a device of
the transverse spectral analysis. Since the two channels propagate with the
different wavenumbers $\kbulk$ and $\ksurf$, and since the surface channel lives
in a layer of thickness $O(1/\alpha)$ adjacent to $\Gamma$, the condition is
stated separately in a near-boundary region and its complement.

\begin{definition}[Outgoing radiation condition]
\label{def:rad-condition}
Let $E > 0$, $\alpha > 0$, and fix $\alpha_1 \in (0,1)$. Split
\begin{equation}
  \label{eq:rad-regions}
  \Rnp(\alpha_1+) = \{x \in \Rnp : x_n > r^{\alpha_1}\},
  \qquad
  \Rnp(\alpha_1-) = \{x \in \Rnp : 0 \le x_n \le r^{\alpha_1}\},
\end{equation}
and write $\Sigma_1(R) = \{R<|x|<2R\} \cap \Rnp(\alpha_1+)$ and
$\Sigma_2(R) = \{R<|x|<2R\} \cap \Rnp(\alpha_1-)$. A function $u \in \Bst$,
smooth outside a compact set, satisfies the \emph{outgoing radiation condition}
if
\begin{equation}
  \label{eq:rad-bulk}
  \lim_{R\to\infty}\frac1R\int_{\Sigma_1(R)}
    \bigl|\partial_ru - i\kbulk u\bigr|^2 dx = 0,
\end{equation}
\begin{equation}
  \label{eq:rad-surf}
  \lim_{R\to\infty}\frac1R\int_{\Sigma_2(R)}
    \bigl|\partial_ru - i\ksurf u\bigr|^2 dx = 0 .
\end{equation}
For solutions of an inhomogeneous equation with compactly supported right-hand
side the limits are understood along $R$ large enough that
$\{R<|x|<2R\}$ misses the support, so that $\partial_ru$ is classically
defined there.
\end{definition}

The exponent $\alpha_1$ is a purely geometric parameter, unrelated to the
impedance $\alpha$. It is the exponent that \cite{DuranMugaNedelec2009} call
$\alpha$. The subscript is added here because that letter is already taken. The
notation $\Rnp(\alpha_1\pm)$ for the two regions is theirs. Two remarks on the
shape of the definition are in order, and both concern the comparison with that
paper.

\begin{lemma}[The radiation condition passes to differences]
\label{lem:rad-linear}
Let $\kappa > 0$, let $\Sigma \subset \Rnp$, and let $u_1,u_2$ satisfy
$\lim_{R\to\infty}R^{-1}\int_{\{R<|x|<2R\}\cap\Sigma}
|\partial_ru_j - i\kappa u_j|^2dx = 0$ for $j=1,2$. Then $u_1-u_2$ satisfies
the same condition. Consequently, if $u_1$ and $u_2$ both satisfy
Definition~\ref{def:rad-condition}, so does $u_1-u_2$.
\end{lemma}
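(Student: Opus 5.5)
The proof is a direct application of the triangle inequality in $\Ltwo{\{R<|x|<2R\}\cap\Sigma}$. We do not need to expand the square. Put $w = u_1 - u_2$. The operator $\partial_r - i\kappa$ is linear, so pointwise on the common region where both $u_j$ are classically differentiable we have
\begin{equation*}
  \partial_r w - i\kappa w
  = \bigl(\partial_r u_1 - i\kappa u_1\bigr) - \bigl(\partial_r u_2 - i\kappa u_2\bigr).
\end{equation*}
That region contains $\{R<|x|<2R\}$ for all $R$ beyond the larger of the two thresholds in Definition~\ref{def:rad-condition}.

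We then use the triangle inequality, or equivalently $|a-b|^2 \le 2|a|^2 + 2|b|^2$. This gives
\begin{equation*}
  \frac1R\int_{\{R<|x|<2R\}\cap\Sigma}|\partial_r w - i\kappa w|^2\,dx
  \le \frac2R\int_{\{R<|x|<2R\}\cap\Sigma}|\partial_r u_1 - i\kappa u_1|^2\,dx
  + \frac2R\int_{\{R<|x|<2R\}\cap\Sigma}|\partial_r u_2 - i\kappa u_2|^2\,dx .
\end{equation*}
Both terms on the right tend to $0$ by hypothesis, so the left side does as well. This proves the first assertion.

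For the consequence, we apply the first assertion twice. The first application uses $\kappa = \kbulk$ and $\Sigma = \Rnp(\alpha_1+)$, which gives \eqref{eq:rad-bulk}. The second uses $\kappa = \ksurf$ and $\Sigma = \Rnp(\alpha_1-)$, which gives \eqref{eq:rad-surf}. The remaining requirements of Definition~\ref{def:rad-condition} are also met by $w$. First, $w$ lies in $\Bst$ because $\Bst$ is a linear space. Second, $w$ is smooth outside a compact set, namely the union of the two exceptional compact sets.

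The only point that needs a moment's care is the one about the range of $R$ in the inhomogeneous case: the limits are taken along $R$ large enough for both $u_1$ and $u_2$ at once. There is no genuine obstacle.
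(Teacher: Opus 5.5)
Your proof is correct and is essentially the paper's argument. Both use the linearity of $\partial_r - i\kappa$ and a triangle-type bound on the shell region: you use $|a-b|^2 \le 2|a|^2 + 2|b|^2$, while the paper uses $\norm{F_1 - F_2} \le \norm{F_1} + \norm{F_2}$ before squaring. The consequence then follows from the two applications with $(\kbulk, \Rnp(\alpha_1+))$ and $(\ksurf, \Rnp(\alpha_1-))$.
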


\begin{proof}
Put $F_j = \partial_ru_j - i\kappa u_j$. Then
$\partial_r(u_1-u_2) - i\kappa(u_1-u_2) = F_1 - F_2$, and the triangle
inequality in $\Ltwo{\{R<|x|<2R\}\cap\Sigma}$ gives
$\norm{F_1-F_2} \le \norm{F_1}+\norm{F_2}$. Divide by $R^{1/2}$, square, and
let $R \to\infty$. The last assertion follows by applying this with
$\Sigma = \Rnp(\alpha_1+)$, $\kappa = \kbulk$ and with
$\Sigma = \Rnp(\alpha_1-)$, $\kappa = \ksurf$.
\end{proof}

\begin{remark}[The exponent $\alpha_1$ and the range $(1/4,1/2)$ of
\cite{DuranMugaNedelec2009}]
\label{rem:rad-alpha1}
Dur\'an, Muga and N\'ed\'elec split $\mathbb{R}^3_+$ by the same surface
$x_n = r^{\alpha_1}$ but restrict $\alpha_1$ to $(1/4,1/2)$. Their condition is
the pair of pointwise bounds
\begin{equation*}
  \bigl|\partial_ru - i\kbulk u\bigr| \le \frac{C}{r^{2\alpha_1+\frac12}}
  \ \ \text{in } \Rnp(\alpha_1+),
  \qquad
  \bigl|\partial_ru - i\ksurf u\bigr| \le \frac{C}{r^{\frac32-\alpha_1}}
  \ \ \text{in } \Rnp(\alpha_1-),
\end{equation*}
written in the notation of \eqref{eq:rad-regions}. Both endpoints disappear
here, for different reasons.

\emph{The upper endpoint.} Weakening the norm gains nothing by itself: for a
quantity of pointwise size $r^{-\gamma}$ on a region whose intersection with
$\{r \sim R\}$ has volume $\sim R^{d}$, both the weighted $L^2$ condition
underlying their variational theory and our shell-averaged condition amount to
$2\gamma > d-1$. The gain comes from integrating at all. Their pointwise bounds
ask for $\gamma > 1$ in each region, and in the layer the rate is
$\frac{n-2}{2}+1-\alpha_1$, which for $n = 3$ gives $\alpha_1 < 1/2$. A
pointwise condition does not see that the surface wave, whose radiation defect
is of relative size $r^{\alpha_1-1}$ at the top of the layer, is itself
exponentially small there. Under an integrated condition the weight
$\psizero^2$ makes the $x_n$-integration converge, and part~(ii) of the proof
of Proposition~\ref{prop:rad-resolvent-satisfies} shows that the surface wave
imposes no restriction on $\alpha_1$. What remains in the layer is the bulk
part of the resolvent, which does not decay in $x_n$, and its volume factor
$R^{n-1+\alpha_1}$ gives $\alpha_1 < 1$.

\emph{The lower endpoint.} In the interior region their rate
$r^{-(2\alpha_1+1/2)}$ requires $\alpha_1 > 1/4$ under any of these
formulations, the volume factor being the same there, but that rate is not
optimal. The apparent singularity of the reflected integral \eqref{eq:v-def}
on the circle $|\xipar| = \kbulk$ is one of the coordinate $|\xipar|$ and not
of the integral (Lemma~\ref{lem:branch}). In the variable $\mu$ the amplitude is
holomorphic across that circle, and the radiation defect vanishes to second
order at the stationary point (Lemma~\ref{lem:rad-phase}). The resulting bound
of Lemma~\ref{lem:rad-uniform} is $O(r^{-n/2})$ uniformly in the direction and
does not involve $\alpha_1$.

Accordingly, neither Lemma~\ref{lem:rad-linear} nor the uniqueness proof of
Theorem~\ref{thm:rad-uniqueness-hom} uses the value of $\alpha_1$, only the
fact that \eqref{eq:rad-regions} partitions each shell. The value of
$\alpha_1$ enters only in Proposition~\ref{prop:rad-resolvent-satisfies},
through the constraint $\alpha_1 < 1$, uniformly in $n \ge 3$, and
Theorem~\ref{thm:rad-equivalence} will show that the resulting class of
solutions does not depend on $\alpha_1$ at all.
\end{remark}

\subsubsection{The reflected integral: a uniform radiation estimate}
\label{sec:rad-reflected}

The pointwise expansions of \S\ref{subsec:asymptotics} are uniform on cones
$\{x_n \ge \delta r\}$ with $\delta$ fixed, the constants degenerating as
$\delta \downarrow 0$. The interior region $\Rnp(\alpha_1+)$ of
\eqref{eq:rad-regions} is not a cone: for every $\delta > 0$ it contains the
wedge $\{r^{\alpha_1} < x_n < \delta r\}$, whose intersection with a shell has
volume of order $\delta R^n$. The boundary layer $\Rnp(\alpha_1-)$ lies entirely
inside the grazing region. Neither can be reached by those expansions, and this
subsection supplies what is missing: a bound on the radiation defect of the
remainder $\Krem{E+i0}f$ of the exact splitting
\eqref{eq:splitting-preview}, valid uniformly in $\theta \in \Sm$ down to and
including the boundary.

Throughout, $f \in C_c^{\infty}(\Rnp)$ is fixed, $E > 0$, and we use the
representation of Proposition~\ref{prop:reflected}. Recall from
\eqref{eq:mu-branches} that $\mu = \mu(\xipar)$ is the normal wavenumber, that
$\Gamma$ of \eqref{eq:branch-contour} is the path traversed by $\mu$ as
$|\xipar|$ increases from $0$ to $\infty$, and that
\begin{equation*}
  s(\mu) = \sqrt{E-\mu^2} = |\xipar|,
  \qquad
  s\ \text{holomorphic near}\ \Gamma\cap\{|\mu|<\kbulk\},
  \qquad
  s(i\tau) = \sqrt{E+\tau^2} .
\end{equation*}
Writing $\xipar = s(\mu)\omega'$ with $\omega' \in \Ssurf$ and using
$|\xipar|^{n-2}d|\xipar| = -s(\mu)^{n-3}\mu\,d\mu$, the reflected integral
\eqref{eq:v-def} becomes, for $x \in \Rnp$ with $r$ large,
\begin{equation}
  \label{eq:rad-mu-representation}
  v(x) = \int_{\Ssurf}\int_{\Gamma}
    a_f(\mu,\omega')\;e^{\,i r\,\Phi_{\omega'}(\mu)}\,d\mu\,dS(\omega'),
  \qquad
  a_f = (2\pi)^{-\frac{n-2}{2}}\,
        \frac{s(\mu)^{n-3}\,\mu\;\widehat{\zext f}\bigl(s(\mu)\omega',-\mu\bigr)}
             {i\mu + \alpha},
\end{equation}
where the phase, normalized by $r$, is
\begin{equation}
  \label{eq:rad-phase}
  \Phi_{\omega'}(\mu)
  = \frac{\rho}{r}\,s(\mu)\,(\omega'\cdot\omega) + \theta_n\,\mu,
  \qquad
  \frac{\rho}{r} = \sqrt{1-\theta_n^2},
\end{equation}
so that $r\Phi_{\omega'}$ is the phase \eqref{eq:branch-phase} of
Lemma~\ref{lem:branch}. The quotient in \eqref{eq:rad-mu-representation} is
understood, on the sphere $|\xipar| = \ksurf$ where $\mu = i\alpha$, through
\eqref{eq:v-pole}. The singular part contributes exactly
$\widetilde u_{\mathrm{surf}}$, so that
$\Krem{E+i0}f = v - \widetilde u_{\mathrm{surf}}$.

Three features of \eqref{eq:rad-mu-representation} are used, and they are what
make the estimate possible.

First, $\widehat{\zext f}$ is entire on $\mathbb{C}^n$ by the Paley--Wiener
theorem, and $\mu \mapsto (s(\mu)\omega',-\mu)$ is holomorphic near
$\{|\mu| < \kbulk\}$. Hence $a_f(\cdot,\omega')$ is holomorphic there except for
the simple pole at $\mu = i\alpha$, with all $\mu$-derivatives bounded uniformly
in $\omega'$. In particular the corner $\mu = 0$ of $\Gamma$, which is the
branch circle $|\xipar| = \kbulk$, is not a singularity. Second, the Jacobian
supplies the factor $\mu$, so that $a_f(0,\omega') = 0$. Third, the amplitude
does not depend on $x$. Only the phase does. Consequently
\begin{equation}
  \label{eq:rad-defect-identity}
  \partial_r v - i\kbulk v
  = i\int_{\Ssurf}\int_{\Gamma}
    \bigl(\Phi_{\omega'}(\mu) - \kbulk\bigr)\,a_f(\mu,\omega')\;
    e^{\,ir\Phi_{\omega'}(\mu)}\,d\mu\,dS(\omega'),
\end{equation}
the differentiation being at fixed $\theta$. \emph{Verifying the radiation
condition is thus nothing but multiplying the amplitude of
\eqref{eq:rad-mu-representation} by $\Phi_{\omega'} - \kbulk$}, and the whole
question is how small that factor is where the integral concentrates.

\begin{lemma}[The radiation defect vanishes to second order at the stationary
point]
\label{lem:rad-phase}
Let $\theta \in \Sm$ and $\omega' \in \Ssurf$, and let
$\mu \in [0,\kbulk]$ be real. Then
$\Phi_{\omega'}(\mu) \le \kbulk$, with equality if and only if
$\omega' = \omega$ and $\mu = \mu_* := \kbulk\,\theta_n$. Moreover, writing
$\Phi_{\omega}$ for the value of $\Phi_{\omega'}$ at $\omega' = \omega$ and the
prime for $d/d\mu$, one has $\Phi_{\omega}'(\mu_*) = 0$ and
$\Phi_{\omega}''(\mu) = -\rho E/(r\,s(\mu)^3)$, and
\begin{equation*}
  \Phi_{\omega}(\mu) - \kbulk
  = -\frac{(\mu-\mu_*)^2}{2\kbulk\,(1-\theta_n^2)}
    + O\bigl(|\mu-\mu_*|^3\bigr) ,
  \qquad
  \Phi_{\omega}''(\mu_*) = -\frac{1}{\kbulk\,(1-\theta_n^2)} .
\end{equation*}
\end{lemma}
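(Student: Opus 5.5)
The inequality comes from Cauchy--Schwarz in $\mathbb{R}^2$, and the second-order statement is a direct calculus computation in $\mu$. Throughout, write $\theta = (\sqrt{1-\theta_n^2}\,\omega,\theta_n)$ with $\omega \in \Ssurf$, as in \eqref{eq:asymp-polar}. Recall $\rho/r = \sqrt{1-\theta_n^2}$ and, for real $\mu \in [0,\kbulk]$, $s(\mu) = \sqrt{E-\mu^2} \ge 0$, so that $s(\mu)^2 + \mu^2 = E$.

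\emph{Step 1: the bound.} Since $|\omega'\cdot\omega| \le 1$ and $s(\mu) \ge 0$,
\begin{equation*}
  \Phi_{\omega'}(\mu) \le \sqrt{1-\theta_n^2}\,s(\mu) + \theta_n\mu .
\end{equation*}
Cauchy--Schwarz for the vectors $(\sqrt{1-\theta_n^2},\theta_n)$ and $(s(\mu),\mu)$, which have norms $1$ and $\kbulk$, then bounds the right-hand side by $\kbulk$.

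\emph{Step 2: the case of equality.} Equality in the first inequality requires $\omega'\cdot\omega = 1$, i.e.\ $\omega' = \omega$, whenever $s(\mu)\sqrt{1-\theta_n^2} > 0$. Equality in Cauchy--Schwarz requires the two vectors to be parallel, i.e.\ $(s(\mu),\mu) = \kbulk(\sqrt{1-\theta_n^2},\theta_n)$, which gives $\mu = \kbulk\theta_n = \mu_*$. This is the main point needing care. If $\theta_n = 1$ then $\rho = 0$ and the phase no longer depends on $\omega'$, so equality can only characterize $\mu$. I would either note that this is a single excluded direction or read the claim for $\theta_n < 1$, which is the regime where $\omega$ is defined. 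The same remark covers the edge $\mu = \kbulk$, where $s = 0$. The proof should flag this degenerate case explicitly.

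\emph{Step 3: derivatives.} Using $s' = -\mu/s$ gives
\begin{equation*}
  \Phi_\omega'(\mu) = \theta_n - \frac{\rho}{r}\,\frac{\mu}{s(\mu)} ,
\end{equation*}
which vanishes at $\mu_*$ because $s(\mu_*) = \kbulk\sqrt{1-\theta_n^2}$. Differentiating again, $(\mu/s)' = (s^2+\mu^2)/s^3 = E/s^3$, so $\Phi_\omega''(\mu) = -\rho E/(r\,s(\mu)^3)$. At $\mu_*$ this equals
\begin{equation*}
  -\frac{\sqrt{1-\theta_n^2}\,E}{\kbulk^3(1-\theta_n^2)^{3/2}}
  = -\frac{1}{\kbulk(1-\theta_n^2)} .
\end{equation*}
Taylor's theorem at $\mu_*$, using $\Phi_\omega(\mu_*) = \kbulk$ from Step 2 and $\Phi_\omega'(\mu_*) = 0$, then gives the quadratic expansion. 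The cubic remainder is controlled by $\sup|\Phi_\omega'''|$ on a neighbourhood of $\mu_*$ where $s$ stays bounded away from $0$; this holds for $\theta_n$ bounded away from $1$ and $\mu$ near $\mu_*$. If uniformity in $\theta$ is later needed, the $O$-constant's dependence on $\theta_n$ should be recorded.
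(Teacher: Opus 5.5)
Your proposal is correct and follows the paper's proof essentially step for step: Cauchy--Schwarz in $\mathbb{R}^2$ for $(\rho/r,\theta_n)$ and $(s(\mu),\mu)$, the equality case giving $\omega'=\omega$ and $\mu=\kbulk\theta_n$, the derivatives computed from $s'=-\mu/s$, and Taylor expansion at $\mu_*$. Your caveats about the direction $\theta_n=1$ (there $\rho=0$, $\mu_*=\kbulk$ is the endpoint with $s(\mu_*)=0$, and the derivative statements degenerate) and about the $\theta_n$-dependence of the cubic remainder are sound; they make explicit what the paper only signals by ``as long as $\theta_n$ stays away from $1$'', with the small correction that for $\theta_n<1$ the edge $\mu=\kbulk$ needs no separate exclusion, because the Cauchy--Schwarz step is strict there.
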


\begin{proof}
On the real segment $s(\mu)^2 + \mu^2 = E$, so by the Cauchy--Schwarz
inequality in $\mathbb{R}^2$, applied to the unit vector
$(\rho/r,\theta_n)$ and to $(s(\mu),\mu)$,
\begin{equation*}
  \Phi_{\omega'}(\mu)
  \le \frac{\rho}{r}s(\mu) + \theta_n\mu
  \le \sqrt{\tfrac{\rho^2}{r^2}+\theta_n^2}\;\sqrt{s(\mu)^2+\mu^2}
  = \kbulk .
\end{equation*}
The first inequality is strict unless $\omega'\cdot\omega = 1$, and the second
unless $(s(\mu),\mu)$ is a positive multiple of $(\rho/r,\theta_n)$, that is
unless $\mu/s(\mu) = \theta_n r/\rho$, which by $s^2+\mu^2 = E$ means
$\mu = \kbulk\theta_n$. Since $s'(\mu) = -\mu/s(\mu)$,
\begin{equation*}
  \Phi_{\omega}'(\mu) = \theta_n - \frac{\rho}{r}\,\frac{\mu}{s(\mu)},
  \qquad
  \Phi_{\omega}''(\mu) = -\frac{\rho}{r}\,\frac{s(\mu)^2+\mu^2}{s(\mu)^3}
  = -\frac{\rho E}{r\,s(\mu)^3},
\end{equation*}
and $\Phi_{\omega}'(\mu_*) = 0$ because $s(\mu_*) = \kbulk\rho/r$.
Substituting this value into the formula for $\Phi_{\omega}''$ gives
\begin{equation*}
  \Phi_{\omega}''(\mu_*)
  = -\frac{\rho}{r}\,\frac{E}{(\kbulk\rho/r)^3}
  = -\frac{E}{\kbulk^3}\Bigl(\frac{r}{\rho}\Bigr)^{2}
  = -\frac{1}{\kbulk\,(1-\theta_n^2)},
\end{equation*}
and Taylor expansion at $\mu_*$ yields the stated form. In particular
$|\Phi_{\omega}''(\mu_*)| \asymp 1$ as long as $\theta_n$ stays away from $1$.
\end{proof}

\begin{lemma}[Uniform radiation estimate for the remainder]
\label{lem:rad-uniform}
Let $n \ge 3$, $E > 0$, $\alpha > 0$ and $f \in C_c^{\infty}(\Rnp)$. There is
$C = C(n,E,\alpha,f)$ such that, for all $x \in \Rnp$ with $r \ge 1$ and
uniformly in $\theta \in \Sm$,
\begin{equation}
  \label{eq:rad-K-radiation}
  \bigl|\Krem{E+i0}f\bigr| \le C\,r^{-\frac{n-1}{2}},
  \qquad
  \bigl|\partial_r\Krem{E+i0}f - i\kbulk\,\Krem{E+i0}f\bigr|
    \le C\,r^{-\frac{n}{2}} .
\end{equation}
\end{lemma}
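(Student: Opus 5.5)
The plan is to work with the $\mu$-representation \eqref{eq:rad-mu-representation} of $v = \widetilde u_{\mathrm{surf}} + \Krem{E+i0}f$ and the defect identity \eqref{eq:rad-defect-identity}. The surface pole at $\mu = i\alpha$ is first isolated: on a small circle around $i\alpha$ we subtract the residue term, which by Proposition~\ref{prop:reflected} is exactly $\widetilde u_{\mathrm{surf}}$. What is left is $\Krem{E+i0}f$, represented by the same contour integral with the amplitude $a_f$ replaced by its principal-value part $a_f^{\mathrm{reg}}$. By \eqref{eq:polar-part} this part is holomorphic in a neighbourhood of the whole path $\Gamma$, including $\mu = i\alpha$, and it still vanishes at $\mu = 0$. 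The two bounds in \eqref{eq:rad-K-radiation} then follow from a single estimate: for amplitudes $b \in \{a_f^{\mathrm{reg}},\ (\Phi_{\omega'}-\kbulk)a_f^{\mathrm{reg}}\}$, the integral $\int_{\Ssurf}\int_\Gamma b\, e^{ir\Phi_{\omega'}}$ is $O(r^{-(n-1)/2})$ in general, and $O(r^{-n/2})$ when $b$ vanishes to second order at the critical point $(\mu_*,\omega)$.

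First the path is split with cut-offs into three pieces: the evanescent tail $\tau \ge \tau_0$, a neighbourhood of the corner $\mu = 0$, and the real segment. The tail is handled using $|e^{ir\Phi}| = e^{-\tau x_n}$ together with the Paley--Wiener bound \eqref{eq:PW-halfplane}. When $x_n \ge 1$ this gives decay, and tangential integration by parts (in $\omega'$ and $\tau$, through the factor $\rho s(i\tau)\,\omega'\cdot\omega$) gives $O(\rho^{-N})$ when $\rho \gtrsim r$. Near the corner the plan is to deform the path into the upper half-plane, which is legitimate because the amplitude is holomorphic there. I would try the contour $\mu \mapsto \mu + i\epsilon\,\chi(\mu)$. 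Along this contour $\im \Phi \gtrsim \theta_n$, and the Step~2 cancellation of Lemma~\ref{lem:branch} keeps corner boundary terms from appearing. On the real segment we do stationary phase jointly in the $n-1$ variables $(\mu,\omega')$. The phase $r\Phi_{\omega'}$ has a unique nondegenerate maximum $\kbulk$ at $(\mu_*,\omega)$ by Lemma~\ref{lem:rad-phase}. The Hessian in $\omega'$ is of size $\rho/r$ times $s(\mu_*) \sim \kbulk\sqrt{1-\theta_n^2}$, and the Hessian in $\mu$ is $-1/(\kbulk(1-\theta_n^2))$. So the leading term has size $r^{-(n-1)/2}\,|a_f^{\mathrm{reg}}(\mu_*,\omega)|\,(1-\theta_n^2)^{-(n-3)/4}$, up to bounded factors.

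Uniformity in $\theta$ is the main obstacle, and it has two endpoints. As $\theta_n \to 0$, the point $\mu_* \to 0$ merges with the corner. Here the Jacobian zero $a_f(0,\cdot) = 0$ should compensate: the amplitude is $O(\mu)$, and the holomorphic extension across $\mu = 0$ means the corner is not a boundary. So in the $\mu$-variable I would treat the whole integral over a holomorphic neighbourhood of $[0,\kbulk]\cup i[0,\tau_0]$ as one integral without endpoint, with the only critical point at $\mu_*$. As $\theta_n \to 1$, the $\omega'$-Hessian degenerates because $\rho/r \to 0$. I would handle this by returning to the disc form of Lemma~\ref{lem:statphase}, where the phase $\xpar\cdot\xipar + \mu x_n$ is nondegenerate near $\theta_n = 1$; for $\theta_n \ge \delta$ this gives the bounds directly from the expansions of Proposition~\ref{prop:asymp-cone}. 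The estimates are then patched over the two overlapping ranges.

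For the derivative bound, the factor $(\Phi_{\omega'} - \kbulk)$ vanishes to second order at the critical point by Lemma~\ref{lem:rad-phase}, and also vanishes in $\omega'$ because $\omega'\cdot\omega - 1 = O(|\omega'-\omega|^2)$. The stationary-phase expansion therefore loses its leading term, and the first correction is smaller by $r^{-1}$. This gives at least $O(r^{-(n+1)/2}) \subset O(r^{-n/2})$ uniformly, once the same uniformity arguments are run with this amplitude.
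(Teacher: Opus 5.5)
Your framework agrees with the paper's: the $\mu$-representation and the defect identity, removal of the pole, holomorphy at the corner, second-order vanishing of the defect at $(\mu_*,\omega)$, and Lemmas~\ref{lem:statphase}--\ref{lem:branch} on the cone $\theta_n\ge\delta_0$. The gap is in the grazing regime $\theta_n\to0$, which is the real content of the lemma.

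\emph{The corner deformation.} The contour $\mu\mapsto\mu+i\epsilon\chi(\mu)$ does not give $\im\Phi\gtrsim\theta_n$. For real $t$ and small $\eta>0$ one has $\im\Phi_{\omega'}(t+i\eta)\approx\eta\,\partial_t\Phi_{\omega'}(t)$. At $\omega'=\omega$ this equals $\eta\bigl(\theta_n-\frac{\rho}{r}\frac{t}{s(t)}\bigr)$, which is negative for every $t>\mu_*=\kbulk\theta_n$. A lift of size independent of $\theta_n$ therefore makes $|e^{ir\Phi}|$ grow like $e^{cr\epsilon t}$.

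A deformation is admissible only to the left of the stationary point. That is why the paper uses a quarter circle of radius $\epsilon_0\asymp\theta_n$. Even there it gets only $\im\Phi_{\omega'}\ge\frac34\theta_n\epsilon_0\sin\vartheta$, a damping that vanishes at the ends of the arc.

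The deformed piece is then not negligible and must be estimated. The amplitude is $O(\epsilon_0)$, $|d\mu|=\epsilon_0\,d\vartheta$, and stationary phase in $\omega'$ gives the bound $r^{-\frac{n-2}{2}}\epsilon_0^2\min\{1,(r\theta_n\epsilon_0)^{-1}\}\le r^{-\frac n2}$. This is saturated at $\theta_n\asymp r^{-1/2}$, which is the lateral wave.

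Treating the bent path as one holomorphic contour with a single critical point does not replace this estimate. For $\theta_n\lesssim r^{-1/2}$ the corner lies inside the stationary-phase window of $\mu_*$. A cut-off at scale $\theta_n$ then produces relative errors of order $(r\theta_n^2)^{-1}$, so your expansion is not uniform.

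\emph{The antipodal set and the exponent.} You never examine the antipodal critical set $\omega'=-\omega$ of the $\omega'$-integration. There $\partial_\mu\Phi_{-\omega}=\theta_n+\frac{\rho}{r}\frac{\mu}{s(\mu)}$ tends to $\theta_n$ at the corner, so it is a near-critical point as $\theta_n\to0$. Moreover $\Phi_{-\omega}-\kbulk\approx-2\kbulk$, so the defect factor gives no gain there.

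The paper handles this term in two places:
\begin{itemize}
\item On the real segment, by one integration by parts with $\int_0^{\kbulk}\theta_n(\theta_n+c\mu)^{-2}\,d\mu\le c^{-1}$. The cruder bound $|\Phi'_{-\omega}|\ge c\mu$ would lose $\log(1/\theta_n)$.
\item On the imaginary arm, by using the Jacobian factor $\tau$ to cancel the degeneracy at $\tau=0$ of the tangential phases $\pm\rho\sqrt{E+\tau^2}$.
\end{itemize}
Each gives $r^{-\frac n2}$. Together with the arc, these terms are what fix the exponent $n/2$ in the paper's proof, which notes that $r^{-\frac{n+1}{2}}$ is not attained uniformly. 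Your claimed uniform $O(r^{-\frac{n+1}{2}})$ for the defect therefore rests on contributions your plan never estimates.

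\emph{Two smaller points.}
\begin{itemize}
\item After subtracting $\widetilde u_{\mathrm{surf}}$, the remainder is not a contour integral with $x$-independent amplitude. The subtracted term carries $\psizero(x_n)$ rather than $e^{i\mu x_n}$, so \eqref{eq:rad-defect-identity} does not apply to it as it stands. The paper instead localizes near $\mu=i\alpha$ with a cut-off and shows that the localized piece and the cut-off part of $\widetilde u_{\mathrm{surf}}$ are both $O(r^{-N})$.
\item Proposition~\ref{prop:asymp-cone} controls $u$ but not $\partial_r u$. On the cone you must rerun Lemmas~\ref{lem:statphase} and \ref{lem:branch} with the amplitude $(\Phi_{\omega'}-\kbulk)a_f$, as the paper does.
\end{itemize}
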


\begin{proof}
Both bounds are proved at once: by \eqref{eq:rad-defect-identity} the second is
the first with the amplitude $a_f$ replaced by
$(\Phi_{\omega'}-\kbulk)a_f$, and we shall indicate at each step which of the
two exponents the argument produces. Constants are allowed to depend on
$n,E,\alpha,f$ and are denoted by $C$.

\emph{Step 1: removal of the pole and of the surface term.}
Let $\chi \in C_c^{\infty}(\mathbb{C})$ be equal to $1$ on
$\{|\mu - i\alpha| \le \alpha/4\}$ and supported in
$\{|\mu-i\alpha| \le \alpha/2\}$, so that $\operatorname{supp}\chi$ meets
$\Gamma$ in a compact subinterval of the imaginary arm, at distance
$\ge \alpha/2$ from the corner $\mu = 0$. Split
$\Krem{E+i0}f = I_1 - I_2 + I_3$, where $I_1$ is the contribution of
$(1-\chi)a_f$ to \eqref{eq:rad-mu-representation}, $I_2$ is the contribution of
$(1-\chi)$ to $\widetilde u_{\mathrm{surf}}$, and $I_3$ is the contribution of
$\chi$ to $v - \widetilde u_{\mathrm{surf}}$.

The amplitude of $I_1$ is holomorphic in $\mu$ near $\Gamma\cap\{|\mu|<\kbulk\}$
and rapidly decreasing along the imaginary arm, the pole having been cut out.
Here $I_1$ is the main term and occupies Steps 2--5.

For $I_2$, by \eqref{eq:usurf-exact} the surface term is
$\psizero(x_n)$ times the inverse Fourier transform in $\Rnm$ of
$\widehat{T_{\psizero}f}(\xipar)\bigl(|\xipar|^2-\ksurf^2-i0\bigr)^{-1}$.
Multiplying by $1-\chi$ removes the singular sphere, and what is left is the
product of a Schwartz function with a smooth symbol of order $-2$, hence a
Schwartz function of $\xpar$. Therefore
$|I_2| \le C_N\rho^{-N}e^{-\alpha x_n}$ for every $N$. As
$\max(\rho,x_n) \ge r/2$, this is $O_N(r^{-N})$. The same applies after
multiplication by $\Phi_{\omega'}-\kbulk$, which is bounded on
$\operatorname{supp}(1-\chi)\cap\Gamma\cap\{|\mu|\le\ksurf\}$ and is compensated
by the rapid decay elsewhere.

For $I_3$, the numerator of \eqref{eq:K-explicit} vanishes at $\mu = i\alpha$,
so the amplitude of $v - \widetilde u_{\mathrm{surf}}$ is regular there. On
$\operatorname{supp}\chi\cap\Gamma$ the variable is $\mu = i\tau$ with
$\tau$ in a compact subinterval of $(0,\infty)$, the amplitude is smooth, and
$|e^{ir\Phi_{\omega'}}| = e^{-r\theta_n\tau}$. If $\theta_n \ge 1/2$ this is
$O(e^{-cr})$. If $\theta_n < 1/2$ then $\rho > r\sqrt3/2$, and the
$\tau$-derivative of the phase $r\Phi_{\omega'}(i\tau)$ has real part
$\rho\,\tau(\omega'\cdot\omega)/\sqrt{E+\tau^2}$ while the $\omega'$-phase has
gradient of size $\asymp\rho$ away from $\omega' = \pm\omega$. Integrating by
parts repeatedly in $\omega'$ and in $\tau$ gives $|I_3| \le C_N\rho^{-N}$. The
amplitude and $\chi$ are smooth, so that no boundary terms occur. In
both cases $I_3 = O_N(r^{-N})$, and again the factor $\Phi_{\omega'}-\kbulk$ is
bounded there.

\emph{Step 2: contour deformation near the corner.}
Fix $\delta_0 \in (0,1/2]$, to be chosen at the end of this step, and assume
first $\theta_n \le \delta_0$, so that $\rho/r \ge \sqrt{3}/2$. Let $C_0 = C_0(E)$ be
such that
\begin{equation*}
  |\im s(\mu)| \le C_0\,\bigl|\im(\mu^2)\bigr|
  \qquad (|\mu| \le \kbulk/2),
\end{equation*}
which is possible because $s(\mu) = \kbulk\sqrt{1-\mu^2/E}$ is holomorphic there
with $s(0) = \kbulk$ and $\im s$ vanishes when $\im(\mu^2)$ does. Put
\begin{equation*}
  \epsilon_0 = \epsilon_0(\theta_n)
  = \min\Bigl\{\frac{\theta_n}{8C_0},\ \frac{\kbulk\theta_n}{2},\
    \frac{\kbulk}{2},\ \frac{\alpha}{2}\Bigr\},
\end{equation*}
and let $\delta_0$ be small enough that the first two entries are the smaller
ones for every $\theta_n \le \delta_0$, so that
$\epsilon_0 = c_1\theta_n$ with $c_1 = \min\{(8C_0)^{-1},\kbulk/2\}$ there.
Replace the corner $[0,\epsilon_0] \cup i[0,\epsilon_0]$ of $\Gamma$ by the
quarter circle
$A = \{\epsilon_0e^{i\vartheta} : 0 \le \vartheta \le \pi/2\}$. By Cauchy's
theorem this does not change the integral: the amplitude of $I_1$ is holomorphic on $\{|\mu|\le\kbulk/2\}$,
whose intersection with the closed first quadrant contains the region swept out,
and the pole $i\alpha$ lies outside it because $\epsilon_0 \le \alpha/2$. Since
the contour is deformed and not cut, no endpoint contributions arise. The
deformed path is $A \cup [\epsilon_0,\kbulk] \cup i[\epsilon_0,\infty)$.

We record the sign of the imaginary part of the phase on each piece. On the real
segment $\Phi_{\omega'}$ is real. On the imaginary arm
$\im\Phi_{\omega'}(i\tau) = \theta_n\tau \ge 0$, since $s(i\tau)$ is real. On
$A$, writing $\mu = \epsilon_0e^{i\vartheta}$ and using
$\im(\mu^2) = \epsilon_0^2\sin2\vartheta \le 2\epsilon_0^2\sin\vartheta$
together with $\omega'\cdot\omega \le 1$ and $\rho/r \le 1$,
\begin{equation*}
  \im\Phi_{\omega'}(\mu)
  \ \ge\ \theta_n\epsilon_0\sin\vartheta - 2C_0\epsilon_0^2\sin\vartheta
  \ =\ \epsilon_0\sin\vartheta\,\bigl(\theta_n - 2C_0\epsilon_0\bigr)
  \ \ge\ \tfrac34\,\theta_n\,\epsilon_0\,\sin\vartheta \ \ge\ 0,
\end{equation*}
by $\epsilon_0 \le \theta_n/(8C_0)$. Hence $|e^{ir\Phi_{\omega'}}| \le 1$
everywhere on the deformed path. It is essential that the radius $\epsilon_0$ be
proportional to $\theta_n$: for a radius fixed independently of the direction the
first term above would be dominated by the second and the integrand would grow
exponentially as $\theta_n \downarrow 0$.

\emph{Step 3: the quarter circle.}
On $A$ we have $|a_f| \le C|\mu| = C\epsilon_0$ by the factor $\mu$ in
\eqref{eq:rad-mu-representation}, and $|d\mu| = \epsilon_0\,d\vartheta$. For the
integration in $\omega'$ we separate the real and imaginary parts of $s$: the
factor $e^{ir(\rho/r)(\re s)(\omega'\cdot\omega)}$ is oscillatory with parameter
$r(\rho/r)\re s \asymp r$, whose phase $\omega' \mapsto \omega'\cdot\omega$ on
$\Ssurf$ has exactly the two nondegenerate critical points $\omega' = \pm\omega$,
while the factor $e^{-r(\rho/r)(\im s)(\omega'\cdot\omega)}$ is not oscillatory
and satisfies
\begin{equation*}
  \Bigl|\partial_{\omega'}^{\beta}
    e^{-r\frac{\rho}{r}(\im s)(\omega'\cdot\omega)}\Bigr|
  \le C_\beta\,(1+\Lambda)^{|\beta|}\,e^{\Lambda},
  \qquad
  \Lambda := r\,|\im s|
  \le 2C_0 r\epsilon_0^2\sin\vartheta
  \le \tfrac14\,r\,\theta_n\epsilon_0\sin\vartheta .
\end{equation*}
Stationary phase on $\Ssurf$ therefore gives a factor $r^{-\frac{n-2}{2}}$ at
the cost of finitely many such derivatives, and combining with the bound
$\im\Phi_{\omega'} \ge \frac34\theta_n\epsilon_0\sin\vartheta$ of Step 2, the
surviving exponential is
$(1+\Lambda)^{M}e^{\Lambda}e^{-\frac34 r\theta_n\epsilon_0\sin\vartheta}
\le C_Me^{-\frac12 r\theta_n\epsilon_0\sin\vartheta}$. Consequently, using
$\sin\vartheta \ge 2\vartheta/\pi$ on $[0,\pi/2]$,
\begin{equation*}
  \Bigl|\int_{\Ssurf}\int_A\Bigr|
  \le C\,r^{-\frac{n-2}{2}}\,\epsilon_0^2
      \int_0^{\pi/2}e^{-\frac12 r\theta_n\epsilon_0\sin\vartheta}\,d\vartheta
  \le C\,r^{-\frac{n-2}{2}}\,\epsilon_0^2\,
      \min\Bigl\{1,\ \frac{1}{r\theta_n\epsilon_0}\Bigr\}.
\end{equation*}
Since $\epsilon_0 \asymp \theta_n$ for $\theta_n \le \delta_0$, the last factor
is $\theta_n^2\min\{1,(r\theta_n^2)^{-1}\} \le r^{-1}$: if $r\theta_n^2 \le 1$
then $\theta_n^2 \le r^{-1}$, and otherwise
$\theta_n^2(r\theta_n^2)^{-1} = r^{-1}$. The quarter circle therefore
contributes $O(r^{-\frac{n-2}{2}-1}) = O(r^{-\frac n2})$, which is admissible
for the second bound of \eqref{eq:rad-K-radiation} and more than enough for the
first. The factor $\Phi_{\omega'}-\kbulk$, being bounded on $A$, changes
nothing. The maximum is attained at $\theta_n \asymp r^{-1/2}$, that is at
height $x_n \asymp \sqrt r$: this is the lateral wave, and the estimate shows
that it is harmless.

\emph{Step 4: the real segment.}
On $[\epsilon_0,\kbulk]$ the phase is real, and the $\omega'$-integration is a
classical stationary phase with parameter $\rho\,s(\mu) $. This parameter
degenerates as $s(\mu) \to 0$, that is as $|\xipar| \to 0$, where polar
coordinates are singular. We therefore insert a smooth cut-off
$\eta(s(\mu))$ with $\eta = 1$ for $s \ge 2\lambda_0$ and $\eta = 0$ for
$s \le \lambda_0$, and dispose of the complementary piece at the end of the
step. On $\operatorname{supp}\eta$ we have $\rho s(\mu) \ge \lambda_0\rho \asymp
r$, and stationary phase at $\omega' = \pm\omega$ yields
\begin{equation*}
  \int_{\Ssurf}a_f\,e^{ir\Phi_{\omega'}}dS(\omega')
  = r^{-\frac{n-2}{2}}\Bigl[A_+(\mu)e^{ir\Phi_{\omega}(\mu)}
      + A_-(\mu)e^{ir\Phi_{-\omega}(\mu)}\Bigr] + O\bigl(r^{-\frac n2}\bigr),
\end{equation*}
with $A_\pm$ smooth in $\mu$, $|A_\pm| \le C|\mu|$, and with an error whose
$\mu$-integral over a bounded interval is again $O(r^{-\frac n2})$.

For the $+$ term, $\Phi_{\omega}$ has by Lemma~\ref{lem:rad-phase} the single
critical point $\mu_* = \kbulk\theta_n$, which lies in $[\epsilon_0,\kbulk]$
because $\epsilon_0 \le \kbulk\theta_n/2$, and there
$|\Phi_{\omega}''| = \rho E/(r s^3) \asymp 1$ on
$\operatorname{supp}\eta$. Stationary phase in $\mu$ gives $O(r^{-1/2})$, whence
the contribution $O(r^{-\frac{n-1}{2}})$ to the first bound of
\eqref{eq:rad-K-radiation}. For the second bound the amplitude carries the extra
factor $\Phi_{\omega}-\kbulk$, which by Lemma~\ref{lem:rad-phase} vanishes to
second order at $\mu_*$. The leading term of the stationary phase expansion
therefore vanishes and the contribution is $O(r^{-3/2})$, that is
$O(r^{-\frac{n+1}{2}})$ in all.

For the $-$ term, $\Phi_{-\omega}'(\mu) = \theta_n + (\rho/r)\mu/s(\mu) > 0$ has
no zero, and $|\Phi_{-\omega}'| \ge (\rho/r)\mu/s(\mu) \ge c\mu$ on
$\operatorname{supp}\eta$. One integration by parts, the amplitude being
$O(\mu)$ and its derivative $O(1)$, gives a contribution bounded by
\begin{equation*}
  \frac{C}{r}\Bigl[\frac{\mu}{|\Phi_{-\omega}'|}\Bigr]_{\mu=\epsilon_0}
  + \frac{C}{r}\int_{\epsilon_0}^{\kbulk}
      \Bigl|\partial_\mu\frac{A_-(\mu)}{\Phi_{-\omega}'(\mu)}\Bigr|d\mu
  \le \frac{C}{r},
\end{equation*}
uniformly in $\theta_n$. Indeed, write $A_-(\mu) = \mu B_-(\mu)$ with $B_-$
bounded together with its derivative, and
$\Phi_{-\omega}'(\mu) = \theta_n + c(\mu)\mu$ with $c = (\rho/r)/s(\mu) \asymp 1$
on $\operatorname{supp}\eta$. Then $A_-/\Phi_{-\omega}' \le C/c$ and
\begin{equation*}
  \partial_\mu\frac{\mu}{\theta_n + c(\mu)\mu}
  = \frac{\theta_n - c'(\mu)\mu^2}{\bigl(\theta_n + c(\mu)\mu\bigr)^2},
  \qquad
  \int_0^{\kbulk}\frac{\theta_n\,d\mu}{\bigl(\theta_n+c(\mu)\mu\bigr)^2}
  \le \frac1c ,
\end{equation*}
the second summand of the numerator contributing $O(1)$ as well. It is the
first bound that matters: using only $\Phi_{-\omega}' \ge c\mu$ would leave
$\int_{\epsilon_0}^{\kbulk}d\mu/\mu \asymp \log(1/\theta_n)$ and destroy the
uniformity. Hence the $-$ term contributes
$O(r^{-\frac{n-2}{2}-1}) = O(r^{-\frac n2})$, and it does so for both bounds,
the factor $\Phi_{-\omega}-\kbulk$ being bounded and smooth. \emph{This is the
term that fixes the exponent $n/2$}: near the antipode $\omega' = -\omega$ the
defect factor is close to $-2\kbulk$ and provides no gain, so the rate
$r^{-\frac{n+1}{2}}$ obtained at the stationary point is not attained uniformly.

Finally, the piece with $s(\mu) \le 2\lambda_0$: reverting to the variable
$\xipar$, its phase is $\xpar\cdot\xipar + \mu x_n$ and its gradient is
$\xpar - x_n\xipar/\mu$, of modulus at least
$\rho - x_n\,2\lambda_0/\sqrt{E-4\lambda_0^2} \ge r(\sqrt3/2 - C\lambda_0)$.
Choosing $\lambda_0$ small makes this $\ge r/2$, and repeated integration by
parts in $\xipar$ gives $O_N(r^{-N})$.

\emph{Step 5: the imaginary arm.}
On $i[\epsilon_0,\infty)$ we have $|e^{ir\Phi_{\omega'}}| = e^{-r\theta_n\tau}$,
$s(i\tau) = \sqrt{E+\tau^2} \ge \kbulk$, and, by the half-space Paley--Wiener
bound \eqref{eq:PW-halfplane}, $|a_f| \le C_N\,\tau\,(1+\tau)^{-N}$ for every
$N$. Stationary phase in $\omega'$ again produces $r^{-\frac{n-2}{2}}$, the
parameter $\rho\,s(i\tau)$ being at least $\kbulk\rho \asymp r$, and leaves a
one-dimensional integral with phase $\psi_\pm(\tau)
= \pm r(\rho/r)\sqrt{E+\tau^2}$ and amplitude
$g(\tau) = \tau\,\tilde a(\tau)\,e^{-r\theta_n\tau}$, where $\tilde a$ and all
its derivatives are $O_N((1+\tau)^{-N})$.

For $\tau \ge 1$ one has $|\psi_\pm'| = r(\rho/r)\tau/\sqrt{E+\tau^2} \ge cr$,
and repeated integration by parts gives $O_N(r^{-N})$.

For $\epsilon_0 \le \tau \le 1$ the essential point is that the factor $\tau$
supplied by the Jacobian in \eqref{eq:rad-mu-representation} cancels exactly the
degeneracy of $\psi_\pm'$ at $\tau = 0$: since
$\psi_\pm'(\tau) = \pm\rho\,\tau/\sqrt{E+\tau^2}$,
\begin{equation*}
  \frac{g(\tau)}{\psi_\pm'(\tau)}
  = \pm\frac{\sqrt{E+\tau^2}}{\rho}\;\tilde a(\tau)\,e^{-r\theta_n\tau}
\end{equation*}
is smooth up to $\tau = 0$ and bounded by $C/r$, and its derivative is bounded
by $C r^{-1}(1+r\theta_n)e^{-r\theta_n\tau}$. One integration by parts therefore
gives
\begin{equation*}
  \Bigl|\int_{\epsilon_0}^{1}g\,e^{i\psi_\pm}\,d\tau\Bigr|
  \le \frac{C}{r}
    + \frac{C}{r}\int_{0}^{1}(1+r\theta_n)\,e^{-r\theta_n\tau}\,d\tau
  \le \frac{C}{r},
\end{equation*}
because $(1+r\theta_n)\int_0^1e^{-r\theta_n\tau}d\tau
\le (1+r\theta_n)\min\{1,(r\theta_n)^{-1}\} \le 2$. No logarithm appears, and
the bound is uniform in $\theta_n \in (0,\delta_0]$. The defect factor
$\Phi_{\omega'} - \kbulk$ is bounded on $\tau \le 1$ and is absorbed by the
rapid decay of $\tilde a$ for $\tau \ge 1$, so the same estimate holds for it.
The imaginary arm thus contributes $O(r^{-\frac{n-2}{2}-1}) = O(r^{-\frac n2})$.

\emph{Step 6: the cone $\theta_n \ge \delta_0$.}
Here the stationary point $\xipar_* = \kbulk\theta'$ of the phase in
\eqref{eq:v-def} lies at distance $\ge c\,\delta_0^2$ from the branch circle,
and Lemmas~\ref{lem:statphase} and \ref{lem:branch} apply directly, with the
amplitude of \eqref{eq:rad-defect-identity} in place of that of
\eqref{eq:rad-mu-representation}: both are stated for an arbitrary amplitude
holomorphic near the corner with $\mu$-derivatives bounded uniformly in
$\omega'$, which $(\Phi_{\omega'}-\kbulk)a_f$ is. They give
$O_{\delta_0}(r^{-\frac{n+1}{2}})$ for the defect and
$O_{\delta_0}(r^{-\frac{n-1}{2}})$ for the function itself, the constants being
harmless since $\delta_0$ is now fixed. Collecting Steps 1--6 proves
\eqref{eq:rad-K-radiation}.
\end{proof}

The exponent $n/2$ in \eqref{eq:rad-K-radiation} is one half-power weaker than
the $r^{-\frac{n+1}{2}}$ which the stationary point alone would suggest, and by
Step~4 the loss occurs at the antipodal critical point $\omega' = -\omega$,
where the defect factor $\Phi_{\omega'} - \kbulk$ is bounded away from zero. The
rate $r^{-\frac n2}$ is the size of the remainder in the
cone expansion of \S\ref{subsec:asymptotics}. The content of
Lemma~\ref{lem:rad-uniform} is that the same rate holds for the radiation
defect, uniformly in the direction, including in the boundary layer. That is all
that the shell averages of Definition~\ref{def:rad-condition} require.

\subsubsection{The outgoing waves satisfy the radiation condition}
\label{sec:rad-verify}

It is convenient to have the four elementary waves that can appear in a
two-channel far field at hand. Recalling
Definition~\ref{def:model-waves}, for $A \in \Ltwo{\Sm}$ and
$C \in \Ltwo{\Ssurf}$,
\begin{equation}
  \label{eq:rad-model}
  \Wb{\pm}{A}(x) = A(\theta)\,r^{-\frac{n-1}{2}}e^{\pm i\kbulk r},
  \qquad
  \Ws{\pm}{C}(x) = C(\omega)\,\rho^{-\frac{n-2}{2}}
    e^{\pm i\ksurf\rho}\,\psizero(x_n).
\end{equation}
Only the outgoing pair has occurred so far. In this notation
Theorem~\ref{thm:asymp-uniform} reads
$\Ralpha{E+i0}f \simeq \Wb{+}{A_f} + \Ws{+}{C_f}$ with
\begin{equation}
  \label{eq:rad-resolvent-amplitudes}
  A_f = c_n\,\Fbulk^{(+)}(E)f,
  \qquad
  C_f = c_n'\,\Fsurf(E)f .
\end{equation}

\begin{proposition}[The resolvent satisfies the radiation condition]
\label{prop:rad-resolvent-satisfies}
Let $n \ge 3$, $E > 0$, $\alpha_1 \in (0,1)$, and let
$f \in C_c^\infty(\Rnp)$. Then $u = \Ralpha{E+i0}f$ satisfies the outgoing
radiation condition of Definition~\ref{def:rad-condition}, with genuine limits
rather than merely $\liminf$.
\end{proposition}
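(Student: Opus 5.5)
The plan is to use the exact three-term decomposition of Proposition~\ref{prop:three-terms}, $u = u_{\mathrm{D}} + \widetilde u_{\mathrm{surf}} + \Krem{E+i0}f$, and to check the two shell-averaged conditions \eqref{eq:rad-bulk} and \eqref{eq:rad-surf} term by term. By Lemma~\ref{lem:rad-linear}, or rather by the triangle inequality in $\Ltwo{\Sigma_j(R)}$ on which it rests, it is enough to show for each of the three summands that the relevant defect has shell average tending to $0$. In each region the defect of $u$ is the sum of the defects of its summands, and all of these are taken with the same wavenumber.

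\emph{Step (i): the Dirichlet part and the remainder.} For $u_{\mathrm{D}}$, Lemma~\ref{lem:hankel} applied with $m=n$ and $g = f_{\mathrm{odd}}$, including the statement about first derivatives, gives $|u_{\mathrm{D}}| \le Cr^{-(n-1)/2}$ and $|\partial_r u_{\mathrm{D}} - i\kbulk u_{\mathrm{D}}| \le Cr^{-(n+1)/2}$, both uniformly in all directions. For $\Krem{E+i0}f$, Lemma~\ref{lem:rad-uniform} gives $|\Krem{E+i0}f| \le Cr^{-(n-1)/2}$ and a bulk defect of size $Cr^{-n/2}$, again uniformly in $\theta$. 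Let $w$ denote either of these two summands. In $\Sigma_1(R)$ the defect of $w$ is then $O(r^{-n/2})$, and $R^{-1}\int_{R<r<2R} r^{-n}\,dx = O(R^{-1})$. In $\Sigma_2(R)$ I rewrite the defect as
\[
  \partial_r w - i\ksurf w = (\partial_r w - i\kbulk w) - i(\ksurf-\kbulk)w .
\]
The first term is handled as in $\Sigma_1(R)$. The second has size $Cr^{-(n-1)/2}$, but $\Sigma_2(R)$ has volume $O(R^{n-1+\alpha_1})$. Its contribution to the shell average is therefore $O(R^{-1}R^{-(n-1)}R^{n-1+\alpha_1}) = O(R^{\alpha_1-1})$, which tends to $0$ because $\alpha_1<1$. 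This is the only place where $\alpha_1<1$ is used.

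\emph{Step (ii): the surface part.} Here Proposition~\ref{prop:asymp-surface}, which also covers first derivatives, gives $\widetilde u_{\mathrm{surf}} = O(\rho^{-(n-2)/2})e^{-\alpha x_n}$. Writing $\partial_r = (\rho/r)\partial_\rho + (x_n/r)\partial_{x_n}$ and using the leading term $\Ws{+}{C_f}$, I expect
\[
  \partial_r\widetilde u_{\mathrm{surf}} - i\ksurf\widetilde u_{\mathrm{surf}}
  = O\bigl(\rho^{-(n-2)/2}\bigr)\bigl(|1-\rho/r| + x_n/r + \rho^{-1}\bigr)e^{-\alpha x_n}.
\]
Since $1-\rho/r \le x_n^2/r^2$, in $\Sigma_2(R)$ the bracket is $O((1+x_n)/R)$. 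The weight $e^{-2\alpha x_n}$ makes the $x_n$-integral finite, and the shell average is $O(R^{-2})$, with no condition on $\alpha_1$. In $\Sigma_1(R)$ one has $x_n > R^{\alpha_1}$, so the factor $e^{-\alpha x_n}$ makes both $\widetilde u_{\mathrm{surf}}$ and its derivatives $O(e^{-\alpha R^{\alpha_1}})$ times a power of $R$. The shell average of the $\kbulk$-defect therefore tends to $0$.

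\emph{Conclusion and main obstacle.} Each contribution has a shell average that tends to $0$ with an explicit rate, so the limits in \eqref{eq:rad-bulk} and \eqref{eq:rad-surf} exist genuinely and are not merely $\liminf$. The genuinely hard input, uniformity in the grazing wedge and in the boundary layer, is already contained in Lemma~\ref{lem:rad-uniform}. Given that lemma, the step I expect to need the most care is the derivative bookkeeping in Step (ii). One must confirm that the $O(\rho^{-n/2})$ remainder of Proposition~\ref{prop:asymp-surface} persists after $\partial_{x_n}$ is applied. It does, because the transverse factor $\psizero$ is exact. One must also check the small-$\rho$ tube $\{\rho \le 1\}$ inside $\Sigma_2(R)$. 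This tube is empty for large $R$, since there $x_n \le r^{\alpha_1} < r/2$ forces $\rho \ge r/2$.
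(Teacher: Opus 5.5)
Your proposal is correct and is essentially the paper's proof. It uses the same three-term splitting, Lemma~\ref{lem:hankel} for $u_{\mathrm{D}}$, and Lemma~\ref{lem:rad-uniform} for $\Krem{E+i0}f$, together with the rewriting $\partial_r w - i\ksurf w = (\partial_r w - i\kbulk w) - i(\ksurf-\kbulk)w$ and the volume count $O(R^{n-1+\alpha_1})$ on $\Sigma_2(R)$. For the surface term it likewise integrates the tilting defect of size $x_n/r$ against the transverse weight $e^{-2\alpha x_n}$ rather than bounding it by its supremum; grouping $u_{\mathrm{D}}$ with the remainder only makes explicit the paper's observation that these two bulk terms impose the single constraint $\alpha_1<1$.
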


\begin{proof}
We use the exact splitting of Proposition~\ref{prop:three-terms},
\begin{equation*}
  u = u_{\mathrm{D}} + \widetilde u_{\mathrm{surf}} + \Krem{E+i0}f,
\end{equation*}
and estimate the radiation defect of each summand separately. Cross terms are
then handled by the Cauchy--Schwarz inequality, which is why it suffices to
bound each shell average individually. Throughout, $r$ is large enough that
$\{R<|x|<2R\}$ misses $\operatorname{supp}f$, so all three summands are smooth
there. No pointwise expansion on a cone is used: the two regions
\eqref{eq:rad-regions} are not cones, and this is precisely what
Lemma~\ref{lem:rad-uniform} is for.

\emph{(i) The Dirichlet term.}
By \eqref{eq:usurf-exact}, $u_{\mathrm{D}}$ is the restriction to $\Rnp$ of the
outgoing free resolvent in $\Rn$ applied to $f_{\mathrm{odd}} \in
C_c^\infty(\Rn)$. By Lemma~\ref{lem:hankel}, applied with $m = n$ and
$k = \kbulk$ and used together with its final clause on first derivatives, the
expansion
$u_{\mathrm{D}} = A_{\mathrm{D}}(\theta)r^{-\frac{n-1}{2}}e^{i\kbulk r}
(1+O(r^{-1}))$ holds \emph{uniformly in all directions} $\theta \in S^{n-1}$,
and so does the differentiated one. Hence
\begin{equation*}
  |u_{\mathrm{D}}| \le C\,r^{-\frac{n-1}{2}},
  \qquad
  \bigl|\partial_ru_{\mathrm{D}} - i\kbulk u_{\mathrm{D}}\bigr|
    \le C\,r^{-\frac{n+1}{2}}
  \qquad\text{on } \Rnp .
\end{equation*}
The set $\Sigma_1(R)$ has volume $O(R^n)$, so the shell average of the
squared defect there is
\begin{equation*}
  R^{-1}\,O(R^{n})\,O\bigl(R^{-(n+1)}\bigr) = O(R^{-2}).
\end{equation*}
On $\Sigma_2(R)$ the comparison wavenumber is $\ksurf$, and we write
\begin{equation*}
  \partial_ru_{\mathrm{D}} - i\ksurf u_{\mathrm{D}}
  = \bigl(\partial_ru_{\mathrm{D}} - i\kbulk u_{\mathrm{D}}\bigr)
    + i(\kbulk-\ksurf)u_{\mathrm{D}},
\end{equation*}
whose square is $O(r^{-(n-1)})$. That region has volume
$O(R^{n-1+\alpha_1})$, an $(n-1)$-dimensional annular base of volume
$O(R^{n-1})$ times a layer of height $O(R^{\alpha_1})$, so its shell average
is
\begin{equation*}
  R^{-1}\,O\bigl(R^{n-1+\alpha_1}\bigr)\,O\bigl(R^{-(n-1)}\bigr)
  = O\bigl(R^{\alpha_1-1}\bigr) \longrightarrow 0
\end{equation*}
for every $\alpha_1 < 1$.

\emph{(ii) The surface term.}
By \eqref{eq:usurf-exact}, $\widetilde u_{\mathrm{surf}}
= \bigl(\Rfree{n-1}{E+\alpha^2+i0}T_{\psizero}f\bigr)(\xpar)\,\psizero(x_n)$.
Lemma~\ref{lem:hankel} with $m = n-1$ and $k = \ksurf$ gives, uniformly in
$\omega \in \Ssurf$ and in $x_n \ge 0$,
\begin{equation*}
  \widetilde u_{\mathrm{surf}} = \Ws{+}{C_f}\bigl(1+O(\rho^{-1})\bigr),
\end{equation*}
together with the corresponding statement for the first derivatives. On
$\Sigma_1(R)$ we have $x_n > r^{\alpha_1}$ and hence
$\psizero(x_n) \le \sqrt{2\alpha}\,e^{-\alpha r^{\alpha_1}}$, while every
derivative produces only polynomial factors. The stretched-exponential factor
defeats the volume $O(R^n)$ for every $\alpha_1 > 0$, so the contribution to
\eqref{eq:rad-bulk} vanishes.

On $\Sigma_2(R)$ we must compare with $\ksurf$. There
$\rho = r + O(r^{2\alpha_1-1})$, $\theta_n = O(r^{\alpha_1-1})$ and
$1 - \sqrt{1-\theta_n^2} = O(r^{2\alpha_1-2})$. Differentiating $\Ws{+}{C_f}$
in $r$ at fixed direction, with $\partial_r\rho = \sqrt{1-\theta_n^2}$ and
$\partial_rx_n = \theta_n$,
\begin{equation*}
\begin{aligned}
  \partial_r\Ws{+}{C_f} - i\ksurf\Ws{+}{C_f}
  &= -i\ksurf\bigl(1-\sqrt{1-\theta_n^2}\bigr)\Ws{+}{C_f}\\
  &\quad - \alpha\theta_n\Ws{+}{C_f}
   + O\bigl(\rho^{-\frac{n-2}{2}-1}\bigr).
\end{aligned}
\end{equation*}
Relative to the size $\rho^{-(n-2)/2}$ of $\Ws{+}{C_f}$ itself, the three terms
are of sizes $O(\theta_n^2)$, $O(\theta_n)$ and $O(\rho^{-1})$, and the middle
one dominates on $\Sigma_2(R)$. The tilting of the surface wavefront away from
the radial direction, measured by $\theta_n$, is thus the dominant defect.

It must not be estimated by its supremum. Every one of the three terms carries
the factor $\psizero(x_n)$, so the layer's height enters the $x_n$-integration
weighted by $\psizero^2$, and
\begin{equation*}
  \int_0^{\infty}x_n^2\,\psizero(x_n)^2\,dx_n = \frac{1}{2\alpha^2}
  < \infty .
\end{equation*}
Writing $\theta_n = x_n/r$ and using $\rho \asymp r \asymp R$ on $\Sigma_2(R)$,
together with $d\xpar = \rho^{n-2}d\rho\,dS(\omega)$, which cancels the weight
$\rho^{-(n-2)}$ exactly,
\begin{equation*}
  \int_{\Sigma_2(R)}\bigl|\alpha\theta_n\Ws{+}{C_f}\bigr|^2dx
  \;\le\; \frac{C\alpha^2}{R^{2}}\,
    \bigl\|C_f\bigr\|^2_{\Ltwo{\Ssurf}}
    \int_R^{2R}\!\!d\rho
    \int_0^{\infty}\!\!x_n^2\,\psizero(x_n)^2\,dx_n
  = O\bigl(R^{-1}\bigr),
\end{equation*}
so that the shell average is $O(R^{-2})$. The other two terms are smaller by a
further factor $O(\theta_n)$ resp.\ $O(\rho^{-1})$ and are treated identically.
\emph{The surface term therefore imposes no constraint on $\alpha_1$ at all},
and the dimension has cancelled throughout.

The crude bound gives a different answer. Replacing the two integrations by their suprema over
$\Sigma_2(R)$ would give a squared integrand $O(r^{2\alpha_1-n})$ and, after
multiplication by the volume $O(R^{n-1+\alpha_1})$, a shell average
$O(R^{3\alpha_1-2})$, which requires $\alpha_1 < 2/3$. That bound is attained
at no point of $\Sigma_2(R)$: the height $x_n \asymp r^{\alpha_1}$ at which
$\theta_n$ is largest is one at which $\psizero$ is exponentially small, while
at the heights $x_n = O(1/\alpha)$ where $\psizero \asymp 1$ one has
$\theta_n = O(1/r)$. The function $x_n\psizero(x_n)$ attains its maximum at
$x_n = 1/\alpha$, at the bottom of the layer and not at its top. The same crude
estimate is, by contrast, sharp for the two bulk terms below, which do not
decay in $x_n$.

\emph{(iii) The remainder.}
Lemma~\ref{lem:rad-uniform} gives, uniformly in $\theta$,
\begin{equation*}
  \bigl|\Krem{E+i0}f\bigr| \le C\,r^{-\frac{n-1}{2}},
  \qquad
  \bigl|\partial_r\Krem{E+i0}f - i\kbulk\Krem{E+i0}f\bigr|
    \le C\,r^{-\frac n2}.
\end{equation*}
On $\Sigma_1(R)$ the shell average of the squared
defect is $R^{-1}O(R^n)O(R^{-n}) = O(R^{-1})$. On $\Sigma_2(R)$ we write
again
\begin{equation*}
  \partial_r w - i\ksurf w
  = (\partial_rw - i\kbulk w) + i(\kbulk-\ksurf)w,
  \qquad w = \Krem{E+i0}f,
\end{equation*}
so that the defect is $O(r^{-\frac{n-1}{2}})$ and the shell average is
$O(R^{\alpha_1-1})$, exactly as in part~(i). Here too the volume factor is
genuine, $\Krem{E+i0}f$ being a bulk quantity with no decay in $x_n$.
\emph{The two bulk terms \textup{(i)} and \textup{(iii)} impose the same
constraint $\alpha_1 < 1$, and it is the only constraint on $\alpha_1$ in the
whole proof}.

Adding the three contributions in each region, and using Cauchy--Schwarz for the
cross terms, proves \eqref{eq:rad-bulk} and \eqref{eq:rad-surf} for every
$\alpha_1 \in (0,1)$.
\end{proof}

\begin{remark}[Why the verification is restricted to smooth compactly supported
data]
\label{rem:rad-smooth-data}
Proposition~\ref{prop:rad-resolvent-satisfies} is a statement about
$\partial_ru$, and two of its ingredients require $f \in C_c^\infty(\Rnp)$
rather than only $f \in \Bsp$ with compact support. Lemma~\ref{lem:hankel} needs a
smooth compactly supported density. The deformation of the contour $\Gamma$
in Step~2 of the proof of Lemma~\ref{lem:rad-uniform} rests on the holomorphy of
$\mu \mapsto \widehat{\zext f}(s(\mu)\omega',-\mu)$, which is the Paley--Wiener
theorem for a compactly supported $f$. Moreover there is no reason to expect
\eqref{eq:rad-bulk}--\eqref{eq:rad-surf} to be stable under $\Bst$-limits of
$u$, since a radial derivative is involved, so the class cannot be enlarged by a
density argument at this point. The intrinsic condition of \S\ref{sec:rad-intrinsic} is free of this
restriction (Remark~\ref{rem:rad-intrinsic-better}).
\end{remark}

\subsubsection{Uniqueness for the homogeneous problem}
\label{sec:rad-uniqueness}

\begin{theorem}[Uniqueness of the outgoing solution, homogeneous case]
\label{thm:rad-uniqueness-hom}
Let $n \ge 3$, $E > 0$, $\alpha > 0$, and $\alpha_1 \in (0,1)$. Suppose that
$u \in \Bst$ solves
\begin{equation}
  \label{eq:rad-hom-problem}
  (-\Delta - E)u = 0 \ \text{ in } \Rnp,
  \qquad
  \partial_{x_n}u + \alpha u = 0 \ \text{ on } \Gamma,
\end{equation}
in the sense of Remark~\ref{rem:rad-regularity}, and satisfies the outgoing
radiation condition of Definition~\ref{def:rad-condition}. Then $u \equiv 0$ on
$\Rnp$.
\end{theorem}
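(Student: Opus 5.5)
The plan is the classical Rellich argument: convert the radiation condition into the decay hypothesis $u \in \Bstz$ of Theorem~\ref{thm:rad-rellich}, and then conclude by unique continuation. The conversion uses an energy (flux) identity on large half-balls. The boundary integral vanishes identically because $\alpha$ is real.

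Fix a cut-off radius $R$ and apply Green's second identity to $u$ and $\overline u$ on $\Ball{R}$, for instance smoothed by a radial cut-off as in Lemma~\ref{lem:kernel-orth}, averaged over $R \in (R',2R')$. Since $u$ and $\overline u$ satisfy the same Robin condition, the integrand on $\Gamma$ cancels exactly. The bulk terms cancel because $E$ is real. This gives
\begin{equation*}
  \im\int_{\{|x|=R\}\cap\Rnp}\overline u\,\partial_r u\,dS = 0,
\end{equation*}
and after averaging in $R$ over $(R',2R')$,
\begin{equation*}
  \im\frac{1}{R'}\int_{\{R'<|x|<2R'\}\cap\Rnp}\overline u\,\partial_r u\,dx = 0 .
\end{equation*}

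Next expand $|\partial_r u - ik u|^2 = |\partial_r u|^2 + k^2|u|^2 - 2k\im(\overline u\,\partial_r u)$ separately on $\Sigma_1(R')$ with $k = \kbulk$ and on $\Sigma_2(R')$ with $k = \ksurf$. Summing, the hypotheses \eqref{eq:rad-bulk} and \eqref{eq:rad-surf} give
\begin{equation*}
  \frac1{R'}\Bigl[\int_{\Sigma_1}\bigl(|\partial_r u|^2+E|u|^2\bigr)
   +\int_{\Sigma_2}\bigl(|\partial_r u|^2+(E+\alpha^2)|u|^2\bigr)\Bigr]
  - \frac{2}{R'}\Bigl[\kbulk\int_{\Sigma_1}+\ksurf\int_{\Sigma_2}\Bigr]\im(\overline u\,\partial_r u) \to 0 .
\end{equation*}
The wavenumbers differ between the two regions, so the flux identity does not kill the cross term directly. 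The remedy is to rewrite $\kbulk\int_{\Sigma_1}+\ksurf\int_{\Sigma_2} = \kbulk\int_{\text{shell}} + (\ksurf-\kbulk)\int_{\Sigma_2}$. The first piece vanishes by the flux identity. For the second, $\im(\overline u\,\partial_r u) = \ksurf|u|^2 + \im(\overline u(\partial_r u - i\ksurf u))$ on $\Sigma_2$. Inserting this and using Cauchy--Schwarz together with \eqref{eq:rad-surf} and $u \in \Bst$, the error terms are $o(1)$.

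Collecting terms, the leftover coefficient of $R'^{-1}\int_{\Sigma_2}|u|^2$ is $(E+\alpha^2) - 2\ksurf(\ksurf-\kbulk) = (\ksurf-\kbulk)^2 \ge 0$, so it is harmless. Therefore $R'^{-1}\int_{\{R'<|x|<2R'\}\cap\Rnp}\bigl(|\partial_r u|^2 + c|u|^2\bigr) \to 0$ with $c > 0$. By Lemma~\ref{lem:rad-scale-ratio} this gives $u \in \Bstz$. Verifying this algebra, and justifying the passage from sphere integrals to shell averages for the non-conical regions $\Sigma_{1,2}$, is the main obstacle I expect. A fallback is to bound the $\Sigma_2$ correction $(\ksurf-\kbulk)\ksurf\int_{\Sigma_2}|u|^2$ directly via the completed square written above.

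Theorem~\ref{thm:rad-rellich} then gives $u = 0$ for $|x| > R_1$. Since $u$ solves $(-\Delta-E)u = 0$ in the connected set $\Rnp$, unique continuation \cite{Aronszajn1957} yields $u \equiv 0$.
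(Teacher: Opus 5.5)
\textbf{The key algebraic step fails as written.} Your architecture matches the paper's: the flux identity on hemispheres, averaging over $t \in (R,2R)$, conversion of the radiation condition into $u \in \Bstz$, then Theorem~\ref{thm:rad-rellich} and unique continuation. Two small points about Step~1. No smoothing is needed there, because $u$ is smooth up to $\Gamma$ and Green's identity can be applied on $\Ball{t}$ directly. Your worry about non-conical regions is also unfounded: the identity holds on full hemispheres, and $\Sigma_1,\Sigma_2$ only partition the shell pointwise afterwards.

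The error is in the coefficient you collect. With $\ksurf^2 = E+\alpha^2$ the leftover coefficient of $R^{-1}\int_{\Sigma_2}|u|^2$ is
\begin{equation*}
  (E+\alpha^2) - 2\ksurf(\ksurf-\kbulk) = \ksurf\,(2\kbulk-\ksurf),
\end{equation*}
and not $(\ksurf-\kbulk)^2$. This is zero when $\alpha^2 = 3E$ and negative when $\alpha^2 > 3E$, that is, throughout $0 < E < \alpha^2/3$. So the bound with $c > 0$ does not follow from the terms you collected.

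\textbf{First repair.} Spend the $|\partial_r u|^2$ term on $\Sigma_2$. There $\partial_r u = F_2 + i\ksurf u$ with $F_2 = \partial_r u - i\ksurf u$. Cauchy--Schwarz, \eqref{eq:rad-surf} and $u \in \Bst$ then give
\begin{equation*}
  R^{-1}\int_{\Sigma_2}|\partial_r u|^2 \ge \ksurf^2 R^{-1}\int_{\Sigma_2}|u|^2 - o(1),
\end{equation*}
which turns the coefficient into $2\kbulk\ksurf > 0$. This is presumably what your ``fallback'' was reaching for.

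\textbf{Simpler repair, the paper's route.} Never expand the squares. In the averaged flux identity write $\overline u\,\partial_r u = \overline u F_1 + i\kbulk|u|^2$ on $\Sigma_1$ and $\overline u F_2 + i\ksurf|u|^2$ on $\Sigma_2$, and take imaginary parts:
\begin{equation*}
  \kbulk\int_{\Sigma_1(R)}|u|^2 + \ksurf\int_{\Sigma_2(R)}|u|^2
  = -\im\int_{\Sigma_1(R)}\overline u F_1 - \im\int_{\Sigma_2(R)}\overline u F_2 .
\end{equation*}
Cauchy--Schwarz and Young then bound $\frac{\kbulk}{2}\int_{\Sigma_1}|u|^2 + \frac{\ksurf}{2}\int_{\Sigma_2}|u|^2$ by $\frac{1}{2\kbulk}\int_{\Sigma_1}|F_1|^2 + \frac{1}{2\ksurf}\int_{\Sigma_2}|F_2|^2$. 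Each region carries its own positive coefficient, so the mismatch of wavenumbers never has to be compensated across regions. Only the partition of the shell is used, and neither $\alpha_1$ nor any relation between $E$ and $\alpha$ enters.
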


\begin{proof}
\emph{Step 1: an exact energy identity.}
By Remark~\ref{rem:rad-regularity}, $u \in C^\infty(\overline{\Rnp})$. Apply
Green's second identity on the half-ball $\Ball{t} = \{|x|<t\} \cap \Rnp$.
Since $E$ is real,
$\int_{\Ball{t}}(u\Delta\overline u - \overline u\Delta u)dx
= \int_{\Ball{t}}(-Eu\overline u + E\overline uu)dx = 0$. The boundary
$\partial\Ball{t}$ consists of the hemisphere
$S_t^+ = \{|x|=t\} \cap \Rnp$ and the disc
$\Gamma_t = \{x_n=0,\ |\xpar|<t\}$, whose outward normal is $\nu = -e_n$. On
$\Gamma_t$ the Robin condition gives $\partial_\nu u = -\partial_{x_n}u = \alpha u$
with $\alpha$ real, so that
\begin{equation}
  \label{eq:rad-flat-cancels}
  u\,\overline{\partial_\nu u} - \overline u\,\partial_\nu u
  = \alpha\bigl(u\overline u - \overline uu\bigr) \equiv 0
\end{equation}
identically on $\Gamma_t$: the boundary contribution does not merely have
vanishing imaginary part, it vanishes outright. Hence
\begin{equation}
  \label{eq:rad-energy}
  \im\int_{S_t^+}\overline u\,\partial_ru\,dS = 0
  \qquad\text{for every } t > 0 .
\end{equation}

\emph{Step 2: shell average and Young's inequality.}
Integrating \eqref{eq:rad-energy} over $t \in (R,2R)$ and using
$\int_R^{2R}\norm{u}_{\Ltwo{S_t^+}}^2dt = \int_{\{R<|x|<2R\}\cap\Rnp}|u|^2dx$
gives
$\im\int_{\{R<|x|<2R\}\cap\Rnp}\overline u\,\partial_ru\,dx = 0$. Split the
shell into $\Sigma_1(R)$ and $\Sigma_2(R)$ and write
$\overline u\partial_ru = \overline uF_1 + i\kbulk|u|^2$ on the first and
$\overline u\partial_ru = \overline uF_2 + i\ksurf|u|^2$ on the second, where
$F_1 = \partial_ru - i\kbulk u$ and $F_2 = \partial_ru - i\ksurf u$. Taking
imaginary parts,
\begin{equation*}
  \kbulk\int_{\Sigma_1(R)}|u|^2dx + \ksurf\int_{\Sigma_2(R)}|u|^2dx
  = -\im\int_{\Sigma_1(R)}\overline uF_1\,dx
    - \im\int_{\Sigma_2(R)}\overline uF_2\,dx .
\end{equation*}
Estimating the right-hand side by Cauchy--Schwarz and then Young's inequality
$ab \le \frac{c}{2}a^2 + \frac{1}{2c}b^2$ with $c = \kbulk$, respectively
$c = \ksurf$, we obtain
\begin{equation*}
  \frac{\kbulk}{2}\int_{\Sigma_1(R)}|u|^2dx
  + \frac{\ksurf}{2}\int_{\Sigma_2(R)}|u|^2dx
  \le \frac{1}{2\kbulk}\int_{\Sigma_1(R)}|F_1|^2dx
    + \frac{1}{2\ksurf}\int_{\Sigma_2(R)}|F_2|^2dx .
\end{equation*}
Divide by $R$ and let $R \to \infty$. The right-hand side tends to $0$ by
\eqref{eq:rad-bulk}--\eqref{eq:rad-surf}, and the left-hand side is a sum of
non-negative terms, so each tends to $0$ separately. Adding them, and recalling
that $\Sigma_1(R) \cup \Sigma_2(R)$ is the whole shell,
\begin{equation}
  \label{eq:rad-decay-obtained}
  \lim_{R\to\infty}\frac1R\int_{\{R<|x|<2R\}\cap\Rnp}|u|^2dx = 0,
\end{equation}
which by Lemma~\ref{lem:rad-scale-ratio} is exactly
$u \in \Bstz$, the hypothesis \eqref{eq:rad-rellich-hypothesis} of
Theorem~\ref{thm:rad-rellich}. Only the partition of
the shell was used here, and not the value of $\alpha_1$ nor the shape of the
interface.

\emph{Step 3: Rellich and unique continuation.}
By \eqref{eq:rad-decay-obtained} and $u \in \Bst$,
Theorem~\ref{thm:rad-rellich} applies with any $R_0 > 0$ and yields $R_1$ with
$u = 0$ on $\{|x|>R_1\} \cap \Rnp$. The set $\Rnp$ is connected and open, and
$u$ is real analytic there, being a smooth solution of a constant-coefficient
elliptic equation. Vanishing on a non-empty open subset,
$u \equiv 0$ on $\Rnp$ by unique continuation \cite{Aronszajn1957}.
\end{proof}

\begin{remark}[The flat half-space supports no purely outgoing mode]
\label{rem:rad-no-outgoing-modes}
Theorem~\ref{thm:rad-uniqueness-hom} may be read as the statement
$\{u \in \Nsol{E} : u \text{ outgoing}\} = \{0\}$, where $\Nsol{E}$ is the space
of $\Bst$ solutions of \eqref{eq:rad-hom-problem} introduced in
\S\ref{subsec:characterization}. This is consistent with, and sharper than, the
absence of eigenvalues: by Theorem~\ref{thm:characterization} every element of
$\Nsol{E}$ is a standing wave $\Fzero(E)^*\phi$, whose far field carries
incoming and outgoing parts of equal magnitude in each channel, so a non-zero
element can never be purely outgoing. The content of the theorem is that the
radiation condition of Definition~\ref{def:rad-condition}, which refers only to
radial derivatives on shells, is strong enough to detect this.
\end{remark}

\subsubsection{The intrinsic form of the radiation condition}
\label{sec:rad-intrinsic}

The condition of Definition~\ref{def:rad-condition} still mentions the
auxiliary exponent $\alpha_1$ and the artificial interface
$x_n = r^{\alpha_1}$. We now show that it selects the same solutions as a
condition in which neither appears: that the far field of $u$ be, in the sense
of $\simeq$, a purely outgoing two-channel wave. The key is that the four
amplitudes of a two-channel far field are determined by its
$\Bstz$-equivalence class.

\begin{lemma}[Rigidity of the four amplitudes]
\label{lem:rad-rigidity}
For $A,B \in \Ltwo{\Sm}$ and $C,D \in \Ltwo{\Ssurf}$ put
\begin{equation*}
  \Xi = \Wb{+}{A} + \Wb{-}{B} + \Ws{+}{C} + \Ws{-}{D}.
\end{equation*}
Then $\Xi \in \Bst$, its shell average exists, and
\begin{equation}
  \label{eq:rad-rigidity}
  \Mav{\Xi}
  = \norm{A}_{\Ltwo{\Sm}}^2 + \norm{B}_{\Ltwo{\Sm}}^2
    + \norm{C}_{\Ltwo{\Ssurf}}^2 + \norm{D}_{\Ltwo{\Ssurf}}^2 .
\end{equation}
In particular $\Xi \in \Bstz$ if and only if $A = B = C = D = 0$.
\end{lemma}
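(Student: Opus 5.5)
The plan is to expand $\MavR{\Xi}{R}$ into the four diagonal terms and six cross terms, compute the diagonal terms exactly, and show that every cross term divided by $R$ tends to $0$. Membership in $\Bst$ follows from the elementary bounds already used in Step~3 of the proof of Theorem~\ref{thm:eigen-far-field}. Existence of $\Mav{\Xi}$ and the value \eqref{eq:rad-rigidity} then follow from the computation. The last assertion follows from Lemma~\ref{lem:shell-average}(iii).

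\emph{Diagonal terms.} In polar coordinates the weight $r^{-(n-1)}$ cancels the Jacobian, so $\MavR{\Wb{\pm}{A}}{R} = \norm{A}^2_{\Ltwo{\Sm}}$ exactly, as in the proof of Lemma~\ref{lem:shell-flux-B}. For the surface waves, write $\Ws{\pm}{C} = w_\pm\otimes\psizero$ with $\Mavt{w_\pm} = \norm{C}^2$, again by exact cancellation of $\rho^{n-2}$. Then Lemma~\ref{lem:tensor-shell} and $\norm{\psizero}=1$ give $\Mav{\Ws{\pm}{C}} = \norm{C}^2$, and likewise for $D$.

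\emph{Cross terms within a channel.} The bulk product $\Wb{+}{A}\overline{\Wb{-}{B}}$ equals $A\overline B\,r^{-(n-1)}e^{2i\kbulk r}$. Its average is $\bigl(R^{-1}\int_0^Re^{2i\kbulk r}dr\bigr)(A,B)_{\Ltwo{\Sm}} = O(R^{-1})$. The surface product $\Ws{+}{C}\overline{\Ws{-}{D}}$ is handled the same way, with $e^{2i\ksurf\rho}$. Here I would integrate first in $x_n$; the domain $\Ball{R}$ is not a product, so I would use the slicing of Lemma~\ref{lem:tensor-shell}. For each $x_n$ the $\rho$-integral runs over $(0,\sqrt{R^2-x_n^2})$ and is bounded by $C|(C,D)|/\ksurf$. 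After integrating against $\psizero^2$ and dividing by $R$, this is $O(R^{-1})$.

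\emph{Cross terms between channels.} These are the main obstacle, since there are four products $\Wb{\pm}{\cdot}\overline{\Ws{\pm}{\cdot}}$ and no exact transverse orthogonality is available for model waves. I would copy the argument from the proof of Lemma~\ref{lem:shell-flux-B}, which uses only the sizes of the waves and not their phases. Split $\Ball{R}$ at $x_n = L$ and apply Cauchy--Schwarz. On $\{x_n\ge L\}$ the surface factor contributes at most $e^{-\alpha L}\norm{C}$. On $\{x_n<L\}$ the bulk factor is bounded by $r_0R^{-1}\norm{A}^2 + \int_{\{\theta_n<L/r_0\}}|A|^2$, which tends to $0$ by dominated convergence. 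Let $R\to\infty$, then $r_0\to\infty$, then $L\to\infty$. This works for general $L^2$ amplitudes, so no density argument is needed.

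Summing the pieces, $\MavR{\Xi}{R}$ converges to the right-hand side of \eqref{eq:rad-rigidity}. If $\Xi\in\Bstz$, then this limit is zero, so all four amplitudes vanish.
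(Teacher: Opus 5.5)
Your proof is correct. It differs from the paper's in how general amplitudes are handled and in how the cross-channel products are controlled.

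\emph{The paper's route.} It first takes $A,B$ continuous on $\overline{\Sm}$ and $C,D$ continuous on $\Ssurf$. The two within-channel pairs are computed as you do, the surface pair over the cylinder $\{\rho<R\}$ followed by Lemma~\ref{lem:tensor-shell}. The cross-channel products are disposed of by a pointwise size bound. For bounded amplitudes, $r \ge \rho$ gives $|\Wb{\pm}{A}\,\Ws{\pm}{C}| \lesssim \rho^{-\frac{n-1}{2}-\frac{n-2}{2}}\psizero(x_n)$. This integrates against $\rho^{n-2}\,d\rho\,dx_n$ to $O(R^{1/2})$, hence $O(R^{-1/2})$ after division by $R$. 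The identity is then extended to all $L^2$ amplitudes by density. That step uses that $(A,B,C,D)\mapsto\Xi$ is bounded into $\Bst$ and that $\Mav{\cdot}^{1/2}$ is Lipschitz there by Lemma~\ref{lem:shell-average}(ii),(iv).

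\emph{Your route.} For the cross-channel terms you import the Cauchy--Schwarz split at $x_n = L$, with the grazing cap $\{\theta_n < L/r_0\}$, from the proof of Lemma~\ref{lem:shell-flux-B}. That argument uses only moduli and $L^2$ norms. Your exact diagonal identities and your within-channel oscillatory bounds likewise need only the pairings $(A,B)$ and $(C,D)$. So every piece is valid for general $L^2$ amplitudes, and the density step disappears.

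\emph{Trade-off.} You get no rate. Your cross-channel terms tend to zero at a speed governed by how much of $|A|^2$ sits near the equator. The paper's route gives the explicit $O(R^{-1/2})$ for continuous data, at the cost of the extension argument. You might also state the trivial converse of the last assertion: $A=B=C=D=0$ gives $\Xi=0\in\Bstz$.
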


\begin{proof}
Assume first that $A,B$ are continuous on $\overline{\Sm}$ and $C,D$ on
$\Ssurf$, a dense class. For the bulk pair, polar coordinates
$dx = r^{n-1}dr\,d\theta$ over $\theta \in \Sm$ cancel the amplitude weight
exactly:
\begin{equation*}
  \frac1R\int_{\{|x|<R\}\cap\Rnp}\bigl|\Wb{+}{A}+\Wb{-}{B}\bigr|^2dx
  = \frac1R\int_{\Sm}\int_0^R
    \bigl|A(\theta)e^{i\kbulk r}+B(\theta)e^{-i\kbulk r}\bigr|^2 dr\,d\theta,
\end{equation*}
and the integrand equals
$|A|^2+|B|^2+2\re\bigl(A\overline Be^{2i\kbulk r}\bigr)$, whose oscillatory part
contributes $O(1/R)$ after integration in $r$. Hence the bulk pair contributes
$\norm{A}^2+\norm{B}^2$ in the limit. For the surface pair, use
$d\xpar = \rho^{n-2}d\rho\,d\omega$ and
$\int_0^\infty\psizero^2dx_n = 1$: over the cylinder $\{\rho<R\}$ the same
computation gives $\norm{C}^2+\norm{D}^2$, and the passage from the cylinder to
the ball is Lemma~\ref{lem:tensor-shell}, applied with
$\Psi = \psizero$ and with $w$ the tangential factor, the tensor being written
in the order $(w\otimes\Psi)(x) = w(\xpar)\Psi(x_n)$ used there. The cross terms
between the two channels are $O(R^{-1/2})$: in the layer where the surface
factor is not exponentially small one has $r \asymp \rho$, so the product of
the two amplitudes is $O(\rho^{-\frac{n-1}{2}-\frac{n-2}{2}})$, and
$\rho^{n-2}\rho^{-(n-3/2)} = \rho^{-1/2}$ integrates to $O(R^{1/2})$ over
$\rho < R$, which divided by $R$ tends to $0$.

For general $L^2$ amplitudes, observe that by the computation just
performed the map
\begin{equation*}
  (A,B,C,D) \longmapsto
  \Wb{+}{A}+\Wb{-}{B}+\Ws{+}{C}+\Ws{-}{D}
\end{equation*}
is bounded from $\Ltwo{\Sm}^2\oplus\Ltwo{\Ssurf}^2$ into $\Bst$, while
$\Mav{\cdot}^{1/2}$ is Lipschitz on $\Bst$ with respect to $\Bstnorm{\cdot}$
by Lemma~\ref{lem:shell-average}(ii),(iv). Both sides of
\eqref{eq:rad-rigidity} are therefore continuous in $(A,B,C,D)$, and the
identity extends from the dense class. The last assertion is immediate, the
right-hand side of \eqref{eq:rad-rigidity} being a sum of squares.
\end{proof}

\begin{theorem}[Equivalence of the radiation conditions]
\label{thm:rad-equivalence}
Let $n \ge 3$, $E > 0$, $\alpha > 0$, let $f \in C_c^\infty(\Rnp)$, and let
$u \in \Bst$ solve
\begin{equation}
  \label{eq:rad-inhom-problem}
  (-\Delta - E)u = f \ \text{ in } \Rnp,
  \qquad
  \partial_{x_n}u + \alpha u = 0 \ \text{ on } \Gamma
\end{equation}
in the sense of Remark~\ref{rem:rad-regularity}. Then the following are
equivalent.
\begin{enumerate}
\item[\textup{(a)}] $u$ satisfies the outgoing radiation condition of
Definition~\ref{def:rad-condition} for some, equivalently every,
$\alpha_1 \in (0,1)$.
\item[\textup{(b)}] There are $A \in \Ltwo{\Sm}$ and $C \in \Ltwo{\Ssurf}$ with
$u \simeq \Wb{+}{A} + \Ws{+}{C}$.
\item[\textup{(c)}] $u = \Ralpha{E+i0}f$.
\end{enumerate}
Moreover, in that case $A = A_f$ and $C = C_f$ are the amplitudes
\eqref{eq:rad-resolvent-amplitudes}, so that the far field of $u$ is
$\Fzero^{(+)}(E)f$ up to the constants $c_n$, $c_n'$.
\end{theorem}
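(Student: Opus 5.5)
The plan is to reduce everything to the homogeneous problem for the difference $w = u - \Ralpha{E+i0}f$. By Theorem~\ref{thm:lap}(e), $\Ralpha{E+i0}f$ lies in $\Bst$ and solves \eqref{eq:rad-inhom-problem}. Hence $w \in \Bst$ solves the homogeneous problem \eqref{eq:rad-hom-problem} on all of $\Rnp$; by Remark~\ref{rem:rad-regularity} it is smooth up to $\Gamma$, and therefore $w \in \Nsol{E}$. The three implications then become statements about $w$.

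For (c)$\Rightarrow$(a) and (c)$\Rightarrow$(b) there is nothing new to prove. Proposition~\ref{prop:rad-resolvent-satisfies} gives (a) for every $\alpha_1 \in (0,1)$, and Theorem~\ref{thm:asymp-uniform} gives (b) with $A = A_f$, $C = C_f$.

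For (a)$\Rightarrow$(c), suppose $u$ satisfies Definition~\ref{def:rad-condition} for some $\alpha_1$. Since $\Ralpha{E+i0}f$ satisfies it for the same $\alpha_1$, Lemma~\ref{lem:rad-linear} shows that $w$ does as well. Theorem~\ref{thm:rad-uniqueness-hom} then gives $w \equiv 0$. The implication is proved for this single $\alpha_1$. Because (c) in turn gives (a) for every $\alpha_1$, the clause ``some, equivalently every'' follows.

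For (b)$\Rightarrow$(c), suppose $u \simeq \Wb{+}{A} + \Ws{+}{C}$. Subtracting Theorem~\ref{thm:asymp-uniform} gives $w \simeq \Wb{+}{A-A_f} + \Ws{+}{C-C_f}$. On the other side, $w \in \Nsol{E}$, so Corollary~\ref{cor:solution-far-field} represents $w$ modulo $\Bstz$ by the full two-channel model, with outgoing data $\yout$ and incoming data $\yin$ weighted by $\gambulk$, $\overline{\gambulk}$, $\gamsurf$, $\overline{\gamsurf}$. The difference of the two representatives is an element of $\Bstz$ of the form $\Xi$ in Lemma~\ref{lem:rad-rigidity}. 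Its incoming amplitudes are $\overline{\gambulk}\,\yin_{\mathrm{bulk}}$ and $\overline{\gamsurf}\,\yin_{\mathrm{surf}}$, and its outgoing amplitudes are $\gambulk\yout_{\mathrm{bulk}} - (A - A_f)$ and $\gamsurf\yout_{\mathrm{surf}} - (C - C_f)$. Lemma~\ref{lem:rad-rigidity} forces all four to vanish. In particular $\yin = 0$, since $\gambulk, \gamsurf \neq 0$. By the isometry in \eqref{eq:far-field-isometry}, $\norm{\phi} = \norm{\yin} = 0$, so $w = \Fzero(E)^{*}\phi = 0$ and (c) holds.

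The identification of the amplitudes comes from the same computation. With $w = 0$ the outgoing data vanish too, so $A = A_f$ and $C = C_f$; equivalently, apply Lemma~\ref{lem:rad-rigidity} to $(\Wb{+}{A} + \Ws{+}{C}) - (\Wb{+}{A_f} + \Ws{+}{C_f}) \in \Bstz$.

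The only step requiring care is matching the representatives in (b)$\Rightarrow$(c). The point to check is that the model waves of Corollary~\ref{cor:solution-far-field} are exactly $\Wb{\pm}{\cdot}$ and $\Ws{\pm}{\cdot}$ with amplitudes in $\Ltwo{\Sm}$ and $\Ltwo{\Ssurf}$, so that Lemma~\ref{lem:rad-rigidity} applies to their difference. The point to keep in mind is that it is the vanishing of the incoming data alone, through the bijection $u \mapsto \yin$ of Corollary~\ref{cor:solution-far-field}, that kills $w$.
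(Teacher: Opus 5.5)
Your proposal is correct and follows the paper's proof essentially step for step: the same reduction to $w = u - \Ralpha{E+i0}f$, Proposition~\ref{prop:rad-resolvent-satisfies} and Theorem~\ref{thm:asymp-uniform} for (c)$\Rightarrow$(a),(b), Lemma~\ref{lem:rad-linear} with Theorem~\ref{thm:rad-uniqueness-hom} for (a)$\Rightarrow$(c), and Corollary~\ref{cor:solution-far-field} with Lemma~\ref{lem:rad-rigidity} for (b)$\Rightarrow$(c). The only cosmetic difference is that you conclude $w = 0$ from $\norm{\phi}_{\Echan{E}} = \norm{\yin}_{\Echan{E}}$, whereas the paper uses the bijectivity of $w \mapsto \yin$; both facts come from the same corollary.
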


\begin{proof}
Write $u_0 = \Ralpha{E+i0}f$, which by Theorem~\ref{thm:lap} belongs to $\Bst$
and solves \eqref{eq:rad-inhom-problem}.

\emph{(c) $\Rightarrow$ (a)} is
Proposition~\ref{prop:rad-resolvent-satisfies}, and
\emph{(c) $\Rightarrow$ (b)} is Theorem~\ref{thm:asymp-uniform} together with
\eqref{eq:rad-resolvent-amplitudes}.

\emph{(a) $\Rightarrow$ (c).} Let $u$ satisfy the radiation condition for some
$\alpha_1 \in (0,1)$. By the implication already proved, $u_0$ satisfies it
for the same $\alpha_1$. Hence $w = u - u_0$ satisfies it too, by
Lemma~\ref{lem:rad-linear}, and $w \in \Bst$ solves the homogeneous problem
\eqref{eq:rad-hom-problem} since the right-hand sides and the boundary
conditions cancel. Theorem~\ref{thm:rad-uniqueness-hom} gives $w \equiv 0$.

\emph{(b) $\Rightarrow$ (c).} Let $u \simeq \Wb{+}{A}+\Ws{+}{C}$. Since
$u_0 \simeq \Wb{+}{A_f}+\Ws{+}{C_f}$ and $\simeq$ is compatible with
subtraction, $w = u - u_0 \in \Nsol{E}$ satisfies
\begin{equation}
  \label{eq:rad-w-outgoing}
  w \simeq \Wb{+}{A-A_f} + \Ws{+}{C-C_f}.
\end{equation}
On the other hand $w \in \Nsol{E}$, so by
Corollary~\ref{cor:solution-far-field} its far field is a standing wave
carrying both phases in each channel:
\begin{equation}
  \label{eq:rad-w-standing}
  w \simeq \Wb{+}{\gambulk(E)\yout_{\mathrm{bulk}}}
    + \Wb{-}{\overline{\gambulk(E)}\,\yin_{\mathrm{bulk}}}
    + \Ws{+}{\gamsurf(E)\yout_{\mathrm{surf}}}
    + \Ws{-}{\overline{\gamsurf(E)}\,\yin_{\mathrm{surf}}},
\end{equation}
where $(\yout,\yin)$ are the outgoing and incoming far-field data of $w$.
Subtracting \eqref{eq:rad-w-outgoing} from \eqref{eq:rad-w-standing}, the
difference lies in $\Bstz$, and Lemma~\ref{lem:rad-rigidity} forces every
amplitude to vanish. In particular
$\overline{\gambulk(E)}\yin_{\mathrm{bulk}} = 0$ and
$\overline{\gamsurf(E)}\yin_{\mathrm{surf}} = 0$. Since
$\gambulk(E),\gamsurf(E) \ne 0$, we get $\yin[w] = 0$, and by the bijectivity of
the correspondence $w \leftrightarrow \yin[w]$ of
Corollary~\ref{cor:solution-far-field} this gives $w = 0$.

Finally, the equivalence with ``every $\alpha_1 \in (0,1)$'' in (a) follows
because (c) does not mention $\alpha_1$, and the identification of $A$ and $C$
follows from \eqref{eq:rad-w-outgoing} with $w = 0$ and
Lemma~\ref{lem:rad-rigidity}.
\end{proof}

\begin{remark}[Advantages of the intrinsic form]
\label{rem:rad-intrinsic-better}
Condition (b) is preferable to (a) on three counts, and it is (b) that will be
carried into Part~II. It contains no auxiliary exponent and no artificial
interface, so the class of admissible solutions is manifestly independent of
both. It involves no derivative of $u$, being a statement about the
$\Bstz$-equivalence class alone, hence is stable under $\Bst$-limits. In
particular it makes sense, and by Theorem~\ref{thm:asymp-uniform} is satisfied
by the resolvent, for every $f \in \Bsp$ and not only for
$f \in C_c^\infty(\Rnp)$, whereas the verification of (a) is confined to the
smaller class by Remark~\ref{rem:rad-smooth-data}. And it continues to make
sense on the surface band $-\alpha^2 < E < 0$, where the bulk channel is closed:
there $\Echan{E} = \Ltwo{\Ssurf}$, the term $\Wb{+}{A}$ is simply absent, and
(b) reads $u \simeq \Ws{+}{C}$, whereas the two-region split
\eqref{eq:rad-regions} loses its meaning because there is no second wavenumber
to separate. The equivalence of (b) and (c) persists there. For $E > 0$ the
implication (c) $\Rightarrow$ (b) rests on Theorem~\ref{thm:asymp-uniform}, and
on the band it rests on Proposition~\ref{prop:char-surface-band}. The converse
survives verbatim, since Corollary~\ref{cor:solution-far-field} and
Lemma~\ref{lem:rad-rigidity} hold in their single-channel form on the whole of
$\Ereg$.
\end{remark}

\subsubsection{Existence and uniqueness of the outgoing solution}
\label{sec:rad-main}

\begin{theorem}[The outgoing solution of $(\Halpha-E)u = f$]
\label{thm:rad-main}
Let $n \ge 3$, $\alpha > 0$, $E > 0$, and let $f \in C_c^\infty(\Rnp)$.
\begin{enumerate}
\item[\textup{(a)}] \textup{(Existence.)} The function
$u = \Ralpha{E+i0}f \in \Bst$ solves \eqref{eq:rad-inhom-problem} and satisfies
the outgoing radiation condition of Definition~\ref{def:rad-condition} for
every $\alpha_1 \in (0,1)$, as well as its intrinsic form
Theorem~\ref{thm:rad-equivalence}\textup{(b)}.
\item[\textup{(b)}] \textup{(Uniqueness.)} If $u_1,u_2 \in \Bst$ both solve
\eqref{eq:rad-inhom-problem} and both satisfy the radiation condition, in
either of the two equivalent forms, then $u_1 = u_2$.
\end{enumerate}
Thus $\Ralpha{E+i0}f$ is the unique outgoing solution of
\eqref{eq:rad-inhom-problem}, and its far field is
$\Fzero^{(+)}(E)f$ in the sense of \eqref{eq:outgoing-transform}. For
every $f \in \Bsp$ the same statements hold with the radiation
condition taken in the intrinsic form.
\end{theorem}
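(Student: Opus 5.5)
The theorem is essentially an assembly of results already proved, and the plan is to read each clause off the appropriate earlier statement.

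\emph{Existence.} For part (a), Theorem~\ref{thm:lap}(e) gives $u=\Ralpha{E+i0}f\in\Bst\cap H^2_{\mathrm{loc}}(\overline{\Rnp})$ solving \eqref{eq:rad-inhom-problem}. Elliptic regularity (Remark~\ref{rem:rad-regularity}) makes $u$ smooth outside $\operatorname{supp} f$, so Definition~\ref{def:rad-condition} is meaningful for it. Proposition~\ref{prop:rad-resolvent-satisfies} then yields the two-region condition for every $\alpha_1\in(0,1)$. Theorem~\ref{thm:asymp-uniform} yields the intrinsic form $u\simeq\Wb{+}{A_f}+\Ws{+}{C_f}$, with the amplitudes \eqref{eq:rad-resolvent-amplitudes}.

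\emph{Uniqueness.} For part (b), I would apply Theorem~\ref{thm:rad-equivalence} to $u_1$ and to $u_2$ separately. Each solves \eqref{eq:rad-inhom-problem} and satisfies (a) or (b) of that theorem, so each equals $\Ralpha{E+i0}f$, and hence $u_1=u_2$. There is no need to require that both satisfy the same form of the condition. An alternative route for the two-region form is Lemma~\ref{lem:rad-linear} applied to $u_1-u_2$, followed by Theorem~\ref{thm:rad-uniqueness-hom}. That route only works when both use the same $\alpha_1$, so passing through (c) is the cleaner choice. The far-field identification is Corollary~\ref{cor:far-field}.

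\emph{Extension to $f\in\Bsp$.} This is the only step not literally contained in earlier statements. Existence in intrinsic form is again Theorem~\ref{thm:asymp-uniform}, which is stated for all $f\in\Bsp$. For uniqueness, let $u_1,u_2\in\Bst$ solve the problem with $u_j\simeq\Wb{+}{A_j}+\Ws{+}{C_j}$. Then $w=u_1-u_2\in\Nsol{E}$, and $w\simeq\Wb{+}{A_1-A_2}+\Ws{+}{C_1-C_2}$. By Corollary~\ref{cor:solution-far-field}, $w$ also has the standing-wave far field \eqref{eq:solution-far-field}. Subtracting the two expansions and applying Lemma~\ref{lem:rad-rigidity} forces $\yin[w]=0$, because $\gambulk(E),\gamsurf(E)\neq0$. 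Bijectivity of $w\mapsto\yin$ then gives $w=0$.

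This is exactly the (b)$\Rightarrow$(c) argument of Theorem~\ref{thm:rad-equivalence}, and it never used the smoothness of $f$. The main point to check is precisely that this argument goes through verbatim for $f\in\Bsp$. It does: the solution $w$ is homogeneous, and the only inputs are Corollary~\ref{cor:solution-far-field}, which concerns $\Nsol{E}$ and does not depend on $f$, and Lemma~\ref{lem:rad-rigidity}, which is a statement about model waves alone.
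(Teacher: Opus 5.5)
Your proposal is correct and follows the paper's proof essentially line by line. Existence comes from Theorem~\ref{thm:lap}, Proposition~\ref{prop:rad-resolvent-satisfies} and Theorem~\ref{thm:asymp-uniform}; uniqueness comes from passing through (c) of Theorem~\ref{thm:rad-equivalence}; and the $\Bsp$ extension reruns the (b)$\Rightarrow$(c) argument. Two of your points are small refinements of that same argument rather than a different approach: the alternative route through Lemma~\ref{lem:rad-linear} needs a common $\alpha_1$ and a common form of the condition, and you compare $u_1$ with $u_2$ directly instead of comparing each with $\Ralpha{E+i0}f$.
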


\begin{proof}
Part (a) is Theorem~\ref{thm:lap} together with
Proposition~\ref{prop:rad-resolvent-satisfies} and
Theorem~\ref{thm:asymp-uniform}. For (b), each $u_j$ equals
$\Ralpha{E+i0}f$ by Theorem~\ref{thm:rad-equivalence}, so $u_1 = u_2$.
Alternatively, and without appealing to \S\ref{subsec:characterization}:
$w = u_1-u_2$ solves the homogeneous problem, satisfies the radiation condition
by Lemma~\ref{lem:rad-linear}, and vanishes by
Theorem~\ref{thm:rad-uniqueness-hom}. For the final assertion, note that
Theorem~\ref{thm:asymp-uniform} holds for every
$f \in \Bsp$, so that the proof of (b) $\Rightarrow$ (c) in
Theorem~\ref{thm:rad-equivalence} applies verbatim to that larger class. Only
the verification of the two-region form uses $f \in C_c^\infty(\Rnp)$.
\end{proof}

\begin{remark}[What is used again in the perturbed problem]
\label{rem:rad-outlook}
Only two ingredients of this subsection use the flat geometry. The first is the
cancellation \eqref{eq:rad-flat-cancels}. For an obstacle
$D \subset \{|x| \le R_0\}$ carrying its own boundary condition, the right-hand
side of \eqref{eq:rad-energy} becomes, for $t$ large enough to enclose $D$, the
constant $\im\int_{\partial D}\overline u\,\partial_\nu u\,dS$, which vanishes
when that condition is self-adjoint (real Robin, Dirichlet or Neumann). Steps
2--3 of Theorem~\ref{thm:rad-uniqueness-hom} are then unchanged. The second is
the extension formula \eqref{eq:rad-clean-extension}, which uses the flatness
of $\Gamma$ and the constancy of $\alpha$ but is applied only in
$\{|x| > R_0\}$. Since the hypotheses of Theorem~\ref{thm:rad-rellich} concern
only a neighbourhood of infinity, that theorem transfers verbatim. What does
not transfer is the block-diagonal form of $\Smat(E)$ in
\eqref{eq:smatrix-def}: for a perturbed boundary nothing forces the
off-diagonal blocks to vanish, and that is the subject of Part~II.
\end{remark}

%% file: Part1_027_inverse_uniqueness.tex
This section closes Part~I by drawing the one consequence of
\S\ref{sec:unperturbed} that bears on the inverse problem. For a flat boundary
the Robin condition $\gamma_1 u = \alpha\gamma_0 u$ prescribes the same
constant at every point of $\Gamma$, so the boundary carries a single scalar
and no shape, and Corollary~\ref{cor:inv-alpha} below recovers that scalar
from the scattering matrix. The recovery of the shape of a locally perturbed
boundary is left to Part~II.

The scattering matrix of \eqref{eq:smatrix-def} is block diagonal, and only one
of its two blocks depends on the impedance. This section makes that statement
precise and draws its two consequences. The first is negative: the surface--surface block is a fixed operator,
the same for every $\alpha > 0$, so surface-wave scattering data alone cannot
determine the impedance of a flat boundary. The second is positive and
elementary: the bulk block determines $\alpha$ from a single energy and a
single non-grazing direction. The two rest on different computations, and we
take them in turn: the reflection coefficient of the fiber problem in
\S\ref{subsec:inv-reflection}, and the reduction of the surface channel to a
free Laplacian in \S\ref{subsec:inv-surface-free}.

Throughout this section $\alpha$ is allowed to vary, and we display the
dependence by a subscript, placed before the channel name:
$\HalphaA{\alpha}$, $\HsurfA{\alpha}$, $\HbulkA{\alpha}$, $\psizeroA{\alpha}$,
$\cphA{\alpha}{k}$, $\reflA{\alpha}{k}$, $\FsurfA{\alpha}$,
$\SmatA{\alpha}{E}$, $\SmatbulkA{\alpha}{E}$, $\SmatsurfA{\alpha}{E}$. Two
further quantities of \S\ref{sec:unperturbed} depend on the impedance and
receive the same treatment: the set of non-threshold energies
\begin{equation*}
  \EregA{\alpha} = \bigl( -\alpha^2,0 \bigr) \cup \bigl( 0,\infty \bigr)
\end{equation*}
and the surface wavenumber $\ksurfA{\alpha}$, whose value at energy $E$ is
$\sqrt{E+\alpha^2}$. The bulk wavenumber $\kbulk$ carries no subscript,
because it does not involve the impedance. This asymmetry
between the two channels is the subject of this section. In \S\ref{sec:unperturbed} the impedance was fixed and all these
subscripts were suppressed. A statement quoted from there is to be read with
$\alpha$ fixed.

\subsection{The Reflection Coefficient as a Function of the Impedance}
\label{subsec:inv-reflection}

\begin{lemma}[The reflection coefficient determines the impedance]
\label{lem:inv-reflection}
Fix $k > 0$ and let
$\reflA{\alpha}{k} = \dfrac{ik-\alpha}{ik+\alpha} = \dfrac{k+i\alpha}{k-i\alpha}$
be the Robin reflection coefficient of \eqref{eq:phase-and-reflection}. Then:
\begin{enumerate}
  \item[\textup{(i)}] $\arg\reflA{\alpha}{k} = 2\arctan(\alpha/k)$, and the map
  $\alpha \mapsto \reflA{\alpha}{k}$ is a real-analytic bijection of
  $(0,\infty)$ onto the open upper unit semicircle
  $\bigl\{ e^{i\vartheta} : 0 < \vartheta < \pi \bigr\}$, with inverse
  \begin{equation}
    \label{eq:inv-alpha-from-R}
    \alpha = k\,\tan\Bigl( \tfrac12 \arg\reflA{\alpha}{k} \Bigr).
  \end{equation}
  \item[\textup{(ii)}] The argument is strictly increasing in $\alpha$, with
  \begin{equation}
    \label{eq:inv-sensitivity}
    \frac{\partial}{\partial\alpha}\arg\reflA{\alpha}{k}
    = \frac{2k}{k^2+\alpha^2}
    \le \frac1\alpha ,
  \end{equation}
  with equality exactly at $k = \alpha$.
  \item[\textup{(iii)}] $\reflA{\alpha}{k} \to 1$ as $\alpha \downarrow 0$ and
  $\reflA{\alpha}{k} \to -1$ as $\alpha \to \infty$, uniformly on compact
  subsets of $(0,\infty)$ in $k$. For fixed $\alpha$,
  $\reflA{\alpha}{k} \to -1$ as $k \downarrow 0$ and
  $\reflA{\alpha}{k} \to 1$ as $k \to \infty$.
\end{enumerate}
\end{lemma}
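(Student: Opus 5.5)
The plan is to reduce everything to the polar form of a single complex number. Since $k>0$ and $\alpha>0$, write $k+i\alpha=\sqrt{k^2+\alpha^2}\,e^{i\phi}$ with $\phi=\arctan(\alpha/k)\in(0,\pi/2)$. Then $k-i\alpha$ is its conjugate, so by the second expression in the statement (equivalently, by $\reflA{\alpha}{k}=\cphA{\alpha}{k}/\overline{\cphA{\alpha}{k}}$ from \eqref{eq:phase-and-reflection}) we get $\reflA{\alpha}{k}=e^{2i\phi}$. The first task is to check the two algebraic forms in the statement agree, by multiplying numerator and denominator by $i$ as in the proof of Proposition~\ref{prop:eigenfunctions}(ii). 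This gives $\arg\reflA{\alpha}{k}=2\arctan(\alpha/k)\in(0,\pi)$, with the principal branch legitimate because the value lies strictly inside $(0,\pi)$.

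For (i), the map $\alpha\mapsto 2\arctan(\alpha/k)$ is a real-analytic, strictly increasing bijection of $(0,\infty)$ onto $(0,\pi)$. Composing with $\vartheta\mapsto e^{i\vartheta}$, which is injective on $(0,\pi)$, gives the bijection onto the open upper semicircle. Inverting $\vartheta=2\arctan(\alpha/k)$ yields \eqref{eq:inv-alpha-from-R}; $\tan$ is finite and positive on $(0,\pi/2)$, so no branch issue arises.

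For (ii), differentiate: $\partial_\alpha\, 2\arctan(\alpha/k)=2k/(k^2+\alpha^2)$. The inequality $2k/(k^2+\alpha^2)\le 1/\alpha$ is equivalent to $(k-\alpha)^2\ge 0$, with equality exactly when $k=\alpha$.

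For (iii), read the limits off $\reflA{\alpha}{k}=e^{2i\arctan(\alpha/k)}$. As $\alpha/k\to0$ the argument tends to $0$, and as $\alpha/k\to\infty$ it tends to $\pi$. Uniformity for $k$ in a compact set $[k_1,k_2]\subset(0,\infty)$ follows from the bounds $\alpha/k_2\le\alpha/k\le\alpha/k_1$ together with the monotonicity of $\arctan$. The $k$-limits at fixed $\alpha$ come from the same formula, with the roles of $\alpha$ and $k$ exchanged.

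There is no genuine obstacle here. The only points needing care are the choice of branch of $\arg$, which is harmless since all values lie in $(0,\pi)$, and stating uniformity correctly in (iii).
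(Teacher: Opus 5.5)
Your proposal is correct and follows the paper's proof essentially verbatim. The paper also writes $\reflA{\alpha}{k}=\cphA{\alpha}{k}/\overline{\cphA{\alpha}{k}}=\cphA{\alpha}{k}^2$ with $\arg\cphA{\alpha}{k}=\arctan(\alpha/k)\in(0,\pi/2)$, doubles the argument, differentiates $2\arctan(\alpha/k)$, and reduces the inequality in (ii) to $(k-\alpha)^2\ge 0$; your treatment of the uniformity in (iii) via $\alpha/k_2\le\alpha/k\le\alpha/k_1$ simply spells out what the paper calls immediate.
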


\begin{proof}
By \eqref{eq:phase-and-reflection} we have
$\cphA{\alpha}{k} = (k+i\alpha)/\sqrt{k^2+\alpha^2}$ with
$|\cphA{\alpha}{k}| = 1$, so
$\reflA{\alpha}{k} = \cphA{\alpha}{k}/\overline{\cphA{\alpha}{k}}
= \cphA{\alpha}{k}^2$, and
$\arg\cphA{\alpha}{k} = \arctan(\alpha/k) \in (0,\pi/2)$ because $k>0$ and
$\alpha>0$. Doubling gives the formula for the argument, and
$|\reflA{\alpha}{k}| = 1$. As $\alpha$ runs over $(0,\infty)$ the quantity
$\arctan(\alpha/k)$ runs strictly monotonically and real-analytically over
$(0,\pi/2)$, whence (i), with \eqref{eq:inv-alpha-from-R} obtained by solving
$\vartheta = 2\arctan(\alpha/k)$ for $\alpha$. No branch has to be chosen,
since $\arg\reflA{\alpha}{k}$ takes values in $(0,\pi)$ and $\tan$ is
injective on $(0,\pi/2)$. Differentiating,
$\partial_\alpha\,2\arctan(\alpha/k) = 2k^{-1}(1+\alpha^2/k^2)^{-1}
= 2k/(k^2+\alpha^2)$, and $2k/(k^2+\alpha^2) \le 1/\alpha$ is the inequality
$(k-\alpha)^2 \ge 0$. This is (ii). Part (iii) is immediate from
$\arg = 2\arctan(\alpha/k)$.
\end{proof}

Part (iii) reproduces the two degenerate limits used as independent checks in
the proof of Lemma~\ref{lem:1d-transform}: as $\alpha \to \infty$ the boundary
condition
becomes the Dirichlet one and $\reflA{\alpha}{k} \to -1$, while as
$\alpha \downarrow 0$ it becomes the Neumann one and $\reflA{\alpha}{k} \to
+1$. The grazing limit $k \downarrow 0$ at fixed $\alpha$ gives $-1$ as well,
which is the structural fact recorded in \S\ref{subsec:lap}: at the bulk
threshold the Robin problem is asymptotically Dirichlet, and that is why
comparison against the Dirichlet resolvent, rather than against the Neumann
one, produces a weight that stays bounded there.

\subsection{The Surface Channel Is a Free Channel}
\label{subsec:inv-surface-free}

The rigidity of the surface block expresses the fact that the surface channel of $\HalphaA{\alpha}$ is, after an
energy shift, the free Laplacian of the boundary hyperplane, and that the
impedance enters it only through that shift and through a unitary conjugation
in the transverse variable. Neither of the two is visible in the far field.
For $m \ge 2$ and $t > 0$ we write
\begin{equation}
  \label{eq:inv-Ffree}
  \bigl( \Ffree{m}(t)g \bigr)(\omega)
  = \frac{t^{\frac{m-2}{4}}}{\sqrt2}\,\widehat{g}\bigl( \sqrt{t}\,\omega \bigr),
  \qquad
  \widehat{g}(\xi) = (2\pi)^{-\frac m2}\int_{\mathbb{R}^m}
    e^{-ix\cdot\xi}g(x)\,dx,
  \qquad \omega \in S^{m-1},
\end{equation}
for the free generalized Fourier transform of $-\Delta$ on $\mathbb{R}^m$ at
energy $t$, the Fourier transform being normalized as in
\eqref{eq:fourier-partial}, of which it is the $m$-dimensional analogue. The
constant in \eqref{eq:inv-Ffree} is the one appearing in \eqref{eq:Fbulk-def}
for $m = n$ and in \eqref{eq:Fsurf-def} for $m = n-1$. This is the same ``one
step apart'' pattern as for the constants $c_n,c_n'$ of \eqref{eq:constants}.

\begin{lemma}[Reduction and far-field compatibility of the surface channel]
\label{lem:inv-surface-free}
For $\alpha > 0$ let
$\VsurfA{\alpha} : \Ltwo{\Rnm} \to \Ltwo{\Rnp}$ be given by
$\bigl( \VsurfA{\alpha}g \bigr)(\xpar,x_n) = g(\xpar)\,\psizeroA{\alpha}(x_n)$.
Then:
\begin{enumerate}
  \item[\textup{(i)}] $\VsurfA{\alpha}$ is an isometry with
  $\VsurfA{\alpha}^{*} = T_{\psizeroA{\alpha}}$,
  $\VsurfA{\alpha}^{*}\VsurfA{\alpha} = \mathrm{Id}_{\Ltwo{\Rnm}}$ and
  $\Ran\VsurfA{\alpha} = \HsurfA{\alpha}$, the subspace
  $\HsurfA{\alpha}$ reduces $\HalphaA{\alpha}$, and
  \begin{equation}
    \label{eq:inv-reduction}
    \VsurfA{\alpha}^{*}\,\HalphaA{\alpha}\,\VsurfA{\alpha}
    = -\Delta' - \alpha^2
    \qquad \text{on } \Ltwo{\Rnm} .
  \end{equation}
  \item[\textup{(ii)}] For every $E > -\alpha^2$,
  \begin{equation}
    \label{eq:inv-Fsurf-factorization}
    \FsurfA{\alpha}(E)
    = \Ffree{n-1}\bigl( E+\alpha^2 \bigr) \circ \VsurfA{\alpha}^{*},
  \end{equation}
  and the same identity holds for the outgoing and incoming transforms, since
  $\FsurfA{\alpha}^{(\pm)}(E) = \FsurfA{\alpha}(E)$ by \eqref{eq:Fout-def}.
  \item[\textup{(iii)}] Let $\alpha,\tilde\alpha > 0$, let $q > 0$ satisfy
  $q \ne \alpha$ and $q \ne \tilde\alpha$, and put $E = q^2 - \alpha^2$ and
  $\tilde E = q^2 - \tilde\alpha^2$, so that $E \in \EregA{\alpha}$ and
  $\tilde E \in \EregA{\tilde\alpha}$, the two surface channels carrying the
  common wavenumber $\ksurfA{\alpha} = \ksurfA{\tilde\alpha} = q$ at these
  respective energies. Then the single, energy-independent operator
  \begin{equation*}
    \Wint = \VsurfA{\tilde\alpha}\VsurfA{\alpha}^{*}
  \end{equation*}
  is unitary from $\HsurfA{\alpha}$ onto $\HsurfA{\tilde\alpha}$, satisfies
  \begin{equation}
    \label{eq:inv-intertwine-H}
    \Wint\bigl( \HalphaA{\alpha} + \alpha^2 \bigr)\big|_{\HsurfA{\alpha}}
    = \bigl( \HalphaA{\tilde\alpha} + \tilde\alpha^2 \bigr)\big|_{\HsurfA{\tilde\alpha}}\Wint ,
  \end{equation}
  and matches the far-field identifications of the two surface channels:
  \begin{equation}
    \label{eq:inv-intertwine-F}
    \FsurfA{\tilde\alpha}^{(\pm)}(\tilde E)\,\Wint
    = \FsurfA{\alpha}^{(\pm)}(E) .
  \end{equation}
\end{enumerate}
\end{lemma}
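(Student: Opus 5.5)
The lemma is almost entirely a repackaging of Proposition~\ref{prop:surface-reduction} and Lemma~\ref{lem:reflection-formula}, now with $\alpha$ displayed. The plan is to prove (i) and (ii) directly from those results, and to get (iii) by combining them. No analytic input beyond the normalization $\norm{\psizeroA{\alpha}}_{\Ltwo{\Rhalf}} = 1$ and the explicit formula \eqref{eq:Fsurf-def} is needed.

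For (i), the isometry property is Fubini together with $\norm{\psizeroA{\alpha}} = 1$: $\norm{\VsurfA{\alpha}g}^2 = \norm{g}^2\norm{\psizeroA{\alpha}}^2$. The adjoint is read off from $(\VsurfA{\alpha}g,f) = \int g(\xpar)\,\overline{\int_0^\infty \psizeroA{\alpha} f\,dx_n}\,d\xpar$, using that $\psizeroA{\alpha}$ is real. This gives $\VsurfA{\alpha}^* = T_{\psizeroA{\alpha}}$, and $\VsurfA{\alpha}^*\VsurfA{\alpha} = \mathrm{Id}$ follows again from the normalization. Since $\VsurfA{\alpha}$ coincides with the unitary $U$ of Proposition~\ref{prop:surface-reduction} regarded as a map into $\Ltwo{\Rnp}$, its range is $\HsurfA{\alpha}$. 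That proposition also gives the reducing property and $\HalphaA{\alpha}\VsurfA{\alpha} = \VsurfA{\alpha}(-\Delta'-\alpha^2)$ on $H^2(\Rnm)$. Applying $\VsurfA{\alpha}^*$ yields \eqref{eq:inv-reduction}, first on $H^2(\Rnm)$ and then as an identity of self-adjoint operators, since both sides are self-adjoint with domain $H^2(\Rnm)$.

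For (ii), \eqref{eq:Fsurf-def} and \eqref{eq:surface-formula} combine to
\[
  \FsurfA{\alpha}(E)f = \frac{(E+\alpha^2)^{\frac{n-3}{4}}}{\sqrt2}\,
  \widehat{T_{\psizeroA{\alpha}}f}\bigl(\sqrt{E+\alpha^2}\,\omega\bigr).
\]
This is exactly $\Ffree{n-1}(E+\alpha^2)\VsurfA{\alpha}^*f$ by \eqref{eq:inv-Ffree} with $m = n-1$, since $(m-2)/4 = (n-3)/4$. The identity holds first on the dense class where the formula is literal, for instance compactly supported $f \in \Bsp$. It then extends to $\Bsp$ by the continuity in Proposition~\ref{prop:F-bounded} together with the $\Bsp$-boundedness of $T_{\psizeroA{\alpha}}$ in Corollary~\ref{cor:tensor}. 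The claim for $\FsurfA{\alpha}^{(\pm)}$ is then immediate from \eqref{eq:Fout-def}.

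For (iii), unitarity of $\Wint$ from $\HsurfA{\alpha}$ onto $\HsurfA{\tilde\alpha}$ follows from (i). The map $\VsurfA{\alpha}^*$ is unitary from $\HsurfA{\alpha}$ onto $\Ltwo{\Rnm}$, and $\VsurfA{\tilde\alpha}$ is unitary from $\Ltwo{\Rnm}$ onto $\HsurfA{\tilde\alpha}$. For \eqref{eq:inv-intertwine-H}, the restriction $(\HalphaA{\alpha}+\alpha^2)|_{\HsurfA{\alpha}}$ equals $\VsurfA{\alpha}(-\Delta')\VsurfA{\alpha}^*$ by (i) and the reducing property. Conjugating, both sides of \eqref{eq:inv-intertwine-H} become $\VsurfA{\tilde\alpha}(-\Delta')\VsurfA{\alpha}^*$, with equal domains $\VsurfA{\alpha}H^2(\Rnm)$. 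For \eqref{eq:inv-intertwine-F}, apply (ii) at $\tilde\alpha$, $\tilde E$:
\[
  \FsurfA{\tilde\alpha}(\tilde E)\,\Wint
  = \Ffree{n-1}(q^2)\,\VsurfA{\tilde\alpha}^*\VsurfA{\tilde\alpha}\VsurfA{\alpha}^*
  = \Ffree{n-1}(q^2)\,\VsurfA{\alpha}^* = \FsurfA{\alpha}(E),
\]
because $\tilde E + \tilde\alpha^2 = q^2 = E + \alpha^2$. The conditions $q \ne \alpha$ and $q \ne \tilde\alpha$ serve only to place $E$ and $\tilde E$ in the non-threshold sets.

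The only point needing care is the domain on which \eqref{eq:inv-intertwine-F} is asserted. The transforms are defined on $\Bsp$, so we need $\Wint$ to map $\Bsp \cap \HsurfA{\alpha}$ into $\Bsp$. I would handle this by reading the identity on $\Bsp$ via the factorization: $\VsurfA{\alpha}^* = T_{\psizeroA{\alpha}}$ maps $\Bsp$ into $\Bspd{\Rnm}$, and $\VsurfA{\tilde\alpha} = M_{\psizeroA{\tilde\alpha}}$ needs a $\Bspd{\Rnm} \to \Bsp$ bound. That bound is a mixed-norm estimate analogous to Lemma~\ref{lem:mixed}, made easy by the exponential decay of $\psizeroA{\tilde\alpha}$. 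If it proved awkward, I would instead state \eqref{eq:inv-intertwine-F} on compactly supported data and extend it by continuity.
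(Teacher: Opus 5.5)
Your proposal is correct and follows the paper's proof essentially step for step. Part (i) is read off from Proposition~\ref{prop:surface-reduction} and the normalization of $\psizeroA{\alpha}$. Part (ii) is the Fubini identification of $T_{\psizeroA{\alpha}}f$ inside \eqref{eq:Fsurf-def}, which you cite through \eqref{eq:surface-formula}. Part (iii) is the same conjugation together with the cancellation $\VsurfA{\tilde\alpha}^{*}\VsurfA{\tilde\alpha} = \mathrm{Id}$ at the common wavenumber $q$.

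Your additional remark addresses a domain point the paper passes over silently. For \eqref{eq:inv-intertwine-F} to make sense on $\Bsp$, $\Wint$ must map $\Bsp$ into $\Bsp$. Your argument for this uses a $\Bspd{\Rnm}\to\Bsp$ bound for multiplication by the exponentially decaying $\psizeroA{\tilde\alpha}$, and that bound does hold.
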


\begin{proof}
(i) All of this is Proposition~\ref{prop:surface-reduction}, whose
$\Hsurf = \Ltwo{\Rnm}\otimes\mathbb{C}\psizero$ is exactly
$\Ran\VsurfA{\alpha}$ and whose intertwining relation
\eqref{eq:surface-reduction} is \eqref{eq:inv-reduction}. The adjoint is
$T_{\psizeroA{\alpha}}$ in the notation of Corollary~\ref{cor:tensor} because
$\psizeroA{\alpha}$ is real, and
$\VsurfA{\alpha}^{*}\VsurfA{\alpha} = \mathrm{Id}$ because
$\norm{\psizeroA{\alpha}}_{\Ltwo{\Rhalf}} = 1$. We recall why the
reduction holds, since the same computation underlies the rigidity
below: $-\psizeroA{\alpha}'' = -\alpha^2\psizeroA{\alpha}$, and for
$g \in H^2(\Rnm)$,
\begin{equation*}
  \bigl( \gamma_1 - \alpha\gamma_0 \bigr)
    \bigl( g \otimes \psizeroA{\alpha} \bigr)
  = -g(\xpar)\bigl[ \psizeroA{\alpha}'(0) + \alpha\,\psizeroA{\alpha}(0) \bigr]
  = -g(\xpar)\sqrt{2\alpha}\,\bigl[ -\alpha + \alpha \bigr]
  = 0,
\end{equation*}
by \eqref{eq:fiber-recall} and the identity
$\gamma_1 u - \alpha\gamma_0 u
= -\bigl( \partial_{x_n}u(\xpar,0) + \alpha u(\xpar,0) \bigr)$ of the
convention $\gamma_1 = \partial_\nu$, $\nu = -e_n$. The transverse profile
satisfies the Robin condition for its own $\alpha$ and for no other, which is
the whole of the impedance dependence of this channel.

(ii) By \eqref{eq:Psis-def} and the reality of $\psizeroA{\alpha}$, writing
$t = E + \alpha^2 > 0$, so that $\sqrt t = \ksurfA{\alpha}$,
\begin{equation*}
\begin{aligned}
  \bigl( \FsurfA{\alpha}(E)f \bigr)(\omega)
  &= \frac{t^{\frac{n-3}{4}}}{\sqrt2}
     \int_{\Rnp} (2\pi)^{-\frac{n-1}{2}}
       e^{-i\xpar\cdot\sqrt{t}\,\omega}\,\psizeroA{\alpha}(x_n)\,f(x)\,dx
  \\
  &= \frac{t^{\frac{n-3}{4}}}{\sqrt2}\,(2\pi)^{-\frac{n-1}{2}}
     \int_{\Rnm} e^{-i\xpar\cdot\sqrt{t}\,\omega}
       \Bigl( \int_0^{\infty}\psizeroA{\alpha}(x_n)f(\xpar,x_n)\,dx_n \Bigr)
       d\xpar
  \\
  &= \frac{t^{\frac{n-3}{4}}}{\sqrt2}\,
     \widehat{\VsurfA{\alpha}^{*}f}\bigl( \sqrt{t}\,\omega \bigr)
   = \bigl( \Ffree{n-1}(t)\,\VsurfA{\alpha}^{*}f \bigr)(\omega),
\end{aligned}
\end{equation*}
which is \eqref{eq:inv-Fsurf-factorization}. The interchange of the two
integrations is justified by Fubini's theorem on the dense class of
\eqref{eq:Fsurf-def}. The last assertion of (ii) is the fact, recorded in
\eqref{eq:Fout-def}, that the surface transform carries no phase factor,
because the surface mode is a bound state of the transverse problem and is not
reflected.

(iii) Since $q \ne \alpha$ and $q \ne \tilde\alpha$, neither $E$ nor $\tilde E$
vanishes, and $E > -\alpha^2$, $\tilde E > -\tilde\alpha^2$ because $q^2 > 0$.
Hence $E \in \EregA{\alpha}$ and $\tilde E \in \EregA{\tilde\alpha}$. By (i)
each of $\VsurfA{\alpha}$, $\VsurfA{\tilde\alpha}$ is an isometry onto
$\HsurfA{\alpha}$, $\HsurfA{\tilde\alpha}$ respectively, so
$\Wint = \VsurfA{\tilde\alpha}\VsurfA{\alpha}^{*}$ is unitary from
$\HsurfA{\alpha}$ onto $\HsurfA{\tilde\alpha}$, with inverse
$\VsurfA{\alpha}\VsurfA{\tilde\alpha}^{*}$. Relation
\eqref{eq:inv-intertwine-H} follows from \eqref{eq:inv-reduction}, both sides
being $\VsurfA{\tilde\alpha}(-\Delta')\VsurfA{\alpha}^{*}$. Finally, by
\eqref{eq:inv-Fsurf-factorization} and
$\VsurfA{\tilde\alpha}^{*}\VsurfA{\tilde\alpha} = \mathrm{Id}$, and because
$E+\alpha^2 = \tilde E+\tilde\alpha^2 = q^2$,
\begin{equation*}
  \FsurfA{\tilde\alpha}(\tilde E)\,\Wint
  = \Ffree{n-1}(q^2)\,\VsurfA{\tilde\alpha}^{*}
      \VsurfA{\tilde\alpha}\VsurfA{\alpha}^{*}
  = \Ffree{n-1}(q^2)\,\VsurfA{\alpha}^{*}
  = \FsurfA{\alpha}(E),
\end{equation*}
and the outgoing and incoming transforms coincide with these by (ii).
\end{proof}

The normalization of part (iii) deserves a word, because it is the only point
at which the two impedances have to be brought into a common frame, and the
frame is not the one a first reading suggests. The comparison is made at a
common surface \emph{wavenumber} $q$, hence at the two \emph{different}
energies $q^2-\alpha^2$ and $q^2-\tilde\alpha^2$. It is in this frame, and only
in this frame, that the surface channel is $\alpha$-independent. (The letter
$q$ is used here, and $k$ reserved for the normal wavenumber of the bulk
channel as in \eqref{eq:phase-and-reflection}. The two are wavenumbers of
different channels and are never equal to one another.) Matching the energies
instead would require comparing $\Ffree{n-1}(E+\alpha^2)$ with
$\Ffree{n-1}(E+\tilde\alpha^2)$. These are indeed related, through the dilation
$g \mapsto \lambda^{\frac{n-1}{2}}g(\lambda\,\cdot)$ of $\Ltwo{\Rnm}$ with
$\lambda^2 = (E+\tilde\alpha^2)/(E+\alpha^2)$, which is unitary. But the
resulting map of channel spaces multiplies the far-field amplitude by
$\bigl( (E+\alpha^2)/(E+\tilde\alpha^2) \bigr)^{1/2}$, so it is not an
isometry unless $\alpha = \tilde\alpha$. The wavenumber, not the energy, is the
variable in which the two surface channels are the same object. Nothing in the
next subsection depends on this choice. The statements proved there are made
for one impedance at a time. It shows, however, that the identity block is not
an artefact of the identification: the whole surface scattering system is
carried from $\alpha$ to $\tilde\alpha$ by the single operator $\Wint$.

\subsection{Rigidity of the Surface Block, and Recovery from the Bulk Block}
\label{subsec:inv-rigidity}

\begin{proposition}[Rigidity of the surface channel]
\label{prop:inv-rigidity}
Let $n \ge 3$.
\begin{enumerate}
  \item[\textup{(i)}] For every $\alpha > 0$ and every $E > 0$ the scattering
  matrix $\SmatA{\alpha}{E}$ of \eqref{eq:smatrix-def} is block diagonal with
  respect to $\Echan{E} = \Ltwo{\Sm} \oplus \Ltwo{\Ssurf}$, its off-diagonal
  blocks vanish, and
  \begin{equation}
    \label{eq:inv-surf-block}
    \SmatsurfA{\alpha}{E} = \mathrm{Id}_{\Ltwo{\Ssurf}} .
  \end{equation}
  In particular $\alpha \mapsto \SmatsurfA{\alpha}{E}$ is a constant map.
  \item[\textup{(ii)}] For every $\alpha > 0$ and every
  $E \in (-\alpha^2,0)$ one has, by
  Corollary~\ref{cor:solution-far-field},
  $\SmatA{\alpha}{E} = \mathrm{Id}_{\Ltwo{\Ssurf}}$: on the surface band the
  \emph{entire} scattering matrix is the identity. Consequently, if
  $\alpha \ne \tilde\alpha$ and $I$ is any nonempty open interval with
  $I \subset \bigl( -\min(\alpha,\tilde\alpha)^2,\,0 \bigr)$, then
  \begin{equation}
    \label{eq:inv-nonuniqueness}
    \SmatA{\alpha}{E} = \SmatA{\tilde\alpha}{E}
    \qquad \text{for every } E \in I .
  \end{equation}
  No functional of the surface-band scattering data on $I$, and no analytic
  continuation of it in the energy, can distinguish $\alpha$ from
  $\tilde\alpha$.
  \item[\textup{(iii)}] By contrast, fix any $E > 0$. Then
  $\SmatbulkA{\alpha}{E}$ is the operator of multiplication by
  $\theta \mapsto \reflA{\alpha}{\kbulk\theta_n}$, and for each fixed
  $\theta \in \Sm$ with $\theta_n > 0$ the map
  $\alpha \mapsto \reflA{\alpha}{\kbulk\theta_n}$ is injective on
  $(0,\infty)$. Hence $\alpha \mapsto \SmatbulkA{\alpha}{E}$ is injective,
  and $\SmatA{\alpha}{E} = \SmatA{\tilde\alpha}{E}$ for a single $E > 0$
  already forces $\alpha = \tilde\alpha$.
\end{enumerate}
\end{proposition}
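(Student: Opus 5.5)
The plan is to read off all three parts from Corollary~\ref{cor:solution-far-field} and Lemma~\ref{lem:inv-reflection}, with Lemma~\ref{lem:inv-surface-free} serving only as the structural explanation. No new analysis should be needed; the work is to make sure that the scattering matrix, as defined through $\yout = \Smat(E)\Jchan\yin$, is a well-defined object whose blocks can be compared across impedances.

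For (i), I fix $\alpha>0$ and $E>0$ and invoke Corollary~\ref{cor:solution-far-field}. Since $u\mapsto\yin$ is a bijection of $\Nsol{E}$ onto $\Echan{E}$, the operator $\SmatA{\alpha}{E}$ is uniquely determined by the relation $\yout=\SmatA{\alpha}{E}\Jchan\yin$. By \eqref{eq:char-smatrix-relation} this relation holds with $\Smat_{\mathrm{bulk}}$ acting on the bulk component and the identity acting on the surface component. Hence the off-diagonal blocks vanish and the surface block is $\mathrm{Id}_{\Ltwo{\Ssurf}}$. I note that the identity comes from $\Fsurf^{(+)}=\Fsurf^{(-)}$, as recorded in \eqref{eq:smatrix-germ}. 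This involves $\alpha$ nowhere, so the map $\alpha\mapsto\SmatsurfA{\alpha}{E}$ is constant. I would add Lemma~\ref{lem:inv-surface-free}(iii) as the intrinsic reason: the intertwiner $\Wint$ carries the surface data of $\alpha$ to those of $\tilde\alpha$ at a common wavenumber. Since both $\FsurfA{\alpha}^{(\pm)}$ are intertwined simultaneously, the surface blocks must coincide.

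For (ii), on the band $E\in(-\alpha^2,0)$ the channel space is $\Ltwo{\Ssurf}$ alone, and the corollary gives $\SmatA{\alpha}{E}=\mathrm{Id}$ outright. If $I\subset(-\min(\alpha,\tilde\alpha)^2,0)$, then $I$ lies in both bands, so both matrices equal the same operator on the same space $\Ltwo{\Ssurf}$. This is independent of the impedance because $\Ssurf$ and its $L^2$ space do not depend on it. Any functional of the data, or analytic continuation in $E$, is then that of a constant function, which gives the claimed non-uniqueness.

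For (iii), I fix $E>0$. The bulk block is multiplication by $\reflA{\alpha}{\kbulk\theta_n}$, and $\kbulk$ is $\alpha$-independent. For $\theta_n>0$, set $k=\kbulk\theta_n>0$. Lemma~\ref{lem:inv-reflection}(i) then gives injectivity of $\alpha\mapsto\reflA{\alpha}{k}$ together with the explicit inverse \eqref{eq:inv-alpha-from-R}. Equality of the two multiplication operators forces the multipliers to agree almost everywhere on $\Sm$. Since these multipliers are continuous in $\theta$, they agree at some, hence at every, point with $\theta_n>0$, and injectivity gives $\alpha=\tilde\alpha$.

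The main point requiring care is this passage from equality of operators to equality of functions at a point, which I handle by continuity. A second point is that in (i) and (ii) the comparison of $\SmatA{\alpha}{E}$ with $\SmatA{\tilde\alpha}{E}$ is meaningful because $\Echan{E}$ is literally the same space for both impedances. No wavenumber matching is needed there, since the statements are made one impedance at a time.
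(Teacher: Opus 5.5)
Your proposal is correct and follows the paper's proof essentially step by step. Parts (i) and (ii) are read off from \eqref{eq:smatrix-def} via Corollary~\ref{cor:solution-far-field}, with Lemma~\ref{lem:inv-surface-free} explaining why the surface block carries no $\alpha$, and (iii) reduces to Lemma~\ref{lem:inv-reflection}(i) once a.e.\ equality of the multipliers gives equality at some $\theta$ with $\theta_n>0$. The paper attributes the vanishing of the off-diagonal blocks to the channel orthogonality of Proposition~\ref{prop:char-channel-orth}, whereas you derive it from the uniqueness of $\Smat(E)$ forced by the bijectivity of $u\mapsto\yin$; both work, and your continuity step in (iii) is harmless but not needed.
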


\begin{proof}
(i) The block structure and \eqref{eq:inv-surf-block} are
\eqref{eq:smatrix-def}, in which the surface summand of the scattering matrix
is the identity. That this is a statement about $\alpha$, and not about the
identification of the channel space, is Lemma~\ref{lem:inv-surface-free}. Indeed, by
\eqref{eq:inv-Fsurf-factorization} the surface far-field identification
$\FsurfA{\alpha}^{(\pm)}(E)$ factors through the isometry
$\VsurfA{\alpha}^{*}$, which depends on $\alpha$, and through a member of the
family $\Ffree{n-1}$ of free transforms of the boundary hyperplane. That
family is defined without reference to the impedance, and it enters here only
through the value $E+\alpha^2$ of its argument. Since
$\FsurfA{\alpha}^{(+)} = \FsurfA{\alpha}^{(-)}$, the incoming and outgoing
amplitudes of a surface wave are computed by one and the same map, so by
\eqref{eq:ydata-surf} they satisfy $\yout = \Jsurf\yin$ and, in the
normalization \eqref{eq:far-field-isometry}, the surface block is the
identity. The vanishing of the off-diagonal blocks is
Proposition~\ref{prop:char-channel-orth} together with
\eqref{eq:smatrix-def}: the two transverse profiles are exactly orthogonal, and
a flat boundary converts neither channel into the other.

(ii) The first assertion is the second line of \eqref{eq:smatrix-def}. For the
second, let $I \subset (-\min(\alpha,\tilde\alpha)^2,0)$ be a nonempty open
interval. For each $E \in I$ we have $-\alpha^2 < E < 0$ and
$-\tilde\alpha^2 < E < 0$,
so $\Echan{E} = \Ltwo{\Ssurf}$ for both impedances and both scattering
matrices equal $\mathrm{Id}_{\Ltwo{\Ssurf}}$, whence
\eqref{eq:inv-nonuniqueness}. The two data sets are equal and, moreover,
constant in $E$. Every quantity computed from
$\bigl\{ \SmatA{\alpha}{E} : E \in I \bigr\}$ is therefore the same for
$\alpha$ and $\tilde\alpha$. That includes any derivative in $E$ and any
continuation to a complex neighbourhood.

(iii) That the bulk block is multiplication by
$\reflA{\alpha}{\kbulk\theta_n}$ is \eqref{eq:smatrix-def}. Fix $\theta$ with
$\theta_n > 0$ and put $k = \kbulk\theta_n$. Then $k > 0$ because $E > 0$, and
Lemma~\ref{lem:inv-reflection}(i) applied with this $k$ gives the injectivity
of $\alpha \mapsto \reflA{\alpha}{k}$. If
$\SmatA{\alpha}{E} = \SmatA{\tilde\alpha}{E}$ for some $E > 0$, then the two
multipliers agree for almost every $\theta \in \Sm$, hence for some $\theta$
with $\theta_n > 0$, and $\alpha = \tilde\alpha$ follows.
\end{proof}

\begin{corollary}[Recovery of the impedance from the bulk block]
\label{cor:inv-alpha}
Let $E > 0$, let $\theta \in \Sm$ satisfy $\theta_n > 0$, and put
$k = \sqrt{E}\,\theta_n > 0$. Write
\begin{equation*}
  \vartheta(\alpha) = \arg \SmatbulkA{\alpha}{E}(\theta) \in (0,\pi),
  \qquad
  \SmatbulkA{\alpha}{E}(\theta) = \reflA{\alpha}{k},
\end{equation*}
for the phase of the multiplier of \eqref{eq:smatrix-def} at $\theta$. The
argument lies in $(0,\pi)$, so no branch has to be chosen. Then:
\begin{enumerate}
  \item[\textup{(i)}] The impedance is recovered from that single matrix
  element by
  \begin{equation}
    \label{eq:inv-reconstruction}
    \alpha = \Arec{k}\bigl( \vartheta(\alpha) \bigr),
    \qquad
    \Arec{k}(\vartheta) := k\,\tan\bigl( \tfrac12\vartheta \bigr),
  \end{equation}
  and $\Arec{k} : (0,\pi) \to (0,\infty)$ is a real-analytic increasing
  bijection, inverse to $\alpha \mapsto \vartheta(\alpha)$, with
  \begin{equation}
    \label{eq:inv-condition-number}
    \Arec{k}'(\vartheta)
    = \frac k2\Bigl( 1 + \tan^2\bigl( \tfrac12\vartheta \bigr) \Bigr)
    = \frac{k^2 + \Arec{k}(\vartheta)^2}{2k}
    \;\ge\; \Arec{k}(\vartheta),
  \end{equation}
  with equality exactly when $\Arec{k}(\vartheta) = k$.
  \item[\textup{(ii)}] \textup{(Stability.)} Let $\alpha,\tilde\alpha > 0$ and
  put $m = \min(\alpha,\tilde\alpha)$, $M = \max(\alpha,\tilde\alpha)$. Then
  \begin{equation}
    \label{eq:inv-bilipschitz}
    \frac{k^2+m^2}{2k}\,
      \bigl| \vartheta(\alpha) - \vartheta(\tilde\alpha) \bigr|
    \;\le\; \bigl| \alpha - \tilde\alpha \bigr|
    \;\le\; \frac{k^2+M^2}{2k}\,
      \bigl| \vartheta(\alpha) - \vartheta(\tilde\alpha) \bigr| .
  \end{equation}
  In particular $\Arec{k}$ is Lipschitz on
  $\Arec{k}^{-1}\bigl( (0,M] \bigr)$ with constant $(k^2+M^2)/(2k)$, and that
  constant is smallest, for given $M$, when $k = M$. It is not Lipschitz on
  all of $(0,\pi)$: the constant grows like $M^2/(2k)$ as $M \to \infty$.
\end{enumerate}
\end{corollary}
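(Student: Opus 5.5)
The plan is to reduce everything to Lemma~\ref{lem:inv-reflection}, applied with the fixed normal wavenumber $k = \sqrt{E}\,\theta_n > 0$. By \eqref{eq:smatrix-def} the multiplier of the bulk block at $\theta$ is $\reflA{\alpha}{k}$. Lemma~\ref{lem:inv-reflection}(i) gives $\vartheta(\alpha) = \arg\reflA{\alpha}{k} = 2\arctan(\alpha/k) \in (0,\pi)$. So the branch remark is already proved, and \eqref{eq:inv-reconstruction} is just \eqref{eq:inv-alpha-from-R} rewritten with $\Arec{k}(\vartheta) = k\tan(\vartheta/2)$.

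For part~(i), the map $\Arec{k}$ is the composition of $\vartheta \mapsto \vartheta/2$, a bijection of $(0,\pi)$ onto $(0,\pi/2)$, with $k\tan$, which is real-analytic and strictly increasing from $(0,\pi/2)$ onto $(0,\infty)$. It is therefore a real-analytic increasing bijection. It is inverse to $\alpha \mapsto \vartheta(\alpha)$ by Lemma~\ref{lem:inv-reflection}(i). Differentiating gives $\Arec{k}' = \tfrac{k}{2}\sec^2(\vartheta/2) = \tfrac{k}{2}\bigl(1+\tan^2(\vartheta/2)\bigr) = (k^2+\Arec{k}^2)/(2k)$. The inequality $(k^2+a^2)/(2k) \ge a$ is $(k-a)^2 \ge 0$, with equality exactly when $a = k$. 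This mirrors \eqref{eq:inv-sensitivity}: the derivative of the inverse is the reciprocal of $2k/(k^2+\alpha^2)$.

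For part~(ii), apply the mean value theorem to $\Arec{k}$ between $\vartheta(\alpha)$ and $\vartheta(\tilde\alpha)$. This gives $|\alpha-\tilde\alpha| = \Arec{k}'(\vartheta_*)\,|\vartheta(\alpha)-\vartheta(\tilde\alpha)|$ for some intermediate $\vartheta_*$. Since $\Arec{k}$ is increasing, $\Arec{k}(\vartheta_*)$ lies between $m$ and $M$. The function $a \mapsto (k^2+a^2)/(2k)$ is increasing in $a>0$, so the derivative factor lies in $[(k^2+m^2)/(2k),\,(k^2+M^2)/(2k)]$, which yields \eqref{eq:inv-bilipschitz}. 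The degenerate case $\alpha=\tilde\alpha$ is trivial.

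The Lipschitz statement on $\Arec{k}^{-1}((0,M])$ follows from the same bound with both points in that interval. For fixed $M$, minimizing $(k^2+M^2)/(2k) = k/2 + M^2/(2k)$ over $k$ gives $k = M$. Non-Lipschitz behaviour on all of $(0,\pi)$ follows from $\Arec{k}' \to \infty$ as $\vartheta \uparrow \pi$, equivalently from $(k^2+M^2)/(2k) \sim M^2/(2k)$. There is no genuine obstacle here. The only point needing care is ordering the monotonicity facts correctly, so that the intermediate value $\Arec{k}(\vartheta_*)$ is bracketed by $m$ and $M$.
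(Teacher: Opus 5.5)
Your proof is correct and is essentially the paper's argument: part (i) is read off from Lemma~\ref{lem:inv-reflection}(i) together with $\tan(\vartheta/2)=\Arec{k}(\vartheta)/k$, and part (ii) is the mean value theorem. The only difference is cosmetic. You apply the mean value theorem to $\Arec{k}$, bracketing $\Arec{k}(\vartheta_*)$ between $m$ and $M$ and using that $(k^2+a^2)/(2k)$ increases in $a$, whereas the paper applies it to $\alpha\mapsto\vartheta(\alpha)$ with the derivative $2k/(k^2+\alpha_*^2)$ from \eqref{eq:inv-sensitivity} and then inverts; the two routes are equivalent.
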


\begin{proof}
By Lemma~\ref{lem:inv-reflection}(i) with this $k$ one has
$\vartheta(\alpha) = 2\arctan(\alpha/k)$, which is a real-analytic increasing
bijection of $(0,\infty)$ onto $(0,\pi)$. Solving for $\alpha$ gives
\eqref{eq:inv-reconstruction}, and differentiating
$\Arec{k}(\vartheta) = k\tan(\vartheta/2)$ gives the first equality in
\eqref{eq:inv-condition-number}. The second follows from
$\tan(\vartheta/2) = \Arec{k}(\vartheta)/k$, and the inequality
$(k^2+\alpha^2)/(2k) \ge \alpha$ is $(k-\alpha)^2 \ge 0$. Equivalently,
\eqref{eq:inv-condition-number} is the reciprocal of
\eqref{eq:inv-sensitivity}. This is (i).

For (ii), the mean value theorem applied to
$\alpha \mapsto \vartheta(\alpha)$ on the interval with endpoints
$\alpha,\tilde\alpha$ gives, by \eqref{eq:inv-sensitivity}, an
$\alpha_* \in [m,M]$ with
\begin{equation*}
  \vartheta(\alpha) - \vartheta(\tilde\alpha)
  = \frac{2k}{k^2+\alpha_*^2}\,\bigl( \alpha - \tilde\alpha \bigr).
\end{equation*}
Since $\alpha \mapsto 2k/(k^2+\alpha^2)$ is strictly decreasing on
$(0,\infty)$, its value at $\alpha_*$ lies between $2k/(k^2+M^2)$ and
$2k/(k^2+m^2)$, and \eqref{eq:inv-bilipschitz} follows on taking absolute
values and inverting. If $\alpha,\tilde\alpha \le M$ then $M$ may be replaced
by the given bound, which is the Lipschitz assertion. The function
$k \mapsto (k^2+M^2)/(2k)$ attains its minimum $M$ at $k = M$.
\end{proof}

\begin{remark}[Stability of the reconstruction]
\label{rem:inv-applied}
A measurement of the phase in \eqref{eq:inv-reconstruction} to within $\eps$
determines $\alpha$ to within $(k^2+M^2)\eps/(2k)$, once an a priori bound
$\alpha \le M$ is available. This is smallest for $k = \sqrt{E}\theta_n = M$,
and by the left-hand inequality in \eqref{eq:inv-bilipschitz} the estimate
cannot be improved in order. The Lipschitz constant $(k^2+M^2)/(2k)$ blows up
both as $k \downarrow 0$ and as $k \to \infty$, in accordance with the
degeneration of the reconstruction at both ends of the range: the reflection
coefficient tends to $-1$ at grazing incidence and to $+1$ at normal incidence
as $E \to \infty$ (Lemma~\ref{lem:inv-reflection}(iii)).
\end{remark}

\begin{remark}[Why propagation data see the impedance and scattering data do
not]
\label{rem:inv-propagation}
Proposition~\ref{prop:inv-rigidity}(ii) does not contradict the fact that a
surface wave depends visibly on $\alpha$: each of
\begin{equation}
  \label{eq:inv-propagation-data}
  \psizeroA{\alpha}(x_n) = \sqrt{2\alpha}\,e^{-\alpha x_n},
  \qquad
  \inf\sigma\bigl( \HalphaA{\alpha} \bigr) = -\alpha^2,
\end{equation}
determines $\alpha$, through the penetration depth of the transverse profile
and through the position of the surface threshold respectively. Neither is
scattering data. The scattering matrix is defined after the free propagation of
each channel has been divided out, and by Lemma~\ref{lem:inv-surface-free} the
quantities \eqref{eq:inv-propagation-data} are what that division removes.

The dispersion relation belongs to the same list. At an energy $E > 0$,
\begin{equation*}
  \alpha^2 = \ksurfA{\alpha}^2 - E
  = \bigl( \ksurfA{\alpha} - \kbulk \bigr)
    \bigl( \ksurfA{\alpha} + \kbulk \bigr),
\end{equation*}
so the impedance is determined by the gap between the two channel wavenumbers
rather than by either channel alone: reading $\alpha$ off $\ksurfA{\alpha}$
requires the origin of the energy axis, which is fixed by the bulk threshold.
Within the surface channel the shift $-\alpha^2$ is absorbed into the spectral
parameter, which is the content of \eqref{eq:inv-intertwine-H}. Thus $\alpha$
is determined by the domain $\EregA{\alpha}$ of the scattering matrix and by
the values of its bulk block, but not by the values of its surface block.
\end{remark}

\begin{remark}[Unitary equivalence and the far-field identification]
\label{rem:inv-unitary-trap}
Lemma~\ref{lem:inv-surface-free}(iii) should not be summarized as ``the two
surface channels are unitarily equivalent, hence indistinguishable''. By
\eqref{eq:bulk-part} the bulk parts $\HalphaA{\alpha}\big|_{\HbulkA{\alpha}}$
and $\HalphaA{\tilde\alpha}\big|_{\HbulkA{\tilde\alpha}}$ are also unitarily
equivalent, both to multiplication by $|\xipar|^2+k^2$ on
$\Ltwo{\Rnm\times\Rhalf}$, and yet the bulk block determines $\alpha$ by
Proposition~\ref{prop:inv-rigidity}(iii). What distinguishes the two channels
is the compatibility of an intertwiner with the far-field identification. For
the surface channel a single, energy-independent intertwiner matches the
incoming \emph{and} the outgoing amplitude maps, by
\eqref{eq:inv-intertwine-F}. For the bulk channel no unitary can do so: by
\eqref{eq:Fout-def} the reflection coefficient is the ratio of the outgoing to
the incoming bulk transform, so such an intertwiner would force
$\reflA{\alpha}{\cdot} = \reflA{\tilde\alpha}{\cdot}$, hence
$\alpha = \tilde\alpha$ by Lemma~\ref{lem:inv-reflection}. The full operators
$\HalphaA{\alpha}$ and $\HalphaA{\tilde\alpha}$ are never unitarily equivalent
for $\alpha \ne \tilde\alpha$, since their spectra $[-\alpha^2,\infty)$ and
$[-\tilde\alpha^2,\infty)$ differ.
\end{remark}

\begin{remark}[On the convention for the antipodal map]
\label{rem:inv-convention}
The statement $\SmatsurfA{\alpha}{E} = \mathrm{Id}$ refers to the
normalization \eqref{eq:far-field-isometry},
$\yout = \Smat(E)\,\Jchan\,\yin$, in which the antipodal map
$\Jchan = \Jbulk \oplus \Jsurf$ of the free propagation is factored out
separately. In the convention that omits $\Jchan$, the surface block is
$\Jsurf$, $(\Jsurf G)(\omega) = G(-\omega)$. Nothing in
Proposition~\ref{prop:inv-rigidity} depends on the choice: $\Jsurf$ is
likewise independent of $\alpha$, being the free relation between incoming and
outgoing amplitudes on $\Rnm$, and the rigidity statement is that the surface
block does not depend on the impedance, whichever of the two fixed operators
one calls it.
\end{remark}

Part~I has developed the direct scattering theory of $\Halpha$ in full:
self-adjointness and spectral structure, the limiting absorption principle
in $\Bsp$--$\Bst$, the generalized eigenfunction expansion, the asymptotic
expansion of the resolvent, the characterization of $\Bst$-solutions, and a
Rellich-type uniqueness theorem whose hypotheses are imposed only near
infinity. Proposition~\ref{prop:inv-rigidity} draws from this theory the one
consequence that bears on the inverse problem for a flat boundary. The two
channels behave asymmetrically. The bulk block determines $\alpha$ from a
single energy and direction, and the surface block determines nothing at all.
This asymmetry is not a defect of the flat geometry but its most informative
feature. Because the flat surface--surface block is the identity for
\emph{every} $\alpha > 0$, it provides a baseline against which a perturbation
of the boundary can be read: any departure from the identity is attributable
to the perturbation and not to the impedance. Part~II will treat such
perturbations, a compactly supported deformation of $\Gamma$, an obstacle, or
both. There nothing forces the two channels to decouple.

%% file: Part1_a1_app_selfadjoint.tex
Throughout this appendix $\gamma_0 : H^1(\Rnp) \to H^{1/2}(\Gamma)$ denotes the
Dirichlet trace and $\gamma_1 = \partial_\nu = -\partial_{x_n}$ the Neumann
trace on $H^2(\Rnp)$. Recall that $\gamma_0$ is bounded and surjective, so that
$\gamma_0\bigl(H^1(\Rnp)\bigr) = H^{1/2}(\Gamma)$ is dense in $\Ltwo{\Gamma}$
\cite[Ch.~1]{Grisvard1985}. Nothing below requires more than $n \ge 2$, which
is the
hypothesis of Theorem~\ref{thm:sa-spec}. For the places where the standing
assumption $n \ge 3$ of the body of the paper is used, see
Remark~\ref{rem:standing-dimension}.

All three parts of Theorem~\ref{thm:sa-spec} are known.
Remark~\ref{rem:literature} gives the references and the reasons for proving
them here in full, and two of those reasons govern what follows. The trace
inequality \eqref{eq:trace-ineq} produces the sharp constant $-\alpha^2$
directly, and its one-dimensional form is used again in \S\ref{app:A2}. The
diagonalization of \S\ref{app:A3} fixes the fiber objects
$\bigl(\hfib(\xipar),\psizero,\gphi_k\bigr)$ in the normalisation used
throughout the paper. The four subsections prove, in order, part~(a), the fiber
decomposition of Proposition~\ref{prop:fibration}, the explicit spectral
transform of the fiber operator, and finally parts~(b) and~(c).

\subsection{The Quadratic Form: Semiboundedness and Self-Adjointness}
\label{app:A1}

\subsubsection*{A sharp trace inequality}

For $u \in C_c^\infty(\overline{\Rnp})$ and any $\eps > 0$,
\begin{equation}
  \label{eq:trace-ineq}
  \norm{\gamma_0 u}_{\Ltwo{\Gamma}}^2
  \le \eps\,\norm{\partial_{x_n} u}_{\Ltwo{\Rnp}}^2
     + \eps^{-1}\,\norm{u}_{\Ltwo{\Rnp}}^2 .
\end{equation}
Indeed, since $u(\xpar, x_n)$ vanishes for large $x_n$,
\begin{equation*}
  |u(\xpar,0)|^2
  = -\int_0^\infty \partial_{x_n} |u(\xpar,x_n)|^2 \, dx_n
  = -2\operatorname{Re} \int_0^\infty
        \overline{u}\,\partial_{x_n} u \, dx_n .
\end{equation*}
Integrating over $\xpar \in \Rnm$ and applying the Cauchy--Schwarz and Young
inequalities,
\begin{equation*}
  \norm{\gamma_0 u}_{\Ltwo{\Gamma}}^2
  \le 2\,\norm{u}_{\Ltwo{\Rnp}}\,\norm{\partial_{x_n} u}_{\Ltwo{\Rnp}}
  \le \eps\,\norm{\partial_{x_n} u}_{\Ltwo{\Rnp}}^2
     + \eps^{-1}\,\norm{u}_{\Ltwo{\Rnp}}^2 ,
\end{equation*}
which is \eqref{eq:trace-ineq}. By density \eqref{eq:trace-ineq} extends to all
$u \in H^1(\Rnp)$. Since $\norm{\partial_{x_n} u} \le \norm{\nabla u}$ and
$\eps>0$ is arbitrary, the boundary form
$q[u] := \norm{\gamma_0 u}_{\Ltwo{\Gamma}}^2$ is \emph{infinitesimally
form-bounded} with respect to the closed nonnegative form
$u \mapsto \norm{\nabla u}_{\Ltwo{\Rnp}}^2$ on $H^1(\Rnp)$.

\subsubsection*{Semiboundedness}

Choosing $\eps = 1/\alpha$ in \eqref{eq:trace-ineq} gives, for every
$u \in H^1(\Rnp)$,
\begin{equation}
  \label{eq:sharp-bound}
  \hform[u]
  = \norm{\nabla u}^2 - \alpha\, q[u]
  \ge \norm{\nabla u}^2 - \norm{\partial_{x_n} u}^2 - \alpha^2\,\norm{u}^2
  \ge -\alpha^2\,\norm{u}^2 ,
\end{equation}
which is \eqref{eq:lower-bound}. That the constant $-\alpha^2$ is optimal will
follow from \eqref{eq:spectrum}, proved in \S\ref{app:A4}.

\subsubsection*{Self-adjointness}

By the preceding paragraphs $-\alpha\,q$ is an infinitesimally form-bounded
perturbation of the closed nonnegative Neumann form
$u \mapsto \norm{\nabla u}^2$ on $H^1(\Rnp)$. By the KLMN theorem
\cite[Thm.~X.17]{ReedSimon2}, the form $\hform$ is therefore closed, symmetric
and bounded from below by $-\alpha^2$ on the form domain $H^1(\Rnp)$, and it
defines a unique self-adjoint operator $A$ with $A \ge -\alpha^2$. It remains to
identify $A$ with $\Halpha$.

Let $u \in \Dom(A)$ and put $f = Au$, so that
\begin{equation}
  \label{eq:weak}
  \hform[u,v] = (f, v)_{\Ltwo{\Rnp}}
  \qquad \text{for all } v \in H^1(\Rnp) .
\end{equation}
Testing \eqref{eq:weak} against $v \in C_c^\infty(\Rnp)$ yields $-\Delta u = f$
in the sense of distributions, so that $-\Delta u \in \Ltwo{\Rnp}$. Since the
boundary $\Gamma = \{x_n = 0\}$ is flat and the coefficients are constant,
elliptic regularity up to the boundary applies in its simplest form. The
half-space is invariant under translations in $\xpar$, so difference quotients
in the tangential directions may be used freely, and they give
$\nabla' u \in H^1(\Rnp)$. The equation then supplies the one remaining second
derivative, $\partial_{x_n}^2 u = -\Delta' u - f \in \Ltwo{\Rnp}$. Hence
$u \in H^2(\Rnp)$ (cf.\ \cite[Thm.~4.18(ii)]{McLean2000}). For $u \in H^2(\Rnp)$ and
$v \in H^1(\Rnp)$, Green's first identity reads
\begin{equation*}
  (\nabla u, \nabla v)
  = (-\Delta u, v) + (\gamma_1 u, \gamma_0 v)_{\Ltwo{\Gamma}} ,
\end{equation*}
so that, subtracting $\alpha\,(\gamma_0 u, \gamma_0 v)_{\Ltwo{\Gamma}}$,
\begin{equation*}
  \hform[u,v]
  = (-\Delta u, v)
    + (\gamma_1 u - \alpha\,\gamma_0 u,\; \gamma_0 v)_{\Ltwo{\Gamma}} .
\end{equation*}
Comparing with \eqref{eq:weak} and using $-\Delta u = f$ we obtain
$(\gamma_1 u - \alpha\gamma_0 u, \gamma_0 v)_{\Ltwo{\Gamma}} = 0$ for all
$v \in H^1(\Rnp)$, and since $\gamma_0\bigl(H^1(\Rnp)\bigr)$ is dense in
$\Ltwo{\Gamma}$ this forces $\gamma_1 u - \alpha\gamma_0 u = 0$. Hence
$u \in \Dom(\Halpha)$ and $Au = -\Delta u = \Halpha u$, i.e.\ $A \subseteq
\Halpha$. The reverse inclusion $\Dom(\Halpha) \subseteq \Dom(A)$ is immediate
from the same Green identity, so $A = \Halpha$. This proves
Theorem~\ref{thm:sa-spec}(a). \qed

\subsection{Fiber Decomposition}
\label{app:A2}

Since $\Halpha$ is invariant under translations in $\xpar$, the partial Fourier
transform $\mathcal{F}'$ of \S\ref{sec:fourier} is the natural tool. Under the
identification $\Ltwo{\Rnp} \cong \Ltwo{\Rnm; \Ltwo{\Rhalf}}$, the map
$\mathcal{F}'$ is unitary onto the constant-fiber direct integral
$\int_{\Rnm}^{\oplus} \Ltwo{\Rhalf}\, d\xipar$. It intertwines $-\Delta'$ with
multiplication by $|\xipar|^2$, and it commutes with $\partial_{x_n}$ and with
the traces $\gamma_0, \gamma_1$, since these involve the normal variable alone
and leave $\xpar$ untouched. Hence it preserves the Robin condition fiberwise.
Consequently $\mathcal{F}'\Halpha(\mathcal{F}')^{-1}$ acts on the fiber over
$\xipar$ as the half-line operator $\hfib(\xipar)$ of \eqref{eq:fibration} with
the domain \eqref{eq:fiber-domain}.

Each $\hfib(\xipar)$ is self-adjoint on $\Ltwo{\Rhalf}$: this is the
one-dimensional case of \S\ref{app:A1}. The trace inequality on $\Rhalf$,
\begin{equation*}
  |v(0)|^2 \le \eps\,\norm{v'}_{\Ltwo{\Rhalf}}^2
             + \eps^{-1}\,\norm{v}_{\Ltwo{\Rhalf}}^2
  \qquad \bigl(v \in H^1(\Rhalf),\ \eps > 0\bigr),
\end{equation*}
is proved exactly as \eqref{eq:trace-ineq}, and the argument of \S\ref{app:A1}
identifies the operator associated with the form
$v \mapsto \norm{v'}^2 + |\xipar|^2\norm{v}^2 - \alpha|v(0)|^2$ on $H^1(\Rhalf)$
with $\hfib(\xipar)$. The family $\xipar \mapsto \hfib(\xipar)$ is measurable,
so \eqref{eq:fibration} holds as a direct integral of self-adjoint operators
\cite[\S XIII.16]{ReedSimon4}. The spectral assertions
\eqref{eq:fiber-spectrum}--\eqref{eq:psi0} are proved in \S\ref{app:A3}. \qed

\subsection{The Half-Line Robin Operator: Explicit Spectral Transform}
\label{app:A3}

Since $\hfib(\xipar) = h_0 + |\xipar|^2$ with
\begin{equation}
  \label{eq:h0}
  h_0 = -\frac{d^2}{dt^2},
  \qquad
  \Dom(h_0)
  = \bigl\{\, v \in H^2(\Rhalf) \;\bigm|\; v'(0) + \alpha\,v(0) = 0 \,\bigr\},
\end{equation}
it suffices to diagonalize $h_0$. The term $|\xipar|^2$ then shifts the spectrum
rigidly. We use the variable $t = x_n > 0$.

\begin{lemma}[Spectral transform of $h_0$]
\label{lem:1d-transform}
Let $\alpha > 0$ and let $h_0$ be as in \eqref{eq:h0}. Put
\begin{equation}
  \label{eq:1d-eigenfunctions}
  \psizero(t) = \sqrt{2\alpha}\, e^{-\alpha t},
  \qquad
  \gphi_k(t) = \sqrt{\frac{2}{\pi}}\;
    \frac{k\cos(kt) - \alpha\sin(kt)}{\sqrt{k^2 + \alpha^2}}
  \qquad (k > 0).
\end{equation}
Then the following hold.
\begin{enumerate}
  \item[\textup{(i)}] $\psizero \in \Dom(h_0)$, $\norm{\psizero}_{\Ltwo{\Rhalf}} = 1$
  and $h_0\psizero = -\alpha^2\psizero$. Moreover $-\alpha^2$ is the only
  eigenvalue of $h_0$, and it is simple.
  \item[\textup{(ii)}] For every $k > 0$ the function $\gphi_k$ is a bounded
  solution of $-\gphi_k'' = k^2\gphi_k$ on $\Rhalf$ satisfying the Robin
  condition $\gphi_k'(0) + \alpha\,\gphi_k(0) = 0$, and
  $(\gphi_k, \psizero)_{\Ltwo{\Rhalf}} = 0$ for every $k > 0$.
  \item[\textup{(iii)}] The transform
  \begin{equation}
    \label{eq:1d-transform}
    (\mathcal{U}v)(k) := \int_0^\infty \gphi_k(t)\, v(t)\, dt
  \end{equation}
  extends from $C_c^\infty(\Rhalf)$ to a bounded operator on $\Ltwo{\Rhalf}$
  with kernel $\mathbb{C}\psizero$, and its restriction
  $\mathcal{U}|_{\{\psizero\}^{\perp}}$ is unitary onto
  $\Ltwo{\Rhalf, dk}$. It diagonalizes $h_0$ there:
  \begin{equation}
    \label{eq:1d-diag}
    \bigl(\mathcal{U} h_0 v\bigr)(k) = k^2\,(\mathcal{U}v)(k),
    \qquad v \in \Dom(h_0) \cap \{\psizero\}^{\perp}.
  \end{equation}
\end{enumerate}
Consequently the orthogonal decomposition
$\Ltwo{\Rhalf} = \mathbb{C}\psizero \oplus \{\psizero\}^{\perp}$ reduces $h_0$.
The restriction of $h_0$ to $\{\psizero\}^{\perp}$ is purely absolutely
continuous with spectrum $[0,\infty)$, and
\begin{equation}
  \label{eq:h0-spectrum}
  \sigma(h_0) = \{-\alpha^2\} \cup [0,\infty) .
\end{equation}
\end{lemma}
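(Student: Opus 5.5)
The plan is to treat $h_0$ as a Sturm--Liouville operator on the half-line with a regular endpoint at $0$ and a limit-point endpoint at $\infty$. The transform $\mathcal{U}$ is obtained from the Weyl--Titchmarsh formula applied to the explicit Green kernel already computed in Lemma~\ref{lem:fiber-green}, specialised to $\xipar = 0$.

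Parts (i) and (ii) are direct computations. For (i), $\psizero'(0)+\alpha\psizero(0) = \sqrt{2\alpha}(-\alpha+\alpha)=0$, $\psizero''=\alpha^2\psizero$ and $\int_0^\infty 2\alpha e^{-2\alpha t}\,dt=1$. Uniqueness and simplicity of the eigenvalue follow from the computation in Proposition~\ref{prop:eigenfunctions}(iii). An $L^2$ eigenfunction with eigenvalue $\lambda\ge 0$ would be trigonometric or affine and hence not square integrable unless zero. For $\lambda=-\kappa^2<0$ it must be a multiple of $e^{-\kappa t}$, and the Robin condition forces $\kappa=\alpha$. For (ii), $\gphi_k'(0)+\alpha\gphi_k(0)\propto k\cdot(-\alpha)+\alpha k=0$. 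The orthogonality is then the integration by parts
\[
  (k^2+\alpha^2)(\gphi_k,\psizero)=\bigl[\gphi_k'\psizero-\gphi_k\psizero'\bigr]_0^\infty ,
\]
understood with an Abel factor $e^{-\epsilon t}$ at infinity. The boundary term at $0$ vanishes because both functions satisfy the same Robin condition. Alternatively, one evaluates $\int_0^\infty(k\cos kt-\alpha\sin kt)e^{-\alpha t}\,dt = k\alpha/(k^2+\alpha^2)-\alpha k/(k^2+\alpha^2)=0$ directly.

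For (iii), I would use Stone's formula for $h_0$. By Lemma~\ref{lem:fiber-green} with $\xipar=0$, the resolvent kernel is
\[
  (2\kappa)^{-1}\bigl(e^{-\kappa|t-s|}+r(\kappa)e^{-\kappa(t+s)}\bigr),\qquad \kappa=\sqrt{-z}.
\]
For $z=k^2\pm i0$ we have $\kappa=\mp ik$. A short computation of the jump gives, for $v\in C_c^\infty(\Rhalf)$ and $0<a<b$,
\[
  \frac{1}{2\pi i}\bigl(g(k^2+i0)-g(k^2-i0)\bigr)(t,s)=\frac{1}{2k}\,\gphi_k(t)\gphi_k(s).
\]
The factor comes from the identity $\gphi_k = (2\pi)^{-1/2}(\overline{c}e^{-ikt}+ce^{ikt})$ of \eqref{eq:phi-exponential} together with $|r|=1$. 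Changing variables $\lambda=k^2$ then gives
\[
  (P_{(a,b)}v,v)=\int_{\sqrt a}^{\sqrt b}|(\mathcal{U}v)(k)|^2\,dk .
\]
The resolvent kernel is continuous up to $(0,\infty)$, so there is no singular continuous part there.

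Next, I would treat the negative axis. The resolvent kernel is meromorphic in $z\in\mathbb{C}\setminus[0,\infty)$, with a single simple pole at $\kappa=\alpha$, i.e. $z=-\alpha^2$, coming from $r(\kappa)$. Its residue is the projection $\psizero\otimes\psizero$, as in \eqref{eq:polar-part}. Hence
\[
  P_{(-\infty,0)}=\psizero\otimes\psizero .
\]
At $0$ we have $P_{\{0\}}=0$ by (i). Combining these gives $\|v\|^2=|(v,\psizero)|^2+\|\mathcal{U}v\|^2$. This is the isometry on $\{\psizero\}^\perp$, and it shows that $\mathcal{U}$ is bounded with $\ker\mathcal{U}\supseteq\mathbb{C}\psizero$, with equality by the isometry.

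It remains to prove surjectivity onto $L^2(dk)$. Since $\Ran\mathcal{U}$ is closed, it suffices to show that its orthogonal complement is trivial. If $F\perp\Ran\mathcal{U}$, then $\mathcal{U}^*F\perp C_c^\infty$, so $\mathcal{U}^*F=0$. On the other hand $\mathcal{U}\mathcal{U}^*=I$ follows from the multiplicity-one structure: the spectral measure restricted to $(0,\infty)$ is $\mathcal{U}^*\mathbf 1\,\mathcal{U}$, and $\mathcal{U}$ intertwines the spectral projections with multiplication by indicators of $k^2\in(a,b)$, so $\Ran\mathcal{U}$ is invariant under all such multiplications. A closed subspace of $L^2(dk)$ invariant under these indicator multiplications is of the form $L^2(S)$ for some measurable set $S$. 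That $S$ has full measure follows from the explicit density $|(\mathcal{U}v)(k)|^2$ being nonzero for suitable $v$ at each $k$ (choose $v$ localized near a point where $\gphi_k\neq 0$).

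The relation \eqref{eq:1d-diag} follows by integrating by parts twice, using the Robin condition to cancel the boundary terms. The spectral statements \eqref{eq:h0-spectrum} and absolute continuity then follow directly from the unitary equivalence with multiplication by $k^2$.

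I expect the main obstacle to be the surjectivity and the careful justification of the jump formula, that is, tracking the constants so that exactly $\frac{1}{2k}\gphi_k\gphi_k$ appears. I would first try to verify the constant in the Neumann limit $\alpha\to 0$, where $\gphi_k=\sqrt{2/\pi}\cos kt$ and the transform reduces to the classical cosine transform.
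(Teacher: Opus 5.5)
Your proof is correct. For (iii) it takes a genuinely different route from the paper.

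\textbf{The paper's route.} The paper does not derive the expansion. It cites the classical Weyl--Titchmarsh theory for a regular endpoint with a Robin condition. It checks the normalization only through the phase-shift form $\gphi_k(t)=\sqrt{2/\pi}\cos(kt+\delta_k)$, $\delta_k=\arctan(\alpha/k)$: the asymptotic amplitude is that of the cosine kernel, so the density must be $dk$. It then obtains $\ker\mathcal{U}=\mathbb{C}\psizero$ from (ii) together with the imported completeness.

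\textbf{Your route.} You build everything from the explicit kernel of Lemma~\ref{lem:fiber-green} at $\xipar=0$. Its Wronskian proof does not use the present lemma, so nothing is circular. Your jump formula $\frac{1}{2\pi i}\bigl(g(k^2+i0)-g(k^2-i0)\bigr)(t,s)=(2k)^{-1}\gphi_k(t)\gphi_k(s)$ is correct. Combined with $d\lambda=2k\,dk$, it turns the paper's asymptotic-amplitude check into a direct computation that the density is $dk$. This is the normalization that Theorem~\ref{thm:stone} later depends on. The residue at $\kappa=\alpha$, which is \eqref{eq:polar-part}, gives $P_{(-\infty,0)}=(\cdot,\psizero)\psizero$. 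Together with $P_{\{0\}}=0$, this yields completeness with no appeal to the literature.

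\textbf{What you pay, and what to write out.} The cost is surjectivity, where two links should be made explicit.
\begin{itemize}
\item The intertwining $\mathcal{U}P_{(a,b)}=\mathbf{1}_{(\sqrt a,\sqrt b)}\mathcal{U}$ is needed for the invariance of $\Ran\mathcal{U}$. It follows from the polarized Parseval identity and $P_{(a,b)}\psizero=0$, by expanding $\|\mathcal{U}P_{(a,b)}v-\mathbf{1}_{(\sqrt a,\sqrt b)}\mathcal{U}v\|^2$ and seeing that it vanishes.
\item Knowing that $(\mathcal{U}v)(k)\neq0$ for some $v$ depending on $k$ does not by itself give $|S^c|=0$. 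You also need that $k\mapsto(\mathcal{U}v)(k)$ is continuous for $v\in C_c^\infty(\Rhalf)$, so that every $k_0$ has a neighbourhood contained in $S$ up to a null set.
\end{itemize}
The same intertwining gives \eqref{eq:1d-diag} by the functional calculus. This is cleaner than the double integration by parts that both you and the paper use, because for $v\in\Dom(h_0)$ the integral $\int_0^\infty\gphi_k v\,dt$ need not converge absolutely.

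\textbf{A small slip in (ii).} Your Green bracket has the wrong sign, and no Abel factor is needed since $\psizero$ decays exponentially. Both end terms vanish anyway, so the conclusion stands.
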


\begin{proof}
(i) A direct computation gives $\psizero'(0) + \alpha\psizero(0) = 0$,
$h_0\psizero = -\alpha^2\psizero$ and
$\norm{\psizero}^2 = 2\alpha\int_0^\infty e^{-2\alpha t}\,dt = 1$. Conversely,
let $\mu$ be an eigenvalue with eigenfunction $v \in \Dom(h_0)$. If $\mu \ge 0$
no nonzero solution of $-v'' = \mu v$ lies in $\Ltwo{\Rhalf}$. If $\mu < 0$,
write $\mu = -\kappa^2$ with $\kappa > 0$. The only $\Ltwo{\Rhalf}$ solution is
$v(t) = c\,e^{-\kappa t}$, and the Robin condition reads
$(-\kappa + \alpha)c = 0$, so that $c \neq 0$ forces $\kappa = \alpha$. Hence
$\mu = -\alpha^2$ is the unique eigenvalue and it is simple.

(ii) With $\gphi_k$ as in \eqref{eq:1d-eigenfunctions} one has, up to the
positive normalizing factor, $\gphi_k(0) \propto k$ and
$\gphi_k'(0) \propto -\alpha k$, whence
$\gphi_k'(0) + \alpha\gphi_k(0) = 0$. Using
\begin{equation*}
  \int_0^\infty e^{-\alpha t}\cos(kt)\,dt = \frac{\alpha}{\alpha^2+k^2},
  \qquad
  \int_0^\infty e^{-\alpha t}\sin(kt)\,dt = \frac{k}{\alpha^2+k^2},
\end{equation*}
we get
$\int_0^\infty e^{-\alpha t}\bigl(k\cos(kt) - \alpha\sin(kt)\bigr)dt
 = \bigl(k\alpha - \alpha k\bigr)/(\alpha^2+k^2) = 0$,
which is the stated orthogonality.

(iii) The assertion is the classical Weyl--Titchmarsh eigenfunction expansion
for a regular endpoint carrying a Robin boundary condition. See
\cite[Ch.~II]{Titchmarsh1962} or \cite[Ch.~14]{Weidmann1987}. What has to be
checked is that the normalization chosen in \eqref{eq:1d-eigenfunctions} is the
one for which the spectral density is exactly $dk$, since that normalization
propagates through the whole of \S\ref{subsec:gft}. It is, and the reason is
visible if $\gphi_k$ is written as a phase shift of the cosine kernel. Setting
\begin{equation*}
  \delta_k := \arctan(\alpha/k) \in (0,\pi/2),
  \qquad\text{so that}\qquad
  \cos\delta_k = \frac{k}{\sqrt{k^2+\alpha^2}},
  \quad
  \sin\delta_k = \frac{\alpha}{\sqrt{k^2+\alpha^2}},
\end{equation*}
the addition formula turns \eqref{eq:1d-eigenfunctions} into
\begin{equation}
  \label{eq:phi-phase-form}
  \gphi_k(t) = \sqrt{\tfrac{2}{\pi}}\;\cos\bigl(kt + \delta_k\bigr) .
\end{equation}
The amplitude $\sqrt{2/\pi}$ is thus independent of $k$ and of $\alpha$: it is
the amplitude of the cosine transform kernel, which is unitary from
$\Ltwo{\Rhalf}$ onto $\Ltwo{\Rhalf,dk}$. The Robin condition changes the phase
of the oscillation at infinity and nothing else, and the spectral density is
therefore the same $dk$ as in the Neumann case. Both limiting cases can be read
off \eqref{eq:phi-phase-form}: as $\alpha \downarrow 0$ one has
$\delta_k \to 0$ and $\gphi_k$ becomes the Neumann kernel
$\sqrt{2/\pi}\cos(kt)$, while as $\alpha \to \infty$ one has
$\delta_k \to \pi/2$ and $\gphi_k$ becomes $-\sqrt{2/\pi}\sin(kt)$, the
Dirichlet kernel up to a sign.

The remaining assertions follow. The kernel of $\mathcal{U}$ is
$\mathbb{C}\psizero$ by~(ii) together with the completeness of the expansion,
and identity \eqref{eq:1d-diag} follows from two integrations by parts. The
boundary terms at $t = 0$ cancel because $v$ and $\gphi_k$ satisfy the same
Robin condition. On $\{\psizero\}^{\perp}$ the operator $h_0$ is then unitarily
equivalent to multiplication by $k^2$ on $\Ltwo{\Rhalf,dk}$, which is purely
absolutely continuous with spectrum $[0,\infty)$. Adjoining the simple
eigenvalue of~(i) gives \eqref{eq:h0-spectrum}.
\end{proof}

Adding the constant $|\xipar|^2$ to \eqref{eq:h0-spectrum} gives
\eqref{eq:fiber-spectrum}, and the eigenfunction \eqref{eq:psi0} does not depend
on $\xipar$. This completes the proof of
Proposition~\ref{prop:fibration}. \qed

\subsection{Proof of Parts (b) and (c)}
\label{app:A4}

Because the fiber eigenfunction $\psizero$ is independent of $\xipar$, the
decomposition of Lemma~\ref{lem:1d-transform} can be carried out
\emph{before} taking the partial Fourier transform, which makes the whole
spectral picture explicit. Write
\begin{equation}
  \label{eq:tensor-splitting}
  \Ltwo{\Rnp} \cong \Ltwo{\Rnm} \otimes \Ltwo{\Rhalf}
  = \Hsurf \oplus \Hbulk,
  \qquad
  \begin{aligned}
    \Hsurf &:= \Ltwo{\Rnm} \otimes \mathbb{C}\psizero, \\
    \Hbulk &:= \Ltwo{\Rnm} \otimes \{\psizero\}^{\perp},
  \end{aligned}
\end{equation}
and let $P_{\mathrm{surf}} = I \otimes (\,\cdot\,, \psizero)\psizero$ be the
orthogonal projection onto $\Hsurf$, that is
$P_{\mathrm{surf}}u = \usurf \otimes \psizero$ with
$\usurf(\xpar) = \int_0^\infty u(\xpar,t)\,\psizero(t)\,dt$.

\subsubsection*{The decomposition \eqref{eq:tensor-splitting} reduces $\Halpha$}

Let $u \in \Dom(\Halpha)$. For a.e.\ $\xpar$ we have $u(\xpar,\cdot) \in
H^2(\Rhalf)$, so that $u(\xpar,t)$ and $\partial_t u(\xpar,t)$ tend to $0$ as
$t \to \infty$, while $\psizero$ and $\psizero'$ decay exponentially. Two
integrations by parts therefore give
\begin{equation*}
  \int_0^\infty \bigl(\partial_t^2 u\bigr)\psizero\, dt
  = \Bigl[\, \partial_t u\,\psizero - u\,\psizero' \,\Bigr]_{t=0}^{t=\infty}
    + \int_0^\infty u\,\psizero''\, dt
  = \alpha^2 \usurf ,
\end{equation*}
the boundary term at $t = 0$ vanishing because $u$ and $\psizero$ satisfy the
same Robin condition, namely
$\partial_t u(\xpar,0)\psizero(0) - u(\xpar,0)\psizero'(0)
= \psizero(0)\bigl(\partial_t u(\xpar,0) + \alpha u(\xpar,0)\bigr) = 0$.
Pairing $\Halpha u = -\Delta' u - \partial_t^2 u$ with $\psizero$ and using this
identity gives
\begin{equation}
  \label{eq:surf-reduction}
  P_{\mathrm{surf}}\,\Halpha u
  = \Bigl(-\Delta' \usurf - \alpha^2 \usurf\Bigr) \otimes \psizero ,
\end{equation}
where $-\Delta'\usurf$ is taken in the sense of distributions on $\Rnm$. Its
left-hand side lies in $\Ltwo{\Rnp}$, so $-\Delta'\usurf \in \Ltwo{\Rnm}$, and
since $\usurf \in \Ltwo{\Rnm}$ this forces $\usurf \in H^2(\Rnm)$. Hence
$P_{\mathrm{surf}}u = \usurf \otimes \psizero$ belongs to $\Dom(\Halpha)$ and
the right-hand side of \eqref{eq:surf-reduction} equals
$\Halpha P_{\mathrm{surf}} u$. Thus $P_{\mathrm{surf}}$ commutes with $\Halpha$,
and both $\Hsurf$ and $\Hbulk$ are reducing subspaces.

\subsubsection*{The surface part}

By \eqref{eq:surf-reduction} the restriction of $\Halpha$ to $\Hsurf$ is
unitarily equivalent, via $a \otimes \psizero \mapsto a$, to
\begin{equation}
  \label{eq:surf-part}
  \Halpha\big|_{\Hsurf} \;\cong\; -\Delta' - \alpha^2
  \quad\text{on } \Ltwo{\Rnm},
\end{equation}
that is, after the Fourier transform in $\xpar$, to multiplication by
$|\xipar|^2 - \alpha^2$ on $\Ltwo{\Rnm}$. The free Laplacian on $\Rnm$ is purely
absolutely continuous with spectrum $[0,\infty)$. Hence
$\Halpha|_{\Hsurf}$ is purely absolutely continuous with spectrum
$[-\alpha^2,\infty)$.

\subsubsection*{The bulk part}

Applying $I \otimes \mathcal{U}$ with $\mathcal{U}$ as in
\eqref{eq:1d-transform}, and then the Fourier transform in $\xpar$, we obtain a
unitary map of $\Hbulk$ onto
$\Ltwo{\Rnm \times \Rhalf;\, d\xipar\, dk}$ under
which, by \eqref{eq:1d-diag},
\begin{equation}
  \label{eq:bulk-part}
  \Halpha\big|_{\Hbulk}
  \;\cong\; \text{multiplication by } |\xipar|^2 + k^2 .
\end{equation}
The function $(\xipar,k) \mapsto |\xipar|^2 + k^2$ has nonvanishing gradient off
the single point $(\xipar,k) = (0,0)$, so the push-forward of Lebesgue measure
under it is absolutely continuous with respect to $dE$ on $(0,\infty)$. Hence
$\Halpha|_{\Hbulk}$ is purely absolutely continuous with spectrum
$\overline{\Ran(|\xipar|^2+k^2)} = [0,\infty)$.

\subsubsection*{Conclusion}

Combining \eqref{eq:surf-part} and \eqref{eq:bulk-part},
\begin{equation*}
  \sigma(\Halpha)
  = \sigma\bigl(\Halpha|_{\Hsurf}\bigr) \cup \sigma\bigl(\Halpha|_{\Hbulk}\bigr)
  = [-\alpha^2,\infty) \cup [0,\infty)
  = [-\alpha^2,\infty),
\end{equation*}
which is \eqref{eq:spectrum}. In particular $\inf\sigma(\Halpha) = -\alpha^2$,
so the bound \eqref{eq:sharp-bound} is sharp. Both summands are purely
absolutely continuous, hence so is $\Halpha$, and
\eqref{eq:ac} follows:
$\sac(\Halpha) = [-\alpha^2,\infty)$ and
$\spp(\Halpha) = \ssc(\Halpha) = \varnothing$. This proves
Theorem~\ref{thm:sa-spec}(b) and (c). \qed

\begin{remark}
\label{rem:no-eigenvalues-direct}
The absence of eigenvalues can also be seen directly from
\eqref{eq:fibration}, without diagonalizing the bulk part. If
$\Halpha u = Eu$ with $u \in \Ltwo{\Rnp}$, then
$\hfib(\xipar)\widehat{u}(\xipar,\cdot) = E\,\widehat{u}(\xipar,\cdot)$ for
a.e.\ $\xipar$. By Lemma~\ref{lem:1d-transform}(i), $E$ is an eigenvalue of
$\hfib(\xipar)$ if and only if $|\xipar|^2 = E + \alpha^2$, and for every other
$\xipar$ the only solution is $\widehat{u}(\xipar,\cdot) = 0$. The exceptional
set is a sphere in $\Rnm$ (or empty), hence a Lebesgue null set, so $u = 0$.
This is the mechanism by which the fiberwise eigenvalue branch
$\lambda_{\mathrm{surf}}$ is converted into absolutely continuous spectrum,
as anticipated in Remark~\ref{rem:thresholds}.
\end{remark}

%% file: Part1_a2_app_mourre.tex
Nothing in this appendix is used in the body of the paper: the limiting
absorption principle of \S\ref{subsec:lap} is proved from the exact
decomposition \eqref{eq:splitting-preview}, and no commutator estimate enters
it. We record the Mourre theory for $\Halpha$ for three reasons. The commutator
identity is of interest in itself, and is short. It exhibits both thresholds of
$\Halpha$ as the zeros of a single explicit constant, which is a second and
quite different way of seeing the structure described in
Remark~\ref{rem:thresholds}. And, unlike the argument of
\S\ref{subsec:lap}, the commutator method does not rest on the fibration of
Proposition~\ref{prop:fibration}, so it remains available in principle when
that fibration is not. Whether the estimate survives a perturbation of the
geometry is not examined here. The comparison with
Theorem~\ref{thm:lap} was summarized in Remark~\ref{rem:mourre}. Here we supply
the computations behind it.

\subsection{The Conjugate Operator and the Commutator}

Take for conjugate operator the generator of dilations,
$A = \tfrac12(x \cdot D + D \cdot x)$ with $D = -i\nabla$, whose unitary group is
$(e^{i\theta A}u)(x) = e^{n\theta/2}u(e^{\theta}x)$. Since $\Rnp$ is a cone, this
group leaves $\Rnp$ and $H^1(\Rnp)$ invariant. It does not, however, leave
$\Dom(\Halpha)$ invariant. The Robin condition is not scale invariant, the
parameter $\alpha$ having the dimension of an inverse length. In fact
\begin{equation}
  \label{eq:dilation-alpha}
  e^{i\theta A}\,\Dom(\Halpha) = \Dom(H_{e^{\theta}\alpha}).
\end{equation}
The commutator must therefore be computed in the form sense. From
$\norm{\nabla u_\lambda}^2 = \lambda^2\norm{\nabla u}^2$ and
$\norm{\gamma_0 u_\lambda}_{\Ltwo{\Gamma}}^2
 = \lambda\,\norm{\gamma_0 u}_{\Ltwo{\Gamma}}^2$ for
$u_\lambda(x) = \lambda^{n/2}u(\lambda x)$ one obtains
\begin{equation}
  \label{eq:scaled-form}
  \hform[u_\lambda]
  = \lambda^2\norm{\nabla u}^2
    - \lambda\,\alpha\,\norm{\gamma_0 u}_{\Ltwo{\Gamma}}^2,
\end{equation}
and differentiating at $\lambda = 1$,
\begin{equation}
  \label{eq:commutator}
  i[\Halpha, A] = 2\Halpha + \alpha\,\gamma_0^{*}\gamma_0
  \qquad\text{as quadratic forms on } H^1(\Rnp).
\end{equation}
The boundary term is relatively form bounded with respect to $\Halpha$ with
relative bound $0$, by the trace inequality
$\norm{\gamma_0 u}_{\Ltwo{\Gamma}}^2
 \le \delta\,\norm{\nabla u}^2 + C_\delta\,\norm{u}^2$. Hence
$\Halpha \in C^1(A)$ and the theory of \cite{Mourre1981,ABG1996} is available.

In fact the regularity is unlimited, and for a reason that costs nothing to
record. By \eqref{eq:scaled-form} the map $\theta \mapsto \hform[u_{e^{\theta}}]$
is $e^{2\theta}\norm{\nabla u}^2 - e^{\theta}\alpha\norm{\gamma_0 u}^2$, so every
derivative in $\theta$ is again a combination of the same two forms,
\begin{equation*}
  \operatorname{ad}_A^m(\Halpha)
  = 2^m\,\norm{\nabla\,\cdot\,}^2 - \alpha\,\gamma_0^{*}\gamma_0
  \qquad (m \ge 1),
\end{equation*}
each of which is form bounded as above. Hence $\Halpha \in C^{\infty}(A)$ in the
form sense, and in particular the hypothesis $C^{1,1}(A)$ under which the
optimal weighted $L^2$ conclusions of \cite{ABG1996} are stated is satisfied.

\subsection{The Boundary Term and the Mourre Constant}

Were the commutator equal to $2\Halpha$ alone, the Mourre estimate
$E_I\,i[\Halpha,A]\,E_I \ge c\,E_I$ would fail outright on the surface branch,
where $E < 0$. It is the term $\alpha\gamma_0^{*}\gamma_0 \ge 0$ that repairs it.
Indeed, for $u = g \otimes \psizero \in \Hsurf$ one has
$\norm{\gamma_0 u}_{\Ltwo{\Gamma}}^2 = 2\alpha\,\norm{g}^2$, since
$|\psizero(0)|^2 = 2\alpha$, so that \eqref{eq:commutator} gives
\begin{equation*}
  \bigl( i[\Halpha,A]u, u \bigr)
  = 2\bigl(\Halpha u, u\bigr) + 2\alpha^2\,\norm{u}^2 ,
\end{equation*}
whereas for $I \subset (0,\infty)$, where $\Ran E_I$ meets both channels, one
simply discards the nonnegative boundary term. Below the bulk threshold there is
no bulk spectrum, so $E_I$ projects onto $\Hsurf$ when
$I \subset (-\alpha^2,0)$, and the display above applies there as an identity.
The best Mourre constant is
\begin{equation}
  \label{eq:mourre-constant}
  c(I) =
  \begin{cases}
    2\inf I, & I \subset (0,\infty), \\[1mm]
    2\bigl( \inf I + \alpha^2 \bigr), & I \subset (-\alpha^2,0).
  \end{cases}
\end{equation}
On the surface branch the first display is an identity, so the second line of
\eqref{eq:mourre-constant} is optimal by inspection. That the first line cannot
be improved either is a statement about grazing directions. For
$u \in \Hbulk$ whose transform is supported in $\{k < \eps\}$, formula
\eqref{eq:phi-phase-form} gives
$|\gphi_k(0)|^2 = \tfrac{2}{\pi}\,k^2/(k^2+\alpha^2)$, whence by
Cauchy--Schwarz
\begin{equation*}
  \norm{\gamma_0 u}_{\Ltwo{\Gamma}}^2
  \le \Bigl(\int_0^{\eps} |\gphi_k(0)|^2\,dk\Bigr)\,\norm{u}^2
  \le \frac{2\eps^3}{3\pi\alpha^2}\,\norm{u}^2 .
\end{equation*}
Such $u$ exist with spectral support in any prescribed $I \subset (0,\infty)$,
since $|\xipar|^2 + k^2 \in I$ is compatible with $k < \eps$. Let
$\eps \downarrow 0$ and concentrate the energy at $\inf I$. The boundary term
then becomes negligible and $\bigl(i[\Halpha,A]u,u\bigr)/\norm{u}^2$ tends to
$2\inf I$. The bulk channel is thus blind to the impedance in the grazing
limit, the transverse profile $\gphi_k$ tending to the Dirichlet one, and no
constant larger than $2\inf I$ will serve.

Thus both thresholds appear as the zeros of the Mourre constant, and the failure
\eqref{eq:dilation-alpha} of scale invariance produces the
surface branch and fixes the position $-\alpha^2$ of its threshold. This is the
commutator counterpart of the pole at $\kappa = \alpha$ in
\eqref{eq:g-minus-gD}.

\subsection{Limitations of the Conclusion}

Remark~\ref{rem:mourre} gave two reasons: the conclusion is a weighted $L^2$
bound valid only for $s > 1/2$, and the later sections need the kernel rather
than the norm. Both can be sharpened.

First, the optimal abstract versions of the theory \cite{ABG1996} do produce
Besov-type spaces, but these are the spaces $\mathcal{B}_A$ defined by the
spectral decomposition of the conjugate operator $A$, not the spaces
$\Bsp = \mathcal{B}_{|x|}$ defined by the dyadic decomposition in $|x|$.
Identifying the two is a separate matter.

Secondly, the two threshold rates are not two estimates of one quantity, one of
them less precise than the other. They measure different things. This is
clearest at the lower threshold. As $I \uparrow 0^{-}$ the constant
\eqref{eq:mourre-constant} tends to $2\alpha^2$ and does not degenerate. The
bound \eqref{eq:lap-explicit} does degenerate, like $|E|^{-1/2}$, because its
first term is the free resolvent in dimension $n$ at energy $E$ and the bulk
threshold sits at $0$. The reason for the discrepancy is that the
Mourre constant registers the opening of the channels that are \emph{present}
in $I$. For $-\alpha^2 < E < 0$ the bulk channel contributes no spectrum at
all, such $E$ lying in the resolvent set of the Dirichlet Laplacian. The
splitting \eqref{eq:splitting-preview} nevertheless retains the bulk term
there, and with it the bulk threshold, since it is an identity between
resolvents and not a statement about spectra. Which of the two behaviours is
the relevant one depends on what is to be done next, and for the purposes of
\S\S\ref{subsec:gft}--\ref{subsec:radiation} it is the splitting.

\subsection{Advantages of the Mourre Estimate}

Two advantages should be recorded. The estimate \eqref{eq:mourre-constant} is
strict, no compact remainder having to be subtracted. The virial theorem
therefore applies directly and yields the absence of eigenvalues in $\Ereg$ as
a by-product of the commutator computation. Here that fact is imported from
Theorem~\ref{thm:sa-spec}(c) instead. If the strictness were lost under a
perturbation and a compact remainder appeared, the conclusion would degrade
only to local finiteness of the point spectrum, which is still more than a
resolvent estimate gives by itself.

More importantly, the argument of \S\ref{subsec:lap} uses in an essential way
that $\Gamma$ is flat and $\alpha$ constant, since it rests on the exact
fibration of Proposition~\ref{prop:fibration}. Under a variable Robin
coefficient $\alpha = \alpha(\xpar)$, a curved boundary, or the insertion of an
obstacle, the fibration is destroyed and the method of \S\ref{subsec:lap} does
not apply, whereas the commutator identity itself does not presuppose it. We do
not pursue that route. The perturbed problem of Part~II is treated there by
other means, and what it requires from \S\ref{subsec:lap} is
Theorem~\ref{thm:lap}, for the flat operator, in the endpoint spaces.